\pgfplotsset{compat=newest} 
\pgfplotsset{plot coordinates/math parser=false} 
\newlength\figureheight 
\newlength\figurewidth
\newcommand{\numberset}{\mathbb}
\newcommand{\N}{\numberset{N}}
\newcommand{\Z}{\numberset{Z}}
\newcommand{\R}{\numberset{R}}
\newcommand{\C}{\numberset{C}}
\newcommand\numberthis{\addtocounter{equation}{1}\tag{\theequation}}
\newcommand\restr[2]{{
  \left.\kern-\nulldelimiterspace
  #1
  \vphantom{\big|}
  \right|_{#2}
  }}
\newcommand{\fixed@sra}{$\vrule height 2\fontdimen22\textfont2 width 0pt\shortrightarrow$}
\newcommand{\shortarrow}[1]{
  \mathrel{\text{\rotatebox[origin=c]{\numexpr#1*45}{\fixed@sra}}}
}
\newcommand{\rightleftarrowss}[2]{
  \mathrel{\mathop{
    \vcenter{\offinterlineskip\m@th
      \ialign{\hfil##\hfil\cr
        \hphantom{$\scriptstyle\mspace{8mu}{#1}\mspace{8mu}$}\cr
        \rightarrowfill\cr
        \vrule height0pt width 2em\cr
        \leftarrowfill\cr
        \hphantom{$\scriptstyle\mspace{8mu}{#2}\mspace{8mu}$}\cr
        \noalign{\kern-0.3ex}
      }
    }
  }\limits^{#1}_{#2}}
}
\newtheorem{theorem}{Theorem}[chapter]
\newtheorem{definition}[theorem]{Definition}
\newtheorem{proposition}[theorem]{Proposition}
\newtheorem{corollary}[theorem]{Corollary}
\newtheorem{lemma}[theorem]{Lemma}
\newtheorem{conjecture}[theorem]{Conjecture}
\newtheorem{remark}[theorem]{Remark}
\title{???}
\author{Nicola Galante}
\date{???}
\begin{document}

\begin{titlepage}

\newboolean{english}
\setboolean{english}{true}
\newboolean{corelatore}
\setboolean{corelatore}{true}

\thispagestyle{empty}
\space
\begin{center}
\textsc{\Large{Università degli studi di Pavia}\\
\normalsize{Dipartimento di Matematica Felice Casorati\\
Corso di Laurea Magistrale in Matematica}}
\end{center}
\[\]
\begin{center}
	\includegraphics[width=0.35\textwidth]{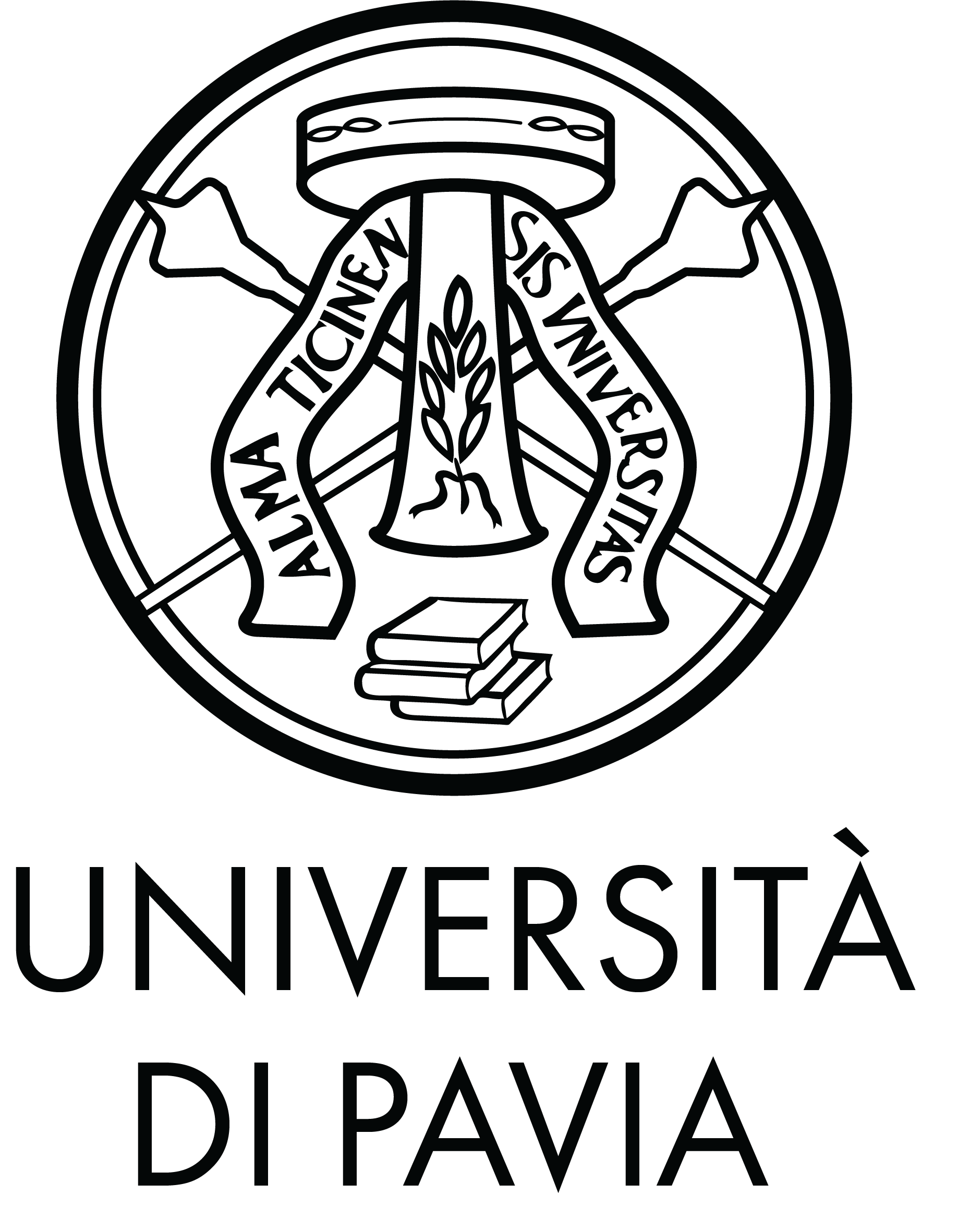}
\end{center}
\[\]
\space
\begin{center}
\ifthenelse{\boolean{english}}
{
\smallskip

\textbf{\Large{Evanescent Plane Wave Approximation of Helmholtz Solutions in Spherical Domains}}
}
{}
\end{center}
\singlespace
\[\]
\begin{center}
\textbf{Tesi di Laurea Magistrale in Matematica}
\end{center}
\[\]\[\]
\begin{flushleft}
\ifthenelse{\boolean{english}}{Relatore (Supervisor):\\}{Relatore:\\}
\textbf{Prof. Andrea Moiola}
\ifthenelse{\boolean{corelatore}}{
\ifthenelse{\boolean{english}}{\\Correlatore (Co-Supervisor):\\}{\\Correlatore:\\}
\textbf{Dr. Emile Parolin}
}
{}
\end{flushleft}
\[\]
\begin{flushright}
Tesi di Laurea di: \\
\textbf{Nicola Galante}\\
Matricola 493107
\end{flushright}
\[\]\[\]\[\]
\begin{center}
Anno Accademico 2021--2022
\end{center}

\end{titlepage}

\newpage\null\thispagestyle{empty}\newpage

\chapter*{Abstract}

The recent results presented in \cite{parolin-huybrechs-moiola} have led to significant developments in achieving stable approximations of Helmholtz solutions by plane wave superposition.
The study shows that the numerical instability and ill-conditioning inherent in plane wave-based Trefftz methods can be effectively overcome with regularization techniques, provided there exist accurate approximations in the form of expansions with bounded coefficients.
Whenever the target solution contains high Fourier modes, propagative plane waves fail to yield stable approximations due to the exponential growth of the expansion coefficients.
Conversely, evanescent plane waves, whose modal content covers high Fourier regimes, are able to provide both accurate and stable results.
The developed numerical approach, which involves constructing evanescent plane wave approximation sets by sampling the parametric domain according to a probability density function, results in substantial improvements when compared to conventional propagative plane wave schemes.

The following work extends this research to the three-dimensional setting, confirming the achieved results and introducing new ones.
By generalizing the 3D Jacobi--Anger identity to complex-valued directions, we show that any Helmholtz solution in a ball can be represented as a continuous superposition of evanescent plane waves.
This representation extends the classical Herglotz one and provides a relevant stability result that cannot be achieved with the use of propagative waves alone.
The proposed numerical recipes have been tailored for the 3D setting and extended with new sampling strategies involving extremal systems of points. These methods are tested by numerical experiments, showing the desired accuracy and bounded-coefficient stability, in line with the two-dimensional case.

\newpage\null\thispagestyle{empty}\newpage

\tableofcontents

\clearpage
\phantomsection
\addcontentsline{toc}{chapter}{Introduction}

\chapter*{Introduction}
\section*{Theoretical framework}

The homogeneous Helmholtz equation
\begin{equation}
\Delta u + \kappa^2 u=0,
\label{Helmholtz equation}
\end{equation}
is a fundamental mathematical model that arises in a wide range of scientific and engineering applications, including acoustics, electromagnetics, linear elasticity and quantum mechanics.
It is closely related to the scalar wave equation $\frac{1}{c^2}\frac{\partial^2 U}{\partial t^2}-\Delta U =0$, as it describes the spatial dependence of time-harmonic solutions $U(\mathbf{x},t)=\Re\{e^{-i\omega t}u(\mathbf{x})\}$, where the real parameter $\kappa=\omega/c>0$ is the wavenumber, while $c$ and $\omega$ are the wave speed and the angular frequency, respectively.

Contrary to the case where the wavenumber $\kappa$ is relatively small, and thus the Helmholtz equation can be regarded as a perturbation of the Laplace equation, approximating Helmholtz solutions becomes increasingly difficult and computationally expensive as soon as we enter the realm of high-frequency problems, namely when the wavelength $\lambda=2\pi/\kappa$ is much smaller than the characteristic length of the computational domain.
The main difficulty stems from the highly oscillatory nature of Helmholtz solutions, necessitating a large number of degrees of freedom to achieve high accuracy using piecewise polynomials.

Various numerical methods are presented in the literature to address this issue, among which the Trefftz methods stand out as particularly intriguing. These discretization schemes employ trial and test functions that are locally piecewise solutions of the differential equation being approximated -- in this instance, the Helmholtz equation -- and offer significant advantages, notably their relatively low computational cost. Like Finite Element Methods (FEMs), they provide a volume discretization, while like Boundary Element Methods (BEMs), they only require integration on lower-dimensional manifolds. Indeed, the Trefftz methods can often provide more accurate solutions with fewer degrees of freedom than standard numerical methods, resulting in significant computational savings, particularly for large-scale problems. 

Out of all Trefftz approximation spaces and their corresponding basis functions, propagative plane waves $e^{i\kappa \mathbf{d}\cdot \mathbf{x}}$, where $\mathbf{d} \in \R^n$ such that $\mathbf{d}\cdot\mathbf{d}=1$ represents the propagation direction, are noteworthy, since their simple exponential expression makes the Trefftz schemes implementation very cost-effective.
Specifically, the computation of integrals over any flat sub-manifold with a boundary that is piecewise flat or straight can be carried out in a closed form, and with an effort that is independent of $\kappa$ (see \cite[Sec.\ 4.1]{hiptmair-moiola-perugia1}).
Unfortunately, the linear systems spawned by propagative plane waves are susceptible to ill-conditioning when high-resolution trial spaces are employed. Indeed, the computation of the expansion is known to be numerically unstable due to the nearly linear dependence of propagative plane waves with similar propagation directions (see \cite[Sec.\ 4.3]{hiptmair-moiola-perugia1}). This is a purely numerical phenomenon that appears when using floating-point arithmetic. As a result, the convergence predicted by the approximation theory cannot be achieved and the accuracy of the numerical scheme stagnates.

\section*{Recent results in 2D}

The study presented in \cite{parolin-huybrechs-moiola} falls within the introduced framework, bringing new interesting developments in providing stable approximations of Helmholtz solutions by plane wave superposition.
The analysis remains restricted to the case of a circular domain, in an effort to provide explicit and precise theoretical results.
Recent progress in frame approximation theory (see \cite{huybrechs1,huybrechs2}) have shown that regularization techniques can effectively address the issue of ill-conditioning, given there exist accurate approximations in the form of expansions with bounded coefficients.
In this case, achieving accurate results in floating-point arithmetic requires not only examining the best approximation error but also studying the norm of the coefficients in the expansion, which is inherently dependent on the chosen representation. 

When employing linear combinations of propagative plane waves to represent Helmholtz solutions characterized by high Fourier modal contents, the resulting expansions invariably comprise exponentially large coefficients.
This evidence is captured by the proof exhibited in \cite[Th.\ 4.3]{parolin-huybrechs-moiola}, which shows that the instability of the approximation stems from the incapability of propagative plane waves to adequately represent the high Fourier modes of any smooth Helmholtz solution.

The key idea is then to enrich the propagative plane wave approximation set with other Helmholtz solutions, which allow to obtain accurate and bounded-coefficient approximations while retaining a simple and cheap implementation.
Evanescent plane waves seem to be the natural candidate for this task, since they conserve the form $e^{i\kappa \mathbf{d}\cdot \mathbf{x}}$, but are distinguished by a complex-valued direction $\mathbf{d} \in \C^n$, where $\mathbf{d}\cdot\mathbf{d}=1$ in order to satisfy the Helmholtz equation.
Hence, the main quest remains to identify a suitable set of evanescent plane waves and verify its effectiveness in ensuring both accuracy and stability.

The modal analysis, which is made possible due to the considered circular domain, indicates that evanescent plane waves can effectively approximate the high Fourier modes of the Helmholtz solutions. This is a crucial attribute that propagative plane waves lack.
Expectations are confirmed by the result provided in \cite[Th.\ 6.7]{parolin-huybrechs-moiola}, which establishes that every Helmholtz solution on the unit disk can be uniquely expressed as a continuous superposition of evanescent plane waves and, moreover, its corresponding density is bounded in a suitable norm. In this sense, this theorem can be considered a sort of stability result at the continuous level.
The primary emphasis is on the \textit{Herglotz transform} \cite[Def.\ 6.6]{parolin-huybrechs-moiola}, an integral operator which enables each solution of the Helmholtz equation to be expressed as the image of a unique function in a weighted $L^2$ space, named \textit{Herglotz density space} \cite[Eq.\ (6.5)]{parolin-huybrechs-moiola}. Moreover, the results in \cite[Cor.\ 6.13]{parolin-huybrechs-moiola} show that the Herglotz transform maps any point-evaluation functional of the Herglotz density space to an evanescent plane wave.
The ensuing equivalence between the Helmholtz solution approximation problem (by evanescent plane waves) and the approximation problem of the corresponding Herglotz density (by evaluation functionals) paves the way to stable discrete representations.

The numerical scheme described in \cite[Sec.\ 7]{parolin-huybrechs-moiola} adopts the ideas proposed in \cite{Cohen_Migliorati} in order to reconstruct the Herglotz densities starting from a finite number of sa-mples.
Despite the existing literature does not provide enough theoretical evidence to ensure the desired accurate and bounded-coefficient approximation properties, the continuous-level results and the numerical experiments presented endorse the validity of \cite[Conj.\ 7.1]{parolin-huybrechs-moiola}.
Consequently, similar approximation properties are expected to be shared by the evanescent plane wave sets in the space of Helmholtz solutions.

In the end, the numerical approach developed, which is based on circular geometries, is evaluated through various forms, exhibiting substantial enhancements if compared to conventional propagative plane wave schemes.

\section*{Extensions to 3D}

There are numerous potential paths for extending the results achieved in \cite{parolin-huybrechs-moiola}. For instance, the scope could be broadened to encompass more general geometries, thus allowing for the application of Trefftz schemes, or considering more sophisticated boundary value problems, involving time-harmonic Maxwell or elastic wave equations.
Nevertheless, the present work opts for a different but essential approach: the goal is to extend the results of \cite{parolin-huybrechs-moiola} to three-dimensional geometries.
Similarly, the analysis remains restricted to the case of spherical domains, such that explicit theoretical results can be exhibited through the use of modal analysis.

Thanks to the introduction of spherical waves,
the structure of the space of Helmholtz solutions can be readily extended to spherical domains.
The first challenge arises because, unlike the two-dimensional scenario, there is no obvious way of considering equally spaced points on the spherical surface.
This seems to be desirable for defining propagative plane wave approximation sets.
The extremal point systems discussed in \cite{sloan-womersley1,sloan-womersley2} provide a potential solution by possessing excellent geometric characteristics that lead to well-distributed points. Moreover, these point sets exhibit good integration properties when employed to establish an interpolatory integration rule. Thanks to the results presented in Corollary \hyperlink{Corollary 2.1}{2.5}, we are able to incorporate them into the sampling-based numerical method used to approximate Helmholtz solutions.
Furthermore, we exploit the extremal systems to provide numerical evidence of the negative results regarding the instability in approximating Helmholtz solutions using propagative plane waves, which are confirmed theoretically in Theorem \hyperlink{Theorem 3.1}{3.4}.

Another non-trivial aspect in the extension to the three-dimensional case concerns the introduction of evanescent plane waves in Definition \hyperlink{Definition 4.1}{4.1} and the related parametrization of the complex direction space $\{\mathbf{d} \in \C^3 : \mathbf{d} \cdot \mathbf{ d}=1\}$.
The idea behind their definition remains the same: given a complex-valued direction $\mathbf{d} \in \C^3$, the evanescent plane wave $e^{i\kappa \mathbf{d}\cdot \mathbf{x}}$ oscillate with an apparent wavenumber larger than $\kappa$ in the direction of propagation $\Re\{\mathbf{d}\}$ while decaying exponentially in the direction $\Im\{\mathbf{d}\}$.
However, unlike the two-dimensional case \cite[Sec.\ 5]{parolin-huybrechs-moiola}, the `parameter complexification' procedure -- namely the parametrization of the complex direction space obtained by complexifying the spherical coordinate angles -- turns out to be less suitable for the analysis of the Herglotz density space, due to difficulties in exhibiting an explicit Hilbert basis.
Therefore, we opted to define a complex-valued reference direction and then consider its rotations in space through the orthogonal matrices associated with the Euler angles.

This choice is closely tied to the Jacobi--Anger identity introduced in Theorem \hyperlink{Theorem 4.6}{4.7}. While this result is easily generalized to the complex case in two dimensions, due to the results in \cite{nist}, this step is not so trivial in the 3D setting. This requires extending the definition of spherical harmonics in order to include complex-valued directions, which is achieved by in turn extending the Ferrers functions (\ref{legendre polynomials}) to the associated Legendre polynomials (\ref{legendre2 polynomials}). Additionally, some fundamental algebraic properties such as the identities presented in \cite[Eq.\ (2.30)]{colton-kress} and \cite[Eq.\ (2.46)]{colton-kress} must also be generalized.

The modal analysis is then made possible thanks to the introduction of Wigner matrices.
The specific properties of these matrices (see \cite{devanathan,quantumtheory}) play a crucial role in confirming the effectiveness of evanescent plane waves in approximating high Fourier modes of the Helmholtz solutions. Moreover, they are pivotal in the construction of the Herglotz density space, and in defining and studying its Hilbert basis presented in Definition \hyperlink{Definition 5.1}{5.1}.

These results set the foundation for extending the Herglotz transform \cite[Def.\ 6.6]{parolin-huybrechs-moiola} to the 3D setting. Consequently, Theorem \hyperlink{Theorem 5.1}{5.9} states that any Helmholtz solution in the unit ball can be uniquely represented as a continuous superposition of evanescent plane waves. Furthermore, the corresponding density is bounded in a suitable norm.
The presented integral representation can be regarded as a generalization of the standard Herglotz representation (see \cite[Eq.\ (1.27)]{colton-kress}) and exhibits a robust stability result that is not valid when considering only propagative plane waves.
Thanks to these findings, all the results achieved in \cite[Sec.\ 6.2]{parolin-huybrechs-moiola} within the continuous-frame setting, as well as the reproducing kernel property of the Herglotz density space, are inherited, making way for discretization strategies.

The presented numerical recipe reflects the one proposed in \cite[Sec.\ 7]{parolin-huybrechs-moiola}, which is inspired by the optimal sampling procedure for weighted least-squares discussed in \cite{Cohen_Migliorati}.
The main objective is to generate a distribution of sampling nodes in the parametric domain that can be used to reconstruct the Herglotz density and, hence, the related Helmholtz solution.
An explicit knowledge of a Hilbert basis for the parametric space is critical to the success of these procedures.
Additionally, in Definition \hyperlink{Definition 6.4}{6.4} we consider simple variants that incorporate the extremal point systems.
However, to ensure the stability of the method, some numerical approximations are required, both with regard to the probability density functions and the normalization coefficients of the Herglotz density space basis, which are crucial ingredients for the method to be effective.

Although the proposed numerical techniques exhibit the desired accuracy and bounded-coefficient stability experimentally, as long as sufficient oversampling and regularization are used, they still lack a full proof and rely on the conjecture presented in \cite[Conj.\ 7.1]{parolin-huybrechs-moiola}.
In three dimensions, similar to the 2D case, the conjecture is supported by the continuous-level results and various numerical experiments.
In comparison to conventional propagative plane wave schemes, the described numerical method provides greater accuracy near singularities by approximating the high Fourier modes that inevitably arise.
Additionally, it seems to retain the quasi-optimality property (see \cite[Sec.\ 8.4]{parolin-huybrechs-moiola}), meaning that the degree of freedom budget required to approximate the first $N$ modes scales linearly with $N$ for a fixed level of accuracy.

Furthermore, similarly to \cite[Sec.\ 8.5]{parolin-huybrechs-moiola}, the developed evanescent plane wave approximation sets are tested on different geometries, such as cubes and tetrahedrons, despite being based on the analysis of the unit ball.
The results show excellent approximation properties, indicating the promising potential of the suggested numerical approach for plane wave approximations and Trefftz methods.

\section*{Outline of the thesis}

In Chapter \hyperlink{Chapter 1}{1}, we review established results regarding the structure of the Helmholtz solution space in spherical domains. We introduce the so-called spherical waves, showing that they form a Hilbert basis, and we study the asymptotic behavior of their $H^1$-normalization coefficients.

Chapter \hyperlink{Chapter 2}{2} introduces the key concept of stable approximation (see Definition \hyperlink{Definition 2.1}{2.1}) and presents a sampling-based numerical scheme for computing approximations of Helmholtz solutions. This simple method relies on regularized Singular Value Decomposition and oversampling. Definition \hyperlink{Extremal points system}{2.2} presents the extremal systems of points, and Section \hyperlink{Section 2.3}{2.3} explains how to compute them, highlighting their usefulness in defining propagative plane waves approximation sets and in constructing sampling point sets on the spherical surface to be used within the numerical scheme. The accuracy of the solutions provided by this scheme is proven in Corollary \hyperlink{2.1}{2.5}, namely, as long as the approximation set has the stable-approximation property and an appropriate set of sampling points (such as the extremal point systems) has been chosen, accurate solutions can be computed numerically.

Chapter \hyperlink{Chapter 3}{3} shows that, despite the use of regularization techniques, propagative plane waves cannot provide stable approximations in the unit ball due to the exponential growth of the expansion coefficients (see Theorem \hyperlink{Theorem 3.1}{3.4}). Furthermore, we show that the Herglotz density associated with spherical waves is not uniformly bounded, indicating that discretizing the related integral representation fails to produce discrete representations with bounded coefficients. Finally, the instability of propagative plane wave sets is confirmed through numerical experiments.

Chapter \hyperlink{Chapter 4}{4} introduces the evanescent plane waves (see Definition \hyperlink{Definition 4.1}{4.1}) and some essential components of their modal analysis, including the generalized Jacobi--Anger identity for complex-valued directions in Theorem \hyperlink{Theorem 4.6}{4.7} and the Wigner matrices in Definition \hyperlink{Definition 4.5}{4.7}. Unlike the propagative case, the modal content of evanescent plane waves is able to cover high Fourier regimes.

Chapter \hyperlink{Chapter 5}{5} presents the Herglotz density space, described in Definition \hyperlink{Definition 5.1}{5.1}, and shows its close link with the Helmholtz solution space through the Jacobi--Anger identity. This connection leads to the definition of an integral operator, the Herglotz transform (see Definition \hyperlink{Definition 5.8}{5.8}), which provides a means to represent any Helmholtz solution in the unit ball as a continuous superposition of evanescent plane waves, as detailed in Theorem \hyperlink{Theorem 5.1}{5.9}. This representation is a generalization of the classical Herglotz representation and provides a stable and robust result that is not achievable with only propagative plane waves. Theorem \hyperlink{Theorem 5.13}{5.13} proves that evanescent plane waves are a continuous frame for the space of Helmholtz solutions, while Proposition \hyperlink{Proposition 5.1}{5.14} shows that the Herglotz density space has the reproducing kernel property. These are crucial features that pave the way for the development of practical numerical methods.

In Chapter \hyperlink{Chapter 6}{6}, we discuss a method for achieving stable numerical approximations of Helmholtz solutions in the unit ball using evanescent plane waves. The method's core relies on generating a distribution of sampling nodes in the parametric domain according to a probability density function. We propose several sampling strategies, including simple variants that incorporate the extremal point systems.

Chapter \hyperlink{Chapter 7}{7} presents several numerical experiments supporting the use of evanescent plane waves to approximate Helmholtz solutions\footnote{The MATLAB code used to generate the numerical results of this paper is available at\\ \url{https://github.com/Nicola-Galante/evanescent-plane-wave-approximation}.}. The results show that the discussed method can achieve the desired accuracy and stability properties in both spherical geometries and other convex domains.

Finally, we present the conclusions of our work as well as opportunities for future research.

\chapter{Helmholtz equation in spherical geometry}
\hypertarget{Chapter 1}{In} this first chapter, taking into account many of the results known in the literature related to the solutions of the Helmholtz equation (\ref{Helmholtz equation}) in a spherical domain, we introduce the notion of spherical waves, henceforth indicated by $b_{\ell}^m$, and hence the space $\mathcal{B}$ generated by them. We then present some lemmas that show how $\mathcal{B}$, equipped with a suitable norm, is indeed a Hilbert space, of which the spherical waves constitute an orthonormal basis, and that this space coincides with the Helmholtz solution space. Lastly, the exponential growth of the normalization coefficients of the spherical waves is presented.

\section{Spherical waves}
\hypertarget{Section 1.1}{With} the intention of extending the work done in \cite{parolin-huybrechs-moiola} for the two-dimensional case, in this paper we consider three-dimensional geometries. In particular, we take into account only the simple case of spherical domains:
this approach allows for modal analysis through the separation of variables.
In fact, as it will become clear later on, spherical waves constitute an orthonormal basis and are bounded solutions of the Helmholtz equation (\ref{Helmholtz equation}) in the unit ball that are separable in spherical coordinates.
Therefore, up to rescaling of the wavenumber $\kappa$ and without loss of generality, we assume that the domain is the open unit ball, hereinafter denoted by $B_1:=\{\mathbf{x} \in \R^3: |\mathbf{x}| <1\}$.
We also introduce the notation $\mathbb{S}^2:=\{\mathbf{x} \in \R^3: |\mathbf{x}| =1\}$, whereas we use $\partial B_1$ to stress when the sphere is being used as the boundary of $B_1$ (in view of applications to domains other than $B_1$).
We also point out that spherical waves are widely used in many Trefftz schemes, see \cite{hiptmair-moiola-perugia1} and related references.

To begin our discussion, we first briefly get through some special functions that will be useful for defining spherical waves. Among the various conventions regarding the following definitions, we choose to rely on \cite{nist}. 
For notational convenience, we define the set of indices
\begin{equation*}
\mathcal{I}:=\{(\ell,m) \in \Z^2:  0 \leq |m| \leq \ell\}.
\end{equation*}
Conforming to \cite[Eq.\ (10.47.3)]{nist}, for every $\ell \geq 0$, the \textit{spherical Bessel function of the first kind} are solutions to the \textit{spherical Bessel equation}
\begin{equation}
r^2\frac{\textup{d}^2y}{\textup{d}r^2}+2r\frac{\textup{d}y}{\textup{d}r}+(r^2-\ell(\ell+1))y=0,
\label{spherical bessel equation}
\end{equation}
and are defined as
\begin{equation}
j_{\ell}(r):=\sqrt{\frac{\pi}{2r}}J_{\ell+\frac{1}{2}}(r),\,\,\,\,\,\,\,\,\,\,\,\,\,\,r>0,
\label{spherical bessel function}
\end{equation}
where $J_{\nu}(r)$ are the usual Bessel functions of the first kind (see \cite[Eq.\ (10.2.2)]{nist}). The spherical Bessel functions oscillate and decay as $r \rightarrow \infty$ and are bounded for any $r \geq 0$.

Following \cite[Eqs.\ (14.7.10) and (14.9.3)]{nist}, for every $(\ell,m) \in \mathcal{I}$, the \textit{Ferrers functions} (also known as \textit{Ferrers functions of the first kind} or as \textit{associated Legendre polynomials}) are solutions to the \textit{general Legendre equation}
\begin{equation}
\frac{\textup{d}}{\textup{d}x}\left(\left(1-x^2\right)\frac{\textup{d}y}{\textup{d}x} \right)+\left(\ell(\ell+1)-\frac{m^2}{1-x^2}\right)y=0,
\label{general legendre equation}
\end{equation}
and are defined as
\begin{equation}
\mathsf{P}_{\ell}^m(x):=\frac{(-1)^m}{2^{\ell}\ell!}(1-x^2)^{m/2}\frac{\textup{d}^{\ell+m}}{\textup{d}x^{\ell+m}}(x^2-1)^{\ell},\,\,\,\,\,\,\,\,\,\,\,\,\,\,|x|\leq 1,
\label{legendre polynomials}
\end{equation}
so that
\begin{equation}
\mathsf{P}_{\ell}^{-m}(x)=(-1)^m\frac{(\ell-m)!}{(\ell+m)!}\mathsf{P}_{\ell}^{m}(x),\,\,\,\,\,\,\,\,\,\,\,\,\,\,|x|\leq 1.
\label{negative legendre polynomials}
\end{equation}
In particular, $\mathsf{P}_{\ell}^m$ is called \textit{Ferrers function of degree $\ell$ and order $m$}, while, if $m=0$, it is simply called \textit{Legendre polynomial of degree $\ell$}. Among the numerous orthogonality relations known in the literature, for our discussion it is important to recall that the Ferrers functions are orthogonal for fixed order $m$ (see \cite[Eq.\ (14.17.6)]{nist}), namely
\begin{equation}
\int_{-1}^1\mathsf{P}_{\ell}^m(x)\mathsf{P}_q^m(x)\,\textup{d}x=\frac{\delta_{\ell, q}}{2\pi(\gamma_{\ell}^m)^2},
\label{P_l^m orthogonality}
\end{equation}
where
\begin{equation}
\gamma_{\ell}^m:=\left[\frac{2\ell+1}{4\pi} \frac{(\ell-m)!}{(\ell+m)!} \right]^{1/2}.
\label{gamma constant}
\end{equation}
\medskip

According to \cite[Eq.\ (14.30.1)]{nist}, for every $\mathbf{\hat{x}}=(\sin \theta_1 \cos \theta_2, \sin \theta_1 \sin \theta_2,\cos \theta_1) \in \mathbb{S}^2$, where $\theta_1 \in [0,\pi]$, $\theta_2 \in [0,2\pi)$, and for every $(\ell,m) \in \mathcal{I}$, the \textit{spherical harmonic function of degree $\ell$ and order $m$} is defined, with a little abuse of notation, as
\begin{equation}
Y_{\ell}^m(\mathbf{\hat{x}})=Y_{\ell}^m(\theta_1,\theta_2):=\gamma_{\ell}^m e^{im\theta_2}\mathsf{P}_{\ell}^m(\cos{\theta_1}),
\label{spherical harmonic}
\end{equation}
where the factor $\gamma_{\ell}^m$ acts as a normalization constant, i.e.\ it is such that $\|Y_{\ell}^m\|_{L^2(\mathbb{S}^2)}=1$.
For further details regarding these definitions, see for instance \cite{colton-kress,nedelec}. To avoid any confusion, note that we use the \textit{Condon-Shortley convention}, that is the phase factor of $(-1)^m$, in the definition of the Ferrers functions (\ref{legendre polynomials}) rather than in (\ref{gamma constant}).

The spherical harmonics $Y_{\ell}^m$ give rise to the \textit{solid harmonics} by extending from $\mathbb{S}^2$ to all $\R^3$ as a homogeneous polynomial of degree $\ell$, namely setting $R_{\ell}^{\,m}(\mathbf{x}):=|\mathbf{x}|^{\ell} Y_{\ell}^m \left(\mathbf{\hat{x}} \right)$, where $\mathbf{\hat{x}}:=\mathbf{x}/|\mathbf{x}|$, and $\{R_{\ell}^{\,m}\}_{(\ell,m) \in \mathcal{I}}$ turns out to be a basis of $\mathcal{H}_\ell$, the space of harmonic and homogeneous polynomials of degree $\ell$.

For every $\ell\geq0$, due to the orthogonality of the complex exponential family $\{\varphi \mapsto e^{im\varphi}\}_{|m|\leq \ell}$ in $L^2(0,2\pi)$ and denoting by $\mathcal{Y}_{\ell}$ the restriction to the unit sphere $\mathbb{S}^2$ of polynomials in $\mathcal{H}_{\ell}$,  $\{Y_{\ell}^m\}_{|m|\leq \ell}$ is an orthonormal basis of $\mathcal{Y}_{\ell}$ for the Hermitian product of $L^2(\mathbb{S}^2)$.

Moreover, thanks (\ref{P_l^m orthogonality}), the functions $\{Y_{\ell}^m\}_{(\ell,m) \in \mathcal{I}}$ constitute an orthonormal basis in $L^2(\mathbb{S}^2)$:
\begin{align*}
\left(Y_{\ell}^m,Y_{q}^n\right)_{L^2(\mathbb{S}^2)}&=\int_{0}^{2\pi}\int_0^\pi Y_{\ell}^m(\theta_1,\theta_2) \overline{Y_q^n(\theta_1,\theta_2)}\sin \theta_1 \textup{d}\theta_1 \textup{d}\theta_2 \\
&=\gamma_{\ell}^m \gamma_q^n\int_0^{2\pi}e^{i\theta_2(m-n)}\textup{d}\theta_2 \int_{-1}^{1}\mathsf{P}_{\ell}^m(x)\mathsf{P}_q^n(x)\textup{d}x\\
&=2\pi\delta_{m,n}\gamma_{\ell}^m\gamma_q^m\int_{-1}^{1}\mathsf{P}_{\ell}^m(x)\mathsf{P}_q^m(x)\textup{d}x=\delta_{\ell, q}\delta_{m,n}. \numberthis \label{Y_l^m orthogonality}
\end{align*}
Furthermore, for every $\ell\geq0$, $\mathcal{Y}_{\ell}$ is exactly the eigenspace of the \textit{Laplace--Beltrami operator} $\Delta_{\mathbb{S}^2}$ on the unit sphere associated with the eigenvalue $-\ell(\ell+1)$, i.e.\
\begin{equation}
\Delta_{\mathbb{S}^2} Y_{\ell}^m+\ell(\ell+1)Y_{\ell}^m=0,\,\,\,\,\,\,\,0 \leq |m|\leq \ell,
\label{Beltrami eigenvectors}
\end{equation}
where the eigenvalue $-\ell(\ell+1)$ has multiplicity $2\ell+1=\text{dim}(\mathcal{Y}_{\ell})$ \cite[Th.\ 2.4.1]{nedelec}. In this regard, we recall that, for any sufficiently smooth function $u:\mathbb{S}^2 \rightarrow \C$, the Laplace--Beltrami operator $\Delta_{\mathbb{S}^2}$ is defined by:
\begin{equation}
\Delta_{\mathbb{S}^2}u:=\frac{1}{\sin^2{\theta_1}}\frac{\partial^2 u}{\partial \theta_2^2}+\frac{1}{\sin{\theta_1}}\frac{\partial}{\partial \theta_1}\left(\sin{\theta_1}\frac{\partial u}{\partial \theta_1} \right).
\label{Beltrami definition}
\end{equation}

We are now ready to give the next definition. First of all, let us introduce the following  $\kappa$-dependent Hermitian product and associated norm: for any $u,v \in H^1(B_1)$,
\begin{equation}
\begin{split} \label{B norm}
(u,v)_{\mathcal{B}}&:=(u,v)_{L^2(B_1)}+\kappa^{-2}(\nabla u, \nabla v)_{L^2(B_1)^2},\\
\|u\|^2_{\mathcal{B}}&:=(u,u)_{\mathcal{B}}.  
\end{split}
\end{equation}
\medskip

\begin{definition}[Spherical waves]
We define, for any $(\ell,m) \in \mathcal{I}$
\begin{equation}
\begin{split} \label{b tilde definizione}
\tilde{b}_\ell^m(\mathbf{x})&:=j_\ell(\kappa |\mathbf{x}|)Y_\ell^m(\mathbf{\hat{x}}),\,\,\,\,\,\,\,\,\,\,\,\,\,\,\,\,\,\,\,\,\,\,\,\,  \forall \mathbf{x} \in B_1,\\
b_\ell^m&:=\beta_{\ell}\tilde{b}_\ell^m,\,\,\,\,\,\,\,\,\,\,\,\,\,\,\,\,\,\,\,\,\,\,\,\,\,\,\,\,\,\,\,\,\,\,\,\,\,\,\,\,\,\,\, \beta_\ell:=\|\tilde{b}_\ell^m\|^{-1}_\mathcal{B}, 
\end{split}
\end{equation}
where $\mathbf{\hat{x}}:=\mathbf{x}/|\mathbf{x}|$. Furthermore, we introduce the space
\begin{equation*}
\mathcal{B}:=\overline{\textup{span}\{b_\ell^m\}_{(\ell,m) \in \mathcal{I}}}^{\|\cdot\|_{\mathcal{B}}} \subsetneq H^{1}(B_1).
\end{equation*}
\end{definition}
Observe that, contrary to the two-dimensional case, the spherical waves $b_\ell^m \in \mathcal{B}$ depend on two different parameters $(\ell,m) \in \mathcal{I}$, but their norm $\beta_{\ell}$ is independent of the parameter $m$, as will be clear later on (see Lemma \hyperlink{Lemma 1.3}{1.4}). However, similarly to \cite{parolin-huybrechs-moiola}, we will refer to spherical waves with mode number $\ell < \kappa$ as \textit{propagative} modes (the `energy' of such modes is distributed in the bulk of the domain), for $\ell \gg \kappa$ the spherical waves are termed \textit{evanescent} (their `energy' is concentrated near the boundary of the domain) and lastly, in between, the waves such that $\ell \approx \kappa$ are called \textit{grazing} modes. Figure \ref{figure 1.1} shows the behavior of the functions $b_{\ell}^m$ on the boundary of the unit ball without the first octant $\partial \{B_1\setminus\{\mathbf{x}=(x,y,z) : x>0,y>0,z>0\}\}$ for different values of $\ell$.
\begin{figure}
\begin{subfigure}{.3\textwidth}
\centering
\includegraphics[trim=100 100 100 100,clip,width=4.5cm,height=4.5cm]{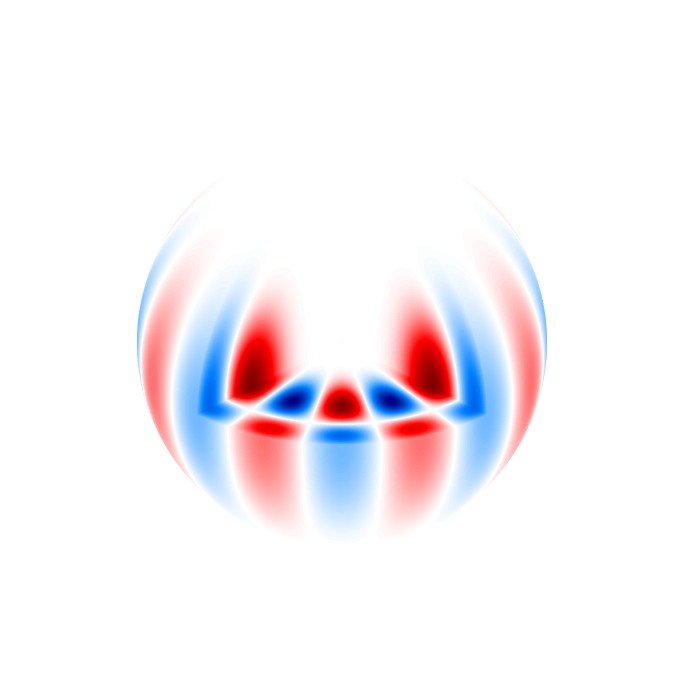}
\caption{Propagative $\ell=8$.}
\end{subfigure}\hfill
\begin{subfigure}{.3\textwidth}
\centering
\includegraphics[trim=100 100 100 100,clip,width=4.5cm,height=4.5cm]{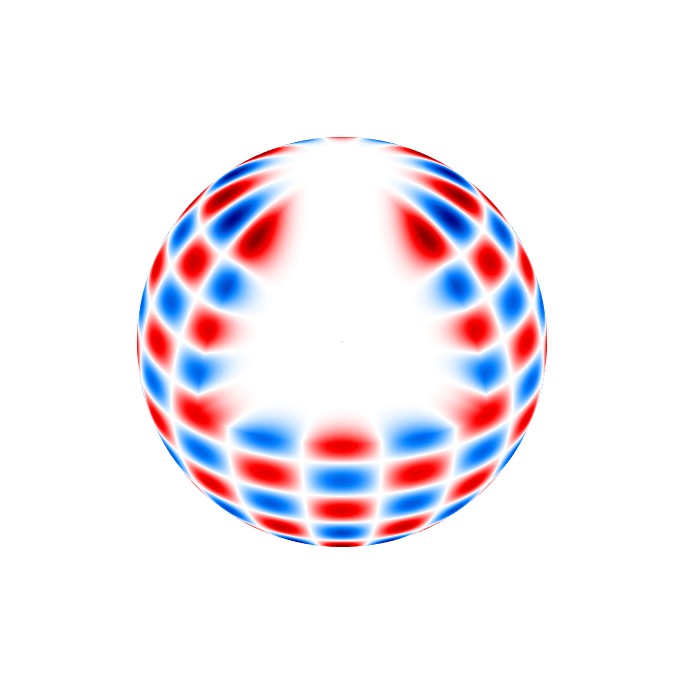}
\caption{Grazing $\ell=16$.}
\end{subfigure}\hfill
\begin{subfigure}{.3\textwidth}
\centering
\includegraphics[trim=100 100 100 100,clip,width=4.5cm,height=4.5cm]{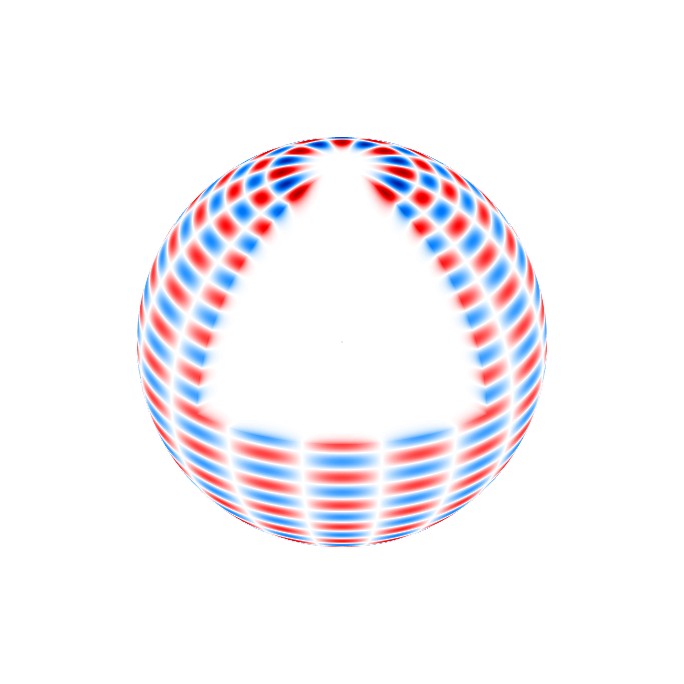}
\caption{Evanescent $\ell=32$.}
\end{subfigure}
\caption{Real part of some spherical waves $b_{\ell}^m$ on the boundary of the unit ball without the first octant $\partial \{B_1\setminus\{\mathbf{x}=(x,y,z) : x>0,y>0,z>0\}\}$ for $m=8$ and wavenumber $\kappa=16$. As the order $\ell$ increases, observe how the support of these functions becomes more and more concentrated near the boundary $\partial B_1$.}
\label{figure 1.1}
\end{figure}

Thanks to (\ref{Beltrami eigenvectors}) and (\ref{Beltrami definition}) is now straightforward to check that $b_{\ell}^m$ in (\ref{b tilde definizione}) is solution to the Helmholtz equation (\ref{Helmholtz equation}) for $(\ell,m) \in \mathcal{I}$: in fact, since $j_{\ell}$ is solution to (\ref{spherical bessel equation}), using spherical coordinates and due to the separation of variables in $\tilde{b}_{\ell}^m$, it follows
\begin{equation}
\begin{split} \label{blm is solution}
\Delta \tilde{b}_{\ell}^m +\kappa^2 \tilde{b}_{\ell}^m &=\frac{\kappa^2}{r^2}\frac{\textup{d}}{\textup{d}r}\left(r^2 \frac{\textup{d}j_{\ell}}{\textup{d}r}\right)Y_{\ell}^m+\frac{\kappa^2 j_{\ell}}{r^2}\Delta_{\mathbb{S}^2}Y_{\ell}^m+\kappa^2 j_{\ell} Y_{\ell}^m\\
&=\frac{\kappa^2 Y_{\ell}^m}{r^2}\left[\frac{\textup{d}}{\textup{d}r}\left(r^2 \frac{\textup{d} j_{\ell}}{\textup{d}r}\right)+(r^2-\ell(\ell+1))j_{\ell}\right]=0.
\end{split}
\end{equation}
We now present some lemmas, useful for setting up what follows, similarly to what is done in \cite{parolin-huybrechs-moiola}.
\begin{lemma}
\hypertarget{Lemma 1.1}{The} space $(\mathcal{B},\|\cdot\|_{\mathcal{B}})$ is a Hilbert space and the family $\{b_{\ell}^m\}_{(\ell,m) \in \mathcal{I}}$ is a Hilbert basis (i.e an orthonormal basis):
\begin{equation*}
(b_{\ell}^m,b_q^n)_{\mathcal{B}}=\delta_{\ell, q}\delta_{m,n},\,\,\,\,\,\,\,\,\,\,\,\,\,\,\forall\, (\ell,m),(q,n) \in \mathcal{I},
\end{equation*}
and
\begin{equation*}
u=\sum_{\ell=0}^{\infty}\sum_{m=-\ell}^{\ell}(u,b_{\ell}^m)_{\mathcal{B}}\,b_{\ell}^m,\,\,\,\,\,\,\,\,\,\,\,\,\,\,\forall u \in \mathcal{B}.
\end{equation*}
\end{lemma}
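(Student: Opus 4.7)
The plan is to verify the two claims in sequence. First, $\mathcal{B}$ is a Hilbert space because the inner product $(\cdot,\cdot)_\mathcal{B}$ is, for fixed $\kappa>0$, equivalent to the standard $H^1(B_1)$ inner product (the two differ only by the $\kappa^{-2}$ weight on the gradient term). Hence $(H^1(B_1),(\cdot,\cdot)_\mathcal{B})$ is a Hilbert space, and $\mathcal{B}$ is, by its very definition, a closed subspace of it, so it inherits completeness.

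For orthogonality, the main idea is to pass to spherical coordinates and exploit the tensor product structure $\tilde b_\ell^m=j_\ell(\kappa r)Y_\ell^m(\hat{\mathbf x})$. I would use the decomposition
\begin{equation*}
\nabla u=\partial_r u\,\hat{\mathbf r}+\frac{1}{r}\nabla_{\mathbb{S}^2}u,\qquad |\nabla u|^2=|\partial_r u|^2+\frac{1}{r^2}|\nabla_{\mathbb{S}^2}u|^2,
\end{equation*}
so that, writing $\mathrm dV=r^2\,\mathrm dr\,\mathrm dS(\hat{\mathbf x})$, the inner product $(\tilde b_\ell^m,\tilde b_q^n)_\mathcal{B}$ splits cleanly into a radial integral times an angular integral. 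The $L^2(B_1)$ contribution factorises as $\int_0^1 j_\ell(\kappa r)j_q(\kappa r)r^2\,\mathrm dr\cdot(Y_\ell^m,Y_q^n)_{L^2(\mathbb{S}^2)}$, and the analogous splitting works for the two pieces of $\kappa^{-2}(\nabla\tilde b_\ell^m,\nabla\tilde b_q^n)_{L^2}$: the radial piece yields an angular factor $(Y_\ell^m,Y_q^n)_{L^2(\mathbb{S}^2)}$, while the tangential piece produces the factor $\int_{\mathbb{S}^2}\nabla_{\mathbb{S}^2}Y_\ell^m\cdot\overline{\nabla_{\mathbb{S}^2}Y_q^n}\,\mathrm dS$.

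To handle the last factor I would integrate by parts on the closed manifold $\mathbb{S}^2$ (no boundary terms) and invoke the eigenvalue relation \eqref{Beltrami eigenvectors}, giving
\begin{equation*}
\int_{\mathbb{S}^2}\nabla_{\mathbb{S}^2}Y_\ell^m\cdot\overline{\nabla_{\mathbb{S}^2}Y_q^n}\,\mathrm dS=-\int_{\mathbb{S}^2}(\Delta_{\mathbb{S}^2}Y_\ell^m)\overline{Y_q^n}\,\mathrm dS=\ell(\ell+1)\,\delta_{\ell,q}\delta_{m,n},
\end{equation*}
where the last equality uses the orthonormality of the spherical harmonics established in \eqref{Y_l^m orthogonality}. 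Combining this with the $L^2(\mathbb{S}^2)$-orthonormality of $\{Y_\ell^m\}$ shows that every piece of $(\tilde b_\ell^m,\tilde b_q^n)_\mathcal{B}$ is proportional to $\delta_{\ell,q}\delta_{m,n}$, so the inner product vanishes whenever $(\ell,m)\neq(q,n)$. Moreover, the diagonal value
\begin{equation*}
\|\tilde b_\ell^m\|_\mathcal{B}^2=\int_0^1 j_\ell(\kappa r)^2 r^2\,\mathrm dr+\int_0^1 j_\ell'(\kappa r)^2 r^2\,\mathrm dr+\frac{\ell(\ell+1)}{\kappa^2}\int_0^1 j_\ell(\kappa r)^2\,\mathrm dr
\end{equation*}
depends only on $\ell$, explaining why $\beta_\ell$ in Definition~1.1 carries no $m$-index. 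Positivity of this quantity (the spherical Bessel functions do not vanish identically on $(0,\kappa)$) makes $\beta_\ell$ well-defined, and the renormalisation $b_\ell^m=\beta_\ell\tilde b_\ell^m$ yields orthonormality.

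Finally, the Hilbert-basis property follows essentially by definition: $\mathcal{B}$ is the $\|\cdot\|_\mathcal{B}$-closure of $\mathrm{span}\{b_\ell^m\}$, and any orthonormal sequence whose closed linear span is the whole Hilbert space is automatically a Hilbert basis, giving the stated Fourier-type expansion. The only substantive step to watch is the integration by parts on $\mathbb{S}^2$ together with the clean separation of variables; everything else is a bookkeeping consequence of the orthonormality of the spherical harmonics and the definition of $\mathcal{B}$.
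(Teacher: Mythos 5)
Your proof is correct, but the key orthogonality step goes by a genuinely different route than the paper's. The paper does not decompose the gradient in spherical coordinates at all: it first checks that the $L^2(B_1)$ and $L^2(\partial B_1)$ pairings of the $\tilde b_\ell^m$ separate into radial times angular factors, and then disposes of the gradient term in one line via Green's identity in the ball,
\begin{equation*}
(\nabla \tilde b_{\ell}^m,\nabla \tilde b_q^n)_{L^2(B_1)^2}=\kappa^2(\tilde b_{\ell}^m,\tilde b_q^n)_{L^2(B_1)}+(\partial_{\mathbf n}\tilde b_{\ell}^m,\tilde b_q^n)_{L^2(\partial B_1)},
\end{equation*}
exploiting the fact that $-\Delta\tilde b_\ell^m=\kappa^2\tilde b_\ell^m$. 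You instead split $|\nabla u|^2=|\partial_r u|^2+r^{-2}|\nabla_{\mathbb S^2}u|^2$ and integrate by parts on the closed surface $\mathbb S^2$, invoking only the Laplace--Beltrami eigenvalue relation. Both arguments are sound. Yours is slightly more elementary and more general (it never uses that $\tilde b_\ell^m$ solves the Helmholtz equation, so it would apply to any separated family $f_\ell(r)Y_\ell^m(\hat{\mathbf x})$), and it yields an explicit radial-integral formula for $\|\tilde b_\ell^m\|_{\mathcal B}^2$ that makes the $m$-independence of $\beta_\ell$ immediate. The paper's route is shorter and, more importantly, its intermediate identity is reused downstream: it is exactly what gives the boundary representation $\|u\|_{\mathcal B}^2=2\|u\|_{L^2(B_1)}^2+\kappa^{-2}(\partial_{\mathbf n}u,u)_{L^2(\partial B_1)}$ of the norm, on which the closed-form evaluation of $\beta_\ell$ in the subsequent lemma rests. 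A small point in your favor: you explicitly address completeness of $(\mathcal B,\|\cdot\|_{\mathcal B})$ via norm equivalence with $H^1(B_1)$, which the paper leaves implicit.
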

\begin{proof}
Thanks to how we defined the spherical waves in (\ref{b tilde definizione}), it is enough to prove that the family $\{\tilde{b}_{\ell}^m\}_{(\ell,m) \in \mathcal{I}}$ is orthogonal, which is a consequence of (\ref{Y_l^m orthogonality}). For $(\ell,m),(q,n) \in \mathcal{I}$ we have:
\begin{align*}
(\tilde{b}_{\ell}^m,\tilde{b}_q^n)_{L^2(B_1)}&=\int_0^1j_{\ell}(\kappa r)j_q(\kappa r)r^2\textup{d}r \int_{0}^{2\pi}\int_0^\pi Y_{\ell}^m(\theta_1,\theta_2) \overline{Y_q^n(\theta_1,\theta_2)}\sin \theta_1 \textup{d}\theta_1 \textup{d}\theta_2 \\
&=\int_0^1 j_{\ell}^2(\kappa r)r^2\textup{d}r\,\delta_{\ell, q}\delta_{m,n}, \numberthis \label{L2 norm tilde b}
\end{align*}
and, denoting by $\mathbf{n}$ the outward unit normal vector,
\begin{align*}
(\partial_{\mathbf{n}}\tilde{b}_{\ell}^m,\tilde{b}_q^n)_{L^2(\partial B_1)}&=\kappa j'_{\ell}(\kappa)j_q(\kappa)\int_{0}^{2\pi}\int_0^\pi Y_{\ell}^m(\theta_1,\theta_2) \overline{Y_q^n(\theta_1,\theta_2)}\sin \theta_1 \textup{d}\theta_1 \textup{d}\theta_2 \\
&=\kappa j'_{\ell}(\kappa)j_{\ell}(\kappa)\,\delta_{\ell, q}\delta_{m,n}. \numberthis \label{L2 boundary}
\end{align*}
The orthogonality with respect to the Hermitian product $(\cdot,\cdot)_{\mathcal{B}}$ can be easily seen from
\begin{align*}
(\nabla \tilde{b}_{\ell}^m,\nabla \tilde{b}_q^n)_{L^2(B_1)^2}&=(-\Delta \tilde{b}_{\ell}^m,\tilde{b}_q^n)_{L^2(B_1)}+(\partial_{\mathbf{n}}\tilde{b}_{\ell}^m,\tilde{b}_q^n)_{L^2(\partial B_1)}\\
&=\kappa^2(\tilde{b}_{\ell}^m,\tilde{b}_q^n)_{L^2(B_1)}+(\partial_{\mathbf{n}}\tilde{b}_{\ell}^m,\tilde{b}_q^n)_{L^2(\partial B_1)}. \numberthis \label{decomposition norm}
\end{align*}
\end{proof}
In particular, observe that, thanks to (\ref{decomposition norm}), assuming $u \in H^1(B_1)$ is a solution to the Helmholtz equation (\ref{Helmholtz equation}), for any $v \in H^1(B_1)$ we can rewrite the Hermitian product and associated norm in (\ref{B norm}) as
\begin{equation}
\begin{split} \label{B norm 2}
(u,v)_{\mathcal{B}}&:=2(u,v)_{L^2(B_1)}+\kappa^{-2}(\partial_{\mathbf{n}}u,v)_{L^2(\partial B_1)},\\
\|u\|^2_{\mathcal{B}}&:=2\|u\|^2_{L^2(B_1)}+\kappa^{-2}(\partial_{\mathbf{n}}u,u)_{L^2(\partial B_1)}.  
\end{split}
\end{equation}
The main reason for introducing spherical waves is the possibility to use them to expand any Helmholtz solution in $B_1$, as we show in the next lemma.
\begin{lemma}
$u \in H^1(B_1)$ satisfies the Helmholtz equation if and only if $u \in \mathcal{B}$.
\end{lemma}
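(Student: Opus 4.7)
The plan is to prove the two implications separately, leveraging the Hilbert-basis structure established in Lemma~\hyperlink{Lemma 1.1}{1.1}. The forward direction $u\in\mathcal{B}\Rightarrow u$ satisfies Helmholtz is almost automatic: every $b_\ell^m$ solves (\ref{Helmholtz equation}) by (\ref{blm is solution}), the norm $\|\cdot\|_\mathcal{B}$ is equivalent to the standard $H^1(B_1)$-norm (with $\kappa$-dependent constants), and the operator $\Delta+\kappa^2$ is continuous from $H^1(B_1)$ into $H^{-1}(B_1)$. Hence the Helmholtz property passes to any $\|\cdot\|_\mathcal{B}$-limit of finite combinations of the $b_\ell^m$, and by definition $\mathcal{B}$ is exactly such a closure.

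For the converse I would use separation of variables. Given a Helmholtz solution $u\in H^1(B_1)$, interior elliptic regularity guarantees $u\in C^\infty(B_1)$, so for each $r\in(0,1)$ the trace on the sphere of radius $r$ admits the $L^2(\mathbb{S}^2)$-convergent expansion $u(r\hat{\mathbf{x}})=\sum_{(\ell,m)\in\mathcal{I}}u_\ell^m(r)\,Y_\ell^m(\hat{\mathbf{x}})$ with $u_\ell^m(r):=\int_{\mathbb{S}^2}u(r\hat{\mathbf{x}})\overline{Y_\ell^m(\hat{\mathbf{x}})}\,\textup{d}\sigma$. Writing $\Delta$ in spherical coordinates, substituting the expansion, and invoking (\ref{Beltrami eigenvectors}) together with the orthonormality (\ref{Y_l^m orthogonality}) to decouple the modes, I would obtain that each $u_\ell^m$ satisfies the spherical Bessel equation (\ref{spherical bessel equation}) in the variable $\kappa r$. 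Its general solution is a linear combination of $j_\ell(\kappa r)$ and the spherical Neumann function $y_\ell(\kappa r)$; the latter is ruled out by the requirement $u\in H^1(B_1)\subset L^2(B_1)$, since $y_\ell(\kappa r)Y_\ell^m$ fails to lie in $L^2(B_1)$ for $\ell\geq 1$ and in $H^1(B_1)$ for $\ell=0$ because of the blow-up at the origin. This yields $u_\ell^m(r)=c_\ell^m j_\ell(\kappa r)$ and hence the formal identity $u=\sum_{(\ell,m)}c_\ell^m\tilde{b}_\ell^m=\sum_{(\ell,m)}(c_\ell^m/\beta_\ell)\,b_\ell^m$.

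The main obstacle is to upgrade this pointwise-in-$r$ identity to convergence in the $\mathcal{B}$-norm. Parseval on each sphere combined with (\ref{L2 norm tilde b}) immediately gives $\|u\|_{L^2(B_1)}^2=\sum|c_\ell^m|^2\|\tilde{b}_\ell^m\|_{L^2(B_1)}^2$, but the $\mathcal{B}$-norm also contains a gradient term. Here I would exploit the Helmholtz-specific simplification (\ref{B norm 2}): the boundary term $(\partial_{\mathbf{n}}u,u)_{L^2(\partial B_1)}$ decouples by modes through the spherical-harmonic expansion of $u$ and $\partial_{\mathbf{n}}u$ on $\partial B_1$ combined with (\ref{L2 boundary}), leading to
\begin{equation*}
\|u\|_\mathcal{B}^2=\sum_{(\ell,m)\in\mathcal{I}}|c_\ell^m|^2\,\|\tilde{b}_\ell^m\|_\mathcal{B}^2=\sum_{(\ell,m)\in\mathcal{I}}|c_\ell^m/\beta_\ell|^2<\infty.
\end{equation*}
By orthonormality of $\{b_\ell^m\}$ in $\mathcal{B}$, the partial sums $\sum_{\ell\leq N}\sum_{|m|\leq\ell}(c_\ell^m/\beta_\ell)\,b_\ell^m$ are Cauchy and therefore converge to some $v\in\mathcal{B}$; a direct uniqueness argument (the two limits share the same $Y_\ell^m$-coefficients on every sphere of radius $r$) identifies $v$ with $u$, so $u\in\mathcal{B}$ as required.
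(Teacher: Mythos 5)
Your proof is correct in substance but takes a genuinely different route from the paper's in the nontrivial (converse) direction. The paper does not use separation of variables at all: it sets $g:=\partial_{\mathbf{n}}u-i\kappa u\in H^{-1/2}(\partial B_1)$, expands this Robin trace in spherical harmonics, builds the truncations $u_L\in\textup{span}\{b_\ell^m\}_{\ell\leq L}$ that reproduce the truncated traces $g_L$ (the choice of the Robin datum guarantees the divisor $j'_\ell(\kappa)-ij_\ell(\kappa)$ never vanishes, so no eigenvalue issue arises), and then invokes the well-posedness of the impedance boundary value problem to get the quantitative bound $\|u-u_L\|_{\mathcal{B}}\leq C\|g-g_L\|_{H^{-1/2}(\partial B_1)}$, from which convergence in $\mathcal{B}$ is immediate. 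Your argument is more elementary and self-contained: interior regularity, mode-by-mode reduction to the spherical Bessel ODE, elimination of the singular solutions $y_\ell$ by the $L^2$/$H^1$ integrability at the origin (your case split $\ell\geq 1$ versus $\ell=0$ is the right one), and a Parseval identity to close. The price is that you must do more work where the paper simply cites a BVP result, and the one step you rightly flag as delicate deserves a cleaner justification than the appeal to (\ref{B norm 2}): the termwise boundary pairing $\sum_{\ell,m}|c_\ell^m|^2\,\kappa j'_\ell(\kappa)j_\ell(\kappa)$ has terms of both signs, so its absolute convergence is not automatic. The standard fix is to establish $\|u\|^2_{H^1(B_R)}=\sum_{\ell,m}|c_\ell^m|^2\|\tilde b_\ell^m\|^2_{H^1(B_R)}$ for each $R<1$ (where all series converge nicely) and let $R\nearrow 1$ by monotone convergence of the non-negative integrands $|u|^2$ and $|\nabla u|^2$; this yields $\sum_{\ell,m}|c_\ell^m/\beta_\ell|^2=\|u\|^2_{\mathcal{B}}<\infty$ directly. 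With that adjustment your identification of the $\mathcal{B}$-limit with $u$ goes through and the proof is complete.
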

\begin{proof}
The continuity of the Helmholtz operator $\mathcal{L}:H^1(B_1)\rightarrow H^{-1}(B_1)$ defined by
\begin{equation*}
\langle \mathcal{L}u,v\rangle_{H^{-1}\times H_0^1}:=(\nabla u,\nabla v)_{L^2(B_1)^2}-\kappa^2(u,v)_{L^2(B_1)},\,\,\,\,\,\,\,\forall u \in H^1(B_1),\,\forall v \in H_0^1(B_1),
\end{equation*}
implies that the kernel of $\mathcal{L}$ is a closed subspace of $H^1(B_1)$. Thanks to (\ref{blm is solution}), it is easily checked that $\mathcal{B}\subseteq \ker \mathcal{L}$.

Conversely, let $u \in H^1(B_1)$ satisfy (\ref{Helmholtz equation}) and set $g:=\partial_{\mathbf{n}}u-i\kappa u \in H^{-1/2}(\partial B_1)$. The Robin trace $g$ can be written as (see \cite[Sec.\ 2.5.1]{nedelec}):
\begin{equation*}
g(\theta_1,\theta_2)=\sum_{\ell=0}^{\infty}\sum_{m=-\ell}^{\ell}\hat{g}_{\ell}^m Y_{\ell}^m(\theta_1,\theta_2),\,\,\,\,\,\,\,\,\,\,\,\,\forall \theta_1 \in [0,\pi],\,\forall \theta_2 \in [0,2\pi),
\end{equation*}
\begin{equation*}
\text{where}\,\,\,\,\,\,\,\sum_{\ell=0}^{\infty}\sum_{m=-\ell}^{\ell}|\hat{g}_{\ell}^m|^2(\ell+1)^{-1}<\infty.
\end{equation*}
Let $L\geq 0$ and set $g_L:=\sum_{\ell=0}^L\sum_{m=-\ell}^{\ell}\hat{g}_{\ell}^m Y_{\ell}^m$. Then there exists a unique $u_L \in \text{span}\{b_{\ell}^m\}_{(\ell,m) \in \mathcal{I}\,:\,\ell\, \leq\, L}$ such that $g_L=\partial_{\mathbf{n}}u_L-i\kappa u_L$, namely
\begin{equation*}
u_L:=\sum_{\ell=0}^L\sum_{m=-\ell}^{\ell}\hat{g}_{\ell}^m\left[\kappa \gamma_{\ell}^m \beta_{\ell} \big(j'_{\ell}(\kappa)-ij_{\ell}(\kappa)\big)\right]^{-1}b_{\ell}^m,
\end{equation*}
where the term $j'_{\ell}(\kappa)-ij_{\ell}(\kappa)$ at the denominator is non-zero because of \cite[Eqs.\ (10.21.3) and (10.58.1)]{nist}. The well-posedness of the problem: find $v \in H^1(B_1)$ such that
\begin{equation*}
\Delta v + \kappa^2v=0,\,\,\,\text{in}\,B_1,\,\,\,\,\,\,\,\text{and}\,\,\,\,\,\,\,\partial_{\mathbf{n}}v-i \kappa v=h\,\,\,\text{on}\,\partial B_1,
\end{equation*}
for $h \in H^{-1/2}(\partial B_1)$ \cite[Prop.\ 8.1.3]{melenk}, implies that there exists a constant $C>0$, independent of $L$, such that $\|u-u_L\|_{\mathcal{B}}\leq C \|g-g_L\|_{H^{-1/2}(\partial B_1)}$. Letting $L$ tend to infinity, in the end we obtain that $u \in \mathcal{B}$ and therefore $\ker \mathcal{L} = \mathcal{B}$.
\end{proof}

\section{Asymptotics of normalization coefficients}

\hypertarget{Section 1.2}{The} coefficients $\beta_{\ell}$ grow super-exponentially with $\ell$ and independently of $m$ after a pre-asymptotic regime up to $\ell \approx \kappa$ (see Figure \ref{figure betal}). The precise asymptotic behavior is given by the following lemma.

\begin{lemma}
\hypertarget{Lemma 1.3}{We} have for all $(\ell,m) \in \mathcal{I}$
\begin{equation}
\beta_{\ell}=\left(\frac{\pi}{2 \kappa}\left[\left(1+\frac{\ell}{\kappa^2}\right)J^2_{\ell+\frac{1}{2}}(\kappa)-\left(J_{\ell-\frac{1}{2}}(\kappa)+\frac{1}{\kappa}J_{\ell+\frac{1}{2}}(\kappa) \right)J_{\ell+\frac{3}{2}}(\kappa) \right] \right)^{-1/2},
\label{betal expansion}
\end{equation}
therefore $\|\tilde{b}_{\ell}^m\|_{\mathcal{B}}$ is independent of the value of $m$ and furthermore
\begin{equation}
\beta_{\ell} \sim 2\sqrt{2}\kappa\left(\frac{2}{e \kappa}\right)^{\ell}\ell^{\ell+\frac{1}{2}},\,\,\,\,\,\,\,\,\,\,\,\,\,\,\,\text{as}\,\,\,\ell \rightarrow \infty.
\label{beta_l asymptotic}
\end{equation}
\end{lemma}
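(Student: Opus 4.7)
The plan is to compute $\|\tilde{b}_\ell^m\|_\mathcal{B}^2$ explicitly and then pass to the asymptotic regime $\ell\to\infty$. Since each $\tilde{b}_\ell^m$ solves (\ref{Helmholtz equation}), the reduced form (\ref{B norm 2}) applies, so
$$\|\tilde{b}_\ell^m\|_\mathcal{B}^2=2\|\tilde{b}_\ell^m\|_{L^2(B_1)}^2+\kappa^{-2}(\partial_{\mathbf{n}}\tilde{b}_\ell^m,\tilde{b}_\ell^m)_{L^2(\partial B_1)}.$$
Formulas (\ref{L2 norm tilde b}) and (\ref{L2 boundary}) reduce the task to evaluating the radial integral $I_\ell:=\int_0^1 j_\ell^2(\kappa r)\,r^2\,\textup{d}r$ and the boundary value $\kappa j_\ell(\kappa) j_\ell'(\kappa)$, both of which are manifestly independent of $m$; this already settles the assertion that $\beta_\ell$ does not depend on $m$.

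Next I would rewrite both quantities in terms of the Bessel values $J_{\ell+1/2}(\kappa)$, $J_{\ell-1/2}(\kappa)$, $J_{\ell+3/2}(\kappa)$. For $I_\ell$, the substitution $s=\kappa r$ together with the identity $j_\ell^2(s)\,s^2=\tfrac{\pi}{2} s\, J_{\ell+1/2}^2(s)$ inherited from (\ref{spherical bessel function}) reduces the problem to the classical Lommel-type integral
$$\int_0^x t\,J_\nu^2(t)\,\textup{d}t=\tfrac{x^2}{2}\bigl[J_\nu^2(x)-J_{\nu-1}(x)J_{\nu+1}(x)\bigr],$$
taken at $\nu=\ell+1/2$ and $x=\kappa$, which yields a closed form for $I_\ell$. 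For the boundary term, I would differentiate the expression $j_\ell(x)=\sqrt{\pi/(2x)}\,J_{\ell+1/2}(x)$ and apply the NIST derivative identity $J_\nu'(x)=(\nu/x) J_\nu(x)-J_{\nu+1}(x)$ at $\nu=\ell+1/2$, so that $\kappa j_\ell(\kappa) j_\ell'(\kappa)$ becomes a linear combination of $J_{\ell+1/2}^2(\kappa)$ and $J_{\ell+1/2}(\kappa)J_{\ell+3/2}(\kappa)$. Summing the two contributions and factoring $\pi/(2\kappa)$ collects the terms into precisely the shape stated in (\ref{betal expansion}).

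For the asymptotic (\ref{beta_l asymptotic}), I would exploit the large-order behavior $j_\ell(x)\sim x^\ell/(2\ell+1)!!$ (valid for fixed $x$), together with the corresponding leading term for $j_\ell'(x)$. These give $I_\ell\sim\kappa^{2\ell}\bigl/\bigl[(2\ell+3)((2\ell+1)!!)^2\bigr]$ and $\kappa j_\ell(\kappa)j_\ell'(\kappa)\sim \ell\,\kappa^{2\ell}/((2\ell+1)!!)^2$. The crucial observation is that after multiplying the boundary term by $\kappa^{-2}$ it still dominates the volume term by a factor of order $\ell$, so $\|\tilde{b}_\ell^m\|_\mathcal{B}^2\sim\ell\,\kappa^{2\ell-2}/((2\ell+1)!!)^2$. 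A direct application of Stirling to $(2\ell+1)!!=(2\ell+1)!\,/(2^\ell\ell!)$ then produces the stated $\beta_\ell\sim 2\sqrt{2}\,\kappa\,(2/(e\kappa))^\ell\,\ell^{\ell+1/2}$.

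The main obstacle I anticipate is the algebraic bookkeeping in the explicit computation: the three products $J_{\ell+1/2}^2(\kappa)$, $J_{\ell-1/2}(\kappa)J_{\ell+3/2}(\kappa)$ and $J_{\ell+1/2}(\kappa)J_{\ell+3/2}(\kappa)$, weighted by various powers of $1/\kappa$, must be regrouped in exactly the asymmetric way displayed in (\ref{betal expansion}). Depending on which variant of the derivative identity one uses, the raw combination differs from the target by terms that must be reconciled via the three-term recurrence $J_{\nu-1}(x)+J_{\nu+1}(x)=(2\nu/x)J_\nu(x)$ at $\nu=\ell+1/2$; being deliberate about which Bessel index to eliminate from the boundary contribution will streamline the match. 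The asymptotic step, by contrast, is routine once one identifies the boundary term as dominant.
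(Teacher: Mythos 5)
Your proposal is correct and follows essentially the same route as the paper: the identity (\ref{betal expansion}) via (\ref{B norm 2}), the Lommel integral \cite[Eq.\ (10.22.5)]{nist} for the volume term, and the recurrence $j_\ell'(x)=\tfrac{\ell}{x}j_\ell(x)-j_{\ell+1}(x)$ for the boundary term, followed by identifying the boundary contribution as dominant; the only cosmetic difference is that you use $j_\ell(x)\sim x^\ell/(2\ell+1)!!$ plus Stirling where the paper substitutes the large-order asymptotics of $J_\nu$ directly, and both yield (\ref{beta_l asymptotic}). (Minor bookkeeping: after the $\kappa^{-2}$ weighting the boundary term dominates the volume term by a factor of order $\ell^2$, not $\ell$, which only strengthens your conclusion.)
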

\begin{proof}
From (\ref{spherical bessel function}), (\ref{L2 norm tilde b}), (\ref{L2 boundary}) and using \cite[Eqs.\ (10.22.5) and (10.51.2)]{nist}, it follows:
\begin{align*}
\|\tilde{b}_{\ell}^m\|^2_{L^2(B_1)}&=\int_0^1j^2_{\ell}(\kappa r)r^2\textup{d}r=\frac{\pi}{2\kappa}\int_0^1J^2_{\ell+\frac{1}{2}}(\kappa r)r\textup{d}r\\
&=\frac{\pi}{4\kappa}\left(J^2_{\ell+\frac{1}{2}}(\kappa)-J_{\ell-\frac{1}{2}}(\kappa)J_{\ell+\frac{3}{2}}(\kappa) \right), \numberthis \label{L2}
\end{align*}
\begin{align*}
(\partial_{\mathbf{n}}\tilde{b}_{\ell}^m,\tilde{b}_{\ell}^m)_{L^2(\partial B_1)}&=\kappa j'_{\ell}(\kappa)j_{\ell}(\kappa)=\kappa \left(\frac{\ell}{\kappa}j_{\ell}(\kappa)-j_{\ell+1}(\kappa) \right)j_{\ell}(\kappa)\\
&=\ell j^2_{\ell}(\kappa)-\kappa j_{\ell}(\kappa)j_{\ell+1}(\kappa)\\
&=\frac{\pi}{2 \kappa} \left( \ell J^2_{\ell+\frac{1}{2}}(\kappa)-\kappa J_{\ell+\frac{1}{2}}(\kappa)J_{\ell+\frac{3}{2}}(\kappa)\right), \numberthis \label{partial}
\end{align*}
Therefore, (\ref{betal expansion}) follows directly from (\ref{B norm 2}) and  $\|\tilde{b}_{\ell}^m\|_{\mathcal{B}}$ is independent of the value of $m$. The proof of the asymptotic behavior consists in showing that we have:
\begin{align*}
&\|\tilde{b}_{\ell}^m\|_{L^2(\partial B_1)} \sim \frac{1}{2\sqrt{2}}\left(\frac{e \kappa}{2}\right)^{\ell}\ell^{-(\ell+1)},\\
&\|\tilde{b}_{\ell}^m\|_{L^2( B_1)} \sim \frac{1}{4}\left(\frac{e \kappa}{2}\right)^{\ell} \ell^{-\left(\ell+\frac{3}{2} \right)},\,\,\,\,\,\,\,\,\,\,\,\,\,\,\,\,\,\,\,\,\text{as}\,\,\,\ell \rightarrow \infty. \numberthis \label{asymptotic behavior}\\
&\|\tilde{b}_{\ell}^m\|_{\mathcal{B}} \sim \frac{1}{2\sqrt{2}\kappa}\left(\frac{e \kappa}{2}\right)^{\ell}\ell^{-\left(\ell+\frac{1}{2} \right)},
\end{align*}
Note that, even if it is not necessary for the purposes of the proof, we also study the behavior of the trace norm, since this result will be useful later on to prove the error bound (\ref{error bound 2}). From the definition (\ref{b tilde definizione}) of $\tilde{b}_{\ell}^m$, we immediately have $\|\tilde{b}_{\ell}^m\|^2_{L^2(\partial B_1)}=j^2_{\ell}(\kappa)$.
Thanks to (\ref{spherical bessel function}) and \cite[Eq.\ (10.19.1)]{nist}, namely
\begin{equation}
J_{\nu}(r) \sim \frac{1}{\sqrt{2\pi \nu}}\left(\frac{er}{2 \nu} \right)^{\nu},\,\,\,\,\,\,\,\,\,\,\,\,\,\,\text{as}\,\,\,\nu \rightarrow \infty,
\label{bessel asymptotics}
\end{equation}
we get
\begin{equation}
j_{\ell}(r)\sim\frac{1}{2\sqrt{r}}\left(\frac{er}{2}\right)^{\ell+\frac{1}{2}}\left(\ell+\frac{1}{2}\right)^{-(\ell+1)},\,\,\,\,\,\,\,\,\,\,\,\,\,\,\text{as}\,\,\,\ell \rightarrow \infty,
\label{spherical bessel asymptotics}
\end{equation}
and therefore as $\ell \rightarrow \infty$
\begin{equation}
\|\tilde{b}_{\ell}^m\|^2_{L^2(\partial B_1)} \sim \frac{1}{4 \kappa} \left(\frac{e \kappa}{2} \right)^{2\ell+1}\left(\ell+\frac{1}{2} \right)^{-2(\ell+1)}.
\label{b tilde boundary}
\end{equation}
Since, for every $x,y,z \in \R$, we have that
\begin{equation}
(\ell+x)^{y\ell+z}\sim \ell^{y\ell+z}\exp\bigg\{{xy+\frac{x(2z-xy)}{2\ell}}\bigg\}\sim \ell^{y\ell+z}e^{xy},\,\,\,\,\,\,\,\,\,\,\,\,\,\,\text{as}\,\,\,\ell \rightarrow \infty,
\label{ell asymptotics}
\end{equation}
then $\left(\ell+1/2 \right)^{-2(\ell+1)}$ is equivalent to ${\ell}^{-2(\ell+1)}e^{-1}$ at infinity; therefore, the first result in (\ref{asymptotic behavior}) follows directly from (\ref{b tilde boundary}) and (\ref{ell asymptotics}).

We now consider the $L^2(B_1)$ norm. From (\ref{L2}) and (\ref{bessel asymptotics}), we get as $\ell \rightarrow \infty$
\begin{equation*}
\|\tilde{b}_{\ell}^m\|^2_{L^2( B_1)} \sim \frac{1}{8 \kappa}\left(\frac{e \kappa}{2} \right)^{2\ell+1}\left(\ell+\frac{1}{2} \right)^{-2(\ell+1)}\Bigg[ 1- \frac{\left(\ell+\frac{1}{2} \right)^{2(\ell+1)}}{\left(\ell-\frac{1}{2} \right)^{\ell}\left(\ell+\frac{3}{2} \right)^{\ell+2}} \Bigg],
\end{equation*}
and, thanks to (\ref{ell asymptotics}), it is easily checked that the term inside the square brackets is equivalent to $\ell^{-1}$ at infinity.

We now consider the $\kappa$-weighted $H^1(B_1)$ norm (\ref{B norm 2}): we need to study the asymptotics of the boundary term. From (\ref{partial}) and (\ref{bessel asymptotics}), we get as $\ell \rightarrow \infty$
\begin{equation}
(\partial_{\mathbf{n}}\tilde{b}_{\ell}^m,\tilde{b}_{\ell}^m)_{L^2(\partial B_1)} \sim \frac{1}{4 \kappa}\left(\frac{e \kappa}{2} \right)^{2\ell+1}\left(\ell+\frac{1}{2} \right)^{-2(\ell+1)}\left[\ell- \frac{e \kappa^2}{2} \frac{\left(\ell+ \frac{1}{2} \right)^{\ell+1}}{\left(\ell+ \frac{3}{2} \right)^{\ell+2}}\right],
\label{www}
\end{equation}
and, thanks to (\ref{ell asymptotics}), it is readily checked that the second term inside the square brackets is dominated by the first one, since it is equivalent to $\kappa^2/2\ell$ at infinity. Thus, the dominant term in $\|\tilde{b}_{\ell}^m\|_{\mathcal{B}}$ in the limit $\ell \rightarrow \infty$ is the boundary term (\ref{www}).
\end{proof}

\begin{figure}
\centering
\includegraphics[width=10cm]{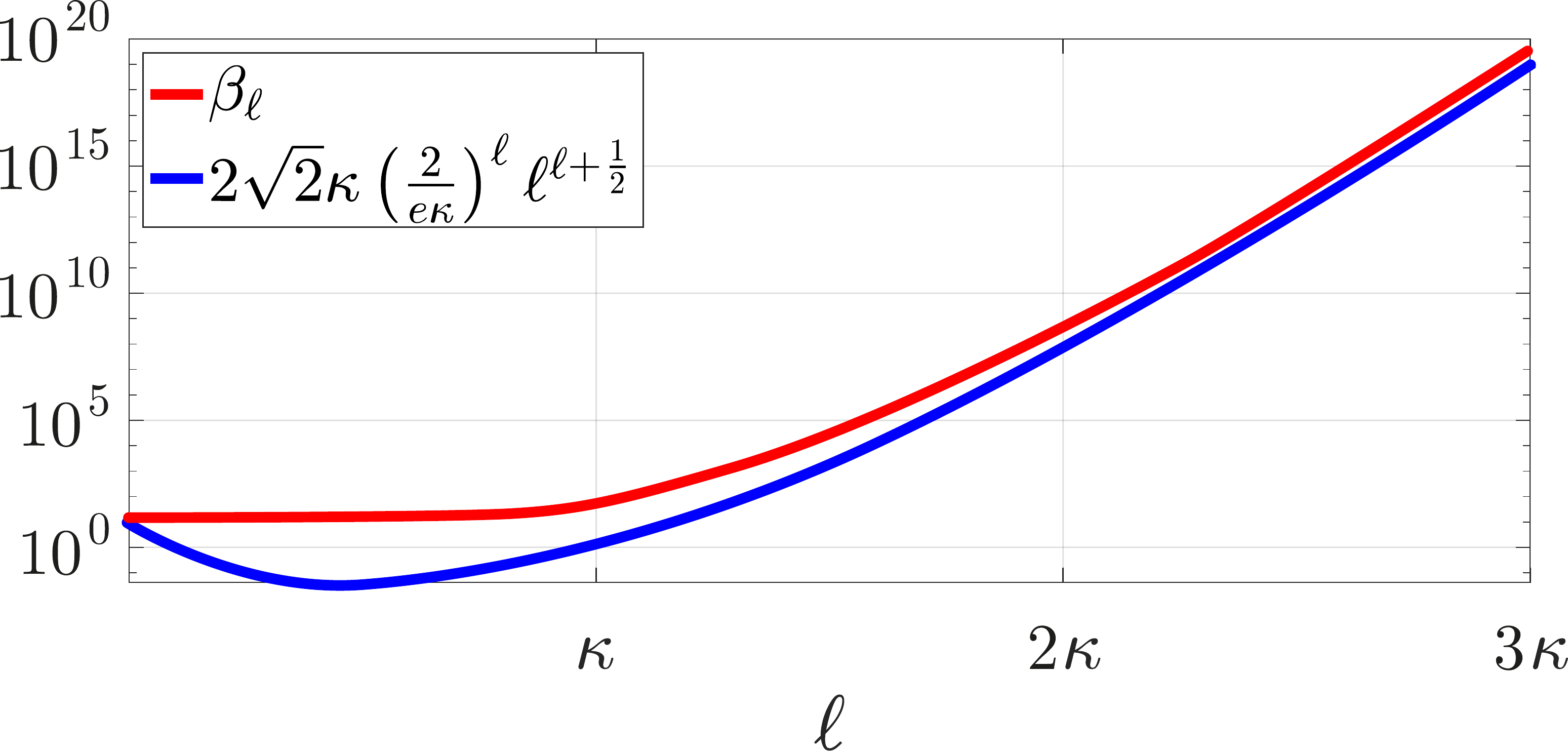}
\caption{Super-exponential growth of coefficients $\beta_{\ell}=\|\tilde{b}_{\ell}^m\|_{\mathcal{B}}^{-1}$. Note the pre-asymptotic regime up to $\ell \approx \kappa$. Wavenumber $\kappa=16$.}
\label{figure betal}
\end{figure}

\begin{remark}
We chose to normalize the spherical waves using the natural norm $\| \cdot \|_{\mathcal{B}}$, but obviously other choices are possible with some minor differences in the coefficients of the exponential growth of $\beta_{\ell}$ (see \textup{\cite[Remark 2.5]{parolin-huybrechs-moiola}} for the two-dimensional case).
\end{remark}

\chapter{Stable numerical approximation}
\hypertarget{Chapter 2}{In} this chapter, we briefly review and generalize to the 3D case the results obtained in \cite[Sec.\ 3]{parolin-huybrechs-moiola}, whose central aspect consists in the notion of stable approximation, crucial in computing numerical approximations in the form of series expansion.
The main underlying idea is to develop a numerical method that provides accurate approximations by using small coefficients in the expansion, thereby ensuring stability under the limitations of finite precision in computer arithmetic.
The proposed approach builds on the results in \cite{huybrechs1,huybrechs2}. We also describe a practical sampling-based numerical scheme to compute approximations of Helmholtz solutions that rely on regularized Singular Value Decomposition and oversampling.
We show that this procedure yields accurate solutions, provided that the approximation set has the stable approximation property and an appropriate choice of sampling points has been made.
In this regard, we introduce extremal point systems as a potential example. These sets of points have interesting geometrical and integration properties that make them useful throughout the paper for constructing plane wave approximation sets, as we will see in the following chapters.

\section{The notion of stable approximation}

\hypertarget{Section 2.1}{Let} us consider a \textit{sequence of approximation sets} in $\mathcal{B}$
\begin{equation}
\mathbf{\Phi}:=\{\mathbf{\Phi}_k\}_{k \in \N}\,\,\,\,\,\,\,\,\text{where}\,\,\,\,\,\,\,\,\mathbf{\Phi}_k:=\{\phi_{k,p}\}_p,\,\,\,\,\forall k \in \N,
\label{approximation sets}
\end{equation}
where, for each $k,p$, $\phi_{k,p} \in \mathcal{B}$ is a solution of the Helmholtz equation (\ref{Helmholtz equation}) in $B_1$ and $|\mathbf{\Phi}_k| < \infty$; these sets do not need to be nested. Associated to any set $\mathbf{\Phi}_k$ for some $k \in \N$, we define the following operator:
\begin{equation}
\mathcal{T}_{\mathbf{\Phi}_k}:\C^{|\mathbf{\Phi}_k|} \rightarrow \mathcal{B},\,\,\,\,\,\,\,\text{such that}\,\,\,\,\,\,\,\boldsymbol{\mu}=\{\mu_p\}_p \mapsto \sum_p \mu_p \phi_{k,p}.
\label{synthesis operator}
\end{equation}
In frame theory, this is often called \textit{synthesis operator}.

\begin{definition}[\hypertarget{Definition 2.1}{Stable approximation}]
The sequence $\mathbf{\Phi}$ of approximation sets \textup{(\ref{approximation sets})} is said to be a stable approximation for $\mathcal{B}$ if, for any tolerance $\eta >0$, there exist a stability exponent $\lambda \geq 0$ and a stability constant $C_{\textup{stb}} \geq 0$ such that
\begin{equation}
\forall u \in \mathcal{B}, \exists k \in \N, \boldsymbol{\mu} \in \C^{|\mathbf{\Phi}_k|}\,\,\,\,\,\,\,\text{such that}\,\,\,\,\,\,\, \begin{cases}
\|u-\mathcal{T}_{\mathbf{\Phi}_k}\boldsymbol{\mu}\|_{\mathcal{B}} \leq \eta \|u\|_{\mathcal{B}} & \text{and}\\
\|\boldsymbol{\mu}\|_{\ell^2} \leq C_{\textup{stb}}|\mathbf{\Phi}_k|^{\lambda}\|u\|_{\mathcal{B}}  & \text{.}
\end{cases}
\label{stable approximation}
\end{equation}
\end{definition}

A sequence of stable approximation sets allows for the representation of any Helmholtz solution as a finite expansion $\mathcal{T}_{\mathbf{\Phi}_k}\boldsymbol{\mu}$ with coefficients $\boldsymbol{\mu}$ having bounded $\ell^2$-norm  up to some algebraic growth. The stability exponent $\lambda$ determines the rate of increase of the coefficient norm, with a smaller $\lambda$ indicating a more stable sequence.
It is worth noting that using the $\ell^2$-norm in (\ref{stable approximation}) is not essential, as the growth of the coefficient norm can be measured using any discrete $\ell^p$-norm. This is possible due to the equivalence of these norms, as established by Hölder inequality, since we consider only finite dimensional sets.

Following \cite{parolin-huybrechs-moiola}, we will present two examples of approximation sets of the type (\ref{approximation sets}): propagative plane waves in (\ref{plane waves approximation set}) and evanescent plane waves in (\ref{evanescence sets}). As we will show in later chapters, these two choices have different stability properties. In Theorem \hyperlink{Theorem 3.1}{3.3}, we prove that propagative plane waves are unstable, whereas in Chapter \hyperlink{Chapter 7}{7} numerical experiments indicate that the evanescent plane wave sets are stable when built using the method described in Section \hyperlink{Section 6.1}{6.1} and in Section \hyperlink{Section 6.3}{6.3}.

\section{Regularized boundary sampling method}

\hypertarget{Section 2.2}{Let} us describe the method for computing the coefficients in practice.
We adopt for simplicity a sampling-type strategy, following \cite{huybrechs3} and in continuity with what was done in \cite{parolin-huybrechs-moiola}. Let us consider the Helmholtz problem with Dirichlet boundary conditions: find $u \in H^1(B_1)$ such that
\begin{equation*}
\Delta u+\kappa^2 u =0,\,\,\,\,\text{in}\,\,\,B_1,\,\,\,\,\,\,\,\,\,\,\text{and}\,\,\,\,\,\,\,\,\,\,\gamma u=g,\,\,\,\,\text{on}\,\,\,\partial B_1,
\end{equation*}
where $g \in H^{\frac{1}{2}}(\partial B_1)$ and $\gamma$ is the Dirichlet trace operator; this problem is known to be well-posed assuming that $\kappa^2$ is not an eigenvalue of the Dirichlet Laplacian.
In all of our numerical experiments, we aim at reconstructing a solution $u \in \mathcal{B}$ having access to its trace $\gamma u$ on the boundary. Thus, we assume for simplicity that $u \in \mathcal{B} \cap C^0(\overline{B_1})$, so as to allow us to consider point evaluations of the Dirichlet trace.

So let $u \in \mathcal{B} \cap C^0(\overline{B_1})$ be the target of our approximation problem. We look for a set of coefficients $\boldsymbol{\xi} \in \C^{|\mathbf{\Phi}_{k}|}$ for a given approximation set $\mathbf{\Phi}_{k}$ (introduced in (\ref{approximation sets})) such that $\mathcal{T}_{\mathbf{\Phi}_{k}}\boldsymbol{\xi} \approx u$. We also assume that, for every $p$, $\phi_{k,p} \in \mathcal{B} \cap C^0(\overline{B_1})$. It remains to understand how to choose the set of $S \geq |\mathbf{\Phi}_{k}|$ sampling points $\{\mathbf{x}_s\}_{s=1}^S$ on the unit sphere ($\mathbf{x}_s \in \partial B_1$ for $s=1,...,S$).
Observe that now, contrary to the two-dimensional case \cite[Eq.\ (3.6)]{parolin-huybrechs-moiola}, there is no obvious way to determine a set of equispaced points $\{\mathbf{x}_s\}_{s=1}^S$. This choice, as we will see in Section \hyperlink{Section 2.4}{2.4}, is basically aimed at ensuring the convergence of the cubature rule
\begin{equation}
\lim_{S \rightarrow \infty}\sum_{s=1}^S w_s v(\mathbf{x}_s)=\int_{\partial B_1}v(\mathbf{x})\textup{d}\sigma(\mathbf{x}),\,\,\,\,\,\,\,\,\,\,\forall v \in C^0(\partial B_1),
\label{Riemann sum}
\end{equation}
where $\mathbf{w}_S=(w_s)_s \in \R^S$ is a suitable vector of positive weights associated with the sampling point set $\{\mathbf{x}_s\}_{s=1}^S \subset \partial B_1$.
In this regard, in the following numerical experiments, as a particular choice, we consider the \textit{extremal systems of points} and associated weights (see \cite{marzo-cerda,reimer,sloan-womersley1}), which we describe in more detail in the next section.

Let us now introduce the matrix $A=(A_{s,p})_{s,p} \in \C^{S \times |\mathbf{\Phi}_{k}|}$ and the vector $\mathbf{b}=(b_s)_s \in \C^S$ such that
\begin{equation}
A_{s,p}:=w_s^{1/2}\phi_{k,p}(\mathbf{x}_s),\,\,\,\,\,\,\,\mathbf{b}_s:=w_s^{1/2}(\gamma u)(\mathbf{x}_s),\,\,\,\,\,\,\,\,\,\,\,1 \leq p \leq |\mathbf{\Phi}_{k}|,\,\,\,\ 1 \leq s \leq S.
\label{A matrix definition}
\end{equation}
The sampling method then consists in approximately solving the following, possibly overdetermined, linear system
\begin{equation}
A\boldsymbol{\xi}=\mathbf{b}.
\label{linear system}
\end{equation}
However, the matrix $A$ may often be ill-conditioned (see Section \hyperlink{Section 3.3}{3.3}) and this can lead to inaccurate solutions when using finite-precision arithmetic.
When the ill-conditioning of the matrix $A$ is caused only by the redundancy of the approximating functions, we can deem it as harmless.
In fact, even though this type of ill-conditioning results in non-uniqueness of the coefficients in an expansion, still all expansions may approximate the solution to similar accuracy.
If among those expansions there exist some with small coefficient norms, then it is possible to mitigate such ill-conditioning using regularization techniques. To obtain a solution even in the presence of ill-conditioning, we rely on the conjugation of oversampling and regularization techniques developed in \cite{huybrechs1,huybrechs2}.
The regularized solution procedure is divided into the following points:
\begin{itemize}
\item The first step is to perform the Singular Value Decomposition (SVD) on the matrix $A$, namely
\begin{equation*}
A=U\Sigma V^*.
\end{equation*}
Let us denote by $\sigma_p$ for $p=1,...,|\mathbf{\Phi}_k|$ the singular values of $A$, assumed to be sorted in descending order. For notational clarity, the largest singular value is renamed $\sigma_{\textup{max}}:=\sigma_1$.
\item Then, the regularization process consists in trimming the relatively small singular values by setting them to zero. A threshold parameter $\epsilon \in (0,1]$ is chosen, and the diagonal matrix $\Sigma$ is approximated by $\Sigma_{\epsilon}$ by replacing all $\sigma_m$ such that $\sigma_m < \epsilon \sigma_{\textup{max}}$ with zero. This leads to an approximate factorization of $A$, namely
\begin{equation}
A_{S,\epsilon}:=U\Sigma_{\epsilon}V^*.
\label{A Se}
\end{equation}
\item Lastly, an approximate solution to the linear system in (\ref{linear system}) is obtained by
\begin{equation}
\boldsymbol{\xi}_{S,\epsilon}:=A^{\dagger}_{S,\epsilon}\mathbf{b}=V\Sigma_{\epsilon}^{\dagger}U^*\mathbf{b}.
\label{xi Se solution}
\end{equation}
Here $\Sigma_{\epsilon}^{\dagger} \in \R^{|\mathbf{\Phi}_{k}| \times S}$ denotes the pseudo-inverse of the matrix $\Sigma_{\epsilon}$, namely the diagonal matrix defined by $(\Sigma_{\epsilon}^{\dagger})_{j,j}=(\Sigma_{j,j})^{-1}$ if $\Sigma_{j,j} \geq \epsilon \sigma_{\textup{max}}$ and $(\Sigma_{\epsilon}^{\dagger})_{j,j}=0$ otherwise. To compute $\boldsymbol{\xi}_{S,\epsilon}$ robustly, it is necessary to evaluate the right-hand-side of (\ref{xi Se solution}) from right to left, that is $\boldsymbol{\xi}_{S,\epsilon}=V(\Sigma_{\epsilon}^{\dagger}(U^* \mathbf{b}))$, to prevent small and large values on the diagonal of $\Sigma_{\epsilon}^{\dagger}$ from being mixed.
\end{itemize}

\section{Extremal system of points}

\hypertarget{Section 2.3}{We} first provide the definition of extremal systems of points following \cite{sloan-womersley1}.

\begin{definition}[\hypertarget{Extremal points system}{Extremal system}]
Let $S=(L+1)^2 \in \N$ for some $L \geq 0$. 
A set $\{\mathbf{x}_s\}_{s=1}^S \subset \partial B_1$ is said to be a fundamental system of points if $\Delta_L \neq 0$, where
\begin{equation}
\Delta_L(x_1,...,x_S):=\det \left(G_L \right),\,\,\,\,\,\,\,\,\,\,\,\,\,\,\text{with}\,\,\,\,\,\,\,\,\,\,\,\,\,\,G_L:=\left(Y_j(\mathbf{x}_s)\right)_{j,s} \in \C^{S \times S},
\label{G matrix}
\end{equation}
and $\{Y_j\}_{j=1}^S$ are the first $S$ spherical harmonics \textup{(\ref{spherical harmonic})}, up to degree $L$ and of any order.
A fundamental system is called extremal if it maximizes $|\Delta_L|$.
\end{definition}
The functions $\{Y_j\}_{j=1}^S$ constitute a basis for the spherical polynomial space $\mathbb{P}_L(\partial B_1)$, whose dimension is indeed $S=(L+1)^2$. More generally, fundamental systems are independent of the choice of basis in the interpolation matrix.

Extremal systems provide well-distributed points, have good integration properties if the points are used to determine an interpolatory integration rule, like in (\ref{Riemann sum}), and also have excellent geometrical properties. Furthermore, extremal systems are found to yield interpolatory cubatures rules with positive weights, at least up to $L=200$ (see \cite{womersley}).

The interest in such system of points stems from the association with Lagrange interpolation. Given a function $v \in C^0(\partial B_1)$, the unique polynomial $\Lambda_Lv \in \mathbb{P}_L(\partial B_1)$ that interpolates $v$ at the points of the fundamental system $\{\mathbf{x}_s\}_{s=1}^S$ can be written as
\begin{equation*}
\Lambda_Lv=\sum_{s=1}^Sv(\mathbf{x}_s)\ell_s,
\end{equation*}
where $\ell_s$ is the Lagrange polynomial associated with the $s$-th point $\mathbf{x}_s$, namely
\begin{equation*}
\ell_j \in \mathbb{P}_L(\partial B_1),\,\,\,\,\,\,\,\,\,\,\,\,\,\,\ell_j(\mathbf{x}_s)=\delta_{j,s},\,\,\,\,\,\,\,\,\,\,\,\,\,\,j,s=1,...,S.
\end{equation*}
The interpolatory cubature rule associated with the system of points $\{\mathbf{x}_s\}_{s=1}^S$ is
\begin{equation}
Q_L(v):=\int_{\partial B_1}(\Lambda_Lv)(\mathbf{x})\textup{d}\sigma(\mathbf{x})=\sum_{s=1}^Sw_sv(\mathbf{x}_s),
\label{cubature}
\end{equation}
\begin{equation}
\textup{where}\,\,\,\,\,\,\,\,\,\,\,\,\,\,w_s:=\int_{\partial B_1}\ell_s(\mathbf{x})\textup{d}\sigma(\mathbf{x}), \label{weigths}
\end{equation}
and it is such that the integral
\begin{equation}
I(v):=\int_{\partial B_1}v(\mathbf{x})\textup{d}\sigma(\mathbf{x})
\label{integral cubature}
\end{equation}
can be computed exactly for all polynomials in $\mathbb{P}_L(\partial B_1)$, namely $Q_L(p)=I(p)$ for every $p \in \mathbb{P}_L(\partial B_1)$.

According to \cite{sloan-womersley1}, the construction of the extremal points and related weights relies on the theory of Reproducing Kernel Hilbert Spaces; for a general reference see \cite{reproducing_kernels}.
The polynomial space $\mathbb{P}_L(\partial B_1) \subset L^2(\partial B_1)$ has the reproducing kernel property. For every $\mathbf{x},\mathbf{y} \in \partial B_1$, the reproducing kernel is given by
\begin{equation}
K(\mathbf{x},\mathbf{y})=K_{\mathbf{y}}(\mathbf{x})=\left(K_{\mathbf{y}},K_{\mathbf{x}}\right)_{L^2(\partial B_1)}=\sum_{\ell=0}^{L}\sum_{m=-\ell}^{\ell}\overline{Y_{\ell}^m(\mathbf{y})}Y_{\ell}^m(\mathbf{x}),
\label{reproducing kernel property 2}
\end{equation}
where $K_{\mathbf{x}} \in \mathbb{P}_L(\partial B_1)$ is the (unique) Riesz representation of the evaluation functional at $\mathbf{x} \in \partial B_1$, namely
\begin{equation}
v(\mathbf{x})=\left(v,K_{\mathbf{x}} \right)_{L^2(\partial B_1)},\,\,\,\,\,\,\,\,\,\,\,\,\forall v \in \mathbb{P}_L(\partial B_1).
\label{reproducing ker}
\end{equation}
Observe that, due to the addition theorem \cite[Eq.\ (2.30)]{colton-kress}, i.e.\
\begin{equation}
\sum_{m=-\ell}^{\ell}\overline{Y_{\ell}^m(\mathbf{y})}Y_{\ell}^m(\mathbf{x})=\frac{2\ell+1}{4\pi}\mathsf{P}_{\ell}(\mathbf{x}\cdot \mathbf{y}),\,\,\,\,\,\,\,\,\,\,\,\,\,\,\forall \mathbf{x},\mathbf{y} \in \partial B_1,
\label{addition theorem}
\end{equation}
the reproducing kernel $K$ is real and symmetric.
The reproducing kernel property implies that pointwise evaluation of elements of $\mathbb{P}_L(\partial B_1)$ on the unit sphere $\partial B_1$ is a continuous operation (see \cite[Def.\ 1.2]{reproducing_kernels}).

Given a fundamental system $\{\mathbf{x}_s\}_{s=1}^S$, the family $\{K_{\mathbf{x}_s}\}_{s=1}^S$ constitutes a basis for $\mathbb{P}_L(\partial B_1)$: in fact such polynomials are linearly independent since the Gram matrix $K_L \in \R^{S \times S}$ with elements
\begin{equation}
(K_L)_{j,s}:=\left(K_{\mathbf{x}_j},K_{\mathbf{x}_s}\right)_{L^2(\partial B_1)}=K(\mathbf{x}_j,\mathbf{x}_s)
\label{Gram matrix}
\end{equation}
is nonsingular if $\{\mathbf{x}_s\}_{s=1}^S$ is a fundamental system. It follows from (\ref{reproducing kernel property 2}) that the matrix $K_L$ can be written as $K_L=G^*_LG_L$, where $G_L$ is the interpolation matrix defined in (\ref{G matrix}).
Thanks to the addition theorem (\ref{addition theorem}), we have that $K(\mathbf{x},\mathbf{y})=\tilde{K}(\mathbf{x} \cdot \mathbf{y})$, where
\begin{equation}
\tilde{K}(t):=\frac{1}{4\pi}\sum_{\ell=1}^L(2\ell+1)\mathsf{P}_{\ell}(t),\,\,\,\,\,\,\,\,\,\,\,\,\,\,\forall t \in [-1,1].
\label{K tilde}
\end{equation}
Thus, we are able to easily calculate the matrix $K_L$ in (\ref{Gram matrix}) from (\ref{K tilde}) by upward recurrence of the Legendre polynomials (see \cite[Eq.\ (14.10.3)]{nist}). In particular, since $\mathsf{P}_{\ell}(1)=1$ for every degree $\ell \geq 0$, note that
\begin{equation*}
K(\mathbf{x},\mathbf{x})=\tilde{K}(1)=\frac{1}{4\pi}\sum_{\ell=0}^L(2\ell+1)=\frac{S}{4\pi},\,\,\,\,\,\,\,\,\,\,\,\,\,\,\forall \mathbf{x} \in \partial B_1,
\end{equation*}
therefore $K_L$ has equal diagonal entries.
\begin{figure}
\centering
\includegraphics[width=7.3cm]{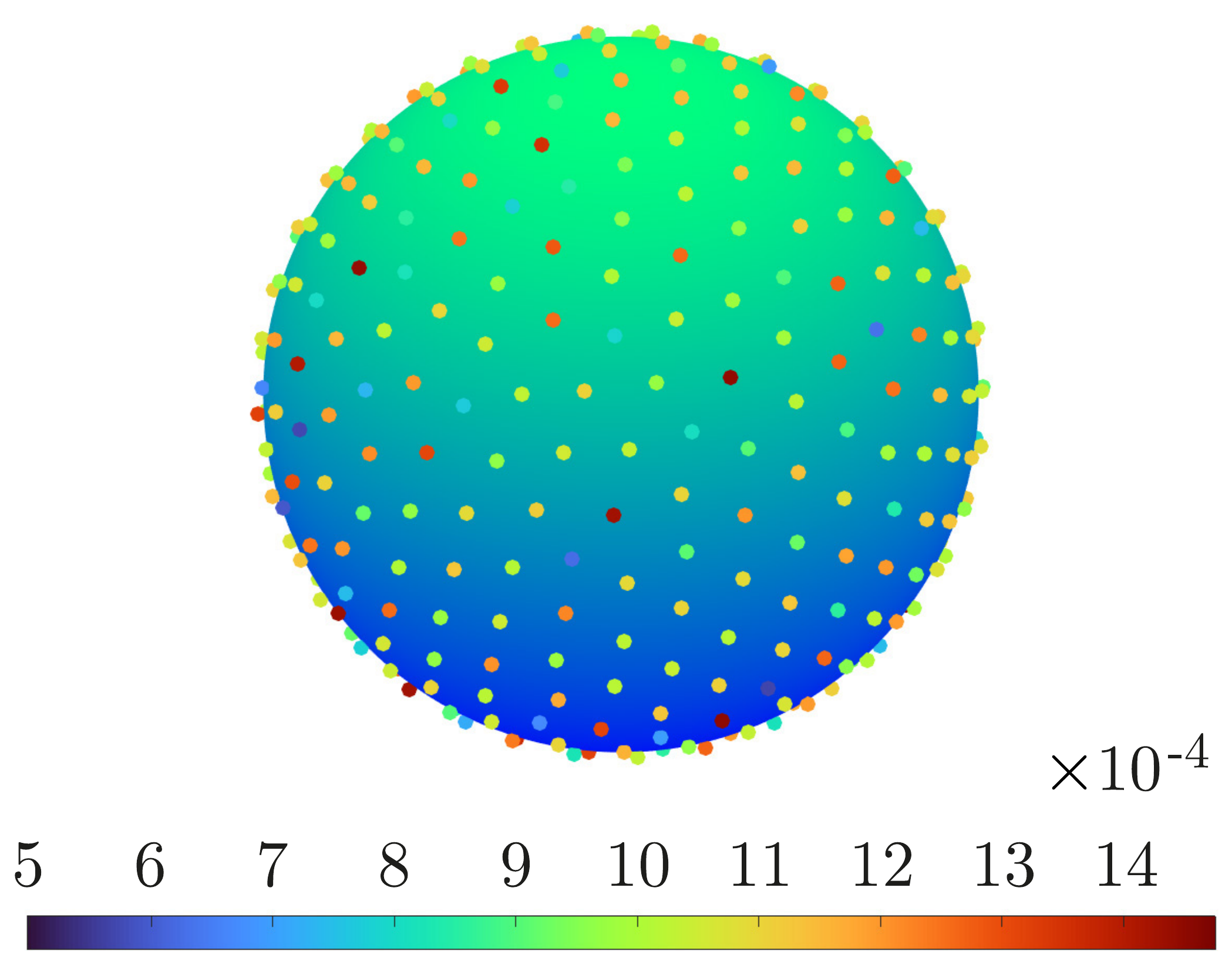}
\captionsetup{justification=centering}
\caption{Distribution of $S=400$ extremal points on the unit sphere $B_1$. Each point $\mathbf{x}_s$ is colored according to the value of the weight $w_s$ associated with it.}
\label{extremal ball}
\end{figure}

Clearly $K_L$ is positive semi-definite for any point set, and $\det(K_L)=\det(G_L)^2 \geq 0$, so an extremal system $\{\mathbf{x}_s\}_{s=1}^S$ can be obtained by maximizing the determinant of the symmetric positive definite matrix $K_L$. Both a restarted conjugate gradient algorithm, followed by a Newton method using a finite difference approximation to the Hessian, and a limited memory BFGS method were used in \cite{sloan-womersley1} to maximize $\log{\det(K_L)}$, so as to avoid overflow issues. This procedure only finds a local maximizer. Furthermore, since $K_L$ is rotationally invariant, the first point $\mathbf{x}_1$ is arbitrarily fixed at the north pole and, if $S>1$, the second point $\mathbf{x}_2$ is fixed on the prime meridian.

The weights $\mathbf{w}_S=(w_s)_s$ related to the extremal points $\{\mathbf{x}_s\}_{s=1}^S$ thus obtained, are defined in (\ref{weigths}). However, the practical computation of the weights proceeds differently. Due to the reproducing kernel property (\ref{reproducing ker}), the basis $\{K_{\mathbf{x}_s}\}_{s=1}^S$ satisfies
\begin{equation*}
\int_{\partial B_1}K_{\mathbf{x}_s}(\mathbf{x})\textup{d}\sigma(\mathbf{x})=1,\,\,\,\,\,\,\,\,\,\,\,\,\,\,s=1,...,S,
\end{equation*}
so the condition that the cubature (\ref{cubature}) is exact for all polynomials in $\mathbb{P}_L(\partial B_1)$ can be written as
\vspace{-2mm}
\begin{equation}
K_L \mathbf{w}_S=\mathbf{1},
\label{linear system cubature}
\end{equation}
where $\mathbf{w}_S=(w_s)_s$, $\mathbf{1}$ is the vector of ones in $\R^S$ and $K_L$ is defined in (\ref{Gram matrix}). In particular, observe that the cubature rule is exact for the constant polynomial $1 \in \mathbb{P}_L(\partial B_1)$, so that $\sum_{s=1}^Sw_s=4\pi$.
It is important to note that for other point systems, particularly when $L$ is large, solving for the weights using (\ref{linear system cubature}) can be difficult due to the ill-conditioning of the matrix $K_L$. However, it is a characteristic of extremal systems that the matrix $K_L$ is highly well-conditioned.

The error for the interpolatory cubature rule (\ref{cubature}) with respect to the exact integral (\ref{integral cubature}) can be bounded as follows: for any $p \in \mathbb{P}_L(\partial B_1)$,
\begin{equation*}
\left|I(v)-Q_L(v)\right|=\left|\left(I-Q_L\right)(v-p)\right| \leq \left(4\pi+\|\mathbf{w}_S\|_{\ell^1} \right)\|v-p\|_{\infty}
\end{equation*}
and therefore, assuming all weights $w_s$ are positive,
\begin{equation}
\left|I(v)-Q_L(v)\right|\leq 8\pi \inf_{p \in \mathbb{P}_L(\partial B_1)}\|v-p\|_{\infty}.
\label{error bound integral}
\end{equation}
Hence, if $\{\mathbf{x}_s\}_{s=1}^S \subset \partial B_1$ is an extremal system of points and $\mathbf{w}_S \in \R^S$ are its associated weights, thanks to (\ref{error bound integral}) and the Weierstrass--Stone theorem, the convergence (\ref{Riemann sum}) is guaranteed.

Some extremal points and associated weights are depicted in Figure \ref{extremal ball}; their tables of values are listed in \cite{womersley}. Other types of fundamental systems can be found in \cite{sloan-womersley2}.

\begin{remark}
\hypertarget{Remark 2.3}{Observe} that, in order to use the extremal system $\{\mathbf{x}_s\}_{s=1}^S$ as sampling point set on the unit sphere $\partial B_1$ in the linear system \textup{(\ref{linear system})}, then $S \in \N$ needs to be a perfect square.
\end{remark}

\section{Error estimates}

\hypertarget{Section 2.4}{By} using the regularization technique outlined in Section \hyperlink{Section 2.2}{2.2}, together with oversampling, i.e.\ $S$ larger than $|\mathbf{\Phi}_k|$, it is possible to achieve accurate approximations. This is provided that the set sequence is a stable approximation in the sense of Definition \hyperlink{Definition 2.1}{2.1} and (\ref{Riemann sum}) holds for the chosen sampling points and weights. This general statement is the main conclusion of \cite[Th.\ 5.3]{huybrechs1} and \cite[Th.\ 1.3 and 3.7]{huybrechs2}, and forms the basis of the investigation into stable approximation sets for the solutions of the Helmholtz equation.

More precisely, we have the following proposition  \cite[Prop.\ 3.2]{parolin-huybrechs-moiola}, which in turn builds on \cite[Th.\ 3.7]{huybrechs2} in the context of generalized sampling.
\begin{proposition}
\hypertarget{Proposition 2.1}{Let} $\gamma$ be the Dirichlet trace operator, $u \in \mathcal{B} \cap C^0(\overline{B_1})$ and $k \in \N$. Given some approximation set $\mathbf{\Phi}_k$ such that, for any $p$, $\phi_{k,p} \in \mathcal{B} \cap C^0(\overline{B_1})$, a set of sampling points $\{\mathbf{x}_s\}_{s=1}^S \subset \partial B_1$ together with a suitable choice of positive weights $\mathbf{w}_S \in \R^S$ such that \textup{(\ref{Riemann sum})} is satisfied and some regularization parameter $\epsilon \in (0,1]$, we consider the approximate solution of the linear system \textup{(\ref{linear system})}, namely $\boldsymbol{\xi}_{S,\epsilon} \in \C^{|\mathbf{\Phi}_k|}$ as defined in \textup{(\ref{xi Se solution})}. Then $\forall \boldsymbol{\mu} \in \C^{|\mathbf{\Phi}_k|}$, $\exists S_0 \in \N$ such that $\forall S \geq S_0$
\begin{equation}
\| \gamma(u-\mathcal{T}_{\mathbf{\Phi}_k}\boldsymbol{\xi}_{S,\epsilon})\|_{L^2(\partial B_1)} \leq 3\|\gamma(u-\mathcal{T}_{\mathbf{\Phi}_k}\boldsymbol{\mu})\|_{L^2(\partial B_1)} + \sqrt{2}\epsilon \sigma_{\textup{max}}\|\mathbf{w}_S\|^{1/2}_{\infty}\|\boldsymbol{\mu}\|_{\ell^2}.
\label{error bound 1}
\end{equation}
Assume moreover that $\kappa^2$ is not an eigenvalue of the Dirichlet Laplacian in $B_1$. Then there exists a constant $C_{\textup{err}} >0$ independent of $u$ and $\mathbf{\Phi}_k$ such that $\forall \boldsymbol{\mu} \in \C^{|\mathbf{\Phi}_k|}$, $\exists S_0 \in \N$ such that $\forall S \geq S_0$
\begin{equation}
\|u-\mathcal{T}_{\mathbf{\Phi}_k}\boldsymbol{\xi}_{S,\epsilon}\|_{L^2( B_1)} \leq C_{\textup{err}} \left(\|u-\mathcal{T}_{\mathbf{\Phi}_k}\boldsymbol{\mu}\|_{\mathcal{B}} + \epsilon \sigma_{\textup{max}}\|\mathbf{w}_S\|^{1/2}_{\infty}\|\boldsymbol{\mu}\|_{\ell^2} \right).
\label{error bound 2}
\end{equation}
\end{proposition}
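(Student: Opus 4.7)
The proof builds on the abstract generalized sampling framework of \cite{huybrechs2}, adapted to the Helmholtz setting as in \cite[Prop.\ 3.2]{parolin-huybrechs-moiola}. The guiding idea is to exploit the truncated SVD structure of $\boldsymbol{\xi}_{S,\epsilon}$: since $\boldsymbol{\xi}_{S,\epsilon}=V\Sigma_\epsilon^{\dagger}U^*\mathbf{b}$ lies in the span of the right singular vectors associated to singular values $\sigma_p\geq\epsilon\sigma_{\max}$, a direct computation using $(\Sigma-\Sigma_\epsilon)\Sigma_\epsilon^{\dagger}=0$ yields $A\boldsymbol{\xi}_{S,\epsilon}=A_{S,\epsilon}\boldsymbol{\xi}_{S,\epsilon}$, and hence $\boldsymbol{\xi}_{S,\epsilon}$ is a minimizer of $\boldsymbol{\xi}\mapsto\|A_{S,\epsilon}\boldsymbol{\xi}-\mathbf{b}\|_{\ell^2}$. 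Combining this optimality with the operator bound $\|A-A_{S,\epsilon}\|_{\mathrm{op}}\leq\epsilon\sigma_{\max}$, which is immediate from the SVD, gives, for every $\boldsymbol{\mu}\in\C^{|\mathbf{\Phi}_k|}$, the discrete estimate
\begin{equation*}
\|A\boldsymbol{\xi}_{S,\epsilon}-\mathbf{b}\|_{\ell^2}\leq\|A\boldsymbol{\mu}-\mathbf{b}\|_{\ell^2}+\epsilon\sigma_{\max}\|\boldsymbol{\mu}\|_{\ell^2}.
\end{equation*}

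To prove (\ref{error bound 1}), I would translate this discrete inequality into its continuous counterpart. By the definitions (\ref{A matrix definition}), the squared discrete residual equals the cubature sum $\sum_{s=1}^S w_s\bigl|(\mathcal{T}_{\mathbf{\Phi}_k}\boldsymbol{\xi})(\mathbf{x}_s)-(\gamma u)(\mathbf{x}_s)\bigr|^2$, which by (\ref{Riemann sum}) approximates $\|\gamma(u-\mathcal{T}_{\mathbf{\Phi}_k}\boldsymbol{\xi})\|_{L^2(\partial B_1)}^2$ as $S\to\infty$. Applying this convergence to the fixed continuous integrand determined by $\boldsymbol{\mu}$, together with a uniform-in-$S$ comparison along the family $\{\mathcal{T}_{\mathbf{\Phi}_k}\boldsymbol{\xi}_{S,\epsilon}\}_S$ \emph{à la} \cite[Th.\ 3.7]{huybrechs2}, produces (\ref{error bound 1}) with the stated constants $3$ and $\sqrt{2}\|\mathbf{w}_S\|_\infty^{1/2}$. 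The weight-dependent factor originates from bounding the discrete-to-continuous discrepancy that perturbs the regularization term $\epsilon\sigma_{\max}\|\boldsymbol{\mu}\|_{\ell^2}$ once it is re-expressed as an $L^2(\partial B_1)$-type quantity. This step is the technical heart of the argument, and constitutes the main obstacle: pointwise convergence of the cubature rule is not enough, because the integrand $\mathcal{T}_{\mathbf{\Phi}_k}\boldsymbol{\xi}_{S,\epsilon}$ itself depends on $S$; one must argue that this family stays confined to a class of Helmholtz solutions on which the cubature error is uniformly small, which is exactly what the generalized sampling framework of \cite{huybrechs2} was designed to provide.

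For the second estimate (\ref{error bound 2}), I would invoke the hypothesis that $\kappa^2$ is not a Dirichlet eigenvalue of $-\Delta$ in $B_1$. The well-posedness result \cite[Prop.\ 8.1.3]{melenk}, together with standard Helmholtz regularity, furnishes a constant $C_1>0$, independent of $u$ and $\mathbf{\Phi}_k$, such that every $v\in\mathcal{B}$ satisfies $\|v\|_{L^2(B_1)}\leq C_1\|\gamma v\|_{L^2(\partial B_1)}$. Applied to $v:=u-\mathcal{T}_{\mathbf{\Phi}_k}\boldsymbol{\xi}_{S,\epsilon}\in\mathcal{B}$, this converts the trace bound (\ref{error bound 1}) into the desired volume bound on the left-hand side. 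To finish, I would upgrade $\|\gamma(u-\mathcal{T}_{\mathbf{\Phi}_k}\boldsymbol{\mu})\|_{L^2(\partial B_1)}$ to the stronger norm $\|u-\mathcal{T}_{\mathbf{\Phi}_k}\boldsymbol{\mu}\|_{\mathcal{B}}$ via continuity of the trace operator $\gamma:H^1(B_1)\to H^{1/2}(\partial B_1)\hookrightarrow L^2(\partial B_1)$, combined with the equivalence of $\|\cdot\|_{\mathcal{B}}$ with the $\kappa$-weighted $H^1(B_1)$ norm from (\ref{B norm}); absorbing the resulting constants into a single $C_{\mathrm{err}}$ yields (\ref{error bound 2}).
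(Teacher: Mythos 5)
Your proposal is correct in substance and reaches the two estimates by a route that differs from the paper's in one interesting place. For (\ref{error bound 1}) the paper does not use the least-squares optimality of $\boldsymbol{\xi}_{S,\epsilon}$; it instead writes the three-term splitting $u-\mathcal{T}_{\mathbf{\Phi}_k}\boldsymbol{\xi}_{S,\epsilon}=[u-\mathcal{T}_{\mathbf{\Phi}_k}\boldsymbol{\mu}]+\mathcal{T}_{\mathbf{\Phi}_k}A^{\dagger}_{S,\epsilon}[A\boldsymbol{\mu}-\mathbf{b}]+\mathcal{T}_{\mathbf{\Phi}_k}[\mathrm{Id}-A^{\dagger}_{S,\epsilon}A]\boldsymbol{\mu}$ and bounds each piece via the operator estimates $\|AA^{\dagger}_{S,\epsilon}\|\leq 1$ and $\|A[\mathrm{Id}-A^{\dagger}_{S,\epsilon}A]\|\leq\epsilon\sigma_{\textup{max}}\|\mathbf{w}_S\|_{\infty}^{1/2}$, which is where the constant $3$ (one triangle-inequality contribution per piece, the middle one picking up a factor $2$ from the norm equivalence) and the weight factor come from. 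Your quasi-optimality argument ($A\boldsymbol{\xi}_{S,\epsilon}=A_{S,\epsilon}\boldsymbol{\xi}_{S,\epsilon}$, hence $\|A\boldsymbol{\xi}_{S,\epsilon}-\mathbf{b}\|_{\ell^2}\leq\|A\boldsymbol{\mu}-\mathbf{b}\|_{\ell^2}+\epsilon\sigma_{\textup{max}}\|\boldsymbol{\mu}\|_{\ell^2}$) is valid and arguably cleaner, but be honest about what it yields: combined with the two-sided equivalence it gives $\|\gamma(u-\mathcal{T}_{\mathbf{\Phi}_k}\boldsymbol{\xi}_{S,\epsilon})\|_{L^2(\partial B_1)}\leq 2\|\gamma(u-\mathcal{T}_{\mathbf{\Phi}_k}\boldsymbol{\mu})\|_{L^2(\partial B_1)}+\sqrt{2}\,\epsilon\sigma_{\textup{max}}\|\boldsymbol{\mu}\|_{\ell^2}$, i.e.\ a better first constant but \emph{without} the factor $\|\mathbf{w}_S\|_{\infty}^{1/2}$; your attribution of that factor to a ``discrete-to-continuous discrepancy'' is not a real derivation, and with the weights already absorbed into $A$ as in (\ref{A matrix definition}) there is no natural mechanism producing it in your route. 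Second, the ``main obstacle'' you flag — that the integrand $\mathcal{T}_{\mathbf{\Phi}_k}\boldsymbol{\xi}_{S,\epsilon}$ depends on $S$ — is resolved more simply than an appeal to the full machinery of \cite{huybrechs2}: every function you need to plug into the cubature lies in the \emph{fixed} finite-dimensional space $\mathrm{span}(\{u\}\cup\mathbf{\Phi}_k)$, and convergence of the cubature on a basis of that space gives the two-sided equivalence (\ref{inequality riemann}) uniformly over it for all $S\geq S_0$; this is exactly what the paper invokes. Finally, for (\ref{error bound 2}) the paper does not quote a well-posedness theorem for the Dirichlet problem with $L^2$ boundary data; it proves the inequality $\|v\|_{L^2(B_1)}\leq C\|\gamma v\|_{L^2(\partial B_1)}$ for $v\in\mathcal{B}$ directly by expanding in the basis $\{b_{\ell}^m\}$ and comparing the two weighted $\ell^2$ sums via the asymptotics (\ref{asymptotic behavior}), the non-eigenvalue hypothesis entering only to guarantee $j_{\ell}(\kappa)\neq 0$ so that the boundary weights are bounded below; your appeal to ``standard Helmholtz regularity'' asserts the same fact but would need this modal computation (or an equivalent) to be self-contained, since \cite[Prop.\ 8.1.3]{melenk} concerns the Robin problem.
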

\begin{proof}
Since $u$ and $\mathbf{\Phi}_k$ are assumed to be in $\mathcal{B} \cap C^0(\overline{B_1})$, the pointwise evaluations of their image by the Dirichlet trace operator $\gamma$ on the boundary $\partial B_1$ is well-defined.
Since (\ref{Riemann sum}) holds by hypothesis, for any $v \in \mathcal{B} \cap C^0(\overline{B_1})$, $\exists S_0 \in \N$ such that $\forall S \geq S_0$
\begin{equation}
\|\gamma v\|^2_{L^2(\partial B_1)} \leq 2\sum_{s=1}^Sw_s|(\gamma v)(\mathbf{x}_s)|^2 \leq 4\|\gamma v\|^2_{L^2(\partial B_1)},
\label{inequality riemann}
\end{equation}
where the constants $2$ and $4$ are arbitrary and can respectively be replaced by any pair of constants $C_1$ and $C_2$ such that $1<C_1<C_2$.
A similar argument is developed in \cite[Eq. (3.2)]{huybrechs2} (note that $A'=B'=1$ in the notations of \cite{huybrechs2}).
Moreover, observe that, thanks to (\ref{synthesis operator}) and (\ref{A matrix definition}), it follows
\begin{equation}
(A\boldsymbol{\mu})_s=w_s^{1/2}\gamma \left(\mathcal{T}_{\mathbf{\Phi}_k}\boldsymbol{\mu}\right)(\mathbf{x}_s),\,\,\,\,\,\,\,\,\,\,\,\,\,\,\forall \boldsymbol{\mu} \in \C^{|\mathbf{\Phi}_k|},
\label{Amu identity before}
\end{equation}
and therefore
\begin{equation}
\|A\boldsymbol{\mu}\|^2_{\ell^2}=\sum_{s=1}^Sw_s\left|\gamma \left(\mathcal{T}_{\mathbf{\Phi}_k}\boldsymbol{\mu}\right)(\mathbf{x}_s) \right|^2,\,\,\,\,\,\,\,\,\,\,\,\,\,\,\forall \boldsymbol{\mu} \in \C^{|\mathbf{\Phi}_k|}.
\label{Amu identity}
\end{equation}
Let $\boldsymbol{\mu} \in \C^{|\mathbf{\Phi}_k|}$. From (\ref{xi Se solution}) we have:
\begin{align*}
u-\mathcal{T}_{\mathbf{\Phi}_k}\boldsymbol{\xi}_{S,\epsilon}&=[u-\mathcal{T}_{\mathbf{\Phi}_k}\boldsymbol{\mu}]+[\mathcal{T}_{\mathbf{\Phi}_k}A^{\dagger}_{S,\epsilon}A \boldsymbol{\mu}-\mathcal{T}_{\mathbf{\Phi}_k}\boldsymbol{\xi}_{S,\epsilon}]+[\mathcal{T}_{\mathbf{\Phi}_k}\boldsymbol{\mu}-\mathcal{T}_{\mathbf{\Phi}_k}A^{\dagger}_{S,\epsilon}A \boldsymbol{\mu}]\\
&=[u-\mathcal{T}_{\mathbf{\Phi}_k}\boldsymbol{\mu}]+\mathcal{T}_{\mathbf{\Phi}_k}A^{\dagger}_{S,\epsilon}[A \boldsymbol{\mu}- \mathbf{b}]+\mathcal{T}_{\mathbf{\Phi}_k}[\text{Id}-A^{\dagger}_{S,\epsilon}A]\boldsymbol{\mu}. \numberthis \label{equation proposition}
\end{align*}
The proof proceeds by estimating the $L^2$ norm of the trace on $\partial B_1$ of each term.
The first term in (\ref{equation proposition}) readily appear in (\ref{error bound 1}), so we examine the second term. From (\ref{inequality riemann})--(\ref{Amu identity}), assuming that $S$ has been chosen sufficiently large, we can write:
\begin{align*}
\|\gamma(\mathcal{T}_{\mathbf{\Phi}_k} A^{\dagger}_{S,\epsilon}[A \boldsymbol{\mu} - \mathbf{b}])\|^2_{L^2(\partial B_1)}& \leq 2\sum_{s=1}^Sw_s|\gamma(\mathcal{T}_{\mathbf{\Phi}_k}A^{\dagger}_{S,\epsilon}[A \boldsymbol{\mu} - \mathbf{b}])(\mathbf{x}_s)|^2\\
&= 2 \|AA^{\dagger}_{S,\epsilon}[A\boldsymbol{\mu}-\mathbf{b}]\|^2_{\ell^2}.
\end{align*}
Furthermore, the regularization (\ref{A Se}) ensures that $\|AA^{\dagger}_{S,\epsilon}\| \leq 1$ and, using once more (\ref{inequality riemann})--(\ref{Amu identity before}) and provided that $S$ has been chosen sufficiently large, we have:
\begin{align*}
\|\gamma(\mathcal{T}_{\mathbf{\Phi}_k} A^{\dagger}_{S,\epsilon}[A \boldsymbol{\mu} - \mathbf{b}])\|^2_{L^2(\partial B_1)} &\leq 2 \|A\boldsymbol{\mu}-\mathbf{b}\|^2_{\ell^2}=2\sum_{s=1}^S w_s|\gamma(\mathcal{T}_{\mathbf{\Phi}_k}\boldsymbol{\mu} - u)(\mathbf{x}_s)|^2\\
&\leq 4\|\gamma(u-\mathcal{T}_{\mathbf{\Phi}_k} \boldsymbol{\mu})\|^2_{L^2(\partial B_1)}.
\end{align*}
We now examine the third term in (\ref{equation proposition}). Arguing as before, from (\ref{inequality riemann})--(\ref{Amu identity}), there exists $S$ sufficiently large such that
\begin{align*}
\|\gamma(\mathcal{T}_{\mathbf{\Phi}_k}[\text{Id}-A^{\dagger}_{S,\epsilon}A]\boldsymbol{\mu})\|^2_{L^2(\partial B_1)} &\leq 2\sum_{s=1}^S w_s|\gamma(\mathcal{T}_{\mathbf{\Phi}_k}[\text{Id}-A^{\dagger}_{S,\epsilon}A]\boldsymbol{\mu})(\mathbf{x}_s)|^2\\
& = 2 \|A[\text{Id}-A^{\dagger}_{S,\epsilon}A]\boldsymbol{\mu}\|^2_{\ell^2}.
\end{align*}
Regularization (\ref{A Se}) ensures that $\|A[\text{Id}-A^{\dagger}_{S,\epsilon}A]\| \leq \epsilon \sigma_{\textup{max}}\|\mathbf{w}_S\|^{1/2}_{\infty}$ so that
\begin{equation*}
\|\gamma(\mathcal{T}_{\mathbf{\Phi}_k}[\text{Id}-A^{\dagger}_{S,\epsilon}A]\boldsymbol{\mu})\|^2_{L^2(\partial B_1)} \leq 2 \epsilon^2 \sigma^2_{\textup{max}}\|\mathbf{w}_S\|_{\infty}\|\boldsymbol{\mu}\|^2_{\ell^2}.
\end{equation*}
Combining all estimates, we obtain (\ref{error bound 1}).

In order to show (\ref{error bound 2}), note first that the continuity of the trace operator $\gamma: \mathcal{B} \rightarrow L^2(\partial B_1)$ allows to write, for any $\boldsymbol{\mu} \in \C^{|\mathbf{\Phi}_k|}$:
\begin{equation}
\|\gamma(u-\mathcal{T}_{\mathbf{\Phi}_k} \boldsymbol{\mu})\|_{L^2(\partial B_1)} \leq \|\gamma\| \|u-\mathcal{T}_{\mathbf{\Phi}_k} \boldsymbol{\mu}\|_{\mathcal{B}}.
\label{continuity bound}
\end{equation}
It remains to bound the $L^2(B_1)$ norm of $u-\mathcal{T}_{\mathbf{\Phi}_k}\boldsymbol{\xi}_{S,\epsilon}$ by the $L^2(\partial B_1)$ norm of its trace. Let $\{\hat{e}_{\ell}^m\}_{(\ell,m) \in \mathcal{I}}$ the coefficients of $e:=u-\mathcal{T}_{\mathbf{\Phi}_k}\boldsymbol{\xi}_{S,\epsilon}$ in the Hilbert basis $\{b_{\ell}^m\}_{(\ell,m) \in \mathcal{I}}$. From the asymptotics (\ref{asymptotic behavior}), we have
\begin{equation*}
\|e\|^2_{L^2(B_1)}=\sum_{\ell=0}^{\infty}\frac{c_{\ell}^{(1)}}{1+{\ell}^2}\sum_{m=-\ell}^{\ell}|\hat{e}_{\ell}^m|^2\,\,\,\,\,\,\,\text{and}\,\,\,\,\,\,\,\|e\|^2_{L^2(\partial B_1)}=\sum_{\ell=0}^{\infty}\frac{c_{\ell}^{(2)}}{\sqrt{1+{\ell}^2}}\sum_{m=-\ell}^{\ell}|\hat{e}_{\ell}^m|^2,
\end{equation*}
where we introduced two sequences of strictly positive constants $\{c_{\ell}^{(i)}\}_{\ell \geq 0}$ for $i=1,2$, both bounded above and below and independent of $u-\mathcal{T}_{\mathbf{\Phi}_k}\boldsymbol{\xi}_{S,\epsilon}$. Note that the fact that $\{c_{\ell}^{(2)}\}_{\ell \geq 0}$ is bounded from below follows from the fact that $\kappa^2$ is not a Dirichlet eigenvalue. We derive (\ref{error bound 2}) from this, (\ref{continuity bound}) and (\ref{error bound 1}).
\end{proof}
Proposition \hyperlink{Proposition 2.1}{2.4} shows that the use of stable approximation sets as defined in Definition \hyperlink{Definition 2.1}{2.1}, in combination with appropriate sampling points and weights that satisfy (\ref{Riemann sum}), is a necessary condition for accurate reconstruction of Helmholtz solutions from samples on the sphere, provided that the number of sampling points $S$ is large enough and the regularization parameter $\epsilon$ is sufficiently small. More specifically, we have the following corollary.

\begin{corollary}
\hypertarget{Corollary 2.1}{Let} $\delta >0$. We assume to have a sequence of approximation sets $\{\mathbf{\Phi}_k\}_{k \in \N}$ that is stable in the sense of Definition \hyperlink{Definition 2.1}{\textup{2.1}} and a set of sampling points $\{\mathbf{x}_s\}_{s=1}^S \subset \partial B_1$ together with a positive weight vector $\mathbf{w}_S \in \R^S$ such that \textup{(\ref{Riemann sum})} is satisfied. Assume also that $\kappa^2$ is not a Dirichlet eigenvalue in $B_1$. Then, $\forall u \in \mathcal{B} \cap C^0(\overline{B_1})$, $\exists k \in \N$, $S_0 \in \N$ and $\epsilon_0 \in (0,1]$ such that $\forall S \geq S_0$ and $\epsilon \in (0,\epsilon_0]$
\begin{equation}
\|u-\mathcal{T}_{\mathbf{\Phi}_k}\boldsymbol{\xi}_{S,\epsilon}\|_{L^2(B_1)} \leq \delta \|u\|_{\mathcal{B}},
\label{bound corollary}
\end{equation}
where $\boldsymbol{\xi}_{S,\epsilon} \in \C^{|\mathbf{\Phi}_k|}$ is defined in \textup{(\ref{xi Se solution})}. Moreover, we can take the regularization parameter $\epsilon$ as large as
\begin{equation}
\epsilon_0=\frac{\delta}{2C_{\textup{err}}C_{\textup{stb}}\sigma_{\textup{max}}|\mathbf{\Phi}_k|^{\lambda}\|\mathbf{w}_S\|^{1/2}_{\infty}}.
\label{epsilon0}
\end{equation}
\end{corollary}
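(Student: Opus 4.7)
The proposal is to combine the stable approximation property (Definition 2.1) with the error estimate (\ref{error bound 2}) of Proposition 2.4, choosing the parameters so that the two contributions on the right-hand side of (\ref{error bound 2}) each absorb half of the target tolerance $\delta \|u\|_{\mathcal{B}}$.

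First, I would fix a tolerance $\eta := \delta/(2C_{\textup{err}})$ and invoke Definition \hyperlink{Definition 2.1}{2.1} applied to $u \in \mathcal{B}$. This produces an index $k \in \N$, a stability exponent $\lambda$, a stability constant $C_{\textup{stb}}$ and a coefficient vector $\boldsymbol{\mu} \in \C^{|\mathbf{\Phi}_k|}$ satisfying simultaneously
\begin{equation*}
\|u-\mathcal{T}_{\mathbf{\Phi}_k}\boldsymbol{\mu}\|_{\mathcal{B}} \leq \frac{\delta}{2C_{\textup{err}}} \|u\|_{\mathcal{B}}
\quad\text{and}\quad
\|\boldsymbol{\mu}\|_{\ell^2} \leq C_{\textup{stb}}|\mathbf{\Phi}_k|^{\lambda}\|u\|_{\mathcal{B}}.
\end{equation*}
Note that this step requires $u \in \mathcal{B}$, which holds by assumption (it is actually slightly stronger, $u \in \mathcal{B} \cap C^0(\overline{B_1})$, so that the hypotheses of Proposition \hyperlink{Proposition 2.1}{2.4} are met).

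Next, I would apply Proposition \hyperlink{Proposition 2.1}{2.4} to the same $u$, the same $\mathbf{\Phi}_k$, and this particular $\boldsymbol{\mu}$. The proposition yields an integer $S_0 \in \N$ such that for every $S \geq S_0$ the estimate (\ref{error bound 2}) holds:
\begin{equation*}
\|u-\mathcal{T}_{\mathbf{\Phi}_k}\boldsymbol{\xi}_{S,\epsilon}\|_{L^2(B_1)} \leq C_{\textup{err}} \|u-\mathcal{T}_{\mathbf{\Phi}_k}\boldsymbol{\mu}\|_{\mathcal{B}} + C_{\textup{err}} \epsilon \sigma_{\textup{max}}\|\mathbf{w}_S\|^{1/2}_{\infty}\|\boldsymbol{\mu}\|_{\ell^2}.
\end{equation*}
By the choice of $\eta$ the first summand is at most $(\delta/2)\|u\|_{\mathcal{B}}$. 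For the second summand, I would substitute the bound on $\|\boldsymbol{\mu}\|_{\ell^2}$ and demand that the remaining factor be smaller than $\delta/(2\|u\|_{\mathcal{B}})$: this gives exactly the threshold
\begin{equation*}
\epsilon \leq \epsilon_0 := \frac{\delta}{2C_{\textup{err}}C_{\textup{stb}}\sigma_{\textup{max}}|\mathbf{\Phi}_k|^{\lambda}\|\mathbf{w}_S\|^{1/2}_{\infty}},
\end{equation*}
matching (\ref{epsilon0}). Summing the two halves yields (\ref{bound corollary}).

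The argument is essentially a bookkeeping exercise once Proposition \hyperlink{Proposition 2.1}{2.4} and Definition \hyperlink{Definition 2.1}{2.1} are in place, so there is no real obstacle. The only subtle point to check is that $\epsilon_0 \in (0,1]$: this may require choosing $\delta$ small enough, or equivalently noting that the statement only claims existence of some $\epsilon_0 \in (0,1]$, so one replaces the expression above by $\min\{1,\epsilon_0\}$ if necessary. One should also make explicit that $\epsilon_0$ depends on $k$ and on $\mathbf{w}_S$ (hence on $S$) but not on $u$, which is consistent with the quantifier order in the statement: $k$ is chosen first via the stable approximation property, then $S \geq S_0$ is chosen large enough for Proposition \hyperlink{Proposition 2.1}{2.4} to apply, and finally $\epsilon$ is taken no larger than the resulting threshold.
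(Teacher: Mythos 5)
Your proposal is correct and follows essentially the same route as the paper's proof: invoke the stable approximation property to obtain $k$ and $\boldsymbol{\mu}$ with $\eta\leq\delta/(2C_{\textup{err}})$, then apply Proposition \hyperlink{Proposition 2.1}{2.4} and split the bound into two halves, which yields exactly the threshold (\ref{epsilon0}). Your additional remarks on ensuring $\epsilon_0\in(0,1]$ and on the quantifier order are sensible refinements of points the paper leaves implicit.
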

\begin{proof}
Let $\eta >0$ and $u \in \mathcal{B} \cap C^0(\overline{B_1})$. The stability assumption implies that $\exists k \in \N$ and $\boldsymbol{\mu} \in \C^{|\mathbf{\Phi}_k|}$ such that (\ref{stable approximation}) holds, namely
\begin{equation*}
\|u-\mathcal{T}_{\mathbf{\Phi}_k}\boldsymbol{\mu}\|_{\mathcal{B}} \leq \eta \|u\|_{\mathcal{B}}\,\,\,\,\,\,\,\,\,\,\text{and}\,\,\,\,\,\,\,\,\,\,\|\boldsymbol{\mu}\|_{\ell^2} \leq C_{\textup{stb}}|\mathbf{\Phi}_k|^{\lambda}\|u\|_{\mathcal{B}}.
\end{equation*}
Moreover, let $\epsilon \in (0,1]$. The previous proposition implies the existence of $S \in \N$ such that for this particular $\boldsymbol{\mu}$ we have:
\begin{align*}
\|u-\mathcal{T}_{\mathbf{\Phi}_k}\boldsymbol{\xi}_{S,\epsilon}\|_{L^2( B_1)} &\leq C_{\textup{err}} \left(\|u-\mathcal{T}_{\mathbf{\Phi}_k}\boldsymbol{\mu}\|_{\mathcal{B}} + \epsilon \sigma_{\textup{max}}\|\mathbf{w}_S\|^{1/2}_{\infty}\|\boldsymbol{\mu}\|_{\ell^2} \right)\\
& \leq C_{\textup{err}} \left(\eta + \epsilon \sigma_{\textup{max}}C_{\textup{stb}}|\mathbf{\Phi}_k|^{\lambda}\|\mathbf{w}_S\|^{1/2}_{\infty} \right)\|u\|_{\mathcal{B}}.
\end{align*}
Choosing $\eta \leq \frac{\delta}{2C_{\textup{err}}}$ and $\epsilon \leq \epsilon_0$ with $\epsilon_0$ given in (\ref{epsilon0}), the error estimate (\ref{bound corollary}) follows. 
\end{proof}

Note that Corollary \hyperlink{Corollary 2.1}{2.5} shows that the vector $\boldsymbol{\xi}_{S,\epsilon} \in \C^{|\mathbf{\Phi}_k|}$ -- which can be computed stably in floating-point arithmetic using the regularized SVD (\ref{xi Se solution}) -- provides an accurate approximation $\mathcal{T}_{\mathbf{\Phi}_k}\boldsymbol{\xi}_{S,\epsilon}$ of $u$. 
This is much stronger than saying that for any solution $u$ there exists a coefficient vector $\boldsymbol{\mu} \in \C^{|\mathbf{\Phi}_k|}$ such that $\mathcal{T}_{\mathbf{\Phi}_k}\boldsymbol{\mu}$ is an accurate approximation of $u$.
Therefore, the previous error bounds on $u-\mathcal{T}_{\mathbf{\Phi}_k}\boldsymbol{\xi}_{S,\epsilon}$ apply to the solution obtained by the sampling method when computed using computer arithmetic. This is in contrast with the classical theory for approximation by propagative plane waves, e.g.\ \cite{hiptmair-moiola-perugia4}, which provides rigorous best-approximation error bounds that are often not achievable numerically, because accurate approximations require large coefficients and cancellation, so results obtained using exact arithmetic are not consistent with those obtained using floating-point computation.

Lastly, to measure the error of the approximation, we introduce, in analogy with \cite{parolin-huybrechs-moiola}, the following relative residual
\begin{equation}
\mathcal{E}=\mathcal{E}(u,\mathbf{\Phi}_k,S,\epsilon):=\frac{\|A\boldsymbol{\xi}_{S,\epsilon}-\mathbf{b}\|_{\ell^2}}{\|\mathbf{b}\|_{\ell^2}},
\label{relative residual}
\end{equation}
where $\boldsymbol{\xi}_{S,\epsilon}$ is the solution (\ref{xi Se solution}) of the regularized system.
In fact, following the same reasoning as the proof of Proposition \hyperlink{Proposition 2.1}{2.4}, it can be shown that, for values of $S$ that are sufficiently large, the quantity $\mathcal{E}$ in (\ref{relative residual}) satisfies the inequality
\begin{equation*}
\|u-\mathcal{T}_{\mathbf{\Phi}_k}\boldsymbol{\xi}_{S,\epsilon}\|_{L^2( B_1)} \leq \tilde{C}\|u\|_{\mathcal{B}}\,\mathcal{E},
\end{equation*}
where $\tilde{C}$ is a constant independent of $u$, $\boldsymbol{\Phi}_k$ and $S$.

\chapter{Instability of propagative plane wave sets}
\hypertarget{Chapter 3}{In} this chapter, we show that the propagative plane waves -- similarly to what happens in the two-dimensional case in \cite[Sec.\ 4]{parolin-huybrechs-moiola} within the unit disk -- fail to yield stable approximations in the unit ball $B_1$.
However, approximations of Helmholtz solutions using propagative plane wave expansions are a key component of many Treffz schemes (see \cite{hiptmair-moiola-perugia1}).
We also introduce the classical notion of Herglotz function. All Herglotz functions are solutions of the Helmholtz equation (\ref{Helmholtz equation}), but not all solutions to the Helmholtz equation have such a representation. Furthermore we show that the density associated with the spherical waves is not uniformly bounded in $\ell$, which implies that the discretization of the related integral representation cannot yield approximate discrete representations with bounded coefficients. In the end, the instability of propagative plane wave sets is verified numerically.

\section{Propagative plane waves}

\hypertarget{Section 3.1}{We} will now introduce the concept of \textit{propagative plane wave}. The term `propagative' it is used here to distinguish the following definition from the notion of \textit{evanescent plane wave} that will be introduced later (see Definition \hyperlink{Definition 4.1}{4.1}).
\begin{definition}[\hypertarget{propagative plane wave}{Propagative plane wave}]
For any pair of angles $(\theta_1,\theta_2) \in [0,\pi] \times [0,2\pi)$, we let
\begin{equation}
\phi_{\mathbf{d}}(\mathbf{x}):=e^{i\kappa \mathbf{d}\cdot \mathbf{x}},\,\,\,\,\,\,\,\forall \mathbf{x} \in \R^3,
\label{propagative wave}
\end{equation}
where the propagation direction of the wave is given by
\begin{equation}
\mathbf{d}=\mathbf{d}(\theta_1, \theta_2):=(\sin \theta_1 \cos \theta_2, \sin \theta_1 \sin \theta_2,\cos \theta_1) \in \mathbb{S}^2 \subset \R^3.
\label{propagative direction}
\end{equation}
\end{definition}
It is immediate to check that any propagative plane wave satisfies the homogeneous Helmholtz equation (\ref{Helmholtz equation}) since $\mathbf{d} \cdot \mathbf{d}=1$.

In 3D, isotropic approximations are obtained by using almost-evenly distributed directions.
For some $P \in \N$, the \textit{propagative plane waves approximation set} is defined as
\begin{equation}
\mathbf{\Phi}_{P}:=\Biggl\{\frac{1}{\sqrt{P}}\, \phi_{\mathbf{d}_p}\Biggr\}_{p=1}^{P},
\label{plane waves approximation set}
\end{equation}
where $\{\mathbf{d}_p\}_{p=1}^{P} \subset \mathbb{S}^2$ is a nearly-uniform set of directions. In contrast to spherical waves, the approximation sets based on such propagative plane waves are in general not hierarchical.

\begin{remark}
\hypertarget{Remark 3.2}{Since} extremal systems presented in Definition \textup{\hyperlink{Extremal points system}{2.2}} provide well-distributed points and have excellent geometrical properties, in our numerical experiments we will use them to describe such a set of directions.
Similarly to what we said for the sampling point set, using extremal systems of points, the direction set $\{\mathbf{d}_p\}_{p=1}^P \subset \mathbb{S}^2$ is well-defined only if $P \in \N$ is a perfect square.
\end{remark}

Lastly, let us state an essential identity that is ubiquitous in the following analysis and provides a link between plane waves and spherical ones.
From the identity \cite[Eq. (2.46)]{colton-kress}, namely
\begin{equation}
e^{irt}=\sum_{\ell=0}^{\infty}i^{\ell}(2\ell+1)j_{\ell}(r)\mathsf{P}_{\ell}(t),\,\,\,\,\,\,\,\,\,\,\,\,\,\,\,\,\,\,\,\,\forall r \geq 0,\,\forall t \in [-1,1],
\label{pre jacobi-anger}
\end{equation}
and the addition theorem (\ref{addition theorem}), for any $\mathbf{x} \in B_1$ and $\mathbf{d} \in \mathbb{S}^2$ we deduce the \textit{Jacobi--Anger identity}:
\begin{equation}
\phi_{\mathbf{d}}(\mathbf{x})=e^{i\kappa \mathbf{d}\cdot \mathbf{x}}=4 \pi \sum_{\ell=0}^{\infty}i^{\ell}\sum_{m=-\ell}^{\ell} \overline{Y_{\ell}^m(\mathbf{d})}Y_{\ell}^m(\mathbf{\hat{x}})j_{\ell}(\kappa |\mathbf{x}|),
\label{jacobi-anger}
\end{equation}
where $\mathbf{\hat{x}}:=\mathbf{x}/|\mathbf{x}| \in \mathbb{S}^2$.

\vspace{-1.3mm}

\section{Herglotz representation}

\hypertarget{Section 3.2}{We} now recall the so-called \textit{Herglotz functions} in \cite[Eq.\ (3.43)]{colton-kress}, defined, for any $v \in L^2(\mathbb{S}^2)$, as
\begin{equation}
u_v(\mathbf{x}):=\int_{\mathbb{S}^2}v(\mathbf{d})\phi_{\mathbf{d}}(\mathbf{x})\textup{d}\sigma(\mathbf{d}),\,\,\,\,\,\,\,\forall \mathbf{x} \in \R^3.
\label{herglotz}
\end{equation}
Such an expression is termed \textit{Herglotz representation} and $v$ is called \textit{Herglotz density} of $u_v$. These functions $u_v \in C^{\infty}(\R^3)$ can be seen as a continuous superposition of propagative plane waves, weighted according to $v$, and are entire solutions of the Helmholtz equation.

In fact, since $v \in L^2(\mathbb{S}^2)$, we can rewrite it as an expansion into spherical harmonics, that is
\begin{equation}
v(\mathbf{d})=\sum_{\ell=0}^{\infty}\sum_{m=-\ell}^{\ell}\hat{v}_{\ell}^m Y_{\ell}^m(\mathbf{d}),\,\,\,\,\,\,\,\forall \mathbf{d} \in \mathbb{S}^2,
\label{v spherical harmonics}
\end{equation}
with $\{\hat{v}_{\ell}^m\}_{(\ell,m) \in \mathcal{I}} \in \ell^2(\mathcal{I})$, and, thanks to the identity (\ref{jacobi-anger}) and the orthogonality (\ref{Y_l^m orthogonality}), it follows
\begin{align*}
u_v(\mathbf{x}&)=\int_{\mathbb{S}^2}v(\mathbf{d})\phi_{\mathbf{d}}(\mathbf{x})\textup{d}\sigma(\mathbf{d})=\int_{\mathbb{S}^2}\sum_{\ell=0}^{\infty}\sum_{m=-\ell}^{\ell}\hat{v}_{\ell}^m Y_{\ell}^m(\mathbf{d})\phi_{\mathbf{d}}(\mathbf{x})\textup{d}\sigma(\mathbf{d})\\
&=\int_{\mathbb{S}^2}\sum_{\ell=0}^{\infty}\sum_{m=-\ell}^{\ell}\hat{v}_{\ell}^m Y_{\ell}^m(\mathbf{d})\left(4\pi \sum_{q=0}^{\infty}i^q\sum_{n=-q}^q\overline{Y_q^n(\mathbf{d})}\,\tilde{b}_q^n(\mathbf{x}) \right)\textup{d}\sigma(\mathbf{d})\\
&=\sum_{\ell=0}^{\infty}\sum_{m=-\ell}^{\ell}\left(\frac{4\pi i^{\ell} \hat{v}_{\ell}^m}{\beta_{\ell}} \right)b_{\ell}^m(\mathbf{x}). \numberthis \label{to the herglotz spherical waves}
\end{align*}
Due to the super-exponential growth of the coefficients $\{\beta_{\ell}\}_{\ell \geq 0}$ shown in Lemma \hyperlink{Lemma 1.3}{1.4}, we deduce that $u_v \in \mathcal{B}$.

Although spherical waves have a Herglotz representation, their Herglotz densities are not bounded uniformly with respect to the index $\ell$. For any $\ell \geq 0$, using once again the Jacobi-Anger identity (\ref{jacobi-anger}), we have
\begin{align*}
\int_{\mathbb{S}^2}Y_{\ell}^m(\mathbf{d})\phi_{\mathbf{d}}(\mathbf{x})\textup{d}\sigma(\mathbf{d})&=\int_{\mathbb{S}^2}Y_{\ell}^m(\mathbf{d})\left(4\pi \sum_{q=0}^{\infty}i^q\sum_{n=-q}^q\overline{Y_q^n(\mathbf{d})}\,\tilde{b}_q^n(\mathbf{x}) \right)\textup{d}\sigma(\mathbf{d})\\
&=4\pi i^{\ell} \tilde{b}_{\ell}^m(\mathbf{x}).
\end{align*}
Hence, we obtain the Herglotz representation of the spherical waves,
\begin{equation}
b_{\ell}^m(\mathbf{x})=\int_{\mathbb{S}^2}\left[\frac{\beta_{\ell}}{4\pi i^{\ell}}Y_{\ell}^m(\mathbf{d})\right]\phi_{\mathbf{d}}(\mathbf{x})\textup{d}\sigma(\mathbf{d}).
\label{herglotz spherical waves}
\end{equation}
Thanks to Lemma \hyperlink{Lemma 1.3}{1.4}, it is easy to see that the associated Herglotz density $\mathbf{d} \mapsto \beta_{\ell}(4 \pi)^{-1}i^{-\ell}Y_{\ell}^m(\mathbf{d})$ is not bounded uniformly with respect to the index $\ell$ in $L^2(\mathbb{S}^2)$.
As a consequence, the discretization of this exact integral representation does not produce approximate discrete representations with bounded coefficients, as we will establish next.

Furthermore, not all solutions of the Helmholtz equation can be written in the form (\ref{herglotz}) for some $v \in L^2(\mathbb{S}^2)$.
For any sequence $\{\hat{u}_{\ell}^m\}_{(\ell,m) \in \mathcal{I}} \in \ell^2(\mathcal{I})$, it follows that $u=\sum_{\ell=0}^{\infty}\sum_{m=-\ell}^{\ell}\hat{u}_{\ell}^m b_{\ell}^m \in \mathcal{B}$.
If moreover $u$ admits an Herglotz representation in the form (\ref{herglotz}), then the coefficients $\{\hat{v}_{\ell}^m\}_{(\ell,m) \in \mathcal{I}}$ (\ref{v spherical harmonics}) of the spherical harmonics expansion of the density $v$, thanks to (\ref{to the herglotz spherical waves}), satisfy the relation $\hat{v}_{\ell}^m=(4 \pi)^{-1}i^{-\ell}\beta_{\ell}\hat{u}_{\ell}^m$ for $(\ell,m) \in \mathcal{I}$. For $v$ to belong to $L^2(\mathbb{S}^2)$, these coefficients would need to belong to $\ell^2(\mathcal{I})$ and this is possible only if the coefficients $\{\hat{u}_{\ell}^m\}_{(\ell,m) \in \mathcal{I}}$ decay super-exponentially, to compensate for the growth of $\{\beta_{\ell}\}_{\ell \geq 0}$, again by Lemma \hyperlink{Lemma 1.3}{1.4}.
For instance, propagative plane waves are not Herglotz functions because their coefficients do not decay quickly enough, as can be easily seen from the Jacobi-Anger identity (\ref{jacobi-anger}): in fact, the suitable Herglotz density $v$ for propagative plane waves would have to be a generalized function, i.e.\ the Dirac distribution centered in $\mathbf{d}$.

\section{Propagative plane wave sets are unstable}

\hypertarget{Section 3.3}{We} will now look at a model approximation problem to exemplify the numerical issues posed by propagative plane wave expansions. Specifically, we will examine the case of approximating a spherical wave $b_{\ell}^m$ for some $(\ell,m) \in \mathcal{I}$ using the sequence of approximation sets of propagative plane waves defined in (\ref{plane waves approximation set}). It is shown that the two requirements in (\ref{stable approximation}), namely accurate approximation and small coefficients, cannot both be met at the same time. Therefore, it is not possible to achieve stable approximations using propagative plane waves.

\begin{lemma}
\hypertarget{Lemma 3.1}{Let} $(\ell,m) \in \mathcal{I}$, $0 < \eta \leq 1$ and $P \in \N$ be given. The approximation set $\mathbf{\Phi}_P$ of propagative plane waves defined in \textup{(\ref{plane waves approximation set})} is such that, for every $\boldsymbol{\mu} \in \C^{P}$,
\begin{equation}
\|b_{\ell}^m-\mathcal{T}_{\mathbf{\Phi}_P}\boldsymbol{\mu}\|_{\mathcal{B}} \leq \eta \|b_{\ell}^m\|_{\mathcal{B}}\,\,\, \Rightarrow\,\,\, \|\boldsymbol{\mu}\|_{\ell^2} \geq \left(1-\eta \right)\frac{\beta_{\ell}}{2\sqrt{\pi(2\ell+1)}}\|b_{\ell}^m\|_{\mathcal{B}}.
\label{plane wave instability}
\end{equation}
\end{lemma}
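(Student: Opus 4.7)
The plan is to exploit the Jacobi--Anger identity \eqref{jacobi-anger} to read off the $(\ell,m)$-component of each propagative plane wave in the Hilbert basis $\{b_{\ell'}^{m'}\}_{(\ell',m')\in\mathcal{I}}$, and then compare this component with $b_\ell^m$ itself under the stated approximation assumption. Since $\|b_\ell^m\|_{\mathcal{B}}=1$, the inequality we want to obtain is essentially $\|\boldsymbol{\mu}\|_{\ell^2}\ge (1-\eta)\beta_\ell/(2\sqrt{\pi(2\ell+1)})$, and the decisive factor $\beta_\ell$ enters precisely from the Jacobi--Anger coefficient.

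First I would rewrite \eqref{jacobi-anger} in the form
\begin{equation*}
\phi_{\mathbf{d}}=4\pi\sum_{(\ell',m')\in\mathcal{I}}\frac{i^{\ell'}}{\beta_{\ell'}}\overline{Y_{\ell'}^{m'}(\mathbf{d})}\,b_{\ell'}^{m'},
\end{equation*}
so that the $(\ell,m)$ Fourier coefficient of $P^{-1/2}\phi_{\mathbf{d}_p}$ in the basis $\{b_{\ell'}^{m'}\}$ is $(4\pi i^\ell)/(\sqrt{P}\,\beta_\ell)\,\overline{Y_\ell^m(\mathbf{d}_p)}$. Consequently, by linearity of $\mathcal{T}_{\mathbf{\Phi}_P}$ and orthonormality of $\{b_{\ell'}^{m'}\}$ in $\mathcal{B}$,
\begin{equation*}
\bigl(\mathcal{T}_{\mathbf{\Phi}_P}\boldsymbol{\mu},b_\ell^m\bigr)_{\mathcal{B}}
=\frac{4\pi i^\ell}{\sqrt{P}\,\beta_\ell}\sum_{p=1}^{P}\mu_p\,\overline{Y_\ell^m(\mathbf{d}_p)}.
\end{equation*}

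Next, since $(b_\ell^m,b_\ell^m)_{\mathcal{B}}=1=\|b_\ell^m\|_{\mathcal{B}}^2$, Cauchy--Schwarz in $\mathcal{B}$ yields
\begin{equation*}
\Bigl|\,1-\bigl(\mathcal{T}_{\mathbf{\Phi}_P}\boldsymbol{\mu},b_\ell^m\bigr)_{\mathcal{B}}\Bigr|
=\bigl|\bigl(b_\ell^m-\mathcal{T}_{\mathbf{\Phi}_P}\boldsymbol{\mu},b_\ell^m\bigr)_{\mathcal{B}}\bigr|
\le \|b_\ell^m-\mathcal{T}_{\mathbf{\Phi}_P}\boldsymbol{\mu}\|_{\mathcal{B}}\,\|b_\ell^m\|_{\mathcal{B}}\le \eta,
\end{equation*}
so by the reverse triangle inequality the sum on the right-hand side has modulus at least $(1-\eta)\sqrt{P}\,\beta_\ell/(4\pi)$.

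I would then bound this sum from above by Cauchy--Schwarz in $\C^P$, i.e.\ by $\|\boldsymbol{\mu}\|_{\ell^2}\bigl(\sum_{p=1}^{P}|Y_\ell^m(\mathbf{d}_p)|^2\bigr)^{1/2}$, and estimate each term using the addition theorem \eqref{addition theorem} with $\mathbf{x}=\mathbf{y}=\mathbf{d}_p$: since $\mathsf{P}_\ell(1)=1$, one has $\sum_{m'=-\ell}^{\ell}|Y_\ell^{m'}(\mathbf{d}_p)|^2=(2\ell+1)/(4\pi)$ and in particular $|Y_\ell^m(\mathbf{d}_p)|^2\le(2\ell+1)/(4\pi)$, giving $\sum_{p=1}^{P}|Y_\ell^m(\mathbf{d}_p)|^2\le P(2\ell+1)/(4\pi)$. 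Chaining the two estimates produces $(1-\eta)\beta_\ell/(4\pi)\le \|\boldsymbol{\mu}\|_{\ell^2}\sqrt{(2\ell+1)/(4\pi)}$, which rearranges to \eqref{plane wave instability}. There is no real obstacle here beyond being careful with the normalization constants; the whole point is that the price of accuracy is the factor $\beta_\ell$, whose super-exponential growth in $\ell$ from Lemma \hyperlink{Lemma 1.3}{1.4} then produces the instability.
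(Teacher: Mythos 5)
Your proposal is correct and follows essentially the same route as the paper's proof: extract the $(\ell,m)$-coefficient of $\mathcal{T}_{\mathbf{\Phi}_P}\boldsymbol{\mu}$ via the Jacobi--Anger identity, observe that the approximation hypothesis forces this coefficient to be within $\eta$ of $1$, and bound it above by $2\sqrt{\pi(2\ell+1)}\,\beta_\ell^{-1}\|\boldsymbol{\mu}\|_{\ell^2}$ using the pointwise bound $|Y_\ell^m|^2\le(2\ell+1)/(4\pi)$. The only cosmetic differences are that you project onto $b_\ell^m$ via Cauchy--Schwarz where the paper isolates one term of the squared-error series, and you pair $\mu_p$ with $Y_\ell^m(\mathbf{d}_p)$ in a single Cauchy--Schwarz step where the paper first bounds $|Y_\ell^m(\mathbf{d}_p)|$ pointwise and then uses $\sum_p|\mu_p|\le\sqrt{P}\|\boldsymbol{\mu}\|_{\ell^2}$; both yield the identical constant.
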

\begin{proof}
Let $\boldsymbol{\mu} \in \C^{P}$. Using the Jacobi--Anger identity (\ref{jacobi-anger}), we obtain:
\begin{equation}
\left(\mathcal{T}_{\mathbf{\Phi}_P} \boldsymbol{\mu} \right)(\mathbf{x})=\frac{4 \pi}{\sqrt{P}}\sum_{p=1}^{P}\mu_p\sum_{q=0}^{\infty}i^q \sum_{n=-q}^q\overline{Y_q^n(\mathbf{d}_p)}\,\tilde{b}_q^n(\mathbf{x})=\sum_{q=0}^{\infty}\sum_{n=-q}^q c_q^n \tilde{b}_q^n(\mathbf{x}),
\label{alpaka 2}
\end{equation}
where the coefficients
\begin{equation*}
c_q^n:=\frac{4\pi i^q}{\sqrt{P}}\sum_{p=1}^{P}\mu_p\overline{Y_q^n(\mathbf{d}_p)},\,\,\,\,\,\,\,\,\,\,\,\,\,\,\forall (q,n) \in \mathcal{I},
\end{equation*}
thanks to \cite[Eq. (2.4.106)]{nedelec}, satisfy
\begin{equation}
|c_q^n|=\frac{4 \pi}{\sqrt{P}} \left|\sum_{p=1}^{P}\mu_p\overline{Y_q^n(\mathbf{d}_p)} \right|\leq \frac{2\sqrt{\pi(2q+1)}}{\sqrt{P}}\sum_{p=1}^{P}|\mu_p| \leq 2\sqrt{\pi(2q+1)}\|\boldsymbol{\mu}\|_{\ell^2}.
\label{alpaka}
\end{equation}Therefore, due to (\ref{alpaka 2}), the approximation error is
\begin{equation*}
\|b_{\ell}^m-\mathcal{T}_{\mathbf{\Phi}_P}\boldsymbol{\mu}\|^2_{\mathcal{B}}=\sum_{q=0}^{\infty}\sum_{n=-q}^q\left|\delta_{\ell, q}\delta_{m,n}-c_q^n\beta^{-1}_q\right|^2.
\end{equation*}
To get the error $\|b_{\ell}^m-\mathcal{T}_{\mathbf{\Phi}_P}\boldsymbol{\mu}\|_{\mathcal{B}}$ below the tolerance $\eta>0$, we need at least
\begin{equation*}
\left|\delta_{\ell, q}\delta_{m,n}-c_q^n\beta^{-1}_{\ell}\right| \leq \eta,\,\,\,\,\,\,\,\,\,\,\,\,\,\,\forall (q,n) \in \mathcal{I}.
\end{equation*}
Thanks to (\ref{alpaka}), for $(q,n)=(\ell,m)$, this reads
\begin{equation*}
\eta \geq \left|1-c_{\ell}^m\beta^{-1}_{\ell}\right| \geq 1-|c_{\ell}^m|\beta^{-1}_{\ell} \geq 1-2\beta^{-1}_{\ell}\sqrt{\pi(2\ell+1)}\|\boldsymbol{\mu}\|_{\ell^2},
\end{equation*}
which can be written as (\ref{plane wave instability}), recalling that $\|b_{\ell}^m\|_{\mathcal{B}}=1$.
\end{proof}

The bound states that in order to accurately approximate the spherical waves $b_{\ell}^m$ in the form of propagative plane wave expansions $\mathcal{T}_{\mathbf{\Phi}_P}\boldsymbol{\mu}$ with a given accuracy (i.e.\ small $\eta >0$), the norms of the coefficients must increase at least like $\sim {\ell}^{\ell}$, i.e.\ super-exponentially fast in $\ell$ (see Lemma \hyperlink{Lemma 1.3}{1.4}). It clearly emerges that the properties of accuracy and stability are in this case mutually exclusive.
In analogy with \cite[Sec.\ 4.3]{parolin-huybrechs-moiola}, we condense this result into the following theorem.

\begin{theorem}
\hypertarget{Theorem 3.1}{The} sequence of approximation set $\{\mathbf{\Phi}_P\}_{P \in \N}$ consisting of propagative plane waves with almost-evenly distributed directions as defined in \textup{(\ref{plane waves approximation set})} is not a stable approximation for $\mathcal{B}$.
\end{theorem}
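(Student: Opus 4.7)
The plan is to argue by contradiction using the spherical waves $\{b_\ell^m\}_{(\ell,m) \in \mathcal{I}}$ themselves as a parametric family of adversarial targets, combining the coefficient lower bound just established in Lemma 3.3 with the super-exponential asymptotics of $\beta_\ell$ from Lemma 1.4.

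First I would suppose that $\{\mathbf{\Phi}_P\}_{P \in \N}$ is a stable approximation for $\mathcal{B}$ and fix any tolerance $\eta \in (0,1)$, for instance $\eta = 1/2$. Definition 2.1 then yields fixed constants $\lambda \geq 0$ and $C_{\textup{stb}} \geq 0$ such that, for every $u \in \mathcal{B}$, some index $P \in \N$ and coefficient vector $\boldsymbol{\mu} \in \C^{P}$ satisfy simultaneously the accuracy bound $\|u - \mathcal{T}_{\mathbf{\Phi}_P}\boldsymbol{\mu}\|_{\mathcal{B}} \leq \eta \|u\|_{\mathcal{B}}$ and the coefficient bound $\|\boldsymbol{\mu}\|_{\ell^2} \leq C_{\textup{stb}}\,P^{\lambda}\|u\|_{\mathcal{B}}$. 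Applying this property to each normalized target $u = b_\ell^m$ produces an index $P_\ell \in \N$ and a coefficient vector $\boldsymbol{\mu}_\ell \in \C^{P_\ell}$ to which Lemma 3.3 in turn provides the complementary lower bound $\|\boldsymbol{\mu}_\ell\|_{\ell^2} \geq (1-\eta)\,\beta_\ell / (2\sqrt{\pi(2\ell+1)})$, so that chaining the two inequalities yields
\begin{equation*}
C_{\textup{stb}}\,P_\ell^{\lambda} \;\geq\; \frac{(1-\eta)\,\beta_\ell}{2\sqrt{\pi(2\ell+1)}} \qquad \forall \ell \geq 0.
\end{equation*}

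Combined with the super-exponential asymptotics $\beta_\ell \sim 2\sqrt{2}\kappa\,(2/(e\kappa))^\ell\,\ell^{\ell+1/2}$ from Lemma 1.4, the right-hand side grows faster than every polynomial (indeed every exponential) in $\ell$. For the immediate case $\lambda = 0$ this is already an impossibility, since no fixed constant $C_{\textup{stb}}$ can dominate the unbounded quantity $\beta_\ell / \sqrt{\ell}$. For $\lambda > 0$ the inequality forces $P_\ell \geq \bigl((1-\eta)\beta_\ell / (2\,C_{\textup{stb}}\sqrt{\pi(2\ell+1)})\bigr)^{1/\lambda}$, so $P_\ell$ itself must diverge super-exponentially in $\ell$, and consequently the putative coefficient budget $C_{\textup{stb}}\,P_\ell^{\lambda}$ becomes super-exponential in the target complexity: this defeats the polynomial-in-$P$ scaling that the stability definition is meant to enforce, producing the desired contradiction.

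The main delicacy is precisely this last step, because Definition 2.1 formally allows $P_\ell$ to be chosen as a function of the target $u$ without an a~priori upper bound. The core observation is that $\lambda$ and $C_{\textup{stb}}$ are frozen once $\eta$ is prescribed and cannot depend on $\ell$, so the chained bound is really a single relation between two growth rates that must hold uniformly in $\ell$: any finite $\lambda$ is powerless against the super-exponential growth of $\beta_\ell$ dictated by the intrinsic Fourier structure of the spherical waves, and this is the quantitative content the theorem is extracting from Lemma 3.3.
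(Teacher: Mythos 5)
Your argument is essentially the paper's own proof: both test the stability definition against the family $\{b_{\ell}^m\}_{(\ell,m)\in\mathcal{I}}$, invoke the coefficient lower bound of Lemma 3.3, and let the super-exponential growth of $\beta_{\ell}$ from Lemma 1.4 do the work. If anything you are more careful than the paper, which simply asserts that $\|\boldsymbol{\mu}\|_{\ell^2}$ ``can not be bounded uniformly with respect to $\ell$'' without tracking the $|\mathbf{\Phi}_P|^{\lambda}$ allowance in Definition 2.1; the delicacy you flag at the end --- that the definition places no a priori bound on $P_{\ell}$, so one must still rule out satisfying the coefficient bound by letting $P_{\ell}$ itself grow super-exponentially --- is a genuine looseness that the paper's proof shares rather than resolves, and fully closing it would require either a single target $u$ mixing infinitely many slowly-decaying modes or an explicit link between $P$ and the achievable accuracy.
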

\begin{proof}
The previous lemma shows that the sequence of spherical waves $\{b_{\ell}^m\}_{(\ell,m) \in \mathcal{I}}$ can not be stably approximated by the sequence of approximation set $\{\mathbf{\Phi}_P\}_{P \in \N}$ in the sense of Definition \hyperlink{Definition 2.1}{2.1}.
Indeed, let $(\ell,m) \in \mathcal{I}$ and suppose that there exists $P \in \N$ and $\mu \in \C^{P}$ such that $\|b_{\ell}^m-\mathcal{T}_{\mathbf{\Phi}_P}\boldsymbol{\mu}\|_{\mathcal{B}} \leq \eta \|b_{\ell}^m\|_{\mathcal{B}}$ for some $1 > \eta >0$. Then
\begin{equation*}
\|\boldsymbol{\mu}\|_{\ell^2} \geq \left(1-\eta \right)\frac{\beta_{\ell}}{2\sqrt{\pi(2\ell+1)}}\|b_{\ell}^m\|_{\mathcal{B}},
\end{equation*}
which implies that $\|\boldsymbol{\mu}\|_{\ell^2}$ can not be bounded uniformly with respect to $\ell$ in virtue of Lemma \hyperlink{Lemma 1.3}{1.4}.
Since the stability condition (\ref{stable approximation}) is not met, we can conclude that the sequence of approximation sets $\{\mathbf{\Phi}_P\}_{P \in \N}$ is unstable according to the Definition \hyperlink{Definition 2.1}{2.1}.
\end{proof}

\section{Modal analysis}

\hypertarget{Section 3.5}{Another} perspective on the same issue is given by the Jacobi--Anger identity (\ref{jacobi-anger}), because it allows us to get a quantitative insight into the modal content of propagative plane waves. For any $\mathbf{x} \in B_1$ and $\mathbf{d}=\mathbf{d}(\theta_1,\theta_2) \in \mathbb{S}^2$ we have
\begin{align*}
\phi_{\mathbf{d}}(\mathbf{x})&= \sum_{\ell=0}^{\infty}\sum_{m=-\ell}^{\ell} 4 \pi i^{\ell} \overline{Y_{\ell}^m(\mathbf{d})}\,\tilde{b}_{\ell}^m(\mathbf{x})\\
&=\sum_{\ell=0}^{\infty}\sum_{m=-\ell}^{\ell}\left[4\pi i^{\ell}\beta_{\ell}^{-1} \gamma_{\ell}^m e^{-im\theta_2}\mathsf{P}_{\ell}^m(\cos \theta_1)\right] b_{\ell}^m(\mathbf{x}). \numberthis \label{expansion plane wave}
\end{align*}
Note that the moduli of the coefficients
\begin{equation} \hat{\phi}_{\ell}^m(\theta_1):=\left|\left(\phi_{\mathbf{d}},b_{\ell}^m\right)_{\mathcal{B}} \right|=\frac{4\pi}{\beta_{\ell}}\gamma_{\ell}^m\left| \mathsf{P}_{\ell}^m(\cos \theta_1)\right|
\label{propagative coefficients}
\end{equation}
in the expansion (\ref{expansion plane wave}) depend on $(\ell,m) \in \mathcal{I}$ and $\theta_1 \in [0,\pi]$. However, thanks to (\ref{gamma constant}) and (\ref{negative legendre polynomials}), $\gamma_{\ell}^{-m}|\mathsf{P}_{\ell}^{-m}|=\gamma_{\ell}^{m}|\mathsf{P}_{\ell}^{m}|$ and therefore $\hat{\phi}_{\ell}^{-m}(\theta_1)=\hat{\phi}_{\ell}^m(\theta_1)$ for every $(\ell,m) \in \mathcal{I}$. Furthermore, due to the parity of the Ferrers functions \cite[Eq.\ (14.7.17)]{nist}, when analyzing the properties of the moduli of the coefficients, it is enough to consider only the case $\theta_1 \in \left[0,\pi/2 \right]$.

\begin{figure}
\centering
\begin{tabular}{cccc}
$\hat{\phi}_{\ell}^m(\pi/2)$ & $\hat{\phi}_{\ell}^m(\pi/4)$ & $\hat{\phi}_{\ell}^m(\pi/64)$ & $\hat{\phi}_{\ell}^m(0)$\\
\includegraphics[width=.2\linewidth,valign=m]{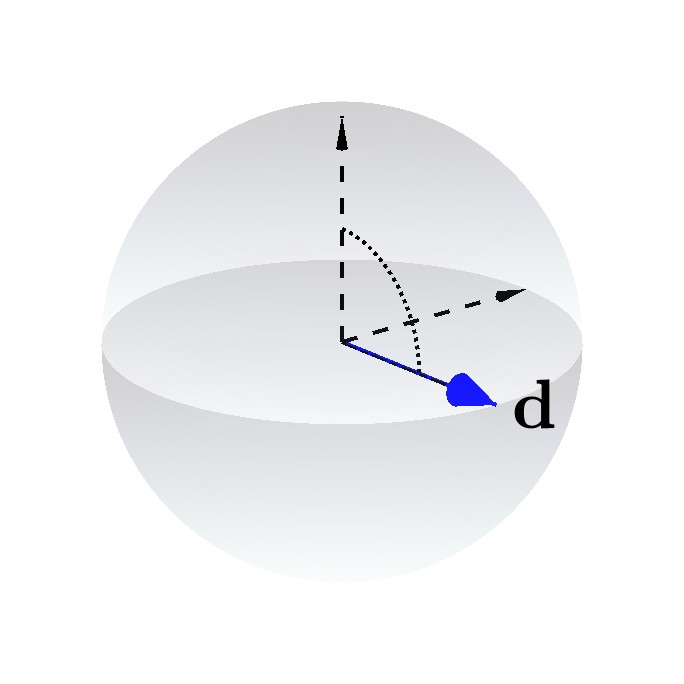} & \includegraphics[width=.2\linewidth,valign=m]{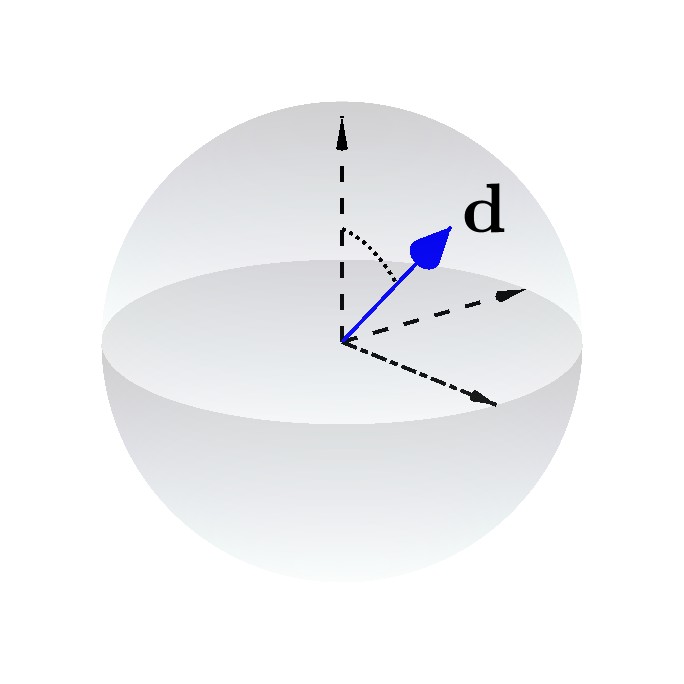} & \includegraphics[width=.2\linewidth,valign=m]{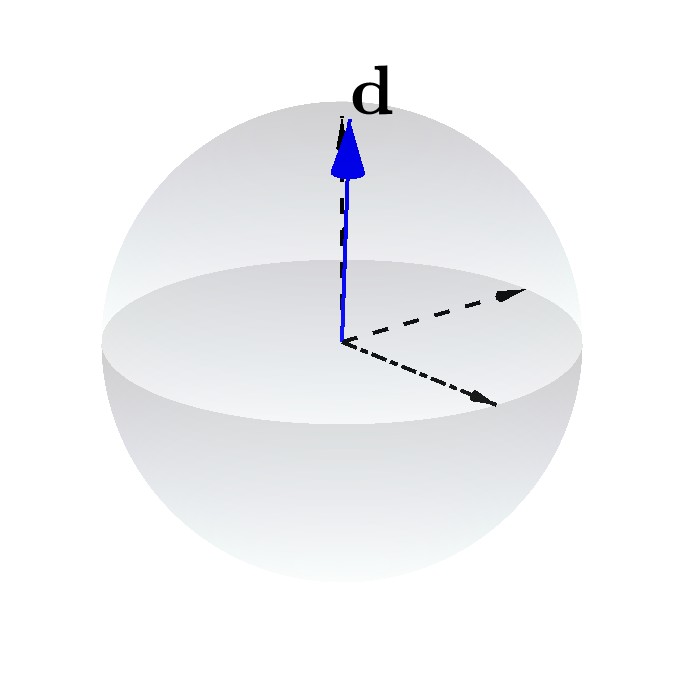} & \includegraphics[width=.2\linewidth,valign=m]{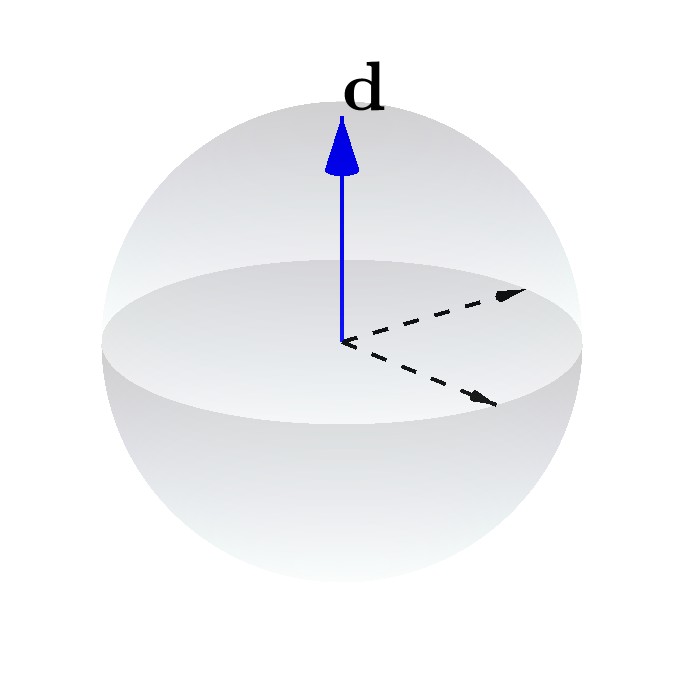}\\
\includegraphics[width=.2\linewidth,valign=m]{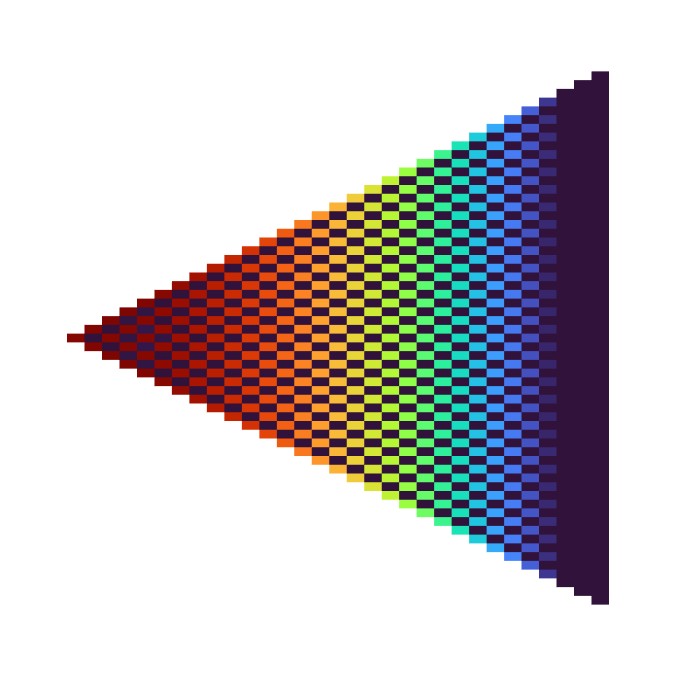} & \includegraphics[width=.2\linewidth,valign=m]{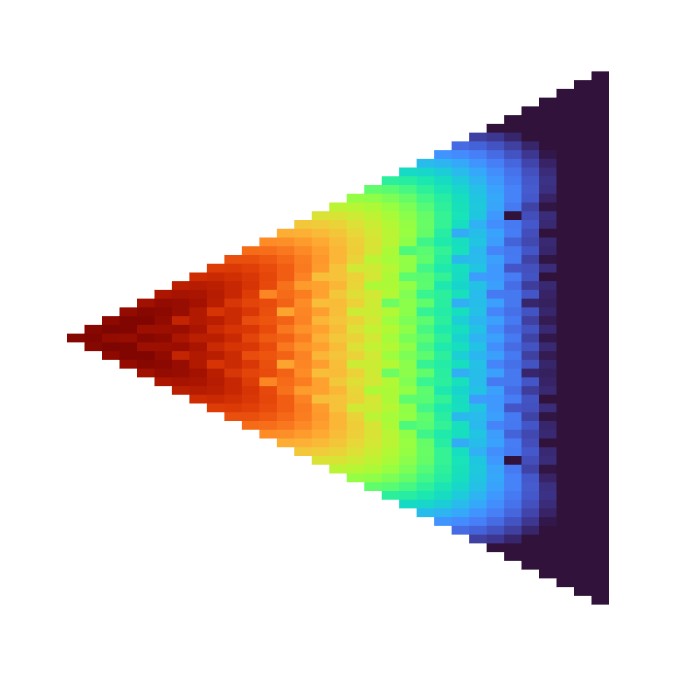} & \includegraphics[width=.2\linewidth,valign=m]{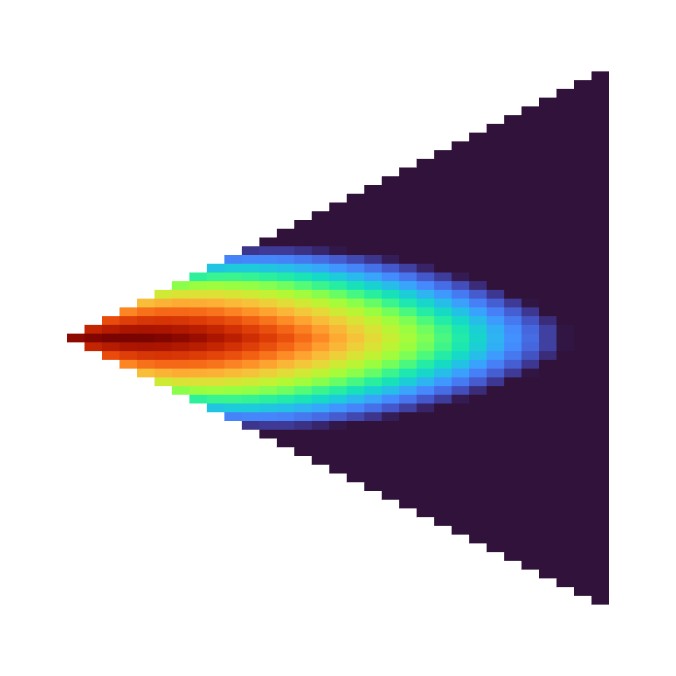} & \includegraphics[width=.2\linewidth,valign=m]{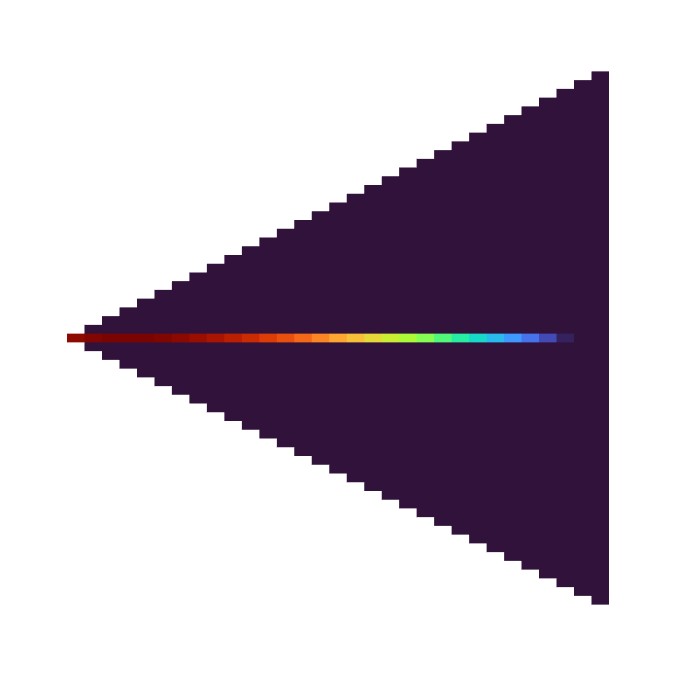}\\
\vspace{1mm}\\
\multicolumn{4}{c}{\includegraphics[width=0.85\linewidth]{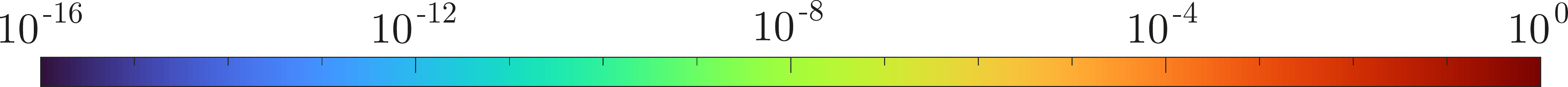}}\\
\end{tabular}
\caption{Modal analysis of the propagative plane waves: (above) representations of direction vectors $\mathbf{d}$ with fixed azimuthal angle $\theta_2=0$ and (below) related distributions of the coefficients $\hat{\phi}_{\ell}^m(\theta_1)$ in (\ref{propagative coefficients}) for different values of $\theta_1$. The index $\ell$ varies along the abscissa within the range $0 \leq \ell \leq 30$, while the index $m$ varies along the ordinate within the range $0 \leq |m| \leq \ell$ forming a triangle. Wavenumber $\kappa=6$.} \label{figure 3.1}
\end{figure}

It is evident that the modal distribution of each plane wave $\phi_{\mathbf{d}}$ depends on the vertical component of its propagation direction and hence on the zeros distribution of $\mathsf{P}_{\ell}^m$ \cite[Sec.\ 14.16.2]{nist}. For instance, $\hat{\phi}_{\ell}^m(\pi/2)=0$ every time $\ell+m$ is odd or, since $\mathsf{P}_{\ell}^{\ell}(\cos \theta_1)=(2\ell-1)!!\sin^{\ell} \theta_1$ \cite[Eq.\ (2.4.102)]{nedelec}, $\hat{\phi}_{\ell}^{\ell}(0)=0$ for every $\ell>0$. More generally, thanks to the definition of the Ferrers functions (\ref{legendre polynomials}), it follows that $\hat{\phi}_{\ell}^m(0)=0$ for every $m \neq 0$. Therefore, if $\theta_1 \approx 0$, then the closer $|m|$ gets to $\ell$ the closer $\hat{\phi}_{\ell}^m(\theta_1)$ gets to $0$. Some distributions of the coefficients (\ref{propagative coefficients}) are depicted in Figure \ref{figure 3.1}.

For instance, if we want to approximate a spherical wave $b_{\ell}^{\ell}$ with $\ell>0$, we will need many more propagative plane waves with `horizontal' rather than `vertical' directions. Conversely, for a propagative mode $b_{\ell}^0$ with an odd $\ell$, many more plane waves with `vertical' rather than `horizontal' directions will be needed.

However, what unites all plane waves (\ref{propagative wave}), regardless of their direction of propagation $\mathbf{d} \in \mathbb{S}^2$, is given by the fact that the coefficients $\hat{\phi}_{\ell}^m(\theta_1)$ decay super-exponentially fast in the evanescent-mode regime $\ell \gg \kappa$.
The direct result is that propagative plane waves are not suited for approximating Helmholtz solutions that have a high-$\ell$ Fourier modal content, as previously shown in Lemma \hyperlink{Lemma 3.1}{3.3}.
This follows directly from the images in Figure \ref{figure 3.1}.

\begin{figure}
\centering
\includegraphics[width=6.5cm]{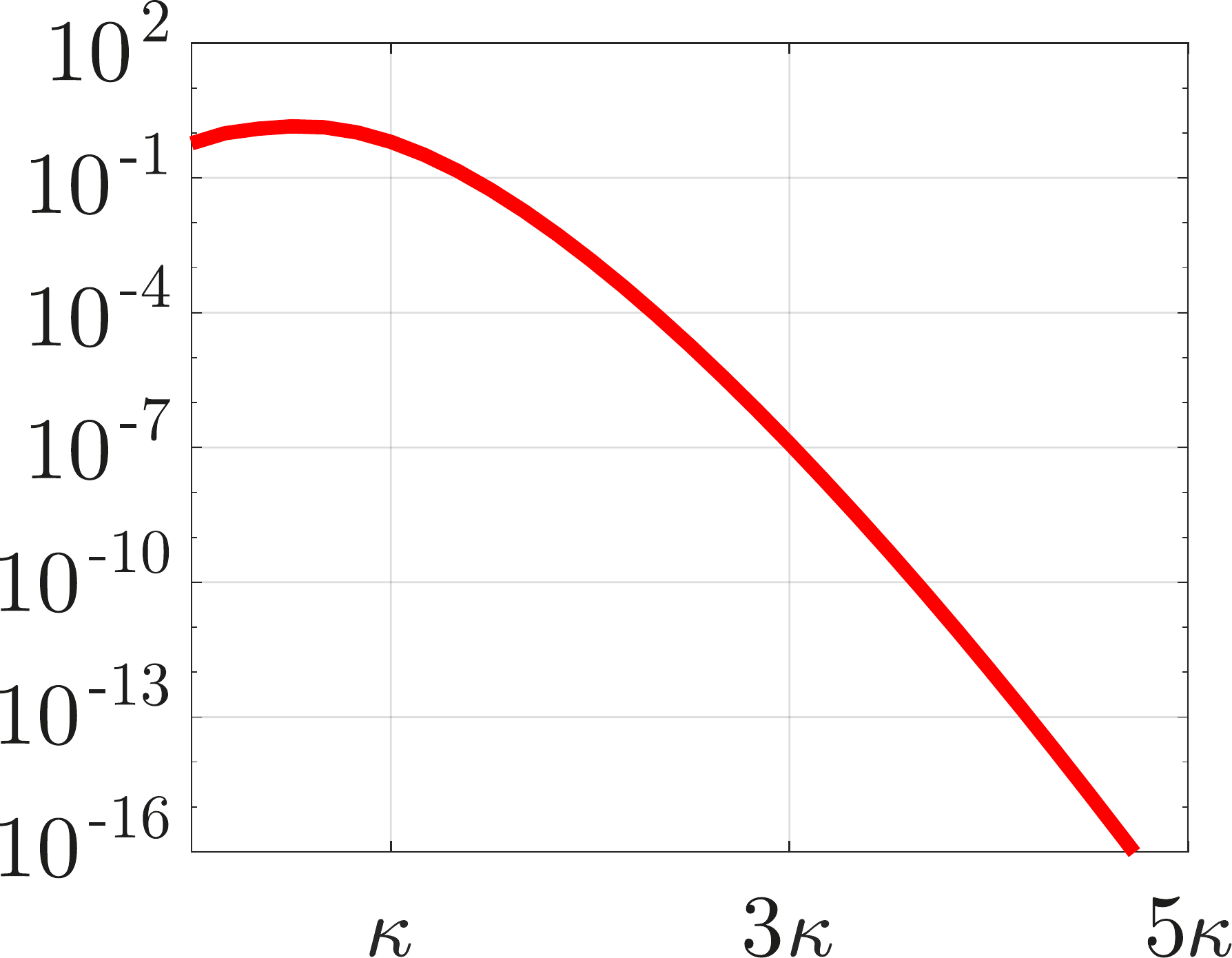}
\caption{Modal analysis of the propagative plane waves: distribution of the coefficients $\hat{\phi}_{\ell}$ in (\ref{l2 coefficients}).
This corresponds to taking the $\ell^2$-norms along vertical segments of the values in the triangles in Figure \ref{figure 3.1}.
Observe that these coefficients decay super-exponentially fast in the evanescent regime $\ell \gg \kappa$. Wavenumber $\kappa=6$.}
\label{figure 3.2}
\end{figure}

Furthermore, this can be seen more clearly by defining:
\vspace{-2mm}
\begin{equation}
\begin{split}
\tilde{b}_\ell[\mathbf{d}]&:=\sum_{m=-\ell}^{\ell}\left(\phi_{\mathbf{d}},b_{\ell}^m\right)_{\mathcal{B}}b_{\ell}^m,\,\,\,\,\,\,\,\,\,\,\,\,\,\,\,\,\,\,\,\,\,\,\,\,  \forall \ell \geq 0,\,\forall \mathbf{d} \in \mathbb{S}^2,\\
b_\ell[\mathbf{d}]&:=\hat{\phi}_{\ell}^{-1}\tilde{b}_\ell[\mathbf{d}],\,\,\,\,\,\,\,\,\,\,\,\,\,\,\,\,\,\,\,\,\,\,\,\,\,\,\,\,\,\,\,\,\,\,\,\,\,\,\,\,\,\,\,\,\,\,\,\,\,\, \hat{\phi}_{\ell}:=\big\|\tilde{b}_\ell[\mathbf{d}]\big\|_{\mathcal{B}}.
\end{split}
\label{dfg}
\end{equation}
In fact, thanks to (\ref{dfg}), we can write
\vspace{-1.5mm}
\begin{equation}
\phi_{\mathbf{d}}=\sum_{\ell=0}^{\infty}\sum_{m=-\ell}^{\ell}\left(\phi_{\mathbf{d}},b_{\ell}^m\right)_{\mathcal{B}}b_{\ell}^m=\sum_{\ell=0}^{\infty}\hat{\phi}_{\ell}\,b_{\ell}[\mathbf{d}],
\label{expansion2 propagative}
\end{equation}
\vspace{-1.5mm}
where $b_{\ell}[\mathbf{d}] \in \text{span}\{b_{\ell}^m\}_{m=-\ell}^{\ell}$ with $\|b_{\ell}[\mathbf{d}]\|_{\mathcal{B}}=1$ and
\begin{equation}
\hat{\phi}_{\ell}=\left(\sum_{m=-\ell}^{\ell}\left[\hat{\phi}_{\ell}^m(\theta_1)\right]^2\right)^{1/2}\!\!\!\!\!=\frac{4 \pi}{\beta_{\ell}}\left(\sum_{m=-\ell}^{\ell}\left|Y_{\ell}^m(\mathbf{d}) \right|^2\right)^{1/2}\!\!\!\!\!=\frac{2\sqrt{\pi(2\ell+1)}}{\beta_{\ell}}.
\label{l2 coefficients}
\end{equation}
Note that in (\ref{l2 coefficients}) the second equality holds due to (\ref{propagative coefficients}), while the last one thanks to \cite[Eq.\ (2.4.105)]{nedelec}.
Up to a multiplicative constant of $i^{\ell}$, the functions $b_{\ell}\left[\mathbf{d}\right]$ coincide with the spherical waves $b_{\ell}^0$ rotated according to the propagative plane wave direction $\mathbf{d} \in \mathbb{S}^2$. This can be readily checked thanks to the definitions (\ref{spherical harmonic}) and (\ref{b tilde definizione}) along with the identities (\ref{wigner property}) and (\ref{D-matrix0}), which involve the so-called \textit{Wigner matrices} (see Section \hyperlink{Section 4.3}{4.3}).
For instance, if $\mathbf{d} \in \mathbb{S}^2$ is the upward direction, from (\ref{expansion plane wave}) and (\ref{dfg}) it follows that $b_{\ell}[\mathbf{d}]=i^{\ell}b_{\ell}^0$.
The coefficients $\hat{\phi}_{\ell}$ in (\ref{expansion2 propagative}) are independent of $\mathbf{d}$ since $\{Y_{\ell}^m\}_{|m| \leq \ell}$ is an orthonormal basis of $\mathcal{Y}_{\ell}$ -- the space of spherical harmonic and homogeneous polynomials of degree $\ell$ -- and furthermore the sphere, the homogeneous polynomials and the Laplace operator are all invariant by rotation. The distribution of the coefficients $\hat{\phi}_{\ell}$ is depicted in Figure \ref{figure 3.2}: note the super-exponentially fast decay in the evanescent regime $\ell \gg \kappa$.

\section{Numerical experiments}

\hypertarget{Section 3.4}{The} instability result of Lemma \hyperlink{Lemma 3.1}{3.3} can be confirmed through numerical experiments. Let us examine again the problem of the approximation of the spherical wave $b_{\ell}^m$ for some $(\ell,m) \in \mathcal{I}$ by a propagative plane waves approximation set $\mathbf{\Phi}_P$ defined in (\ref{plane waves approximation set}).
As anticipated, in this section we use extremal systems of points (see Definition \hyperlink{Extremal points system}{2.2}) to describe both the propagation directions of the plane waves in $\mathbf{\Phi}_P$ and the sampling points $\{\mathbf{x}_s\}_{s=1}^S \subset \partial B_1$. The associated weights $\mathbf{w}_S \in \R^S$ are computed solving the linear system (\ref{linear system cubature}). Therefore, recalling Remark \hyperlink{Remark 2.3}{2.3} and Remark \hyperlink{Remark 3.2}{3.2}, the numerical results presented are based on using the smallest square integer greater than or equal to $P$ as the approximation set dimension.
In analogy with \cite[Sec.\ 4.4]{parolin-huybrechs-moiola}, we choose $S=\lceil \sqrt{2|\mathbf{\Phi}_P|}\rceil^2$.
The sampling matrix $A$ and the right-hand side $\mathbf{b}$ are defined according to (\ref{A matrix definition}), where $u=b_{\ell}^m$ for some $(\ell,m) \in \mathcal{I}$.

\begin{figure}
\centering
\includegraphics[width=8.5cm]{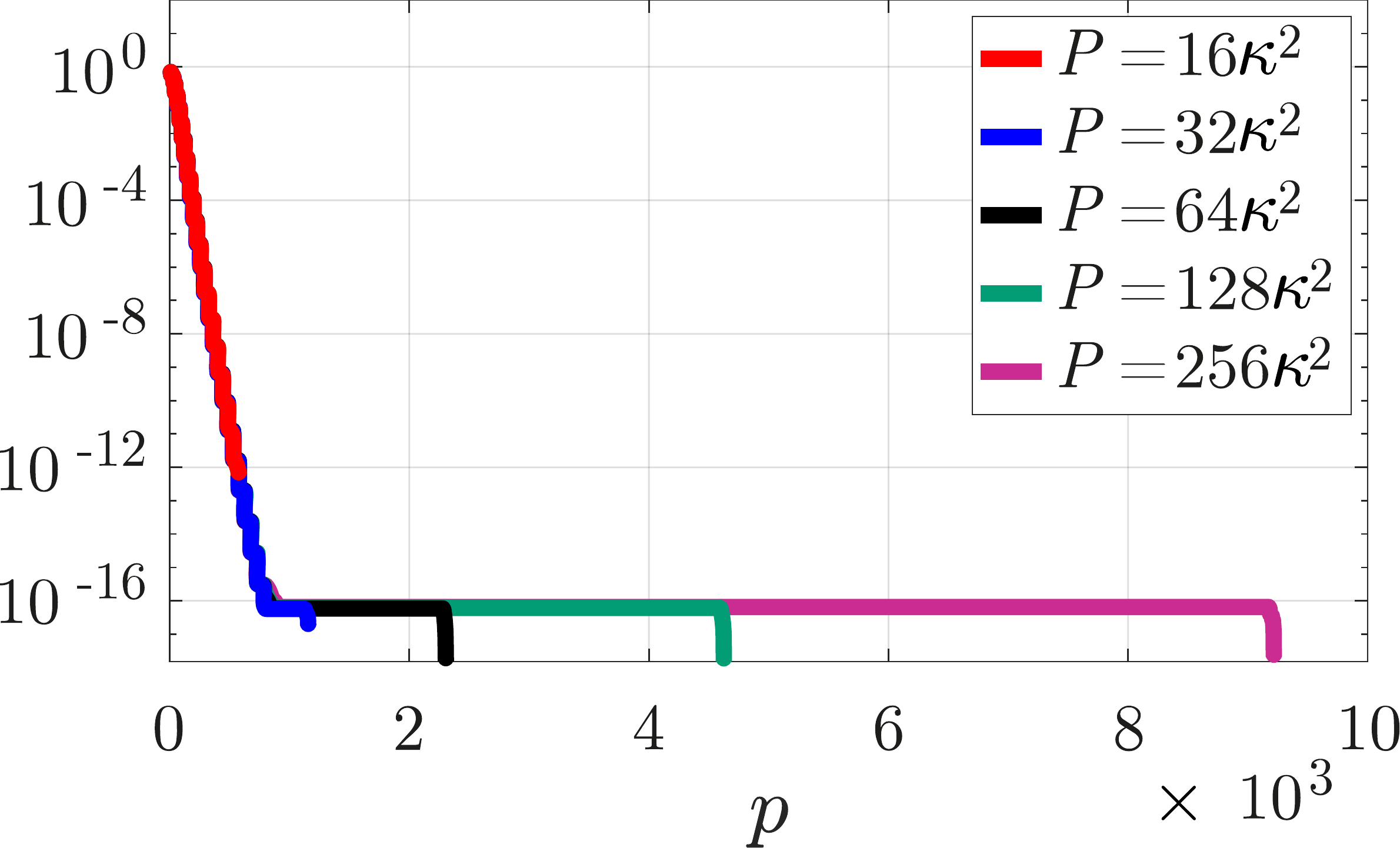}
\caption{Singular values $\{\sigma_p\}_{p}$ of the matrix $A$ using
propagative plane wave approximation sets (\ref{plane waves approximation set}).
Observe that the number of singular values above a fixed threshold does not increase when $P$ is raised. Wavenumber $\kappa=6$.}
\label{figure 3.3}
\end{figure}

The matrix $A$ is known to be ill-conditioned, as its condition number (the ratio of the largest singular value $\sigma_{\textup{max}}$ over the smallest one $\sigma_{\textup{min}}$) increases exponentially with the number of plane waves in the approximation set $\mathbf{\Phi}_P$, as can be inferred from Figure \ref{figure 3.3}. This fact is not unique to the sampling method, and can be observed in similar experiments in \cite[Sec.\ 4.3]{hiptmair-moiola-perugia1} for the mass matrix of a Galerkin formulation in a Cartesian geometry, for the case $S=|\mathbf{\Phi}_P|$. The least-squares formulation also has an even worse condition number. Therefore, in our subsequent numerical experiments, we will use the regularization technique outlined in Section \hyperlink{Section 2.2}{2.2} with threshold parameter $\epsilon=10^{-14}$.

\begin{figure}
\centering
\includegraphics[width=.82\linewidth]{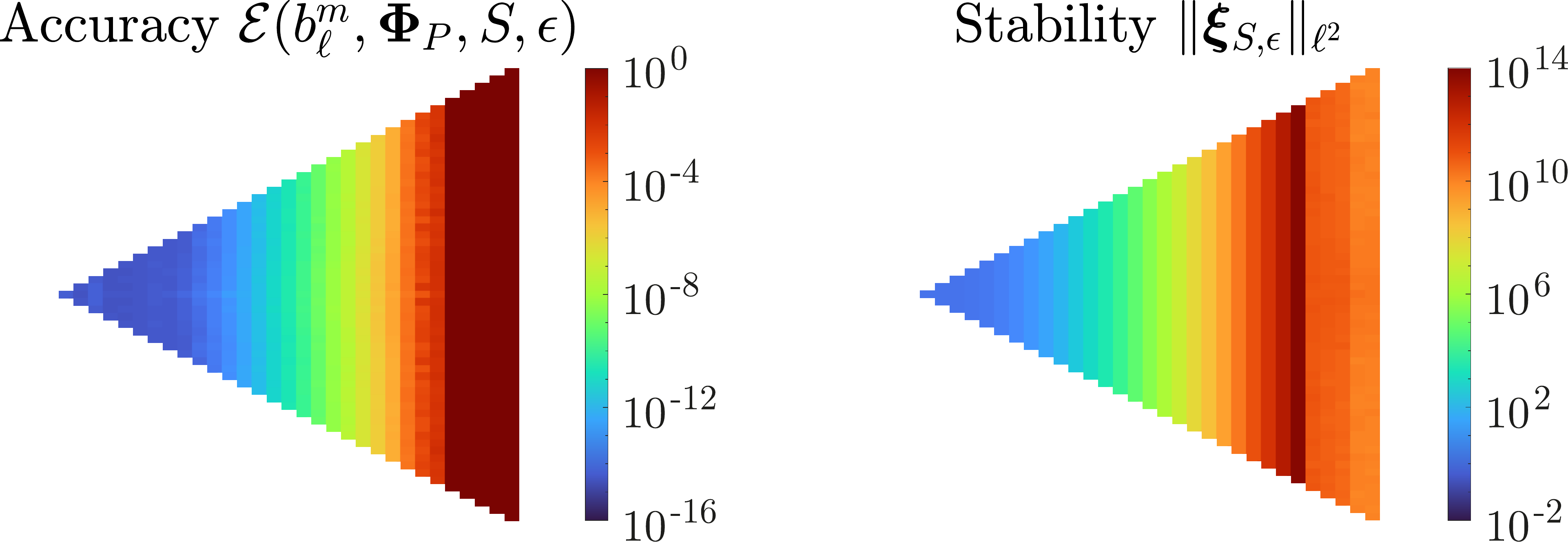}
\caption{Accuracy $\mathcal{E}$ as defined in (\ref{relative residual}) (left) and stability $\|\boldsymbol{\xi}_{S,\epsilon}\|_{\ell^2}$ (right) of the approximation of spherical waves $b_{\ell}^m$ by propagative plane waves. The index $\ell$ varies along the abscissa within the range $0 \leq \ell \leq 5\kappa$, while the index $m$ varies along the ordinate within the range $0 \leq |m| \leq \ell$ forming a triangle.
DOF budget $P=64\kappa^2$, wavenumber $\kappa=6$ and regularization parameter $\epsilon=10^{-14}$.} \label{figure triangle}
\end{figure}

As shown in Figure \ref{figure triangle} for the particular case where $P=64\kappa^2$, the mode number $m$ is irrelevant, since, for fixed $\ell$, the outcomes do not vary significantly as the order $m$ changes. Therefore, we will only examine the case where $m=0$ in the following.
Here the same matrix $A$ is used to approximate all the $b_{\ell}^m$’s for any $(\ell,m) \in \mathcal{I}$ up to $\ell = 5\kappa$.
On the left panel we report the relative residual $\mathcal{E}$ defined in (\ref{relative residual}) as a measure of the accuracy of the approximation. On the right panel we report the size of the coefficients, namely $\|\boldsymbol{\xi}_{S,\epsilon}\|_{\ell^2}$ as a measure of the stability of the approximation.
Other numerical results, with fixed $m=0$ and various choice of $P \in \N$, are reported with this same layout in Figure \ref{figure 3.4}.
We observe three regimes:
\begin{itemize}
\item For the propagative modes, which are those corresponding to spherical waves with mode number $\ell \leq \kappa$, the approximation is accurate ($\mathcal{E}<10^{-13}$) and the size of the coefficients is moderate ($\|\boldsymbol{\xi}_{S,\epsilon}\|_{\ell^2} < 10$).
\vspace{-0.1mm}
\item For mode numbers $\ell$ that are roughly larger than the wavenumber $\kappa$, the norms of the coefficients of the computed approximations blow up exponentially and the accuracy decreases proportionally.
\vspace{-0.1mm}
\item At a certain point (roughly between $\ell=4\kappa$ and $\ell=5\kappa$ in this specific numerical experiment), the exponential growth of the coefficients completely destroys the stability of the approximation and we are unable to approximate the target $b_{\ell}^0$ with any significant accuracy. When the relative error is of the order of $\mathcal{O}(1)$, the size of the coefficients reported is not meaningful. In fact, taking $\boldsymbol{\xi}_{S,\epsilon}$ identically zero would provide a similar error.
\end{itemize}
As in \cite[Sec.\ 4.4]{parolin-huybrechs-moiola}, even in three dimensions, increasing $P$ does not improve the accuracy beyond a certain point. In fact, Figure \ref{figure 3.3} shows that the $\epsilon$\textit{-rank} (the number of singular values larger than $\epsilon$) of the matrix $A$ does not increase as $P$ is raised.
Although increasing $P$ does not improve accuracy, it does not worsen the numerical instability any further. This is true despite the blow up with respect to $P$ of the condition number of the matrix $A$, that follows from Figure \ref{figure 3.3}.

As in Figure \ref{figure triangle}, also in Figure \ref{figure 3.4} for fixed $P$, the same matrix $A$ is used to approximate all the $b_{\ell}^0$’s for any mode number $\ell$ up to $\ell=5\kappa$ (i.e.\ to compute all markers of the same color). Even when the matrix $A$ is extremely ill-conditioned (for example, when $P=256 \kappa^2$ in the numerical experiments presented), we still get almost machine-precision accuracy for all propagative modes $\ell \leq \kappa$, while maintaining an error of order $\mathcal{O}(1)$ for evanescent modes with larger mode number $\ell \geq 4\kappa$. The simple regularization technique outlined in Section \hyperlink{Section 2.2}{2.2} enables us to obtain these results.

Any regularization technique can reduce but not eliminate the inherent instability of Trefftz methods that use propagative plane waves. Even with regularization, it remains impossible to achieve accurate approximation of evanescent modes within a given floating-point precision. Similarly to the two-dimensional case \cite{parolin-huybrechs-moiola}, the main goal of this paper is to create a discrete space of plane waves that enables stable approximation of all modes, for application in Trefftz schemes.
\begin{figure}
\centering
\includegraphics[width=\linewidth]{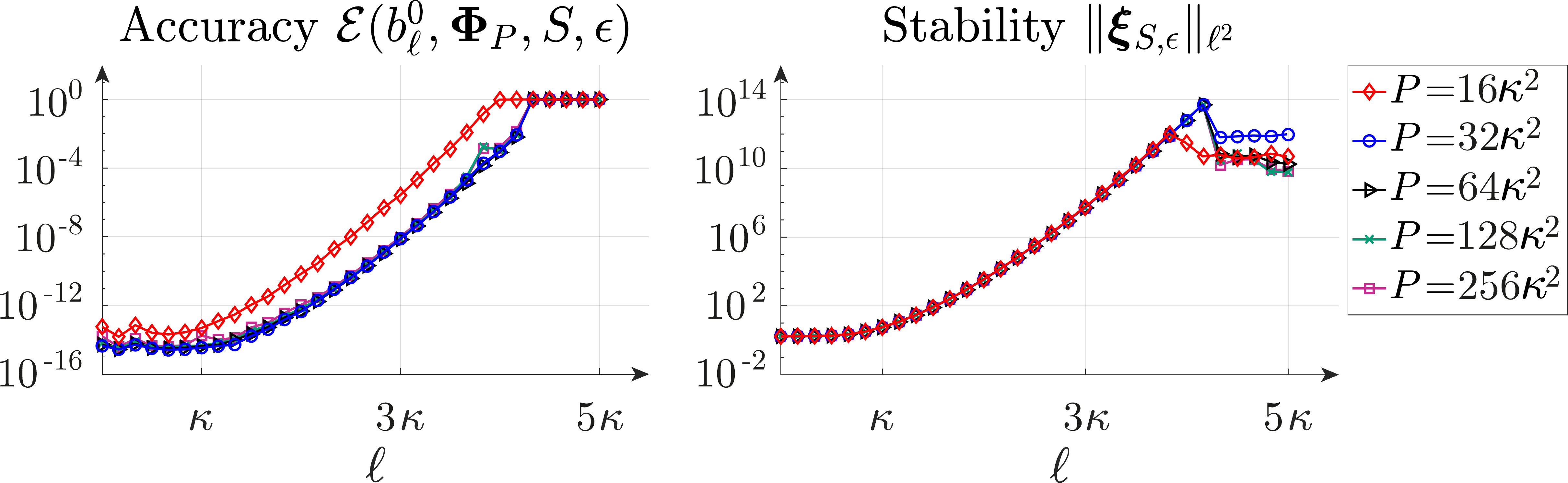}
\caption{Accuracy $\mathcal{E}$ as defined in (\ref{relative residual}) (left) and stability $\|\boldsymbol{\xi}_{S,\epsilon}\|_{\ell^2}$ (right) of the approximation of spherical waves $b_{\ell}^0$ by propagative plane waves. Wavenumber $\kappa=6$ and regularization parameter $\epsilon=10^{-14}$.}
\label{figure 3.4}
\end{figure}

\chapter{Evanescent plane waves}
\hypertarget{Chapter 4}{The} main purpose of this chapter is to present evanescent plane waves, which have a defining direction vector $\mathbf{d} \in \C^3$ instead of the propagative ones with $\mathbf{d} \in \R^3$ and to provide some understanding of why they are expected to have improved stability properties. Propagative and evanescent plane waves are sometimes referred to as homogeneous and inhomogeneous plane waves, respectively, since only the former have constant amplitude.
Evanescent plane waves oscillate with an apparent wavenumber larger than $\kappa$ in the direction of propagation, which is parallel to the vector $\Re\{\mathbf{d}\}$, and decay exponentially in the direction parallel to $\Im\{\mathbf{d}\}$. These two directions are mutually orthogonal.
We first provide the definition of evanescent plane wave by introducing the new \textit{evanescence parameters}: $\theta_3 \in [0,2\pi)$, which determines the decay direction, and $\zeta \in [0,+\infty)$, which regulates the magnitude of both the apparent wavenumber and the decay rate.
Then, in analogy with \cite[Sec.\ 5.2]{parolin-huybrechs-moiola}, we present the modal analysis of evanescent plane waves, generalizing the Jacobi--Anger identity (\ref{jacobi-anger}) to the complex field: what can be observed is that, in contrast to the propagative case, we can move the Fourier content of the plane waves to higher-mode regimes by adjusting the evanescence parameters $\theta_3$ and $\zeta$. As a result, we anticipate that evanescent plane waves are better suited for capturing the higher Fourier modes of Helmholtz solutions that are less regular, for instance in presence of close-by singularities.

\section{Definition}

\hypertarget{Section 4.1}{We} will now introduce the concept of \textit{evanescent plane wave}. It is easy to see that, in order to define a plane wave having the form $e^{i\kappa \mathbf{d}\cdot \mathbf{x}}$ and satisfying the Helmholtz equation (\ref{Helmholtz equation}), we need a direction vector $\mathbf{d} \in \C^3$ such that $\mathbf{d} \cdot \mathbf{d}=\sum_{i=1}^3\textup{d}_i^2=1$, i.e.\
\begin{equation}
\left|\Re\{\mathbf{d}\}\right|^2-\left|\Im\{\mathbf{d}\}\right|^2=1\,\,\,\,\,\,\,\,\,\,\,\,\,\,\text{and}\,\,\,\,\,\,\,\,\,\,\,\,\,\,\Re\{\mathbf{d}\} \cdot \Im\{\mathbf{d}\}=0.
\label{complex direction conditions}
\end{equation}
Our approach involves fixing a reference complex direction vector that satisfies the conditions (\ref{complex direction conditions}) and then taking all its possible rotations in space. Suppose, for instance, that its real and imaginary parts are non-negative and parallel to the $z$-axis and $x$-axis, respectively. Then the first equation in (\ref{complex direction conditions}) becomes $\Re\{\textup{d}_3\}^2-\Im\{\textup{d}_1\}^2=1$ and, defining $z:=\Re\{\textup{d}_3\}$, we get $\Im\{\textup{d}_1\}=\sqrt{z^2-1}$. Observe that, since $\Im\{\mathbf{d}\}$ is real, then we need $z \geq 1$. Therefore, for every $z \geq 1$, we define the \textit{reference upward complex direction vector} $\mathbf{d}_{\uparrow}(z)$ as
\begin{equation}
\mathbf{d}_{\uparrow}(z):=(i\sqrt{z^2-1},0,z) \in \C^3.
\label{reference complex direction}
\end{equation}
We are now ready to provide the definition of evanescent plane waves, along with a surjective parametrization of the complex-direction space $\{\mathbf{d} \in \C^3:\mathbf{d}\cdot \mathbf{d}=1\}$.
\begin{definition}[\hypertarget{Definition 4.1}{Evanescent plane wave parametrization}]
Let $\boldsymbol{\theta}:=(\theta_1,\theta_2,\theta_3) \in \Theta:=[0,\pi] \times [0,2\pi) \times [0,2\pi)$ be the Euler angles and $R_{\boldsymbol{\theta}}$ the associated rotation matrix defined according to the convention (z-y-z), namely $R_{\boldsymbol{\theta}}:=R_{z}(\theta_2)R_{y}(\theta_1)R_{z}(\theta_3)$, where
\begin{equation*}
R_{y}(\theta):=
\begin{bmatrix}
    \cos{(\theta)}       & 0 & \sin{(\theta)}\\
    0       & 1 & 0\\
    -\sin{(\theta)}       & 0 & \cos{(\theta)}
\end{bmatrix},\,\,\,\,\,\,\,
R_{z}(\theta):=
\begin{bmatrix}
    \cos{(\theta)}       & -\sin{(\theta)} & 0\\
    \sin{(\theta)}       & \cos{(\theta)} & 0\\
    0       & 0 & 1
\end{bmatrix}.
\end{equation*}
Furthermore, for every $z \geq 1$, let $\mathbf{d}_{\uparrow}(z)$ the reference upward complex direction vector \textup{(\ref{reference complex direction})}. For any $\mathbf{y}:=(\boldsymbol{\theta},\zeta) \in \Theta \times [0,+\infty)$, we let
\begin{equation}
\phi_{\mathbf{y}}(\mathbf{x}):=e^{i\kappa \mathbf{d}(\mathbf{y})\cdot \mathbf{x}},\,\,\,\,\,\,\,\,\,\,\,\,\,\,\forall \mathbf{x} \in \R^3,
\label{evanescent wave}
\end{equation}
where the complex-valued direction of the wave is given by
\begin{equation}
\mathbf{d}(\mathbf{y}):=R_{\boldsymbol{\theta}}\,\mathbf{d}_{\uparrow}\left(\frac{\zeta}{2\kappa}+1 \right) \in \C^3.
\label{complex direction}
\end{equation}
\end{definition}
\color{black} An explicit definition of the rotation matrix $R_{\boldsymbol{\theta}}$ is given by:
\begin{equation}
R_{\boldsymbol{\theta}}=
\begin{bmatrix}
    c_1c_2c_3-s_2s_3 & -c_1c_2s_3-s_2c_3 & s_1c_2\\
    c_1s_2c_3+c_2s_3 & -c_1s_2s_3+c_2c_3 & s_1s_2\\
    -s_1c_3 & s_1s_3 & c_1
\end{bmatrix},
\label{rotation matrix R}
\end{equation}
where we use the shorthand notation $c_i:=\cos{(\theta_i)}$ and $s_i:=\sin{(\theta_i)}$ for $i=1,2,3$. Therefore, for every $\mathbf{y} \in \Theta \times [0,+\infty)$, $\mathbf{d}(\mathbf{y})$ can be rewritten as
\begin{equation}
\mathbf{d}(\mathbf{y})=z\begin{pmatrix}
    s_1c_2\\
    s_1s_2\\
    c_1
\end{pmatrix}+i\sqrt{z^2-1}\begin{pmatrix}
    c_1c_2c_3-s_2s_3\\
    c_1s_2c_3+c_2s_3\\
    -s_1c_3
\end{pmatrix},
\label{complex direction 2}
\end{equation}
where $z=\zeta/2\kappa+1$.
It can be easily verified that the evanescent plane wave satisfies the homogeneous Helmholtz equation (\ref{Helmholtz equation}), as $\mathbf{d}(\mathbf{y}) \cdot \mathbf{d}(\mathbf{y})=1$ for any $\mathbf{y} \in \Theta \times [0,+\infty)$ by design, due to the fact that the rotation matrix $R_{\boldsymbol{\theta}}$ in (\ref{rotation matrix R}) is unitary.
In fact, $\mathbf{d}(\mathbf{y})$ satisfies the conditions (\ref{complex direction conditions}) and furthermore $\left|\Re\{\mathbf{d}(\mathbf{y})\}\right|=z$ and $\left|\Im\{\mathbf{d}(\mathbf{y})\}\right|=\sqrt{z^2-1}$ for every $\mathbf{y} \in \Theta \times [0,+\infty)$, where $z=\zeta/2\kappa+1$.

Observe that we chose to parameterize $\mathbf{d}(\mathbf{y})$ in (\ref{complex direction}) using $z=\zeta/2\kappa+1$ with $\zeta \in [0,+\infty)$. Although this choice may not be immediately apparent, it leads to simpler definitions and propositions in the subsequent analysis.

\begin{figure}
     \centering
     \begin{subfigure}[b]{0.3\textwidth}
         \centering
         \includegraphics[trim=125 125 5 5,clip,width=4.5cm,height=4.5cm]{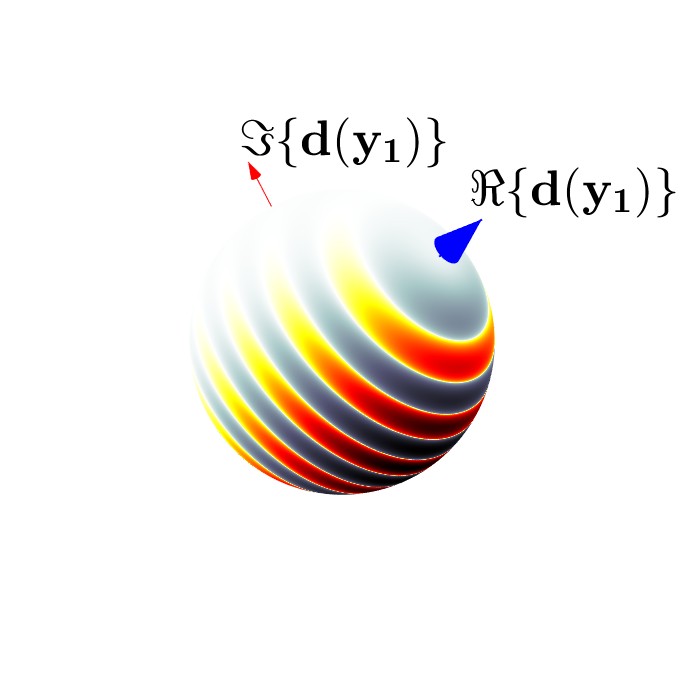}
     \end{subfigure}
     \begin{subfigure}[b]{0.3\textwidth}
         \centering
         \includegraphics[trim=125 125 5 5,clip,width=4.5cm,height=4.5cm]{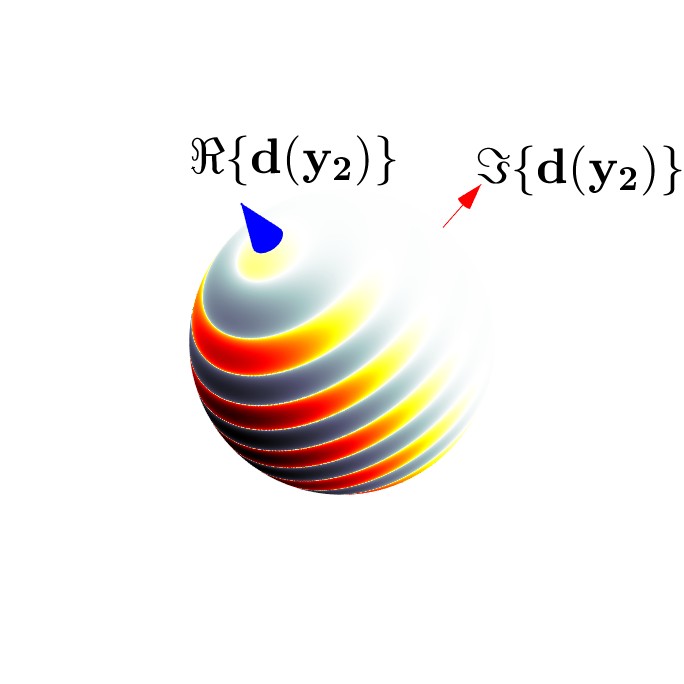}
     \end{subfigure}
     \begin{subfigure}[b]{0.3\textwidth}
         \centering
         \includegraphics[trim=120 120 5 5,clip,width=4.5cm,height=4.5cm]{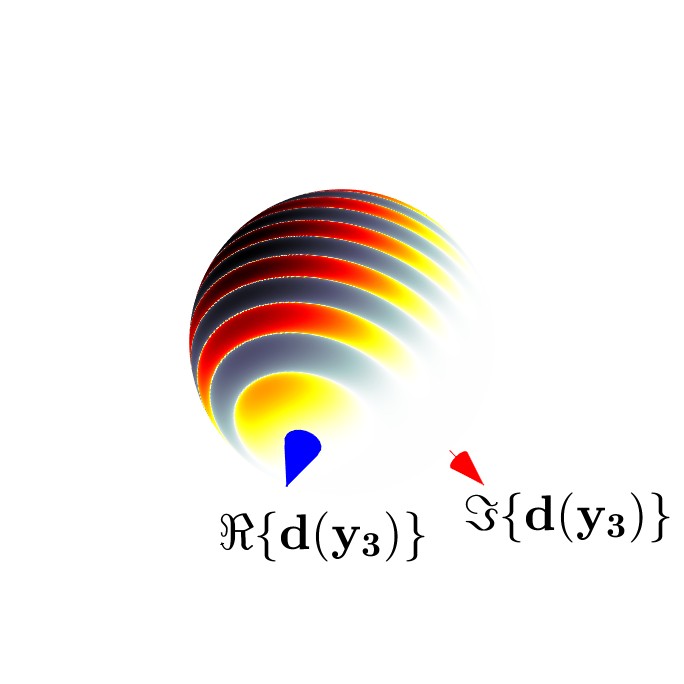}
     \end{subfigure}
\caption{Real part of three different evanescent plane waves $\mathbf{\phi}_{\mathbf{y}}$ restricted to $\partial B_1$.
The blue arrow indicates the direction of the real part $\Re\{\mathbf{d}(\mathbf{y})\}$, i.e.\ $\mathbf{d}(\theta_1,\theta_2)$ defined in (\ref{propagative direction}), while the red arrow indicates the direction of the imaginary part $\Im\{\mathbf{d}(\mathbf{y})\}$, i.e.\ $\mathbf{d}^{\bot}(\boldsymbol{\theta})$ that is the first column of $R_{\boldsymbol{\theta}}$ in (\ref{rotation matrix R}).
The size of the arrowheads is proportional to the norm of the vector and therefore dependent on $\zeta$ and $\kappa$. Starting from left to right, for $\mathbf{y}=(\boldsymbol{\theta},\zeta)$, we have respectively: $\mathbf{y}_1=(\pi/4,\pi/4,7\pi/8,2)$, $\mathbf{y}_2=(\pi/5,8\pi/5,\pi,4)$ and $\mathbf{y}_3=(5\pi/8,9\pi/5,\pi/5,8)$. Wavenumber $\kappa=16$.}
\label{figure 4.1}
\end{figure}

The choice of Euler angles and matrices in Definition \hyperlink{Definition 4.1}{4.1} is taken from \cite[Eqs. (4) and (5)]{pendleton}, with the difference that we change the signs of the angles to guarantee consistency with the notation adopted for the propagative waves. In fact, if we assume $\zeta=0$, for any $\boldsymbol{\theta} \in \Theta$, we recover the usual propagative plane wave of Definition \hyperlink{propagative plane wave}{3.1} with real direction $\mathbf{d}(\theta_1,\theta_2)$ in (\ref{propagative direction}): in this case the wave direction turns out to be independent of the new angular parameter $\theta_3$, since any rotation $R_z(\theta_3)$ around the vertical axis sends $\mathbf{d}_{\uparrow}(1)=(0,0,1)$ into itself.

Since the direction vector $\mathbf{d}(\mathbf{y})$ in (\ref{complex direction 2}) is complex, the wave behavior might be unclear. A more explicit expression of the evanescent plane wave in (\ref{evanescent wave}) is
\begin{equation*}
\phi_{\mathbf{y}}(\mathbf{x})=e^{i \left(\frac{\zeta}{2}+\kappa \right) \mathbf{d}(\theta_1,\theta_2) \cdot \mathbf{x}}e^{- \left( \zeta \left(\frac{\zeta}{4}+\kappa \right) \right)^{1/2} \mathbf{d}^{\bot}(\boldsymbol{\theta}) \cdot \mathbf{x}},
\end{equation*}
where $\mathbf{d}(\theta_1,\theta_2)$ is defined in (\ref{propagative direction}) and $\mathbf{d}^{\bot}(\boldsymbol{\theta})$ is the first column of the matrix $R_{\boldsymbol{\theta}}$ in (\ref{rotation matrix R}). We see from this formula that the wave oscillates with apparent wavenumber $\zeta/2+\kappa \geq \kappa$ in the propagation direction $\mathbf{d}(\theta_1,\theta_2)$, which is parallel to $\Re\{\mathbf{d}(\mathbf{y})\}$. In addition, the wave decays exponentially with rate $\left(\zeta(\zeta/4+\kappa) \right)^{1/2}$ in the direction $\mathbf{d}^{\bot}(\boldsymbol{\theta})$, which is parallel to $\Im\{\mathbf{d}(\mathbf{y})\}$ and thus orthogonal to $\mathbf{d}(\theta_1,\theta_2)$. Therefore, the decay direction is orthogonal to the propagation one and this is confirmed by the fact that $\mathbf{d}(\theta_1,\theta_2)$ coincides with the third column of $R_{\boldsymbol{\theta}}$ in (\ref{rotation matrix R}), which is unitary.
This justifies naming the new parameters $\theta_3 \in [0,2\pi)$ and $\zeta \in [0,+\infty)$, which control the imaginary part of the complex direction $\mathbf{d}(\mathbf{y})$ in (\ref{complex direction}), \textit{evanescence parameters}.
A representation of three different evanescent plane waves restricted to $\partial B_1$ is given in Figure \ref{figure 4.1}.

\begin{remark}
In order to define the evanescent plane waves, contrary to the two-dimensional case \textup{\cite[Sec.\ 5]{parolin-huybrechs-moiola}}, the `parameter complexification' procedure -- that is the parametrization of the complex-direction space $\{\mathbf{d} \in \C^3:\mathbf{d}\cdot \mathbf{d}=1\}$ obtained by complexifying the angles in \textup{(\ref{propagative direction})} -- turns out to be less suitable for the analysis of the space of Herglotz densities in 3D -- that we will introduce later in Chapter \textup{\hyperlink{Chapter 5}{5}} -- in particular in relation to the orthogonality and the asymptotic behavior of the basis $\{a_{\ell}^m\}_{(\ell,m) \in \mathcal{I}}$ (see Lemma \textup{\hyperlink{Lemma 5.2}{5.3}} and Lemma \textup{\hyperlink{Lemma 5.3}{5.4}}). For this reason, we chose to define a complex reference direction $\mathbf{d}_{\uparrow}(z)$ and then consider its rotations in space through the orthogonal matrix $R_{\boldsymbol{\theta}}$.
\end{remark}

\section{Complex-direction Jacobi--Anger identity}

\begin{figure}
\centering
\includegraphics[width=\linewidth]{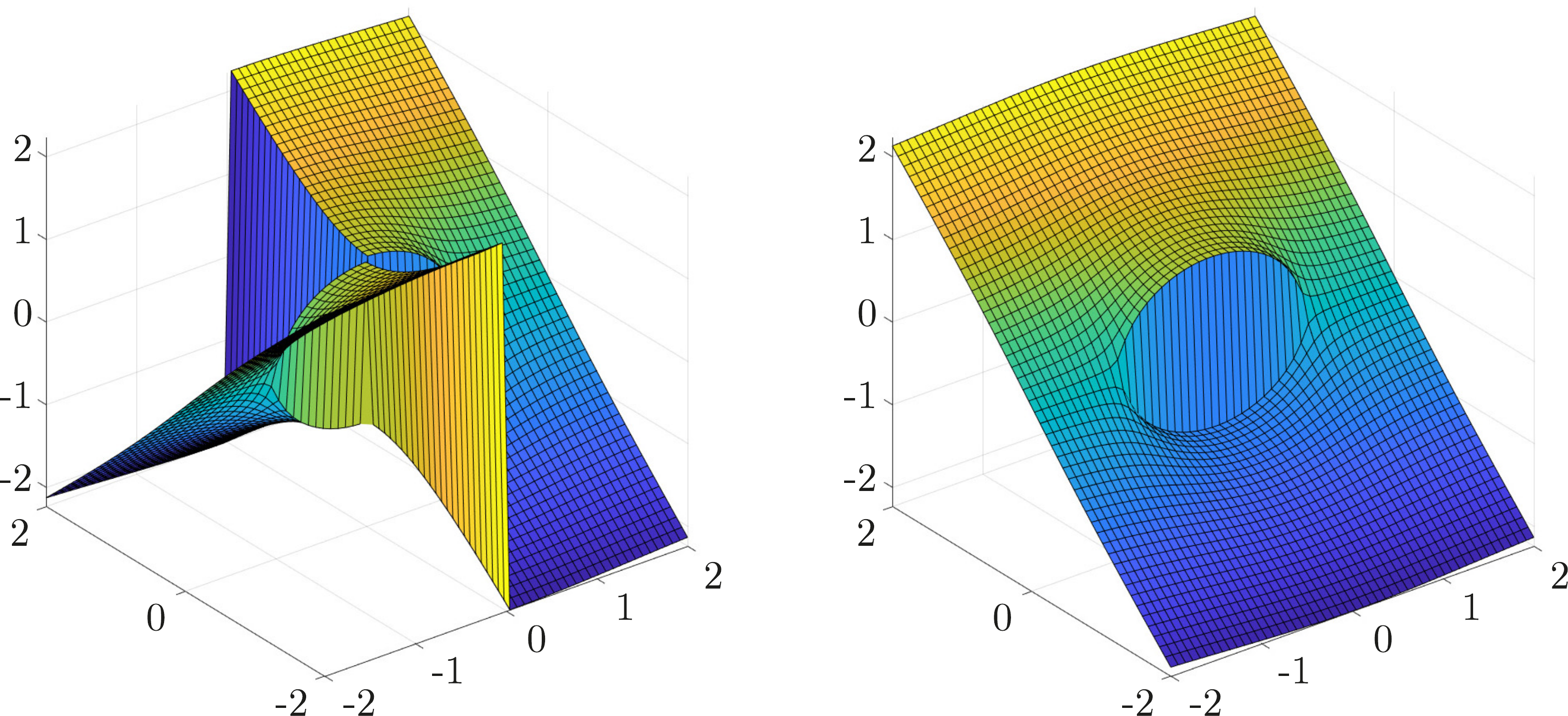}
\caption{Imaginary part of $(w^2-1)^{1/2}$: (left) considering the principal value, we have two branch cuts along $(-1,1)$ and $(-i\infty,+i\infty)$; (right) considering the definition in (\ref{convention}), we can eliminate the branch cut on the imaginary axis.}
\label{figure 4.2}
\end{figure}

\hypertarget{Section 4.2}{To} further study the evanescent plane waves, we require additional definitions for our analytic toolkit, specifically, we need to extend the Ferrers function to the complex domain (see (\ref{legendre2 polynomials})) and introduce the \textit{Wigner matrices} (see Definition \hyperlink{Definition 4.5}{4.8}).
According to \cite[Sec.\ 3.2, Eq.\ (6)]{erdelyi}, for any $m \in \Z$, throughout this section we use the convention
\begin{equation}
(w^2-1)^{m/2}:=\mathcal{P}\left[(w+1)^{m/2}\right]\,\mathcal{P}\left[(w-1)^{m/2}\right],\,\,\,\,\,\,\,\,\,\,\,\,\,\,\forall w \in \C,
\label{convention}
\end{equation}
where $\mathcal{P}[\,\,\cdot\,\,]$ indicates that the principal branch is chosen. Observe that, for every $w \in \C$ and odd $m \in \Z$, this is equivalent to say
\begin{equation*}
(w^2-1)^{m/2}:=\begin{cases}
-\mathcal{P}\left[(w^2-1)^{m/2}\right] & \text{if}\,\,\, \Re\{w\}<0\,\,\,\vee\,\,\,\left(\Re\{w\}=0\,\,\,\wedge\,\,\,\Im\{w\}<0 \right)\\
+\mathcal{P}\left[(w^2-1)^{m/2}\right]  & \text{if}\,\,\, \Re\{w\}>0\,\,\,\vee\,\,\,\left(\Re\{w\}=0\,\,\,\wedge\,\,\,\Im\{w\}\geq 0 \right)\\
\end{cases}.
\end{equation*}
Thanks to (\ref{convention}) we can get rid of the branch cut along the imaginary axis simply by mirroring the function values on the left-half of the complex plane in the right-half (with some corrections where $\Re\{w\}=0$); for the case $m=1$, see Figure \ref{figure 4.2}.

Following \cite[Eqs.\ (14.7.14) and (14.9.13)]{nist}, for every $(\ell,m) \in \mathcal{I}$, the \textit{associated Legendre polynomials} are solutions to the general Legendre equation (\ref{general legendre equation}) and are defined as
\begin{equation}
P_{\ell}^m(w):=\frac{1}{2^{\ell}\ell!}(w^2-1)^{m/2}\frac{\textup{d}^{\ell+m}}{\textup{d}w^{\ell+m}}(w^2-1)^{\ell},\,\,\,\,\,\,\,\,\,\,\,\,\,\,\forall w \in \C,
\label{legendre2 polynomials}
\end{equation}
so that
\begin{equation}
P_{\ell}^{-m}(w)=\frac{(\ell-m)!}{(\ell+m)!}P_{\ell}^{m}(w),\,\,\,\,\,\,\,\,\,\,\,\,\,\,\forall w \in \C.
\label{negative legendre2 polynomials}
\end{equation}
In particular, $P_{\ell}^m$ is called \textit{associated Legendre polynomial of degree $\ell$ and order $m$}.
For every $(\ell,m) \in \mathcal{I}$ such that $\frac{m}{2} \not\in \Z$, $P_{\ell}^m$ is a single-valued function on the complex plane with a branch cut along the interval $(-1,1)$, where it is continuous from above; otherwise, if $m$ is even, $P_{\ell}^m$ is a polynomial of degree $\ell$. From \cite[Eq.\ (14.23.1)]{nist}, it follows:
\begin{equation}
\lim_{\epsilon \shortarrow{7} 0}P_{\ell}^m(x\pm i\epsilon)=(\pm 1)^mP_{\ell}^m(x)=i^{\mp m}\mathsf{P}_{\ell}^m(x),\,\,\,\,\,\,\,\,\,\,\,\,\,\,\forall x \in (-1,1).
\label{on the cut}
\end{equation}
Observe that, if $m=0$, we simply obtain the \textit{Legendre polynomial of degree $\ell$}, which is defined on the entire complex plane, namely $P_{\ell}(w)=\mathsf{P}_{\ell}(w)$ for every $w \in \C$.

The modal analysis of evanescent plane waves, to which the next section is devoted, relies on the extension of the Jacobi--Anger identity (\ref{jacobi-anger}) to complex-valued directions $\mathbf{d}(\mathbf{y})$ in (\ref{complex direction}). 
First, we then need to extend the spherical harmonics (\ref{spherical harmonic}) to complex directions of the form $\mathbf{d}_{\uparrow}(z)$ in (\ref{reference complex direction}). With the introduction of the associated Legendre polynomials (\ref{legendre2 polynomials}), for every $(\ell,m) \in \mathcal{I}$, we can define:
\begin{equation*}
Y_{\ell}^m\left(\mathbf{d}_{\uparrow}(z)\right):=\gamma_{\ell}^m i^{-m}P_{\ell}^m(z),\,\,\,\,\,\,\,\,\,\,\,\,\,\,\forall z\geq 1,
\end{equation*}
where $\gamma_{\ell}^m$ was introduced in (\ref{gamma constant}).
This definition is supported by the following proposition, which generalizes the addition theorem (\ref{addition theorem}) when an upward complex direction vector is considered.

\begin{proposition}
\hypertarget{Proposition 4.3}{The} following identities hold for any $\ell \geq 0$, $\mathbf{x} \in \mathbb{S}^2$ and $z \geq 1$:
\begin{equation}
\sum_{m=-\ell}^{\ell}Y_{\ell}^m(\mathbf{d}_{\uparrow}(z))\overline{Y_{\ell}^m(\mathbf{x})}=\sum_{m=-\ell}^{\ell}Y_{\ell}^m(\mathbf{d}_{\uparrow}(z))Y_{\ell}^m(\mathbf{x})=\frac{2\ell+1}{4\pi}P_{\ell}(\mathbf{d}_{\uparrow}(z)\cdot \mathbf{x}).
\label{addition theorem 2}
\end{equation}
\end{proposition}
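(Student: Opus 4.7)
My plan is to prove the two equalities separately: the first follows from an $m\leftrightarrow-m$ symmetry, while the second is obtained by analytically continuing the classical addition theorem~(\ref{addition theorem}). For the first equality, I derive two parity identities. From~(\ref{spherical harmonic}) and~(\ref{negative legendre polynomials}) one verifies that $Y_\ell^{-m}(\mathbf{x}) = (-1)^m\,\overline{Y_\ell^m(\mathbf{x})}$ for $\mathbf{x}\in\mathbb{S}^2$. Using the definition $Y_\ell^m(\mathbf{d}_\uparrow(z)) = \gamma_\ell^m\, i^{-m}\, P_\ell^m(z)$ together with~(\ref{negative legendre2 polynomials}) and the fact that $P_\ell^m(z)$ is real for $z\geq 1$, one obtains analogously $Y_\ell^{-m}(\mathbf{d}_\uparrow(z)) = (-1)^m\,Y_\ell^m(\mathbf{d}_\uparrow(z))$. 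Reindexing $m\to -m$ in the middle sum of~(\ref{addition theorem 2}) and applying these two identities, the two $(-1)^m$ factors cancel and the first equality is established.

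For the second equality, I first take the complex conjugate of~(\ref{addition theorem}); using that $\mathsf{P}_\ell$ is real-valued on $[-1,1]$, this yields $\sum_m Y_\ell^m(\mathbf{y})\overline{Y_\ell^m(\mathbf{x})} = \frac{2\ell+1}{4\pi} P_\ell(\mathbf{x}\cdot\mathbf{y})$ for $\mathbf{x},\mathbf{y}\in\mathbb{S}^2$. Fixing $\mathbf{x}$, I parameterize $\mathbf{y}(\theta_1) := (\sin\theta_1, 0, \cos\theta_1)$ and view both sides as functions of complex $\theta_1$. The right-hand side is clearly entire, being a polynomial in $\sin\theta_1$ and $\cos\theta_1$. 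For the left-hand side, rewriting $\mathsf{P}_\ell^m(\cos\theta_1) = \frac{(-1)^m}{2^\ell\ell!}\sin^m\theta_1 \cdot q_{\ell,m}(\cos\theta_1)$, where $q_{\ell,m}(x):=\frac{d^{\ell+m}}{dx^{\ell+m}}(x^2-1)^\ell$ is a polynomial (and reducing to $m\geq 0$ via~(\ref{negative legendre polynomials}) when needed), shows that each summand is entire in $\theta_1$. By the identity theorem, the equality extends from $\theta_1 \in [0,\pi]$ to all $\theta_1 \in \mathbb{C}$. Setting $\theta_1 = i\rho$ with $\cosh\rho = z$ gives $\mathbf{y}(i\rho) = \mathbf{d}_\uparrow(z)$, $\sin^m(i\rho) = i^m(z^2-1)^{m/2}$, and $(-1)^m i^m = i^{-m}$, so each LHS summand becomes exactly $\gamma_\ell^m i^{-m} P_\ell^m(z)\,\overline{Y_\ell^m(\mathbf{x})} = Y_\ell^m(\mathbf{d}_\uparrow(z))\,\overline{Y_\ell^m(\mathbf{x})}$, and the RHS becomes the claimed $\frac{2\ell+1}{4\pi}P_\ell(\mathbf{d}_\uparrow(z)\cdot\mathbf{x})$.

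The main subtlety is reconciling the chosen analytic continuation along $\theta_1 = it$ with the paper's convention $Y_\ell^m(\mathbf{d}_\uparrow(z)) := \gamma_\ell^m i^{-m} P_\ell^m(z)$. Rather than explicitly choosing a branch of $(1-w^2)^{m/2}$ at $w = z > 1$, writing this factor as $\sin^m\theta_1$ (entire in $\theta_1$) handles the branch automatically and produces the factor $i^m(z^2-1)^{m/2}$ at $\theta_1 = i\rho$, which combines with the $(-1)^m$ prefactor of~(\ref{legendre polynomials}) to give the $i^{-m}$ of the paper's definition, consistent with~(\ref{on the cut}) and~(\ref{convention}).
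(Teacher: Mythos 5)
Your proof is correct, and it takes a genuinely different route from the paper's. The paper establishes the second equality by invoking the classical addition theorem for associated Legendre functions of argument off the cut, \cite[Eq.\ (14.28.1)]{nist}, taking boundary limits $\epsilon \searrow 0$ onto the cut via (\ref{on the cut}), and then reorganizing the sum using (\ref{negative legendre polynomials}), (\ref{gamma constant}) and (\ref{negative legendre2 polynomials}); because \cite{nist} does not adopt the convention (\ref{convention}), that identity is only directly available for $\theta_1 \in [0,\pi/2)$, and the paper needs a separate remark to extend the result to all of $[0,\pi]$ by analyticity in $\theta_1$. You instead analytically continue the real addition theorem (\ref{addition theorem}) itself in the polar angle of $\mathbf{y}$, exploiting that each summand $\gamma_\ell^m \mathsf{P}_\ell^m(\cos\theta_1)\overline{Y_\ell^m(\mathbf{x})}$ and the right-hand side $\frac{2\ell+1}{4\pi}\mathsf{P}_\ell(\mathbf{x}\cdot\mathbf{y}(\theta_1))$ are entire in $\theta_1$, and then evaluating at $\theta_1 = i\rho$ with $\cosh\rho = z$. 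This buys two things: it is self-contained (you effectively reprove the special case of \cite[Eq.\ (14.28.1)]{nist} that is needed, rather than citing it), and it dispenses with the branch-cut bookkeeping and the $[0,\pi/2)$ restriction in one stroke, since working in the $\theta_1$ variable makes the factor $\sin^m\theta_1$ entire and automatically produces the $i^{-m}$ consistent with the paper's definition of $Y_\ell^m(\mathbf{d}_\uparrow(z))$ and with (\ref{convention}). Your separate treatment of the first equality by the $m \leftrightarrow -m$ reindexing is also sound and matches in substance the $e^{\pm i m\varphi}$ manipulation in the paper's chain of equalities. The only point worth making explicit in a final write-up is the reduction to $m \geq 0$ via (\ref{negative legendre polynomials}) when arguing that the negative-order summands are entire, which you already flag.
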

\begin{proof}
Let $\ell \geq 0$, $z \geq 1$ and $\mathbf{x}=(\sin{\theta}\cos{\varphi},\sin{\theta}\sin{\varphi},\cos{\theta})\in \mathbb{S}^2$, where $\theta \in [0,\pi]$ and $\varphi \in [0,2\pi)$.
Thanks to (\ref{on the cut}), it follows:
\begin{equation}
P_{\ell}(z)P_{\ell}(\cos \theta-i\epsilon)+2\sum_{m=1}^{\ell}\frac{(\ell-m)!}{(\ell+m)!}
(-1)^mP_{\ell}^m(z)P_{\ell}^m(\cos \theta-i\epsilon)\cos(m\varphi)
\label{first one}
\end{equation}
\small
\begin{equation*}
\Big\downarrow\,\,\,\epsilon \searrow 0
\end{equation*}
\normalsize
\begin{equation}
P_{\ell}(z)\mathsf{P}_{\ell}(\cos \theta)+2\sum_{m=1}^{\ell}\frac{(\ell-m)!}{(\ell+m)!}
i^{-m}P_{\ell}^m(z)\mathsf{P}_{\ell}^m(\cos \theta)\cos(m\varphi),\,\,\,
\label{first one 2}
\end{equation}
and moreover, according to (\ref{convention}), we have that
\begin{equation}
P_{\ell}\left(z(\cos\theta-i\epsilon)-\sqrt{z^2-1}\sqrt{(\cos \theta-i\epsilon)^2-1}\cos \varphi\right)
\label{second one}
\end{equation}
\small
\begin{equation*}
\Big\downarrow\,\,\,\epsilon \searrow 0
\end{equation*}
\normalsize
\begin{equation}
P_{\ell}\left(z\cos\theta+i\sqrt{z^2-1}\sin \theta \cos \varphi\right).
\label{second one 2}
\end{equation}
Due to \cite[Eqs.\ (14.7.16) and (14.28.1)]{nist}, the values of the expressions in (\ref{first one}) and (\ref{second one}) are the same, and therefore (\ref{first one 2}) and (\ref{second one 2}) also coincide. Hence, thanks to (\ref{negative legendre polynomials}), (\ref{gamma constant}), and (\ref{negative legendre2 polynomials}), we get
\begin{align*}
&\frac{4\pi}{2\ell+1}\!\sum_{m=-\ell}^{\ell}(\gamma_{\ell}^m)^2i^{-m}P_{\ell}^m(z)\mathsf{P}_{\ell}^m(\cos \theta)e^{\pm im\varphi}=\!\!\sum_{m=-\ell}^{\ell}\frac{(\ell-m)!}{(\ell+m)!}i^{-m}P_{\ell}^m(z)\mathsf{P}_{\ell}^m(\cos \theta)e^{\pm im\varphi}\\
&=\sum_{m=0}^{\ell}\frac{(\ell-m)!}{(\ell+m)!}i^{-m}P_{\ell}^m(z)\mathsf{P}_{\ell}^m(\cos \theta)e^{\pm im\varphi}+\!\!\sum_{m=-\ell}^{-1}\frac{(\ell+m)!}{(\ell-m)!}i^{m}P_{\ell}^{-m}(z)\mathsf{P}_{\ell}^{-m}(\cos \theta)e^{\pm im\varphi}\\
&=\sum_{m=0}^{\ell}\frac{(\ell-m)!}{(\ell+m)!}i^{-m}P_{\ell}^m(z)\mathsf{P}_{\ell}^m(\cos \theta)e^{\pm im\varphi}+\sum_{m=1}^{\ell}\frac{(\ell-m)!}{(\ell+m)!}i^{-m}P_{\ell}^{m}(z)\mathsf{P}_{\ell}^{m}(\cos \theta)e^{\mp im\varphi}\\
&=P_{\ell}(z)\mathsf{P}_{\ell}(\cos \theta)+2\sum_{m=1}^{\ell}\frac{(\ell-m)!}{(\ell+m)!}
i^{-m}P_{\ell}^m(z)\mathsf{P}_{\ell}^m(\cos \theta)\cos(m\varphi)\\
&=P_{\ell}\left(z\cos\theta+i\sqrt{z^2-1}\sin \theta \cos \varphi\right)=P_{\ell}\left(\mathbf{d}_{\uparrow}(z)\cdot \mathbf{x}\right)
\end{align*}
and (\ref{addition theorem 2}) follows.
\end{proof}
\begin{remark}
To be more precise, \textup{\cite[Eq.\ (14.28.1)]{nist}} states that the equality between \textup{(\ref{first one})} and \textup{(\ref{second one})} holds only if $\theta \in [0,\pi/2)$ and thus, following the previous proof, identities \textup{(\ref{addition theorem 2})} are proven only in this case.
This probably happens because \textup{\cite{nist}} does not adopt the convention \textup{(\ref{convention})} within the definition of the associated Legendre polynomials and therefore \textup{\cite[Eq.\ (14.28.1)]{nist}} is limited only to values with positive real part.
Nevertheless, since all terms in \textup{(\ref{addition theorem 2})} are analytic in $(0,\pi)$ as functions of $\theta$ (making explicit the dependence of $\mathbf{x}$ on $\theta$), these identities can be easily extend to this interval due to \textup{\cite[Th.\ 3.2.6]{ablowitz}}. Furthermore, they hold if $\mathbf{x}=(0,0,1)$, namely $\theta=\pi$: in fact $\mathsf{P}_{\ell}^m(-1)=(-1)^{\ell}\delta_{0,m}$ and, due to \textup{\cite[Eq.\ (14.7.17)]{nist}}, $P_{\ell}(-z)=(-1)^{\ell}P_{\ell}(z)$ for every $z\geq 1$.
\end{remark}
The previous result (\ref{addition theorem 2}) brings us close to deriving a Jacobi--Anger identity for the reference complex direction $\mathbf{d}_{\uparrow}(z)$, for $z\geq 1$. However, before we proceed, we must first extend the identity (\ref{pre jacobi-anger}) to complex values of $t$, which necessitates the use of the following lemma.
\begin{lemma}
Let $\ell \geq 0$. We have for every $0 \leq m \leq \ell$ and $w \in \C$
\begin{equation}
P_{\ell}^m(w)=\frac{(\ell+m)!}{2^{\ell}\ell!}\sum_{k=0}^{\ell-m}\binom{\ell}{k}\binom{\ell}{m+k}\left(w-1\right)^{\ell-\left(m/2+k\right)}\left(w+1\right)^{m/2+k}.
\label{sum legendre expansion}
\end{equation}
In particular, due to \textup{(\ref{negative legendre2 polynomials})}, $P_{\ell}^m(z)\geq 0$ for every $z\geq 1$ and $(\ell,m) \in \mathcal{I}$.
\end{lemma}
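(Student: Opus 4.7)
The plan is to expand the definition (\ref{legendre2 polynomials}) of $P_\ell^m(w)$ by factoring $(w^2-1)^\ell=(w-1)^\ell(w+1)^\ell$ and computing the required derivative via Leibniz's rule. Concretely, I would write
\[
\frac{\mathrm{d}^{\ell+m}}{\mathrm{d}w^{\ell+m}}\bigl[(w-1)^\ell(w+1)^\ell\bigr]
=\sum_{j=0}^{\ell+m}\binom{\ell+m}{j}\frac{\mathrm{d}^{j}}{\mathrm{d}w^{j}}(w-1)^\ell\cdot\frac{\mathrm{d}^{\ell+m-j}}{\mathrm{d}w^{\ell+m-j}}(w+1)^\ell.
\]
The $j$-th derivative of $(w\pm1)^\ell$ is $\frac{\ell!}{(\ell-j)!}(w\pm1)^{\ell-j}$ when $j\le\ell$ and vanishes otherwise, so the surviving terms are precisely those with $j\le\ell$ and $\ell+m-j\le\ell$, i.e.\ $m\le j\le\ell$. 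Setting $k:=j-m$ so that $0\le k\le\ell-m$, the sum collapses to
\[
(\ell+m)!\sum_{k=0}^{\ell-m}\frac{\ell!\,\ell!}{(m+k)!\,(\ell-k)!\,(\ell-m-k)!\,k!}(w-1)^{\ell-m-k}(w+1)^{k},
\]
and recognizing $\binom{\ell}{k}\binom{\ell}{m+k}$ in the coefficient gives a clean expression for the derivative.

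Next I would multiply this expression by the prefactor $(w^2-1)^{m/2}/(2^\ell\ell!)$ from the definition of $P_\ell^m$. Here one has to be a bit careful: by the convention (\ref{convention}), $(w^2-1)^{m/2}=\mathcal{P}[(w+1)^{m/2}]\,\mathcal{P}[(w-1)^{m/2}]$, so the factor distributes and combines with $(w-1)^{\ell-m-k}(w+1)^{k}$ to yield $(w-1)^{\ell-(m/2+k)}(w+1)^{m/2+k}$. This is exactly the summand in (\ref{sum legendre expansion}), which finishes the identity for $0\le m\le\ell$.

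For the positivity claim, if $z\ge 1$ then $z-1\ge 0$ and $z+1>0$, and the real-valued principal branches of $(z-1)^{m/2+k}$ and $(z+1)^{\ell-(m/2+k)}$ are nonnegative; since every binomial coefficient is positive, each term in (\ref{sum legendre expansion}) is nonnegative, so $P_\ell^m(z)\ge 0$ for $0\le m\le\ell$. For negative orders $-\ell\le -m<0$, one then invokes (\ref{negative legendre2 polynomials}): the factor $(\ell-m)!/(\ell+m)!$ is a positive real, so $P_\ell^{-m}(z)\ge 0$ as well, covering every index $(\ell,m)\in\mathcal{I}$.

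The whole argument is essentially bookkeeping; the only subtle point is the handling of the half-integer powers, which is why invoking the convention (\ref{convention}) explicitly is crucial to obtain an unambiguous product $(w-1)^{\ell-(m/2+k)}(w+1)^{m/2+k}$ valid on all of $\C$.
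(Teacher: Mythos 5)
Your proposal is correct and follows essentially the same route as the paper: Leibniz's rule applied to $(w-1)^{\ell}(w+1)^{\ell}$, restriction to the surviving indices $m\le j\le\ell$, the re-indexing $k=j-m$, recognition of the product of binomial coefficients, and then distribution of the half-integer power via the convention (\ref{convention}). The only (immaterial) slip is in the positivity paragraph, where the exponents $m/2+k$ and $\ell-(m/2+k)$ are attached to the wrong factors; since both are nonnegative for $z\ge 1$ the conclusion is unaffected.
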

\begin{proof}
Through some calculation, we can see that
\begin{align*}
\frac{\textup{d}^{\ell+m}}{\textup{d}w^{\ell+m}}(w^2-1)^{\ell}&=\sum_{k=0}^{\ell+m}\binom{\ell+m}{k}\left(\frac{\textup{d}^k}{\textup{d}w^k}(w-1)^{\ell}\right)\left(\frac{\textup{d}^{\ell+m-k}}{\textup{d}w^{\ell+m-k}}(w+1)^{\ell}\right)\\
&=\sum_{k=m}^{\ell}\binom{\ell+m}{k}\left(\frac{\ell!}{(\ell-k)!}(w-1)^{\ell-k}\right)\left(\frac{\ell!}{(k-m)!}(w+1)^{k-m}\right)\\
&=\frac{(w-1)^{\ell}}{(w+1)^m}\sum_{k=m}^{\ell}\frac{(\ell+m)!}{k!(\ell+m-k)!}\frac{\ell!}{(\ell-k)!}\frac{\ell!}{(k-m)!}\left(\frac{w+1}{w-1}\right)^k\\
&=(w-1)^{\ell-m}\sum_{k=0}^{\ell-m}\frac{(\ell+m)!}{(m+k)!(\ell-k)!}\frac{\ell!}{(\ell-m-k)!}\frac{\ell!}{k!}\left(\frac{w+1}{w-1}\right)^k\\
&=(\ell+m)!(w-1)^{\ell-m}\sum_{k=0}^{\ell-m}\binom{\ell}{k}\binom{\ell}{m+k}\left(\frac{w+1}{w-1}\right)^k.
\end{align*}
Therefore, thanks to the definitions (\ref{convention}) and (\ref{legendre2 polynomials}), it follows
\begin{align*}
P_{\ell}^m(w)&=\frac{1}{2^{\ell}\ell!}(w^2-1)^{m/2}\frac{\textup{d}^{\ell+m}}{\textup{d}w^{\ell+m}}(w^2-1)^{\ell}\\
&=\frac{(\ell+m)!}{2^{\ell}\ell!}(w-1)^{\ell-m/2}(w+1)^{m/2}\sum_{k=0}^{\ell-m}\binom{\ell}{k}\binom{\ell}{m+k}\left(\frac{w+1}{w-1}\right)^k\\
&=\frac{(\ell+m)!}{2^{\ell}\ell!}\sum_{k=0}^{\ell-m}\binom{\ell}{k}\binom{\ell}{m+k}\left(w-1\right)^{\ell-\left(m/2+k\right)}\left(w+1\right)^{m/2+k}.
\end{align*}
\end{proof}

\begin{proposition}
\hypertarget{Proposition 4.6}{The} following identity holds for any $r \geq 0$ and $w \in \C$:
\begin{equation}
e^{irw}=\sum_{\ell=0}^{\infty}i^{\ell}(2\ell+1)j_{\ell}(r)P_{\ell}(w).
\label{pre jacobi anger 2}
\end{equation}
\end{proposition}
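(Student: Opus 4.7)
The plan is to prove the identity by analytic continuation in the variable $w$. Both sides, viewed as functions of $w$ for fixed $r \geq 0$, are entire, and they agree on the real interval $[-1,1]$ by the classical identity (\ref{pre jacobi-anger}) (recalling that $P_\ell$ and $\mathsf{P}_\ell$ coincide for $m=0$). Since $[-1,1]$ has accumulation points in $\C$, the identity principle for holomorphic functions yields equality on all of $\C$.

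The first step is to verify that the right-hand side does define an entire function of $w$. Each summand $i^\ell(2\ell+1)j_\ell(r)P_\ell(w)$ is a polynomial in $w$, so it suffices to establish locally uniform convergence on $\C$. For this I would use the expansion (\ref{sum legendre expansion}) with $m=0$ to get
\begin{equation*}
|P_\ell(w)| \leq \frac{1}{2^\ell}\sum_{k=0}^{\ell}\binom{\ell}{k}^2 |w-1|^{\ell-k}|w+1|^k \leq \frac{\binom{2\ell}{\ell}}{2^\ell}\bigl(\max(|w-1|,|w+1|)\bigr)^\ell,
\end{equation*}
so that $|P_\ell(w)| \leq C_w^\ell$ for a constant $C_w$ bounded uniformly on compact subsets of $\C$. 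Combined with the super-exponential decay of $j_\ell(r)$ furnished by (\ref{spherical bessel asymptotics}), namely $j_\ell(r) = \mathcal{O}\bigl((er/2)^{\ell+1/2}\ell^{-(\ell+1)}\bigr)$, the general term of the series behaves like $(erC_w)^\ell/\ell^\ell$, which is summable uniformly on compacta. Hence the sum is an entire function of $w$.

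The second step is to invoke (\ref{pre jacobi-anger}), which gives the equality of both sides for $w = t \in [-1,1]$; since both are entire in $w$ and agree on a set with accumulation points, the identity theorem (see e.g.\ \cite[Th.\ 3.2.6]{ablowitz}) concludes the proof.

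The only mildly technical step is the uniform bound on $|P_\ell(w)|$; beyond that, the argument is a standard analytic-continuation routine. There is no real obstacle, provided one is careful that the representation (\ref{sum legendre expansion}) for $m=0$ reduces to a genuine polynomial formula for $P_\ell$ (no branch cuts appear), which is what allows both sides of (\ref{pre jacobi anger 2}) to be genuinely entire rather than merely holomorphic off a cut.
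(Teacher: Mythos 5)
Your proposal is correct and follows essentially the same route as the paper's proof: both establish convergence of the series via the polynomial expansion (\ref{sum legendre expansion}) with $m=0$, the bound $\sum_{k}\binom{\ell}{k}^2=\binom{2\ell}{\ell}$ (the Vandermonde identity), and the asymptotics (\ref{spherical bessel asymptotics}) of $j_{\ell}$, and then conclude by analytic continuation from $[-1,1]$ using the identity theorem \cite[Th.\ 3.2.6]{ablowitz}. The only cosmetic difference is that you bound $|P_{\ell}(w)|$ on compacta via $\max(|w-1|,|w+1|)$ whereas the paper works on balls $B_R$ with the bound $(R+1)/2$; the two are interchangeable.
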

\begin{proof}
Let $R > 1$ and $B_R:=\{w \in \C: |w| <R\}$. We want to see that the right-hand side in (\ref{pre jacobi anger 2}) is well-defined for every $r \geq 0$ and $w \in B_R$. Due to (\ref{spherical bessel asymptotics}), it is enough to see that
\begin{equation}
\sum_{\ell=0}^{\infty}\left(\frac{er}{2\ell+1}\right)^{\ell}\left| P_{\ell}(w)\right|<\infty,\,\,\,\,\,\,\,\,\,\,\,\,\,\,\,\,\,\,\,\,\forall r \geq 0,\,\forall w \in B_R.
\label{other series}
\end{equation}
Thanks to (\ref{sum legendre expansion}) and the Vandermonde identity \cite[Eq.\ (1)]{Sokal}, it follows
\begin{equation*}
|P_{\ell}(w)|\leq \frac{1}{2^{\ell}}\sum_{k=0}^{\ell}\binom{\ell}{k}^2|w-1|^{\ell-k}|w+1|^k\leq\sum_{k=0}^{\ell}\binom{\ell}{k}^2\left(\frac{R+1}{2}\right)^{\ell}=\binom{2\ell}{\ell}\left(\frac{R+1}{2}\right)^{\ell},
\end{equation*}
and therefore, for every $r \geq 0$ and $w \in B_R$, the series (\ref{other series}) is dominated by
\begin{equation}
\sum_{\ell=0}^{\infty}c_{\ell},\,\,\,\,\,\,\,\,\,\,\,\,\,\,\text{where}\,\,\,\,\,\,\,\,\,\,\,\,\,\,c_{\ell}:=\binom{2\ell}{\ell}\left[\frac{er(R+1)}{4\ell+2}\right]^{\ell}.
\label{other other series}
\end{equation}
The series (\ref{other other series}) is convergent, as can be readily seen from the ratio test: in fact, thanks to (\ref{ell asymptotics}), we have
\begin{equation*}
\frac{c_{\ell+1}}{c_{\ell}}\sim \left(\frac{\ell+1/2}{\ell+3/2}\right)^{\ell+1}\frac{er(R+1)}{\ell+1}\sim\frac{r(R+1)}{\ell},\,\,\,\,\,\,\,\,\,\,\,\,\,\,\text{as}\,\,\,\ell \rightarrow \infty.
\end{equation*}
Therefore, the right-hand side in (\ref{pre jacobi anger 2}) is well-defined for every $r \geq 0$ and $w \in B_R$. The functions $w \mapsto e^{irw}$ and $w \mapsto P_{\ell}(w)$ are analytic on $B_R$ and, since identity (\ref{pre jacobi-anger}) holds, i.e.\ (\ref{pre jacobi anger 2}) with $w \in [-1,1]$, it follows that (\ref{pre jacobi anger 2}) also holds for every $r \geq 0$ and $w \in B_R$ due to \cite[Th. 3.2.6]{ablowitz}. Since $R >1$ is arbitrary, (\ref{pre jacobi anger 2}) is valid for every $w \in \C$.
\end{proof}

Finally, thanks to (\ref{addition theorem 2}) and (\ref{pre jacobi anger 2}), we can derive a Jacobi--Anger identity for the reference complex direction $\mathbf{d}_{\uparrow}(z)$ for every $z\geq 1$.
Once this result has been established, a corresponding Jacobi--Anger identity for all complex directions in the set $\{\mathbf{d} \in \C^3 : \mathbf{d} \cdot \mathbf{d}=1\}$ readily follows.

\begin{theorem}
\hypertarget{Theorem 4.6}{The} following identity holds for any $\mathbf{x} \in B_1$ and $z \geq 1$:
\begin{equation}
e^{i\kappa \mathbf{d}_{\uparrow}(z)\cdot \mathbf{x}}=4 \pi \sum_{\ell=0}^{\infty}i^{\ell}\sum_{m=-\ell}^{\ell} Y_{\ell}^m(\mathbf{d}_{\uparrow}(z))Y_{\ell}^m(\mathbf{\hat{x}})j_{\ell}(\kappa |\mathbf{x}|).
\label{complex jacobi-anger}
\end{equation}
Hence, for any $\mathbf{x} \in B_1$ and $\mathbf{y} \in \Theta \times [0,+\infty)$, it follows:
\begin{equation}
\phi_{\mathbf{y}}(\mathbf{x})=e^{i\kappa \mathbf{d}(\mathbf{y})\cdot \mathbf{x}}=4 \pi\sum_{\ell=0}^{\infty}i^{\ell} \sum_{{m}=-\ell}^{\ell}  Y_{\ell}^{m}\left(\mathbf{d}_{\uparrow}\left(\frac{\zeta}{2\kappa}+1\right)\right)Y_{\ell}^{m}\left(R^{-1}_{\boldsymbol{\theta}}\mathbf{\hat{x}}\right)j_{\ell}(\kappa |\mathbf{x}|).
\label{all complex jacobi-anger}
\end{equation}
\end{theorem}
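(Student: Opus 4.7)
My plan is to derive identity (\ref{complex jacobi-anger}) first, and then obtain (\ref{all complex jacobi-anger}) as an immediate consequence by exploiting the rotation invariance built into Definition~\hyperlink{Definition 4.1}{4.1}.

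\textbf{Step 1 (the reference-direction case).} Fix $\mathbf{x} \in B_1$ and $z \geq 1$. I would set $r := \kappa |\mathbf{x}|$ and $w := \mathbf{d}_{\uparrow}(z) \cdot \mathbf{\hat{x}} \in \C$. Applying Proposition \hyperlink{Proposition 4.6}{4.6} to this pair $(r,w)$ yields
\begin{equation*}
e^{i\kappa \mathbf{d}_{\uparrow}(z)\cdot \mathbf{x}} = e^{irw} = \sum_{\ell=0}^{\infty} i^{\ell}(2\ell+1)\, j_{\ell}(\kappa|\mathbf{x}|)\, P_{\ell}\!\left(\mathbf{d}_{\uparrow}(z)\cdot \mathbf{\hat{x}}\right).
\end{equation*}
To express each Legendre factor in terms of spherical harmonics I would then invoke the generalized addition theorem of Proposition \hyperlink{Proposition 4.3}{4.3}, which gives
\begin{equation*}
(2\ell+1)\, P_{\ell}\!\left(\mathbf{d}_{\uparrow}(z)\cdot \mathbf{\hat{x}}\right) \;=\; 4\pi \sum_{m=-\ell}^{\ell} Y_{\ell}^m(\mathbf{d}_{\uparrow}(z))\, Y_{\ell}^m(\mathbf{\hat{x}}).
\end{equation*}
Substituting this into the series above immediately produces (\ref{complex jacobi-anger}). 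The swap of summation and the use of Proposition \hyperlink{Proposition 4.3}{4.3} term by term are legitimate thanks to the absolute convergence estimate already established in the proof of Proposition \hyperlink{Proposition 4.6}{4.6} (the dominating series in (\ref{other other series}) controls the Legendre factors uniformly), combined with the trivial bound $|Y_\ell^m(\mathbf{\hat{x}})| \leq C\sqrt{2\ell+1}$ for real arguments.

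\textbf{Step 2 (the general rotated case).} For (\ref{all complex jacobi-anger}), let $\mathbf{y} = (\boldsymbol{\theta},\zeta) \in \Theta \times [0,+\infty)$ and set $z := \zeta/(2\kappa)+1 \geq 1$. The key observation is that the rotation matrix $R_{\boldsymbol{\theta}}$ in (\ref{rotation matrix R}) is real and orthogonal, so the non-conjugate bilinear dot product satisfies
\begin{equation*}
\mathbf{d}(\mathbf{y})\cdot \mathbf{x} \;=\; \big(R_{\boldsymbol{\theta}}\,\mathbf{d}_{\uparrow}(z)\big)\cdot \mathbf{x} \;=\; \mathbf{d}_{\uparrow}(z)\cdot \big(R_{\boldsymbol{\theta}}^{-1}\mathbf{x}\big),
\end{equation*}
even though $\mathbf{d}_{\uparrow}(z)$ is complex. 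Moreover, since $R_{\boldsymbol{\theta}}$ is an isometry of $\R^3$, one has $|R_{\boldsymbol{\theta}}^{-1}\mathbf{x}| = |\mathbf{x}|$ and $\widehat{R_{\boldsymbol{\theta}}^{-1}\mathbf{x}} = R_{\boldsymbol{\theta}}^{-1}\mathbf{\hat{x}}$, so that applying identity (\ref{complex jacobi-anger}) to the point $R_{\boldsymbol{\theta}}^{-1}\mathbf{x} \in B_1$ gives
\begin{equation*}
\phi_{\mathbf{y}}(\mathbf{x}) = e^{i\kappa \mathbf{d}_{\uparrow}(z) \cdot (R_{\boldsymbol{\theta}}^{-1}\mathbf{x})} = 4\pi \sum_{\ell=0}^{\infty} i^{\ell} \sum_{m=-\ell}^{\ell} Y_{\ell}^m(\mathbf{d}_{\uparrow}(z))\, Y_{\ell}^m\!\left(R_{\boldsymbol{\theta}}^{-1}\mathbf{\hat{x}}\right) j_{\ell}(\kappa|\mathbf{x}|),
\end{equation*}
which is exactly (\ref{all complex jacobi-anger}).

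\textbf{Main obstacle.} The routine parts are the two substitutions; the only delicate point is justifying that the rearrangement leading to (\ref{complex jacobi-anger}) is legitimate when $z>1$, because the associated Legendre factors $P_{\ell}^m(z)$ then grow with $\ell$ (unlike on the cut $[-1,1]$). I would rely on the super-exponential decay (\ref{spherical bessel asymptotics}) of $j_{\ell}(\kappa|\mathbf{x}|)$ for $|\mathbf{x}|<1$ to dominate this growth uniformly for $z$ in compact subsets of $[1,+\infty)$, exactly in the spirit of the convergence bound (\ref{other other series}) used in the proof of Proposition \hyperlink{Proposition 4.6}{4.6}; this is the same mechanism that makes Helmholtz solutions in $B_1$ expressible as convergent spherical-wave series.
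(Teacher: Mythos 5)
Your proposal is correct and takes essentially the same route as the paper: the reference-direction identity (\ref{complex jacobi-anger}) is obtained by combining the complex-argument expansion (\ref{pre jacobi anger 2}) with the generalized addition theorem (\ref{addition theorem 2}), and the rotated case (\ref{all complex jacobi-anger}) follows from the orthogonality of $R_{\boldsymbol{\theta}}$ via $\mathbf{d}(\mathbf{y})\cdot\mathbf{x}=\mathbf{d}_{\uparrow}(z)\cdot R_{\boldsymbol{\theta}}^{-1}\mathbf{x}$. The convergence justification you supply (dominating the growth of $P_{\ell}$ by the decay of $j_{\ell}(\kappa|\mathbf{x}|)$ as in (\ref{other other series})) is a detail the paper leaves implicit, but it matches the mechanism already established there.
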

\begin{proof}
The identity (\ref{complex jacobi-anger}) easily follows from (\ref{addition theorem 2}) and (\ref{pre jacobi anger 2}). In order to extend the Jacobi--Anger identity (\ref{jacobi-anger}) to complex-valued directions $\mathbf{d}(\mathbf{y})$ in (\ref{complex direction}) and thus prove the identity (\ref{all complex jacobi-anger}), we can note that:
\begin{align*}
\phi_{\mathbf{y}}(\mathbf{x})&=e^{i\kappa \mathbf{d}(\mathbf{y})\cdot \mathbf{x}}=e^{i\kappa R_{\boldsymbol{\theta}}\mathbf{d}_{\uparrow}\left(\frac{\zeta}{2\kappa}+1\right)\cdot \mathbf{x}}=e^{i\kappa \mathbf{d}_{\uparrow}\left(\frac{\zeta}{2\kappa}+1\right)\cdot R^{-1}_{\boldsymbol{\theta}}\mathbf{x}}\\ &=4 \pi\sum_{\ell=0}^{\infty}i^{\ell} \sum_{{m}=-\ell}^{\ell}  Y_{\ell}^{m}\left(\mathbf{d}_{\uparrow}\left(\frac{\zeta}{2\kappa}+1\right)\right)Y_{\ell}^{m}\left(R^{-1}_{\boldsymbol{\theta}}\mathbf{\hat{x}}\right)j_{\ell}(\kappa |\mathbf{x}|).
\end{align*}
\end{proof}
\vspace{-6mm}

\section{Wigner matrices}

\hypertarget{Section 4.3}{We} now present a short section dedicated to the \textit{Wigner matrices}. We propose the next definition accordingly to the notation of \cite[Eq.\ (1)]{feng} and \cite[Eq.\ (5.65)]{devanathan}.
\begin{definition}[Wigner matrices]
\hypertarget{Definition 4.5}{Let} $\boldsymbol{\theta} \in \Theta$ be the Euler angles and $\ell \geq 0$. The \textup{Wigner D-matrix} is the unitary matrix $D_{\ell}(\boldsymbol{\theta})=(D_{\ell}^{m,m'}(\boldsymbol{\theta}))_{m,m'} \in \C^{(2\ell+1) \times (2\ell +1)}$, where $|m|,|m'|\leq \ell$, whose elements are defined by
\begin{equation}
D_{\ell}^{m,m'}(\boldsymbol{\theta}):=e^{im'\theta_2}d_{\ell}^{\,m,m'}(\theta_1)e^{im\theta_3}.
\label{DD matrix}
\end{equation}
In turn, the matrix $d_{\ell}(\theta):=(d_{\ell}^{\,m,m'}(\theta))_{m,m'} \in \R^{(2\ell+1) \times (2\ell +1)}$, where $|m|,|m'|\leq \ell$, is called \textup{Wigner d-matrix} and its elements are
\begin{equation}
d_{\ell}^{\,m,m'}(\theta):=\sum_{k=k_{\textup{min}}}^{k_{\textup{max}}}w_{\ell,k}^{m,m'}\left(\cos{\frac{\theta}{2}}\right)^{2(\ell-k)+m'-m}\left(\sin{\frac{\theta}{2}}\right)^{2k+m-m'},
\label{d matrix}
\end{equation}
where
\begin{equation}
w_{\ell,k}^{m,m'}:=\frac{(-1)^k\left[(\ell+m)!(\ell-m)!(\ell+m')!(\ell-m')!\right]^{1/2}}{(\ell-m-k)!(\ell+m'-k)!(k+m-m')!\,k!}
\label{coefficienti Wigner}
\end{equation}
with $k_{\textup{min}}:=\max\{0,m'-m\}$ and $k_{\textup{max}}:=\max\{\ell-m,\ell+m'\}$.
\end{definition}
For other explicit definitions involving trigonometric functions see \cite[Sec.\ 4.3]{quantumtheory}. Sign differences in (\ref{DD matrix}) and (\ref{coefficienti Wigner}) with respect to the cited sources are due to the choice of the Euler angles. Observe that the Wigner d-matrix $d_{\ell}(\theta_1)$ is real: this is mainly due to the conventional choice (z-y-z) adopted in Definition \hyperlink{Definition 4.1}{4.1}.

Thanks to the Wigner D-matrix $D_{\ell}(\boldsymbol{\theta})$, any spherical harmonic of degree $\ell$ and order $m$, under the rotation $R_{\boldsymbol{\theta}}$, transforms into a linear combination of spherical harmonics of the same degree, in fact the expansion formula \cite[Sec.\ 4.1, Eq.\ (5)]{quantumtheory} holds, namely 
\begin{equation}
Y_{\ell}^{m}(\mathbf{d})=\sum_{m'=-\ell}^{\ell}\overline{D_{\ell}^{m,m'}(\boldsymbol{\theta})}Y_{\ell}^{m'}(R_{\boldsymbol{\theta}}\mathbf{d}),\,\,\,\,\,\,\,\,\,\,\,\,\,\,\,\,\forall \mathbf{d} \in \mathbb{S}^2,\,\forall (\ell,m) \in \mathcal{I}
\label{wigner property}
\end{equation}
(see also \cite[Eq.\ (33)]{pendleton} and \cite[Eq.\ (4.1)]{devanathan}). Finally, we have all the ingredients to proceed with the modal analysis of evanescent plane waves (\ref{evanescent wave}).

\begin{remark}
As pointed out in \textup{\cite{feng}}, the numerical computation of the Wigner's formula \textup{(\ref{d matrix})} is subject to intolerable numerical errors, because it appears as a sum of many large numbers with alternating signs. To avoid this problem, the authors of the previously cited article present a simple method by expanding the $d_{\ell}(\theta)$ matrix into a complex Fourier series and then calculate the Fourier coefficients by exactly diagonalizing the angular momentum operator $J_{y}$ in the eigenbasis of $J_z$ (for further details about these operators see \textup{\cite[Ch.\ 1]{devanathan})}. This leads to a procedure that is free from the numerical cancellation problem, since, due to the normalization of the eigenvectors of $J_y$, the norm of each Fourier coefficient is not larger than unity. More precisely, $J_y$ is first expressed as a $(2\ell+1)$-dimensional Hermitian matrix
\begin{equation*}
J_y=\frac{1}{2i}\begin{bmatrix}
    0 & -X_{-\ell+1} & & & &\\
    X_{\ell} & 0 & -X_{-\ell+2} & & &\\
    & X_{\ell-1} & 0 & \ddots & &\\
    & & \ddots & \ddots & \ddots &\\
    & & & \ddots & 0 & -X_{\ell}\\
    & & & & X_{-\ell+1} & 0\\
\end{bmatrix},
\end{equation*}
where $(J_y)_{m,m'}=\left(X_{-m'}\delta_{m,m'+1}-X_{m'}\delta_{m,m'-1}\right)/(2i)$, for $|m|,|m'|\leq \ell$, with the term $X_{m}:=[(\ell+m)(\ell-m+1)]^{1/2}$ satisfying $X_{\pm m}=X_{\mp m +1}$. Next, the Hermitian matrix $J_y$ is diagonalized in order to obtain all the eigenvectors $\{\mathbf{j}_m\}_{m=-\ell}^{\ell}$ and thus compute
\vspace{-2mm}
\begin{equation}
d_{\ell}^{\,m,m'}(\theta)=\sum_{\mu=-\ell}^{\ell}e^{i\mu\theta}j_{\mu,m}\overline{j_{\mu,m'}},
\label{d matrix computation}
\end{equation}
where $\mathbf{j_{\mu}}=(j_{\mu,m})_{m=-\ell}^{\ell}$.
As mentioned earlier, this method not only has the advantage of having all the coefficients $t_{\ell,\mu}^{m,m'}$ in \textup{(\ref{d matrix computation})} smaller than unity, but also the matrix $J_y$ is tridiagonal and Hermitian, and so it can be easily diagonalized. In the following numerical experiments we will use \textup{(\ref{d matrix computation})} to overcome any possible loss of precision due to Wigner's formula \textup{(\ref{d matrix})}, thus using the Fourier expansion \textup{(\ref{d matrix computation})}.
\end{remark}

\vspace{-4mm}
\section{Modal analysis}

\hypertarget{Section 4.4}{In} (\ref{all complex jacobi-anger}) we saw that is possible to extend the Jacobi--Anger identity (\ref{jacobi-anger}) to all the complex-valued directions in $\{\mathbf{d} \in \C^3 : \mathbf{d} \cdot \mathbf{d}=1\}$. Recovering the definitions proposed in the previous sections, we present the modal analysis of evanescent plane waves, trying to explain why we expect such waves to have better stability properties than the propagative ones.
To improve readability, we introduce the notation $\mathbf{D}^{m}_{\ell}(\boldsymbol{\theta})$, for $0 \leq |m| \leq \ell$, to indicate the columns of the Wigner D-matrix $D_{\ell}(\boldsymbol{\theta})$ and
\begin{equation}
\mathbf{P}_{\ell}(\zeta):=\left(\gamma_{\ell}^{m'}i^{m'}P_{\ell}^{m'}\left(\frac{\zeta}{2\kappa}+1\right)\right)_{m'=-\ell}^{\ell} \in \C^{2\ell+1}.
\label{vector P}
\end{equation}
The findings presented in this section are based on the following fundamental result.

\begin{proposition}
The following identity holds for any $\mathbf{x} \in B_1$ and $\mathbf{y} \in \Theta \times [0,\!+\infty)$:
\begin{equation}
\phi_{\mathbf{y}}(\mathbf{x})=\sum_{\ell=0}^{\infty}\sum_{{m}=-\ell}^{\ell}\left[4 \pi i^{\ell}\beta_{\ell}^{-1}\overline{\mathbf{D}^{m}_{\ell}(\boldsymbol{\theta}) \cdot \mathbf{P}_{\ell}(\zeta)} \right] b_{\ell}^m(\mathbf{x}),
\label{complex expansion}
\end{equation}
\end{proposition}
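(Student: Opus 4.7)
The strategy is to take the complex-direction Jacobi--Anger identity (\ref{all complex jacobi-anger}) already established in Theorem \hyperlink{Theorem 4.6}{4.7} and perform two basis conversions. First, the spherical harmonic evaluated at the reference complex direction is rewritten using the extended definition $Y_{\ell}^{m}(\mathbf{d}_{\uparrow}(z))=\gamma_{\ell}^{m}i^{-m}P_{\ell}^{m}(z)$ with $z:=\zeta/(2\kappa)+1$. Second, the spherical harmonic evaluated at the rotated argument $R_{\boldsymbol{\theta}}^{-1}\hat{\mathbf{x}}$ is expanded in the canonical basis $\{Y_{\ell}^{m'}(\hat{\mathbf{x}})\}_{m'}$ via the Wigner rotation property (\ref{wigner property}). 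Collecting the resulting contributions produces expansion coefficients along the spherical waves $b_{\ell}^{m'}=\beta_{\ell}j_{\ell}(\kappa|\cdot|)Y_{\ell}^{m'}$, which, after matching with the definition (\ref{vector P}) of $\mathbf{P}_{\ell}(\zeta)$, will display the claimed dot-product form.

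Concretely, the plan is to apply (\ref{wigner property}) with $\mathbf{d}=R_{\boldsymbol{\theta}}^{-1}\hat{\mathbf{x}}\in\mathbb{S}^{2}$, which gives $Y_{\ell}^{m}(R_{\boldsymbol{\theta}}^{-1}\hat{\mathbf{x}})=\sum_{m'=-\ell}^{\ell}\overline{D_{\ell}^{m,m'}(\boldsymbol{\theta})}\,Y_{\ell}^{m'}(\hat{\mathbf{x}})$; substitute this, together with $Y_{\ell}^{m}(\mathbf{d}_{\uparrow}(z))=\gamma_{\ell}^{m}i^{-m}P_{\ell}^{m}(z)$, into (\ref{all complex jacobi-anger}); then interchange the two finite inner sums over $m$ and $m'$ and the outer series in $\ell$. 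Absolute convergence justifying the rearrangement follows from the super-exponential decay of $j_{\ell}(\kappa|\mathbf{x}|)$ in $\ell$ for $\mathbf{x}\in B_{1}$ given by (\ref{spherical bessel asymptotics}), which dominates any polynomial-in-$\ell$ growth of the finitely many entries $\gamma_{\ell}^{m}P_{\ell}^{m}(z)$ and unimodular $D_{\ell}^{m,m'}(\boldsymbol{\theta})$ (the Wigner D-matrix is unitary, so each entry has modulus at most one). After the interchange, the coefficient multiplying $j_{\ell}(\kappa|\mathbf{x}|)Y_{\ell}^{m'}(\hat{\mathbf{x}})=\beta_{\ell}^{-1}b_{\ell}^{m'}(\mathbf{x})$ reads
\begin{equation*}
4\pi\,i^{\ell}\beta_{\ell}^{-1}\sum_{m=-\ell}^{\ell}\gamma_{\ell}^{m}\,i^{-m}P_{\ell}^{m}(z)\,\overline{D_{\ell}^{m,m'}(\boldsymbol{\theta})}.
\end{equation*}

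To finish, I would observe that $\gamma_{\ell}^{m}$ is real by (\ref{gamma constant}) and $P_{\ell}^{m}(z)$ is real for $z\geq 1$ by the lemma preceding Proposition \hyperlink{Proposition 4.6}{4.6}; consequently the sum inside is the complex conjugate of $\sum_{m=-\ell}^{\ell}\gamma_{\ell}^{m}i^{m}P_{\ell}^{m}(z)\,D_{\ell}^{m,m'}(\boldsymbol{\theta})$, which, under the convention that the $m'$-th column $\mathbf{D}^{m'}_{\ell}(\boldsymbol{\theta})$ collects the entries $\{D_{\ell}^{m,m'}(\boldsymbol{\theta})\}_{m=-\ell}^{\ell}$, is exactly $\mathbf{D}^{m'}_{\ell}(\boldsymbol{\theta})\cdot\mathbf{P}_{\ell}(\zeta)$ by the definition (\ref{vector P}). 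Renaming $m'\leftrightarrow m$ yields (\ref{complex expansion}). The main obstacle is essentially notational: keeping the column-indexing convention of the Wigner D-matrix consistent with (\ref{vector P}) and tracking where complex conjugation enters, since all analytic content has already been packaged into Theorem \hyperlink{Theorem 4.6}{4.7} and the Wigner transformation law (\ref{wigner property}).
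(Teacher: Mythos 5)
Your proposal is correct and follows essentially the same route as the paper's proof: substitute the extended spherical harmonic $Y_{\ell}^{m}(\mathbf{d}_{\uparrow}(z))=\gamma_{\ell}^{m}i^{-m}P_{\ell}^{m}(z)$ and the Wigner rotation law (\ref{wigner property}) into the complex-direction Jacobi--Anger identity (\ref{all complex jacobi-anger}), swap the finite inner sums, and read off the coefficient of $b_{\ell}^{m}$ as the conjugated dot product with $\mathbf{P}_{\ell}(\zeta)$. The only additions on your side are the explicit justification of the series rearrangement and the remark that the conjugate can be pulled outside because $\gamma_{\ell}^{m}$ and $P_{\ell}^{m}(z)$ are real for $z\geq 1$, both of which the paper leaves implicit.
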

\begin{proof}
Thanks to the complex-direction Jacobi--Anger identity (\ref{all complex jacobi-anger}) and the expansion formula (\ref{wigner property}), we have
\begin{align*}
\phi_{\mathbf{y}}(\mathbf{x})&=\sum_{\ell=0}^{\infty}\sum_{{m'}=-\ell}^{\ell}4 \pi i^{\ell} Y_{\ell}^{m'}\left(\mathbf{d}_{\uparrow}\left(\frac{\zeta}{2\kappa}+1\right)\right)Y_{\ell}^{m'}\left(R^{-1}_{\boldsymbol{\theta}}\mathbf{\hat{x}}\right)j_{\ell}(\kappa |\mathbf{x}|)\\
&=\sum_{\ell=0}^{\infty}\sum_{{m'}=-\ell}^{\ell}4 \pi i^{\ell} Y_{\ell}^{m'}\left(\mathbf{d}_{\uparrow}\left(\frac{\zeta}{2\kappa}+1\right)\right)\left(\sum_{m=-\ell}^{\ell}\overline{D_{\ell}^{m',m}(\boldsymbol{\theta})}Y_{\ell}^{m}(\mathbf{\hat{x}}) \right)j_{\ell}(\kappa |\mathbf{x}|)\\
&=\sum_{\ell=0}^{\infty}\sum_{{m}=-\ell}^{\ell}\left[\sum_{{m'}=-\ell}^{\ell}4 \pi i^{\ell-m'}\beta_{\ell}^{-1}\gamma_{\ell}^{m'}P_{\ell}^{m'}\left(\frac{\zeta}{2\kappa}+1\right)\overline{D_{\ell}^{m',m}(\boldsymbol{\theta})} \right] b_{\ell}^m(\mathbf{x}).
\end{align*}
\end{proof}

It is worth noting that, thanks to definition (\ref{DD matrix}), \cite[Eq.\ (35)]{pendleton} and the Wigner d-matrix symmetry properties \cite[Sec.\ 4.4, Eq.\ (1)]{quantumtheory}, it holds
\begin{equation}
D_{\ell}^{0,m}(\boldsymbol{\theta})=\sqrt{\frac{(\ell-m)!}{(\ell+m)!}}\,e^{im\theta_2}\mathsf{P}_{\ell}^m(\cos \theta_1),\,\,\,\,\,\,\,\,\,\forall \boldsymbol{\theta} \in \Theta,\,\forall (\ell,m) \in \mathcal{I},
\label{D-matrix0}
\end{equation}
and moreover $P_{\ell}^{m}(1)=\delta_{0,m}$ due to (\ref{legendre2 polynomials}). Hence, assuming $\zeta=0$ in (\ref{complex expansion}), we recover the Jacobi--Anger expansion for propagative plane waves in (\ref{expansion plane wave}) for any $\boldsymbol{\theta} \in \Theta$.
The moduli of the coefficients
\begin{equation}
\hat{\phi}_{\ell}^m(\theta_1,\theta_3,\zeta)\!:=\!\left|\left(\phi_{\mathbf{y}},b_{\ell}^m\right)_{\mathcal{B}}\right|\!=\!\frac{4\pi}{\beta_{\ell}}\left|\sum_{m'=-\ell}^{\ell}\!\!\gamma_{\ell}^{m'}i^{-m'}\!d_{\ell}^{\,m',m}(\theta_1)e^{-im'\theta_3}P_{\ell}^{m'}\!\!\left(\frac{\zeta}{2\kappa}+1\right)\right|
\label{evanescent coefficients}
\end{equation}
in the modal expansion (\ref{complex expansion}) depend on $\theta_1 \in [0,\pi]$, $\theta_3 \in [0,2\pi)$ and $(\ell,m) \in \mathcal{I}$. Observe that if $\zeta=0$, then, thanks to (\ref{D-matrix0}), the coefficients (\ref{evanescent coefficients}) coincide with the propagative ones in (\ref{propagative coefficients}) and therefore are independent of $\theta_3$.

Some distributions of the coefficients (\ref{evanescent coefficients}) are depicted in Figure \ref{figure 4.4}.
By adjusting the evanescence parameters $\theta_3$ and $\zeta$, the Fourier modal content of the plane waves can be shifted to higher regimes for any $(\ell,m) \in \mathcal{I}$, as can be seen in comparison to Figure \ref{figure 3.1}: in fact, by varying $\zeta$ we are able to reach higher degrees, i.e larger values of $\ell$, while by varying $\theta_3$ we range over the different orders $m$.

To better see this result, in analogy with what was done in (\ref{dfg}), we define:
\begin{equation}
\begin{split}
\tilde{b}_\ell[\mathbf{y}]&:=\sum_{m=-\ell}^{\ell}\left(\phi_{\mathbf{y}},b_{\ell}^m\right)_{\mathcal{B}}b_{\ell}^m,\,\,\,\,\,\,\,\,\,\,\,\,\,\,\,\,\,\,\,\,\,\,\,\,  \forall \ell \geq 0,\,\forall \mathbf{y} \in \Theta \times [0,+\infty),\\
b_\ell[\mathbf{y}]&:=\hat{\phi}_{\ell}^{-1}(\zeta)\tilde{b}_\ell[\mathbf{y}],\,\,\,\,\,\,\,\,\,\,\,\,\,\,\,\,\,\,\,\,\,\,\,\,\,\,\,\,\,\,\,\,\,\,\,\,\,\,\,\,\,\,\,\,\,\,\,\,\,\,\,\, \hat{\phi}_{\ell}(\zeta):=\big\|\tilde{b}_\ell[\mathbf{y}]\big\|_{\mathcal{B}}.
\end{split}
\label{yuy}
\end{equation}

\begin{figure}[H]
\centering
\begin{tabular}{cccc}
$\hat{\phi}_{\ell}^m\left(\frac{\pi}{2},\frac{\pi}{4},30\right)$ & $\hat{\phi}_{\ell}^m\left(\frac{\pi}{2},\frac{\pi}{2},30\right)$ & $\hat{\phi}_{\ell}^m\left(\frac{\pi}{2},\frac{3\pi}{2},30\right)$ & $\hat{\phi}_{\ell}^m\left(\frac{\pi}{2},\frac{7\pi}{4},30\right)$\\
\includegraphics[width=.2\linewidth,valign=m]{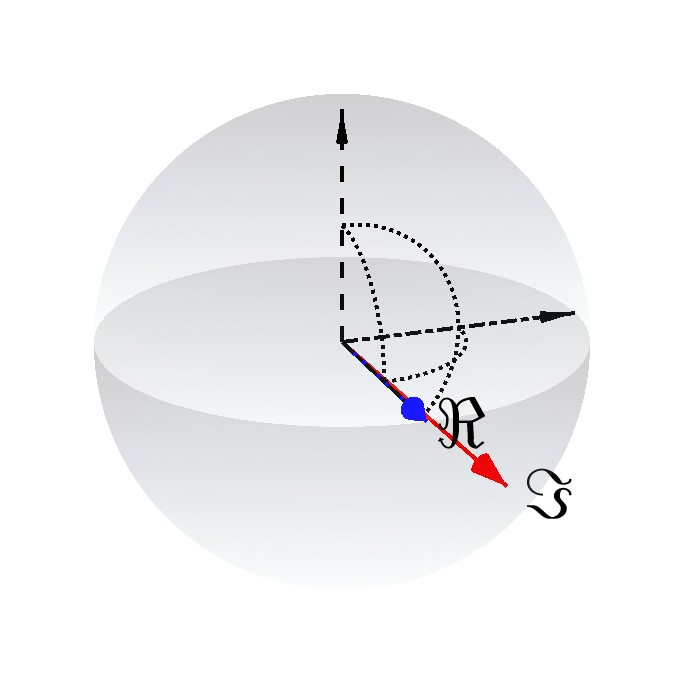} & \includegraphics[width=.2\linewidth,valign=m]{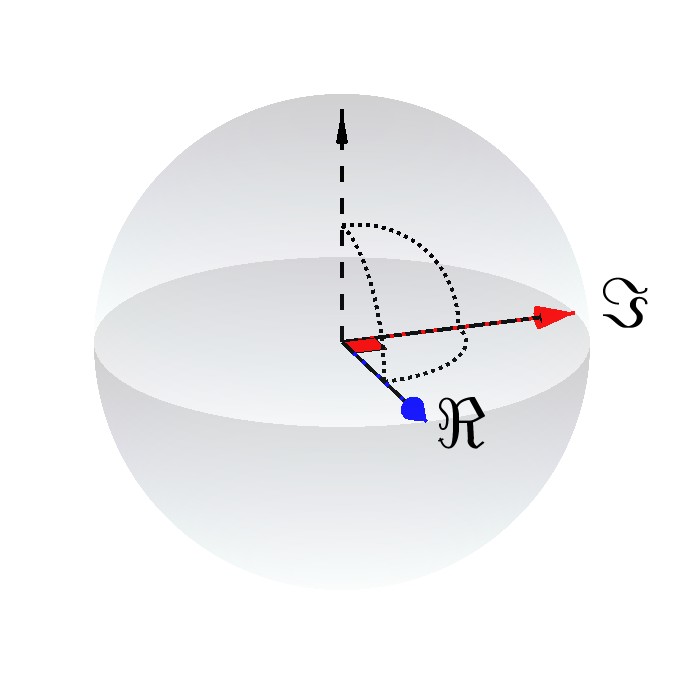} & \includegraphics[width=.2\linewidth,valign=m]{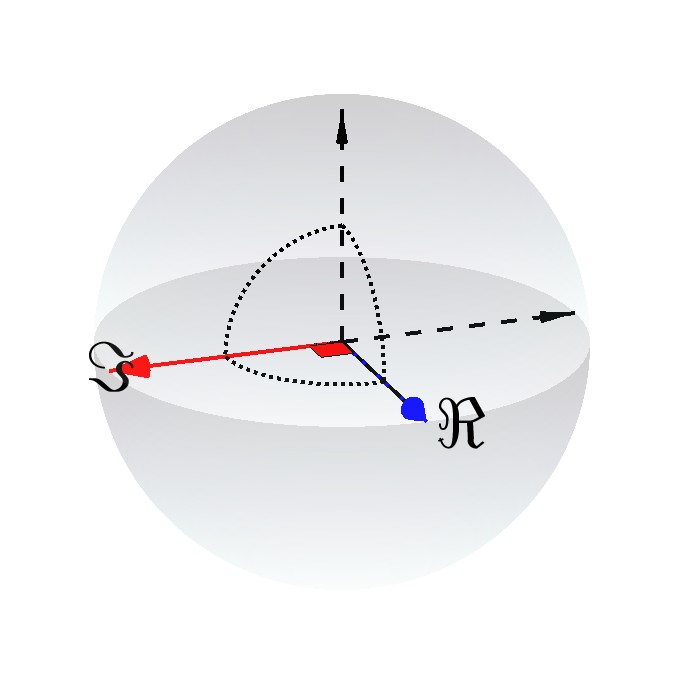} & \includegraphics[width=.2\linewidth,valign=m]{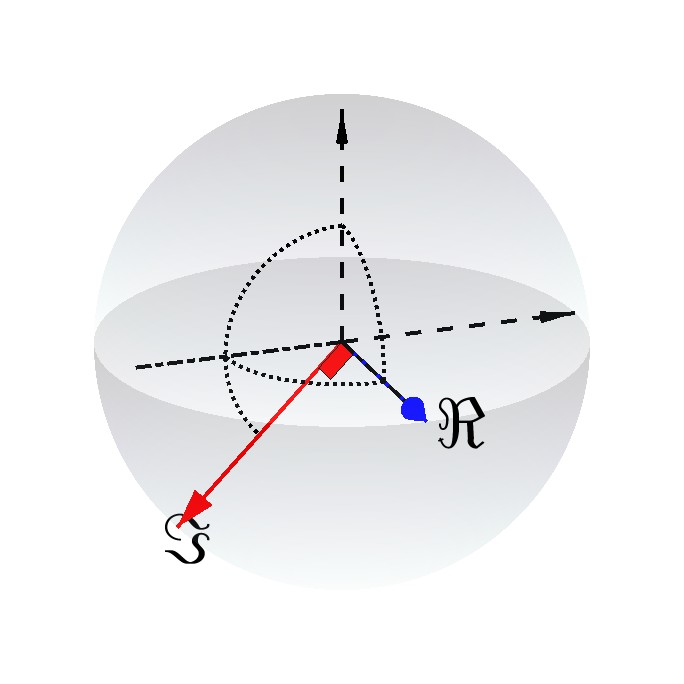}\\
\includegraphics[width=.2\linewidth,valign=m]{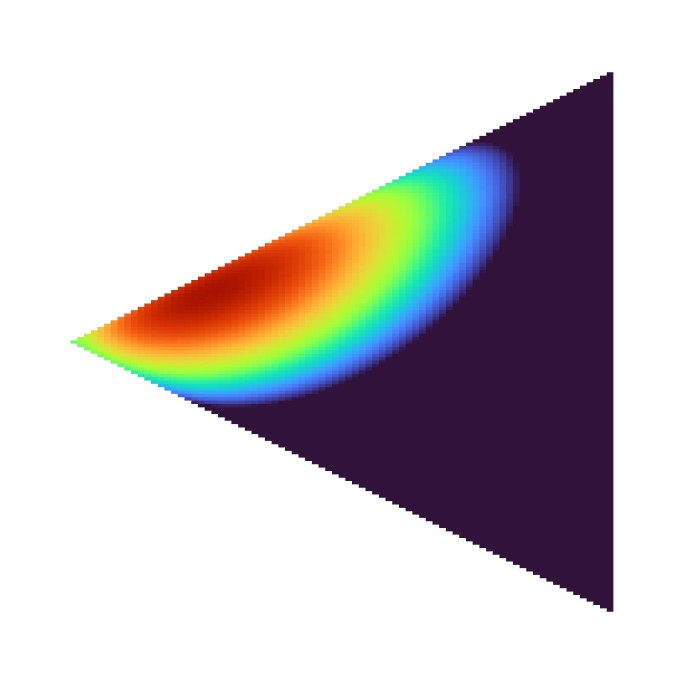} & \includegraphics[width=.2\linewidth,valign=m]{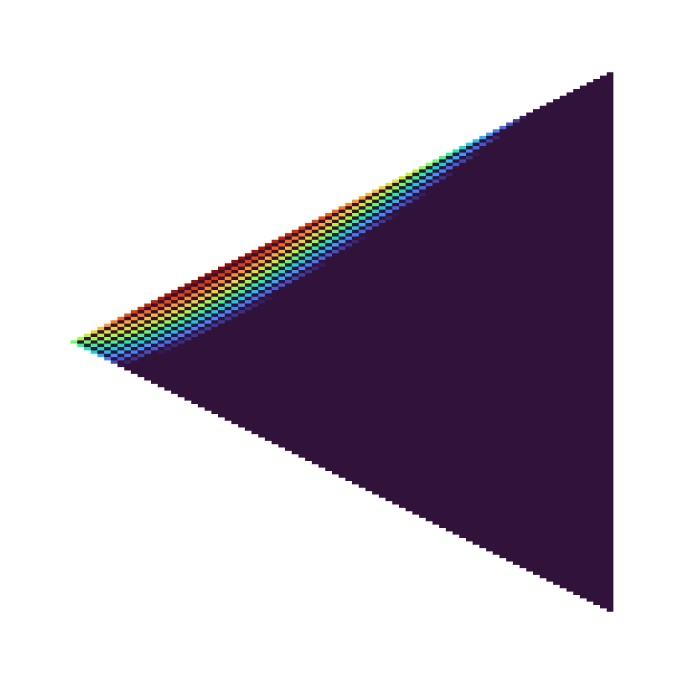} & \includegraphics[width=.2\linewidth,valign=m]{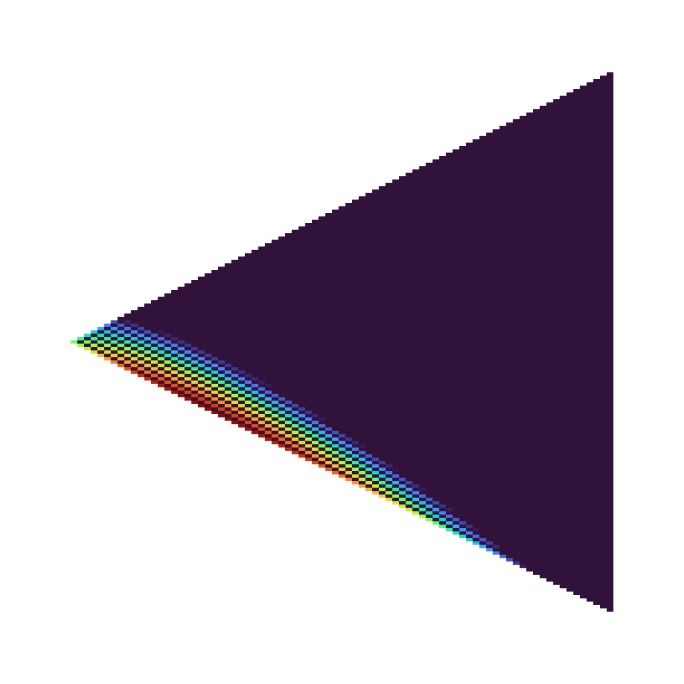} & \includegraphics[width=.2\linewidth,valign=m]{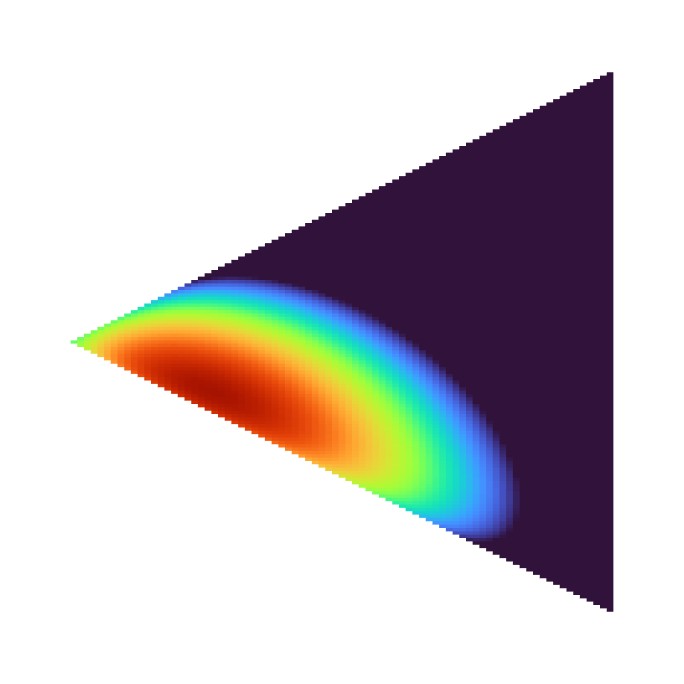}\\
\rule{0pt}{3.3ex} 
$\hat{\phi}_{\ell}^m\left(\frac{\pi}{4},\frac{\pi}{4},30\right)$ & $\hat{\phi}_{\ell}^m\left(\frac{\pi}{4},\frac{\pi}{2},30\right)$ & $\hat{\phi}_{\ell}^m\left(\frac{\pi}{4},\frac{3\pi}{2},30\right)$ & $\hat{\phi}_{\ell}^m\left(\frac{\pi}{4},\frac{7\pi}{4},30\right)$\\
\includegraphics[width=.2\linewidth,valign=m]{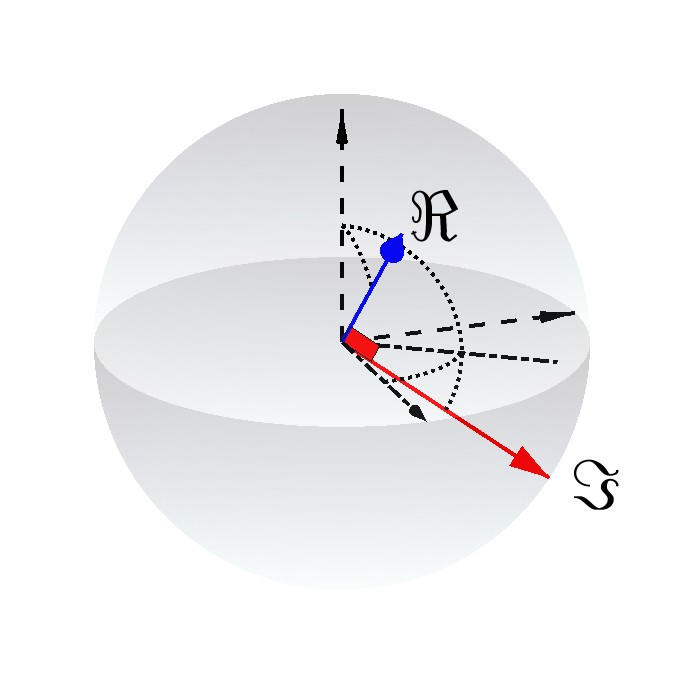} & \includegraphics[width=.2\linewidth,valign=m]{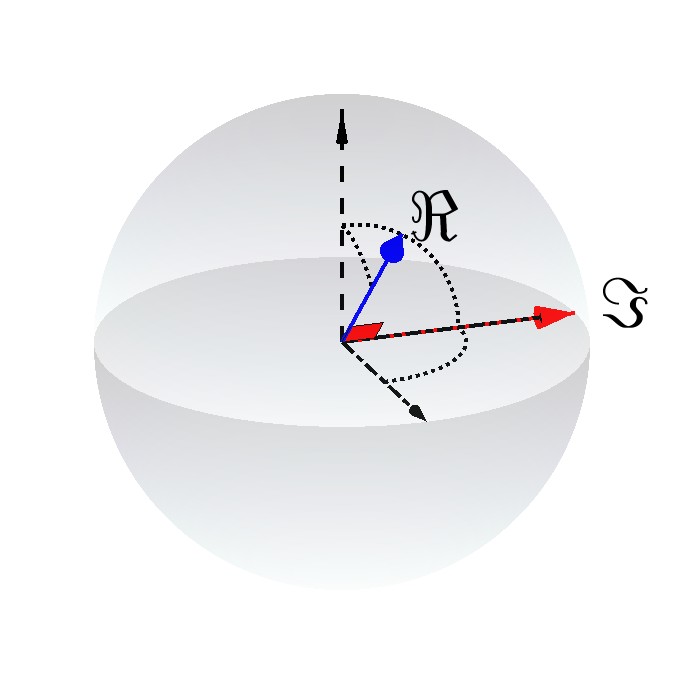} & \includegraphics[width=.2\linewidth,valign=m]{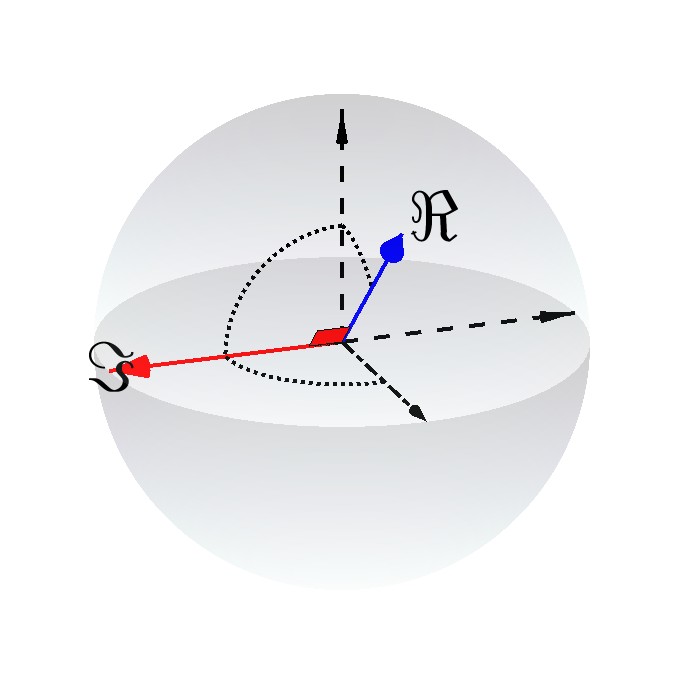} & \includegraphics[width=.2\linewidth,valign=m]{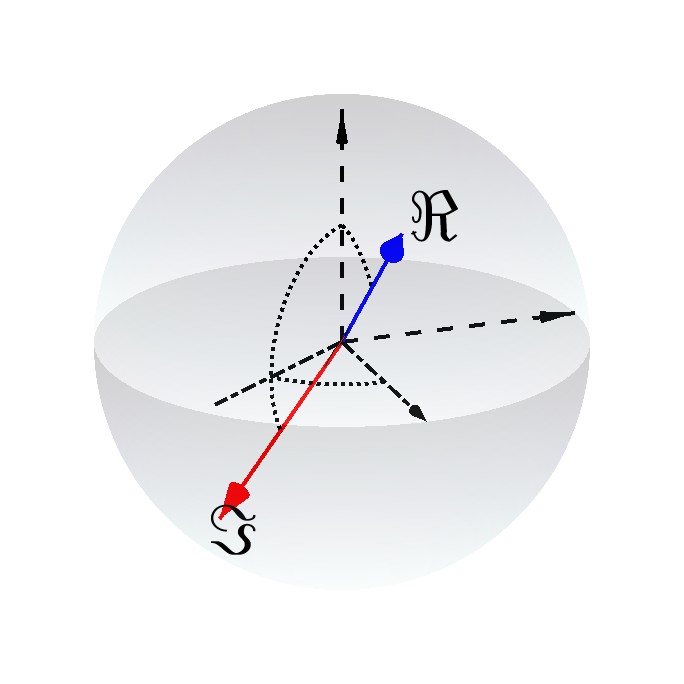}\\
\includegraphics[width=.2\linewidth,valign=m]{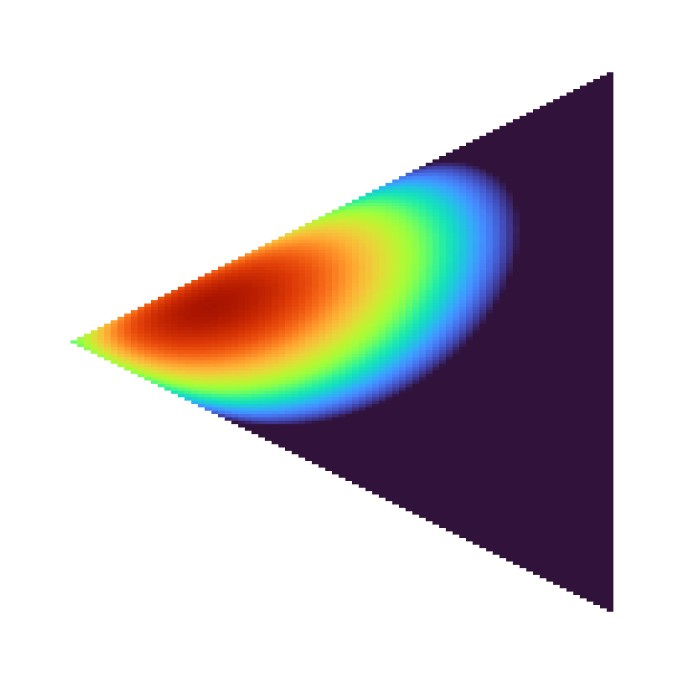} & \includegraphics[width=.2\linewidth,valign=m]{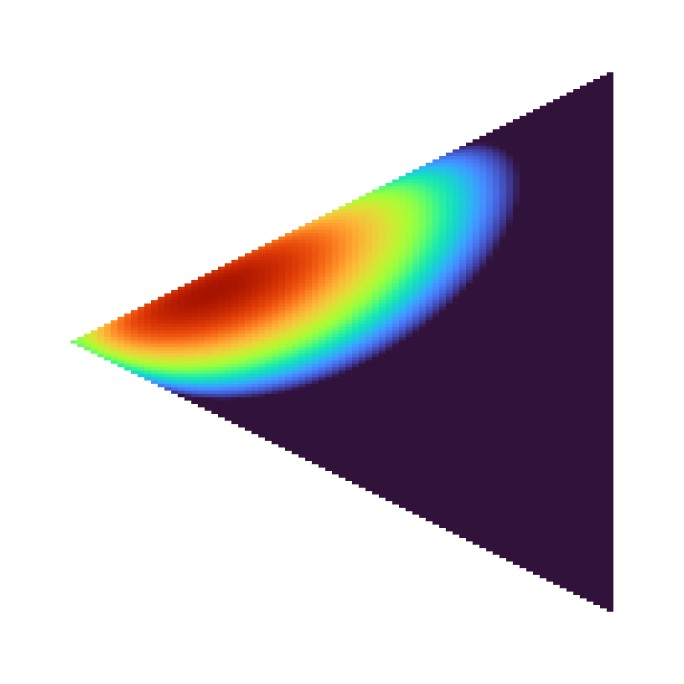} & \includegraphics[width=.2\linewidth,valign=m]{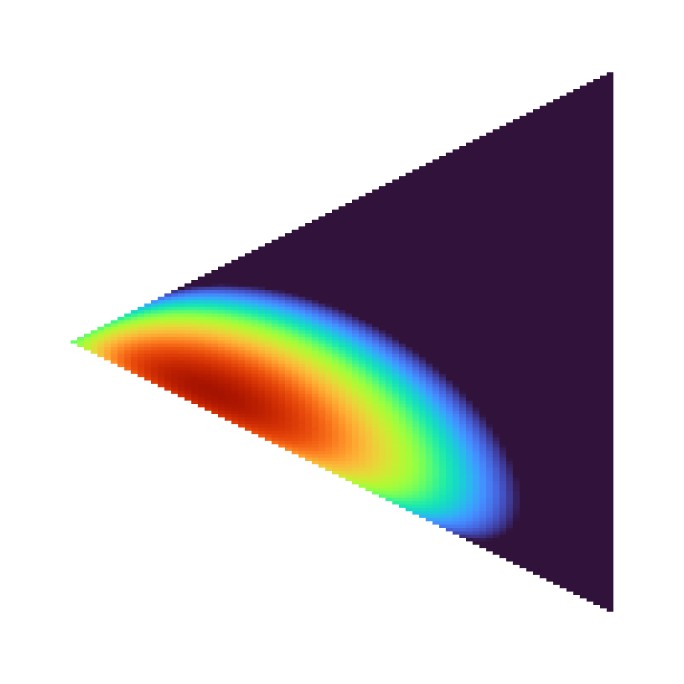} & \includegraphics[width=.2\linewidth,valign=m]{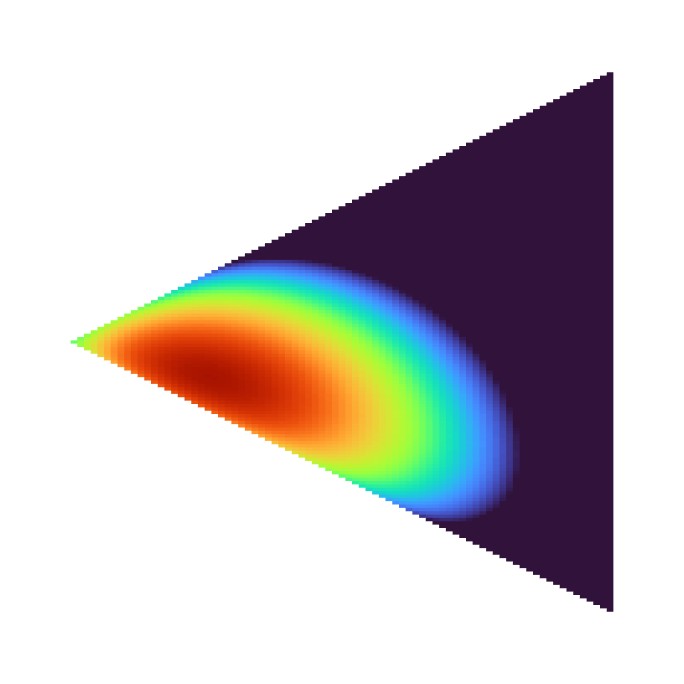}\\
\rule{0pt}{3.3ex}
$\hat{\phi}_{\ell}^m\left(\frac{\pi}{2},\frac{\pi}{4},60\right)$ & $\hat{\phi}_{\ell}^m\left(\frac{\pi}{2},\frac{\pi}{2},60\right)$ & $\hat{\phi}_{\ell}^m\left(\frac{\pi}{2},\frac{3\pi}{2},60\right)$ & $\hat{\phi}_{\ell}^m\left(\frac{\pi}{2},\frac{7\pi}{4},60\right)$\\
\includegraphics[width=.2\linewidth,valign=m]{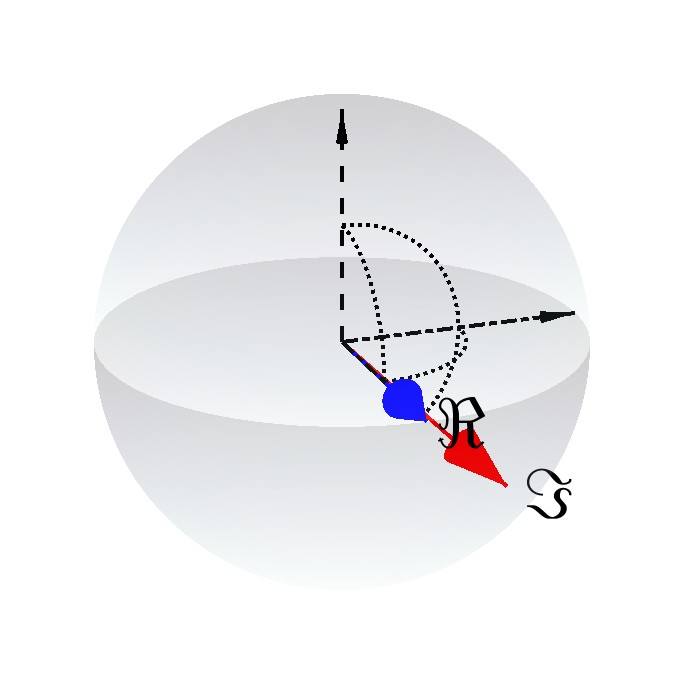} & \includegraphics[width=.2\linewidth,valign=m]{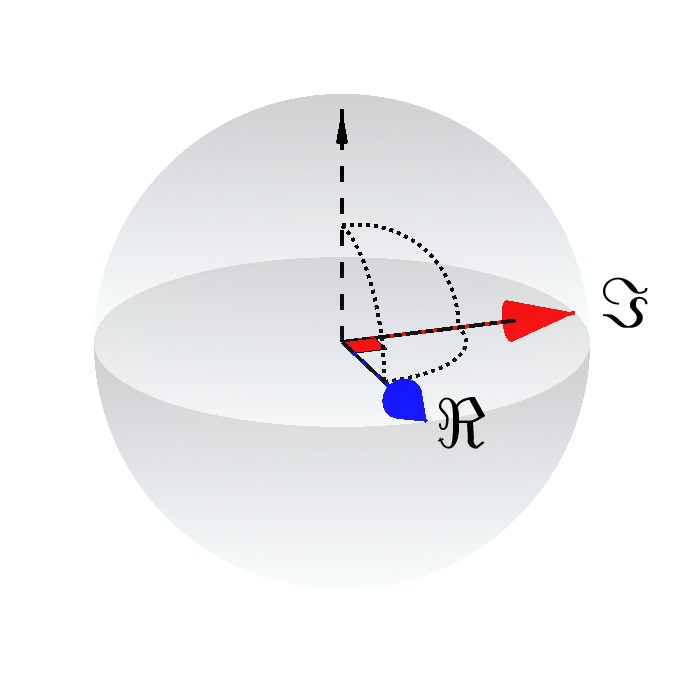} & \includegraphics[width=.2\linewidth,valign=m]{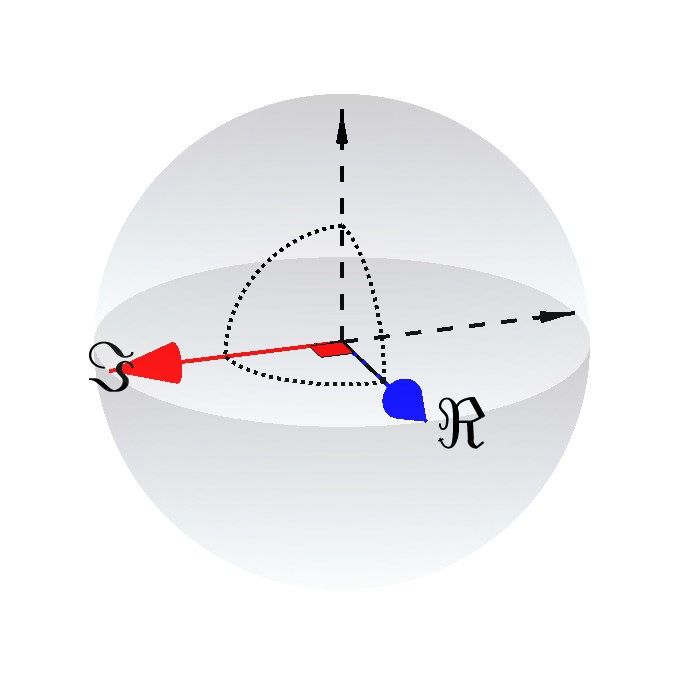} & \includegraphics[width=.2\linewidth,valign=m]{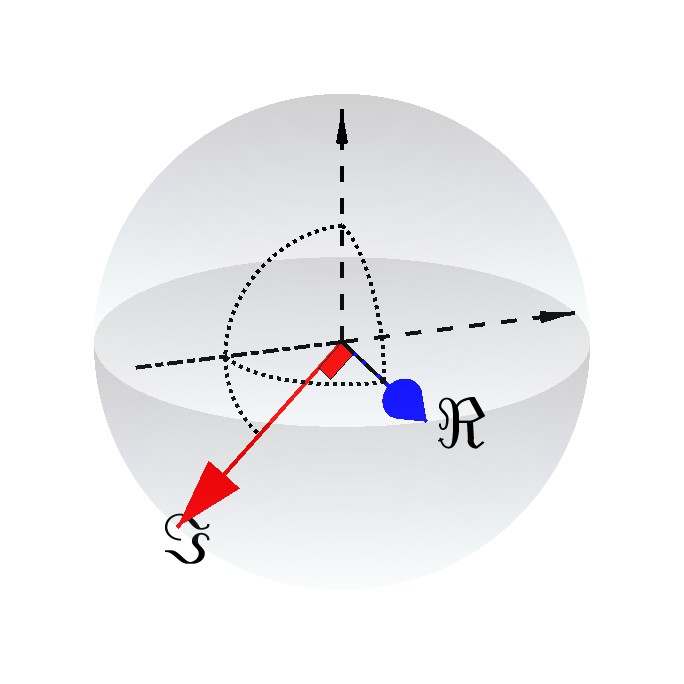}\\
\includegraphics[width=.2\linewidth,valign=m]{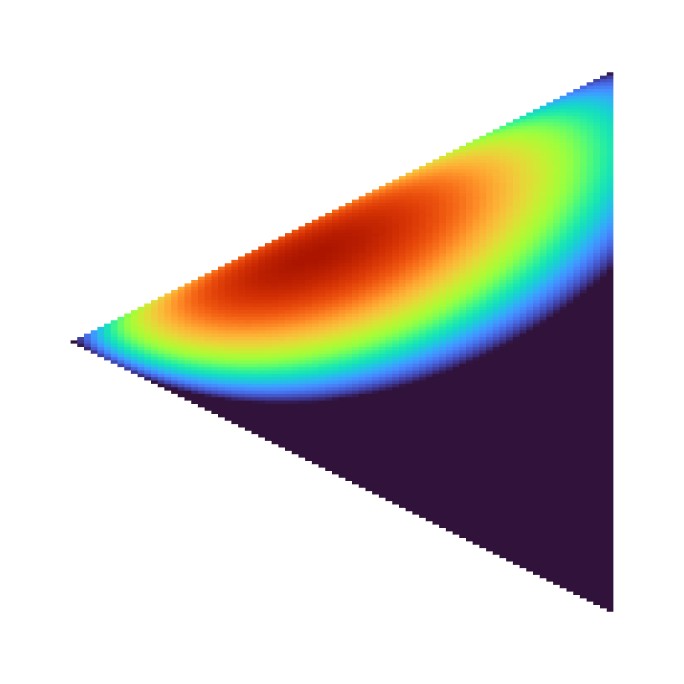} & \includegraphics[width=.2\linewidth,valign=m]{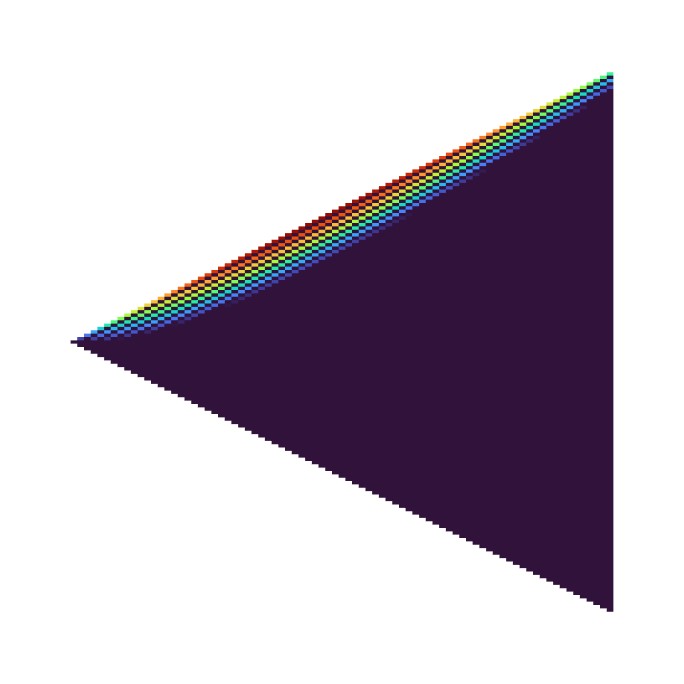} & \includegraphics[width=.2\linewidth,valign=m]{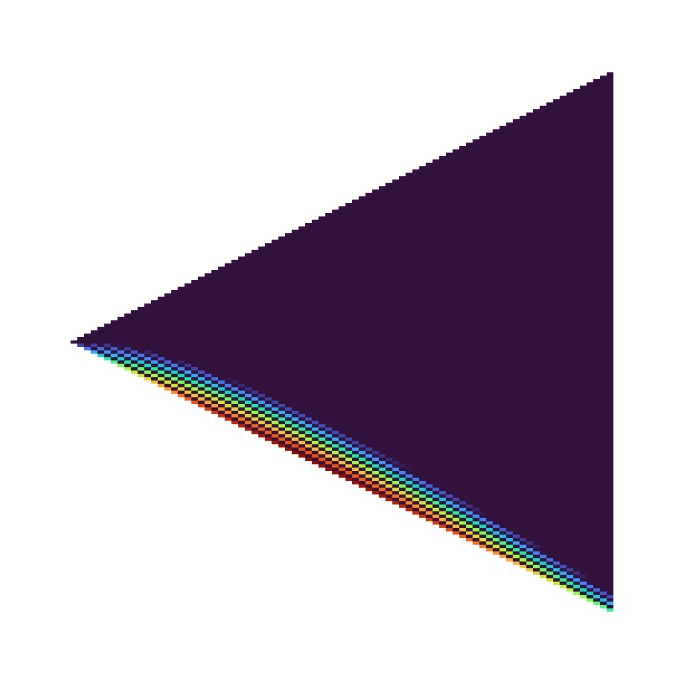} & \includegraphics[width=.2\linewidth,valign=m]{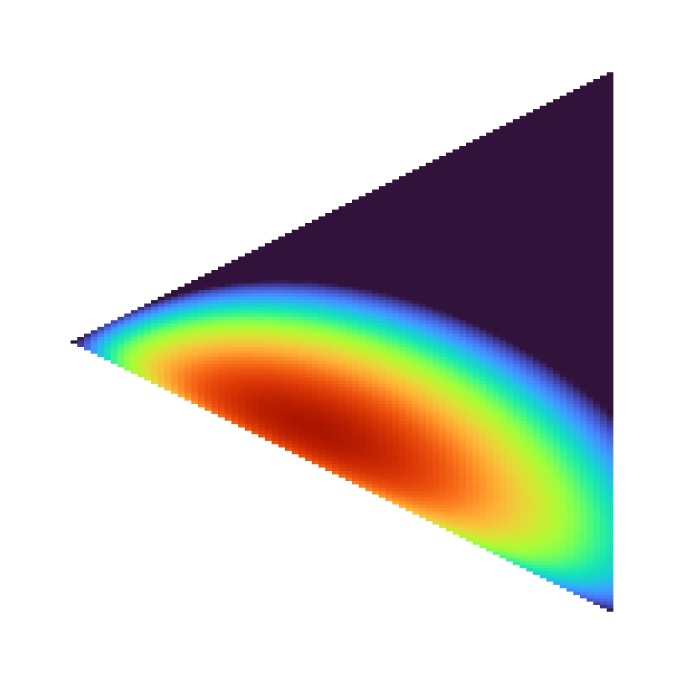}\\
\vspace{1mm}\\
\multicolumn{4}{c}{\includegraphics[width=0.85\linewidth]{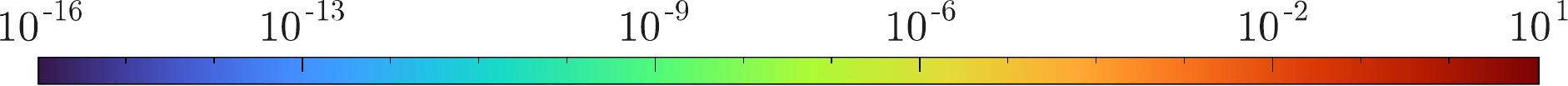}}\\
\end{tabular}
\end{figure}
\begin{figure}[H]
\centering
\begin{tabular}{cccc}
$\hat{\phi}_{\ell}^m\left(\frac{\pi}{4},\frac{\pi}{4},60\right)$ & $\hat{\phi}_{\ell}^m\left(\frac{\pi}{4},\frac{\pi}{2},60\right)$ & $\hat{\phi}_{\ell}^m\left(\frac{\pi}{4},\frac{3\pi}{2},60\right)$ & $\hat{\phi}_{\ell}^m\left(\frac{\pi}{4},\frac{7\pi}{4},60\right)$\\
\includegraphics[width=.2\linewidth,valign=m]{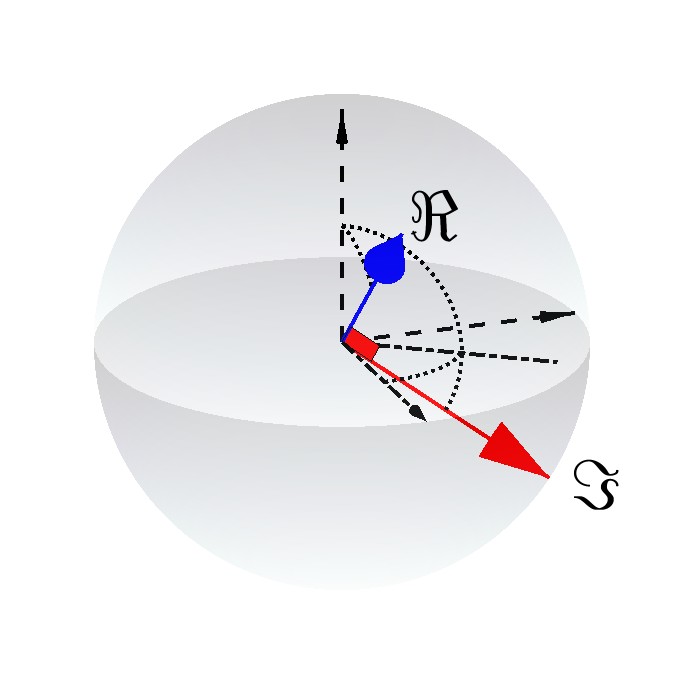} & \includegraphics[width=.2\linewidth,valign=m]{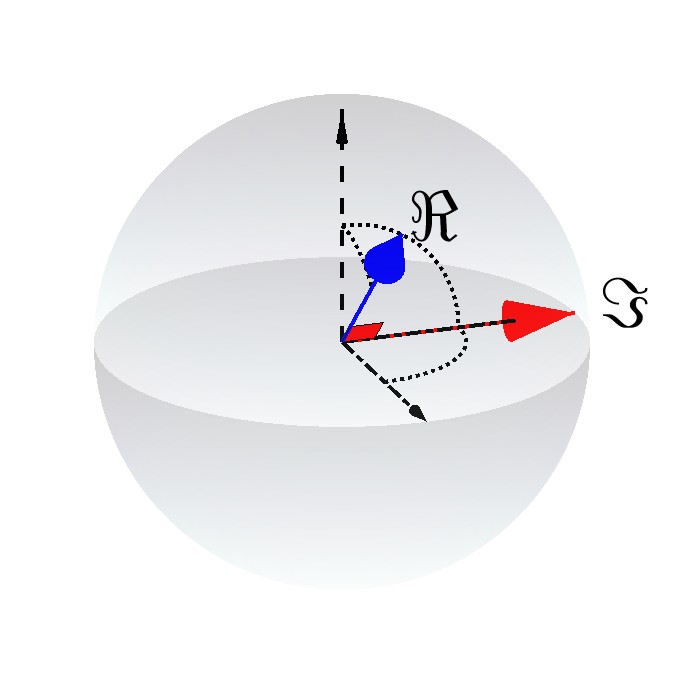} & \includegraphics[width=.2\linewidth,valign=m]{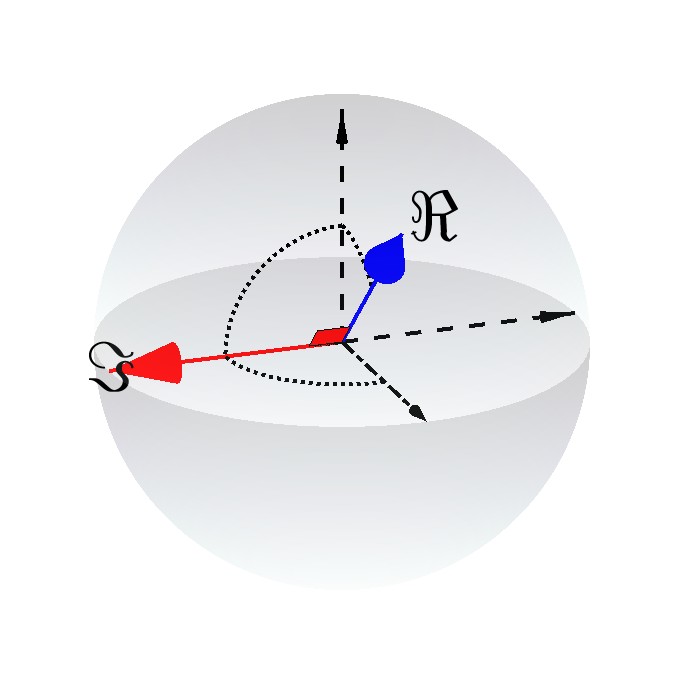} & \includegraphics[width=.2\linewidth,valign=m]{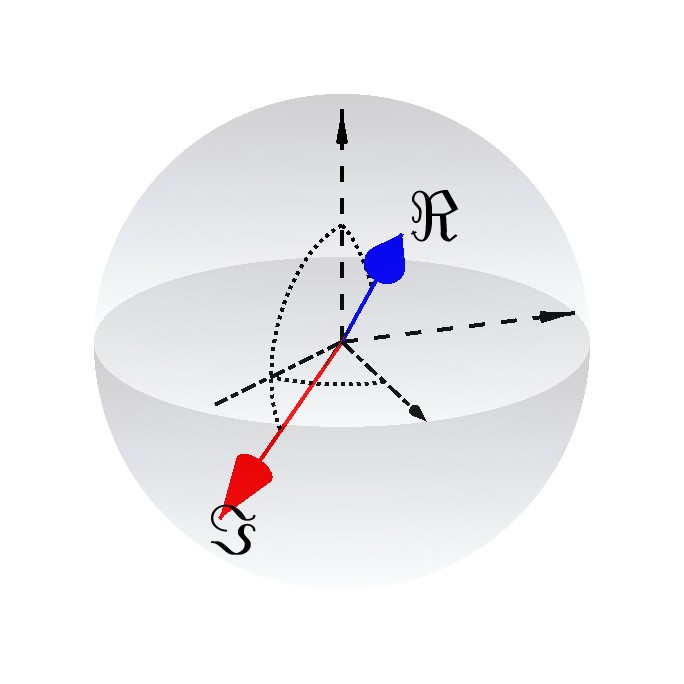}\\
\includegraphics[width=.2\linewidth,valign=m]{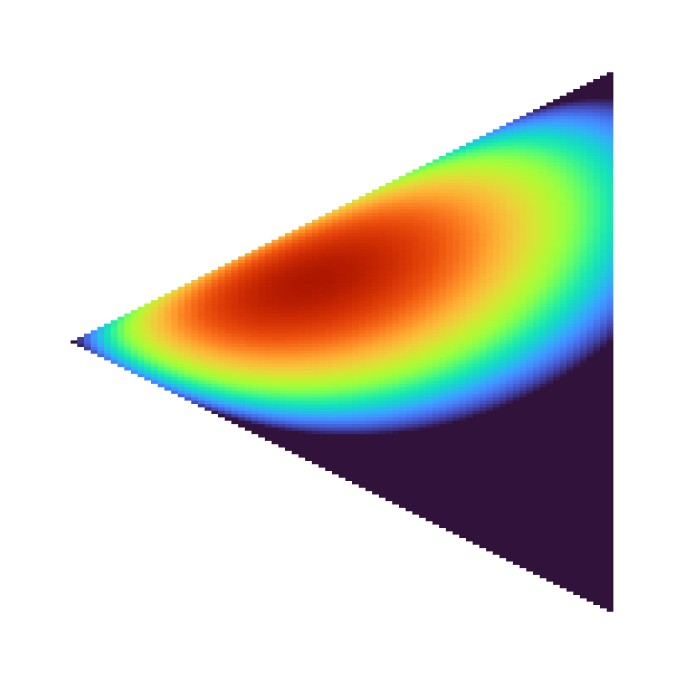} & \includegraphics[width=.2\linewidth,valign=m]{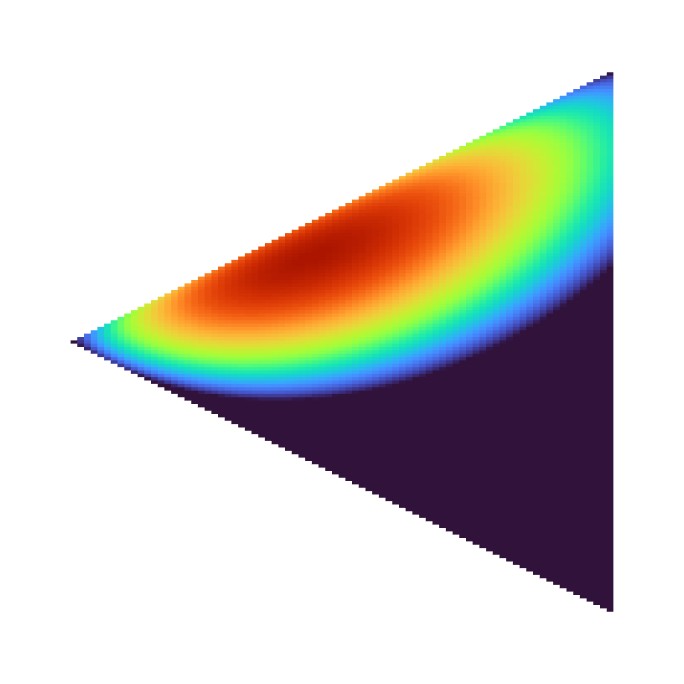} & \includegraphics[width=.2\linewidth,valign=m]{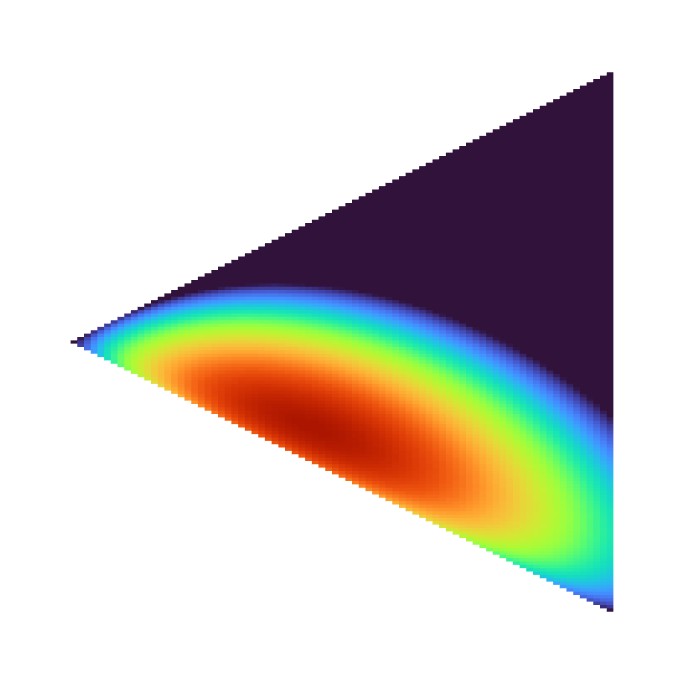} & \includegraphics[width=.2\linewidth,valign=m]{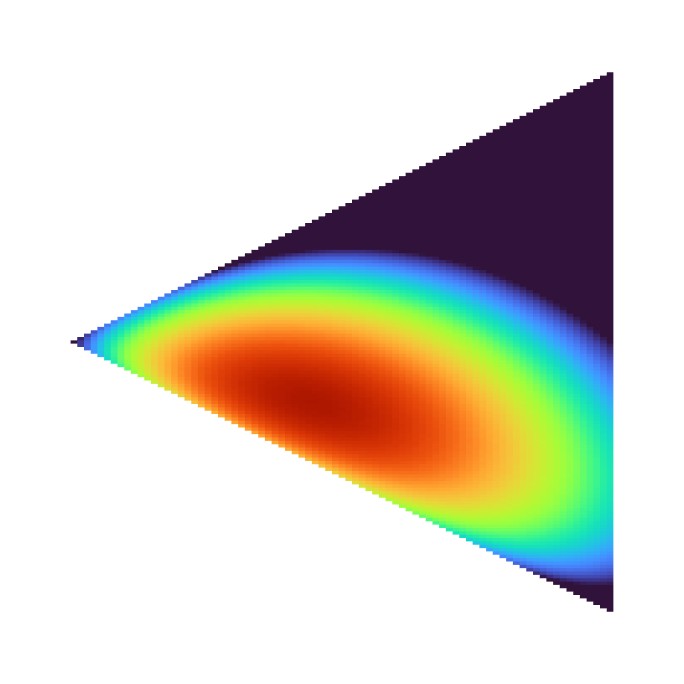}\\
\rule{0pt}{3.3ex} 
$\hat{\phi}_{\ell}^m\left(\frac{\pi}{2},\frac{\pi}{4},120\right)$ & $\hat{\phi}_{\ell}^m\left(\frac{\pi}{2},\frac{\pi}{2},120\right)$ & $\hat{\phi}_{\ell}^m\left(\frac{\pi}{2},\frac{3\pi}{2},120\right)$ & $\hat{\phi}_{\ell}^m\left(\frac{\pi}{2},\frac{7\pi}{4},120\right)$\\
\includegraphics[width=.2\linewidth,valign=m]{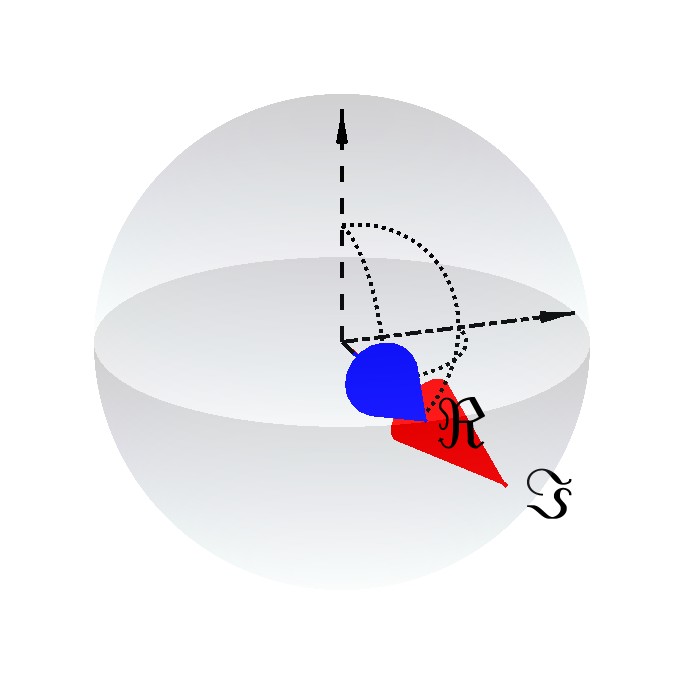} & \includegraphics[width=.2\linewidth,valign=m]{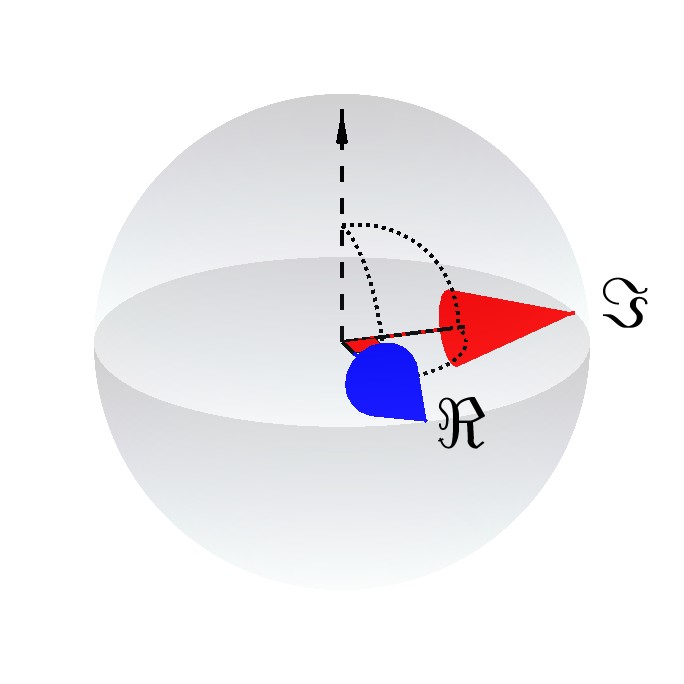} & \includegraphics[width=.2\linewidth,valign=m]{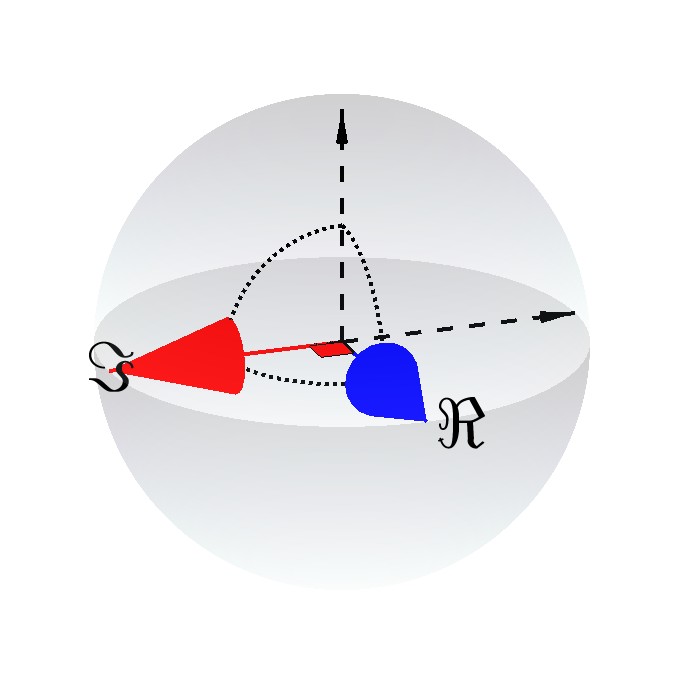} & \includegraphics[width=.2\linewidth,valign=m]{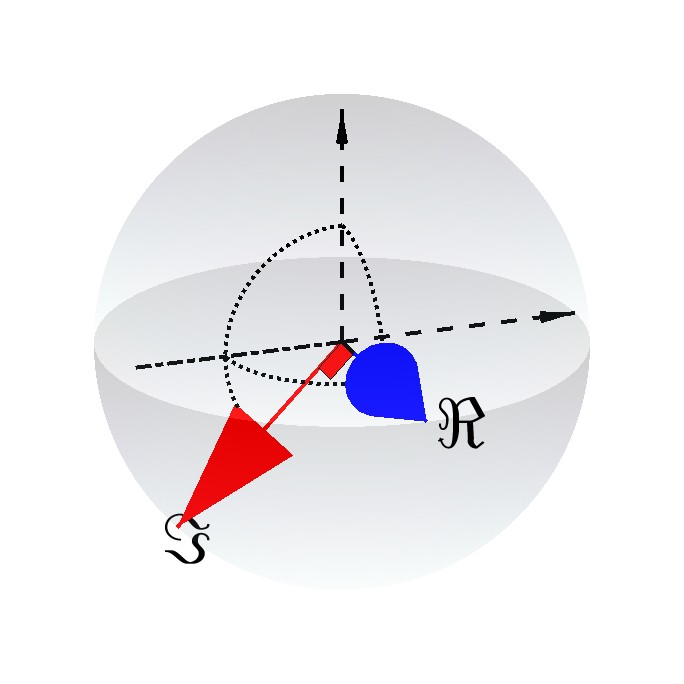}\\
\includegraphics[width=.2\linewidth,valign=m]{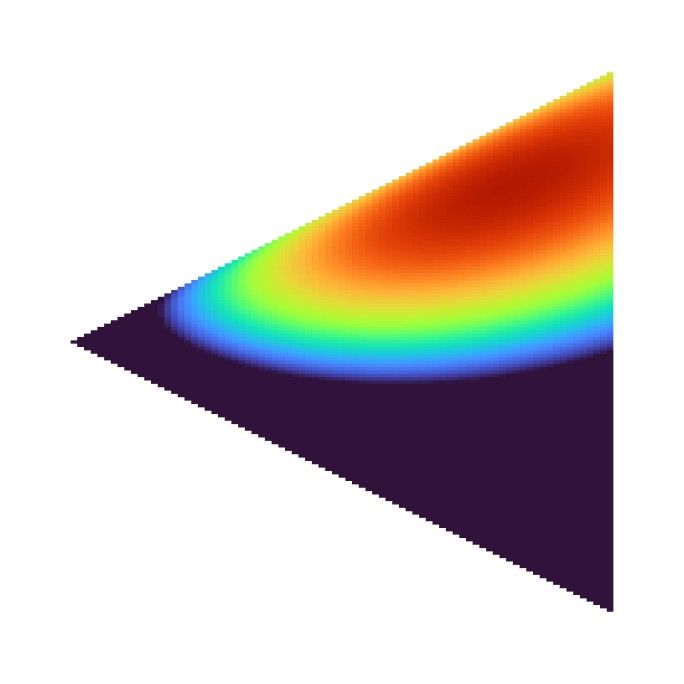} & \includegraphics[width=.2\linewidth,valign=m]{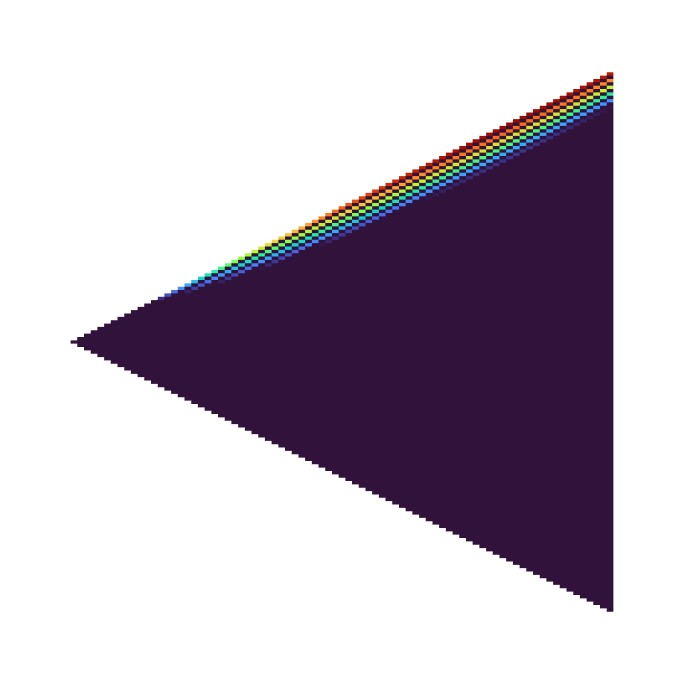} & \includegraphics[width=.2\linewidth,valign=m]{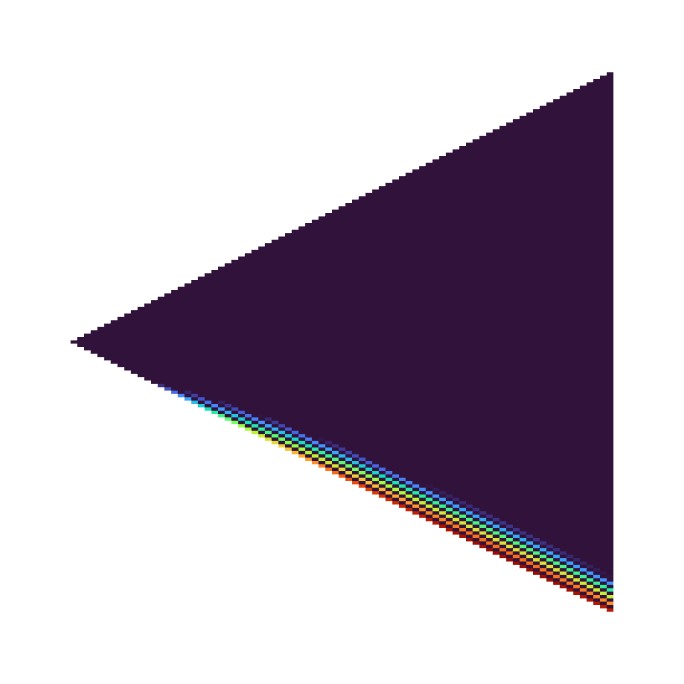} & \includegraphics[width=.2\linewidth,valign=m]{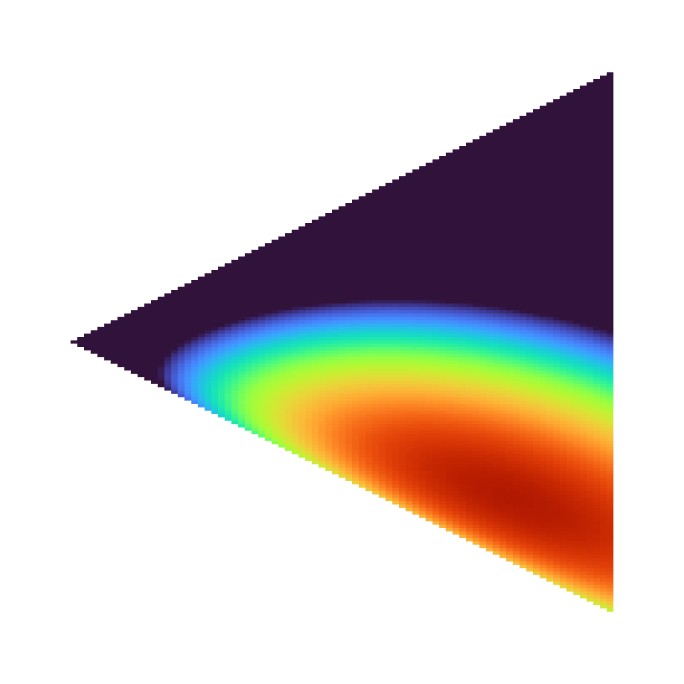}\\
\rule{0pt}{3.3ex}
$\hat{\phi}_{\ell}^m\left(\frac{\pi}{4},\frac{\pi}{4},120\right)$ & $\hat{\phi}_{\ell}^m\left(\frac{\pi}{4},\frac{\pi}{2},120\right)$ & $\hat{\phi}_{\ell}^m\left(\frac{\pi}{4},\frac{3\pi}{2},120\right)$ & $\hat{\phi}_{\ell}^m\left(\frac{\pi}{4},\frac{7\pi}{4},120\right)$\\
\includegraphics[width=.2\linewidth,valign=m]{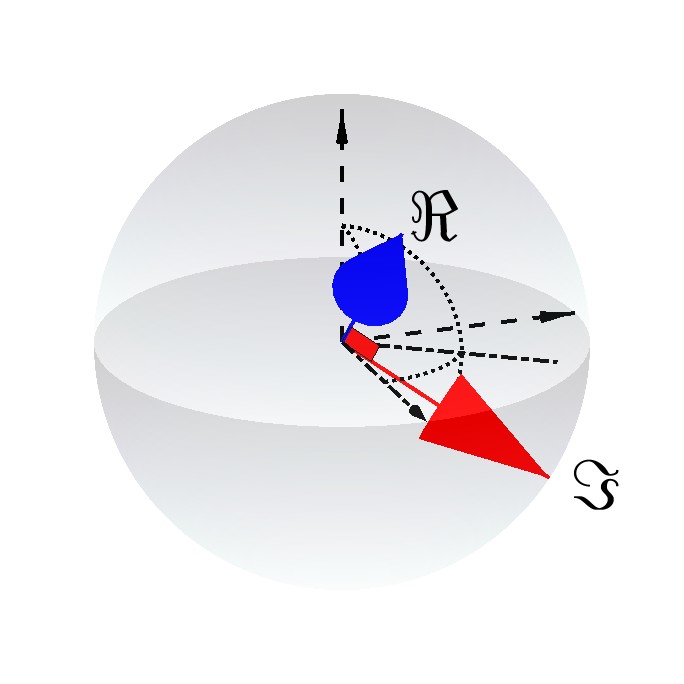} & \includegraphics[width=.2\linewidth,valign=m]{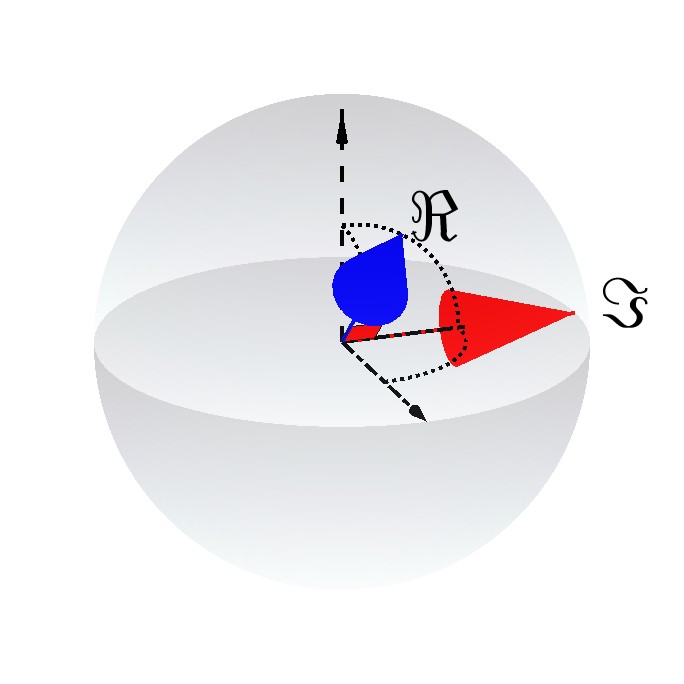} & \includegraphics[width=.2\linewidth,valign=m]{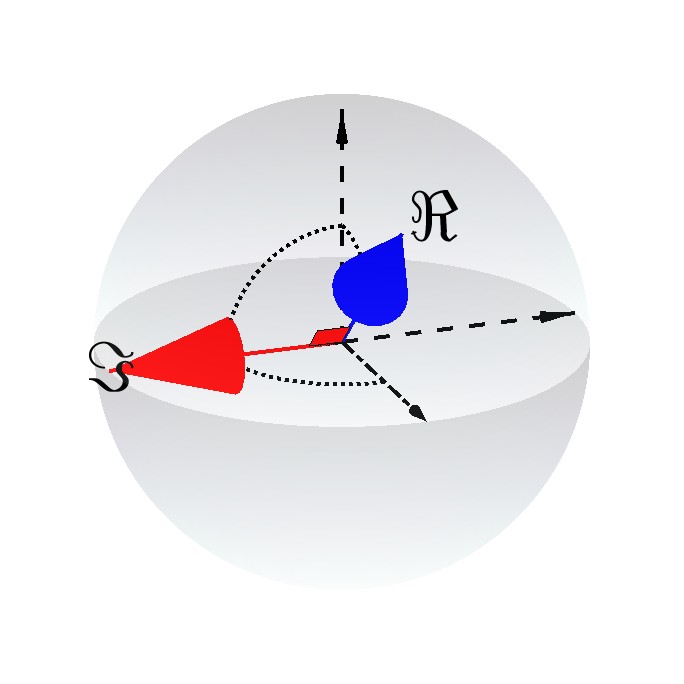} & \includegraphics[width=.2\linewidth,valign=m]{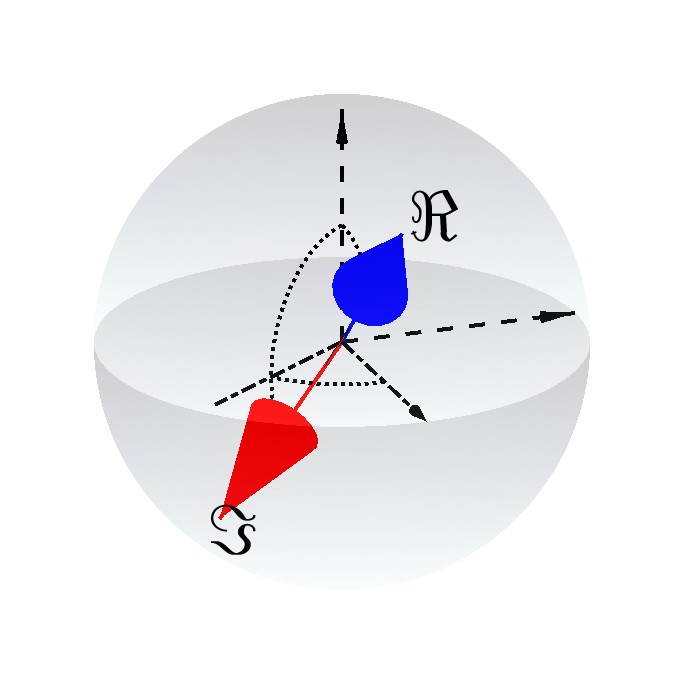}\\
\includegraphics[width=.2\linewidth,valign=m]{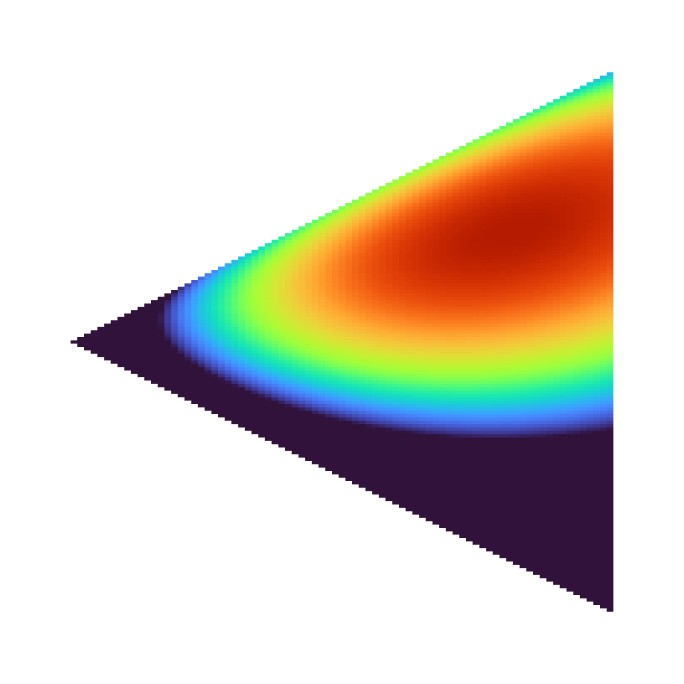} & \includegraphics[width=.2\linewidth,valign=m]{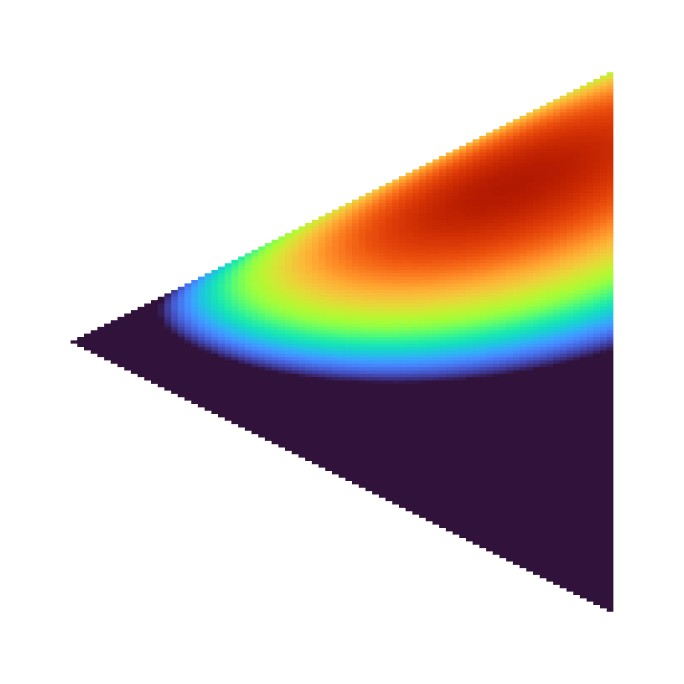} & \includegraphics[width=.2\linewidth,valign=m]{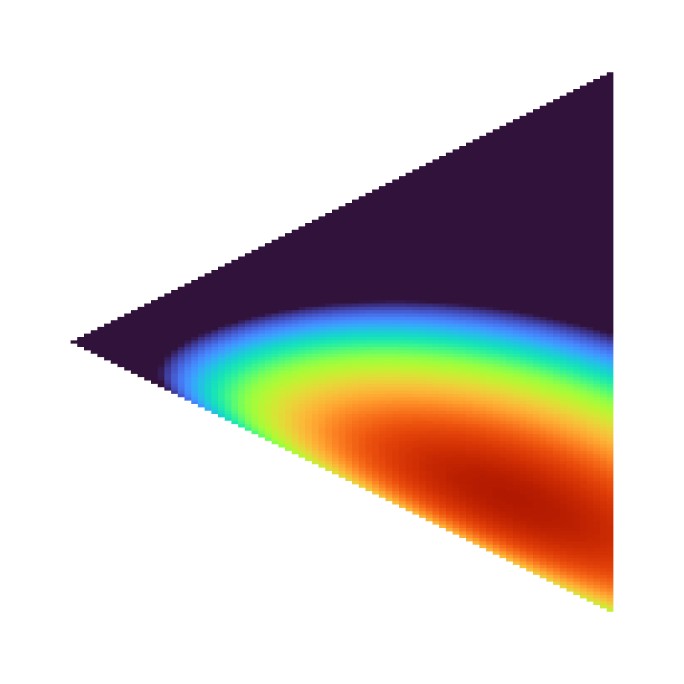} & \includegraphics[width=.2\linewidth,valign=m]{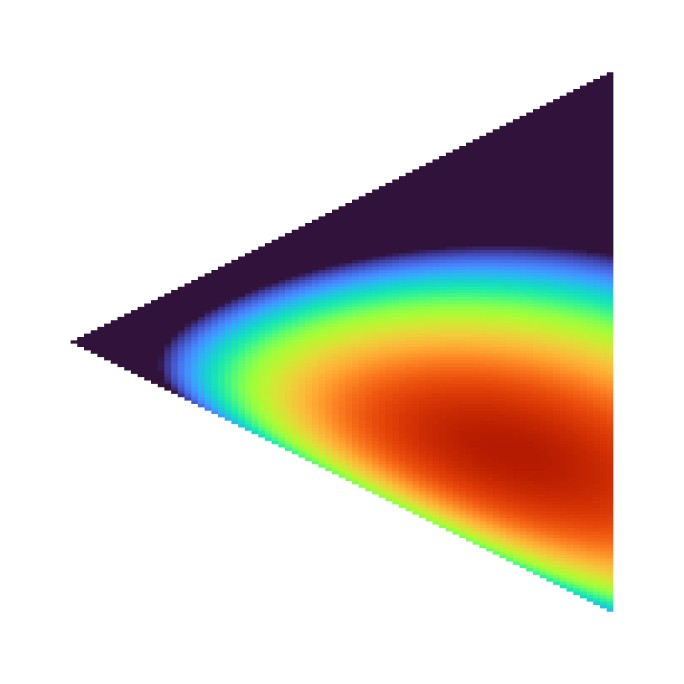}\\
\vspace{1mm}\\
\multicolumn{4}{c}{\includegraphics[width=0.85\linewidth]{imagespdf/colorbar2.pdf}}\\
\end{tabular}
\end{figure}
\begin{figure}[H]
\centering
\begin{tabular}{cccc}
$\hat{\phi}_{\ell}^m\left(0,0,30\right)$ & $\hat{\phi}_{\ell}^m\left(0,0,90\right)$ & $\hat{\phi}_{\ell}^m\left(0,0,180\right)$ & $\hat{\phi}_{\ell}^m\left(0,0,240\right)$\\
\includegraphics[width=.2\linewidth,valign=m]{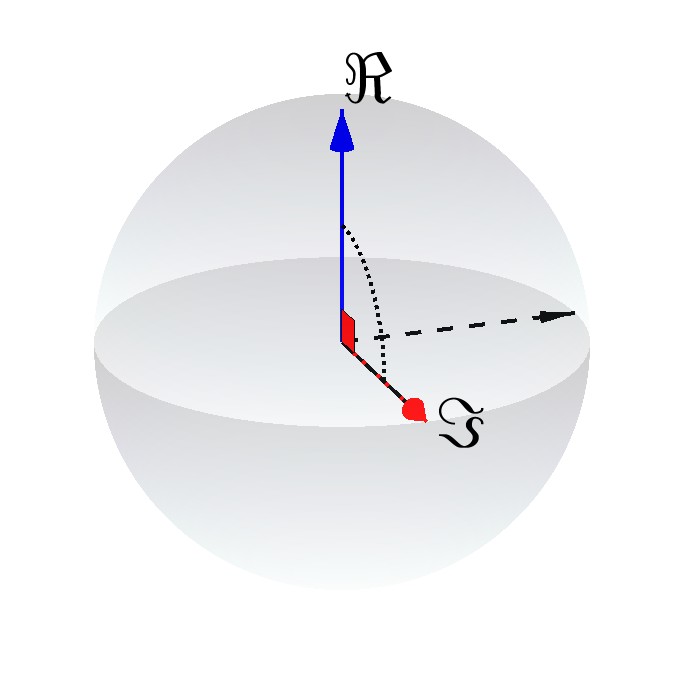} & \includegraphics[width=.2\linewidth,valign=m]{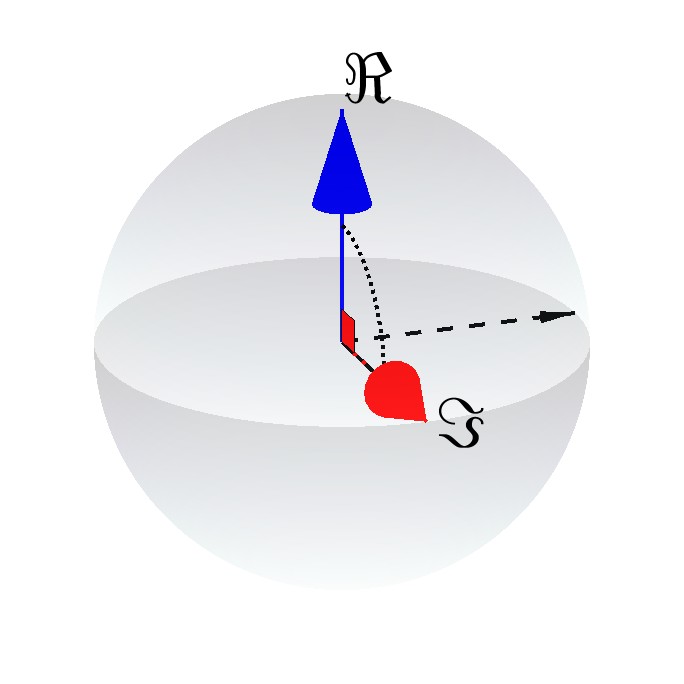} & \includegraphics[width=.2\linewidth,valign=m]{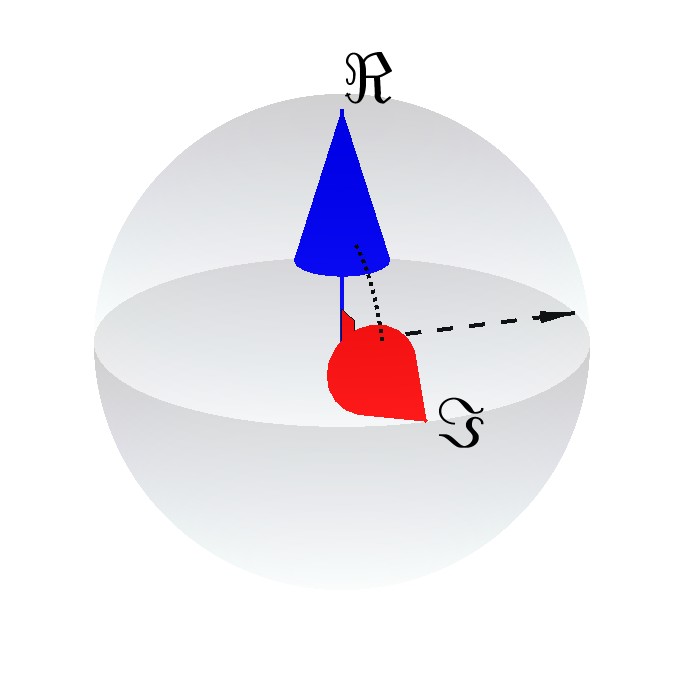} & \includegraphics[width=.2\linewidth,valign=m]{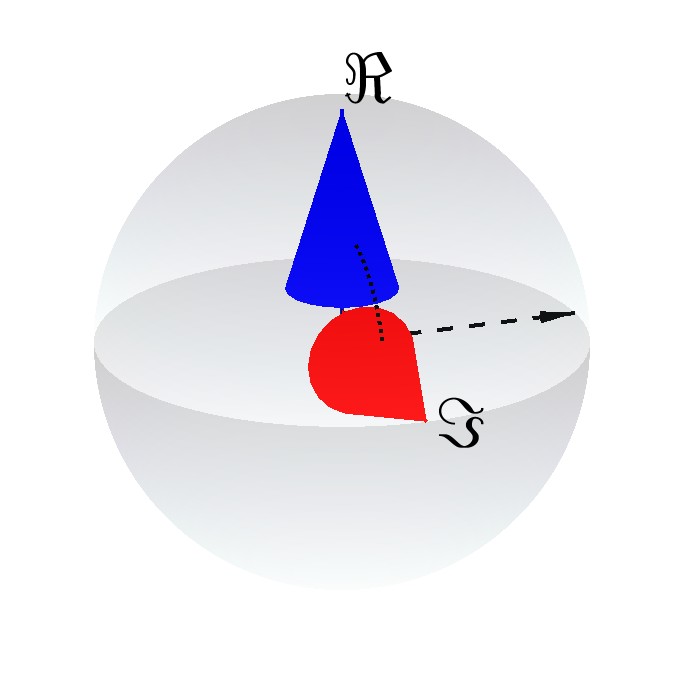}\\
\includegraphics[width=.2\linewidth,valign=m]{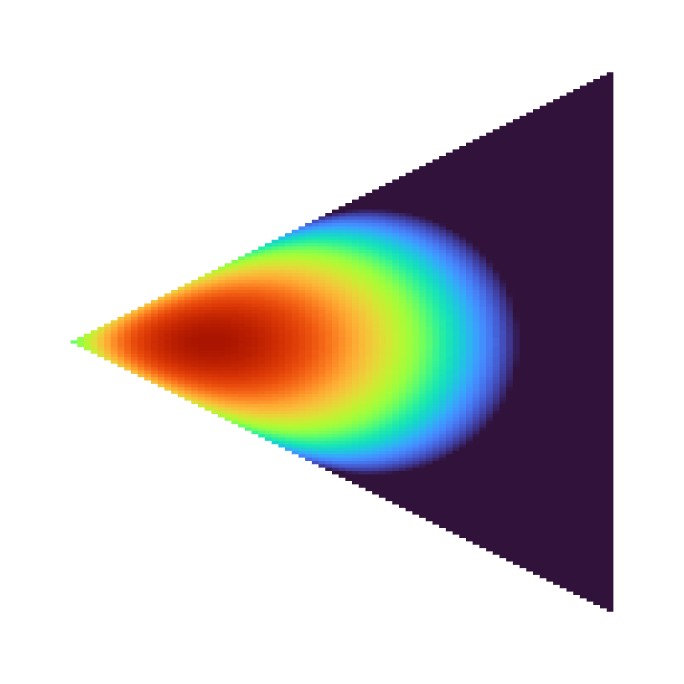} & \includegraphics[width=.2\linewidth,valign=m]{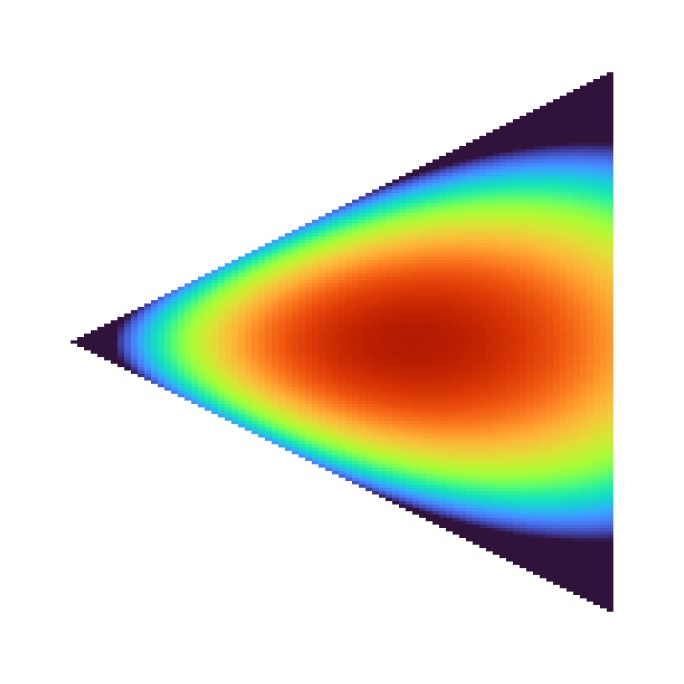} & \includegraphics[width=.2\linewidth,valign=m]{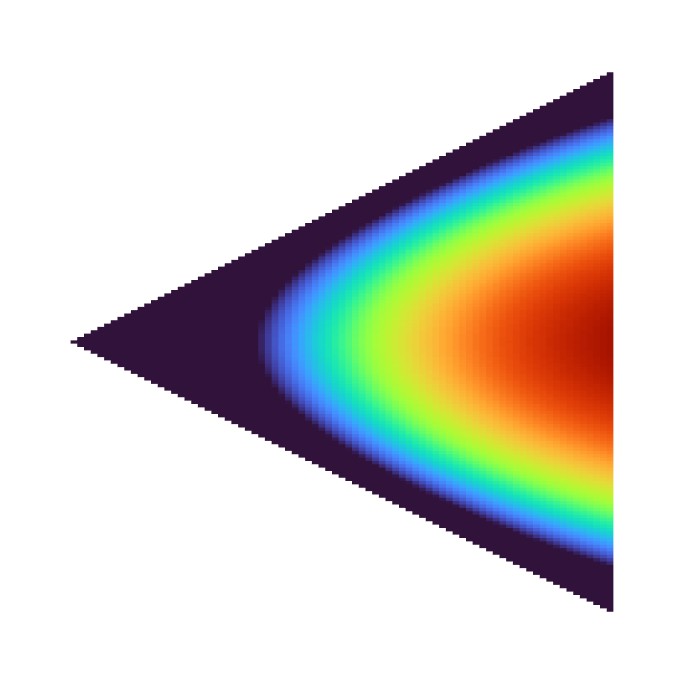} & \includegraphics[width=.2\linewidth,valign=m]{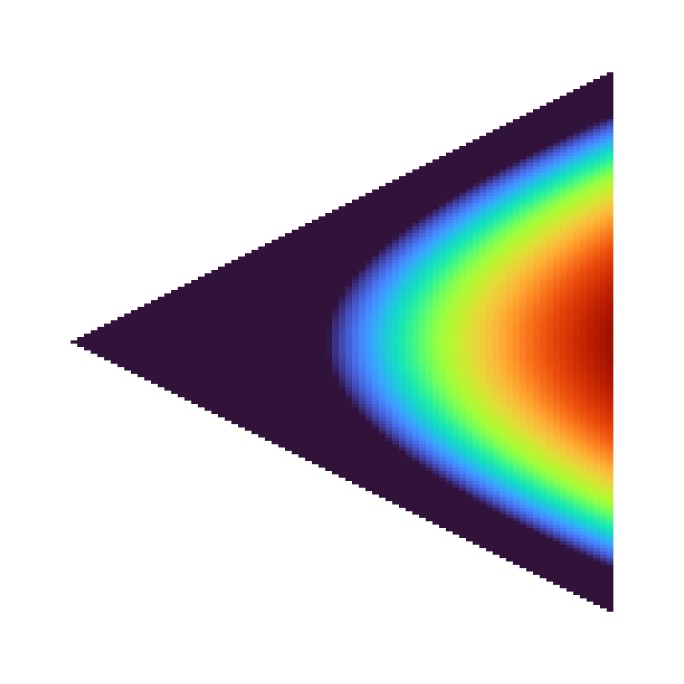}\\
\vspace{1mm}\\
\multicolumn{4}{c}{\includegraphics[width=0.85\linewidth]{imagespdf/colorbar2.pdf}}\\
\end{tabular}
\caption{Modal analysis of the evanescent plane waves. For each wave: (above) representations of both real and imaginary components (denoted respectively by $\Re$ and $\Im$ in figure to simplify the notation) of direction vectors $\mathbf{d}(\mathbf{y})$ with fixed azimuthal angle $\theta_2=0$. The size of the arrowheads is proportional to the norm of the vector and therefore dependent on $\zeta$ and $\kappa$; (below) related distributions of the coefficients $\hat{\phi}_{\ell}^m(\theta_1,\theta_3,\zeta)$ in (\ref{evanescent coefficients}) for different values of $\theta_1$, $\theta_3$ and $\zeta$. The index $\ell$ varies along the abscissa within the range $0 \leq \ell \leq 80$, while the index $m$ varies along the ordinate within the range $0 \leq |m| \leq \ell$ forming a triangle. We have conveniently normalized the coefficients according to a normalization factor (depending only on $\zeta$) which is described in the Chapter \protect\hyperlink{Chapter 6}{6}, namely the square root of $\mu_N$ in (\ref{rho density}), and computed using the approximation in (\ref{alphal approximation}). Wavenumber $\kappa=6$.} \label{figure 4.4}
\end{figure}
\vspace{-3mm}

In fact, thanks to (\ref{yuy}), we can write
\vspace{-1mm}
\begin{equation}
\phi_{\mathbf{y}}=\sum_{\ell=0}^{\infty}\sum_{m=-\ell}^{\ell}\left(\phi_{\mathbf{y}},b_{\ell}^m\right)_{\mathcal{B}}b_{\ell}^m=\sum_{\ell=0}^{\infty}\hat{\phi}_{\ell}(\zeta)\,b_{\ell}[\mathbf{y}],
\label{expansion2 evanescent}
\end{equation}
where $b_{\ell}[\mathbf{y}] \in \text{span}\{b_{\ell}^m\}_{m=-\ell}^{\ell}$ with $\|b_{\ell}[\mathbf{y}]\|_{\mathcal{B}}=1$ and
\vspace{-1mm}
\begin{equation}
\hat{\phi}_{\ell}(\zeta)=\left(\sum_{m=-\ell}^{\ell}\left[\hat{\phi}_{\ell}^m(\theta_1,\theta_3,\zeta)\right]^2\right)^{1/2}\!\!\!\!\!=\,\left(\sum_{m=-\ell}^{\ell}\left|\left(\phi_{\mathbf{y}},b_{\ell}^m\right)_{\mathcal{B}}\right|^2\right)^{1/2}\!\!\!\!=\,\frac{4\pi}{\beta_{\ell}}\left|\mathbf{P}_{\ell}(\zeta)\right|.
\label{evanescent l2 coefficients}
\end{equation}
The last equality in (\ref{evanescent l2 coefficients}) holds due to (\ref{evanescent coefficients}) and the unitarity condition \cite[Sec.\ 4.1, Eq.\ (6)]{quantumtheory}. Moreover, note that, setting $\zeta=0$, we fall back into the propagative case (\ref{l2 coefficients}). The distribution of the coefficients $\hat{\phi}_{\ell}(\zeta)$ in (\ref{evanescent l2 coefficients}) is depicted in Figure \ref{figure 4.3} for different values of the evanescence parameter $\zeta$.

\begin{figure}
\centering
\includegraphics[width=8.8cm]{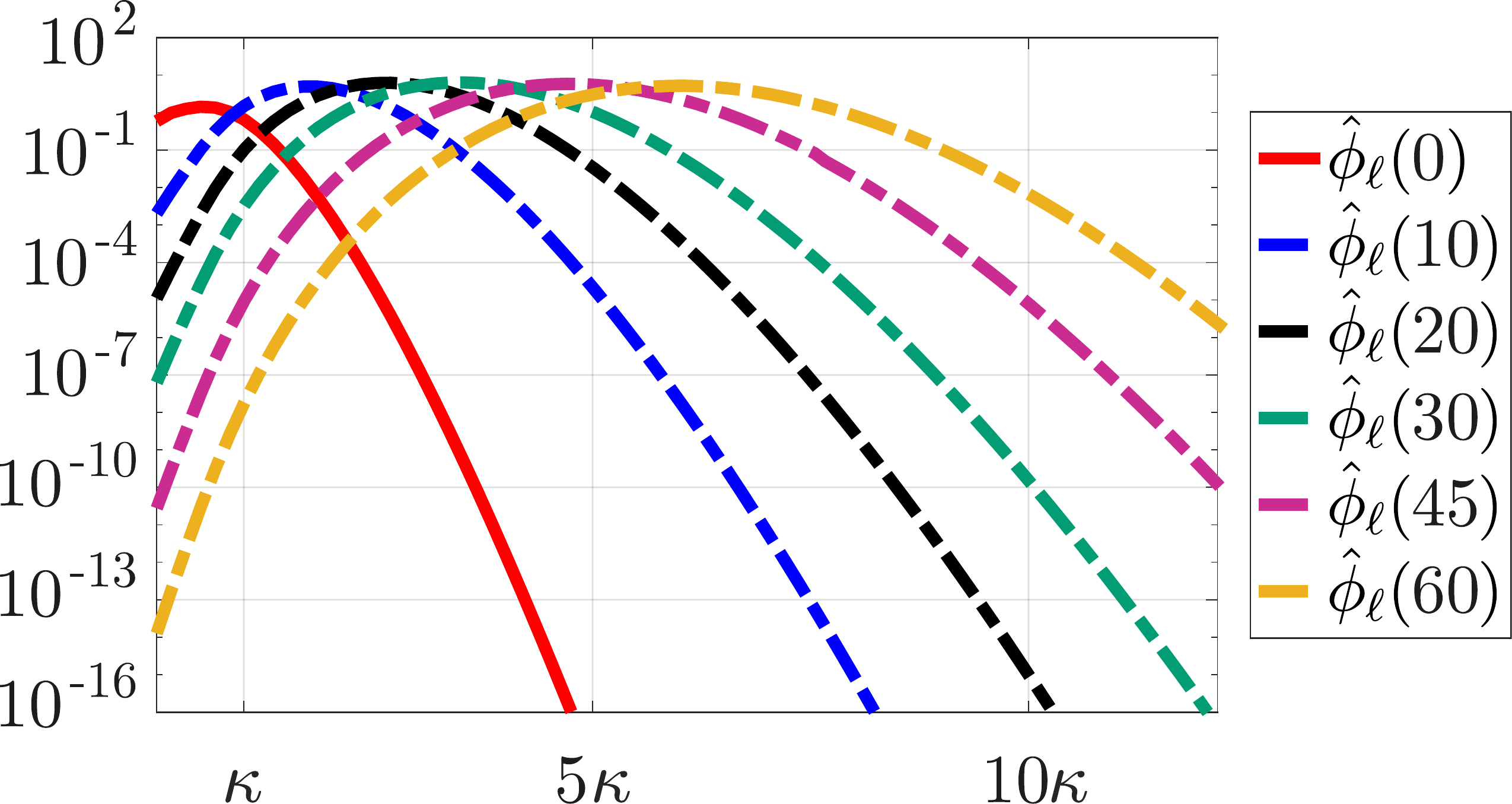}
\caption{Modal analysis of the evanescent plane waves: distribution of the coefficients $\hat{\phi}_{\ell}(\zeta)$ in (\ref{evanescent l2 coefficients}) for different values of the evanescence parameter $\zeta$.
For each $\zeta$, this corresponds to taking the $\ell^2$-norms along the vertical segments of the values in the triangles describing the distribution of the coefficients $\hat{\phi}_{\ell}^m(\theta_1,\theta_3,\zeta)$ in (\ref{evanescent coefficients}), such as those in Figure \ref{figure 4.1}.
Observe that by increasing the value of $\zeta$ it is possible to cover higher-$\ell$ Fourier modes, contrary to what happens in the propagative case of Figure \ref{figure 3.1}. We have conveniently normalized the coefficients according to a normalization factor (depending only on $\zeta$) which is described in Chapter \protect\hyperlink{Chapter 6}{6}, namely the square root of $\mu_N$ in (\ref{rho density}), and computed using the approximation in (\ref{alphal approximation}). Wavenumber $\kappa=6$.}
\label{figure 4.3}
\end{figure}

Lastly, let us now examine the symmetry properties of the coefficients in (\ref{evanescent coefficients}) in more detail.
Similarly to the propagative case, we can again limit ourselves to considering $\theta_1 \in [0,\pi/2]$: in fact, thanks to (\ref{negative legendre2 polynomials}) and the Wigner d-matrix symmetry property $d^{\,m,m'}_{\ell}(\theta)=(-1)^{\ell+m'}d^{\,-m,m'}_{\ell}(\pi-\theta)$ in \cite[Sec.\ 4.4, Eq.\ (1)]{quantumtheory}, we have
\begin{align*}
\hat{\phi}_{\ell}^m(\theta_1,\theta_3,\zeta)&=4\pi\beta_{\ell}^{-1}\left|\textstyle \sum_{m'=-\ell}^{\ell}\gamma_{\ell}^{m'}i^{m'}d_{\ell}^{\,m',m}(\theta_1)e^{im'\theta_3}P_{\ell}^{m'}\left(\zeta/2\kappa+1\right)\right|\\
&=4\pi\beta_{\ell}^{-1}\left|\textstyle \sum_{m'=-\ell}^{\ell}\gamma_{\ell}^{-m'}i^{m'}d_{\ell}^{\,-m',m}(\pi-\theta_1)e^{im'\theta_3}P_{\ell}^{-m'}\left(\zeta/2\kappa+1\right)\right|\\
&=4\pi\beta_{\ell}^{-1}\left|\textstyle \sum_{m'=-\ell}^{\ell}\gamma_{\ell}^{m'}i^{-m'}d_{\ell}^{\,m',m}(\pi-\theta_1)e^{-im'\theta_3}P_{\ell}^{m'}\left(\zeta/2\kappa+1\right)\right|\\
&=\hat{\phi}_{\ell}^m(\pi-\theta_1,\theta_3,\zeta). \numberthis \label{sym1}
\end{align*}
Furthermore, we can assume $\theta_3 \in [0,\pi/2] \cup [3\pi/2,2\pi)$ (or analogously in $[\pi/2,3\pi/2]$):
\vspace{-1.5mm}
\begin{align*}
\hat{\phi}_{\ell}^m(\theta_1,\theta_3,\zeta)&=4\pi\beta_{\ell}^{-1}\left|\textstyle \sum_{m'=-\ell}^{\ell}\gamma_{\ell}^{m'}i^{m'}d_{\ell}^{\,m',m}(\theta_1)e^{im'\theta_3}P_{\ell}^{m'}\left(\zeta/2\kappa+1\right)\right|\\
&=4\pi\beta_{\ell}^{-1}\left|\textstyle \sum_{m'=-\ell}^{\ell}\gamma_{\ell}^{m'}i^{-m'}d_{\ell}^{\,m',m}(\theta_1)e^{-im'(\pi-\theta_3)}P_{\ell}^{m'}\left(\zeta/2\kappa+1\right)\right|\\
&=\hat{\phi}_{\ell}^m(\theta_1,\pi-\theta_3,\zeta). \numberthis \label{sym2}
\end{align*}
The symmetric behavior of the coefficients by varying the value of $\theta_3$ is also relevant. Using the property $d_{\ell}^{\,m,m'}\!(\theta)\!=\!(-1)^{m'-m}d_{\ell}^{\,-m,-m'}\!(\theta)$ in \cite[Sec.\ 4.4, Eq.\ (1)]{quantumtheory}, it follows:
\vspace{-10mm}
\begin{align*}
\hat{\phi}_{\ell}^m(\theta_1,\pi-\theta_3,\zeta)&=4\pi\beta_{\ell}^{-1}\left|\textstyle \sum_{m'=-\ell}^{\ell}\gamma_{\ell}^{m'}i^{-m'}d_{\ell}^{\,m',m}(\theta_1)e^{-im'(\pi-\theta_3)}P_{\ell}^{m'}\left(\zeta/2\kappa+1\right)\right|\\
&=4\pi\beta_{\ell}^{-1}\left|\textstyle \sum_{m'=-\ell}^{\ell}\gamma_{\ell}^{-m'}i^{-m'}d_{\ell}^{\,-m',-m}(\theta_1)e^{im'\theta_3}P_{\ell}^{-m'}\left(\zeta/2\kappa+1\right)\right|\\
&=4\pi\beta_{\ell}^{-1}\left|\textstyle \sum_{m'=-\ell}^{\ell}\gamma_{\ell}^{m'}i^{m'}d_{\ell}^{\,m',-m}(\theta_1)e^{-im'\theta_3}P_{\ell}^{m'}\left(\zeta/2\kappa+1\right)\right|\\
&=4\pi\beta_{\ell}^{-1}\left|\textstyle \sum_{m'=-\ell}^{\ell}\gamma_{\ell}^{m'}i^{-m'}d_{\ell}^{\,m',-m}(\theta_1)e^{-im'(\pi+\theta_3)}P_{\ell}^{m'}\left(\zeta/2\kappa+1\right)\right|\\
&=\hat{\phi}_{\ell}^{-m}(\theta_1,\pi+\theta_3,\zeta). \numberthis \label{sym3}
\end{align*}
Moreover, thanks to the identity $d_{\ell}^{\,m,m'}(0)=\delta_{m,m'}$ (that follows directly from (\ref{d matrix})), it is possible to observe that $\hat{\phi}_{\ell}^{m}(0,\theta_3,\zeta)$ is actually independent of the value of $\theta_3$:
\begin{equation*}
\hat{\phi}_{\ell}^{m}(0,\theta_3,\zeta)=4\pi\beta_{\ell}^{-1}\left|\gamma_{\ell}^{m}e^{-im\theta_3}P_{\ell}^{m}\left(\zeta/2\kappa+1\right)\right|=4\pi\beta_{\ell}^{-1}\gamma_{\ell}^{m}P_{\ell}^{m}\left(\zeta/2\kappa+1\right).
\end{equation*}
In addition, the sets of coefficients of the form $\hat{\phi}_{\ell}^{m}(\pi/2,\pi/2,\zeta)$ and $\hat{\phi}_{\ell}^{m}(\pi/2,3\pi/2,\zeta)$ depict the same checkerboard-pattern as in Figure \ref{figure 3.1} for the case $\theta_1=\pi/2$. For instance, setting $\theta_1=\theta_3=\pi/2$ and denoting the addends in the sum of (\ref{evanescent coefficients}) with $c_{\ell}^{m',m}(\zeta)$, we have
\begin{align*}
c_{\ell}^{-m',m}(\zeta)&=\gamma_{\ell}^{-m'}i^{m'}d_{\ell}^{\,-m',m}\left(\pi/2\right)e^{i m'\pi/2}P_{\ell}^{-m'}\left(\zeta/2\kappa+1\right)\\
&=(-1)^{\ell+m+m'}\gamma_{\ell}^{-m'}d_{\ell}^{\,m',m}\left(\pi/2\right)P_{\ell}^{-m'}\left(\zeta/2\kappa+1\right)\\
&=(-1)^{\ell+m+m'}\gamma_{\ell}^{m'}d_{\ell}^{\,m',m}\left(\pi/2\right)P_{\ell}^{m'}\left(\zeta/2\kappa+1\right)\\
&=(-1)^{\ell+m}\gamma_{\ell}^{m'}i^{-m'}d_{\ell}^{\,m',m}\left(\pi/2\right) e^{-im' \pi/2}P_{\ell}^{m'}\left(\zeta/2\kappa+1\right)\\
&=(-1)^{\ell+m}c_{\ell}^{m',m}(\zeta),
\end{align*}
where we used the Wigner d-matrix property $d^{\,m,m'}_{\ell}(\pi/2)=(-1)^{\ell+m'}d^{\,-m,m'}_{\ell}(\pi/2)$ in \cite[Sec. 4.4, Eq. (1)]{quantumtheory}.
Therefore, if $\ell+m$ is odd, we have that $c_{\ell}^{-m',m}(\zeta)=-c_{\ell}^{m',m}(\zeta)$ and hence, due to (\ref{D-matrix0}), it follows
\begin{align*}
\hat{\phi}_{\ell}^{m}(\pi/2,\pi/2,\zeta)&=4\pi\beta_{\ell}^{-1}\left|\textstyle \sum_{m'=-\ell}^{\ell}c_{\ell}^{m',m}(\zeta)\right|=4\pi\beta_{\ell}^{-1}\left|c_{\ell}^{0,m}(\zeta)\right|\\
&=4\pi\beta_{\ell}^{-1}\gamma_{\ell}^{0}P_{\ell}\left(\zeta/2\kappa+1\right)\left|d_{\ell}^{\,0,m}\left(\pi/2\right)\right|\\
&=4\pi\beta_{\ell}^{-1}\gamma_{\ell}^{m}P_{\ell}\left(\zeta/2\kappa+1\right)\left|\mathsf{P}_{\ell}^m(0)\right|=0,\,\,\,\,\,\,\,\,\,\,\,\,\,\,\,\,\,\,\,\,\,\,\,\,\,\,\,\,\text{if }\ell+m\text{ is odd}.
\end{align*}
The same can be seen for $\hat{\phi}_{\ell}^{m}(\pi/2,3\pi/2,\zeta)$. It is easy to believe that, playing with the symmetry properties of the Wigner d-matrix and with those related to the Ferrers functions (\ref{legendre polynomials}) or the associated Legendre polynomials (\ref{legendre2 polynomials}) as in the previous formulae, many other relations of this type can be deduced.

To conclude, our findings suggest that evanescent plane waves are able to accurately capture the high Fourier modes of Helmholtz solutions that have less regularity. However, selecting the correct values for the evanescence parameters $\theta_3$ and $\zeta$ to create approximation spaces of a reasonable size remains a significant challenge. This issue will be the main focus of the rest of the paper.

\chapter{Herglotz transform}
\hypertarget{Chapter 5}{In} this chapter, following \cite[Sec.\ 6]{parolin-huybrechs-moiola}, we introduce a family of functions $\{a_{\ell}^m\}_{(\ell,m) \in \mathcal{I}}$ defined on the parametric domain $\Theta \times [0,+\infty)$ and consequentially the space $\mathcal{A}$ generated by them. We present some lemmas that show how $\mathcal{A}$, provided with a suitable weighted norm, is indeed a Hilbert space, of which the functions $a_{\ell}^m$ constitute an orthonormal basis. The asymptotic behavior of their related normalization coefficients $\alpha_{\ell}$ -- defined as in (\ref{b tilde definizione}) -- turns out to be reciprocal to the one of $\beta_{\ell}$, i.e.\ $\alpha_{\ell} \sim \beta_{\ell}^{-1}$ as $\ell$ goes to infinity, which allows us to introduce the notion of \textit{Herglotz transform} $T$ between the spaces $\mathcal{A}$ and the space $\mathcal{B}$ of Helmholtz solutions in the unit ball $B_1$. The integral representation of $T$ can be seen as a generalization of the Herglotz classical one (\ref{herglotz}).
This suggests us to call $\mathcal{A}$ the \textit{space of Herglotz densities}.
Furthermore, we prove that the operator $T$ is bounded and invertible.

Hence, any Helmholtz solution in the unit ball $B_1$ can be uniquely represented as a continuous superposition of evanescent plane waves and, moreover, its corresponding density is bounded in a suitable norm, i.e.\ the $\mathcal{A}$ norm.

This result indicates that evanescent plane waves are a continuous frame for the space of Helmholtz solutions.
This property lays the foundation for achieving stable and accurate discrete expansions.
As a significant implication, the space of Herglotz densities has the reproducing kernel property, meaning that point-evaluation functionals are continuous. Therefore, due to Riesz theorem, any point-evaluation functional can be identified with an element of $\mathcal{A}$, and thus mapped into an evanescent plane wave through the Herglotz transform.

\section{Space of Herglotz densities}

\hypertarget{Section 5.1}{To} shorten notations, we denote in the following the parametric domain as
\begin{equation*}
Y:=\Theta \times [0,+\infty).
\end{equation*}
We introduce a weighted $L^2$ space defined on $Y$. The weight function is
\begin{equation}
w(\mathbf{y})=w(\theta_1,\zeta):=\sin{(\theta_1)}\,\zeta^{1/2}e^{-\zeta},\,\,\,\,\,\,\,\,\,\,\forall \mathbf{y} \in Y.
\label{weight}
\end{equation}
\begin{figure}[H]
\centering
\begin{tabular}{c|cccccc}
\, & \, $\zeta=10^{\text{-}3}$ & \, $\zeta=10^{\text{-}2}$ & \, $\zeta=10^{\text{-}1}$ & \, $\zeta=10^{0}$ & \, $\zeta=10^{1}$ & \, $\zeta=10^{2}$\\
\vspace{-3mm}\\
\hline
\vspace{-3mm}\\
$m=0$ & \includegraphics[trim=30 30 30 30,clip,width=.118\linewidth,valign=m]{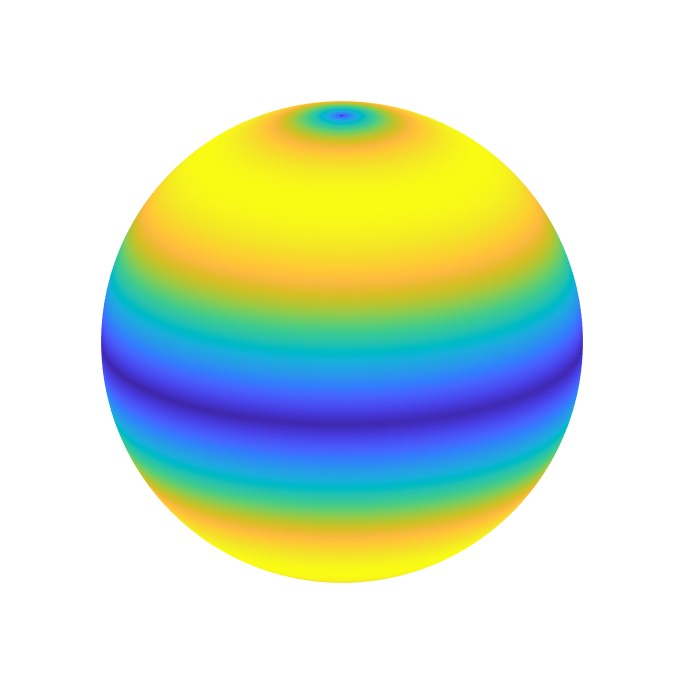} & \includegraphics[trim=30 30 30 30,clip,width=.118\linewidth,valign=m]{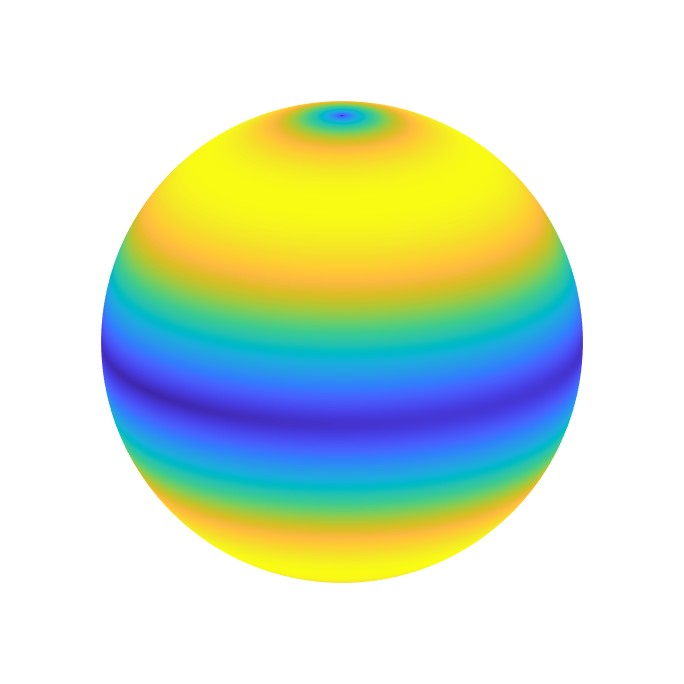} & \includegraphics[trim=30 30 30 30,clip,width=.118\linewidth,valign=m]{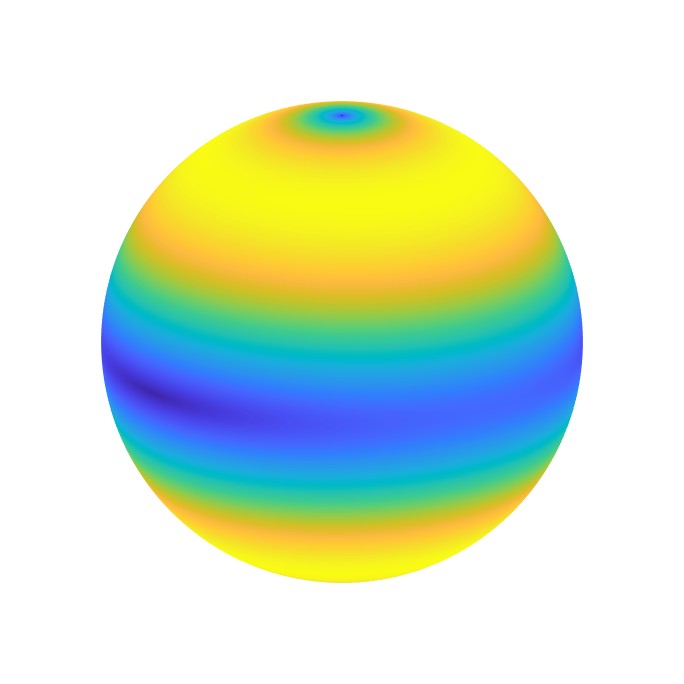} & \includegraphics[trim=30 30 30 30,clip,width=.118\linewidth,valign=m]{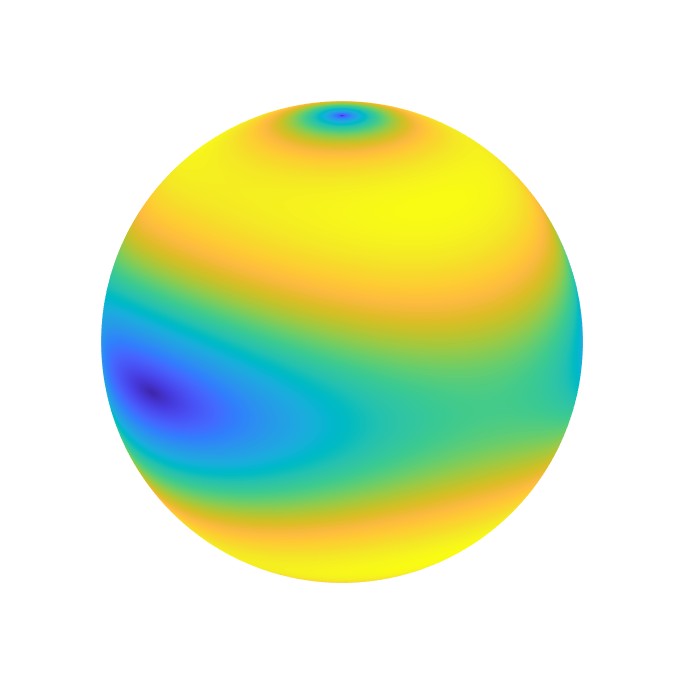} & \includegraphics[trim=30 30 30 30,clip,width=.118\linewidth,valign=m]{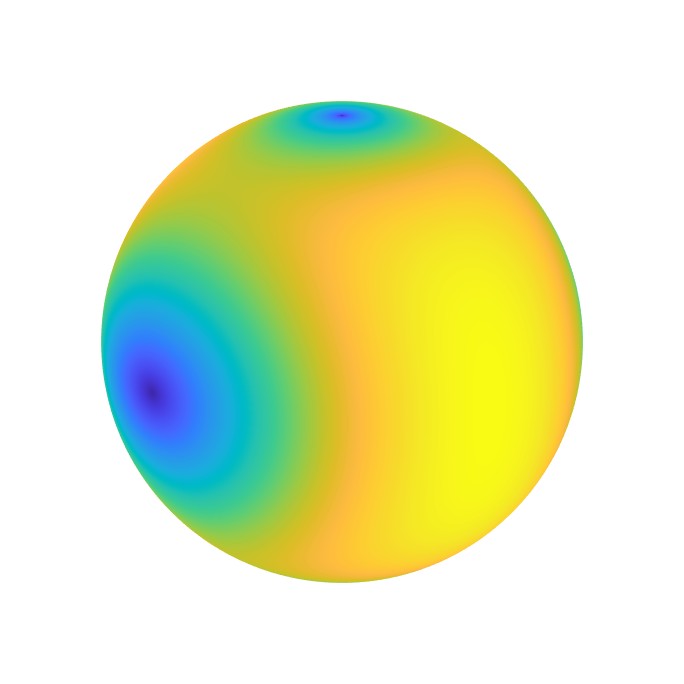} & \includegraphics[trim=30 30 30 30,clip,width=.118\linewidth,valign=m]{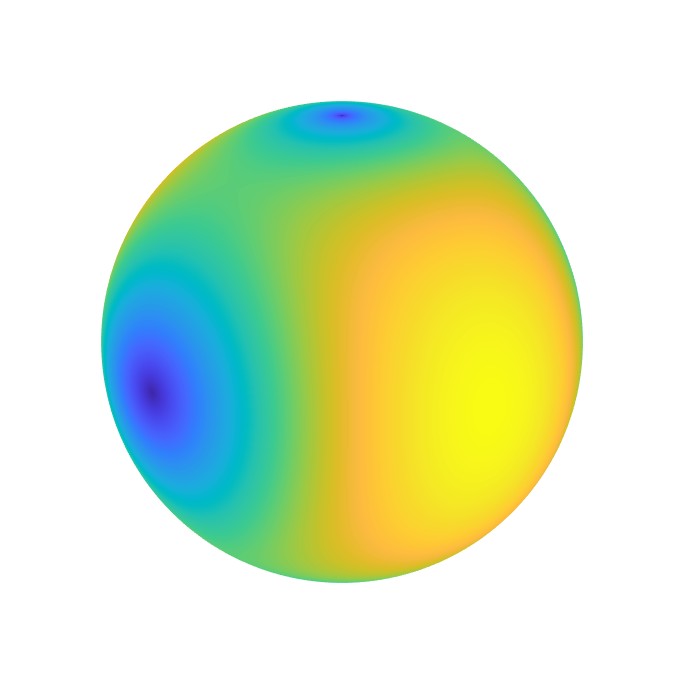}\\
$m=1$ & \includegraphics[trim=30 30 30 30,clip,width=.118\linewidth,valign=m]{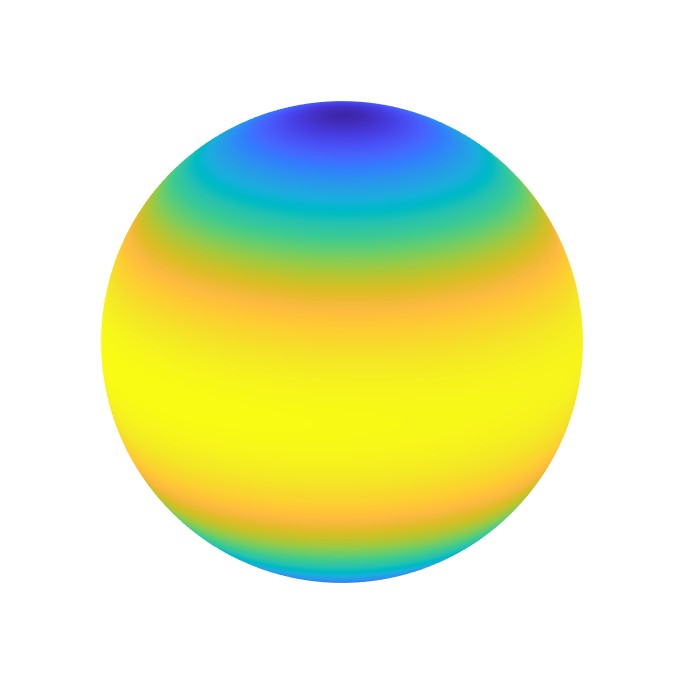} & \includegraphics[trim=30 30 30 30,clip,width=.118\linewidth,valign=m]{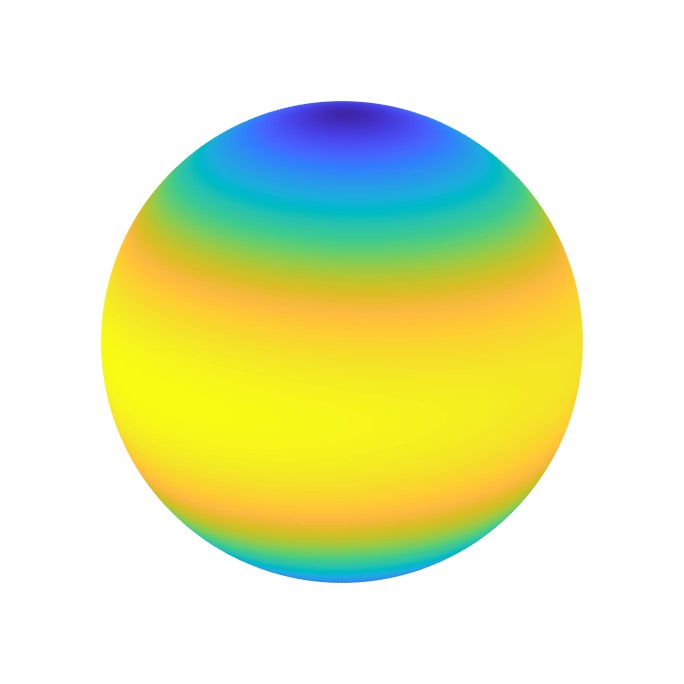} & \includegraphics[trim=30 30 30 30,clip,width=.118\linewidth,valign=m]{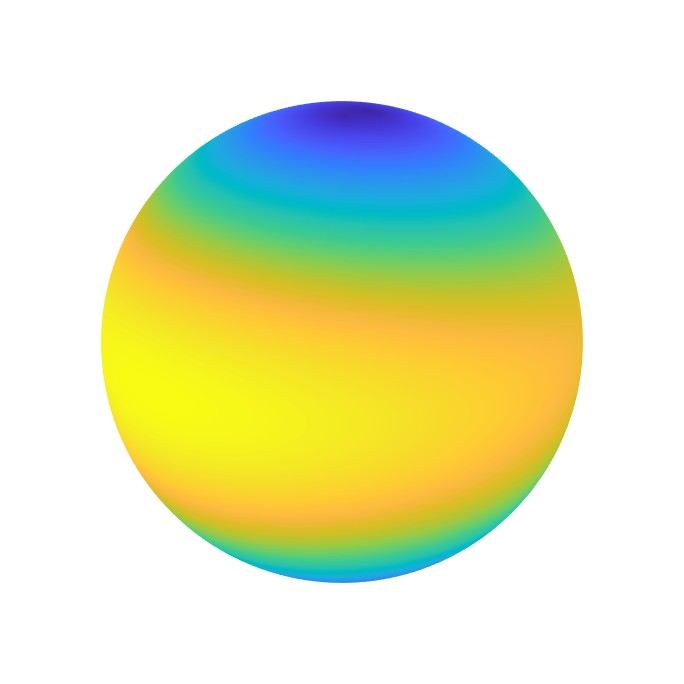} & \includegraphics[trim=30 30 30 30,clip,width=.118\linewidth,valign=m]{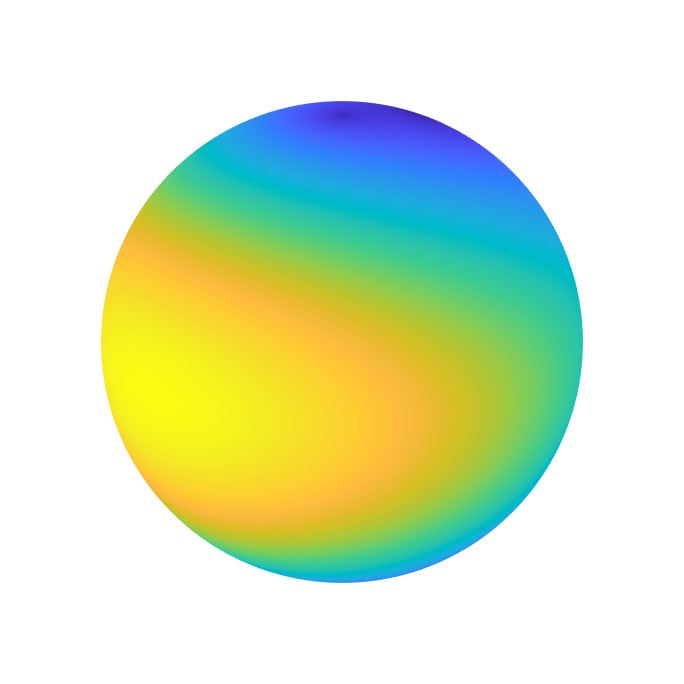} & \includegraphics[trim=30 30 30 30,clip,width=.118\linewidth,valign=m]{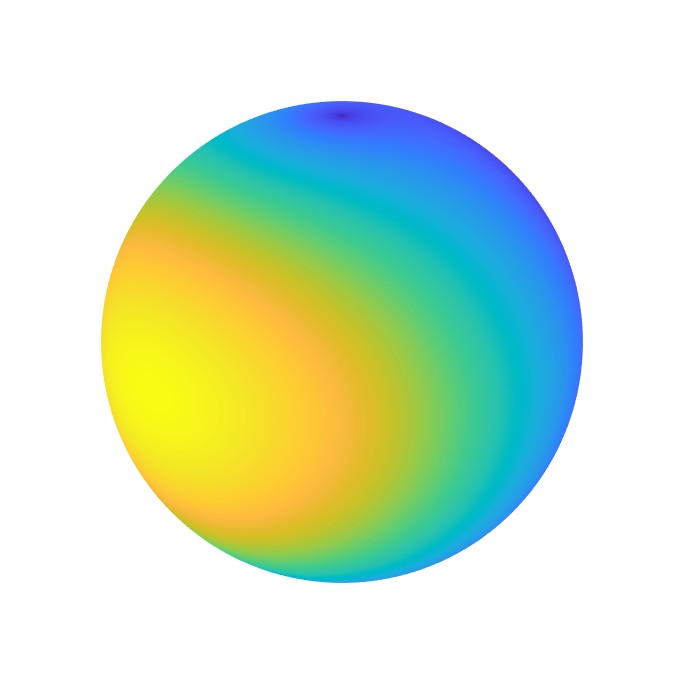} & \includegraphics[trim=30 30 30 30,clip,width=.118\linewidth,valign=m]{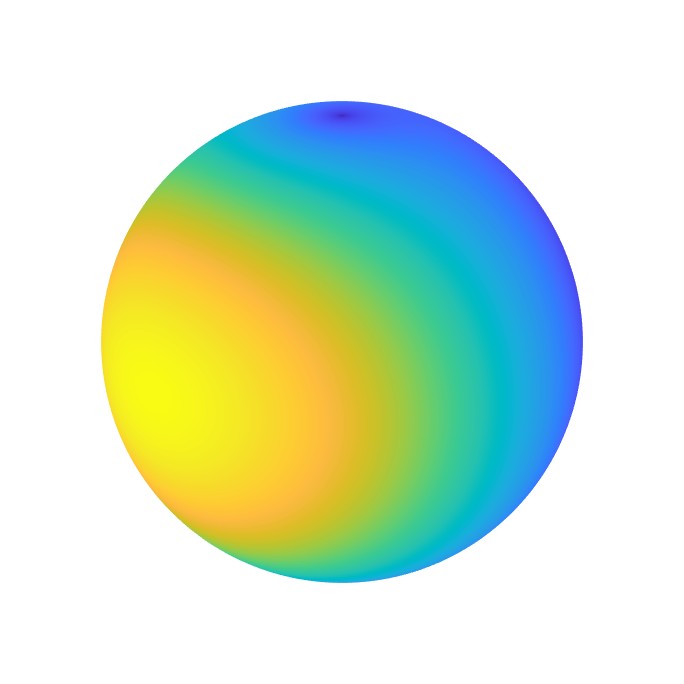}\\
\vspace{-3mm}\\
$\max$ & \footnotesize{$8\times10^{\text{-}3}$} & \footnotesize{$1.2\times10^{\text{-}2}$} & \footnotesize{$2.5\times10^{\text{-}2}$} & \footnotesize{$3.3\times10^{\text{-}2}$} & \footnotesize{$1.5\times10^{\text{-}3}$} & \footnotesize{$4\times10^{\text{-}22}$}\\
\vspace{-3mm}\\
\hline
\vspace{-3mm}\\
$m=0$ & \includegraphics[trim=30 30 30 30,clip,width=.118\linewidth,valign=m]{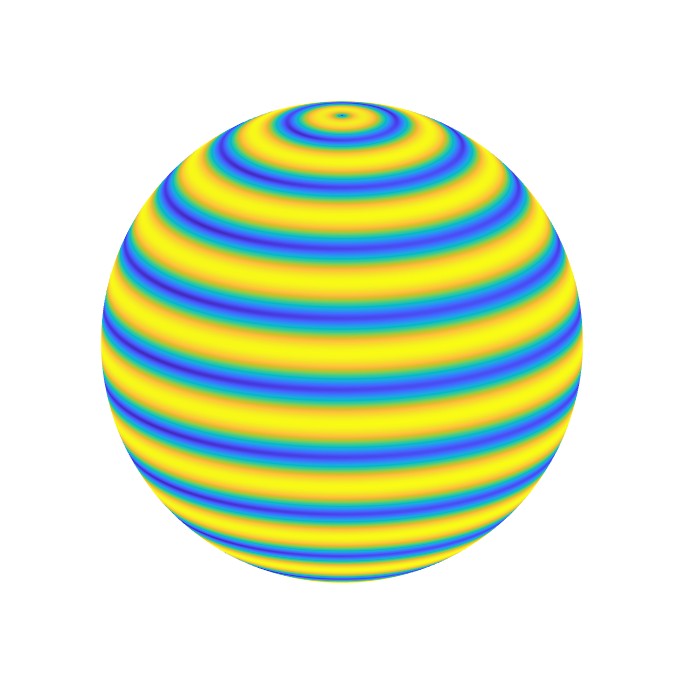} & \includegraphics[trim=30 30 30 30,clip,width=.118\linewidth,valign=m]{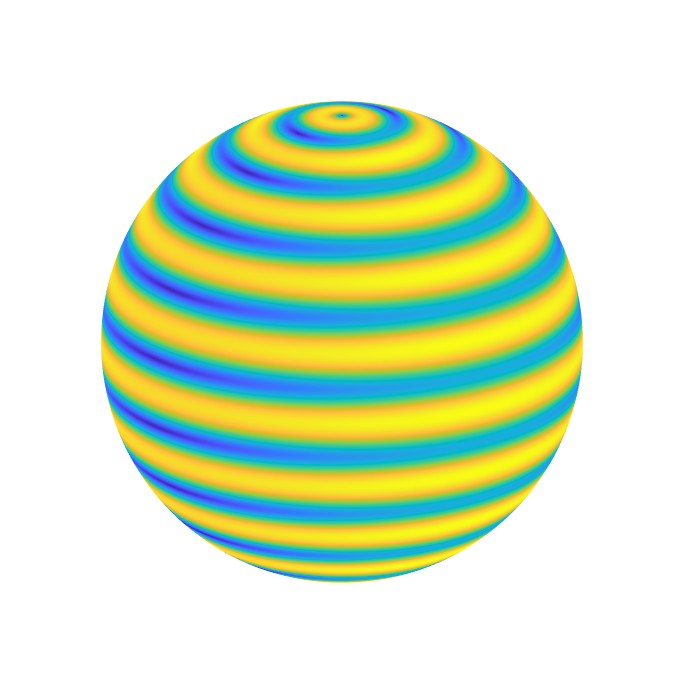} & \includegraphics[trim=30 30 30 30,clip,width=.118\linewidth,valign=m]{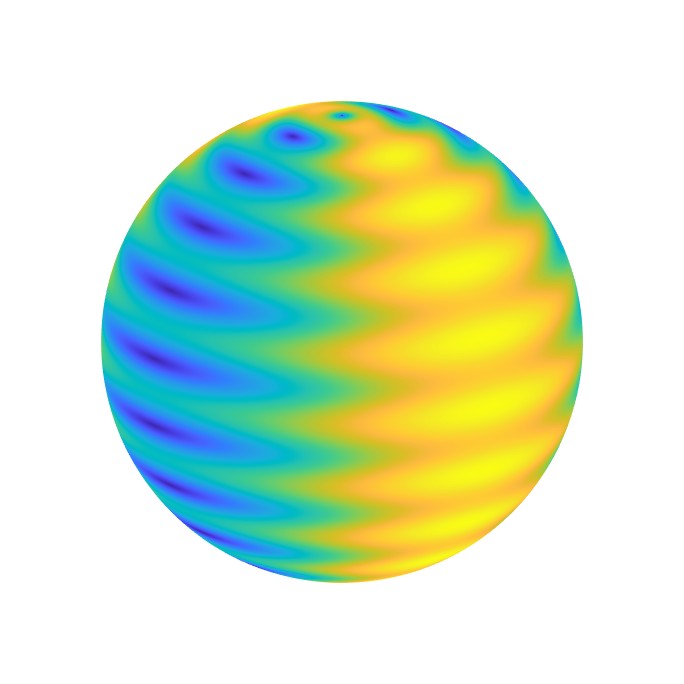} & \includegraphics[trim=30 30 30 30,clip,width=.118\linewidth,valign=m]{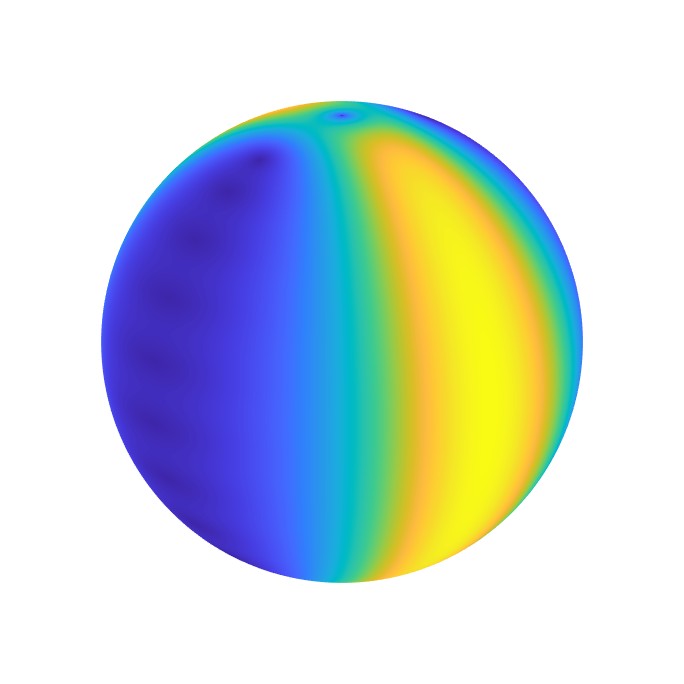} & \includegraphics[trim=30 30 30 30,clip,width=.118\linewidth,valign=m]{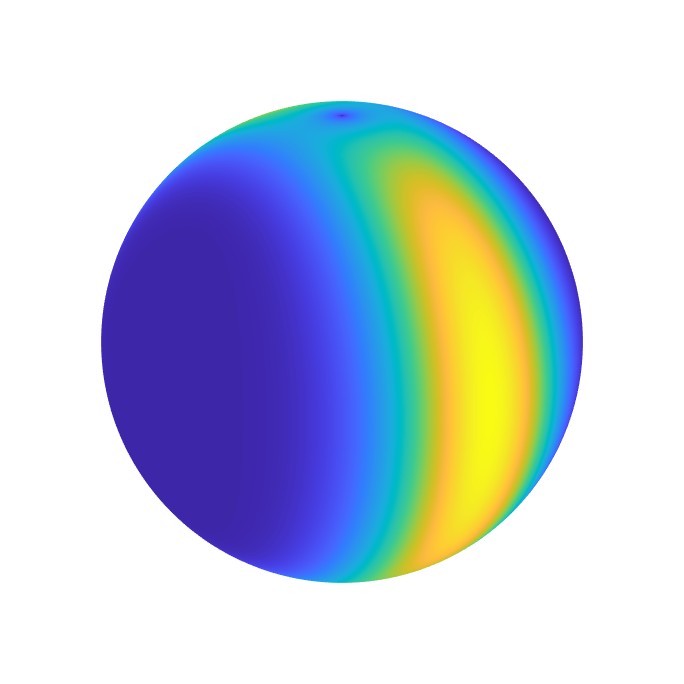} & \includegraphics[trim=30 30 30 30,clip,width=.118\linewidth,valign=m]{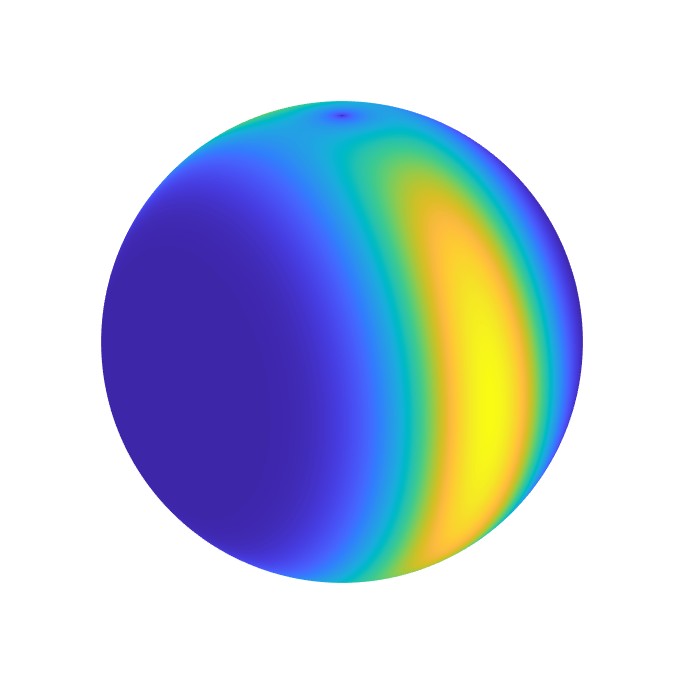}\\
$m=5$ & \includegraphics[trim=30 30 30 30,clip,width=.118\linewidth,valign=m]{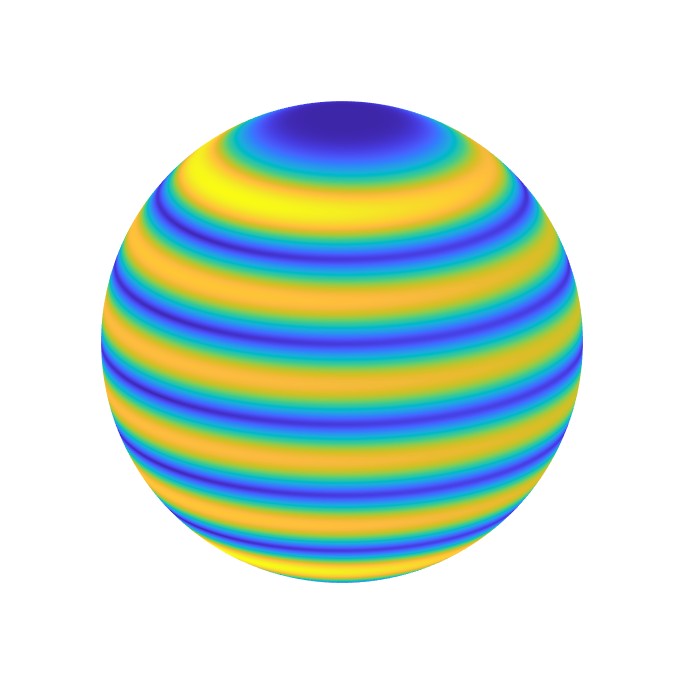} & \includegraphics[trim=30 30 30 30,clip,width=.118\linewidth,valign=m]{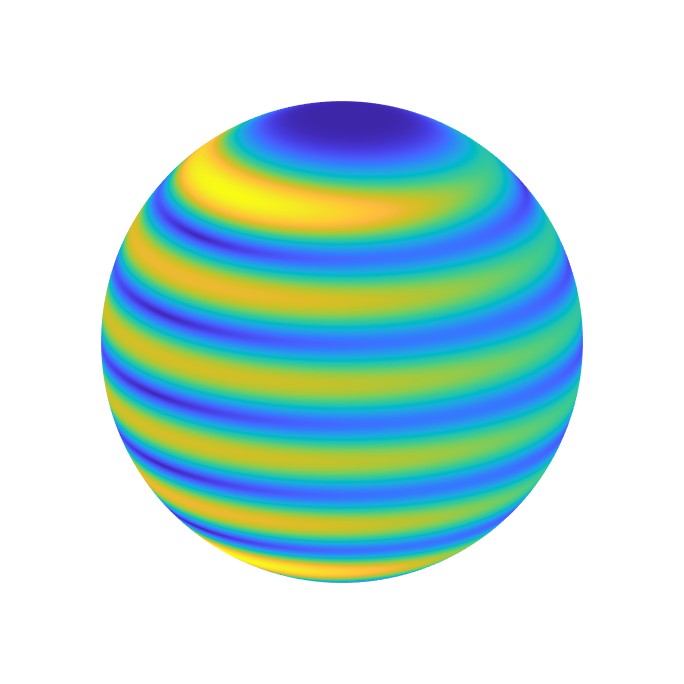} & \includegraphics[trim=30 30 30 30,clip,width=.118\linewidth,valign=m]{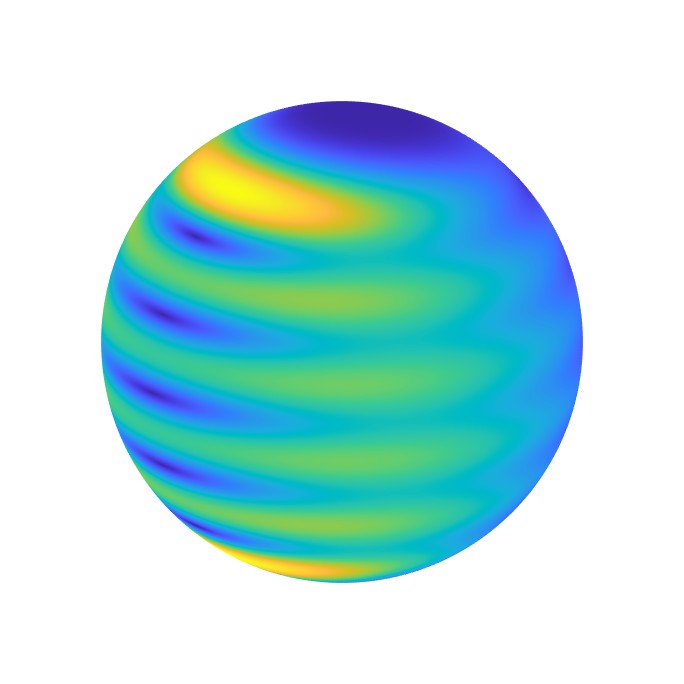} & \includegraphics[trim=30 30 30 30,clip,width=.118\linewidth,valign=m]{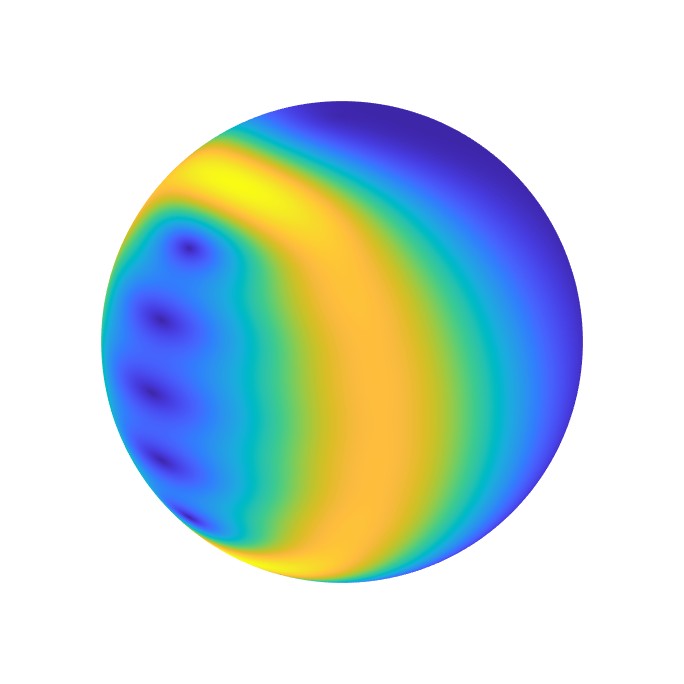} & \includegraphics[trim=30 30 30 30,clip,width=.118\linewidth,valign=m]{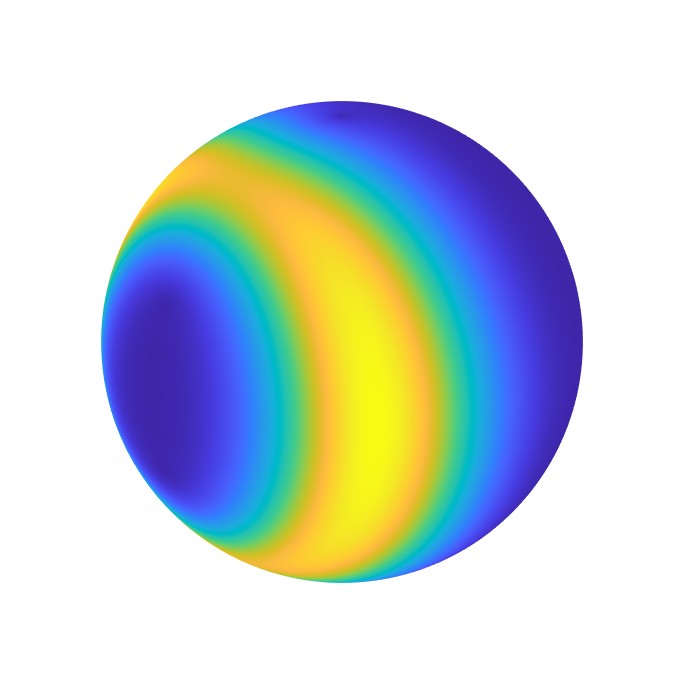} & \includegraphics[trim=30 30 30 30,clip,width=.118\linewidth,valign=m]{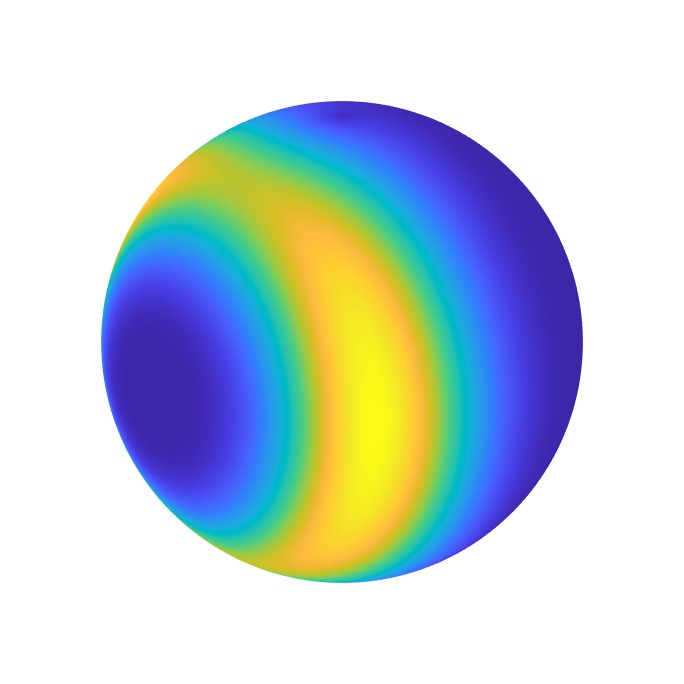}\\
$m=10$ & \includegraphics[trim=30 30 30 30,clip,width=.118\linewidth,valign=m]{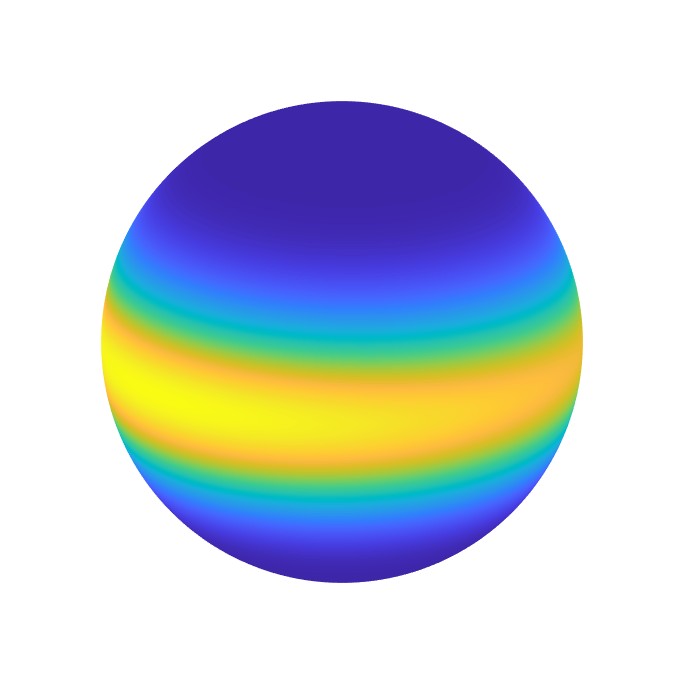} & \includegraphics[trim=30 30 30 30,clip,width=.118\linewidth,valign=m]{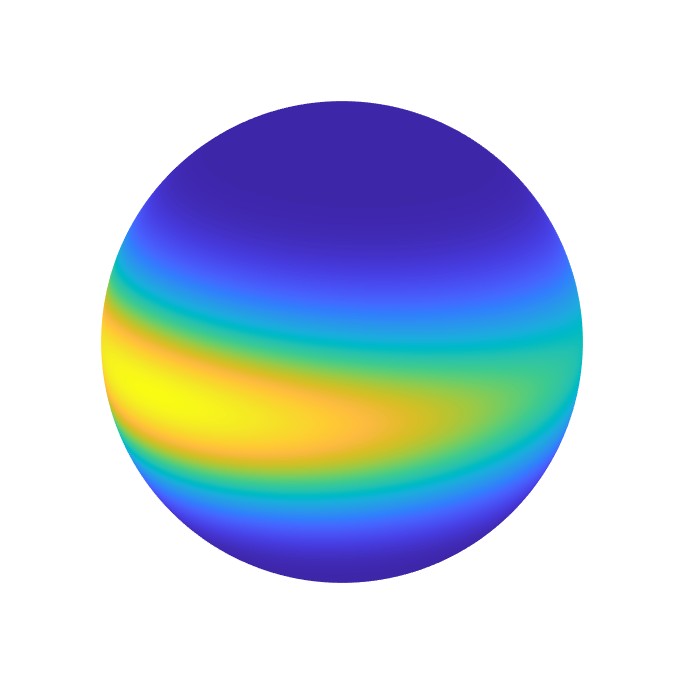} & \includegraphics[trim=30 30 30 30,clip,width=.118\linewidth,valign=m]{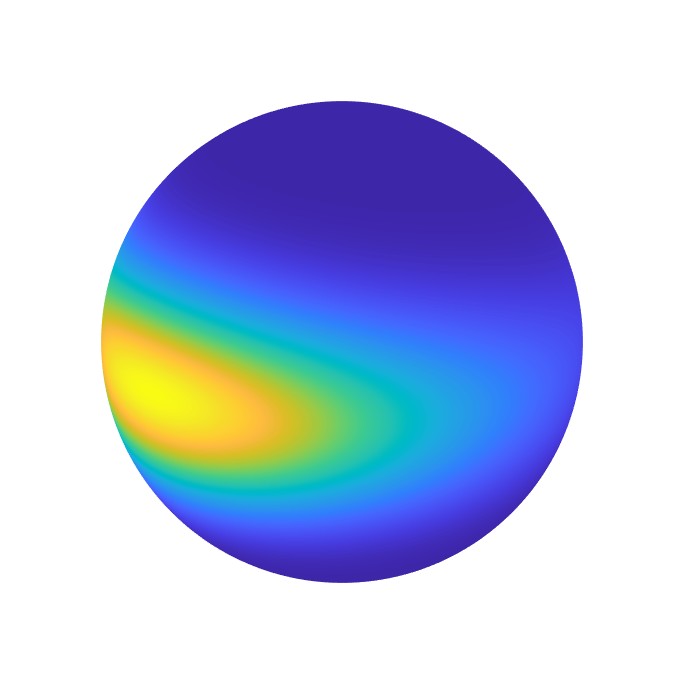} & \includegraphics[trim=30 30 30 30,clip,width=.118\linewidth,valign=m]{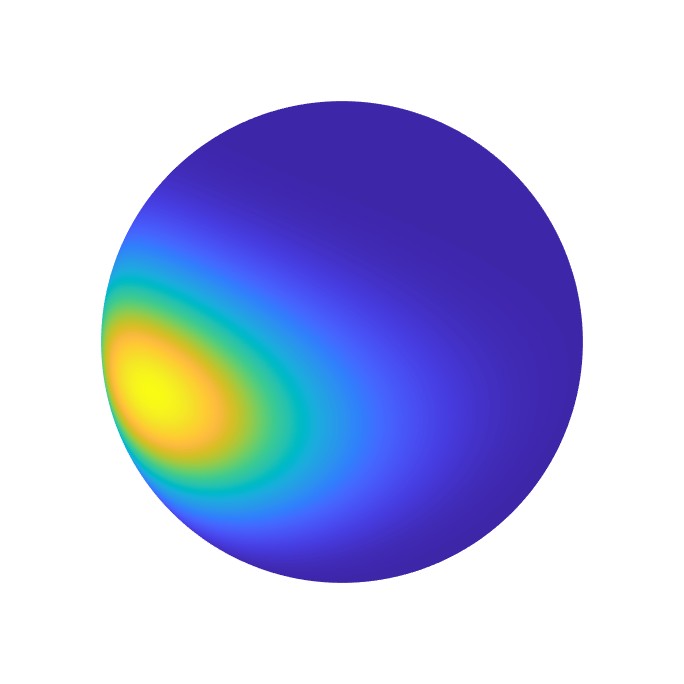} & \includegraphics[trim=30 30 30 30,clip,width=.118\linewidth,valign=m]{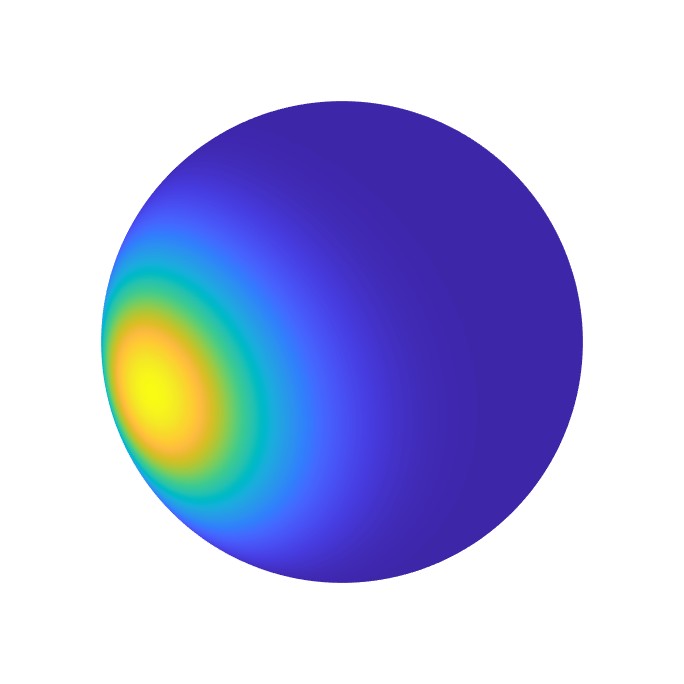} & \includegraphics[trim=30 30 30 30,clip,width=.118\linewidth,valign=m]{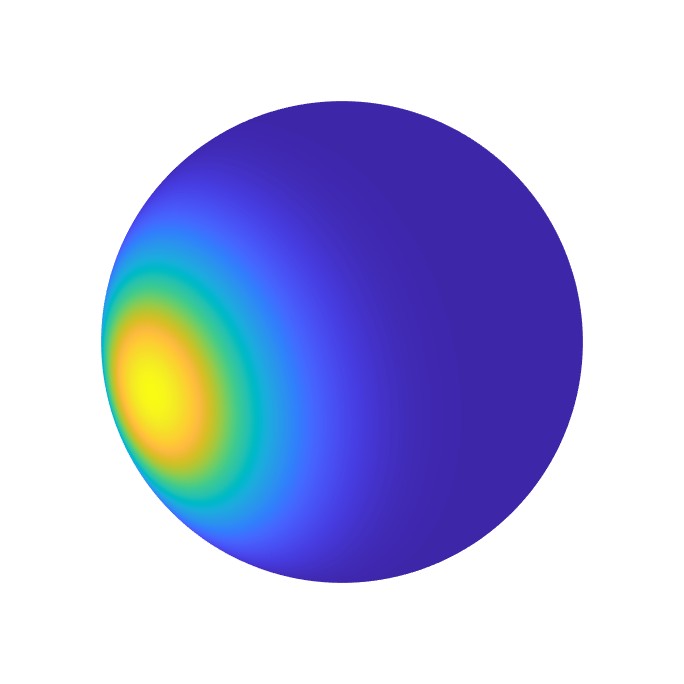}\\
\vspace{-3mm}\\
$\max$ & \footnotesize{$3\times10^{\text{-}6}$} & \footnotesize{$7.1\times10^{\text{-}6}$} & \footnotesize{$3\times10^{\text{-}5}$} & \footnotesize{$3.6\times10^{\text{-}4}$} & \footnotesize{$4.1\times10^{\text{-}2}$} & \footnotesize{$5\times10^{\text{-}14}$}\\
\vspace{-3mm}\\
\hline
\vspace{-3mm}\\
$m=0$ & \includegraphics[trim=30 30 30 30,clip,width=.118\linewidth,valign=m]{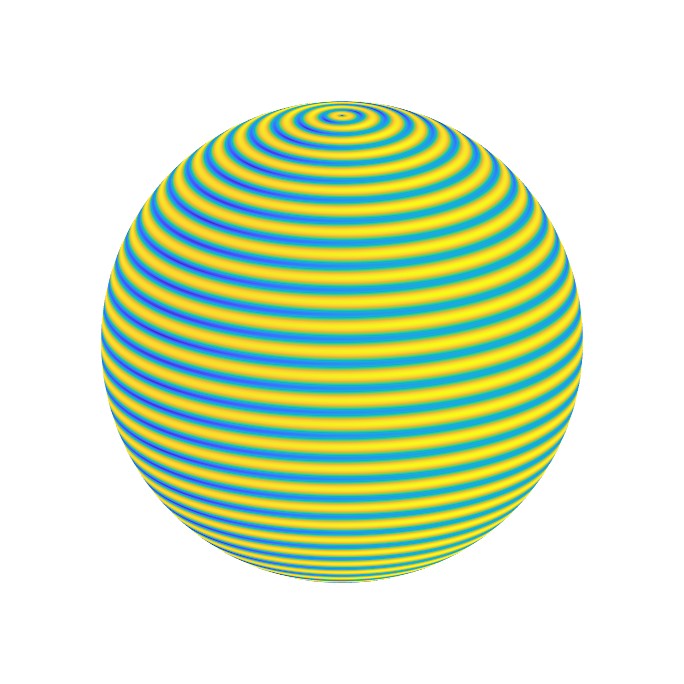} & \includegraphics[trim=30 30 30 30,clip,width=.118\linewidth,valign=m]{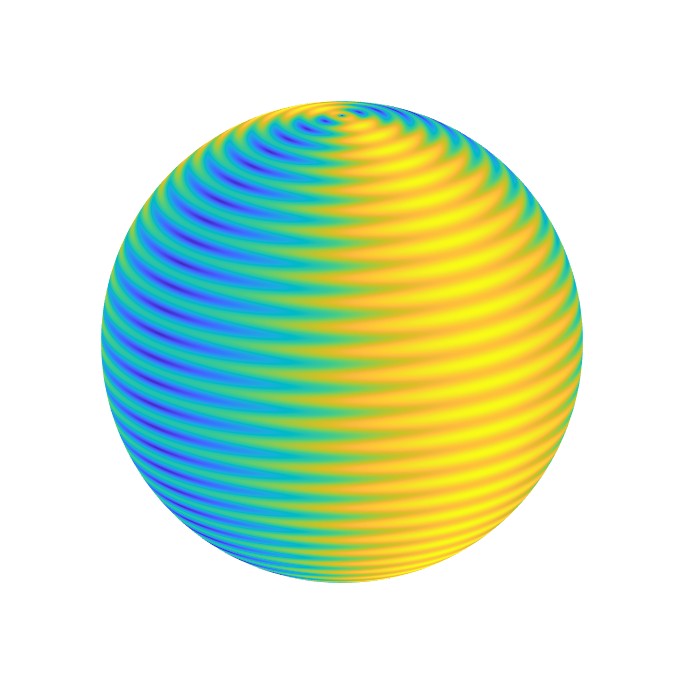} & \includegraphics[trim=30 30 30 30,clip,width=.118\linewidth,valign=m]{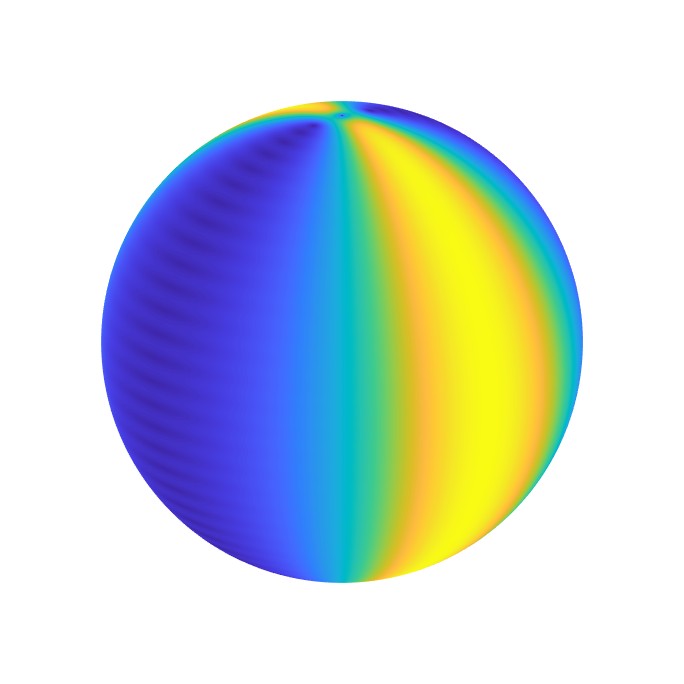} & \includegraphics[trim=30 30 30 30,clip,width=.118\linewidth,valign=m]{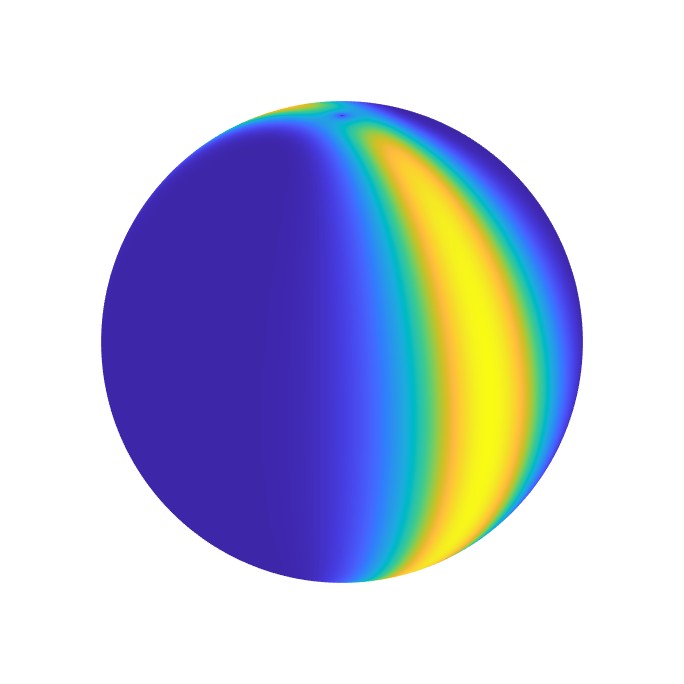} & \includegraphics[trim=30 30 30 30,clip,width=.118\linewidth,valign=m]{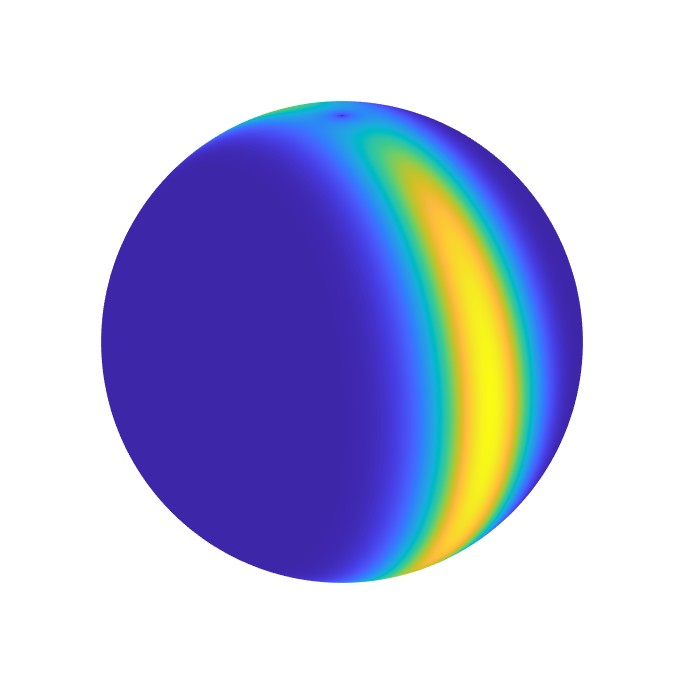} & \includegraphics[trim=30 30 30 30,clip,width=.118\linewidth,valign=m]{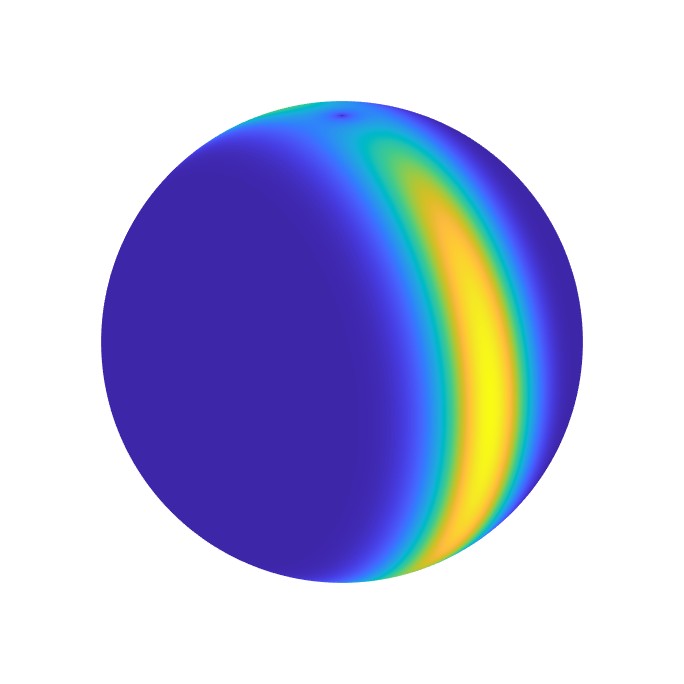}\\
$m=15$ & \includegraphics[trim=30 30 30 30,clip,width=.118\linewidth,valign=m]{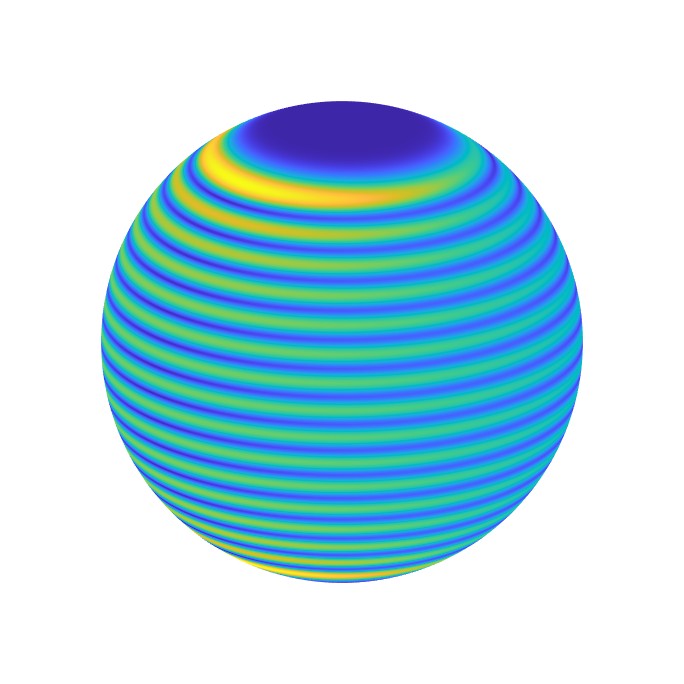} & \includegraphics[trim=30 30 30 30,clip,width=.118\linewidth,valign=m]{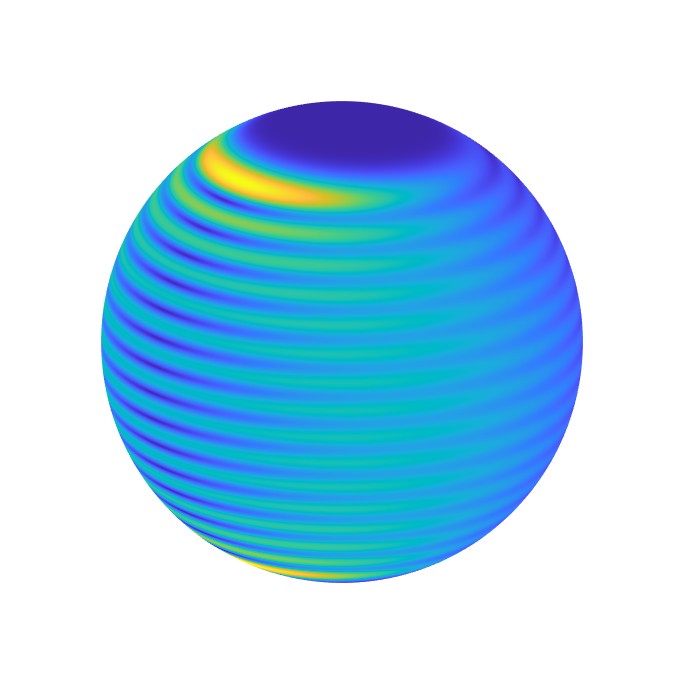} & \includegraphics[trim=30 30 30 30,clip,width=.118\linewidth,valign=m]{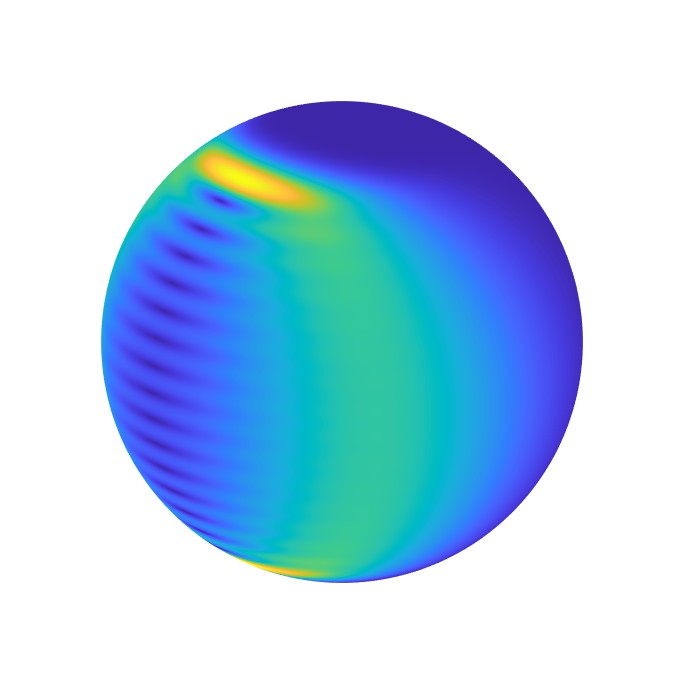} & \includegraphics[trim=30 30 30 30,clip,width=.118\linewidth,valign=m]{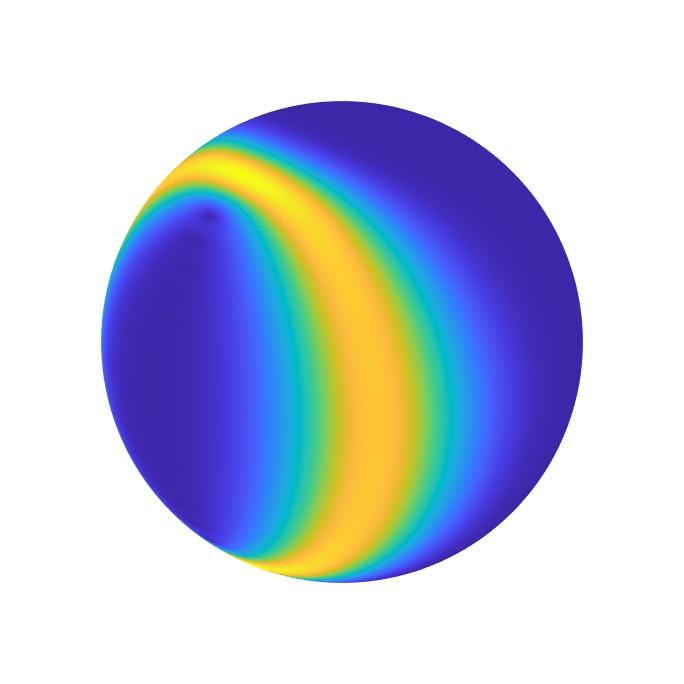} & \includegraphics[trim=30 30 30 30,clip,width=.118\linewidth,valign=m]{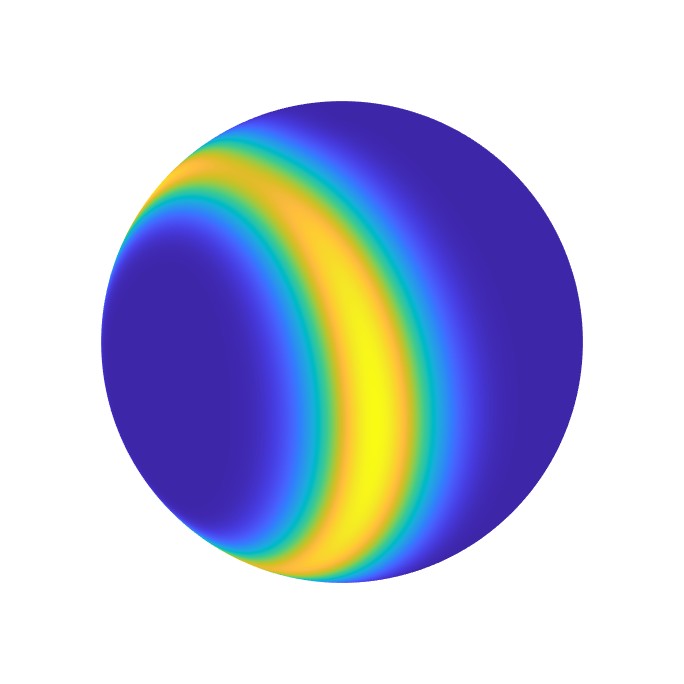} & \includegraphics[trim=30 30 30 30,clip,width=.118\linewidth,valign=m]{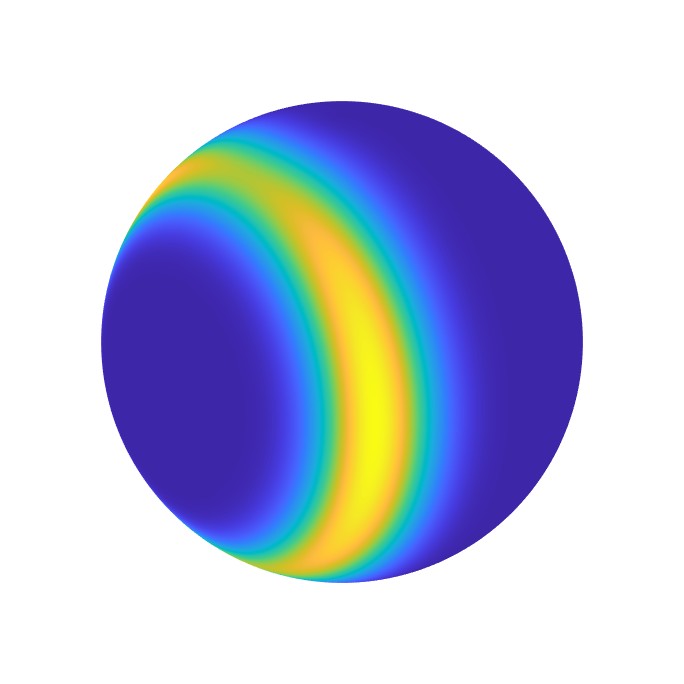}\\
$m=30$ & \includegraphics[trim=30 30 30 30,clip,width=.118\linewidth,valign=m]{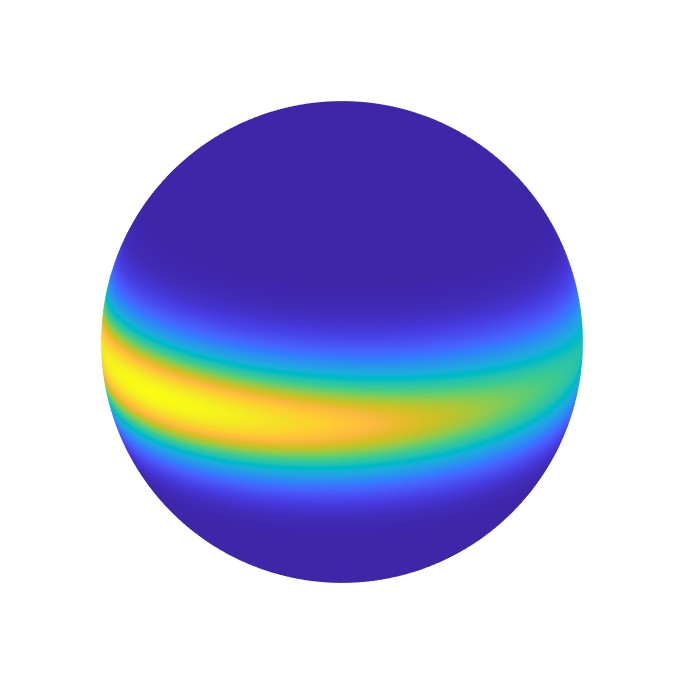} & \includegraphics[trim=30 30 30 30,clip,width=.118\linewidth,valign=m]{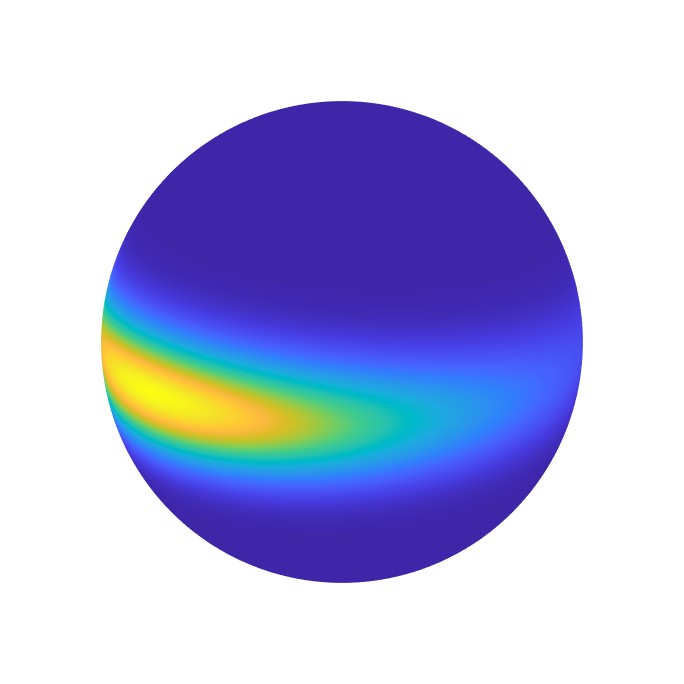} & \includegraphics[trim=30 30 30 30,clip,width=.118\linewidth,valign=m]{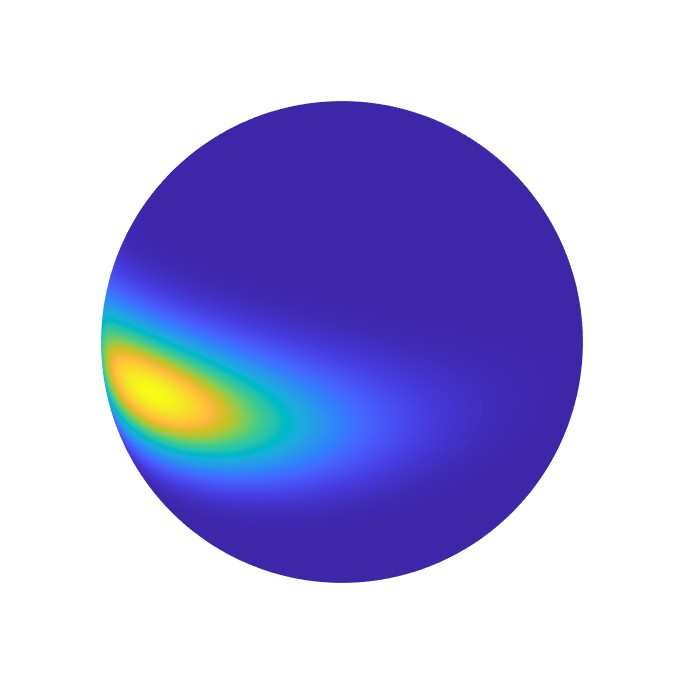} & \includegraphics[trim=30 30 30 30,clip,width=.118\linewidth,valign=m]{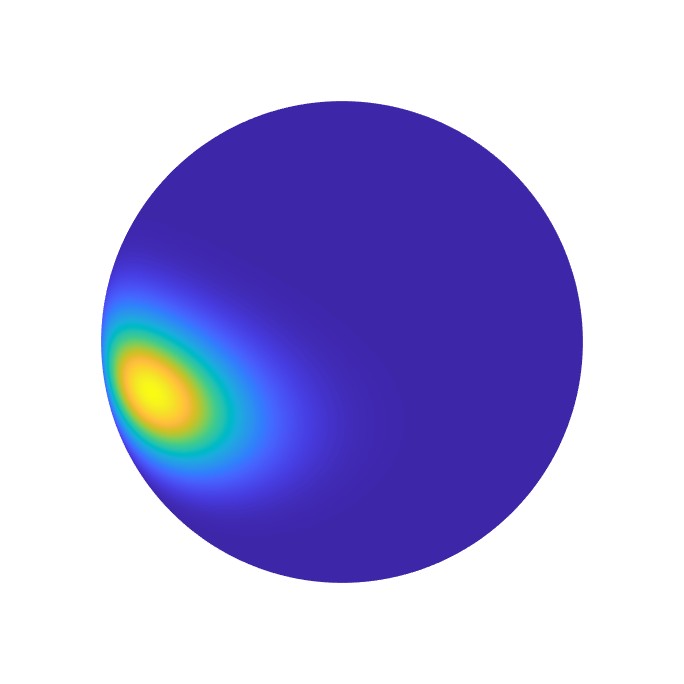} & \includegraphics[trim=30 30 30 30,clip,width=.118\linewidth,valign=m]{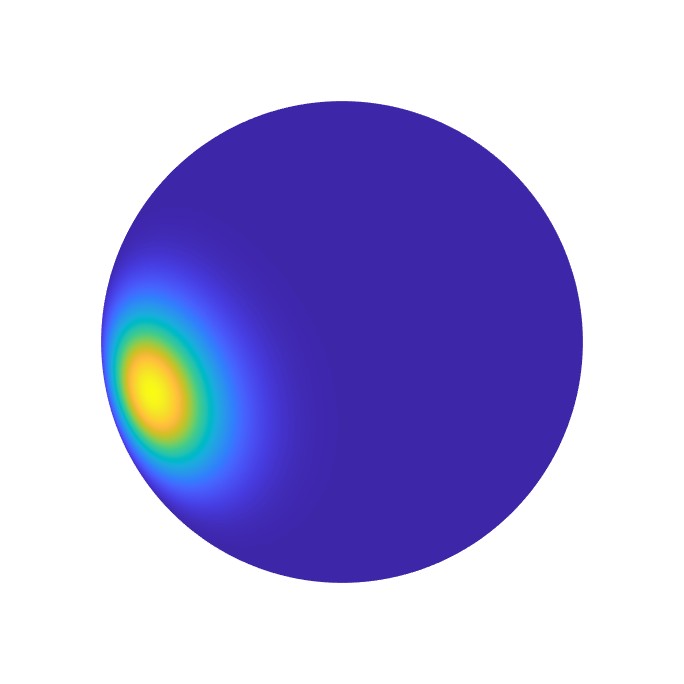} & \includegraphics[trim=30 30 30 30,clip,width=.118\linewidth,valign=m]{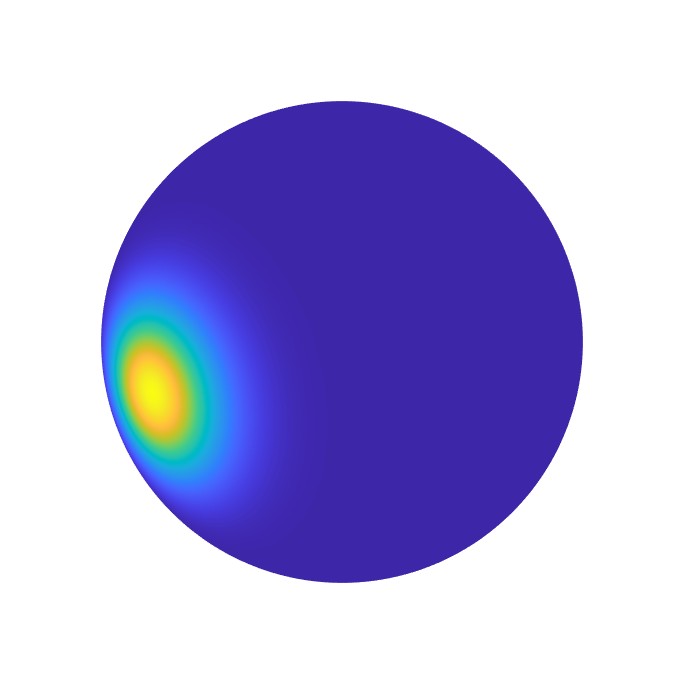}\\
\vspace{-3mm}\\
$\max$ & \footnotesize{$3\times10^{\text{-}22}$} & \footnotesize{$2.3\times10^{\text{-}21}$} & \footnotesize{$3.1\times10^{\text{-}20}$} & \footnotesize{$5.2\times10^{\text{-}17}$} & \footnotesize{$8.7\times10^{\text{-}8}$} & \footnotesize{$9.2\times10^{\text{-}5}$}\\
\vspace{-3mm}\\
\hline
\vspace{1.5mm}\\
\end{tabular}
\caption{Representations of the functions $(\zeta\sin{\theta_1} \cos{\theta_3},\zeta\sin{\theta_1} \sin{\theta_3},\zeta\cos{\theta_1}) \mapsto |w^{1/2}(\theta_1,\zeta)a_{\ell}^m(\theta_1,\cdot\,,\theta_3,\zeta)|$,
which are independent of the second argument of $a_{\ell}^m$,
for fixed $\ell=1$, $\ell=10$ and $\ell=30$ (from top to bottom) and various values of $m$ and $\zeta$.
For every degree $\ell$, the functions in each column share the same color scale, whose maximum value is placed at the bottom along the `max' row.
Wavenumber $\kappa=6$.}
\label{figure 5.1}
\end{figure}
\noindent Observe that $w$ does not depend on the Euler angles $\theta_2$ and $\theta_3$.
The $w$-weighted $L^2$ Hermitian product and the associated norm are defined by:
\begin{equation}
\begin{split} \label{A norm}
(u,v)_{\mathcal{A}}&:=\int_{Y}u(\mathbf{y})\overline{v(\mathbf{y})}w(\mathbf{y})\textup{d}\mathbf{y},\\
\|u\|^2_{\mathcal{A}}&:=(u,u)_{\mathcal{A}},
\end{split}
\,\,\,\,\,\,\,\,\,\,\,\,\,\,\forall u,v \in L^2(Y;w).
\end{equation}
We now define a proper subspace of $L^2(Y;w)$, that we denote by $\mathcal{A}$ and call \textit{space of Herglotz densities}.

\begin{definition}[\hypertarget{Definition 5.1}{Herglotz densities}]
We define, for any $(\ell,m) \in \mathcal{I}$
\begin{equation}
\begin{split} \label{a tilde definizione}
\tilde{a}_\ell^m(\mathbf{y})&:=\mathbf{D}^{m}_{\ell}(\boldsymbol{\theta}) \cdot \mathbf{P}_{\ell}(\zeta),\,\,\,\,\,\,\,\,\,\,\,\,\,\,\,\,\,\,\,\,\,\,\,\,  \forall \mathbf{y} \in Y,\\
a_\ell^m&:=\alpha_{\ell}\tilde{a}_\ell^m,\,\,\,\,\,\,\,\,\,\,\,\,\,\,\,\,\,\,\,\,\,\,\,\,\,\,\,\,\,\,\,\,\,\,\,\,\,\,\,\,\, \alpha_{\ell}:=\|\tilde{a}_\ell^m\|^{-1}_\mathcal{A}, 
\end{split}
\end{equation}
where $\mathbf{D}^{m}_{\ell}(\boldsymbol{\theta})$ and $\mathbf{P}_{\ell}(\zeta)$ are defined in \textup{(\ref{vector P})} and $\mathbf{y}=(\boldsymbol{\theta},\zeta)$. Furthermore, we introduce the space
\vspace{-3mm}
\begin{equation*}
\mathcal{A}:=\overline{\textup{span}\{a_\ell^m\}_{(\ell,m) \in \mathcal{I}}}^{\|\cdot\|_{\mathcal{A}}} \subsetneq L^2(Y;w).
\end{equation*}
\end{definition}

Similarly to the spherical waves (\ref{b tilde definizione}), also the Herglotz densities depend on two different parameters $(\ell,m) \in \mathcal{I}$, but their norm $\alpha_{\ell}$ is independent of the parameter $m$, as will be clear later on (see Lemma \hyperlink{Lemma 5.2}{5.3}). The wavenumber $\kappa$ appears explicitly within the definition (\ref{vector P}) of $\mathbf{P}_{\ell}(\zeta)$, therefore each $a_{\ell}^m$ for $(\ell,m) \in \mathcal{I}$ depends on it.

Some densities $a_{\ell}^m$, weighted by $w^{1/2}$, are represented in Figure \ref{figure 5.1}.
Note that, as the degree $\ell$ increases, the maximum values of the depicted functions are reached at ever larger values of the evanescence parameter $\zeta$, and this reflects the transition from propagative modes (e.g.\ $\ell=1$) to evanescent modes (e.g.\ $\ell=30$).
The sym-metries of these functions can be understood in light of the properties (\ref{sym1}) and (\ref{sym2}).
For convenience, we only report some cases where $m\geq 0$, since, thanks to (\ref{sym3}), for $m\leq0$ the functions are symmetric with respect to the plane $\{\mathbf{x} =(x,y,z)\in \R^3 : y=0\}$.
If $\zeta$ is large enough (e.g.\ $\zeta=10$), the supports create an annular structure, which collapses to a dot when $ |m|$ goes to $\ell$, while, if $|m|$ goes to $0$, it tends to form two vertical stripes that are symmetric with respect to the plane $\{\mathbf{x}=(x,y,z)\in \R^3 : x=0 \}$.
The smaller $\zeta$ gets, the less dependent the functions become on $\theta_3$ and the exhibited patterns match the zero distributions of the Ferrers functions $\mathsf{P}_{\ell}^m$ (see \cite[Sec.\ 4.16.2]{nist}).
In Figure \ref{figure 5.1}, for instance when $\zeta=10^{-3}$, we observe $\ell-|m|$ nearly uniform blue horizontal bands.

We present a lemma that will aid in the examination of the asymptotic behavior of the normalization coefficients $\alpha_{\ell}$ later on.

\begin{lemma}
We have for all $(\ell,m) \in \mathcal{I}$ and $z \geq 1$
\begin{equation}
(z-1)^{\ell}\leq \frac{\sqrt{\pi}(\ell-m)!P_{\ell}^m(z)}{2^{\ell}\Gamma\left(\ell+1/2\right)}\leq (z+1)^{\ell}.
\label{disuguaglianza lemma}
\end{equation}
\end{lemma}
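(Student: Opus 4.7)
The plan is to use the explicit expansion (\ref{sum legendre expansion}) established in the preceding lemma, combined with Vandermonde's identity and the Legendre duplication formula. First I would reduce to the case $m \geq 0$: using the symmetry (\ref{negative legendre2 polynomials}), one sees that $(\ell-m)!P_{\ell}^m(z)$ depends only on $|m|$ (the factor $(\ell-m)!$ is exactly what compensates the ratio $(\ell-|m|)!/(\ell+|m|)!$), so both sides of (\ref{disuguaglianza lemma}) are unchanged under $m \mapsto -m$, and we may assume $0 \leq m \leq \ell$.

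The key observation is then that, under the convention (\ref{convention}), each summand in (\ref{sum legendre expansion}) is a non-negative real when $z \geq 1$, and the exponents $\ell - m/2 - k$ and $m/2 + k$ both lie in the interval $[m/2, \ell - m/2] \subset [0, \ell]$ for $0 \leq k \leq \ell - m$. Since $0 \leq z-1 \leq z+1$ and both exponents are non-negative, the mixed factor obeys the term-wise sandwich
\begin{equation*}
(z-1)^{\ell} \;\leq\; (z-1)^{\ell - m/2 - k}(z+1)^{m/2 + k} \;\leq\; (z+1)^{\ell}.
\end{equation*}
Multiplying by the positive coefficient $\binom{\ell}{k}\binom{\ell}{m+k}$ and summing in $k$, Vandermonde's identity (applied to $\sum_{k=0}^{\ell-m}\binom{\ell}{k}\binom{\ell}{\ell-m-k} = \binom{2\ell}{\ell-m}$) yields
\begin{equation*}
\frac{(\ell+m)!\binom{2\ell}{\ell-m}}{2^{\ell}\ell!}(z-1)^{\ell} \;\leq\; P_{\ell}^m(z) \;\leq\; \frac{(\ell+m)!\binom{2\ell}{\ell-m}}{2^{\ell}\ell!}(z+1)^{\ell}.
\end{equation*}

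It only remains to simplify the common constant. Writing $\binom{2\ell}{\ell-m} = \frac{(2\ell)!}{(\ell+m)!(\ell-m)!}$ reduces the coefficient to $\frac{(2\ell)!}{2^{\ell}\ell!(\ell-m)!}$. Applying the Legendre duplication formula $\Gamma(\ell+1/2) = \frac{\sqrt{\pi}(2\ell)!}{4^{\ell}\ell!}$ one finds that this coefficient is exactly $\frac{2^{\ell}\Gamma(\ell+1/2)}{\sqrt{\pi}(\ell-m)!}$, and multiplying the displayed double inequality by $\frac{\sqrt{\pi}(\ell-m)!}{2^{\ell}\Gamma(\ell+1/2)}$ gives (\ref{disuguaglianza lemma}).

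The argument is essentially computational, so there is no serious obstacle; the only point requiring a little care is the bookkeeping of the half-integer exponents when $m$ is odd, which is harmless under convention (\ref{convention}) since $z \geq 1$ makes $(z\pm 1)^{m/2+k}$ a well-defined non-negative real for every $k$. The cleanness of the final constant relies on the happy cancellation between Vandermonde and the duplication formula, which is the reason both bounds collapse into the compact form stated in the lemma.
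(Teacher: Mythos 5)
Your proposal is correct and follows essentially the same route as the paper's proof: reduction to $m\geq 0$ via (\ref{negative legendre2 polynomials}), the expansion (\ref{sum legendre expansion}), a term-wise bound on the mixed factors $(z-1)^{\ell-m/2-k}(z+1)^{m/2+k}$, Vandermonde's identity, and the half-integer Gamma evaluation (\ref{gamma half}). The only cosmetic difference is that the paper factors out $(z-1)^{\ell}$ and bounds the ratio $\left(\frac{z+1}{z-1}\right)^{m/2+k}$, whereas you sandwich each summand directly, which is equivalent (and marginally cleaner at $z=1$).
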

\begin{proof}
Note that, due to (\ref{negative legendre2 polynomials}), $(\ell+m)!P_{\ell}^{-m}(z)=(\ell-m)!P_{\ell}^{m}(z)$, for every $(\ell,m) \in \mathcal{I}$. Therefore, we can assume $m \geq 0$ in the following. Thanks to (\ref{sum legendre expansion}) and since
\vspace{-1mm}
\begin{equation}
\Gamma\left(n+\frac{1}{2}\right)=\frac{\sqrt{\pi}(2n)!}{2^{2n}n!},\,\,\,\,\,\,\,\,\,\,\,\,\,\,\forall n \geq 0,
\label{gamma half}
\end{equation}
(see \cite[Eq.\ (5.5.5)]{nist}) it readily follows:
\vspace{-2mm}
\begin{equation*}
A_{\ell}^m(z):=\frac{\sqrt{\pi}(\ell-m)!P_{\ell}^m(z)}{2^{\ell}\Gamma\left(\ell+1/2\right)}=\binom{2\ell}{\ell+m}^{-1}\!\!\!\!\!\!\left(z-1\right)^{\ell}\sum_{k=0}^{\ell-m}\binom{\ell}{k}\binom{\ell}{m+k}\left(\frac{z+1}{z-1}\right)^{m/2+k}.
\end{equation*}
Thanks to the Vandermonde identity \cite[Eq.\ (1)]{Sokal}, we derive:
\vspace{-1mm}
\begin{align*}
A_{\ell}^m(z) &\geq \binom{2\ell}{\ell+m}^{-1}(z-1)^{\ell}\sum_{k=0}^{\ell-m}\binom{\ell}{k}\binom{\ell}{m+k}=(z-1)^{\ell},\\
A_{\ell}^m(z) &\leq \binom{2\ell}{\ell+m}^{-1}\left(\frac{z+1}{z-1}\right)^{\ell-m/2}(z-1)^{\ell}\sum_{k=0}^{\ell-m}\binom{\ell}{k}\binom{\ell}{m+k}\\
&=\left(\frac{z-1}{z+1}\right)^{m/2}(z+1)^{\ell}\leq (z+1)^{\ell}. \qedhere
\end{align*}
\end{proof}
\vspace{-3mm}
\begin{remark}
\hypertarget{Remark 5.3}{Numerical} experiments suggest the possibility of improving the upper bound in \textup{(\ref{disuguaglianza lemma})} with $z^{\ell}$. However, the lack of such a refinement does not affect the validity of the next result.
\end{remark}
\vspace{-1mm}

The coefficients $\alpha_{\ell}$ decay super-exponentially with $\ell$ after a pre-asymptotic regi-me up to
$\ell \approx \kappa$. The precise asymptotic behavior is given by the following lemma.

\begin{lemma}
\hypertarget{Lemma 5.2}{We} have that $\|\tilde{a}_{\ell}^m\|_{\mathcal{A}}$ is independent of the value of $m$ for all $(\ell,m) \in \mathcal{I}$ and furthermore
\vspace{-5mm}
\begin{equation}
\alpha_{\ell} \sim c(\kappa)\left(\frac{e\kappa}{2} \right)^{\ell}\ell^{-\left(\ell+\frac{1}{2}\right)},\,\,\,\,\,\,\,\,\,\,\,\,\,\,\,\text{as}\,\,\,\ell \rightarrow \infty,
\label{behaviour alpha_lm}
\end{equation}
where the constant $c(\kappa)$ only depends on $\kappa$.
\end{lemma}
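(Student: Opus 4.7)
The plan is to reduce $\|\tilde{a}_\ell^m\|_\mathcal{A}^2$ to a one-dimensional integral whose asymptotics can be extracted by a saddle-point analysis. I will carry out three steps.

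\textbf{Step 1 ($m$-independence).} First I would expand
\[
|\tilde{a}_\ell^m(\mathbf{y})|^2 = |\mathbf{D}_\ell^m(\boldsymbol{\theta})\cdot \mathbf{P}_\ell(\zeta)|^2 = \sum_{m',m''} D_\ell^{m',m}(\boldsymbol{\theta})\overline{D_\ell^{m'',m}(\boldsymbol{\theta})}\,\gamma_\ell^{m'}\gamma_\ell^{m''}i^{m'-m''}P_\ell^{m'}(z)P_\ell^{m''}(z),
\]
and integrate over $\Theta$ first, using the Wigner D-matrix orthogonality
\[
\int_\Theta D_\ell^{p,m}(\boldsymbol{\theta})\overline{D_\ell^{q,m}(\boldsymbol{\theta})}\sin\theta_1\,d\boldsymbol{\theta} = \frac{8\pi^2}{2\ell+1}\,\delta_{p,q},
\]
which collapses the double sum to the diagonal $m'=m''$ and eliminates $m$. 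What remains is
\[
\mu_\ell := \|\tilde{a}_\ell^m\|_\mathcal{A}^2 = \frac{8\pi^2}{2\ell+1}\int_0^\infty |\mathbf{P}_\ell(\zeta)|^2\,\zeta^{1/2}e^{-\zeta}\,d\zeta,
\]
which is visibly independent of $m$.

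\textbf{Step 2 (Legendre reduction).} To obtain a closed form for $|\mathbf{P}_\ell(\zeta)|^2$, I would use the classical addition theorem for Legendre polynomials with two equal polar arguments and opposite azimuths, analytically continued to real $z\ge 1$ exactly as in the proof of Proposition 4.3. This yields the compact identity
\[
|\mathbf{P}_\ell(\zeta)|^2 = \sum_{m=-\ell}^\ell (\gamma_\ell^m)^2[P_\ell^m(z)]^2 = \frac{2\ell+1}{4\pi}\,P_\ell(2z^2-1), \qquad z = 1+\tfrac{\zeta}{2\kappa},
\]
so that
\[
\mu_\ell = 2\pi \int_0^\infty P_\ell(2z^2-1)\,\zeta^{1/2}e^{-\zeta}\,d\zeta.
\]

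\textbf{Step 3 (Saddle-point analysis).} Finally I would apply Laplace's method. Using the classical asymptotic $P_\ell(\cosh\alpha)\sim e^{(\ell+1/2)\alpha}/\sqrt{2\pi\ell\sinh\alpha}$ with $\cosh\alpha = 2z^2-1$, so that $e^\alpha=(z+\sqrt{z^2-1})^2$, the phase
\[
\Psi_\ell(\zeta) = (2\ell+1)\log(z+\sqrt{z^2-1}) - \zeta
\]
is stationary where $\sqrt{z_*^2-1} = (2\ell+1)/(2\kappa)$, giving $\zeta_* = (2\ell+1) - 2\kappa + O(\ell^{-1})$, and
\[
\Psi_\ell(\zeta_*) = (2\ell+1)\log\!\bigl((2\ell+1)/(e\kappa)\bigr) + 2\kappa + o(1), \qquad |\Psi_\ell''(\zeta_*)| \sim \frac{1}{2\ell+1}.
\]
Combining the polynomial prefactor from the $P_\ell$ asymptotic, the $\zeta^{1/2}$ factor, and the Gaussian width $\sqrt{2\pi/|\Psi_\ell''(\zeta_*)|}$ and bookkeeping the Stirling reduction $(2\ell+1)^{2\ell+1}\sim e(2\ell)^{2\ell+1}$ would yield $\mu_\ell \sim C(\kappa)(2/(e\kappa))^{2\ell}\ell^{2\ell+1}$ for some explicit $\kappa$-dependent constant $C(\kappa)$, whence $\alpha_\ell = \mu_\ell^{-1/2}$ gives the asymptotic \eqref{behaviour alpha_lm} with $c(\kappa) = C(\kappa)^{-1/2}$.

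The delicate point is Step 3. The saddle sits at $z_*\sim \ell/\kappa\to\infty$, so one must check that the classical Laplace-type asymptotic for $P_\ell(\cosh\alpha)$ remains valid uniformly in that growing range (alternatively, one can cross-check using the large-argument expansion $P_\ell(x) \sim \binom{2\ell}{\ell}x^\ell/2^\ell$, which at the saddle produces the same leading order); the subleading expansion of $\zeta_*$ must also be pushed to the $O(1)$ term in order to extract the $e^{2\kappa}$ contribution that enters $c(\kappa)$, and the polynomial prefactor at the saddle must be tracked carefully against the $\sqrt{2\pi(2\ell+1)}$ Gaussian width so that the correct power of $\ell$ survives. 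The bounds of Lemma 5.2 are a useful sanity check but by themselves too crude, since the ratio $(z+1)^\ell/(z-1)^\ell$ stays bounded away from $1$ at the saddle and would inflate $c(\kappa)$ by an $e^{O(\kappa)}$ factor; the fine constant thus requires the sharp saddle-point calculation rather than the bounds.
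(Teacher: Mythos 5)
Your Step 1 coincides with the paper's proof: the Wigner orthogonality (\ref{wigner orthogonality}) collapses the angular integral, yields exactly the paper's identity (\ref{1 lemma 5.3}), and gives the $m$-independence. From there you genuinely diverge. The paper keeps the sum over $m'$, bounds each associated Legendre polynomial by the elementary two-sided estimate (\ref{disuguaglianza lemma}), sandwiches each integral between $e^{-2\kappa}C_{\ell}^{m'}$ and $e^{2\kappa}C_{\ell}^{m'}$ for an explicit Gamma-function expression $C_{\ell}^{m'}$, and finishes with Stirling. Your Step 2 instead collapses the $m'$-sum exactly through the addition theorem, $\sum_{m}(\gamma_{\ell}^{m})^{2}(P_{\ell}^{m}(z))^{2}=\frac{2\ell+1}{4\pi}P_{\ell}(2z^{2}-1)$, which is a correct polynomial identity (pair $\mathbf{d}_{\uparrow}(z)$ with its complex conjugate, whose inner product is $2z^{2}-1$; I checked it directly for $\ell=1,2$), and your Step 3 replaces the sandwich by a genuine Laplace analysis. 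In principle this is sharper: you are right that the paper's two-sided bounds determine $\alpha_{\ell}$ only up to a factor between $e^{-2\kappa}$ and $e^{2\kappa}$, which suffices for the subsequent corollary on $\tau_{\pm}$ but does not by itself produce an asymptotic equivalence with a single constant $c(\kappa)$.

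However, Step 3 as written does not deliver the exponent you claim, and this is a substantive gap. At the saddle one has $\zeta_{*}\approx 2\ell$, $z_{*}\approx\ell/\kappa$, $\sinh\alpha_{*}=2z_{*}\sqrt{z_{*}^{2}-1}\sim 2\ell^{2}/\kappa^{2}$, so the prefactor $\zeta^{1/2}/\sqrt{2\pi\ell\sinh\alpha}$ contributes $\sim\kappa/(\sqrt{2\pi}\,\ell)$ while the Gaussian width contributes $\sqrt{2\pi(2\ell+1)}\sim 2\sqrt{\pi\ell}$; their product is $O(\ell^{-1/2})$, and combining with $e^{\Psi_{\ell}(\zeta_{*})}\sim (2e^{2\kappa}/\kappa)(2/(e\kappa))^{2\ell}\ell^{2\ell+1}$ gives $\mu_{\ell}\sim C(\kappa)(2/(e\kappa))^{2\ell}\ell^{2\ell+\frac{1}{2}}$, hence $\alpha_{\ell}\sim c(\kappa)(e\kappa/2)^{\ell}\ell^{-(\ell+\frac{1}{4})}$ rather than (\ref{behaviour alpha_lm}). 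Your own cross-check $P_{\ell}(x)\sim\binom{2\ell}{\ell}x^{\ell}/2^{\ell}$ produces the same answer, since $\binom{2\ell}{\ell}\sim 4^{\ell}/\sqrt{\pi\ell}$ carries exactly that $\ell^{-1/2}$. This missing $\sqrt{\ell}$ is also precisely where your route and the paper's part company: the paper passes from the fixed-$m'$ asymptotic of $C_{\ell}^{m'}$ to the sum by multiplying by the number of terms $2\ell+1$, but that asymptotic is not uniform in $m'$ (the terms with $|m'|$ near $\ell$ are smaller by a factor of order $\binom{2\ell}{\ell}^{-1}$), and the exact Vandermonde evaluation $\sum_{m'}1/((\ell+m')!(\ell-m')!)=4^{\ell}/(2\ell)!$ is only $\sim\sqrt{\pi\ell}$ times the $m'=0$ term. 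So you must either locate a genuinely missing factor of $\ell^{1/2}$ in your computation, or accept that your method, carried out consistently, yields a different power of $\ell$ than the statement; in either case the final line of your Step 3 cannot simply be asserted to match the target.
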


\begin{proof}
Thanks to the orthogonality condition \cite[Sec.\ 4.10, Eq.\ (5)]{quantumtheory}, namely
\begin{equation}
\int_{\Theta}D_{\ell}^{m',m}(\boldsymbol{\theta})\overline{D_{q}^{n',n}(\boldsymbol{\theta})}\sin{(\theta_1)}\textup{d}\boldsymbol{\theta}=\frac{8\pi^2}{2\ell+1}\delta_{m,n}\delta_{m',n'}\delta_{\ell,q},
\label{wigner orthogonality}
\end{equation}
we have that
\begin{align*}
\|\tilde{a}_{\ell}^m\|^2_{\mathcal{A}}&=\int_{Y}|\mathbf{D}^{m}_{\ell}(\boldsymbol{\theta}) \cdot \mathbf{P}_{\ell}(\zeta)|^2w(\mathbf{y})\textup{d}\mathbf{y}\\
&=\sum_{m'=-\ell}^{\ell}\int_{\Theta}|D_{\ell}^{m',m}(\boldsymbol{\theta})|^2\sin{(\theta_1)}\textup{d}\boldsymbol{\theta}\int_0^{+\infty}\left[\gamma_{\ell}^{m'}P_{\ell}^{m'}\left(\frac{\zeta}{2\kappa}+1 \right)\right]^2\zeta^{1/2}e^{-\zeta}\textup{d}\zeta\\
&=\frac{8\pi^2}{2\ell+1}\sum_{m'=-\ell}^{\ell}\int_0^{+\infty}\left[\gamma_{\ell}^{m'}P_{\ell}^{m'}\left(\frac{\zeta}{2\kappa}+1 \right)\right]^2\zeta^{1/2}e^{-\zeta}\textup{d}\zeta. \numberthis \label{1 lemma 5.3}
\end{align*}
Observe that $\|\tilde{a}_{\ell}^m\|_{\mathcal{A}}$ is independent of the value of $m$. In what follows, we study the integral in (\ref{1 lemma 5.3}), which we denote henceforth by $B_{\ell}^{m'}$. Thanks to (\ref{disuguaglianza lemma}):
\begin{align*}
B_{\ell}^{m'}&\geq\left(\frac{2^{\ell}\gamma_{\ell}^{m'}\Gamma(\ell+1/2)}{\sqrt{\pi}(\ell-m')!}\right)^2\int_{0}^{+\infty}\left(\frac{\zeta}{2\kappa}\right)^{2\ell}\zeta^{1/2}e^{-\zeta}\textup{d}\zeta\\
&=\frac{1}{4\pi^2\kappa^{2\ell}}\frac{(2\ell+1)\Gamma^{\,2}(\ell+1/2)}{(\ell+m')!(\ell-m')!} \,\Gamma\left(2\ell+\frac{3}{2}\right)=e^{-2\kappa}C_{\ell}^{m'}, \numberthis \label{3 lemma 5.3}
\end{align*}
and analogously
\begin{align*}
B_{\ell}^{m'}&\leq\left(\frac{2^{\ell}\gamma_{\ell}^{m'}\Gamma(\ell+1/2)}{\sqrt{\pi}(\ell-m')!}\right)^2\int_{0}^{+\infty}\left(\frac{\zeta}{2\kappa}+2\right)^{2\ell}\zeta^{1/2}e^{-\zeta}\textup{d}\zeta\\
&=\left(\frac{2^{\ell}\gamma_{\ell}^{m'}\Gamma(\ell+1/2)}{\sqrt{\pi}(\ell-m')!}\right)^2\int_{4\kappa}^{+\infty}\left(\frac{\eta}{2\kappa}\right)^{2\ell}(\eta-4\kappa)^{1/2}e^{-(\eta-4\kappa)}\textup{d}\eta\\
&<\frac{2\ell+1}{4\pi^2}\frac{2^{2\ell}\Gamma^{\,2}(\ell+1/2)}{(\ell+m')!(\ell-m')!} \int_{0}^{+\infty}\left(\frac{\eta}{2\kappa}\right)^{2\ell}\eta^{1/2}e^{-\eta}e^{4\kappa}\textup{d}\eta\\
&=\frac{e^{4\kappa}}{4\pi^2\kappa^{2\ell}}\frac{(2\ell+1)\Gamma^{\,2}(\ell+1/2)}{(\ell+m')!(\ell-m')!} \,\Gamma\left(2\ell+\frac{3}{2}\right)=e^{2\kappa}C_{\ell}^{m'}, \numberthis \label{2 lemma 5.3}
\end{align*}
where we used the change of variable $\eta=\zeta+4\kappa$ and $C_{\ell}^{m'}$ is defined as
\begin{equation}
C_{\ell}^{m'}:=\frac{e^{2\kappa}}{4\pi^2\kappa^{2\ell}}\frac{(2\ell+1)\Gamma^{\,2}(\ell+1/2)}{(\ell+m')!(\ell-m')!} \Gamma\left(2\ell+\frac{3}{2}\right),\,\,\,\,\,\,\,\,\,\,\,\,\,\,\forall\,(\ell,m') \in \mathcal{I}.
\label{Clm constant}
\end{equation}
Using (\ref{ell asymptotics}) and \cite[Eq. (5.11.3)]{nist}, it is easily checked that as $\ell \rightarrow +\infty$:
\begin{align*}
\Gamma\left(2\ell+\frac{3}{2}\right) &\sim \sqrt{2\pi}e^{-\left(2\ell+\frac{3}{2}\right)}\left(2\ell+\frac{3}{2}\right)^{2\ell+1} \sim \sqrt{2\pi}e^{-2\ell}\left(2\ell\right)^{2\ell+1},\\
\Gamma^{\,2}\left(\ell+\frac{1}{2}\right) &\sim 2\pi e^{-(2\ell+1)}\left(\ell+\frac{1}{2}\right)^{2\ell} \sim 2\pi e^{-2\ell}\ell^{2\ell},\\
\frac{1}{(\ell+m')!(\ell-m')!}& \sim \frac{e^{2(\ell+1)}}{2\pi(\ell+m'+1)^{\ell+m'+1/2}(\ell-m'+1)^{\ell-m'+1/2}}\\
&\sim \frac{e^{2(\ell+1)}}{2\pi e^2 \ell^{2\ell+1}}=\frac{1}{2\pi}e^{2\ell}\ell^{-(2\ell+1)},\,\,\,\,\,\,\,\,\,\,\,\,\,\,\,\,\,\,\,\,\,\,\,\,\,\,\,\,\,\,\,\,\,\,\,\,\,\,\,\,\,\,\,\,\,\,|m'|\leq \ell\text{ fixed},
\end{align*}
and therefore
\begin{equation*}
\frac{(2\ell+1)\Gamma^{\,2}(\ell+1/2)}{(\ell+m')!(\ell-m')!}\Gamma\left(2\ell+\frac{3}{2}\right) \sim 2\sqrt{2\pi}e^{-2\ell}(2\ell)^{2\ell+1},\,\,\,\,\,\,\,\,\,\,\text{as}\,\,\, \ell \rightarrow +\infty.
\end{equation*}
\color{black} From (\ref{3 lemma 5.3}) and (\ref{2 lemma 5.3}), it follows that, as $\ell \rightarrow +\infty$, there exists a constant $c_1(\kappa)$, only dependent on the wavenumber $\kappa$, such that
\begin{equation*}
C_{\ell}^{m'} \sim \frac{e^{2\kappa}}{\pi\sqrt{2\pi}}\left(\frac{2}{e\kappa}\right)^{2\ell}\ell^{2\ell+1}\,\,\,\,\,\,\,\Rightarrow\,\,\,\,\,\,\,B_{\ell}^{m'} \sim c_1(\kappa)\left(\frac{2}{e\kappa}\right)^{2\ell}\ell^{2\ell+1}.
\end{equation*} \color{black}
Moreover, also $\|\tilde{a}_{\ell}^m\|^2_{\mathcal{A}}$ has the same behavior as $B_{\ell}^{m'}$ at infinity: in fact, thanks to (\ref{1 lemma 5.3}), we have
\begin{align*}
\|\tilde{a}_{\ell}^m\|^2_{\mathcal{A}} &\sim \frac{4\pi^2}{\ell}\sum_{m'=-\ell}^{\ell}c_1(\kappa)\left(\frac{2}{e\kappa}\right)^{2\ell}\ell^{2\ell+1} \sim c_2(\kappa)\left(\frac{2}{e\kappa}\right)^{2\ell}\ell^{2\ell+1},
\end{align*}
for some constant $c_2(\kappa)$ only dependent on $\kappa$; the claimed result (\ref{behaviour alpha_lm}) follows.
\end{proof}

\begin{lemma}
\hypertarget{Lemma 5.3}{The} space $(\mathcal{A},\|\cdot\|_{\mathcal{A}})$ is a Hilbert space and the family $\{a_{\ell}^m\}_{(\ell,m) \in \mathcal{I}}$ is a Hilbert basis (i.e an orthonormal basis):
\begin{equation*}
(a_{\ell}^m,a_q^n)_{\mathcal{A}}=\delta_{\ell, q}\delta_{m,n},\,\,\,\,\,\,\,\,\,\,\,\,\,\,\forall\, (\ell,m),(q,n) \in \mathcal{I},
\end{equation*}
and
\begin{equation*}
u=\sum_{\ell=0}^{\infty}\sum_{m=-\ell}^{\ell}(u,a_{\ell}^m)_{\mathcal{A}}\,a_{\ell}^m,\,\,\,\,\,\,\,\,\,\,\,\,\,\,\forall u \in \mathcal{A}.
\end{equation*}
\end{lemma}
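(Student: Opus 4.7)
The Hilbert space structure is inherited for free. Since $\mathcal{A}$ is by definition a closed subspace of the weighted Lebesgue space $L^2(Y;w)$---which is itself a Hilbert space because the weight $w$ in (\ref{weight}) is strictly positive almost everywhere on $Y$---$\mathcal{A}$ is a Hilbert space under the restricted inner product $(\cdot,\cdot)_{\mathcal{A}}$ defined in (\ref{A norm}). The substance of the proof therefore reduces to establishing the orthonormality of the family $\{a_\ell^m\}$, after which completeness will follow by construction.

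My plan is to compute $(\tilde{a}_\ell^m,\tilde{a}_q^n)_{\mathcal{A}}$ directly by exploiting the separation of variables that is baked into the definitions. The weight factorizes as $w(\boldsymbol{\theta},\zeta)=\sin\theta_1\cdot\zeta^{1/2}e^{-\zeta}$, the Euler-angle dependence of $\tilde{a}_\ell^m$ is entirely carried by the Wigner D-matrix column $\mathbf{D}^m_\ell(\boldsymbol{\theta})$, and the evanescence dependence sits in the vector $\mathbf{P}_\ell(\zeta)$. Expanding the two dot products and exchanging the finite sums with the integrals (Fubini is justified because each integrand is dominated by those estimated in the previous lemma via (\ref{2 lemma 5.3})--(\ref{3 lemma 5.3})), the inner product splits into a double sum over $(m',n')$ of a product of an angular integral over $\Theta$ times a radial integral over $[0,+\infty)$. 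The angular factor is precisely what the Wigner orthogonality relation (\ref{wigner orthogonality}) handles, yielding the Kronecker structure $\frac{8\pi^2}{2\ell+1}\delta_{m,n}\delta_{m',n'}\delta_{\ell,q}$. Every off-diagonal contribution (any $(\ell,m)\ne(q,n)$) is thereby killed, while on the diagonal the remaining single sum reproduces exactly the expression (\ref{1 lemma 5.3}), which was already identified there with $\|\tilde{a}_\ell^m\|_{\mathcal{A}}^2$. Renormalizing via $a_\ell^m=\alpha_\ell\tilde{a}_\ell^m$ with $\alpha_\ell=\|\tilde{a}_\ell^m\|_{\mathcal{A}}^{-1}$ then produces the claimed orthonormality $(a_\ell^m,a_q^n)_{\mathcal{A}}=\delta_{\ell,q}\delta_{m,n}$.

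Completeness is automatic: by the very definition $\mathcal{A}=\overline{\textup{span}\{a_\ell^m\}_{(\ell,m)\in\mathcal{I}}}^{\|\cdot\|_{\mathcal{A}}}$, the orthonormal system $\{a_\ell^m\}$ is total in $\mathcal{A}$, hence a Hilbert basis, and the standard Hilbert-space projection theorem delivers the Fourier-type expansion $u=\sum_{\ell=0}^{\infty}\sum_{m=-\ell}^{\ell}(u,a_\ell^m)_{\mathcal{A}}\,a_\ell^m$ for every $u\in\mathcal{A}$. There is no serious obstacle here: the argument is essentially a clean assembly of the Wigner orthogonality relation and the radial integral computation already prepared in the previous lemma, with the interchange of sum and integral being the only mildly technical step.
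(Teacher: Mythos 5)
Your proposal is correct and follows essentially the same route as the paper: the paper's proof simply observes that orthogonality of $\{\tilde{a}_{\ell}^m\}$ follows by ``tracing back the steps'' of the norm computation in (\ref{1 lemma 5.3}), which is exactly your combination of the factorized weight, the Wigner orthogonality relation (\ref{wigner orthogonality}), and the radial integral, with completeness automatic from the definition of $\mathcal{A}$ as the closure of the span. You merely spell out the details (including the harmless interchange of the finite sums with the integrals) that the paper leaves implicit.
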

\begin{proof}
Thanks to how we defined the Herglotz densities in (\ref{a tilde definizione}), it is enough to prove that the family $\{\tilde{a}_{\ell}^m\}_{(\ell,m) \in \mathcal{I}}$ is orthogonal, which can be readily observed by tracing back the steps in (\ref{1 lemma 5.3}).
\end{proof}
Using our definitions, the Jacobi–Anger expansion (\ref{complex expansion}) takes the simple form
\begin{equation}
\phi_{\mathbf{y}}(\mathbf{x})=\sum_{\ell=0}^{\infty}\sum_{{m}=-\ell}^{\ell}4 \pi i^{\ell}\,\overline{\tilde{a}_{\ell}^m(\mathbf{y})} \,\tilde{b}_{\ell}^m(\mathbf{x})=\sum_{\ell=0}^{\infty}\sum_{{m}=-\ell}^{\ell}\tau_{\ell}\,\overline{a_{\ell}^m(\mathbf{y})}\,b_{\ell}^m(\mathbf{x})
\label{tau jacobi-anger}
\end{equation}
where we have introduced
\begin{equation*}
\tau_{\ell}:=4\pi i^{\ell}(\alpha_{\ell}\beta_{\ell})^{-1},\,\,\,\,\,\,\,\,\,\,\,\,\,\,\forall \ell \geq 0.
\end{equation*}
The formula (\ref{tau jacobi-anger}) plays a central role in the following: it links the spherical waves basis (\ref{b tilde definizione}) of the Helmholtz solution space $\mathcal{B}$ to the Herglotz densities basis (\ref{a tilde definizione}) of the space $\mathcal{A}$ by means of the evanescent plane waves $\phi_{\mathbf{y}}$ in (\ref{evanescent wave}).

Thanks to the asymptotics presented in Lemma \hyperlink{Lemma 1.3}{1.4} and Lemma \hyperlink{Lemma 5.2}{5.4}, we can deduce the next result.
\begin{corollary}
\hypertarget{Corollary 5.1}{There} exist uniform bounds for $|\tau_{\ell}|$, namely
\begin{equation}
\tau_{-}:=\inf_{\ell \geq 0}|\tau_{\ell}|>0,\,\,\,\,\,\,\,\,\,\,\text{and}\,\,\,\,\,\,\,\,\,\,\tau_{+}:=\sup_{\ell \geq 0}|\tau_{\ell}|<\infty.
\label{uniform bounds tau}
\end{equation}
\end{corollary}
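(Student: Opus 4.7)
The plan is to obtain the bounds from the precise asymptotic behaviors of $\alpha_\ell$ and $\beta_\ell$ already established in Lemma \hyperlink{Lemma 1.3}{1.4} and Lemma \hyperlink{Lemma 5.2}{5.4}, together with positivity at finite indices. Since $|\tau_\ell| = 4\pi(\alpha_\ell\beta_\ell)^{-1}$, it suffices to show that the product $\alpha_\ell\beta_\ell$ converges to a strictly positive, finite limit as $\ell\to\infty$, and is strictly positive and finite for every fixed $\ell$.

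First I would combine the two asymptotic estimates. From (\ref{beta_l asymptotic}) we have $\beta_\ell \sim 2\sqrt{2}\,\kappa\,(2/(e\kappa))^\ell\,\ell^{\ell+1/2}$, while (\ref{behaviour alpha_lm}) gives $\alpha_\ell \sim c(\kappa)\,(e\kappa/2)^\ell\,\ell^{-(\ell+1/2)}$. The exponential factors and the algebraic factors cancel exactly, yielding
\begin{equation*}
\alpha_\ell\,\beta_\ell \;\underset{\ell\to\infty}{\longrightarrow}\; 2\sqrt{2}\,\kappa\,c(\kappa) \;=:\; L > 0.
\end{equation*}
Consequently $|\tau_\ell| = 4\pi/(\alpha_\ell\beta_\ell) \to 4\pi/L$, a finite positive number. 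In particular there exists $\ell_0\in\N$ such that for all $\ell \geq \ell_0$ one has $\tfrac{1}{2}(4\pi/L) \leq |\tau_\ell| \leq \tfrac{3}{2}(4\pi/L)$, providing uniform bounds on the tail.

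Next I would handle the finitely many indices $\ell < \ell_0$. For each such $\ell$, the function $\tilde b_\ell^m$ is a nonzero element of $H^1(B_1)$ (it is, for instance, the unique Helmholtz solution with a nontrivial prescribed spherical-harmonic boundary datum), so $\beta_\ell = \|\tilde b_\ell^m\|_{\mathcal{B}}^{-1} \in (0,\infty)$. Likewise $\tilde a_\ell^m$ is a nonzero element of $L^2(Y;w)$ by the explicit computation (\ref{1 lemma 5.3}) (the integrand is strictly positive on a set of positive measure), so $\alpha_\ell = \|\tilde a_\ell^m\|_{\mathcal{A}}^{-1} \in (0,\infty)$. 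Hence $|\tau_\ell| = 4\pi/(\alpha_\ell\beta_\ell)$ is strictly positive and finite for each of the finitely many $\ell < \ell_0$.

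Taking the minimum and maximum of the finitely many initial values together with the uniform bounds obtained on the tail yields the claimed $\tau_- > 0$ and $\tau_+ < \infty$. No step presents a genuine obstacle: the asymptotic cancellation in the leading step is the key mechanism, and once it is verified the remainder is routine compactness over finitely many indices. The only point requiring slight care is to make sure that the asymptotic equivalence $\alpha_\ell\beta_\ell \sim L$ really holds with a nonzero constant $L$; this is guaranteed because both $2\sqrt{2}\,\kappa$ and $c(\kappa)$ in the referenced lemmas are strictly positive.
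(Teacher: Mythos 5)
Your proposal is correct and follows exactly the argument the paper intends: the corollary is stated as a direct consequence of the asymptotics in Lemma \hyperlink{Lemma 1.3}{1.4} and Lemma \hyperlink{Lemma 5.2}{5.4}, and your cancellation of the exponential and algebraic factors in $\alpha_{\ell}\beta_{\ell}$, combined with positivity and finiteness at the finitely many remaining indices, is precisely the intended (and only needed) reasoning.
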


\begin{figure}
\centering
\begin{subfigure}{.44\textwidth}
  \centering
  \includegraphics[width=\linewidth]{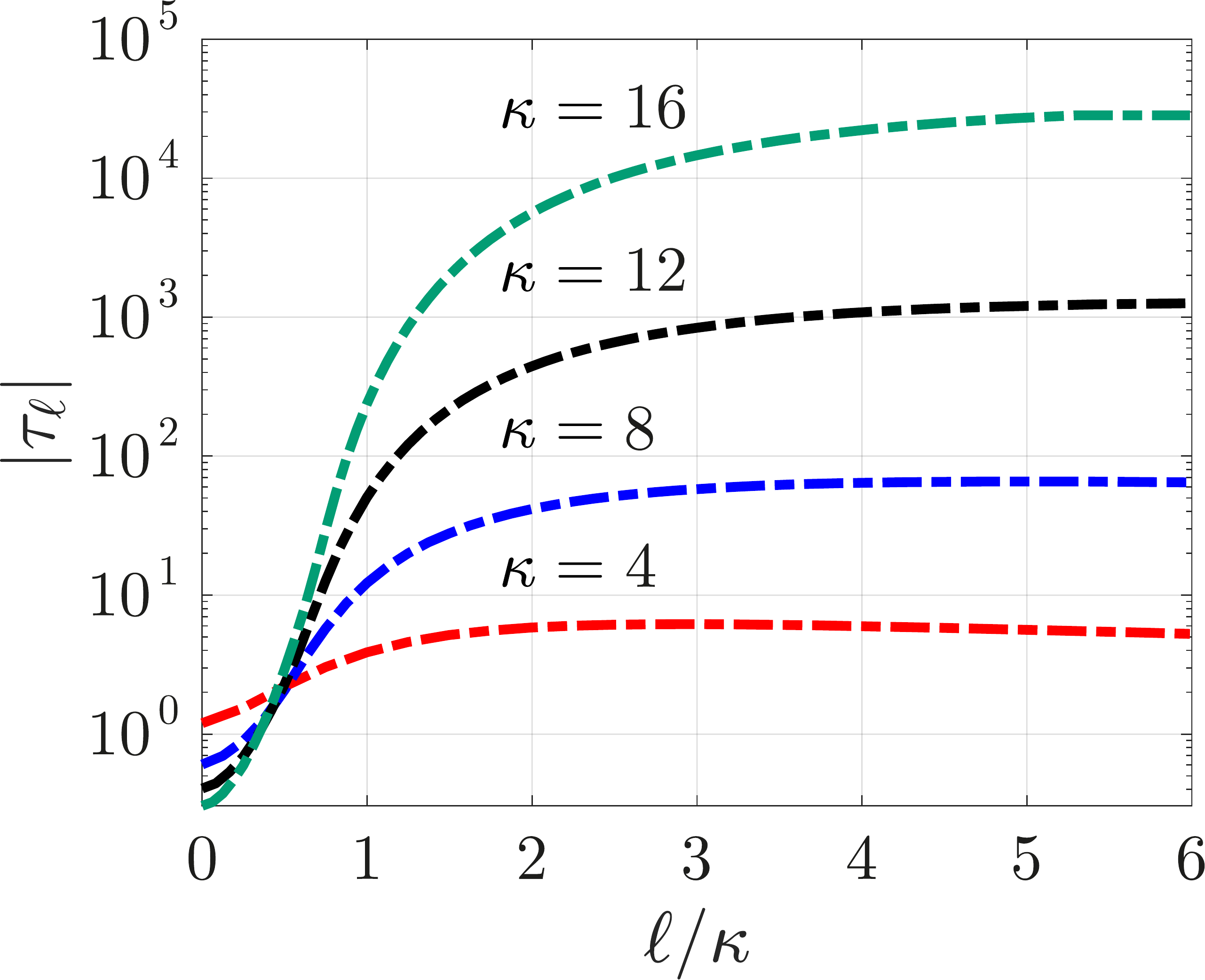}
  \caption{Dependence of $|\tau_{\ell}|$ on the mode number $\ell$ for various wavenumber $\kappa$.}
  \label{figure 5.2a}
\end{subfigure}
\hfill
\begin{subfigure}{.44\textwidth}
  \centering
  \includegraphics[width=0.97\linewidth]{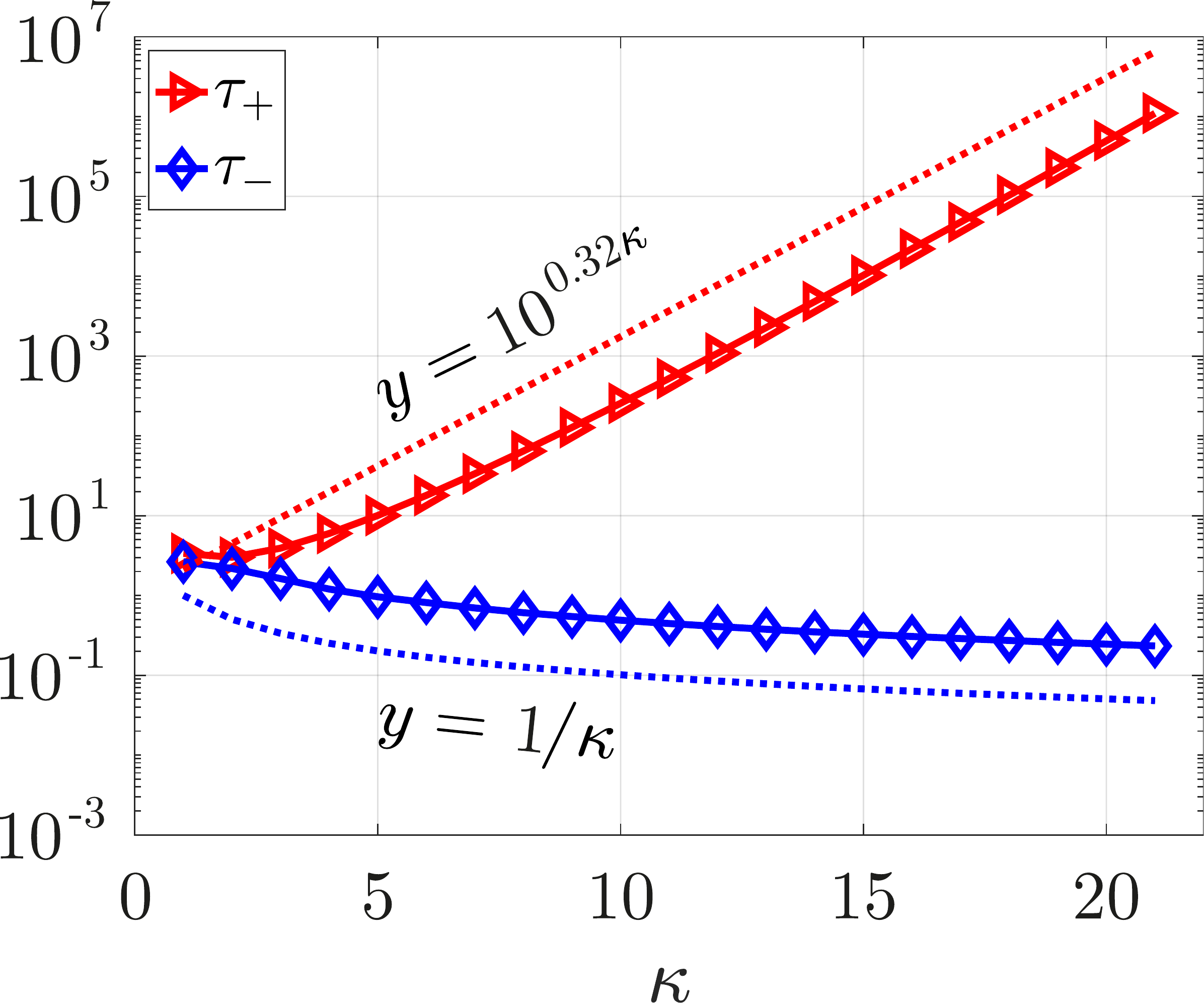}
  \caption{Dependence of $\tau_{\pm}$ defined in (\ref{uniform bounds tau}) on the wavenumber $\kappa$.}
  \label{figure 5.2b}
\end{subfigure}
\caption{}
\end{figure}

It is evident that specific pairs of norms for the Helmholtz solution space (\ref{B norm}) and the Herglotz density space (\ref{A norm}) are needed in order to establish the uniform bounds (\ref{uniform bounds tau}).
This justifies the choice of the weight (\ref{weight}).

The behavior of $|\tau_{\ell}|$ is crucial for the following analysis and is given in Figure \ref{figure 5.2a} for different wavenumbers $\kappa$. Observe that this plot aligns with the results stated in (\ref{uniform bounds tau}), since the curves display a flat asymptotic behavior for larger values of $\ell$.
Moreover, note that the values $\tau_{\pm}$ depend on the wavenumber $\kappa$, as shown in Figure \ref{figure 5.2b}.

\section{Herglotz integral representation}

\hypertarget{Section 5.2}{In} this section, we introduce the \textit{Herglotz transform} $T$. Through this integral operator, we are able to express any Helmholtz solution in $\mathcal{B}$ as a linear combination of evanescent plane waves, each weighted by an element of $\mathcal{A}$.
Borrowing the terminology from \textit{Frame Theory} (for a reference on this field see \cite{christensen}), we also describe the adjoint operator $T^*$, the corresponding frame and Gram operators $S$ and $G$ and prove some of their properties.

First we present the following lemma, useful for defining the Herglotz transform.
\begin{lemma}
\hypertarget{Lemma 5.4}{For} any $\mathbf{x} \in B_1$, $\mathbf{y} \mapsto \overline{\phi_{\mathbf{y}}(\mathbf{x})} \in \mathcal{A}$.
\end{lemma}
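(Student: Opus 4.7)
The plan is to use the Jacobi--Anger expansion (\ref{tau jacobi-anger}) in its $\mathbf{y}$-conjugate form to read off the candidate Fourier coefficients of $\mathbf{y} \mapsto \overline{\phi_{\mathbf{y}}(\mathbf{x})}$ with respect to the Hilbert basis $\{a_{\ell}^m\}_{(\ell,m)\in\mathcal{I}}$ of $\mathcal{A}$, then verify these are square-summable, and finally identify the resulting $\mathcal{A}$-limit with $\overline{\phi_{\mathbf{y}}(\mathbf{x})}$ almost everywhere on $Y$.

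First, I would fix $\mathbf{x} \in B_1$ and conjugate (\ref{tau jacobi-anger}) in the $\mathbf{y}$-variable to obtain, formally,
\[
\overline{\phi_{\mathbf{y}}(\mathbf{x})} \;=\; \sum_{\ell=0}^{\infty}\sum_{m=-\ell}^{\ell} c_{\ell}^m\, a_{\ell}^m(\mathbf{y}),\qquad c_{\ell}^m:=\overline{\tau_{\ell}}\,\overline{b_{\ell}^m(\mathbf{x})}.
\]
Because $\{a_{\ell}^m\}$ is a Hilbert basis of $\mathcal{A}$ (the preceding orthonormality lemma), the synthesis series $g(\mathbf{y}):=\sum_{\ell,m} c_\ell^m a_\ell^m(\mathbf{y})$ will converge to an element of $\mathcal{A}$ as soon as $\sum_{\ell,m}|c_\ell^m|^2<\infty$.

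Second, I would bound the coefficient series. By the uniform bound $|\tau_\ell|\leq\tau_+$ from (\ref{uniform bounds tau}) it suffices to control $\sum_{\ell,m}|b_\ell^m(\mathbf{x})|^2$. Using (\ref{b tilde definizione}) and the addition theorem (\ref{addition theorem}),
\[
\sum_{m=-\ell}^{\ell}|b_\ell^m(\mathbf{x})|^2 \;=\; \beta_\ell^2\,j_\ell^2(\kappa|\mathbf{x}|)\,\frac{2\ell+1}{4\pi}.
\]
Combining the super-exponential growth (\ref{beta_l asymptotic}) of $\beta_\ell$ with the super-exponential decay (\ref{spherical bessel asymptotics}) of $j_\ell(\kappa|\mathbf{x}|)$ (and using (\ref{ell asymptotics}) to tidy up the $(\ell+\tfrac12)$-powers), the two balance out exactly so that
\[
\beta_\ell^2\,j_\ell^2(\kappa|\mathbf{x}|) \;\sim\; \kappa^2\,|\mathbf{x}|^{2\ell}\,\ell^{-1}\qquad\text{as }\ell\to\infty,
\]
giving $\sum_{m}|b_\ell^m(\mathbf{x})|^2 \sim \tfrac{\kappa^2}{2\pi}|\mathbf{x}|^{2\ell}$. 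Since $\mathbf{x}\in B_1$ forces $|\mathbf{x}|<1$, this bound is summable in $\ell$, so $\sum_{\ell,m}|c_\ell^m|^2<\infty$.

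Third, the series defining $g$ converges in $\mathcal{A}\subset L^2(Y;w)$, hence a subsequence of partial sums converges pointwise a.e.\ to $g$. But by (\ref{tau jacobi-anger}) those same partial sums converge pointwise everywhere in $Y$ to $\overline{\phi_{\mathbf{y}}(\mathbf{x})}$. Therefore $g(\mathbf{y}) = \overline{\phi_{\mathbf{y}}(\mathbf{x})}$ for a.e.\ $\mathbf{y}\in Y$, which proves $\mathbf{y}\mapsto\overline{\phi_{\mathbf{y}}(\mathbf{x})}\in\mathcal{A}$. The main obstacle, which drives the whole argument, is the asymptotic balance in the second step: the super-exponential growth of $\beta_\ell$ from Lemma~\hyperlink{Lemma 1.3}{1.4} is matched precisely by the decay of $j_\ell(\kappa|\mathbf{x}|)$, and it is the strict inclusion $\mathbf{x}\in B_1$ that leaves behind the geometric factor $|\mathbf{x}|^{2\ell}$ making the series convergent --- this also hints at why the result should fail up to the boundary.
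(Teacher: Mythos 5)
Your proposal is correct and follows essentially the same route as the paper's proof: expand $\overline{\phi_{\mathbf{y}}(\mathbf{x})}$ in the basis $\{a_{\ell}^m\}$ via the Jacobi--Anger identity (\ref{tau jacobi-anger}), reduce to square-summability of the coefficients $\overline{\tau_{\ell}}\,\overline{b_{\ell}^m(\mathbf{x})}$, and use the uniform bound on $|\tau_{\ell}|$ together with the asymptotic cancellation $(2\ell+1)\beta_{\ell}^2 j_{\ell}^2(\kappa|\mathbf{x}|)\sim 2\kappa^2|\mathbf{x}|^{2\ell}$, which is summable precisely because $|\mathbf{x}|<1$. Your extra care in identifying the $\mathcal{A}$-limit with the pointwise series is a harmless refinement that the paper leaves implicit.
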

\begin{proof}
Let $\mathbf{x} \in B_1$ and define $v_{\mathbf{x}}:\mathbf{y} \mapsto \overline{\phi_{\mathbf{y}}(\mathbf{x})}$. We have, using the Jacobi--Anger identity (\ref{tau jacobi-anger}), that
\vspace{-5mm}
\begin{equation*}
v_{\mathbf{x}}(\mathbf{y})=\sum_{\ell=0}^{\infty}\sum_{{m}=-\ell}^{\ell}\overline{\tau_{\ell}}\,\overline{b_{\ell}^m(\mathbf{x})}a_{\ell}^m(\mathbf{y}),\,\,\,\,\,\,\,\,\,\,\forall \mathbf{y} \in Y.
\end{equation*}
Since $\{a_{\ell}^m\}_{(\ell,m) \in \mathcal{I}}$ is a Hilbert basis for $\mathcal{A}$, thanks to \cite[Eq.\ (2.4.105)]{nedelec} we get
\vspace{-3mm}
\begin{equation*}
\|v_{\mathbf{x}}\|^2_{\mathcal{A}}=\sum_{\ell=0}^{\infty}\sum_{m=-\ell}^{\ell}|\tau_{\ell}b_{\ell}^m|^2\leq \frac{\tau^2_{+}}{4\pi}\sum_{\ell=0}^{\infty}(2\ell+1)\beta^2_{\ell}j_{\ell}^2(\kappa|\mathbf{x}|).
\end{equation*}
Thanks to (\ref{beta_l asymptotic}) and (\ref{spherical bessel asymptotics}), it is readily seen that
\begin{equation*}
(2\ell+1)\beta^2_{\ell}j_{\ell}^2(\kappa|\mathbf{x}|) \sim 2 \kappa^2|\mathbf{x}|^{2\ell},
\end{equation*}
from which we conclude that $\|v_{\mathbf{x}}\|_{\mathcal{A}}<\infty$, since $|\mathbf{x}|<1$.
\end{proof}
Note that, if $\mathbf{x} \in \partial B_1$, then $\mathbf{y} \mapsto \overline{\phi_{\mathbf{y}}(\mathbf{x})}$ does not belong to $\mathcal{A}$, as can be easily seen from the previous proof.
We are ready to define the Herglotz transform.

\begin{definition}[\hypertarget{Definition 5.8}{Herglotz transform}]
For any $v \in \mathcal{A}$, we define the Herglotz transform, denoted by $T$, as the operator
\begin{equation}
\boxed{(Tv)(\mathbf{x}):=\int_{Y}v(\mathbf{y})\phi_{\mathbf{y}}(\mathbf{x})w(\mathbf{y})\textup{d}\mathbf{y},\,\,\,\,\,\,\,\,\,\,\,\,\forall \mathbf{x} \in B_1.}
\label{Herglotz transform}
\end{equation}
\end{definition}
This operator is well-defined on $\mathcal{A}$ thanks to Lemma \hyperlink{Lemma 5.4}{5.7}. In the setting of continuous-frame theory, see e.g.\ \cite[Eq.\ (5.27)]{christensen}, this operator is called \textit{synthesis operator}.
\begin{theorem}
\hypertarget{Theorem 5.1}{The} operator $T$ is bounded and invertible on $\mathcal{A}$:
\begin{align*}
T\,:\,\,&\mathcal{A} \rightarrow \mathcal{B},\\
&v \mapsto Tv=\sum_{\ell=0}^{\infty}\sum_{m=-\ell}^{\ell}\tau_{\ell}\,(v,a_{\ell}^m)_{\mathcal{A}}\,b_{\ell}^m, \numberthis \label{T explicit definition}
\end{align*}
and
\begin{equation}
\tau_{-}\|v\|_{\mathcal{A}}\leq\|Tv\|_{\mathcal{B}}\leq\tau_+\|v\|_{\mathcal{A}},\,\,\,\,\,\,\,\,\,\,\,\,\,\,\forall v \in \mathcal{A}.
\label{T operator bounds}
\end{equation}
In particular, $T$ is diagonal with respect to the bases $\{a_{\ell}^m\}_{(\ell,m) \in \mathcal{I}}$ and $\{b_{\ell}^m\}_{(\ell,m) \in \mathcal{I}}$, namely:
\begin{equation}
Ta_{\ell}^m=\tau_{\ell}\,b_{\ell}^m,\,\,\,\,\,\,\,\,\,\,\,\,\,\,\forall (\ell,m) \in \mathcal{I}.
\label{diagonalization}
\end{equation}
\end{theorem}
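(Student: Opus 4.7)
The plan is to reduce everything to the Jacobi--Anger identity (\ref{tau jacobi-anger}) plus the fact that $\{a_\ell^m\}_{(\ell,m)\in\mathcal{I}}$ and $\{b_\ell^m\}_{(\ell,m)\in\mathcal{I}}$ are orthonormal bases of $\mathcal{A}$ and $\mathcal{B}$ respectively, then combine with the uniform bounds $\tau_-,\tau_+$ from Corollary \hyperlink{Corollary 5.1}{5.6}.

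First I would establish the diagonal action (\ref{diagonalization}) directly. Substituting the Jacobi--Anger expansion (\ref{tau jacobi-anger}) for $\phi_{\mathbf{y}}(\mathbf{x})$ into the defining integral (\ref{Herglotz transform}) with $v=a_\ell^m$ yields, formally,
\begin{equation*}
(Ta_\ell^m)(\mathbf{x})=\sum_{q=0}^{\infty}\sum_{n=-q}^q \tau_q\,b_q^n(\mathbf{x})\,(a_\ell^m,a_q^n)_{\mathcal{A}}=\tau_\ell\,b_\ell^m(\mathbf{x}),
\end{equation*}
by orthonormality of $\{a_q^n\}$. To justify the interchange of sum and integral, I would observe that, for each fixed $\mathbf{x}\in B_1$, the partial sums $S_N(\mathbf{y}):=\sum_{\ell\leq N}\sum_{m}\tau_\ell\,b_\ell^m(\mathbf{x})\,\overline{a_\ell^m(\mathbf{y})}$ converge to $\overline{\phi_{\mathbf{y}}(\mathbf{x})}$ in $L^2(Y;w)$: the tail squared norm equals $\sum_{\ell>N}\sum_m|\tau_\ell|^2|b_\ell^m(\mathbf{x})|^2$, which is bounded by $\tau_+^2$ times the tail of the series analyzed in the proof of Lemma \hyperlink{Lemma 5.4}{5.7} and therefore tends to zero. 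Pairing with $v=a_\ell^m\in\mathcal{A}$ through $(\cdot,\cdot)_{\mathcal{A}}$ then commutes with the $L^2(Y;w)$-limit.

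Next I would extend (\ref{T explicit definition}) to all $v\in\mathcal{A}$. For $v$ a finite linear combination of the $a_\ell^m$, the formula follows from the previous step by linearity. For general $v\in\mathcal{A}$, writing $v=\sum c_{\ell,m}\,a_\ell^m$ with $c_{\ell,m}:=(v,a_\ell^m)_{\mathcal{A}}$, the partial sums $v_N$ converge to $v$ in $\mathcal{A}$, and by Cauchy--Schwarz together with Lemma \hyperlink{Lemma 5.4}{5.7} the integral defining $Tv_N(\mathbf{x})$ converges to $Tv(\mathbf{x})$ pointwise on $B_1$. Since the image formula $\sum c_{\ell,m}\tau_\ell\,b_\ell^m$ belongs to $\mathcal{B}$ whenever $\{c_{\ell,m}\}\in\ell^2(\mathcal{I})$ (using $|\tau_\ell|\leq\tau_+$ and the Hilbert basis property of $\{b_\ell^m\}$), this identifies $Tv$ with (\ref{T explicit definition}) as an element of $\mathcal{B}$. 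Parseval in $\mathcal{A}$ and in $\mathcal{B}$ then gives
\begin{equation*}
\|Tv\|_{\mathcal{B}}^2=\sum_{\ell,m}|\tau_\ell|^2\,|c_{\ell,m}|^2,\qquad \|v\|_{\mathcal{A}}^2=\sum_{\ell,m}|c_{\ell,m}|^2,
\end{equation*}
from which the two-sided bound (\ref{T operator bounds}) follows immediately by inserting $\tau_-\leq|\tau_\ell|\leq\tau_+$.

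Finally, invertibility is a direct consequence: $\tau_->0$ makes the lower bound in (\ref{T operator bounds}) a coercivity estimate, so $T$ is injective with closed range; surjectivity is constructive, since for any $u=\sum_{\ell,m}\hat{u}_\ell^m\,b_\ell^m\in\mathcal{B}$ the element $v:=\sum_{\ell,m}\tau_\ell^{-1}\hat{u}_\ell^m\,a_\ell^m$ lies in $\mathcal{A}$ (its squared $\mathcal{A}$-norm being bounded by $\tau_-^{-2}\|u\|_{\mathcal{B}}^2$) and satisfies $Tv=u$ by (\ref{T explicit definition}). The main technical subtlety I expect is the rigorous exchange of sum and integral in the opening step; everything else is bookkeeping on orthonormal expansions once the uniform bounds on $\tau_\ell$ are in hand.
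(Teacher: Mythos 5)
Your proposal is correct and follows essentially the same route as the paper: expand $\phi_{\mathbf{y}}$ via the Jacobi--Anger identity (\ref{tau jacobi-anger}), use the orthonormality of $\{a_{\ell}^m\}$ and $\{b_{\ell}^m\}$ to obtain the diagonal representation, and conclude the two-sided bound and invertibility from the uniform bounds $\tau_-,\tau_+$. The only difference is that you explicitly justify the sum--integral interchange via $L^2(Y;w)$-convergence of the partial sums (a detail the paper's proof passes over silently), which is a welcome refinement rather than a change of method.
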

\begin{proof}
Thanks to the Jacobi--Anger identity (\ref{tau jacobi-anger}), for any $v \in \mathcal{A}$ and $\mathbf{x} \in B_1$ we have that
\begin{align*}
(Tv)(\mathbf{x})&=\int_{Y}\phi_{\mathbf{y}}(\mathbf{x})v(\mathbf{y})w(\mathbf{y})\textup{d}\mathbf{y}=\int_{Y}\left(\sum_{\ell=0}^{\infty}\sum_{m=-\ell}^{\ell}\tau_{\ell}\,b_{\ell}^m(\mathbf{x})\overline{a_{\ell}^m(\mathbf{y})} \right)v(\mathbf{y})w(\mathbf{y})\textup{d}\mathbf{y}\\
&=\sum_{\ell=0}^{\infty}\sum_{m=-\ell}^{\ell}\tau_{\ell}\int_{Y}\overline{a_{\ell}^m(\mathbf{y})}v(\mathbf{y})w(\mathbf{y})\textup{d}\mathbf{y}\,b_{\ell}^m(\mathbf{x})=\sum_{\ell=0}^{\infty}\sum_{m=-\ell}^{\ell}\tau_{\ell}\,(v,a_{\ell}^m)_{\mathcal{A}}\,b_{\ell}^m(\mathbf{x}),
\end{align*}
and so (\ref{T explicit definition}) holds. Hence, from Lemma \hyperlink{Lemma 1.1}{1.2}
\begin{equation*}
\|Tv\|^2_{\mathcal{B}}=\sum_{\ell=0}^{\infty}\sum_{m=-\ell}^{\ell}|\tau_{\ell}|^2|(v,a_{\ell}^m)_{\mathcal{A}}|^2,\,\,\,\,\,\,\,\,\,\,\,\,\,\,\forall v \in \mathcal{A},
\end{equation*}
and (\ref{T operator bounds}) is derived from the results of Lemma \hyperlink{Lemma 5.3}{5.5} and the uniform bounds in (\ref{uniform bounds tau}). It is easily verifiable that the inverse is provided by
\begin{equation}
T^{-1}u=\sum_{\ell=0}^{\infty}\sum_{m=-\ell}^{\ell}\tau_{\ell}^{-1}(u,b_{\ell}^m)_{\mathcal{B}}\,a_{\ell}^m,\,\,\,\,\,\,\,\,\,\,\,\,\,\,\forall u \in \mathcal{B}.
\label{T inverse definition}
\end{equation}
\end{proof}
It follows that the Herglotz transform $T$ is bounded and invertible between the space of
Herglotz densities $\mathcal{A}$ and the space of Helmholtz solutions $\mathcal{B}$. From (\ref{T inverse definition}), the inverse operator $T^{-1}$ can also be written as an integral operator: for any $u \in \mathcal{B}$,
\begin{equation*}
\left(T^{-1}u\right)(\mathbf{y})=\int_{B_1}u(\mathbf{x})\Psi(\mathbf{x},\mathbf{y})\textup{d}\mathbf{x}+\kappa^{-2}\int_{B_1}\nabla u(\mathbf{x})\cdot \nabla \Psi(\mathbf{x},\mathbf{y})\textup{d}\mathbf{x},\,\,\,\,\,\,\,\,\,\,\,\,\forall \mathbf{y} \in Y,
\end{equation*}
where the kernel $\Psi$ is defined, for any $\mathbf{y} \in Y$, as
\begin{equation*}
\Psi(\mathbf{x},\mathbf{y}):=\sum_{\ell=0}^{\infty}\sum_{m=-\ell}^{\ell}\tau_{\ell}^{-1}a_{\ell}^m(\mathbf{y})\overline{b_{\ell}^m(\mathbf{x})},\,\,\,\,\,\,\,\,\,\,\,\,\forall \mathbf{x} \in B_1.
\end{equation*}
The integral representation $Tv$ in (\ref{Herglotz transform}) resembles the Herglotz representation (\ref{herglotz}), but at the same time it increases its scope to all the Helmholtz solutions in $\mathcal{B}$.
In fact, the standard Herglotz representation (\ref{herglotz}) cannot represent all the Helmholtz solution as a continuous superposition of propagative plane waves (\ref{propagative wave}) with density $v \in L^2(\mathbb{S}^2)$, as previously explained in Section \hyperlink{Section 3.2}{3.2}.
However, by including evanescent waves (\ref{evanescent wave}), any Helmholtz solution can be represented by using the generalized Herglotz representation (\ref{Herglotz transform}). This is because the operator $T$ is an isomorphism between the spaces $\mathcal{A}$ and $\mathcal{B}$, meaning for any $u \in \mathcal{B}$, there is a unique corresponding $v \in \mathcal{A}$ such that $u=Tv$.
The price to pay for this result is the need for a 4D parameter domain (the Cartesian product $Y$) in place of a 2D one ($[0,\pi] \times [0,2\pi)$) and thus of a quadruple integral; the added dimensions correspond to the evanescence parameters $\theta_3$ and $\zeta$.

Moreover, for any $(\ell,m) \in \mathcal{I}$, the Herglotz density $\tau_{\ell}^{-1}a_{\ell}^m$ of the spherical wave $b_{\ell}^m$ is bounded in the $\mathcal{A}$ norm by $\tau_{-}^{-1}$ due to (\ref{diagonalization}), and thus uniformly with respect to the index $\ell$. This is in contrast to the standard Herglotz representation (\ref{herglotz spherical waves}) using only propagative plane waves, where the associated Herglotz densities can not be bounded uniformly in $L^2(\mathbb{S}^2)$ with respect to the index $\ell$.
In this sense, Theorem \hyperlink{Theorem 5.1}{5.9} can be considered a sort of stability result at the continuous level. Our goal is to derive a discrete version of this integral representation.

In the continuous-frame setting, see \cite[Eq.\ (5.28)]{christensen}, the adjoint operator of $T$, $T^{*}$, is referred to as the \textit{analysis operator}.
\begin{lemma}
\hypertarget{Lemma 5.5}{The} adjoint operator $T^*$ of $T$ is given for any $u \in \mathcal{B}$ by
\begin{equation*}
(T^*u)(\mathbf{y}):=(u,\phi_{\mathbf{y}})_{\mathcal{B}},\,\,\,\,\,\,\,\,\,\,\,\,\forall \mathbf{y} \in Y.
\end{equation*}
The operator $T^*$ is bounded and invertible on $\mathcal{B}$:
\begin{align*}
T^*\,:\,\,&\mathcal{B} \rightarrow \mathcal{A},\\
&u \mapsto T^*u=\sum_{\ell=0}^{\infty}\sum_{m=-\ell}^{\ell}\overline{\tau_{\ell}}\,(u,b_{\ell}^m)_{\mathcal{B}}\,a_{\ell}^m, \numberthis \label{T* explicit definition}
\end{align*}
and
\begin{equation}
\tau_{-}\|u\|_{\mathcal{B}}\leq\|T^*u\|_{\mathcal{A}}\leq\tau_+\|u\|_{\mathcal{B}},\,\,\,\,\,\,\,\,\,\,\,\,\forall u \in \mathcal{B}.
\label{T* operator bounds}
\end{equation}
\end{lemma}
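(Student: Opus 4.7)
The plan is to derive the formula for $T^*$ directly from the definition of the adjoint, and then to exploit the Jacobi--Anger identity (\ref{tau jacobi-anger}) to diagonalize $T^*$ in the bases $\{b_\ell^m\}$ and $\{a_\ell^m\}$, after which the two-sided bound (\ref{T* operator bounds}) will follow from Parseval's identity and the uniform bounds (\ref{uniform bounds tau}).

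First, I would fix $v \in \mathcal{A}$ and $u \in \mathcal{B}$ and write out $(Tv,u)_{\mathcal{B}}$ by inserting the integral representation (\ref{Herglotz transform}) of $Tv$ into the Hermitian product (\ref{B norm}). Since $v\,w$ is integrable on $Y$ and $\mathbf{y}\mapsto \phi_{\mathbf{y}}$ together with its spatial gradient is continuous, one may interchange the order of integration by Fubini's theorem (absolute convergence is guaranteed by the Cauchy--Schwarz inequality together with Lemma \hyperlink{Lemma 5.4}{5.7}, which gives $\mathbf{y}\mapsto \overline{\phi_{\mathbf{y}}(\mathbf{x})} \in \mathcal{A}$ and, by an analogous argument, the required $\mathcal{A}$-integrability of the gradient terms). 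This yields
\begin{equation*}
(Tv,u)_{\mathcal{B}} \;=\; \int_{Y} v(\mathbf{y})\, (\phi_{\mathbf{y}},u)_{\mathcal{B}}\, w(\mathbf{y})\,\textup{d}\mathbf{y} \;=\; (v,T^*u)_{\mathcal{A}},
\end{equation*}
so by identification $(T^*u)(\mathbf{y}) = \overline{(\phi_{\mathbf{y}},u)_{\mathcal{B}}} = (u,\phi_{\mathbf{y}})_{\mathcal{B}}$, as claimed.

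Next, I would substitute the Jacobi--Anger expansion (\ref{tau jacobi-anger}), which, combined with the Hilbert basis property of $\{b_\ell^m\}_{(\ell,m)\in\mathcal{I}}$ given in Lemma \hyperlink{Lemma 1.1}{1.2}, yields
\begin{equation*}
(T^*u)(\mathbf{y}) \;=\; \Big(u,\sum_{\ell=0}^{\infty}\sum_{m=-\ell}^{\ell}\tau_{\ell}\,\overline{a_\ell^m(\mathbf{y})}\,b_\ell^m\Big)_{\!\mathcal{B}} \;=\; \sum_{\ell=0}^{\infty}\sum_{m=-\ell}^{\ell} \overline{\tau_{\ell}}\, (u,b_\ell^m)_{\mathcal{B}}\, a_\ell^m(\mathbf{y}),
\end{equation*}
where the exchange of the inner product with the infinite sum is justified by continuity of $(u,\cdot)_{\mathcal{B}}$ and convergence of the Jacobi--Anger series in $\mathcal{B}$. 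This gives (\ref{T* explicit definition}) and, in particular, $T^*b_\ell^m = \overline{\tau_\ell}\,a_\ell^m$, i.e.\ $T^*$ is diagonal with respect to the two bases.

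Finally, applying Parseval's identity in $\mathcal{A}$ for the orthonormal basis $\{a_\ell^m\}$ (Lemma \hyperlink{Lemma 5.3}{5.5}) and then Parseval in $\mathcal{B}$ for $\{b_\ell^m\}$ gives
\begin{equation*}
\|T^*u\|_{\mathcal{A}}^2 \;=\; \sum_{\ell=0}^{\infty}\sum_{m=-\ell}^{\ell} |\tau_\ell|^2\,|(u,b_\ell^m)_{\mathcal{B}}|^2,
\end{equation*}
and the uniform bounds $\tau_- \leq |\tau_\ell| \leq \tau_+$ from Corollary \hyperlink{Corollary 5.1}{5.6} immediately produce the two-sided estimate (\ref{T* operator bounds}). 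Invertibility then follows from the diagonal representation, with inverse $(T^*)^{-1}v = \sum_{\ell,m} \overline{\tau_\ell}^{\,-1}(v,a_\ell^m)_{\mathcal{A}}\,b_\ell^m$, which is well-defined on all of $\mathcal{A}$ by the same uniform bound. No step is really an obstacle here: the proof is essentially bookkeeping once Theorem \hyperlink{Theorem 5.1}{5.9} and the Jacobi--Anger identity (\ref{tau jacobi-anger}) are available, and the only care needed is in justifying the Fubini exchange, which hinges on the $\mathcal{A}$-integrability established in Lemma \hyperlink{Lemma 5.4}{5.7}.
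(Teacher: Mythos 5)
Your proposal is correct and follows essentially the same route as the paper: derive the adjoint formula by exchanging the integral with the $\mathcal{B}$-inner product, expand via the Jacobi--Anger identity (\ref{tau jacobi-anger}) to diagonalize $T^*$ in the bases $\{a_{\ell}^m\}$ and $\{b_{\ell}^m\}$, and conclude the bounds (\ref{T* operator bounds}) from Parseval and (\ref{uniform bounds tau}). The only cosmetic differences are that you justify the Fubini exchange more explicitly and obtain invertibility from the explicit diagonal inverse rather than from the invertibility of $T$ itself; both are fine.
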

\begin{proof}
For any $v \in \mathcal{A}$ and $u \in \mathcal{B}$, we have:
\begin{align*}
(Tv,u)_{\mathcal{B}}&=\left(\int_{Y}\phi_{\mathbf{y}}v(\mathbf{y})w(\mathbf{y})\textup{d}\mathbf{y},u \right)_{\mathcal{B}}=\int_{Y}v(\mathbf{y})(\phi_{\mathbf{y}},u)_{\mathcal{B}}\,w(\mathbf{y})\textup{d}\mathbf{y}\\
&=\left(v,\overline{\left(\phi_{\mathbf{y}},u\right)}_{\mathcal{B}}\right)_{\mathcal{A}}=\left(v,\left(u,\phi_{\mathbf{y}}\right)_{\mathcal{B}}\right)_{\mathcal{A}}.
\end{align*}
Furthermore, using the Jacobi-Anger identity (\ref{tau jacobi-anger}), for any $u \in \mathcal{B}$ and $\mathbf{y} \in Y$
\begin{equation*}
(T^*u)(\mathbf{y})=(u,\phi_{\mathbf{y}})_{\mathcal{B}}=\left(u,\sum_{\ell=0}^{\infty}\sum_{m=-\ell}^{\ell}\tau_{\ell}\,\overline{a_{\ell}^m(\mathbf{y})}\,b_{\ell}^m\right)_{\mathcal{B}}=\sum_{\ell=0}^{\infty}\sum_{m=-\ell}^{\ell}\overline{\tau_{\ell}}\,(u,b_{\ell}^m)_{\mathcal{B}}\,a_{\ell}^m(\mathbf{y}).
\end{equation*}
and so (\ref{T* explicit definition}) follows. Since $T$ is invertible, then $T^*$ is also invertible and therefore it remains to prove that (\ref{T* operator bounds}) holds. Hence, from Lemma \hyperlink{Lemma 5.3}{5.5}
\begin{equation*}
\|T^*u\|^2_{\mathcal{A}}=\sum_{\ell=0}^{\infty}\sum_{m=-\ell}^{\ell}|\tau_{\ell}|^2|(u,b_{\ell}^m)_{\mathcal{B}}|^2,
\end{equation*}
and (\ref{T* operator bounds}) is derived from Lemma \hyperlink{Lemma 1.2}{1.2} and the uniform bounds in (\ref{uniform bounds tau}).
\end{proof}

In the continuous frame terminology used in \cite{christensen}, we introduce the concepts of \textit{frame operator} and \textit{Gram operator} as follows:
\begin{equation*}
\begin{split}
S&:=TT^*\,:\,\mathcal{B} \rightarrow \mathcal{B},\\
G&:=T^*T\,:\,\mathcal{A} \rightarrow \mathcal{A}.
\end{split}
\end{equation*}
The frame operator formula can be made more explicit as follows: for any $u \in \mathcal{B}$,
\begin{equation*}
(Su)(\mathbf{x})=\int_Y(u,\phi_{\mathbf{y}})_{\mathcal{B}}\,\phi_{\mathbf{y}}(\mathbf{x})w(\mathbf{y})\textup{d}\mathbf{y},\,\,\,\,\,\,\,\,\,\,\,\,\forall \mathbf{x} \in B_1.
\end{equation*}
\begin{corollary}
\hypertarget{Corollary 5.3}{The} operators $S$ and $G$ are bounded, invertible, self-adjoint and positive. For any $v \in \mathcal{A}$ and $u \in \mathcal{B}$,
\begin{equation*}
\begin{split}
Su&=\sum_{\ell=0}^{\infty}\sum_{m=-\ell}^{\ell}|\tau_{\ell}|^2(u,b_{\ell}^m)_{\mathcal{B}}\,b_{\ell}^m,\\
Gv&=\sum_{\ell=0}^{\infty}\sum_{m=-\ell}^{\ell}|\tau_{\ell}|^2(v,a_{\ell}^m)_{\mathcal{A}}\,a_{\ell}^m,
\end{split}
\,\,\,\,\,\,\,\,\,\,\text{and}\,\,\,\,\,\,\,\,\,\,
\begin{split}
\tau^2_{-}\|u\|_{\mathcal{B}}&\leq\|Su\|_{\mathcal{B}}\leq\tau^2_+\|u\|_{\mathcal{B}},\\
\tau^2_{-}\|v\|_{\mathcal{A}}&\leq\|Gv\|_{\mathcal{A}}\leq\tau^2_+\|v\|_{\mathcal{A}}.
\end{split}
\end{equation*}
\end{corollary}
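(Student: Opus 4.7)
The proof is essentially a direct assembly of Theorem \hyperlink{Theorem 5.1}{5.9} and Lemma \hyperlink{Lemma 5.5}{5.10}, which already diagonalize $T$ and $T^*$ with respect to the Hilbert bases $\{a_\ell^m\}_{(\ell,m)\in\mathcal{I}}$ and $\{b_\ell^m\}_{(\ell,m)\in\mathcal{I}}$ via the coefficients $\tau_\ell$. My plan is to push each stated property through this diagonal structure.

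First, I would establish self-adjointness by a one-line adjoint computation, namely $S^* = (TT^*)^* = T^{**}T^* = TT^* = S$, and similarly $G^* = G$; both identities make sense since $T$ is bounded between Hilbert spaces by \eqref{T operator bounds}. Next, boundedness and invertibility of $S$ and $G$ follow immediately from boundedness and invertibility of $T$ and $T^*$ (Theorem \hyperlink{Theorem 5.1}{5.9} and Lemma \hyperlink{Lemma 5.5}{5.10}) by composition. For positivity, I would compute $(Su,u)_{\mathcal{B}} = (TT^*u,u)_{\mathcal{B}} = (T^*u, T^*u)_{\mathcal{A}} = \|T^*u\|_{\mathcal{A}}^2 \geq 0$ and analogously $(Gv,v)_{\mathcal{A}} = \|Tv\|_{\mathcal{B}}^2 \geq 0$, with strict positivity inherited from the lower bounds in \eqref{T operator bounds} and \eqref{T* operator bounds}.

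To obtain the explicit series representations, I would apply $T$ to the expansion of $T^*u$ in \eqref{T* explicit definition} and invoke the diagonalization identity \eqref{diagonalization}:
\begin{equation*}
Su = T(T^*u) = T\!\left(\sum_{\ell=0}^{\infty}\sum_{m=-\ell}^{\ell}\overline{\tau_{\ell}}\,(u,b_{\ell}^m)_{\mathcal{B}}\,a_{\ell}^m\right) = \sum_{\ell=0}^{\infty}\sum_{m=-\ell}^{\ell}|\tau_{\ell}|^2\,(u,b_{\ell}^m)_{\mathcal{B}}\,b_{\ell}^m,
\end{equation*}
using linearity and continuity of $T$ to pass the sum outside; the expansion for $Gv$ is obtained symmetrically from \eqref{T explicit definition}. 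From these diagonal forms, Parseval's identity in $\mathcal{B}$ and $\mathcal{A}$ (Lemma \hyperlink{Lemma 1.1}{1.2} and Lemma \hyperlink{Lemma 5.3}{5.5}) yield $\|Su\|_{\mathcal{B}}^2 = \sum_{\ell,m} |\tau_\ell|^4 |(u,b_\ell^m)_{\mathcal{B}}|^2$ and $\|Gv\|_{\mathcal{A}}^2 = \sum_{\ell,m} |\tau_\ell|^4 |(v,a_\ell^m)_{\mathcal{A}}|^2$; the uniform bounds \eqref{uniform bounds tau} then give the two-sided estimates $\tau_-^2\|u\|_{\mathcal{B}} \leq \|Su\|_{\mathcal{B}} \leq \tau_+^2\|u\|_{\mathcal{B}}$ and $\tau_-^2\|v\|_{\mathcal{A}} \leq \|Gv\|_{\mathcal{A}} \leq \tau_+^2\|v\|_{\mathcal{A}}$.

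There is no real obstacle here: every non-trivial step—the bases being orthonormal, the diagonal action of $T$ via $\tau_\ell$, and the uniform bounds $0 < \tau_- \leq |\tau_\ell| \leq \tau_+ < \infty$—has already been done in the preceding theorem, lemma, and corollary. The only mild subtlety to watch is the interchange of $T$ with the infinite series in the step above, which is justified by the boundedness of $T$ on $\mathcal{A}$; once that is noted, the corollary is a direct consequence and could legitimately be written as ``follows by composition from Theorem \hyperlink{Theorem 5.1}{5.9} and Lemma \hyperlink{Lemma 5.5}{5.10}.''
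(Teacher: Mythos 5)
Your proposal is correct and follows exactly the paper's route: the paper's own proof is the one-liner ``This result follows directly from Theorem \hyperlink{Theorem 5.1}{5.9} and Lemma \hyperlink{Lemma 5.5}{5.10},'' and your argument simply fills in the details of that composition (diagonal action of $T$ and $T^*$, Parseval, and the uniform bounds on $|\tau_\ell|$), all of which check out.
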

\begin{proof}
This result follows directly from Theorem \hyperlink{Theorem 5.1}{5.9} and Lemma \hyperlink{Lemma 5.5}{5.10}.
\end{proof}
Finally, we are now ready to prove that the evanescent plane waves (\ref{evanescent wave}) constitute a continuous frame for the space of Helmholtz solutions $\mathcal{B}$.
First, we outline the general definition in \cite[Def.\ 5.6.1]{christensen}.
\begin{definition}[\hypertarget{Definition 5.11}{Continuous frame}]
Let $\mathcal{H}$ be a complex Hilbert space and $X$ a measure space provided with a positive measure $\mu$. A continuous frame is a family of vectors $\{\phi_x\}_{x \in X} \subset \mathcal{H}$ for which:
\begin{itemize}
\item for all $u \in \mathcal{H}$, the function $x \mapsto \left(u,\phi_x\right)_{\mathcal{H}}$ is a measurable function on $X$;
\item there exist constants $A,B >0$ such that
\begin{equation*}
A\|u\|^2_{\mathcal{H}}\leq \int_X|(u,\phi_{x})_{\mathcal{H}}|^2\textup{d}\mu(x)\leq B\|u\|^2_{\mathcal{H}},\,\,\,\,\,\,\,\,\,\,\,\,\forall u \in \mathcal{H}.
\end{equation*}
\end{itemize}
\end{definition}

\begin{theorem}
\hypertarget{Theorem 5.13}{The} family $\{\phi_{\mathbf{y}}\}_{\mathbf{y} \in Y}$ is a continuous frame for $\mathcal{B}$. Besides, the optimal frame bounds are $A=\tau_{-}^2$ and $B=\tau_{+}^2$.
\end{theorem}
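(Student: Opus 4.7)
The plan is to reduce the frame inequality directly to the operator-norm bounds for $T^*$ already established in Lemma \hyperlink{Lemma 5.5}{5.10}, and then to exhibit extremal sequences among the spherical-wave basis to prove optimality.

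First I would verify measurability: for every $u \in \mathcal{B}$, the map $\mathbf{y} \mapsto (u,\phi_{\mathbf{y}})_{\mathcal{B}}$ is exactly $(T^*u)(\mathbf{y})$ by Lemma \hyperlink{Lemma 5.5}{5.10}, and since $T^*u \in \mathcal{A} \subset L^2(Y;w)$, this function is measurable on $Y$. Next, by the definition (\ref{A norm}) of the $\mathcal{A}$-norm and again the identity $(T^*u)(\mathbf{y}) = (u,\phi_{\mathbf{y}})_{\mathcal{B}}$, one has
\begin{equation*}
\int_{Y}|(u,\phi_{\mathbf{y}})_{\mathcal{B}}|^2 w(\mathbf{y})\,\textup{d}\mathbf{y} = \|T^*u\|_{\mathcal{A}}^2,\quad \forall u \in \mathcal{B}.
\end{equation*}
Plugging this into the two-sided bound (\ref{T* operator bounds}) of Lemma \hyperlink{Lemma 5.5}{5.10} yields
\begin{equation*}
\tau_{-}^2\|u\|_{\mathcal{B}}^2 \leq \int_{Y}|(u,\phi_{\mathbf{y}})_{\mathcal{B}}|^2 w(\mathbf{y})\,\textup{d}\mathbf{y} \leq \tau_{+}^2\|u\|_{\mathcal{B}}^2,\quad \forall u \in \mathcal{B},
\end{equation*}
proving that $\{\phi_{\mathbf{y}}\}_{\mathbf{y} \in Y}$ is a continuous frame in the sense of Definition \hyperlink{Definition 5.11}{5.12} with frame bounds $A=\tau_{-}^2$ and $B=\tau_{+}^2$.

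It remains to show these frame bounds are the optimal ones. The key observation is that, by the diagonalization (\ref{diagonalization}) of $T$ and taking adjoints, $T^*b_{\ell}^m = \overline{\tau_{\ell}}\,a_{\ell}^m$ for every $(\ell,m) \in \mathcal{I}$; this also follows directly from (\ref{T* explicit definition}) using the orthonormality in Lemma \hyperlink{Lemma 1.1}{1.2}. Since $\{b_{\ell}^m\}$ and $\{a_{\ell}^m\}$ are orthonormal in their respective Hilbert spaces, this gives $\|T^*b_{\ell}^m\|_{\mathcal{A}}^2 = |\tau_{\ell}|^2 = |\tau_{\ell}|^2\|b_{\ell}^m\|_{\mathcal{B}}^2$. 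By the very definition (\ref{uniform bounds tau}) of $\tau_{\pm}$ as infimum and supremum, there exist sequences of indices $\ell_n^-, \ell_n^+ \geq 0$ with $|\tau_{\ell_n^{\pm}}| \to \tau_{\pm}$; testing the frame inequality on the unit vectors $b_{\ell_n^{\pm}}^0 \in \mathcal{B}$ shows that neither $A$ can be enlarged beyond $\tau_{-}^2$ nor $B$ reduced below $\tau_{+}^2$.

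The proof is essentially a direct reformulation once one identifies $\int_Y |(u,\phi_{\mathbf{y}})_{\mathcal{B}}|^2 w\,\textup{d}\mathbf{y}$ with $\|T^*u\|_{\mathcal{A}}^2$, so there is no serious obstacle; the only minor subtlety is ensuring that the optimality argument does not require the extremes $\tau_{\pm}$ to be attained. This is circumvented by working with approximating sequences, whose existence is guaranteed by Corollary \hyperlink{Corollary 5.1}{5.6} together with the definition of $\tau_{\pm}$ as infimum and supremum of $|\tau_{\ell}|$ over $\ell \geq 0$.
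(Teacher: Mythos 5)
Your proposal is correct and follows essentially the same route as the paper: the paper identifies the integral with $(Su,u)_{\mathcal{B}}$ and invokes Corollary \hyperlink{Corollary 5.3}{5.11}, which is the same computation as your identification with $\|T^*u\|_{\mathcal{A}}^2$ and Lemma \hyperlink{Lemma 5.5}{5.10}, since $(Su,u)_{\mathcal{B}}=(T^*u,T^*u)_{\mathcal{A}}$. Your optimality argument via the test vectors $b_{\ell_n^{\pm}}^0$ merely makes explicit what the paper leaves implicit in the diagonal expansion $\sum_{\ell,m}|\tau_{\ell}|^2|(u,b_{\ell}^m)_{\mathcal{B}}|^2$, and it is sound.
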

\begin{proof}
It is readily checked that the family of evanescent waves $\{\phi_{\mathbf{y}}\}_{\mathbf{y} \in Y}$ satisfies the two conditions in Definition \hyperlink{Definition 5.11}{5.12}.
In fact, for any $u \in \mathcal{B}$, the measurability of
\begin{equation*}
\mathbf{y} \mapsto (u,\phi_{\mathbf{y}})_{\mathcal{B}}=(T^*u)(\mathbf{y})
\end{equation*}
follows from $T^*u \in \mathcal{A}$, according to Lemma \hyperlink{Lemma 5.5}{5.10}, and $\mathcal{A} \subset L^2(Y;w)$.

The second condition, namely
\begin{equation*}
A\|u\|^2_{\mathcal{B}}\leq \int_Y|(u,\phi_{\mathbf{y}})_{\mathcal{B}}|^2w(\mathbf{y})\textup{d}\mathbf{y}\leq B\|u\|^2_{\mathcal{B}},\,\,\,\,\,\,\,\,\,\,\,\,\forall u \in \mathcal{B},
\end{equation*}
for some constants $A,B>0$, follows from the boundedness and positivity of the frame operator $S$, as proven in Corollary \hyperlink{Corollary 5.3}{5.11}. Indeed, for any $u \in \mathcal{B}$, we have
\begin{equation*}
\int_Y|(u,\phi_{\mathbf{y}})_{\mathcal{B}}|^2w(\mathbf{y})\textup{d}\mathbf{y}=(Su,u)_{\mathcal{B}}=\sum_{\ell=0}^{\infty}\sum_{m=-\ell}^{\ell}|\tau_{\ell}|^2|(u,b_{\ell}^m)_{\mathcal{B}}|^2,
\end{equation*}
which also shows that the frame bounds stated are optimal.
\end{proof}

\section{The reproducing kernel property}

\hypertarget{Section 5.3}{A} noteworthy consequence of the continuous frame result is presented in the following proposition, which we borrow from \cite[Sec.\ 6.3]{parolin-huybrechs-moiola} and to which we refer for the proof.
\begin{proposition}
\hypertarget{Proposition 5.1}{The} Herglotz density space $\mathcal{A}$ has the reproducing kernel pro-perty. The reproducing kernel is given by
\begin{equation}
K(\mathbf{z},\mathbf{y})=K_{\mathbf{y}}(\mathbf{z})=\left(K_{\mathbf{y}},K_{\mathbf{z}}\right)_{\mathcal{A}}=\sum_{\ell=0}^{\infty}\sum_{m=-\ell}^{\ell}\overline{a_{\ell}^m(\mathbf{y})}a_{\ell}^m(\mathbf{z}),\,\,\,\,\,\,\,\,\,\,\forall \mathbf{y},\mathbf{z} \in Y,
\label{reproducing kernel property}
\end{equation}
with pointwise convergence of the series and where $K_{\mathbf{y}} \in \mathcal{A}$ is the (unique) Riesz representation of the evaluation functional at $\mathbf{y} \in Y$, namely
\begin{equation}
v(\mathbf{y})=\left(v,K_{\mathbf{y}} \right)_{\mathcal{A}},\,\,\,\,\,\,\,\,\,\,\,\,\forall v \in \mathcal{A}.
\label{hccp}
\end{equation}
\end{proposition}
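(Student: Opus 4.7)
The plan is to follow the standard three-step strategy for reproducing kernel Hilbert spaces: establish that point evaluation at any $\mathbf{y} \in Y$ is a bounded linear functional on $\mathcal{A}$, invoke the Riesz representation theorem to obtain $K_{\mathbf{y}} \in \mathcal{A}$, then recover the explicit series formula by expanding $K_{\mathbf{y}}$ in the Hilbert basis $\{a_\ell^m\}_{(\ell,m) \in \mathcal{I}}$ provided by Lemma \hyperlink{Lemma 5.3}{5.5}. For any $v = \sum_{\ell,m}(v,a_\ell^m)_{\mathcal{A}}a_\ell^m \in \mathcal{A}$, the Cauchy--Schwarz inequality gives
\begin{equation*}
|v(\mathbf{y})|^2 \leq \|v\|^2_{\mathcal{A}}\sum_{\ell=0}^{\infty}\sum_{m=-\ell}^{\ell}|a_\ell^m(\mathbf{y})|^2,
\end{equation*}
so continuity reduces to showing that $\mathcal{K}(\mathbf{y}):=\sum_{\ell,m}|a_\ell^m(\mathbf{y})|^2$ is finite for every $\mathbf{y} \in Y$.

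The main obstacle, and the heart of the argument, is precisely this pointwise finiteness. I would proceed by exploiting the unitarity of the Wigner D-matrix $D_\ell(\boldsymbol{\theta})$ (see \textup{\cite[Sec.\ 4.1]{quantumtheory}}) to perform the summation over $m$ at fixed $\ell$: since the columns $\{\mathbf{D}^m_\ell(\boldsymbol{\theta})\}_{|m|\leq\ell}$ are Hermitian-orthonormal in $\C^{2\ell+1}$,
\begin{equation*}
\sum_{m=-\ell}^{\ell}|\tilde{a}_\ell^m(\mathbf{y})|^2 = \sum_{m=-\ell}^{\ell}\left|\mathbf{D}^m_\ell(\boldsymbol{\theta})\cdot\mathbf{P}_\ell(\zeta)\right|^2 = |\mathbf{P}_\ell(\zeta)|^2,
\end{equation*}
so that $\sum_m|a_\ell^m(\mathbf{y})|^2 = \alpha_\ell^2|\mathbf{P}_\ell(\zeta)|^2$. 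Combining the identity (\ref{evanescent l2 coefficients}), namely $\hat{\phi}_\ell(\zeta)=4\pi\beta_\ell^{-1}|\mathbf{P}_\ell(\zeta)|$, with the definition $\tau_\ell = 4\pi i^\ell(\alpha_\ell\beta_\ell)^{-1}$, yields the clean identity $\alpha_\ell^2|\mathbf{P}_\ell(\zeta)|^2 = \hat{\phi}_\ell(\zeta)^2/|\tau_\ell|^2$. Summing in $\ell$ and using the uniform lower bound $|\tau_\ell|\geq \tau_->0$ from Corollary \hyperlink{Corollary 5.1}{5.6}, together with $\sum_{\ell,m}|(\phi_{\mathbf{y}},b_\ell^m)_{\mathcal{B}}|^2 = \|\phi_{\mathbf{y}}\|_{\mathcal{B}}^2$ (valid by Lemma \hyperlink{Lemma 1.1}{1.2}, since $\phi_{\mathbf{y}}\in\mathcal{B}$ is smooth on $\overline{B_1}$), gives
\begin{equation*}
\mathcal{K}(\mathbf{y}) = \sum_{\ell=0}^{\infty}\frac{\hat{\phi}_\ell(\zeta)^2}{|\tau_\ell|^2} \leq \tau_-^{-2}\|\phi_{\mathbf{y}}\|^2_{\mathcal{B}} < \infty,
\end{equation*}
which proves continuity of point evaluation.

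By Riesz representation there exists a unique $K_{\mathbf{y}} \in \mathcal{A}$ satisfying (\ref{hccp}). To identify its Fourier coefficients in the orthonormal basis, I would note $(K_{\mathbf{y}},a_\ell^m)_{\mathcal{A}} = \overline{(a_\ell^m,K_{\mathbf{y}})_{\mathcal{A}}} = \overline{a_\ell^m(\mathbf{y})}$, so Parseval and the uniqueness of the Hilbert-basis expansion give $K_{\mathbf{y}} = \sum_{\ell,m}\overline{a_\ell^m(\mathbf{y})}\,a_\ell^m$ in $\mathcal{A}$. The middle identity in (\ref{reproducing kernel property}) is just the reproducing property applied to $K_{\mathbf{y}}$, i.e.\ $K_{\mathbf{y}}(\mathbf{z}) = (K_{\mathbf{y}},K_{\mathbf{z}})_{\mathcal{A}}$. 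Finally, the pointwise convergence of the series $\sum_{\ell,m}\overline{a_\ell^m(\mathbf{y})}a_\ell^m(\mathbf{z})$ follows from one more Cauchy--Schwarz step, since both $\mathcal{K}(\mathbf{y})$ and $\mathcal{K}(\mathbf{z})$ have been shown to be finite. The only delicate quantitative input in the whole argument is the pointwise boundedness of $\mathcal{K}(\mathbf{y})$, which is where the structural ingredients of the paper---Wigner unitarity, the $\tau_\ell$-bounds, and the Herglotz--Jacobi--Anger link between $\mathcal{A}$ and $\mathcal{B}$---come together.
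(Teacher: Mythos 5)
Your argument is correct, but note that the paper does not actually prove this proposition: it explicitly defers to the two-dimensional treatment in \cite[Sec.\ 6.3]{parolin-huybrechs-moiola}, so there is no in-paper proof to compare against step by step. Your proposal is a valid self-contained substitute, and it supplies precisely the 3D-specific ingredient that a citation to the 2D argument leaves implicit: the collapse of the sum over the order $m$ at fixed degree $\ell$ via the unitarity of the Wigner D-matrix, $\sum_{m}|\tilde{a}_{\ell}^m(\mathbf{y})|^2=|\mathbf{P}_{\ell}(\zeta)|^2$, which is the same mechanism the paper itself exploits in (\ref{muN independence}) and (\ref{evanescent l2 coefficients}). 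Your reduction of the key quantitative step to $\mathcal{K}(\mathbf{y})\leq\tau_{-}^{-2}\|\phi_{\mathbf{y}}\|^2_{\mathcal{B}}$ is sound: the algebra $\alpha_{\ell}^2|\mathbf{P}_{\ell}(\zeta)|^2=\hat{\phi}_{\ell}(\zeta)^2/|\tau_{\ell}|^2$ checks out, $\phi_{\mathbf{y}}$ is bounded on $\overline{B_1}$ and solves the Helmholtz equation, hence belongs to $\mathcal{B}$ with finite (if $\zeta$-dependent) norm, and Parseval together with Corollary \hyperlink{Corollary 5.1}{5.6} closes the estimate. (This is the mirror image of the computation in Lemma \hyperlink{Lemma 5.4}{5.7}, which fixes $\mathbf{x}$ and sums over the $\mathcal{A}$-basis; you fix $\mathbf{y}$ and sum over the same basis evaluated at $\mathbf{y}$.) The one point I would make explicit in a final write-up is that elements of $\mathcal{A}\subset L^2(Y;w)$ are a priori equivalence classes, so ``$v(\mathbf{y})$'' must be defined via the canonical representative $\sum_{\ell,m}(v,a_{\ell}^m)_{\mathcal{A}}a_{\ell}^m(\mathbf{y})$; your first Cauchy--Schwarz display is exactly what shows this series converges absolutely and hence that the representative, and with it the evaluation functional, is well defined. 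With that remark added, the proof is complete.
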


It is important to note that the reproducing kernel property (\ref{hccp}) implies that the evaluation of elements of $\mathcal{A}$ at any point in $Y$ is continuous \cite[Def.\ 1.2]{reproducing_kernels}.
Let $\mathbf{y} \in Y$, then for some constant $c_{\mathbf{y}}>0$:
\begin{equation*}
|v(\mathbf{y})|=\left|\left(v,K_{\mathbf{y}}\right)_{\mathcal{A}}\right|\leq c_{\mathbf{y}} \|v\|_{\mathcal{A}},\,\,\,\,\,\,\,\,\,\,\,\,\forall v \in \mathcal{A}.
\end{equation*}
The motivation for introducing the reproducing kernel property is highlighted in the following result, which is directly derived from Proposition \hyperlink{Proposition 5.1}{5.14}, Theorem \hyperlink{Theorem 5.1}{5.9} and the Jacobi–Anger identity (\ref{tau jacobi-anger}).
\begin{corollary}
\hypertarget{Corollary 5.14}{The} evanescent plane waves are the images under $T$ of the Riesz representation of the evaluation functionals, namely
\begin{equation}
\phi_{\mathbf{y}}=TK_{\mathbf{y}},\,\,\,\,\,\,\,\,\,\,\,\,\forall \mathbf{y} \in Y.
\label{corollary label}
\end{equation}
\end{corollary}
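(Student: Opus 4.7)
The plan is to combine three earlier results in succession: the explicit expansion of the reproducing kernel in the Hilbert basis $\{a_\ell^m\}$ from Proposition \hyperlink{Proposition 5.1}{5.14}, the diagonal action of the Herglotz transform from Theorem \hyperlink{Theorem 5.1}{5.9}, and the complex-direction Jacobi--Anger identity (\ref{tau jacobi-anger}). The identity $\phi_{\mathbf{y}} = TK_{\mathbf{y}}$ should drop out as an almost immediate consequence.

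Concretely, fix $\mathbf{y} \in Y$. First I would use the series representation \textup{(\ref{reproducing kernel property})} of the reproducing kernel to write
\begin{equation*}
K_{\mathbf{y}} = \sum_{\ell=0}^{\infty} \sum_{m=-\ell}^{\ell} \overline{a_\ell^m(\mathbf{y})}\, a_\ell^m \qquad \text{in } \mathcal{A},
\end{equation*}
where the convergence is guaranteed in the $\|\cdot\|_{\mathcal{A}}$ norm since $K_{\mathbf{y}} \in \mathcal{A}$ and $\{a_\ell^m\}_{(\ell,m)\in \mathcal{I}}$ is a Hilbert basis (Lemma \hyperlink{Lemma 5.3}{5.5}). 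Next, since $T:\mathcal{A} \to \mathcal{B}$ is bounded by Theorem \hyperlink{Theorem 5.1}{5.9}, I can apply it term by term and use the diagonalization \textup{(\ref{diagonalization})}, $Ta_\ell^m = \tau_\ell b_\ell^m$, to obtain
\begin{equation*}
TK_{\mathbf{y}} = \sum_{\ell=0}^{\infty} \sum_{m=-\ell}^{\ell} \tau_\ell \overline{a_\ell^m(\mathbf{y})}\, b_\ell^m \qquad \text{in } \mathcal{B}.
\end{equation*}

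Finally, I would recognize the right-hand side as precisely the Jacobi--Anger expansion \textup{(\ref{tau jacobi-anger})} of the evanescent plane wave $\phi_{\mathbf{y}}$, yielding $TK_{\mathbf{y}} = \phi_{\mathbf{y}}$. There is no genuine obstacle here: the only point that deserves a brief justification is the interchange of $T$ with the infinite sum, which is immediate from the continuity of $T$ on $\mathcal{A}$. The result can alternatively be read off from the dual characterization of $T^{-1}$: for any $u \in \mathcal{B}$, applying \textup{(\ref{hccp})} to $v = T^{-1}u \in \mathcal{A}$ gives $(T^{-1}u)(\mathbf{y}) = (T^{-1}u, K_{\mathbf{y}})_{\mathcal{A}} = (u, TK_{\mathbf{y}})_{\mathcal{B}}$, while by Lemma \hyperlink{Lemma 5.5}{5.10} applied to the identity $T^{-1} = (T^*)^{-1}\circ \ldots$, or more directly by duality, $(T^{-1}u)(\mathbf{y})$ equals $(u,\phi_{\mathbf{y}})_{\mathcal{B}}$ up to the appropriate normalization, again forcing $TK_{\mathbf{y}} = \phi_{\mathbf{y}}$. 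Either route fits naturally in a couple of lines.
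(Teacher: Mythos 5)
Your main argument is correct and is exactly the derivation the paper intends: the paper gives no separate proof, stating only that the corollary follows directly from Proposition \hyperlink{Proposition 5.1}{5.14}, Theorem \hyperlink{Theorem 5.1}{5.9} and the Jacobi--Anger identity (\ref{tau jacobi-anger}), which is precisely your chain of expanding $K_{\mathbf{y}}$ in the basis $\{a_\ell^m\}$, applying the bounded operator $T$ termwise via (\ref{diagonalization}), and recognizing (\ref{tau jacobi-anger}). One caveat on your optional ``dual'' route: the correct adjoint identity is $(u,TK_{\mathbf{y}})_{\mathcal{B}}=(T^*u,K_{\mathbf{y}})_{\mathcal{A}}=(T^*u)(\mathbf{y})=(u,\phi_{\mathbf{y}})_{\mathcal{B}}$, using $T^*$ rather than $T^{-1}$ as you wrote; with that fix it also closes in one line, but as stated your pairing $(T^{-1}u,K_{\mathbf{y}})_{\mathcal{A}}=(u,TK_{\mathbf{y}})_{\mathcal{B}}$ is not the adjoint relation.
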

Therefore, approximating a Helmholtz solution $u \in \mathcal{B}$ using evanescent plane waves is, through the isomorphism $T$, equivalent to approximating its Herglotz density $v=T^{-1}u \in \mathcal{A}$ by an expansion of evaluation functionals, namely
\begin{equation}
v \approx \sum_{p=1}^P\mu_p K_{{\mathbf{y}}_p}\,\,\,\,\,\,\,\,  \rightleftarrowss{T}{T^{-1}}\,\,\,\,\,\,\,\,  u \approx \sum_{p=1}^P\mu_p \phi_{{\mathbf{y}}_p}
\label{isomorphism}
\end{equation}
for some set of coefficients $\boldsymbol{\mu}=\{\mu_{p}\}_{p=1}^P$.
The subsequent chapters provide numerical evidence that it is indeed possible to build such suitable approximations (up to some normalization of the families $\{K_{{\mathbf{y}}_p}\}_p$ and $\{\phi_{{\mathbf{y}}_p}\}_p$).

\chapter{Evanescent plane wave approximation sets}
\hypertarget{Chapter 6}{In} this chapter, we describe a method for the stable numerical approximation of a general Helmholtz solution in the unit ball $B_1$ by evanescent plane waves. 

The core of this procedure relies on the equivalence between this approximation problem and the one of the corresponding Herglotz density, as stated in (\ref{isomorphism}). In analogy with \cite[Sec.\ 7]{parolin-huybrechs-moiola}, the main idea is to adapt the sampling technique from \cite{Cohen_Migliorati,Hampton,Migliorati_Nobile} (referred to as \textit{coherence-optimal sampling}) to our situation, to generate a distribution of sampling nodes in $Y$ that will be used to reconstruct the Herglotz density.
Simple variants including the use of extremal point systems defined in (\ref{G matrix}) are also taken into account.
In particular, the last section delves deeper into the explanation of the numerical recipe and showcases a diverse array of sampling strategies.
The approach outlined has been found to be highly effective, as emerges from Chapter \hyperlink{Chapter 7}{7} and \cite[Sec.\ 8]{parolin-huybrechs-moiola}. However, there is still a lack of full proof of the accuracy and stability of the approximation of Helmholtz solutions using evanescent plane waves.

Let $u \in \mathcal{B}$ be the goal of our approximation problem, and $v:=T^{-1}u \in \mathcal{A}$ its corresponding Herglotz density. Additionally, let some tolerance $\eta > 0$ be given.

\section{Approximation sets}

\hypertarget{Section 6.1}{Since} both $\mathcal{A}$ and $\mathcal{B}$ are infinite-dimensional spaces, the strategy for constructing finite-dimensional approximation sets is to use the natural hierarchy of finite-dimensional subspaces created by truncating the Hilbert bases $\{a_{\ell}^m\}_{(\ell,m) \in \mathcal{I}}$ and $\{b_{\ell}^m\}_{(\ell,m) \in \mathcal{I}}$ respectively.

\begin{definition}[Truncated spaces]
For any $L \geq 0$, we define, respectively, the truncated Herglotz density space and the truncated Helmholtz solution space as
\begin{equation*}
\mathcal{A}_{L}:=\textup{span}\{a_\ell^m\}_{(\ell,m) \in \mathcal{I}\,:\,\ell\, \leq\, L} \subsetneq \mathcal{A},\,\,\,\,\,\,\,\,\,\,\,\,\,\,\mathcal{B}_{L}:=\textup{span}\{b_\ell^m\}_{(\ell,m) \in \mathcal{I}\,:\,\ell\, \leq\, L} \subsetneq \mathcal{B}.
\end{equation*}
We denote the dimension of both spaces $\mathcal{A}_L$ and $\mathcal{B}_L$ by
\begin{equation*}
N=N(L):=\dim \mathcal{A}_L=\dim \mathcal{B}_L=(L+1)^2 \in \N.
\end{equation*}
\end{definition}
We also introduce the orthogonal projection $\Pi_L$ onto $\mathcal{B}_L$ defined as
\begin{align*}
\Pi_L\,:\,\,&\mathcal{B} \rightarrow \mathcal{B},\\
&u \mapsto \sum_{\ell=0}^{L}\sum_{m=-\ell}^{\ell}\left( u,b_{\ell}^m\right)_{\mathcal{B}}b_{\ell}^m,
\end{align*}
and denote by $u_L:=\Pi_Lu \in \mathcal{B}_L$ the projection of $u$ onto $\mathcal{B}_L$ and by $v_L:=T^{-1}u_L \in \mathcal{A}_L$ the image under the inverse transform of $u_L$. Obviously, the sequence of projections $\{u_L\}_{L \geq 0}$ converges to $u$ in $\mathcal{B}$. In particular, we can define
\begin{equation}
L^*=L^*(u,\eta):=\min\{L \geq 0 : \|u-u_L\|_{\mathcal{B}} < \eta \|u\|_{\mathcal{B}}\}.
\label{L star}
\end{equation}
Furthermore, (\ref{T operator bounds}) implies that the sequence $\{v_L\}_{L \geq 0}$ converges to $v$ in $\mathcal{A}$: in fact, for any $L \geq L^*$, we have
\begin{equation*}
\|v-v_L\|_{\mathcal{A}} \leq \tau_{-}^{-1} \|u-u_L\|_{\mathcal{B}}  < \tau_{-}^{-1}\eta \|u\|_{\mathcal{B}}.
\end{equation*}

Our goal is to approximate with evanescent plane waves the projection $u_L=\Pi_L u \in \mathcal{B}_L$ (or equivalently $v_L=T^{-1}u_L \in \mathcal{A}_L$).
The main idea is to build approximations of elements of $\mathcal{A}_L$ by constructing a finite set of sampling nodes $\{\mathbf{y}_p\}_p$ in $Y$, according to the distribution outlined in \cite[Sec.\ 2.1]{Hampton}, \cite[Sec.\ 2.2]{Cohen_Migliorati} and \cite[Sec.\ 2]{Migliorati_Nobile}, whose related probability density is reported in (\ref{rho density}).
A variation of this strategy involves creating a finite set $\{\mathbf{y}_p\}_p$ in $Y$ by restricting the sampling based on this distribution to the \textit{evanescence domain} $[0,2\pi)\times[0,+\infty)$ and making use of the extremal point systems for the coordinates in $[0,\pi]\times[0,2\pi)$. This choice seems to be desirable due to the geometrical properties of these systems, which are able to provide well-distributed points. Note that this is equivalent to establishing a priori the direction of propagation of the evanescent plane waves and then selecting, through the sampling based on (\ref{rho density}), the waves intensities and the decay directions.
Even though the domain $[0,+\infty)$ is unbounded, the finite integrability of the weight in (\ref{weight}) allows for sampling in a bounded region only.
With an appropriate normalization factor, the associated set of sampling functionals $\{K_{\mathbf{y}_p}\}_p$ is expected to provide a good approximation of $v_L$. Therefore, up to some normalization factor, the approximation set for $u_L$ will be given by the evanescent plane waves $\{\phi_{\mathbf{y}_p}\}_p$.

The probability density function $\rho_N$ is defined (up to normalization) as the reciprocal of the $N$-term \textit{Christoffel function} following the approach in \cite[Eq.\ (2.6)]{Cohen_Migliorati}:
\begin{equation}
\rho_N:=\frac{w}{N\mu_N},\,\,\,\,\,\,\,\text{where}\,\,\,\,\,\,\,\mu_N(\mathbf{y}):=\left(\sum_{\ell=0}^L\sum_{m=-\ell}^{\ell}\left|a_{\ell}^m(\mathbf{y}) \right|^2 \right)^{-1},\,\,\,\,\,\,\,\forall \mathbf{y} \in Y.
\label{rho density}
\end{equation}
Note that $\rho_N$ and $\mu_N$ are well-defined since $0 < \mu_N \leq \mu_1 < \infty$, because $a_0^0 \neq 0$.
The function $\mu_N$ is actually independent of $\boldsymbol{\theta}$: in fact, thanks to the Wigner D-matrix unitarity condition \cite[Sec.\ 4.1, Eq.\
(6)]{quantumtheory}, it readily follows that
\begin{equation}
\mu_N^{-1}(\mathbf{y})=\sum_{\ell=0}^L\sum_{m=-\ell}^{\ell}\left|a_{\ell}^m(\mathbf{y}) \right|^2=\sum_{\ell=0}^L\alpha_{\ell}^2\left|\mathbf{P}_{\ell}(\zeta) \right|^2,\,\,\,\,\,\,\,\forall \mathbf{y} \in Y.
\label{muN independence}
\end{equation}

Hence, the density function $\rho_N$ is a bivariate function on $Y$, since it is independent of the Euler angles $\theta_2,\theta_3 \in [0,2\pi)$, and depends on $\theta_1$ only through the weight $w$ in (\ref{weight}).
The occurrence of this feature is solely attributed to the inevitable singularities that arise in the spherical parameterization when Euler angles are involved.
As a consequence, the sampling problem can be considered nearly one-dimensional, with the key parameter being $\zeta$. However, selecting an appropriate distribution for $\zeta$ poses a significant challenge.
Moreover, it is worth noting that $1/\mu_N$ corresponds to the truncated series expansion of the diagonal of the reproducing kernel $K$, which is obtained by taking $z=y$ and truncating the series in (\ref{reproducing kernel property}) at $L$.

The numerical recipe involves, for each $L \geq 0$, generating a sequence of node sets in the parametric domain $Y$
\begin{equation}
\numberset{Y}_L:=\{\numberset{Y}_{L,P}\}_{P \in \N},\,\,\,\,\,\,\,\text{where}\,\,\,\,\,\,\,\numberset{Y}_{L,P}:=\{\mathbf{y}_{L,P,\,p}\}_{p=1}^P,\,\,\,\,\,\,\,\forall P \in \N,
\label{sampling node set}
\end{equation}
using a sampling strategy such that $|\numberset{Y}_{L,\,P}|=P$, for all $P \in \N$, and the sequence $\numberset{Y}_L$ converges (in a suitable sense) to the density $\rho_{N(L)}$ defined in (\ref{rho density}) as $P$ tends to infinity. The sets are not assumed to be nested.

Two approximation sets can be constructed: one consisting of sampling functionals in $\mathcal{A}$ and the other of evanescent plane waves in $\mathcal{B}$.
Associated to the node sets (\ref{sampling node set}), we introduce a sequence of finite sets in $\mathcal{A}$ as follows:
\begin{equation}
\begin{split} \label{evaluation sets}
\boldsymbol{\Psi}_L&:=\{\boldsymbol{\Psi}_{L,P}\}_{P \in \N},\,\,\,\,\,\,\,\forall L\geq 0,\,\,\,\,\,\,\,\text{where}\\
\boldsymbol{\Psi}_{L,P}:=&\,\,\Biggl\{\sqrt{\frac{\mu_N(\mathbf{y}_{L,P,\,p})}{P} }K_{\mathbf{y}_{L,P,\,p}}\Biggr\}_{p=1}^P,\,\,\,\,\,\,\,\forall P \in \N.
\end{split}
\end{equation}

\noindent In the approximation sets, each evaluation functional $K_{\mathbf{y}_{L,P,\,p}}$ has been normalized by the real constant $\sqrt{\mu_N(\mathbf{y}_{L,P,\,p})/P} $ which is (numerically) close to $\|K_{\mathbf{y}_{L,P,\,p}}\|^{-1}_{\mathcal{A}}/\sqrt{P}$. More precisely, we have that
\begin{equation*}
\sqrt{\mu_{N}(\mathbf{y})}\|K_{\mathbf{y}}\|_{\mathcal{A}}=\left(\sum_{\ell=0}^L\sum_{m=-\ell}^{\ell}\left|a_{\ell}^m(\mathbf{y}) \right|^2 \right)^{-1/2}\left(\sum_{\ell=0}^{\infty}\sum_{m=-\ell}^{\ell}\left|a_{\ell}^m(\mathbf{y}) \right|^2 \right)^{1/2} \geq 1,\,\,\,\,\,\,\,\forall \mathbf{y} \in Y.
\end{equation*}
The normalization constant in (\ref{evaluation sets}) is crucial for the numerical stability of the scheme: in fact the stable approximation property (\ref{stable approximation}) of a set sequence depends on the normalization of its elements.

Associated to the node set sequences (\ref{sampling node set}) and the approximation set sequences (\ref{evaluation sets}) in $\mathcal{A}$, we define the sequence of approximation sets of normalized evanescent plane waves in $\mathcal{B}$ as follows
\begin{equation}
\begin{split} \label{evanescence sets}
&\,\,\,\boldsymbol{\Phi}:=\{\boldsymbol{\Phi}_{L,P}\}_{L\geq 0,P \in \N},\,\,\,\,\,\,\,\,\,\,\,\,\,\,\text{where}\\
\boldsymbol{\Phi}_{L,P}:=\Biggl\{&\sqrt{\frac{\mu_N(\mathbf{y}_{L,P,\,p})}{P} }\phi_{\mathbf{y}_{L,P,\,p}}\Biggr\}_{p=1}^P,\,\,\,\,\,\,\,\forall L \geq 0, \forall P \in \N.
\end{split}
\end{equation}
Due to (\ref{corollary label}), the sequence of sets (\ref{evanescence sets}) is the image of the sequence of sets (\ref{evaluation sets}) by the Herglotz transform operator $T$.

The numerical recipe for constructing the approximation sets $\boldsymbol{\Phi}_{L,P}$ is based on only two parameters, $L$ and $P$. The tuning of these parameters is straightforward:
\begin{itemize}

\item The first parameter to consider is $L$, which determines the Fourier truncation level. As $L$ increases, the accuracy of the approximation of $u$ (resp. $v=T^{-1}u$) by $u_L=\Pi_Lu$ (resp. $v_L=\Pi_Lv$) improves. The appropriate value for $L \geq L^*$ will depend on the regularity of the Helmholtz solution and thus on the decay rate of the modal expansion coefficients.
\item The second one is the dimension $P$ of the evanescent plane wave approximation space, which is also the number of sampling points in $Y$.
If $L$ is fixed, increasing $P$ should allow to control the accuracy of the approximation of $u_L$ (resp. $v_L=T^{-1}u_L$) by $\mathcal{T}_{\boldsymbol{\Phi}_{L,P}}\boldsymbol{\xi}$ (resp. $\mathcal{T}_{\boldsymbol{\Psi}_{L,P}}\boldsymbol{\xi}$) for some bounded coefficients $\boldsymbol{\xi} \in \C^P$.
The numerical results presented below corroborate this conjecture and show experimentally that $P$ should scale quadratically with $L$, and thus linearly with $N$, with a moderate proportionality constant (see Section \hyperlink{Section 7.2}{7.2}).
\end{itemize}
In our implementation (detailed in Section \hyperlink{Section 2.2}{2.2}), after selecting the approximation sets $\boldsymbol{\Phi}_{L,P}$, the computation of a specific set of coefficients $\boldsymbol{\xi}_{S,\epsilon}$ involves the use of two additional parameters, $S$ and $\epsilon$:
\begin{itemize}
\item The first parameter, $S$, refers to the number of sampling points on the boundary of the physical domain $B_1$. As stated in \cite{huybrechs1,huybrechs2}, it is recommended to use an adequate amount of oversampling.
In order to make use of the theoretical findings in Section \hyperlink{Section 2.3}{2.3}, it is necessary to select the sampling points $\{\mathbf{x}_s\}_{s=1}^S \subset \partial B_1$ and weights $\mathbf{w}_S \in \R^S$ appropriately to meet the requirements of (\ref{Riemann sum}).
In practice, we will use extremal systems of points introduced in Definition \hyperlink{Extremal points system}{2.2}, hence $S$ is chosen as a perfect square. In analogy with \cite{parolin-huybrechs-moiola}, we choose for simplicity an oversampling ratio of $2$, namely $S=\lceil \sqrt{2|\boldsymbol{\Phi}_{L,P}|} \rceil^2$.
It is possible that such a high degree of oversampling is not required and additional numerical experiments could be conducted to explore a reduction in the oversampling ratio $S/|\boldsymbol{\Phi}_{L,P}|$ in order to decrease the computational cost of the method.
\item The second parameter, $\epsilon$, is the regularization parameter used in the truncation of the singular values. To evaluate the method, we use a value of $\epsilon=10^{-14}$ in the numerical experiments that follow.
If less precise approximations are satisfactory, the parameter $\epsilon$ could be increased.
\end{itemize}

It is important to note that the selection of the reconstruction strategy does not affect the approximation sets $\boldsymbol{\Phi}_{L,P}$, together with their related accuracy and stability properties.
Although we presented the method of boundary sampling with regularized SVD as a simple example, other reconstruction strategies, such as sampling within the domain or using Galerkin or Petrov--Galerkin projections, can also be effective. Similarly, other regularization techniques, like Tikhonov regularization, can also be applied. Regardless of the specific strategy chosen, it is crucial to apply sufficient oversampling and regularization.

The construction used here, which is based on the ideas outlined in \cite[Sec.\ 7.3]{parolin-huybrechs-moiola}, builds upon similar concepts that have been previously explored in different contexts. Indeed, sampling node sets similar to the ones proposed here can be found in literature, such as in \cite{Cohen_Migliorati,Hampton,Migliorati_Nobile}.
The context of these works is the reconstruction of elements of finite-dimensional subspaces (with explicit orthonormal basis) in weighted $L^2$ spaces using sampling, as presented in \cite{Cohen_Migliorati}. This approach was later used to construct random cubature rules in \cite{Migliorati_Nobile}.
The idea behind these methods is that by sampling at specific nodes, it is possible to gather enough information to accurately reconstruct the function as an expansion in the (truncated) orthonormal basis.

Within this framework, the results from the literature state that to reconstruct an element $v_L=\Pi_L v \in \mathcal{A}_L$, it is sufficient to sample at the nodes $\boldsymbol{\Psi}_{L,P}$ for a sufficiently large value of $P$.
In contrast, the numerical method described above aims to construct an approximation of the element $v_L=\Pi_L v \in \mathcal{A}_L$ as an expansion in the set of evaluation functionals $\boldsymbol{\Psi}_{L,P}$ for some sufficiently large $P$. This means that the approximation we are seeking belongs to the span of the evaluation functionals, span $\boldsymbol{\Psi}_{L,P}$, which has trivial intersection with $\mathcal{A}_L$.
Thanks to (\ref{corollary label}), applying the Herglotz transform $T$ to this approximation in span $\boldsymbol{\Psi}_{L,P}$ yields an element in span $ \boldsymbol{\Phi}_{L,P}$ (i.e.\ a finite superposition of evanescent plane waves) that approximates $u_L=Tv_L \in \mathcal{B}_L$.
Despite the connections to related works, a full proof is currently missing and we lack a solid theoretical foundation to support this numerical method. However, the extensive numerical tests presented in Chapter \hyperlink{Chapter 7}{7} demonstrate the high level of accuracy and stability of the sets $\boldsymbol{\Phi}_{L,P}$.

\section{A conjectural stable approximation result}

\hypertarget{Section 6.2}{We} summarize below the speculations expressed in \cite[Sec.\ 7.4]{parolin-huybrechs-moiola}, which are suggested by the two-dimensional numerical experiments in \cite[Sec.\ 8]{parolin-huybrechs-moiola} and the ones given in the next chapter.
\begin{conjecture}
\hypertarget{Conjecture 6.1}{The} sequence of approximation sets $\boldsymbol{\Psi}_L$ defined in \textup{(\ref{evaluation sets})} is a stable approximation for $\mathcal{A}_L$, uniformly with respect to the truncation parameter $L$. Namely, there exist $\lambda^* \geq 0$ and $C^*\geq 0$ independent of $L$ such that $\forall L\geq 0$, $\exists P^*=P^*(L,\eta,\lambda^*,C^*) \in \N$ such that $\forall v_L \in \mathcal{A}_L$, $\exists \boldsymbol{\mu} \in \C^{P^*}$ such that
\begin{equation}
\|v_L-\mathcal{T}_{\boldsymbol{\Psi}_{L,P^*}}\boldsymbol{\mu}\|_{\mathcal{A}}\leq \eta \|v_L\|_{\mathcal{A}}\,\,\,\,\,\,\,\,\,\,\,\,\,\,\text{and}\,\,\,\,\,\,\,\,\,\,\,\,\,\,\|\boldsymbol{\mu}\|_{\ell^2}\leq C^*{P^*}^{\lambda^*}\|v_L\|_{\mathcal{A}}.
\label{inequalities}
\end{equation}
\end{conjecture}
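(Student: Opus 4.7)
The plan is to translate the approximation problem into a finite-dimensional random least-squares system and invoke the coherence-optimal sampling theory of Cohen--Migliorati. Writing any $v_L \in \mathcal{A}_L$ as $v_L = \sum_{(\ell,m):\,\ell \leq L} c_{\ell,m}\,a_\ell^m$ so that $\|v_L\|_{\mathcal{A}}^2 = \|\mathbf{c}\|_{\ell^2}^2$, the reproducing kernel property (\ref{hccp}) gives, for each element $\psi_p := \sqrt{\mu_N(\mathbf{y}_{L,P,p})/P}\,K_{\mathbf{y}_{L,P,p}}$ of $\boldsymbol{\Psi}_{L,P}$, the identity $(\psi_p, a_\ell^m)_{\mathcal{A}} = \sqrt{\mu_N(\mathbf{y}_{L,P,p})/P}\,\overline{a_\ell^m(\mathbf{y}_{L,P,p})}$. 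Collecting these into a random matrix $B \in \C^{N \times P}$ with $B_{(\ell,m),p} = \sqrt{\mu_N(\mathbf{y}_{L,P,p})/P}\,\overline{a_\ell^m(\mathbf{y}_{L,P,p})}$, the orthogonal projection $\Pi_L\bigl(\sum_p \mu_p \psi_p\bigr)$ has coefficient vector $B\boldsymbol{\mu}$ in the basis $\{a_\ell^m\}_{\ell \leq L}$, so reproducing $v_L$ exactly inside $\mathcal{A}_L$ reduces to the linear system $B\boldsymbol{\mu} = \mathbf{c}$.

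The choice of sampling density $\rho_N = w/(N\mu_N)$ in (\ref{rho density}) is designed so that $\mathbb{E}[BB^*] = N^{-1} I_N$, as a direct calculation using the orthonormality of $\{a_\ell^m\}$ in $\mathcal{A}$ confirms. A matrix Chernoff-type concentration inequality (see \cite[Th.\ 2.1]{Cohen_Migliorati}) then guarantees that, whenever $P \geq C_0 N \log(N/\delta)$ for an absolute constant $C_0$, the event $\|NBB^* - I_N\|_{\textup{op}} \leq 1/2$ holds with probability at least $1-\delta$. On this event the minimum-norm solution $\boldsymbol{\mu}^* := B^*(BB^*)^{-1}\mathbf{c}$ interpolates $v_L$ inside $\mathcal{A}_L$ and satisfies $\|\boldsymbol{\mu}^*\|_{\ell^2}^2 = \mathbf{c}^*(BB^*)^{-1}\mathbf{c} \leq 2N\|v_L\|_{\mathcal{A}}^2$. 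Since $N = (L+1)^2$ and one takes $P^* = \lceil C_0 N\log N\rceil$, this yields the coefficient bound in (\ref{inequalities}) with stability exponent $\lambda^* = 1/2$ up to logarithmic factors, and a constant $C^*$ independent of $L$.

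What remains is the accuracy bound in the full $\mathcal{A}$-norm. By construction, $\mathcal{T}_{\boldsymbol{\Psi}_{L,P}}\boldsymbol{\mu}^* = v_L + R$ with $R := \sum_p \mu_p^*\,(I - \Pi_L)\psi_p \in \mathcal{A}_L^{\perp}$, so
\begin{equation*}
\|v_L - \mathcal{T}_{\boldsymbol{\Psi}_{L,P}}\boldsymbol{\mu}^*\|_{\mathcal{A}}^2 = \|R\|_{\mathcal{A}}^2 = (\boldsymbol{\mu}^*)^*\, M\, \boldsymbol{\mu}^*,
\end{equation*}
where $M_{p,q} = P^{-1}\sqrt{\mu_N(\mathbf{y}_p)\mu_N(\mathbf{y}_q)}\,[K(\mathbf{y}_p,\mathbf{y}_q) - K^L(\mathbf{y}_p,\mathbf{y}_q)]$ involves the tail of the reproducing kernel, with $K^L$ denoting its truncation to modes $\ell \leq L$. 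In particular, the diagonal entries read $M_{p,p} = P^{-1}\bigl[K(\mathbf{y}_p,\mathbf{y}_p)/K^L(\mathbf{y}_p,\mathbf{y}_p) - 1\bigr]$. It would suffice to prove $\|M\|_{\textup{op}} \leq \eta^2/(2N)$ to close the argument via the coefficient bound already established.

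The hard part is precisely this uniform-in-$L$ control of $\|M\|_{\textup{op}}$. Since $\rho_N$ is tailored to the truncation level $L$ and concentrates its mass in the evanescence regime $\zeta \sim L$, where the dominant Herglotz density modes are those with $\ell \approx L$, the tail $\sum_{\ell > L}\sum_m |a_\ell^m(\mathbf{y})|^2$ need not be small relative to $\mu_N(\mathbf{y})^{-1} = \sum_{\ell \leq L}\sum_m |a_\ell^m(\mathbf{y})|^2$ on the support of the sampling density, and a naive termwise estimate based on the pointwise asymptotics of Lemma \hyperlink{Lemma 5.2}{5.4} controls individual modes but falls short of the required Gram-matrix bound. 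This is exactly the step at which the rigorous argument breaks down, which is why (\ref{inequalities}) is formulated as a conjecture rather than a theorem, supported here only by the numerical evidence of Chapter \hyperlink{Chapter 7}{7}, in parallel with \cite[Conj.\ 7.1]{parolin-huybrechs-moiola} for the two-dimensional case.
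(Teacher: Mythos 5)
The statement you were asked to prove is stated in the paper as a \emph{conjecture}: the paper offers no proof, only numerical evidence (Chapter \hyperlink{Chapter 7}{7}) and the analogy with \cite[Conj.\ 7.1]{parolin-huybrechs-moiola}, and it explicitly says that ``a full proof is currently missing.'' Your proposal is therefore correctly calibrated: you do not claim a proof, you set up the natural attack via coherence-optimal sampling, and you honestly identify the step at which it breaks down. The parts you do carry out are sound. The identity $(\psi_p,a_\ell^m)_{\mathcal{A}}=\sqrt{\mu_N(\mathbf{y}_p)/P}\,\overline{a_\ell^m(\mathbf{y}_p)}$ follows correctly from (\ref{hccp}); the computation $\mathbb{E}[BB^*]=N^{-1}I_N$ is exactly what the density (\ref{rho density}) is designed to produce; the matrix Chernoff step and the bound $\|\boldsymbol{\mu}^*\|_{\ell^2}^2=\mathbf{c}^*(BB^*)^{-1}\mathbf{c}\leq 2N\|v_L\|_{\mathcal{A}}^2$ on the good event are standard and correct; and your formula for the residual Gram matrix $M$ in terms of the tail $K-K^L$ of the reproducing kernel (\ref{reproducing kernel property}) is right, including the diagonal $M_{p,p}=P^{-1}[K(\mathbf{y}_p,\mathbf{y}_p)/K^L(\mathbf{y}_p,\mathbf{y}_p)-1]$ since $\mu_N(\mathbf{y})^{-1}=K^L(\mathbf{y},\mathbf{y})$. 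Your diagnosis of the obstruction --- that the kernel tail need not be small relative to $\mu_N^{-1}$ on the support of $\rho_N$, uniformly in $L$ --- is precisely the missing ingredient, and matches why the paper leaves this as a conjecture.

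Two caveats worth recording. First, your probabilistic framework only addresses the i.i.d.\ random-sampling variant; the conjecture as formulated covers all the node-set sequences of (\ref{sampling node set}), including the deterministic, Sobol and extremal-based strategies of Definition \hyperlink{Definition 6.4}{6.4}, for which $\mathbb{E}[BB^*]$ has no meaning and a deterministic equidistribution argument would be needed in place of the Chernoff bound. Second, your reduction insists on exact interpolation of $v_L$ inside $\mathcal{A}_L$ (i.e.\ $B\boldsymbol{\mu}=\mathbf{c}$), which forces the entire error into the off-space component $R$; the conjecture only requires a relative error $\eta$, so one is free to trade a small in-space defect against a smaller off-space residual, and this extra freedom may well be essential to any eventual proof. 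Neither caveat invalidates your conclusion: the statement remains open, and your write-up correctly reflects that.
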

For simplicity, we will assume in the following that any $P \geq P^*$ satisfies the two inequalities in (\ref{inequalities}). If this is not the case, the proofs can be easily adapted. Although this assumption is true when the sets are hierarchical, this is not a requirement.

If the conjecture stated earlier is valid, the stability of the approximation sets of evanescent plane waves (\ref{evanescence sets}) follows.

\begin{proposition}
Let $\delta > 0$. If Conjecture \hyperlink{Conjecture 6.1}{\textup{6.2}} holds, then the sequence of approximation sets \textup{(\ref{evanescence sets})} provides a stable approximation for $\mathcal{B}$. Moreover, assume to have a set of sampling points $\{\mathbf{x}_s\}_{s=1}^S \subset \partial B_1$ together with a positive weight vector $\mathbf{w}_S \in \R^S$ such that \textup{(\ref{Riemann sum})} is satisfied. If $\kappa^2$ is not a Dirichlet eigenvalue on $B_1$, then $\forall u \in \mathcal{B}\cap C^0(\overline{B_1})$, $\exists L \geq 0$, $P \in \N$, $S \in \N$ and $\epsilon \in (0,1]$ such that
\begin{equation*}
\|u-\mathcal{T}_{\boldsymbol{\Phi}_{L,P}}\boldsymbol{\xi}_{S,\epsilon}\|_{L^2(B_1)}\leq \delta \|u\|_{\mathcal{B}},
\end{equation*}
where $\boldsymbol{\xi}_{S,\epsilon} \in \C^P$ is computed with the regularization procedure in \textup{(\ref{xi Se solution})}. The SVD regularization parameter $\epsilon$ can be chosen as \textup{(\ref{epsilon0})}.
\end{proposition}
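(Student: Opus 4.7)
The argument splits naturally into two steps: first, combine Conjecture \hyperlink{Conjecture 6.1}{6.2} with the isomorphism $T\colon\mathcal{A}\to\mathcal{B}$ to derive the stable approximation property for $\{\boldsymbol{\Phi}_{L,P}\}$ in the sense of Definition \hyperlink{Definition 2.1}{2.1}; second, feed this stability directly into Corollary \hyperlink{Corollary 2.1}{2.5} to obtain the $L^2(B_1)$ error bound.

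For the first step, fix $u\in\mathcal{B}$ and a target tolerance $\eta>0$, set $v:=T^{-1}u\in\mathcal{A}$, and pick auxiliary tolerances $\eta_1,\eta_2>0$ to be tuned at the end. Choose $L\geq L^*(u,\eta_1)$ from (\ref{L star}) so that $\|u-u_L\|_{\mathcal{B}}\leq\eta_1\|u\|_{\mathcal{B}}$; the bounds (\ref{T operator bounds}) on $T$ then give $\|v_L\|_{\mathcal{A}}\leq\tau_-^{-1}(1+\eta_1)\|u\|_{\mathcal{B}}$. Applying Conjecture \hyperlink{Conjecture 6.1}{6.2} to $v_L\in\mathcal{A}_L$ with tolerance $\eta_2$ produces $P\in\N$ and $\boldsymbol{\mu}\in\C^P$ satisfying the two inequalities in (\ref{inequalities}). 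The pivotal observation is that, by Corollary \hyperlink{Corollary 5.14}{5.15} (identity (\ref{corollary label})), $\phi_{\mathbf{y}_p}=TK_{\mathbf{y}_p}$ for every sampling node, and since the real normalization factors $\sqrt{\mu_N(\mathbf{y}_{L,P,p})/P}$ in (\ref{evaluation sets}) and (\ref{evanescence sets}) coincide, $T$ intertwines the two synthesis operators on the common coefficient space, i.e.\ $T\,\mathcal{T}_{\boldsymbol{\Psi}_{L,P}}\boldsymbol{\mu}=\mathcal{T}_{\boldsymbol{\Phi}_{L,P}}\boldsymbol{\mu}$. A triangle inequality combined with the upper bound $\|T\|\leq\tau_+$ then yields
\begin{equation*}
\|u-\mathcal{T}_{\boldsymbol{\Phi}_{L,P}}\boldsymbol{\mu}\|_{\mathcal{B}}\leq\eta_1\|u\|_{\mathcal{B}}+\tau_+\eta_2\|v_L\|_{\mathcal{A}}\leq\bigl(\eta_1+\tau_+\tau_-^{-1}\eta_2(1+\eta_1)\bigr)\|u\|_{\mathcal{B}},
\end{equation*}
while the coefficient bound in Conjecture \hyperlink{Conjecture 6.1}{6.2} transfers to $\|\boldsymbol{\mu}\|_{\ell^2}\leq C^*\tau_-^{-1}(1+\eta_1)P^{\lambda^*}\|u\|_{\mathcal{B}}$. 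Selecting $\eta_1,\eta_2$ small enough that the parenthesized factor is bounded by $\eta$ establishes the stable approximation property for $\{\boldsymbol{\Phi}_{L,P}\}$ with exponent $\lambda=\lambda^*$ and an explicit constant $C_{\textup{stb}}$ depending only on $\tau_\pm$ and $C^*$.

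For the second step, every hypothesis of Corollary \hyperlink{Corollary 2.1}{2.5} is now in force: the sequence $\{\boldsymbol{\Phi}_{L,P}\}$ is stable; the sampling points and weights satisfy (\ref{Riemann sum}) by assumption; $\kappa^2$ is not a Dirichlet eigenvalue; and each $\phi_{\mathbf{y}_p}\in C^{\infty}(\R^3)\cap\mathcal{B}\subset\mathcal{B}\cap C^0(\overline{B_1})$, as required on the elements of the approximation set. The corollary then produces $L\geq 0$, $P\in\N$, $S_0\in\N$ and $\epsilon_0\in(0,1]$ (the latter given explicitly by (\ref{epsilon0})) such that for all $S\geq S_0$ and $\epsilon\in(0,\epsilon_0]$ the regularized SVD coefficient vector $\boldsymbol{\xi}_{S,\epsilon}$ from (\ref{xi Se solution}) delivers $\|u-\mathcal{T}_{\boldsymbol{\Phi}_{L,P}}\boldsymbol{\xi}_{S,\epsilon}\|_{L^2(B_1)}\leq\delta\|u\|_{\mathcal{B}}$.

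The main obstacle is purely bookkeeping: one has to tune the auxiliary tolerances $\eta_1,\eta_2$ consistently, and to verify that the normalization factors in (\ref{evaluation sets}) and (\ref{evanescence sets}) are genuinely identical, so that the very same coefficient vector $\boldsymbol{\mu}$ passes unchanged through $T$. No new analytic ingredient beyond the conjectured stability in $\mathcal{A}$, the bounded invertibility of $T$ established in Theorem \hyperlink{Theorem 5.1}{5.9}, and Corollary \hyperlink{Corollary 2.1}{2.5} is required; the proposition is essentially a reassembly of these three results via the commutative diagram linking finite-dimensional truncations of $\mathcal{A}$ and $\mathcal{B}$ through $T$.
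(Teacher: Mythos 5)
Your proposal is correct and follows essentially the same route as the paper's proof: transfer the conjectured stability of $\boldsymbol{\Psi}_{L,P}$ in $\mathcal{A}$ to $\boldsymbol{\Phi}_{L,P}$ in $\mathcal{B}$ via the bounded invertible Herglotz transform (using $TK_{\mathbf{y}}=\phi_{\mathbf{y}}$ and the matching normalization), then invoke Corollary \hyperlink{Corollary 2.1}{2.5}. The only cosmetic differences are your use of two separate tolerances $\eta_1,\eta_2$ where the paper uses one, and your bound $\|v_L\|_{\mathcal{A}}\leq\tau_-^{-1}(1+\eta_1)\|u\|_{\mathcal{B}}$, which could be sharpened to $\tau_-^{-1}\|u\|_{\mathcal{B}}$ since $\Pi_L$ is an orthogonal projection.
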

\begin{proof}
We need to establish the stability of the sequence of approximation sets, namely that for any $\tilde{\eta}>0$, there exists $\lambda \geq 0$ and $C \geq 0$ such that $\forall u \in \mathcal{B}\cap C^0(\overline{B_1})$, $\exists L \geq 0$, $P \in \N$ and $\boldsymbol{\mu} \in \C^P$ such that
\begin{equation}
\|u-\mathcal{T}_{\boldsymbol{\Phi}_{L,P}}\boldsymbol{\mu}\|_{\mathcal{B}}\leq \tilde{\eta} \|u\|_{\mathcal{B}}\,\,\,\,\,\,\,\,\,\,\,\,\,\,\text{and}\,\,\,\,\,\,\,\,\,\,\,\,\,\,\|\boldsymbol{\mu}\|_{\ell^2}\leq CP^{\lambda}\|u\|_{\mathcal{B}}.
\label{thesis proposition}
\end{equation}
Given that this holds, the stated result is a direct consequence of Corollary \hyperlink{Corollary 2.1}{2.5}.

Let $\eta >0$ and $u \in \mathcal{B}\cap C^0(\overline{B_1})$. For any $L \geq L^*(u,\eta)$ with $L^*$ defined in (\ref{L star}), if we let $u_L:=\Pi_L u$ we have
\begin{equation*}
\|u-u_L\|_{\mathcal{B}}\leq \eta \|u\|_{\mathcal{B}}.
\end{equation*}
Set $v:=T^{-1}u \in \mathcal{A}$ and $v_L:=T^{-1}u_L$.
Assuming the validity of Conjecture \hyperlink{Conjecture 6.1}{6.2}, there exist $\lambda^*\geq0$ and $C^*\geq0$, both independent of $L$, and $P^*=P^*(L,\eta,\lambda^*,C^*) \in \N$ such that, for any $P \geq P^*$, there exists a set of coefficients $\boldsymbol{\mu} \in \C^P$ such that the inequalities (\ref{inequalities}) hold.
Furthermore, thanks to (\ref{T operator bounds}) and (\ref{inequalities}), we have that
\begin{equation*}
\|u_L-\mathcal{T}_{\boldsymbol{\Phi}_{L,P}}\boldsymbol{\mu}\|_{\mathcal{B}} \leq \tau_{+} \eta \|v_L\|_{\mathcal{A}}\,\,\,\,\,\,\,\,\,\,\,\,\,\,\text{and}\,\,\,\,\,\,\,\,\,\,\,\,\,\,\|v_L\|_{\mathcal{A}} \leq \tau_{-}^{-1}\|u_L\|_{\mathcal{B}}\leq \tau_{-}^{-1}\|u\|_{\mathcal{B}}.
\end{equation*}
For any $L \geq L^*(u,\eta)$ and $P \geq P^*$, the total approximation error for the Helmholtz solution $u$ can be estimated, combining the previous bounds, as
\begin{equation}
\begin{split} \label{conclusion}
\|u-\mathcal{T}_{\boldsymbol{\Phi}_{L,P}}\boldsymbol{\mu}\|_{\mathcal{B}} &\leq \|u-u_L\|_{\mathcal{B}}+\|u_L-\mathcal{T}_{\boldsymbol{\Phi}_{L,P}}\boldsymbol{\mu}\|_{\mathcal{B}} \leq \left(1+\tau_{+}\tau_{-}^{-1} \right)\eta\|u\|_{\mathcal{B}}\\
&\text{and}\,\,\,\,\,\,\,\,\,\,\,\,\,\,\,\,\,\,\,\,\|\boldsymbol{\mu}\|_{\ell^2}\leq C^*P^{\lambda^*}\tau_{-}^{-1}\|u\|_{\mathcal{B}}.
\end{split}
\end{equation}
We conclude by choosing $\eta=\tilde{\eta}/\left(1+\tau_{+}\tau_{-}^{-1} \right)$ and noting that (\ref{conclusion}) is (\ref{thesis proposition}) with $\lambda=\lambda^*$ and $C=C^*\tau_{-}^{-1}$.
\end{proof}

The independence of the stability exponent $\lambda^*$ and the stability constant $C^*$ in Conjecture \hyperlink{Conjecture 6.1}{6.2} from the truncation parameter $L$ is crucial for the previous proof.
Without this uniform stability, Conjecture \hyperlink{Conjecture 6.1}{6.2} would not be strong enough to establish (\ref{thesis proposition}).

\section{Probability densities and samples}

\hypertarget{Section 6.3}{In} this section we describe the numerical recipe outlined in Section \hyperlink{Section 6.1}{6.1}.
In the left column of Figure \ref{figure 6.1}, we depict the probability density functions
\begin{equation}
\hat{\rho}_{N}(\zeta):=\int_{\Theta}\rho_{N}(\boldsymbol{\theta},\zeta)\,\textup{d}\boldsymbol{\theta},\,\,\,\,\,\,\,\,\,\,\,\,\,\,\forall \zeta \in [0,+\infty),
\label{rho zeta density}
\end{equation}
with respect to the ratio $\zeta/\kappa$, where $\rho_{N(L)}$ is defined in (\ref{rho density}).
The variable $L$ represents the truncation parameter, indicating that the sampling is done to approximate elements of $\mathcal{A}_L$, which has dimension $N(L)$.
It is worth noting that, although $\rho_{N}$ is a four-variables function in the parametric domain $Y$, it is sinusoidal with respect to $\theta_1$ (see (\ref{weight})) and constant with respect to the other Euler angles $\theta_2$ and $\theta_3$.

The main mode of the probability densities $\hat{\rho}_N$ is seen at $\zeta=0$, which represents purely propagative plane waves. As the wavenumber increases, both the peak in $\zeta=0$ and the density support get larger. This latter feature is different with respect to the two-dimensional case \cite{parolin-huybrechs-moiola}, due to the $\kappa$-dependent parametrization of the evanescence parameter $\zeta$ in (\ref{complex direction}); this is the reason why $\zeta$ is scaled by the wavenumber $\kappa$ in Figure \ref{figure 6.1}.
Eventually, the probability tends to zero exponentially as $\zeta$ gets large enough.
If $L \leq \kappa$, the densities are unimodal distributions, whereas, for $L \gg \kappa$, they are multimodal: in fact, for instance when $L=4\kappa$, we observe an additional mode for relatively large values of the evanescence parameter (roughly for $\zeta=5\kappa$), besides the main one at $\zeta=0$.

In analogy with \cite[Sec.\ 8.1]{parolin-huybrechs-moiola}, for any $L\geq 0$, one possible strategy is to generate $P=\nu N(L)$ samples in the Cartesian product $Y$ using the \textit{Inversion Transform Sampling} (ITS) technique suggested by \cite[Sec. 5.2]{Cohen_Migliorati}.
The process involves generating sampling sets in $[0,1]^4$ that converge (in a suitable sense) to the uniform distribution $\mathcal{U}_{[0,1]^4}$ when $P$ goes to infinity,
\begin{equation}
\{\mathbf{z}_p\}_{p=1}^P,\,\,\,\,\,\,\,\text{with}\,\,\,\,\,\,\,\mathbf{z}_p=(z_{p,\theta_1},z_{p,\theta_2},z_{p,\theta_3},z_{p,\zeta}) \in [0,1]^4,\,\,\,\,\,\,\,p=1,...,P,
\label{points in [0,1]}
\end{equation}
and then map them back to the parametric domain $Y$, to obtain sampling sets that converge to the probability density function $\rho_N$ as $P \rightarrow \infty$.
Hence, we need to invert each of the cumulative density functions associated with each of the parameters in $Y$. Given the nature of the dependence of $\rho_N$ on the angular variables, this result can be easily computed explicitly for every $\boldsymbol{\theta}=(\theta_1,\theta_2,\theta_3) \in \Theta$.
However, the numerical evaluation of the cumulative probability distribution related to the evanescence parameter $\zeta$, namely
\begin{equation}
\Upsilon_{N}(\zeta):=\int_{0}^{\zeta}\hat{\rho}_{N}(\eta)\,\textup{d}\eta,\,\,\,\,\,\,\,\,\,\,\,\,\,\,\forall \zeta \in [0,+\infty),
\label{zeta cumulative}
\end{equation}
is a bit difficult to implement, costly to run and numerically unstable. In fact, due to (\ref{vector P}), (\ref{weight}), (\ref{1 lemma 5.3}), (\ref{muN independence}), and definition (\ref{rho zeta density}), we should compute:
\begin{align*}
\Upsilon_{N}(\zeta)&=\int_{0}^{\zeta}\int_{\Theta}\rho_{N}(\boldsymbol{\theta},\eta)\,\textup{d}\boldsymbol{\theta}\textup{d}\eta=\int_{0}^{\zeta}\int_{\Theta}\frac{\eta^{1/2}e^{-\eta}\sin \theta_1}{N \mu_N(\boldsymbol{\theta},\eta)}\,\textup{d}\boldsymbol{\theta}\textup{d}\eta\\
&=\left(\int_{\Theta}\sin \theta_1\,\textup{d}\boldsymbol{\theta}\right)\left(\frac{1}{N}\sum_{\ell=0}^L\alpha_{\ell}^2\int_{0}^{\zeta}\left|\mathbf{P}_{\ell}(\eta)\right|^2\eta^{1/2}e^{-\eta}\,\textup{d}\eta\right)\\
&=\frac{1}{N}\sum_{\ell=0}^L(2\ell+1)\frac{\sum_{m=-\ell}^{\ell}\int_0^{\zeta}\left[\gamma_{\ell}^m P_{\ell}^m(\eta/2\kappa+1)\right]^2\eta^{1/2}e^{-\eta}\,\textup{d}\eta}{\sum_{m=-\ell}^{\ell}\int_0^{\infty}\left[\gamma_{\ell}^m P_{\ell}^m(\eta/2\kappa+1)\right]^2\eta^{1/2}e^{-\eta}\,\textup{d}\eta}. \numberthis \label{upsilon}
\end{align*}
Following the suggestion of Remark \hyperlink{Remark 5.3}{5.3} and the asymptotics \cite[Eq.\ (14.8.12)]{nist}, we propose to rely on the approximation
\begin{align*}
\int_0^{\zeta}\!\Big[\gamma_{\ell}^m P_{\ell}^m\!&\left(\frac{\eta}{2\kappa}+1\right)\Big]^2\!\eta^{1/2}e^{-\eta}\,\textup{d}\eta \approx \left(\frac{2^{\ell}\gamma_{\ell}^{m}\Gamma(\ell+1/2)}{\sqrt{\pi}(\ell-m)!}\right)^2\!\!\int_{0}^{\zeta}\left(\frac{\eta}{2\kappa}+1\right)^{2\ell}\!\!\!\eta^{1/2}e^{-\eta}\textup{d}\eta\\
&= \left(\frac{2^{\ell}\gamma_{\ell}^{m}\Gamma(\ell+1/2)}{\sqrt{\pi}(\ell-m)!}\right)^2\int_{2\kappa}^{2\kappa+\zeta}\left(\frac{\eta}{2\kappa}\right)^{2\ell}(\eta-2\kappa)^{1/2}e^{-(\eta-2\kappa)}\textup{d}\eta\\
&\leq \frac{e^{2\kappa}}{4\pi^2\kappa^{2\ell}} \frac{(2\ell+1) \Gamma^{\,2}(\ell+1/2)}{(l+m)!(l-m)!}\int_{2\kappa}^{2\kappa+\zeta}\eta^{2\ell+1/2}e^{-\eta}\textup{d}\eta\\
&= C_{\ell}^m \int_{2\kappa}^{2\kappa+\zeta}\eta^{2\ell+1/2}e^{-\eta}\textup{d}\eta, \numberthis \label{first approximation}
\end{align*}
where the constant $C_{\ell}^m$ was defined in (\ref{Clm constant}).
\begin{figure}
\centering
\begin{subfigure}{0.98\linewidth}
\includegraphics[width=0.97\linewidth]{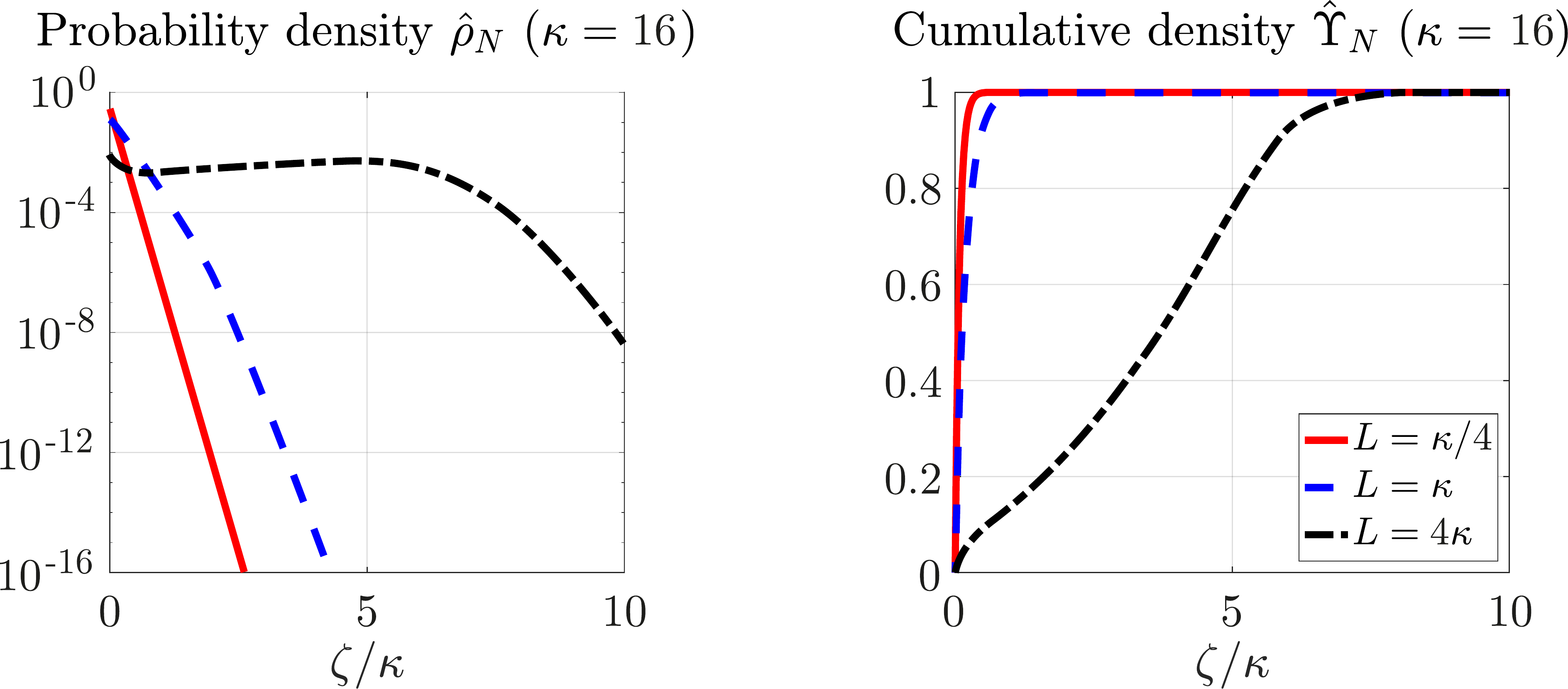}
\vspace{4mm}
\end{subfigure}
\begin{subfigure}{0.98\linewidth}
\includegraphics[width=0.97\linewidth]{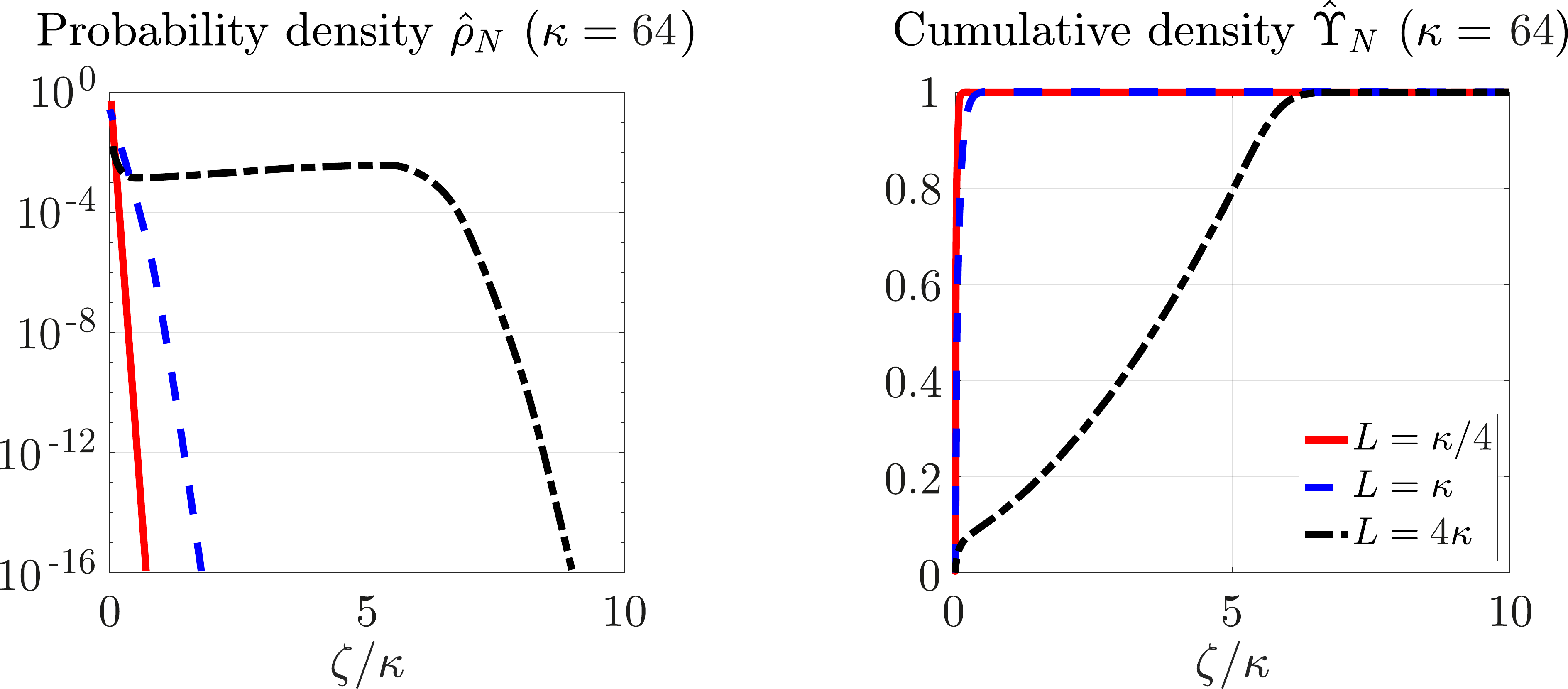}
\vspace{4mm}
\end{subfigure}
\begin{subfigure}{0.98\linewidth}
\includegraphics[width=0.97\linewidth]{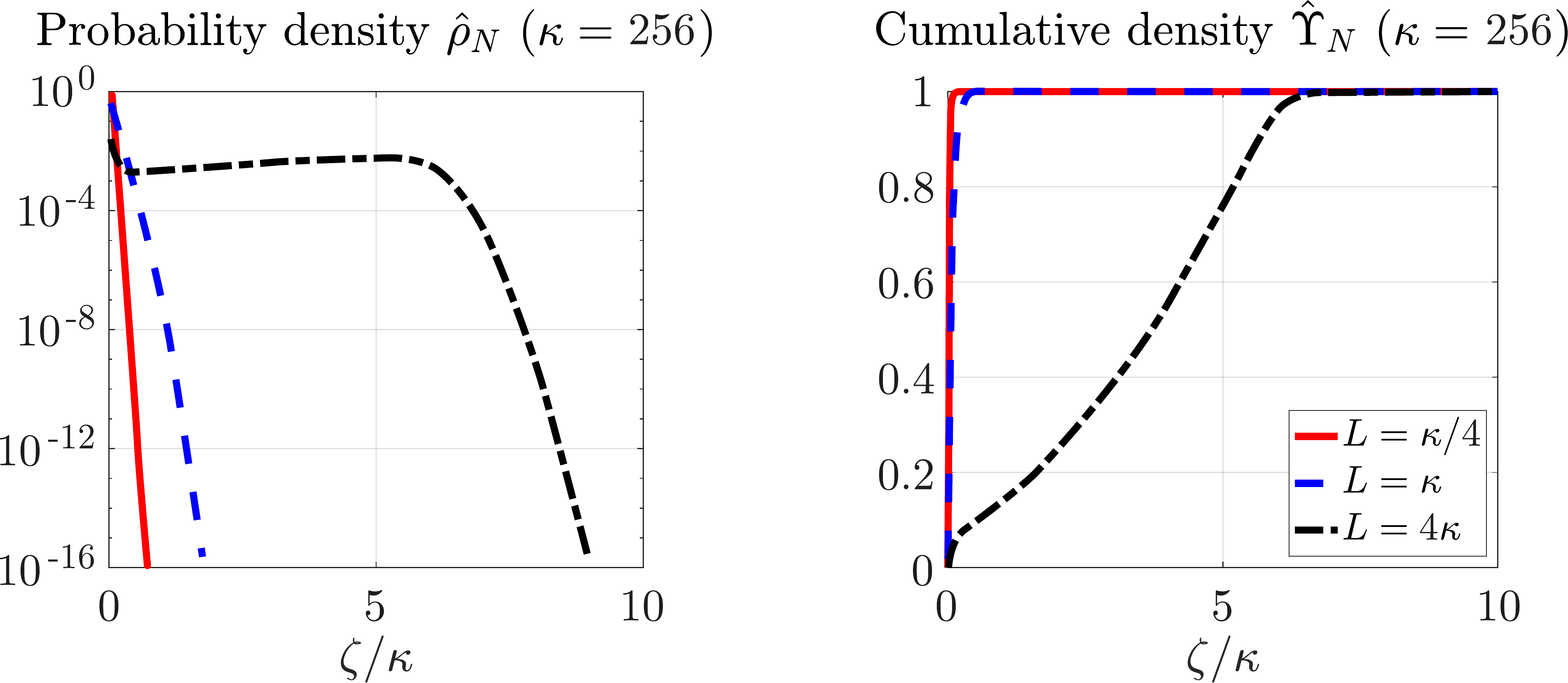}
\end{subfigure}
\caption{Sampling density functions $\hat{\rho}_N$ in (\ref{rho zeta density}) (left) and $\hat{\Upsilon}_N$ in (\ref{approx zeta cumulative}) (right) with respect to the $\kappa$-scaled evanescence parameter $\zeta$ constructed for the subspace $\mathcal{A}_L$. The wavenumber $\kappa$ varies in $\{16, 64, 256\}$ from top to bottom.}
\label{figure 6.1}
\end{figure}
Note that we proceeded by analogy with (\ref{2 lemma 5.3}), approximating the associated Legendre polynomials with a single monomial of degree $\ell$ and replacing the factor $(\eta-2\kappa)^{1/2}$ by $\eta^{1/2}$ in the integrand. Hence, we approximate both the numerator and the denominator in (\ref{upsilon}) with:
\begin{align*}
\int_0^{\zeta}\Big[\gamma_{\ell}^m P_{\ell}^m\!\left(\frac{\eta}{2\kappa}+1\right)\Big]^2\eta^{1/2}e^{-\eta}\,\textup{d}\eta &\approx C_{\ell}^m \left[ \Gamma\left(2\ell+\frac{3}{2},2\kappa\right)-\Gamma\left(2\ell+\frac{3}{2},2\kappa+ \zeta \right) \right],\\
\int_0^{\infty}\left[\gamma_{\ell}^m P_{\ell}^m\left(\frac{\eta}{2\kappa}+1\right)\right]^2\eta^{1/2}e^{-\eta}\textup{d}\eta &\approx C_{\ell}^m\,\Gamma\left(2\ell+\frac{3}{2},2\kappa\right), \numberthis \label{useful approximation}
\end{align*}
where we introduced the \textit{upper incomplete Gamma function} defined in \cite[Eq. (8.2.2)]{nist}.
Observe that this choice is a sort of interpolation between the lower and the upper bounds in (\ref{3 lemma 5.3}) and (\ref{2 lemma 5.3}) respectively.
Since the constant factors trivially simplify, due to this heuristic we get, for every $\zeta \in [0,+\infty)$,
\begin{align*}
\Upsilon_N(\zeta)&\approx \frac{1}{N}\sum_{\ell=0}^L(2\ell+1)\frac{\Gamma\left(2\ell+3/2,2\kappa\right)-\Gamma\left(2\ell+3/2,2\kappa+ \zeta \right)}{\Gamma\left(2\ell+3/2,2\kappa\right)}\\
&=1-\frac{1}{N}\sum_{\ell=0}^L(2\ell+1)\frac{\Gamma\left(2\ell+3/2,2\kappa+\zeta \right)}{\Gamma\left(2\ell+3/2,2\kappa\right)}. \numberthis \label{passo prima}
\end{align*}
Therefore, in order to approximate $\Upsilon_N$ in (\ref{zeta cumulative}), starting from (\ref{passo prima}), we define the cumulative distribution function
\begin{equation}
\hat{\Upsilon}_N(\zeta):=1-\frac{1}{N}\sum_{\ell=0}^L(2\ell+1)\frac{Q\left(2\ell+3/2,2\kappa+\zeta \right)}{Q\left(2\ell+3/2,2\kappa\right)},\,\,\,\,\,\,\,\,\,\,\,\,\,\,\forall \zeta \in [0,+\infty),
\label{approx zeta cumulative}
\end{equation}
where $Q$ is the \textit{normalized upper incomplete Gamma function} defined in \cite[Eq.\ (8.2.4)]{nist}. Introducing this function is necessary for the cumulative density function $\hat{\Upsilon}_N$ to be computed stably and without overflow issues. The function $\hat{\Upsilon}_N$ retains the following properties, which are crucial for our sampling purposes:
\begin{equation*}
    0 \leq \hat{\Upsilon}_{N}(\zeta) \leq 1,
    \qquad\,\,\,\,\,
    \hat{\Upsilon}_{N}(0) = 0,
    \qquad\,\,\,\,\,
    \lim_{\zeta \to \infty}\hat{\Upsilon}_{N}(\zeta) = 1.
\end{equation*}
The benefits of this very simple expression are quite clear compared to what needs to be computed otherwise in (\ref{upsilon}).
Some cumulative density functions $\hat{\Upsilon}_N$ are represented in the right column of Figure \ref{figure 6.1}.
When $\mathcal{A}_L$ only consists of elements related to the propagative regime ($L \leq \kappa$), the cumulative distributions $\hat{\Upsilon}_N$ are nearly step functions, especially for large wavenumbers. However, for $L > \kappa$, these functions are more complex (e.g.\ see the cases where $L=4\kappa$). Thus, for $L \leq \kappa$, it is safe to only choose propagative plane waves, as stated in Section \hyperlink{Section 3.4}{3.5}, but for $L > \kappa$, the selection of evanescent waves becomes a non-trivial task.

Therefore, after we generated the sampling sets (\ref{points in [0,1]}) in $[0,1]^4$, we map them to the parametric domain $Y$ obtaining:
\begin{equation}
\{\mathbf{y}_p\}_{p=1}^P,\,\,\,\,\,\,\,\text{with}\,\,\,\,\,\,\,\mathbf{y}_p=(\arccos{(1-2z_{p,\theta_1})},2\pi z_{p,\theta_2},2\pi z_{p,\theta_3},\hat{\Upsilon}_N^{-1}(z_{p,\zeta})) \in Y.
\label{points back in Y}
\end{equation}
The fact that the density function $\rho_N$ is sinusoidal in $\theta_1$ and constant in $\theta_2$ and $\theta_3$ simplifies the sample generation process, eliminating the need for the techniques in \cite[Sec. 5]{Cohen_Migliorati} that use tensor-product orthonormal bases. The inversion $\hat{\Upsilon}_{N}^{-1}$ can be computed using basic root-finding techniques. In our numerical experiments we rely on the bisection method, which is straightforward and reliable.

As anticipated previously, another possible strategy consists in using the extremal points spherical coordinates to replace the first two components of the sampling points in (\ref{points back in Y}). Thus, we need to generate the sampling point sets as in (\ref{points in [0,1]}) only in $[0,1]^2$ and then map them back to the evanescence domain $[0,2\pi)\times[0,+\infty)$.
Observe that using this approach requires $P \in \N$ to be a perfect square once again.

Once the samples $\{\mathbf{y}_p\}_{p=1}^P$ in the Cartesian product $Y$ have been generated, our next step is to construct the evanescent plane wave set (\ref{evanescence sets}). This process involves computing the $N$-term Christoffel function $\mu_N$, which, according to (\ref{muN independence}), depends on both the normalization coefficients $\alpha_{\ell}$ in (\ref{a tilde definizione}) and $\mathbf{P}_{\ell}(\zeta)$ in (\ref{vector P}). Although the latter can be simply derived through some recurrence formulae (see \cite[Eqs. (14.7.15) and (14.10.3)]{nist}), the former brings along some numerical difficulties due to the integral in (\ref{1 lemma 5.3}). However, the use of the approximation presented in (\ref{useful approximation}) can help to overcome this issue.
From (\ref{1 lemma 5.3}) and (\ref{Clm constant}), it follows:
\begin{align*}
\alpha_{\ell}^{-2}&=\frac{8\pi^2}{2\ell+1}\sum_{m=-\ell}^{\ell}\int_0^{\infty}\left[\gamma_{\ell}^m P_{\ell}^m\left(\frac{\eta}{2\kappa}+1 \right) \right]^2 \eta^{1/2}e^{-\eta}\,\textup{d}\eta\\
&\approx \frac{8\pi^2}{2\ell+1}\sum_{m=-\ell}^{\ell} \frac{e^{2\kappa}}{4\pi^2\kappa^{2\ell}} \frac{(2\ell+1) \Gamma^{\,2}(\ell+1/2)}{(l+m)!(l-m)!}\,\Gamma\left(2\ell+\frac{3}{2},2\kappa \right)\\
&=\frac{8\pi^2}{(2\ell+1)} \frac{e^{2\kappa}(2\ell+1)}{4\pi^2\kappa^{2\ell}}\frac{2^{2\ell}}{(2\ell)!}\frac{\sqrt{\pi}(2\ell)!}{2^{2\ell}\ell!}\,\Gamma\left(\ell+\frac{1}{2}\right)\Gamma\left(2\ell+\frac{3}{2},2\kappa \right)\\
&=\frac{2\sqrt{\pi}e^{2\kappa}}{\ell!\kappa^{2\ell}}\Gamma\left(\ell+\frac{1}{2}\right)\Gamma\left(2\ell+\frac{3}{2},2\kappa \right), \numberthis \label{alphal approximation}
\end{align*}
where we used (\ref{gamma half}) and
\begin{equation*}
\sum_{m=-\ell}^{\ell}\frac{1}{(\ell+m)!(\ell-m)!}=\frac{1}{(2\ell)!}\sum_{m=-\ell}^{\ell}\binom{2\ell}{\ell+m}=\frac{1}{(2\ell)!}\sum_{m=0}^{2\ell}\binom{2\ell}{m}=\frac{2^{2\ell}}{(2\ell)!}.
\end{equation*}
In our numerical experiments, we will adopt the approximations presented in (\ref{approx zeta cumulative}) and (\ref{alphal approximation}).

In Chapter \hyperlink{Chapter 7}{7} we test five methods of sampling. These strategies differ both in whether they incorporate extremal systems and in the way the initial sampling distribution is generated. More specifically, in the first three strategies, all the coordinates of the nodes in $Y$ are sampled according to the probability distribution (\ref{rho density}) and thus the initial samples are generated in $[0,1]^4$. Otherwise, in the last two strategies, the extremal points coordinates are involved in order to define the first two component of the sampling points in $Y$. In the latter case, the initial samples are generated in $[0,1]^2$.

\begin{definition}[\hypertarget{Definition 6.4}{Sampling strategies}]
We will consider the following strategies:
\begin{enumerate}[\normalfont(a)]
\item \textup{Deterministic sampling}: the initial samples in $[0,1]^4$ are a Cartesian product of four sets of equispaced points with equal number of points in every directions. The numerical results presented use the smallest $4$th-power integer greater than or equal to $P$ as the approximation set dimension.
\item \textup{Random sampling}: the initial samples in $[0,1]^4$ are generated randomly according to the product of four uniform distributions $\mathcal{U}_{[0,1]}$.
\item \textup{Sobol sampling}: the initial samples in $[0,1]^4$ correspond to Sobol sequences which are quasi-random low-discrepancy sequences.
\item \textup{Extremal--Random sampling}: the initial samples in $[0,1]^2$ are generated randomly according to the product of two uniform distributions $\mathcal{U}_{[0,1]}$. The numerical results presented use the smallest square integer greater than or equal to $P$ as the approximation set dimension.
\item \textup{Extremal--Sobol sampling}: the initial samples in $[0,1]^2$ correspond to Sobol sequences. The numerical results presented use the smallest square integer greater than or equal to $P$ as the approximation set dimension.
\end{enumerate}
\end{definition}

\begin{remark}
\hypertarget{Remark 6.5}{To} avoid overloading the notation while ensuring generality in the discussion, sometimes we will improperly use $P$ to denote the dimension of the evanescent plane wave approximation set $\boldsymbol{\Phi}_{L,P}$, even though it may not accurately reflect its actual size, which depends on the sampling strategy employed.
\end{remark}

Some examples of node sets resulting from the previous sampling strategies are depicted in Figure \ref{figure 6.2} and Figure \ref{figure 6.3}.
We only report the components related to the parameters $\theta_1 \in [0,\pi]$ and $\zeta \in [0,+\infty)$, since the probability density function $\rho_N$ is constant with respect to the other Euler angles $\theta_2$ and $\theta_3$.
As anticipated, for smaller values of $L$, the sampling points cluster near the line $\zeta=0$, which is the regime where propagative plane waves provide a sufficient approximation. However, it should be noted that there are no purely propagative plane waves at $\zeta=0$, as $\rho_N$ is a continuous distribution. For $L > \kappa$, the evanescence parameter $\zeta$ covers a wider range, with some concentration at the secondary peak of the distribution (roughly around $\zeta=5\kappa$), which aligns with the results of Figure \ref{figure 6.1}.

\begin{remark}
As we shall see, the previous sampling strategies lead to very similar results. In particular, these shows that employing extremal systems of points \textup{(\ref{G matrix})}, as within the sampling procedures \textup{(d)} and \textup{(e)} in Definition \textup{\hyperlink{Definition 6.4}{6.4}}, offers no further improvements.
It is worth noting that the problem of choosing the parameters $\theta_1 \in [0,\pi]$ and $\theta_2 \in [0,2\pi)$, here aimed at building the evanescent plane wave approximation sets \textup{(\ref{evanescence sets})}, is common not only to the construction of propagative plane wave direction sets \textup{(\ref{plane waves approximation set})}, but also to the definition of the sampling points $\{\mathbf{x}_s\}_{s=1}^S \subset \partial B_1$, which are used in the linear system \textup{(\ref{linear system})}.
This suggests that it is not necessary to exploit the geometric properties of extremal points, but rather that it is enough to rely on a weighted sampling strategy, be it deterministic, random or even quasi-random.
This allows us to achieve similar results with less effort, due to being able to dispense with extremal systems, and therefore with fewer constraints, given that the number of points no longer necessarily has to be a perfect square.
\end{remark}

\begin{figure}
\centering
\begin{subfigure}{0.98\linewidth}
\includegraphics[width=0.9\linewidth]{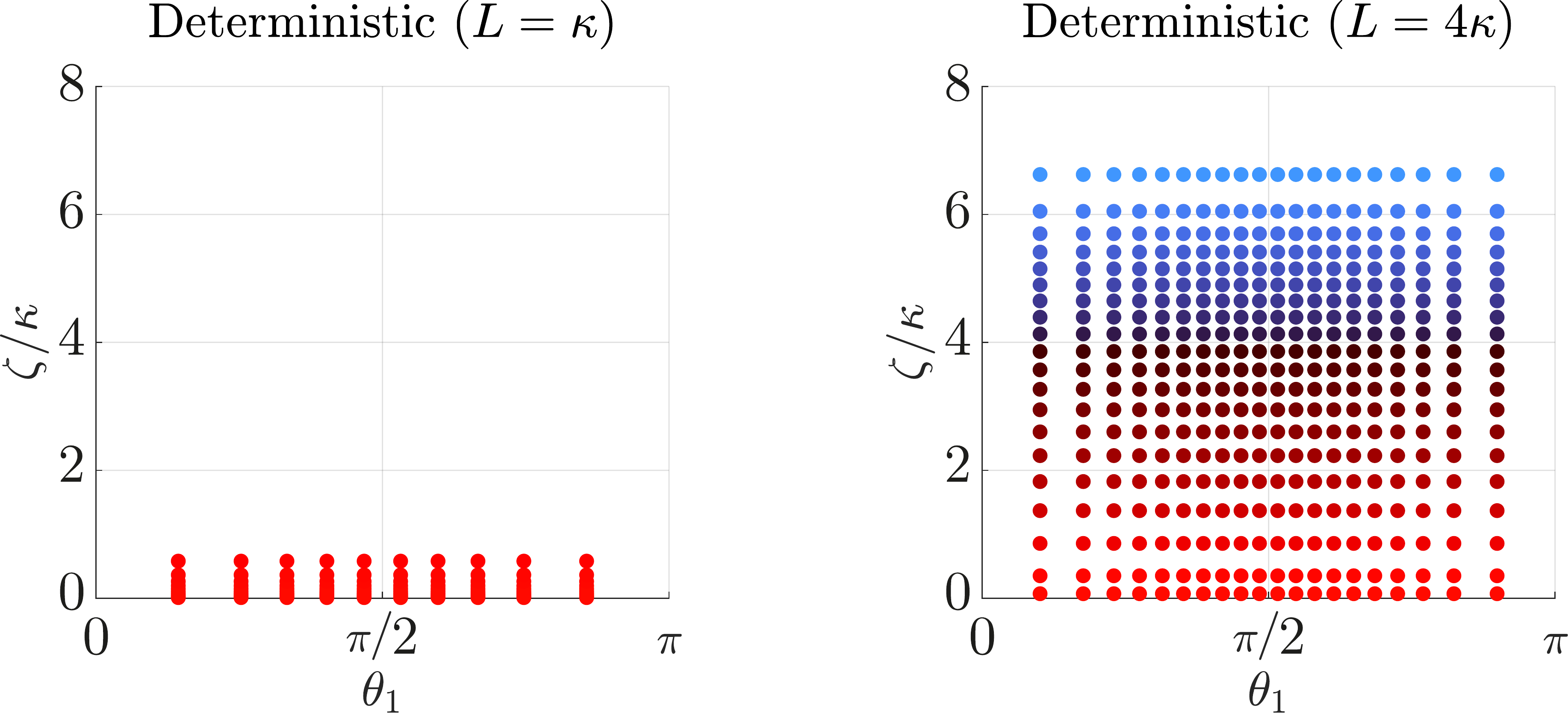}
\vspace{4mm}
\end{subfigure}
\begin{subfigure}{0.98\linewidth}
\includegraphics[width=0.9\linewidth]{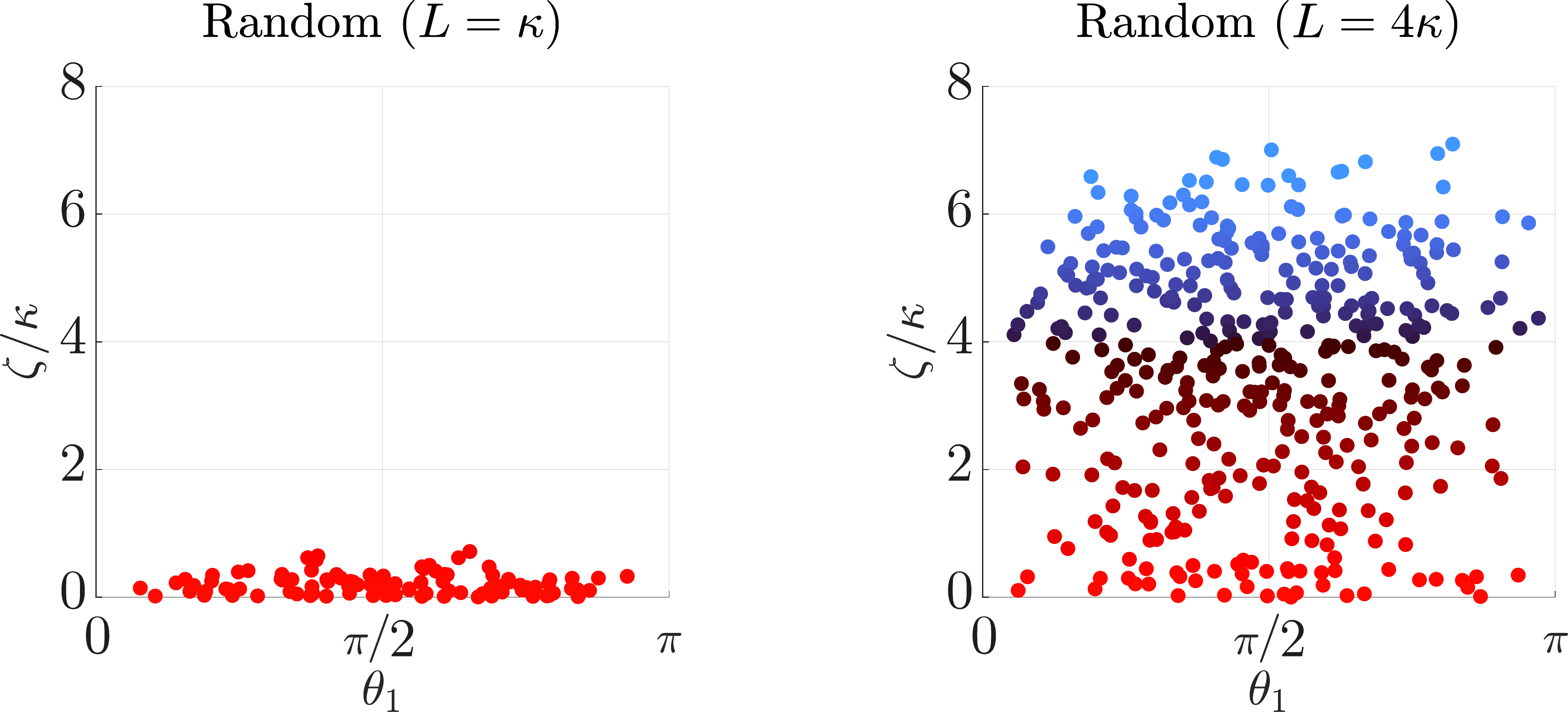}
\vspace{4mm}
\end{subfigure}
\begin{subfigure}{0.98\linewidth}
\includegraphics[width=0.9\linewidth]{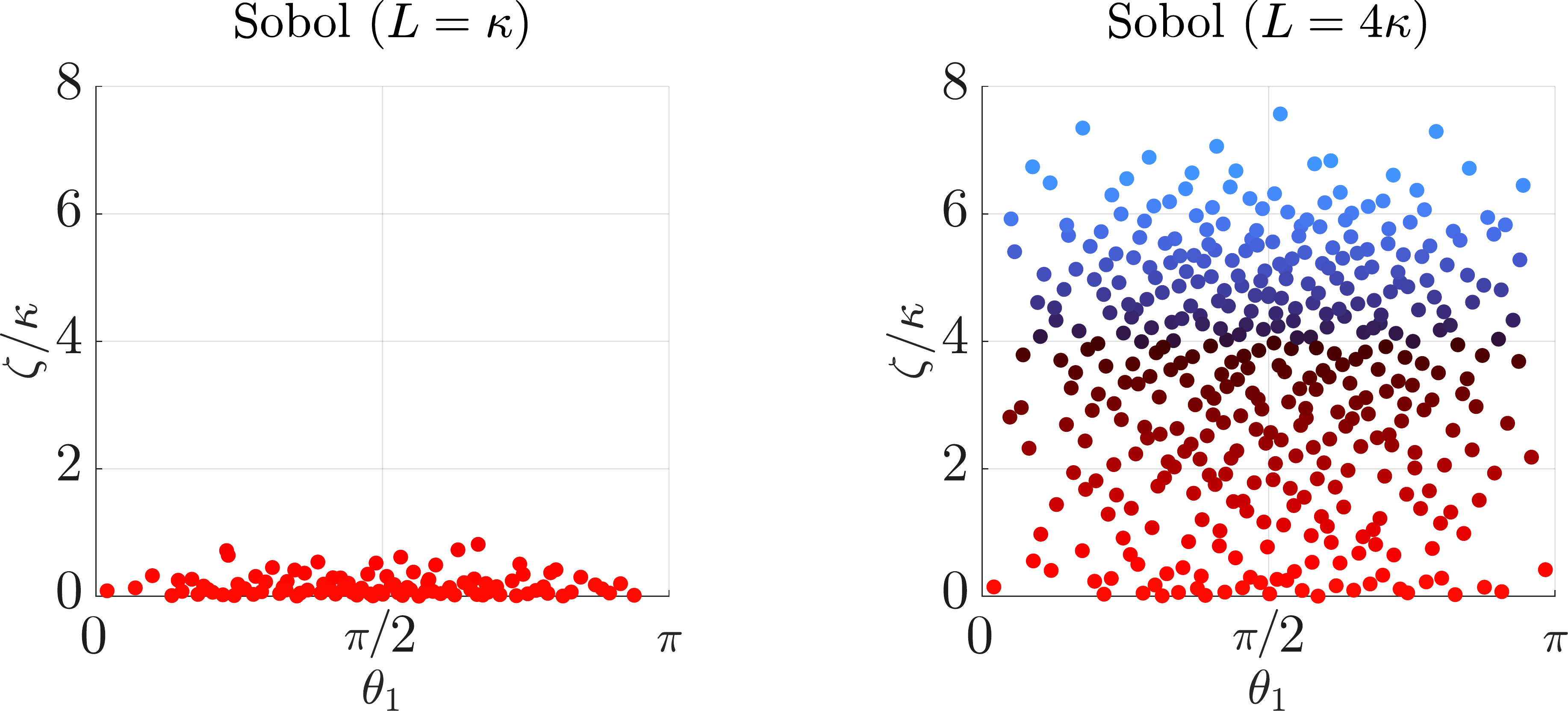}
\end{subfigure}
\caption{$P=6L$ samples in $[0,\pi]\times[0,+\infty)$ for $L$ equal to $\kappa$ (left) and $4\kappa$ (right) and for sampling strategies (a)--(c) presented in Definition \protect\hyperlink{Definition 6.4}{6.4} (from top to bottom). The points are colored according to the square root of $\mu_N$ in (\ref{rho density}), that is computed using the approximation in (\ref{alphal approximation}). Wavenumber $\kappa=16$.}
\label{figure 6.2}
\end{figure}

\begin{figure}
\centering
\begin{subfigure}{0.98\linewidth}
\includegraphics[width=0.9\linewidth]{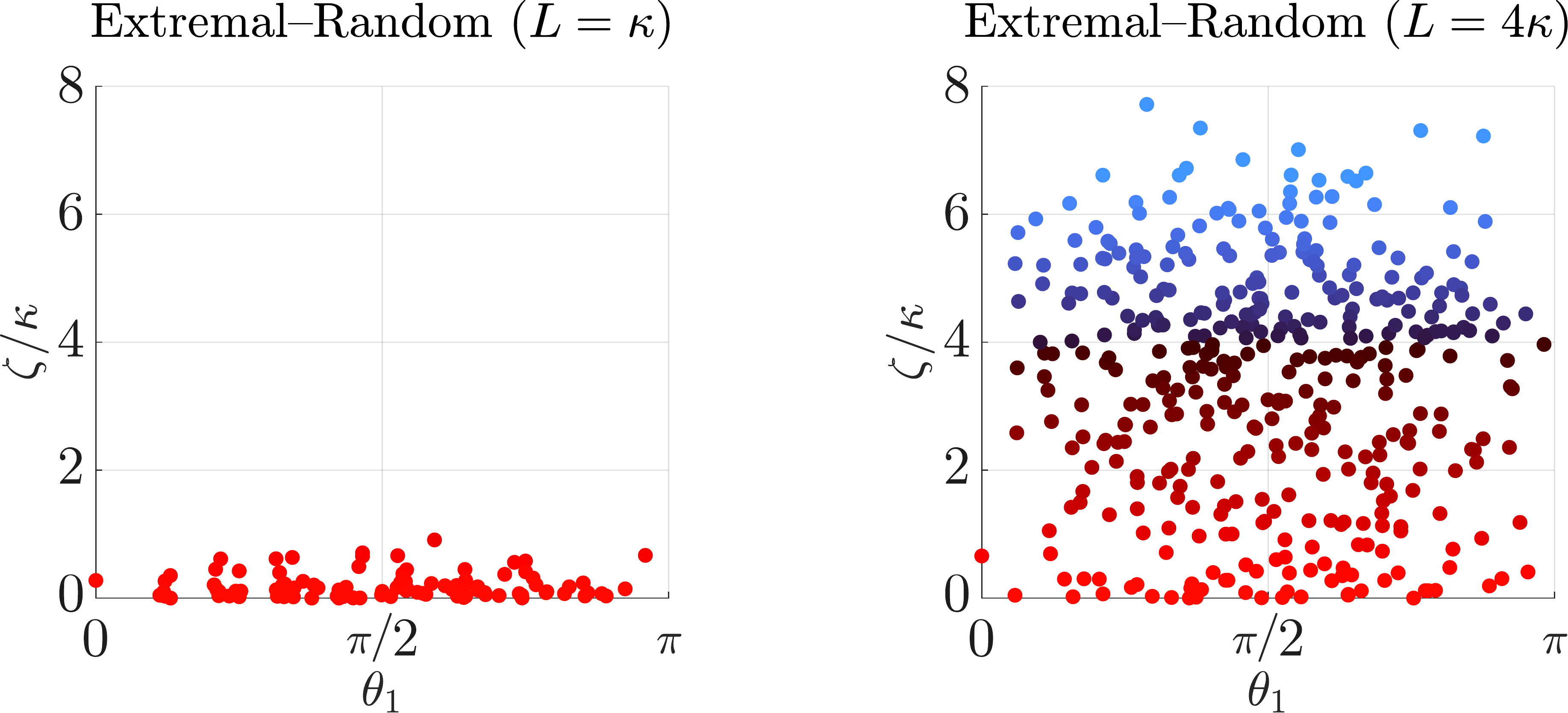}
\vspace{4mm}
\end{subfigure}
\rule{0pt}{4ex} 
\begin{subfigure}{0.98\linewidth}
\includegraphics[width=0.9\linewidth]{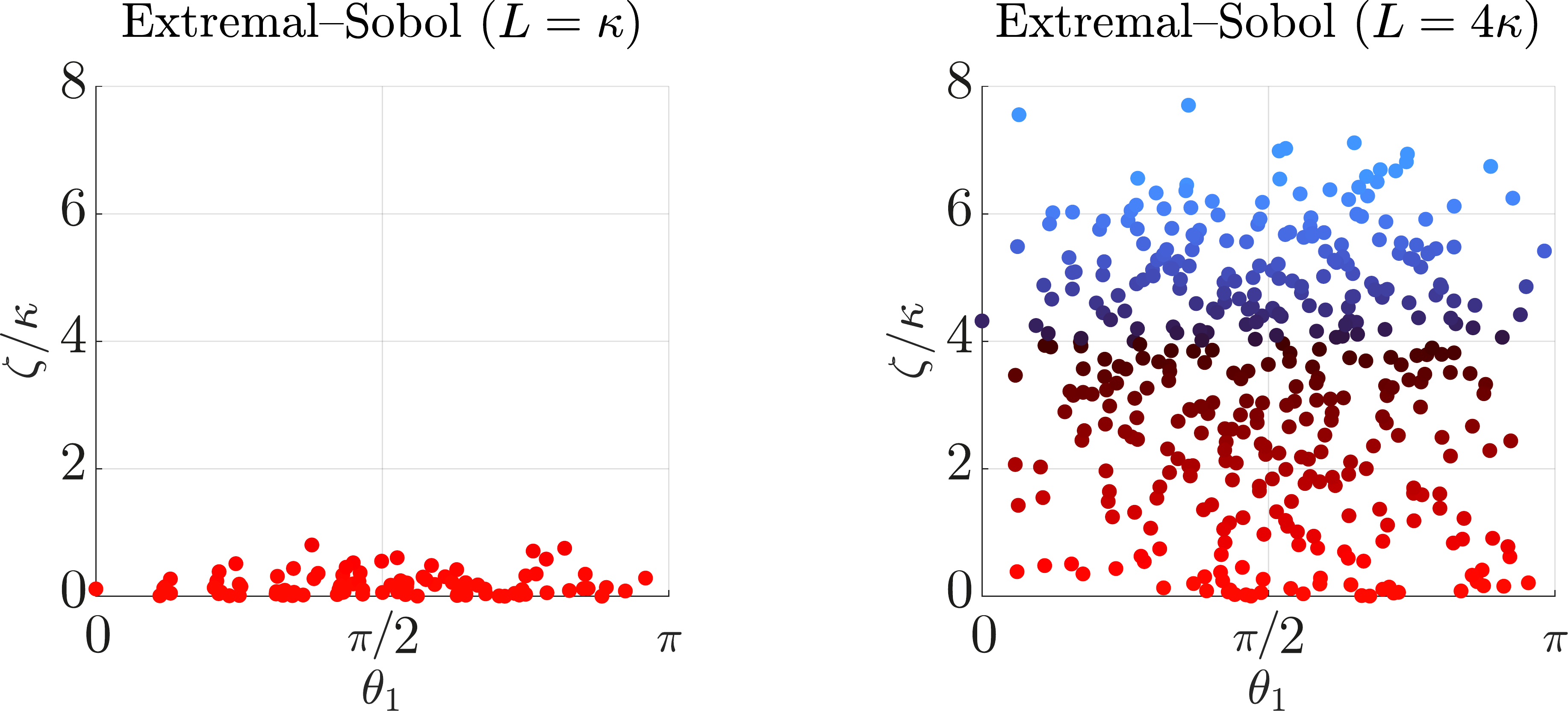}
\end{subfigure}
\caption{$P=\lceil\sqrt{6L}\rceil^2$ nodes in $[0,\pi]\times[0,+\infty)$ for $L$ equal to $\kappa$ (left) and $4\kappa$ (right) and for sampling strategies (d) and (e) presented in Definition \protect\hyperlink{Definition 6.4}{6.4} (from top to bottom). Since the nodes first components are defined thanks to the extremal systems, there is always a point for which $\theta_1=0$: in fact, as highlighted in Section \protect\hyperlink{Section 2.3}{2.3}, regardless of the cardinality of the system, the first extremal point is always fixed at the north pole. The points are colored according to the square root of $\mu_N$ in (\ref{rho density}), that is computed using the approximation in (\ref{alphal approximation}). Wavenumber $\kappa=16$.}
\label{figure 6.3}
\end{figure}

\chapter{Numerical results}
\hypertarget{Chapter 7}{We} present numerical evidence that the described recipe can be used to obtain stable and accurate approximations of Helmholtz solutions inside the unit ball $B_1$.
First, we repeat the numerical experiments of Section \hyperlink{Section 3.4}{3.5} considering approximation sets consisting of evanescent plane waves, built according to the sampling strategies outlined in Definition \hyperlink{Definition 6.4}{6.4}.
Then, we investigate the validity of Conjecture \hyperlink{Conjecture 6.1}{6.2}, by reconstructing some solution surrogates and studying the convergence of the error. Moreover, we analyse the optimal size of the approximation set $P$ in relation to the truncation parameter $L$.
Finally, we conclude this chapter with some numerical results involving the approximation of the fundamental solution of the Helmholtz equation in 3D, both in the unit ball $B_1$ and in different geometries.

\section{Evanescent plane waves stability}

\hypertarget{Section 7.1}{We} consider again the numerical test from Section \hyperlink{Section 3.4}{3.5}, which showed that any approximation using propagative plane waves is unstable. We aim to determine if our proposed method using evanescent plane waves improves stability without sacrificing accuracy.
The context remains unchanged: we calculate approximations of the spherical waves $b_{\ell}^0$ for several degrees $\ell$, since, similarly to Figure \ref{figure triangle}, the accuracy and stability properties of the numerical results do not differ significantly varying the order $|m|\leq \ell$, as shown in Figure \ref{figure 7.1} for the particular case where $P=4L^2=64\kappa^2$.
However, this time we use the approximation sets $\boldsymbol{\Phi}_{L,P}$ defined in (\ref{evanescence sets}), whose evanescent plane waves are characterized by the parameters $\{\mathbf{y}_{L,P,\,p}\}_{p}$, obtained according to the sampling strategies presented in Definition \hyperlink{Definition 6.4}{6.4}.
The normalization of the evanescent waves is described in (\ref{evanescence sets}), where the parameter $L$ is fixed at $4\kappa$.

The numerical results are displayed in Figure \ref{figure 7.2} and Figure \ref{figure 7.4}. On one hand, the left panel shows the relative residual $\mathcal{E}$ defined in (\ref{relative residual}) as an indicator of the approximation's accuracy. On the other hand, the right panel displays the magnitude of the coefficients, $\|\boldsymbol{\xi}_{S,\epsilon}\|_{\ell^2}$, to indicate the stability of the approximation. These results should be compared to those in Figure \ref{figure 3.4}, which were obtained when only propagative plane waves were used in the approximation set defined in (\ref{plane waves approximation set}).
\begin{figure}
\centering
\includegraphics[width=.82\linewidth]{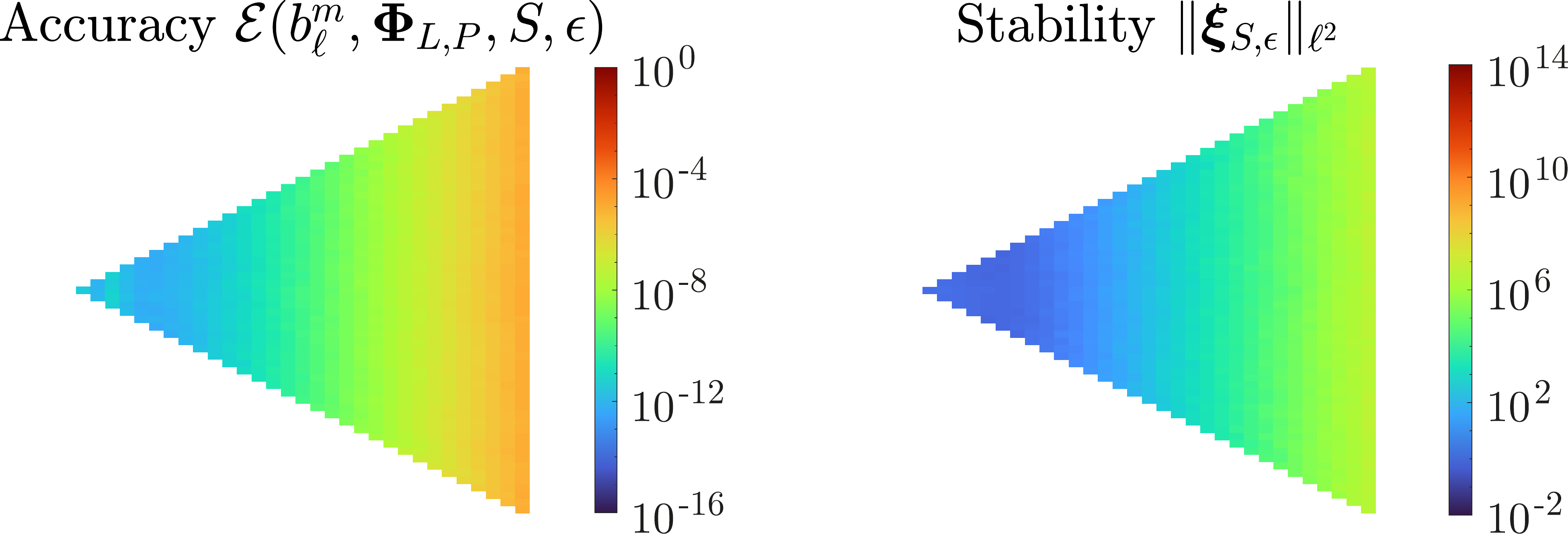}
\caption{Accuracy $\mathcal{E}$ as defined in (\ref{relative residual}) (left) and stability $\|\boldsymbol{\xi}_{S,\epsilon}\|_{\ell^2}$ (right) of the approximation of spherical waves $b_{\ell}^m$ by evanescent plane waves, whose parameters in $Y$ are choosen according to the Sobol sampling (c) in Definition \protect\hyperlink{Definition 6.4}{6.4}. The index $\ell$ varies along the abscissa within the range $0 \leq \ell \leq 5\kappa$, while $m$ varies along the ordinate within the range $0 \leq |m| \leq \ell$ forming a triangle.
Truncation at $L=4\kappa$, DOF budget $P=4L^2$, wavenumber $\kappa=6$ and regularization parameter $\epsilon=10^{-14}$.} \label{figure 7.1}
\end{figure}
The key result is that by using a sufficient number of waves, i.e.\ making $P$ large enough, we can approximate all the modes $\ell \leq L=4\kappa$ to near machine precision. This encompasses the propagative modes $\ell \leq \kappa$, which were already well-approximated by only using propagative plane waves, but, more importantly, also includes the evanescent modes $\kappa < \ell \leq L=4\kappa$ for which purely propagative plane waves provided poor or no approximation.
Additionally, even modes with an higher degree, i.e.\ $L=4\kappa < \ell \leq 5\kappa$, are accurately approximated.
The norms of the coefficients $\|\boldsymbol{\xi}_{S,\epsilon}\|_{\ell^2}$ in the approximate expansions are moderate and this is in stark contrast with the results of Section \hyperlink{Section 3.4}{3.5}.
Moreover, it can be seen that for small values of $P$, such as $P=L^2=16\kappa^2$ and $P=2L^2=32\kappa^2$, purely propagative plane waves provide a better approximation of propagative modes than evanescent plane waves. This is because the approximation spaces made of propagative plane waves are tuned for propagative modes, which span a space of dimension $(\kappa+1)^2$. On the other hand, the approximation spaces created using evanescent plane waves target a larger number of modes, including some evanescent modes, which span a space of dimension $N=(L+1)^2$, where $L=4\kappa$ in this numerical experiment.

Comparing Figure \ref{figure 7.3} and Figure \ref{figure 7.5} with Figure \ref{figure 3.3}, it can be seen that when $P$ is large enough, the condition number of the matrix $A$ is comparable for both propagative and evanescent plane waves. The improved accuracy for evanescent modes is not due to a better conditioning of the linear system, but rather to an increase of the $\epsilon$-rank (i.e.\ the number of singular values greater than $\epsilon$) of the matrix (from less than $10^3$ for propagative plane waves to around $5 \times 10^3$ for evanescent plane waves in the case $P=16L^2=256\kappa^2$). Raising the truncation parameter $L$ allows to increase the $\epsilon$-rank.

\begin{figure}
\centering
\begin{subfigure}{0.98\linewidth}
\includegraphics[width=\linewidth]{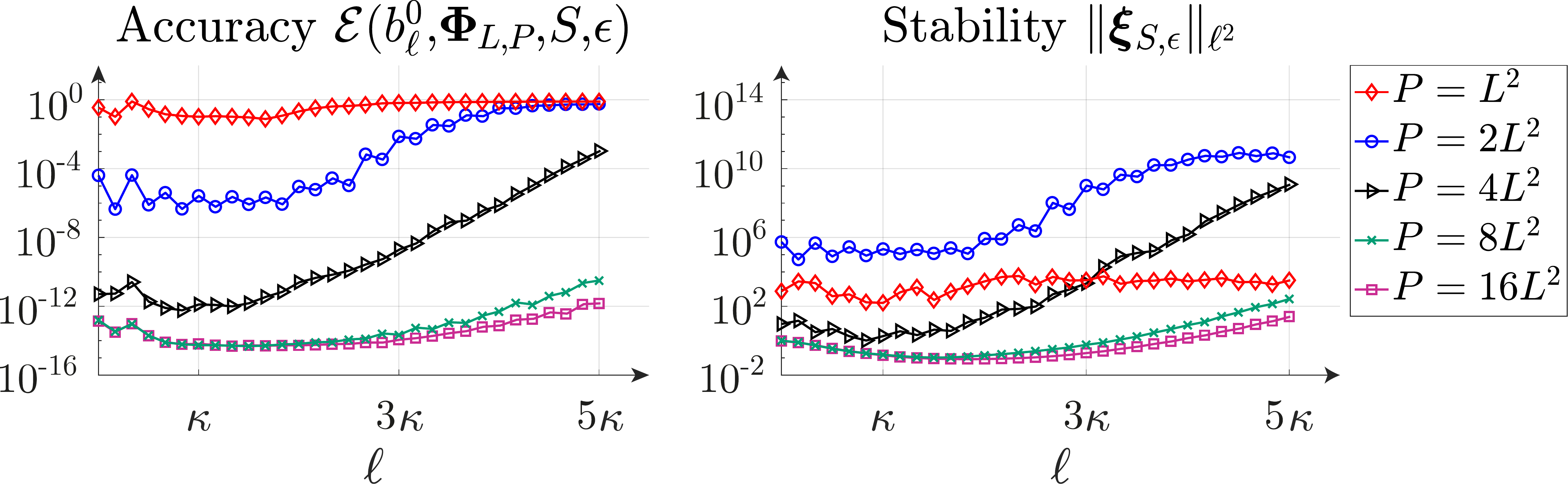}
\subcaption{Deterministic sampling.}
\end{subfigure}
\begin{subfigure}{0.98\linewidth}
\vspace{5mm}
\includegraphics[width=\linewidth]{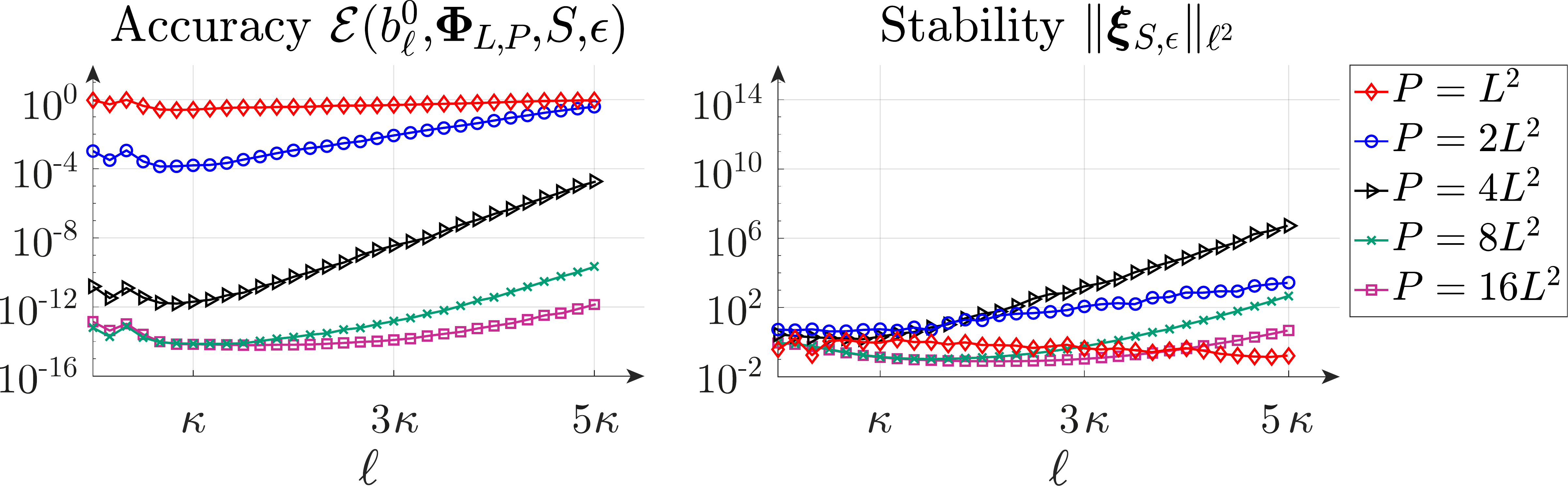}
\subcaption{Random sampling.}
\end{subfigure}
\begin{subfigure}{0.98\linewidth}
\vspace{5mm}
\includegraphics[width=\linewidth]{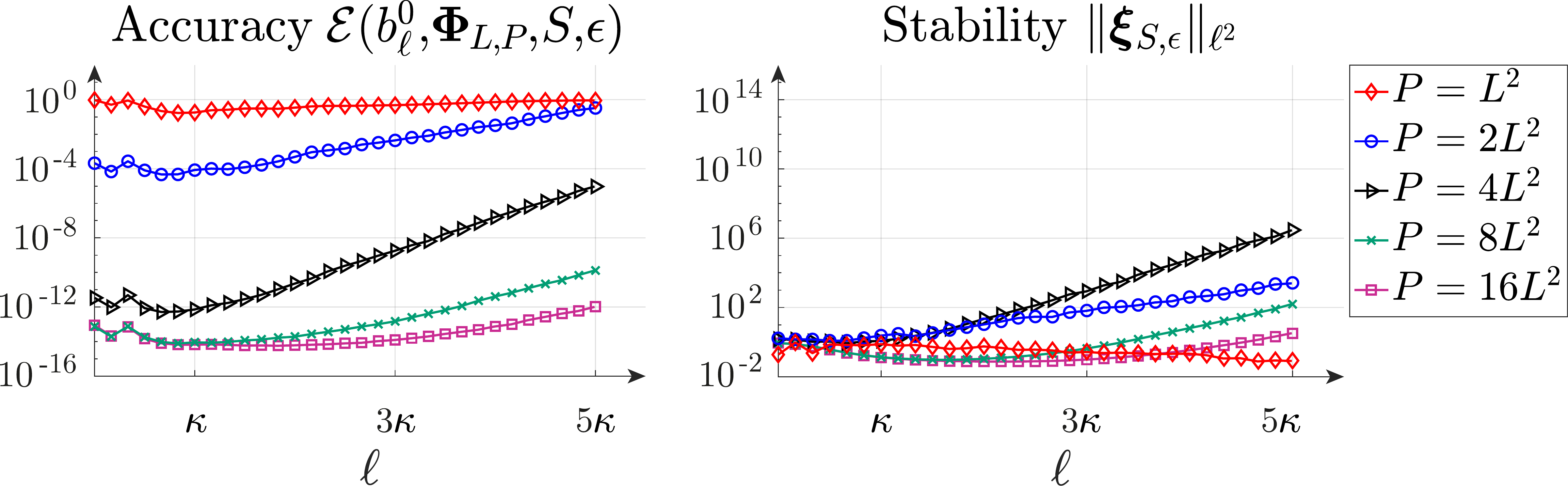}
\subcaption{Sobol sampling.}
\end{subfigure}
\caption{Accuracy $\mathcal{E}$ as defined in (\ref{relative residual}) (left) and stability $\|\boldsymbol{\xi}_{S,\epsilon}\|_{\ell^2}$ (right) of the approximation of spherical waves $b_{\ell}^0$ by evanescent plane waves, whose parameters in $Y$ are chosen according to the sampling strategies (a)--(c) presented in Definition \protect\hyperlink{Definition 6.4}{6.4}.
Compare these results with those presented in Figure \ref{figure 3.4}.
Truncation at $L=4\kappa$, wavenumber $\kappa=6$ and regularization parameter $\epsilon=10^{-14}$.}
\label{figure 7.2}
\end{figure}

\begin{figure}
\centering
\begin{subfigure}{\linewidth}
\centering
\includegraphics[width=8.7cm]{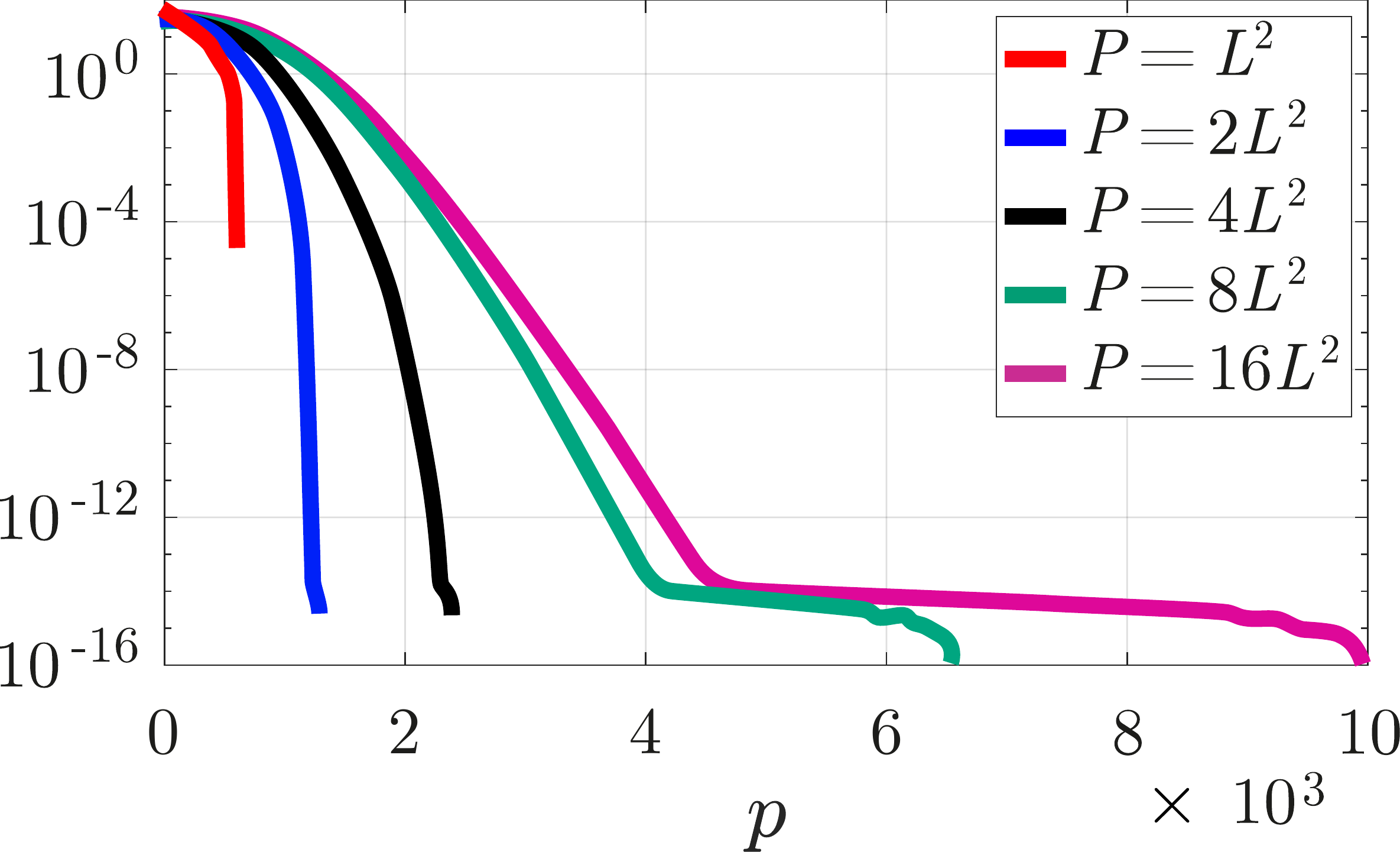}
\subcaption{Deterministic sampling.}
\end{subfigure}
\begin{subfigure}{\linewidth}
\centering
\vspace{5mm}
\includegraphics[width=8.7cm]{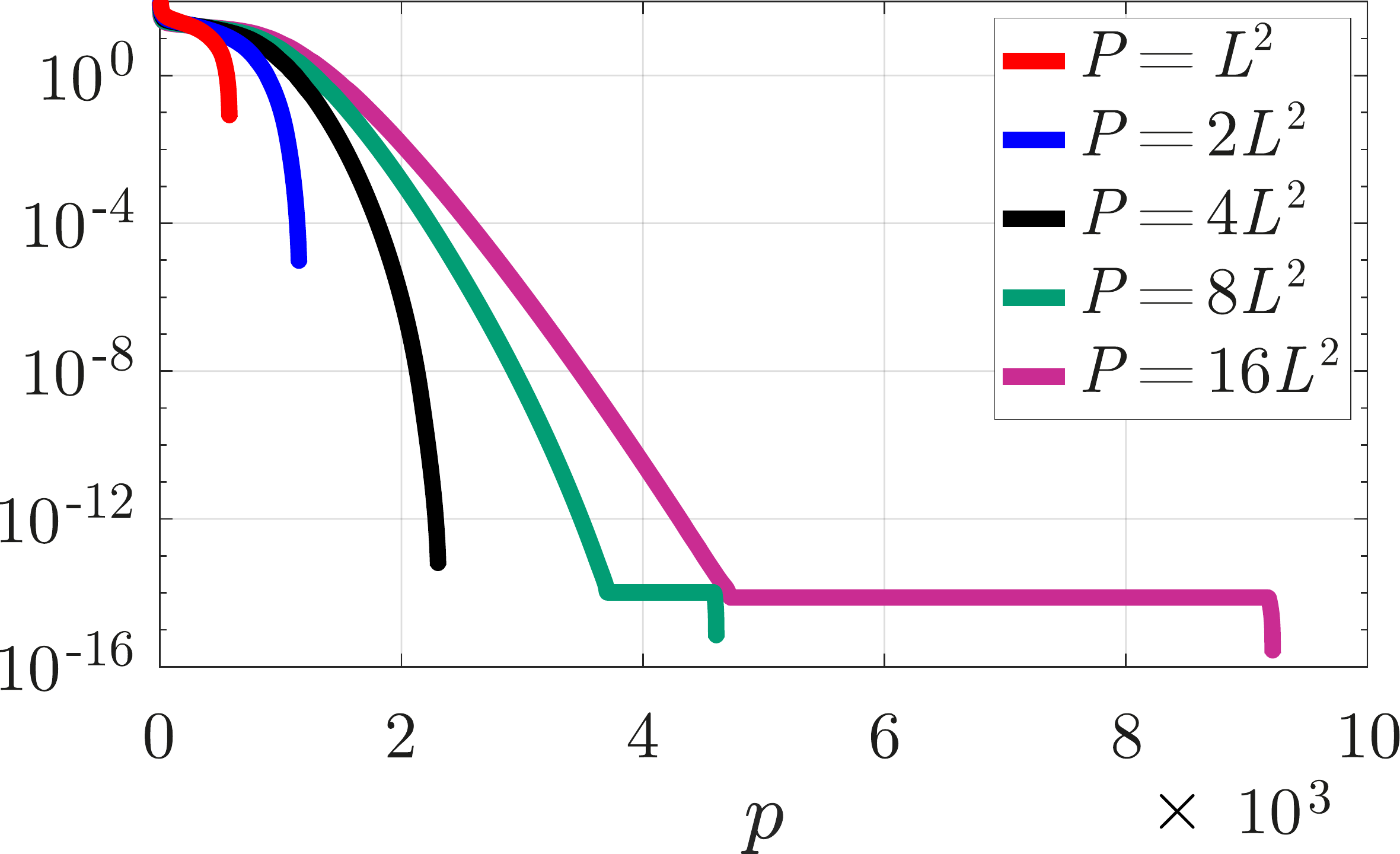}
\subcaption{Random sampling.}
\end{subfigure}
\begin{subfigure}{\linewidth}
\centering
\vspace{5mm}
\includegraphics[width=8.7cm]{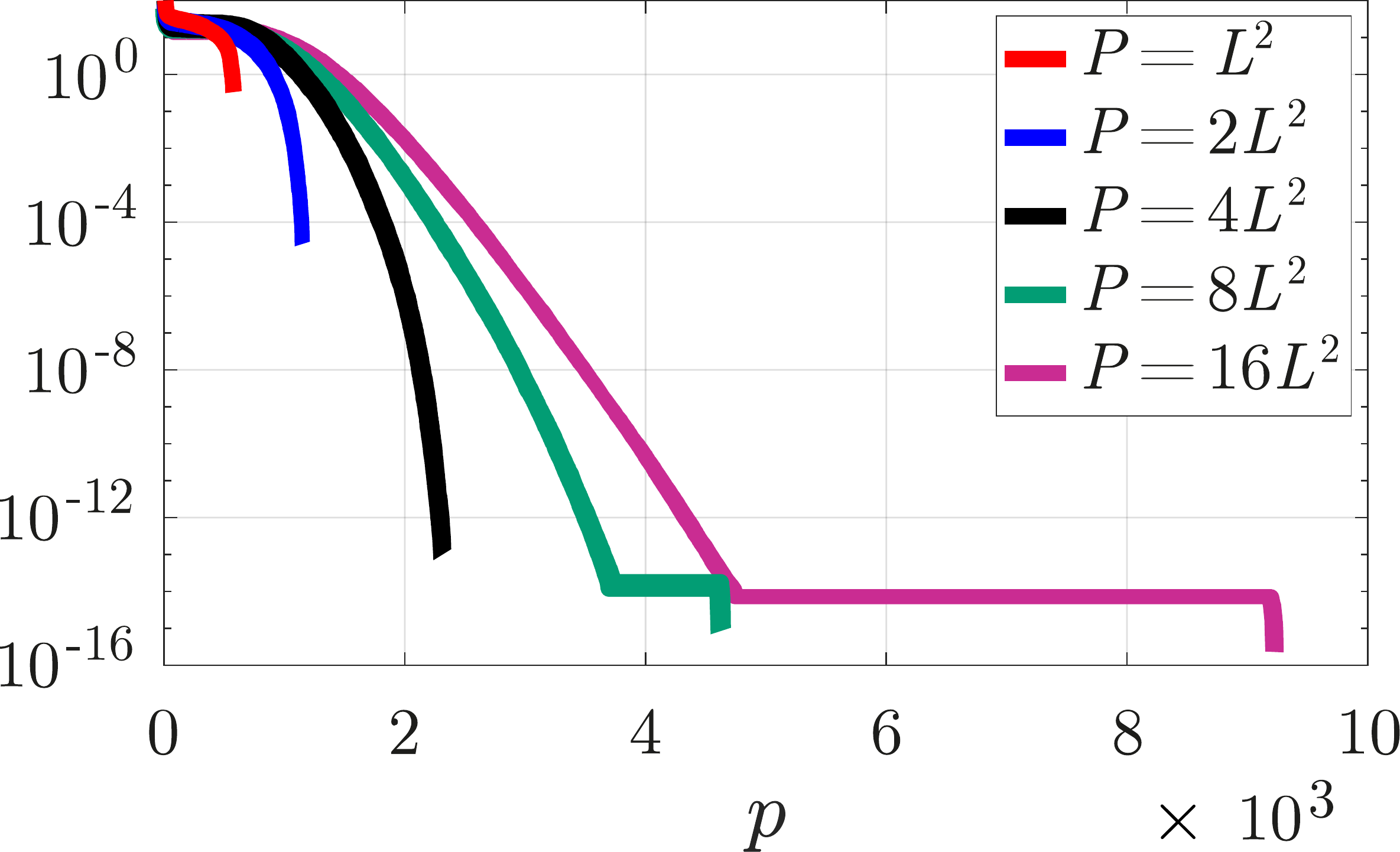}
\subcaption{Sobol sampling.}
\end{subfigure}
\caption{Singular values $\{\sigma_p\}_{p}$ of the matrix $A$ using evanescent plane wave approximation sets (\ref{evanescence sets}), whose parameters in $Y$ are chosen according to the sampling strategies (a)--(c) presented in Definition \protect\hyperlink{Definition 6.4}{6.4}.
Compare these results with those presented in Figure \ref{figure 3.2}.
Truncation at $L=4\kappa$, wavenumber $\kappa=6$.}
\label{figure 7.3}
\end{figure}

\begin{figure}
\centering
\begin{subfigure}{0.98\linewidth}
\phantomcaption
\phantomcaption
\phantomcaption
\includegraphics[width=\linewidth]{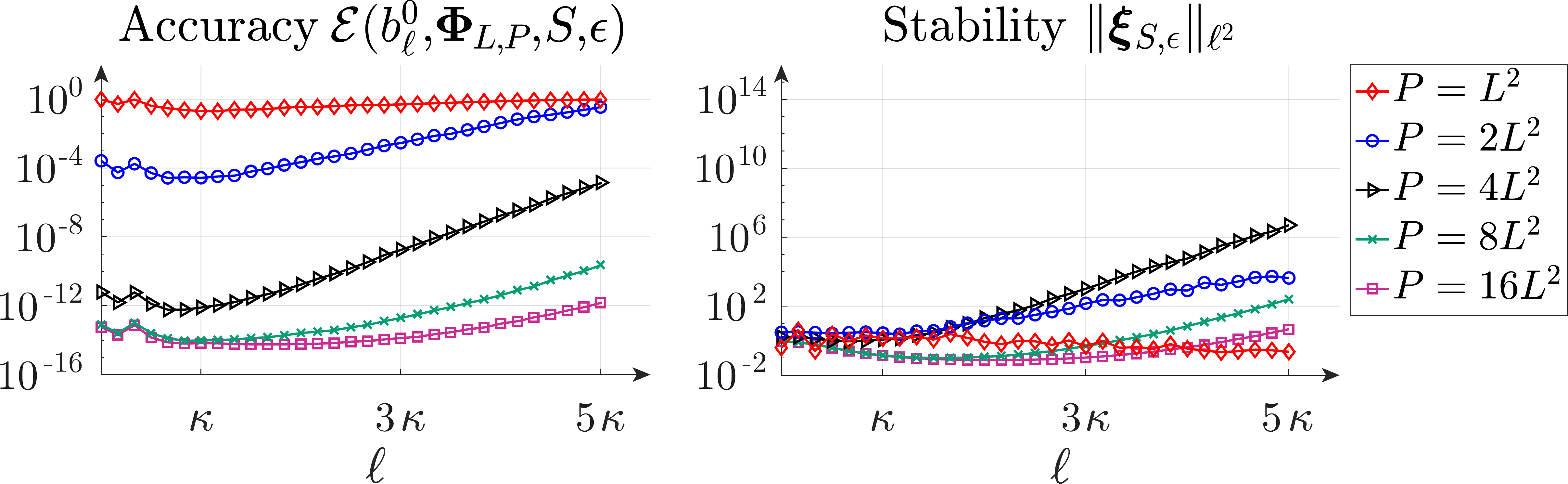}
\subcaption{Extremal--Random sampling.}
\end{subfigure}
\begin{subfigure}{0.98\linewidth}
\vspace{5mm}
\includegraphics[width=\linewidth]{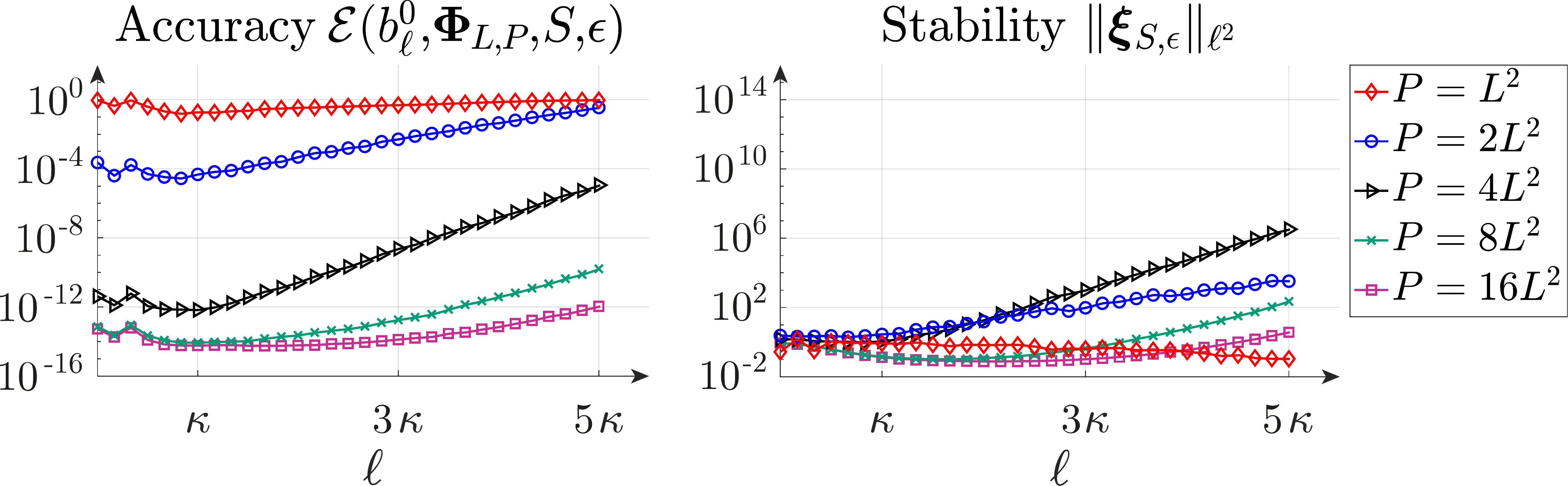}
\subcaption{Extremal--Sobol sampling.}
\end{subfigure}
\caption{ The caption of Figure \ref{figure 7.2} applies here as well, with the only difference that the parameters in $Y$ are now chosen according to the sampling strategies (d) and (e) in Definition \protect\hyperlink{Definition 6.4}{6.4}.}
\label{figure 7.4}
\end{figure}

\begin{figure}
\centering

\begin{subfigure}{0.49\textwidth}
\phantomcaption
\phantomcaption
\phantomcaption
\includegraphics[width=0.97\linewidth]{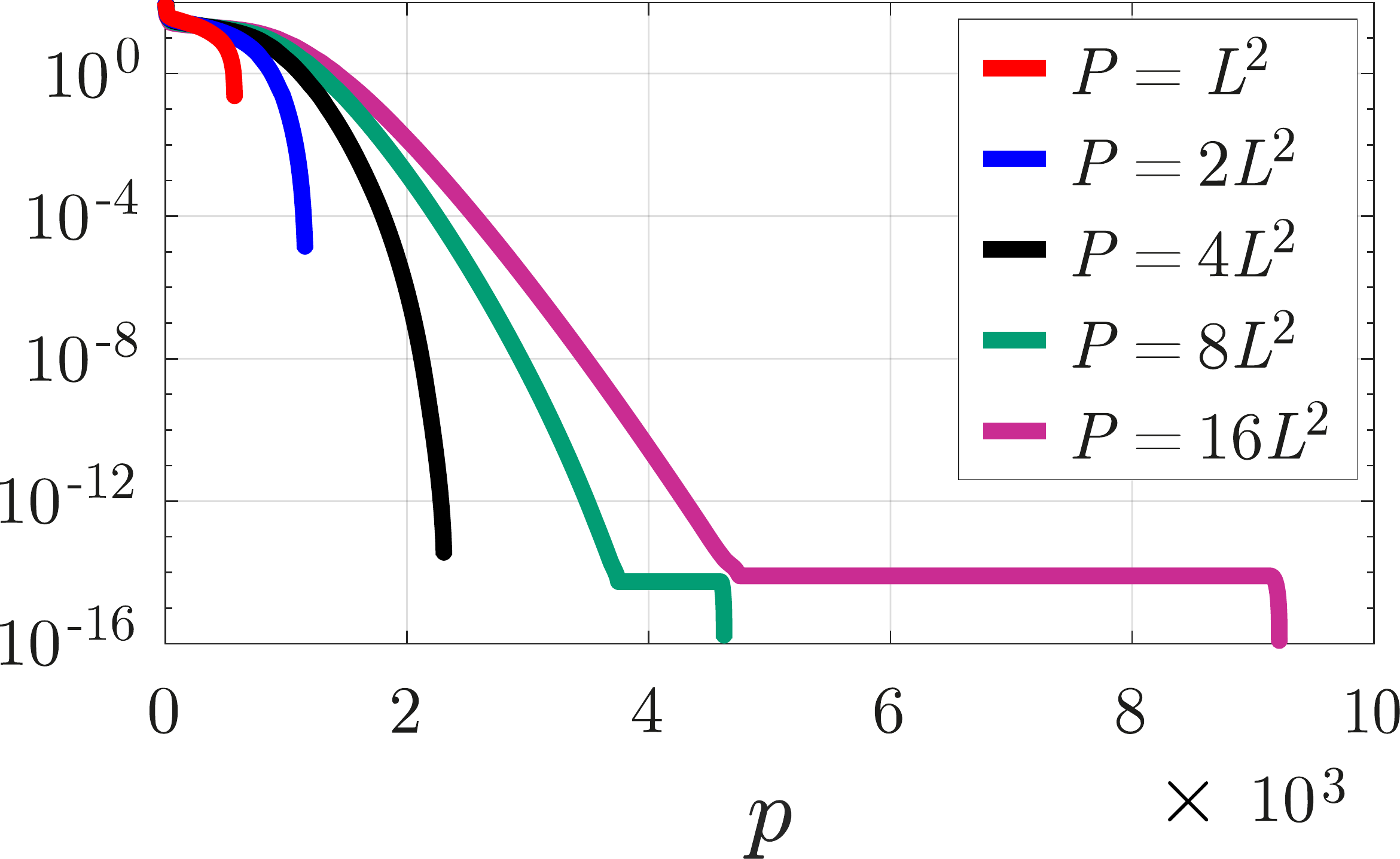}
\caption{Extremal--Random sampling.}
\end{subfigure}
\begin{subfigure}{0.49\textwidth}
\includegraphics[width=0.97\linewidth]{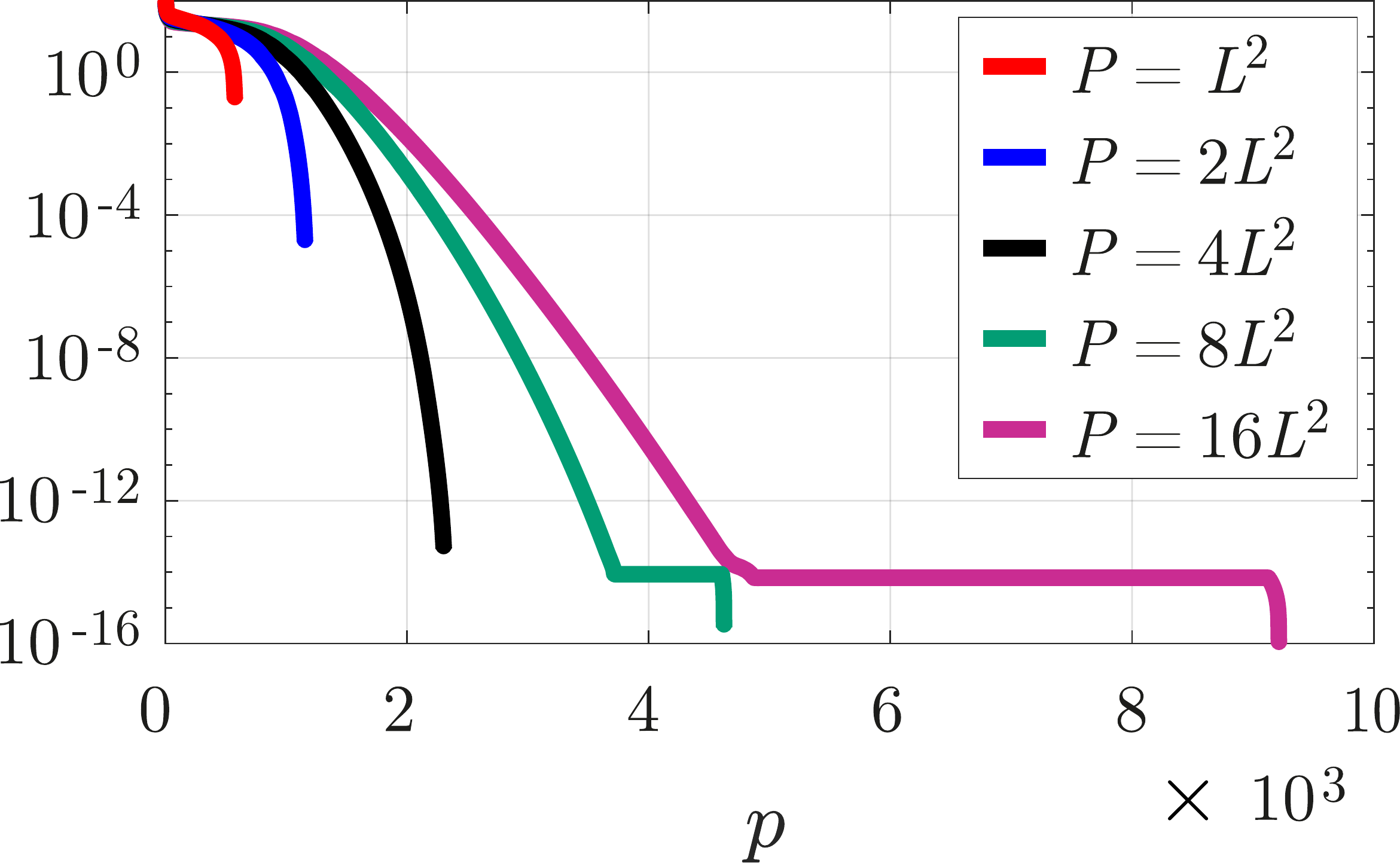}
\caption{Extremal--Sobol sampling.}
\end{subfigure}
\caption{The caption of Figure \ref{figure 7.3} applies here as well, with the only difference that the parameters in $Y$ are now chosen according to the sampling strategies (d) and (e) in Definition \protect\hyperlink{Definition 6.4}{6.4}.}
\label{figure 7.5}
\end{figure}

\section{Approximation of solution surrogates}

\hypertarget{Section 7.2}{We} test the previously described procedure by reconstructing a solution surrogate of the following form:
\begin{equation}
u:=\sum_{\ell=0}^L\sum_{m=-\ell}^{\ell}\hat{u}_{\ell}^mb_{\ell}^m \in \mathcal{B}_L.
\label{surrogate solution}
\end{equation}
The coefficients $\{\hat{u}_{\ell}^m\}_{(\ell,m) \in \mathcal{I}}$ of the expansion (\ref{surrogate solution}) are products of normally-dist-ributed random numbers (with mean $0$ and standard deviation $1$) and the scaling factors $[\max\{1,\ell-\kappa\}]^{-1}$. The coefficients of any element of $\mathcal{B}$ decay in modulus as $o(\ell^{-1})$ for $\ell \rightarrow \infty$, therefore, this scenario is quite challenging.

We perform the previously outlined procedure for all the sampling strategies presented in Definition \hyperlink{Definition 6.4}{6.4}.
The primary objective is to examine the validity of Conjecture \hyperlink{Conjecture 6.1}{6.2} focusing on how the error behaves as the dimension $P$ of the approximation space increases (see Remark \hyperlink{Remark 6.5}{6.5} for the exact approximation space dimension). In analogy with what was previously done, we choose $S=\lceil \sqrt{2|\boldsymbol{\Phi}_{L,P}|} \rceil^2$ extremal points, presented in Section \hyperlink{Section 2.3}{2.3}, for sampling on the sphere.
The numerical results are displayed in Figure \ref{figure 7.6} and Figure \ref{figure 7.7}. The left panel shows the relative residual $\mathcal{E}$, defined in (\ref{relative residual}), as a measure of the accuracy of the approximation. The right panel depicts the magnitude of the coefficients, namely $\|\boldsymbol{\xi}_{S,\epsilon}\|_{\ell^2}/\|u\|_{\mathcal{B}}$, as a measure of the stability of the approximation.

The error decreases quite rapidly with respect to the ratio $P/N(L)=P/(L+1)^2$, which is the dimension of the approximation set divided by the dimension of the space of the possible solution surrogates.
The numerical results depicted in Figure \ref{figure 7.6} and Figure \ref{figure 7.7} suggest that the size of the approximation set $P$ should vary quadratically with respect to the truncation parameter $L$.
In fact, when $L$ is large enough (e.g.\ $L \geq 2\kappa$), the decay is largely independent of $L$: this is consistent with Conjecture \hyperlink{Conjecture 6.1}{6.2}.
This is an important question regarding the efficiency of the proposed method: the approximation spaces $\boldsymbol{\Phi}_{L,P}$ defined in (\ref{evanescence sets}) are quasi-optimal, i.e.\ we only need $P=\mathcal{O}(N)$ DOFs with a moderate proportionality constant to approximate $N$ spherical modes with reasonable accuracy.

Furthermore, note that the magnitude of the coefficients $\|\boldsymbol{\xi}_{S,\epsilon}\|_{\ell^2}/\|u\|_{\mathcal{B}}$ in the expansions decreases as the dimension of the approximation space grows. This confirms that accurate and stable approximations can be obtained. The reported values of $\|\boldsymbol{\xi}_{S,\epsilon}\|_{\ell^2}/\|u\|_{\mathcal{B}}$ for small values of $P/N(L)$ (especially the initial increase) are not significant, as they correspond to imprecise approximations.

In Figure \ref{figure 7.8}, we present the plots of a solution surrogate (\ref{surrogate solution}), with wavenumber $\kappa=5$ and truncation parameter $L=5\kappa=25$. Additionally, in Figure \ref{figure 7.9} and Figure \ref{figure 7.10} we provide the absolute error using first $P=4(L+1)^2=2704$ plane waves and then $P=9(L+1)^2=6084$ plane waves, whether they are propagative or evanescent (where the nodes in $Y$ are selected using the Extremal--Sobol strategy (e) in Definition \hyperlink{Definition 6.4}{6.4}).
In both cases the same regularized oversampling technique discussed in Section \hyperlink{Section 2.2}{2.2} is used.

\begin{figure}
\centering
\begin{subfigure}{0.98\linewidth}
\includegraphics[width=0.98\linewidth]{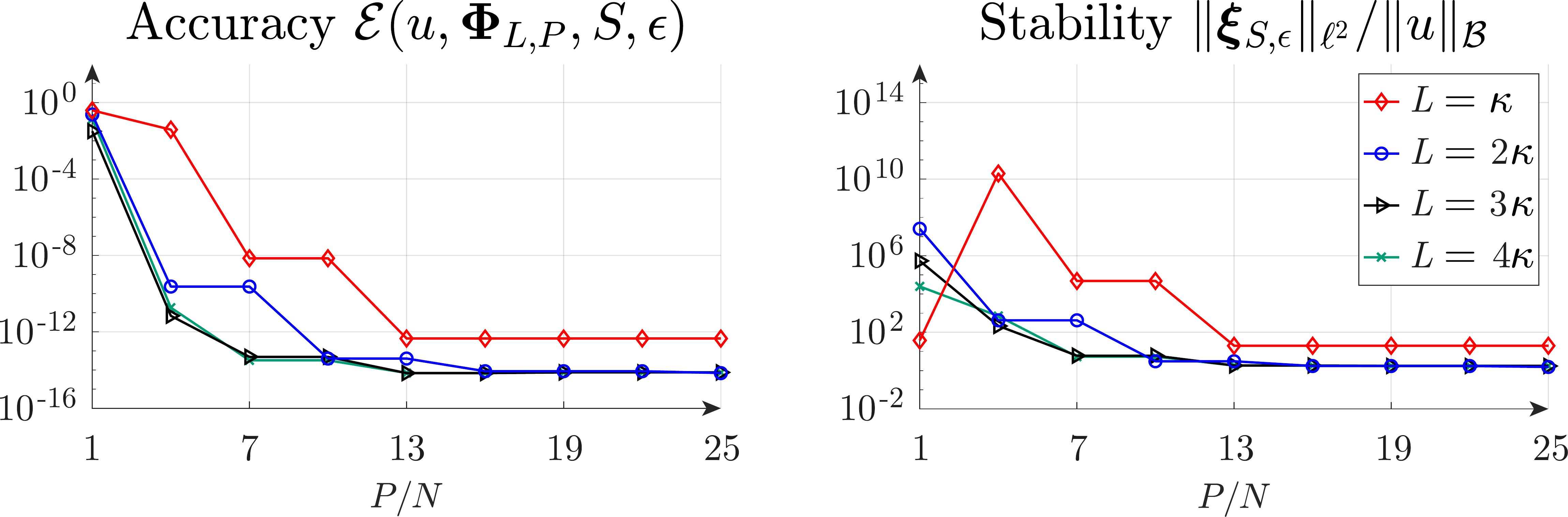}
\subcaption{Deterministic sampling.}
\end{subfigure}
\begin{subfigure}{0.98\linewidth}
\vspace{5mm}
\includegraphics[width=0.98\linewidth]{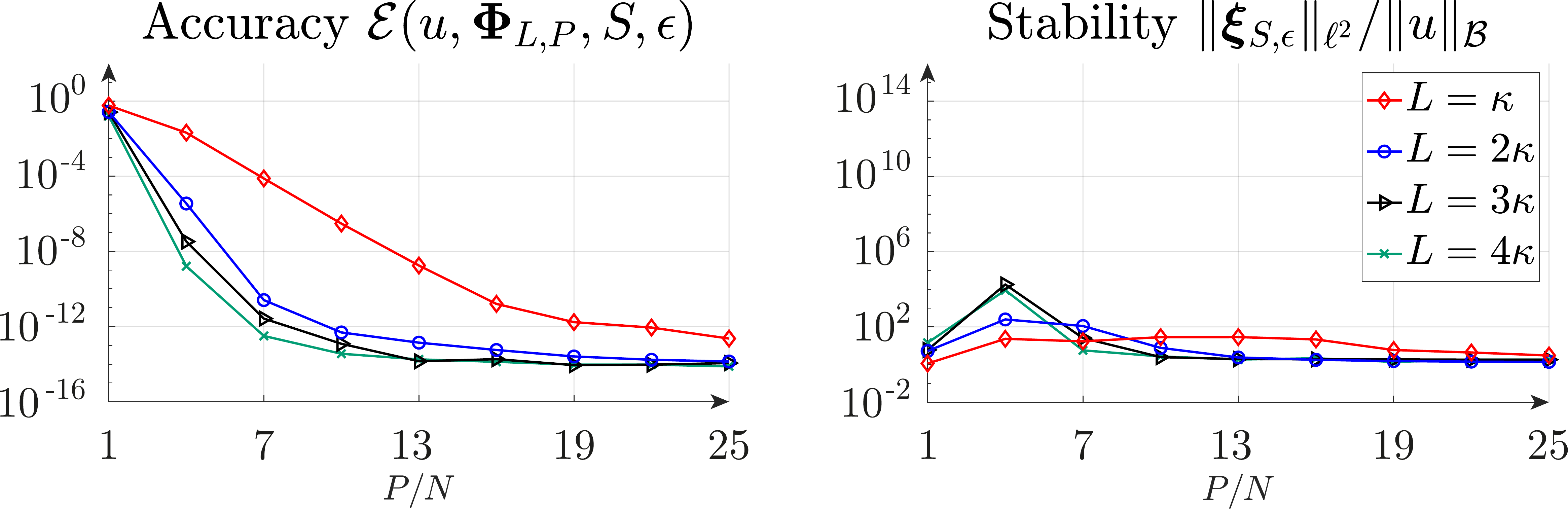}
\subcaption{Random sampling.}
\end{subfigure}
\begin{subfigure}{0.98\linewidth}
\vspace{5mm}
\includegraphics[width=0.98\linewidth]{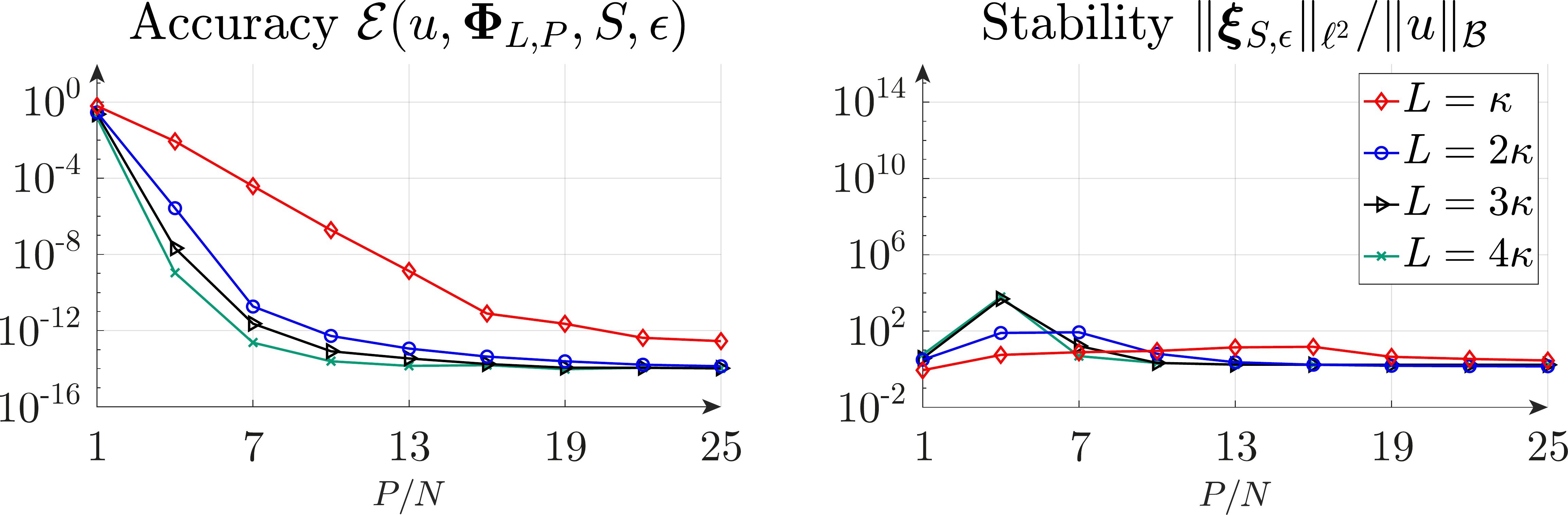}
\subcaption{Sobol sampling.}
\end{subfigure}
\caption{Accuracy $\mathcal{E}$ as defined in (\ref{relative residual}) (left) and stability $\|\boldsymbol{\xi}_{S,\epsilon}\|_{\ell^2}/\|u\|_{\mathcal{B}}$ (right) of the approximation of solution surrogates $u$ in the form (\ref{surrogate solution}) by $P$ evanescent plane waves, whose parameters in $Y$ are chosen according to the sampling strategies (a)--(c) presented in Definition \protect\hyperlink{Definition 6.4}{6.4}. The horizontal axis represents the ratio $P/N(L)$, where $N(L)=(L+1)^2$ is the dimension of the space $\mathcal{B}_L$, to which $u$ belongs. Wavenumber $\kappa=6$ and regularization parameter $\epsilon=10^{-14}$.}
\label{figure 7.6}
\end{figure}
\begin{figure}[H]
\centering
\begin{subfigure}{0.98\linewidth}
\phantomcaption
\phantomcaption
\phantomcaption
\includegraphics[width=0.98\linewidth]{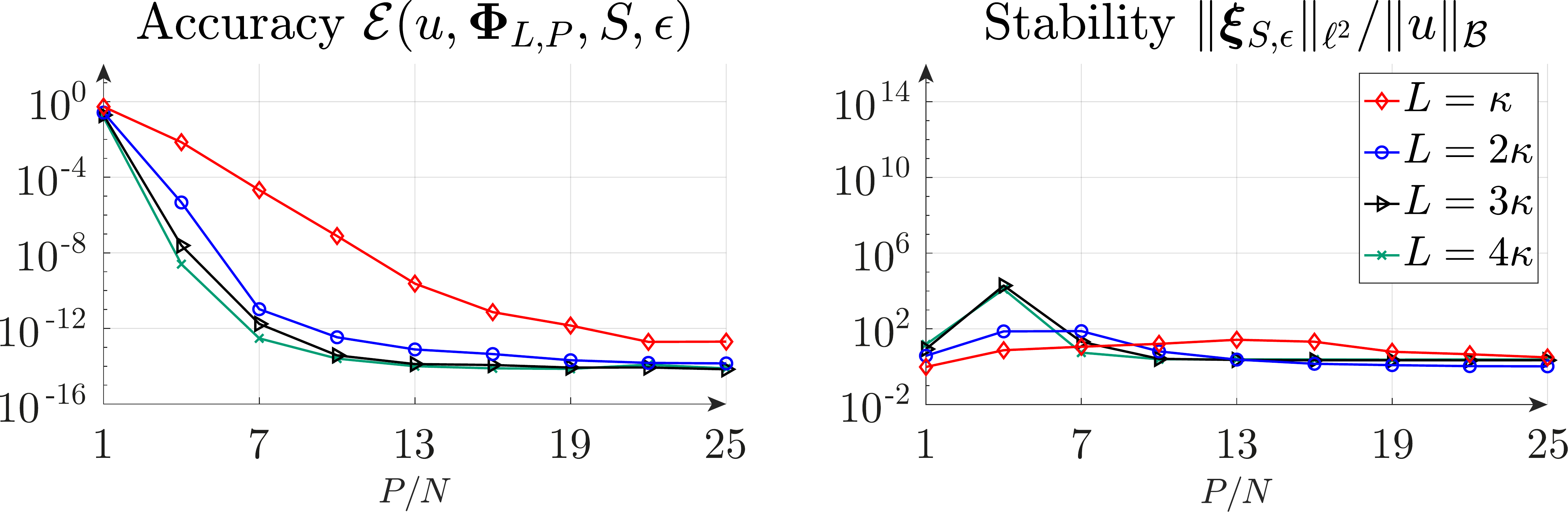}
\subcaption{Extremal--Random sampling.}
\end{subfigure}
\begin{subfigure}{0.98\linewidth}
\vspace{5mm}
\includegraphics[width=0.98\linewidth]{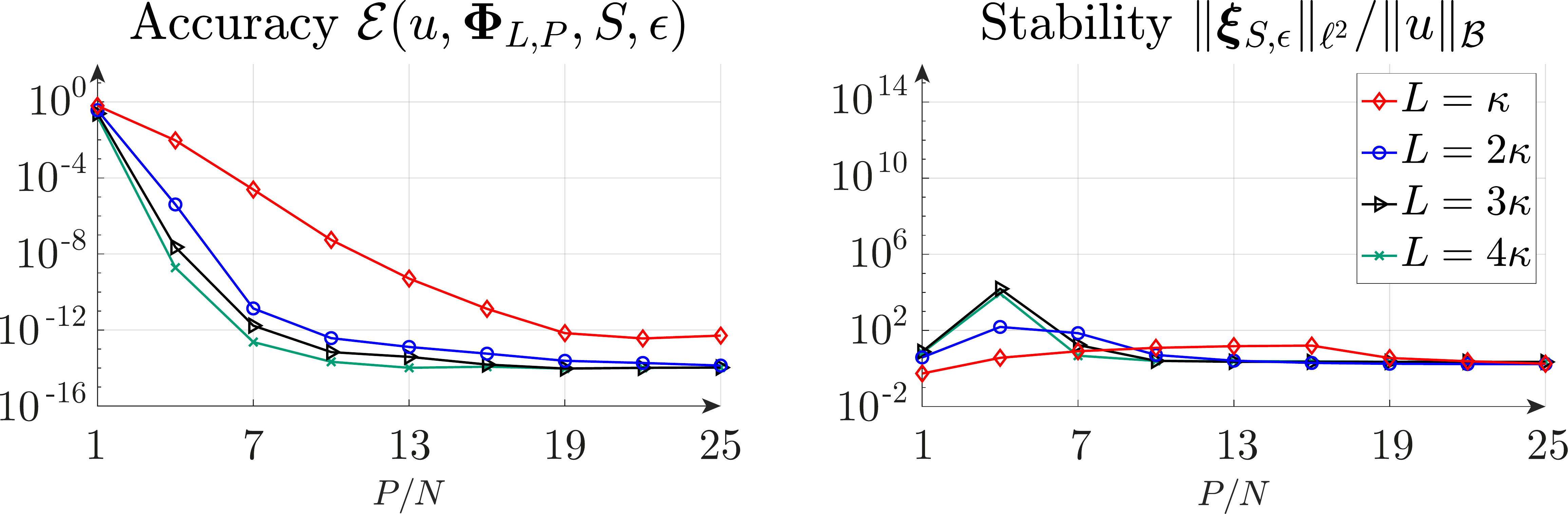}
\subcaption{Extremal--Sobol sampling.}
\end{subfigure}
\caption{The caption of Figure \ref{figure 7.6} applies here as well, with the only difference that the parameters in $Y$ are now chosen according to the sampling strategies (d) and (e) in Definition \protect\hyperlink{Definition 6.4}{6.4}.}
\label{figure 7.7}
\end{figure}
\vspace{-1mm}

The error resulting from the use of propagative plane waves $\boldsymbol{\Phi}_P$ is considerably higher compared to the one obtained by using evanescent plane waves $\boldsymbol{\Phi}_{L,P}$. As depicted in Figure \ref{figure 7.9} and Figure \ref{figure 7.10}, the difference is approximately $8$ and $12$ orders of magnitude larger, respectively, when measured using the $L^{\infty}$ norm.
In both scenarios, the error is concentrated near the boundary. However, while increasing the degrees of freedom does not lead to improved accuracy in the case of propagative waves, particularly on $\partial B_1$, using evanescent waves can reduce the error.
This is due to the fact that evanescent plane waves can effectively capture the higher Fourier modes of Helmholtz solutions, which is not possible with propagative plane waves.

Regarding the approximation by evanescent plane waves, we can estimate the number of DOFs per wavelength, denoted by $\lambda=2\pi/\kappa$, used in each direction with $\lambda \sqrt[3]{3|\boldsymbol{\Phi}_{L,P}|/4\pi}$, which is approximately $10.9$ in Figure \ref{figure 7.9} and $14.2$ in Figure \ref{figure 7.10}.
In low order methods, a commonly employed rule of thumb is to use roughly $6 \sim 10$ DOFs per wavelength to achieve $1$ or $2$ digits of accuracy.
However, observe that we can obtain $8 \sim 12$ digits of accuracy with just a fraction more of this amount.

In summary, the numerical results are fully compatible with Conjecture \hyperlink{Conjecture 6.1}{6.2}.
\begin{figure}
\centering
\begin{tabular}{cc}
\includegraphics[trim=50 50 50 50,clip,width=.45\textwidth,valign=m]{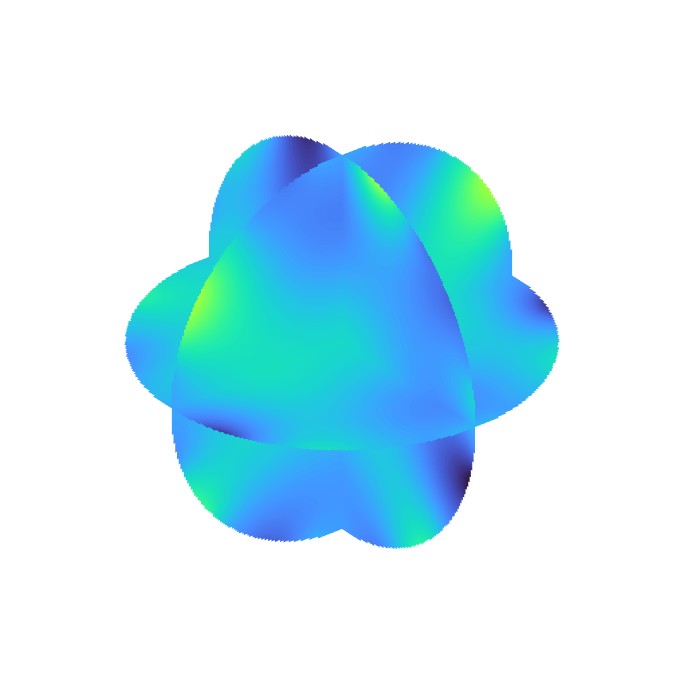} &  \includegraphics[trim=50 50 50 50,clip,width=.45\textwidth,valign=m]{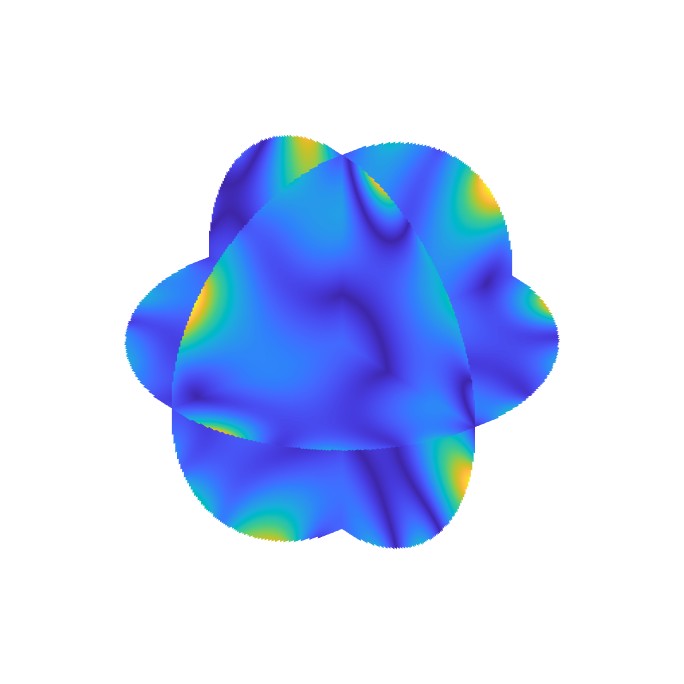}\\
\includegraphics[width=.4\textwidth]{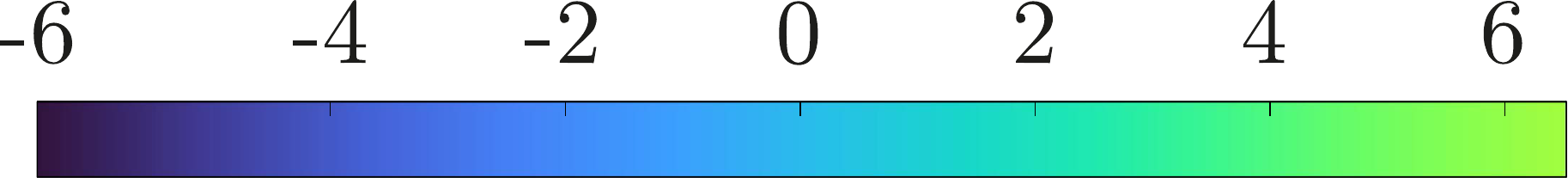} & \includegraphics[width=.4\textwidth]{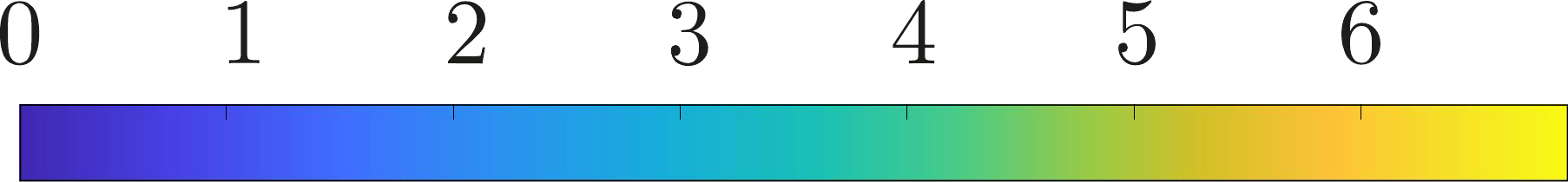} \\
Real part of target solution $\Re{u}$
& Modulus of target solution $|u|$\\
\rule{0pt}{3.3ex}
\includegraphics[trim=50 50 50 50,clip,width=.45\textwidth,valign=m]{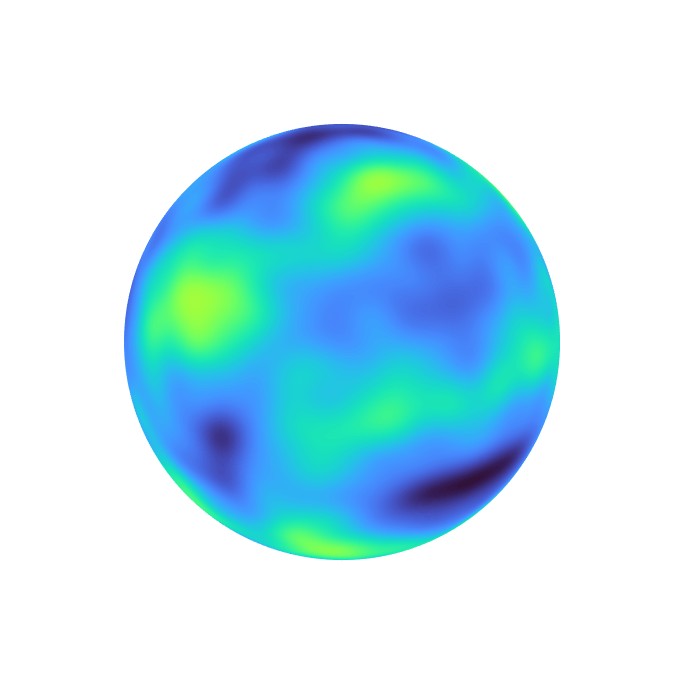} & \includegraphics[trim=50 50 50 50,clip,width=.45\textwidth,valign=m]{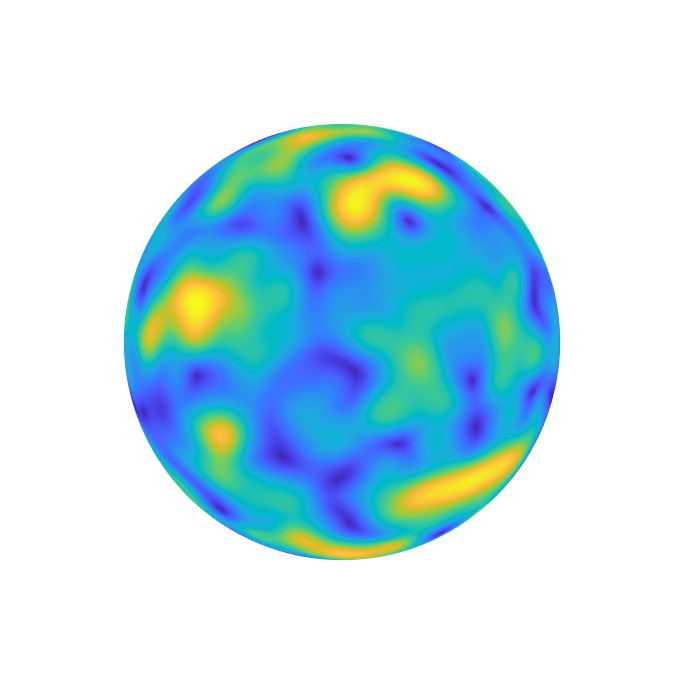}\\
\includegraphics[width=.4\textwidth]{imagespdf/bar_real.pdf} & \includegraphics[width=.4\textwidth]{imagespdf/bar_abs.pdf} \\
Real part of target solution $\Re{u}$
& Modulus of target solution $|u|$\\
\end{tabular}
\caption{Solution surrogate $u$, target of the approximation, defined in (\ref{surrogate solution}) with wavenumber $\kappa=5$ and $L=5\kappa=25$. Both the real part $\Re{u}$ and the modulus $|u|$ of the target solution are plotted on $B_1 \cap \{\mathbf{x}=(x,y,z) : xyz=0\}$ (top) and on the unit sphere $\partial B_1$ (bottom).}
\label{figure 7.8}
\end{figure}

\begin{figure}
\centering
\begin{tabular}{cc}
\includegraphics[trim=50 50 50 50,clip,width=.45\textwidth,valign=m]{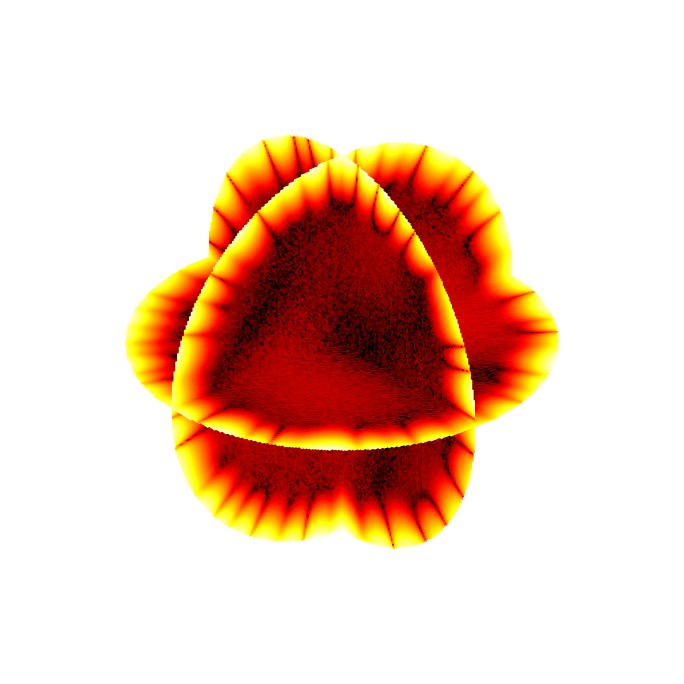} &  \includegraphics[trim=50 50 50 50,clip,width=.45\textwidth,valign=m]{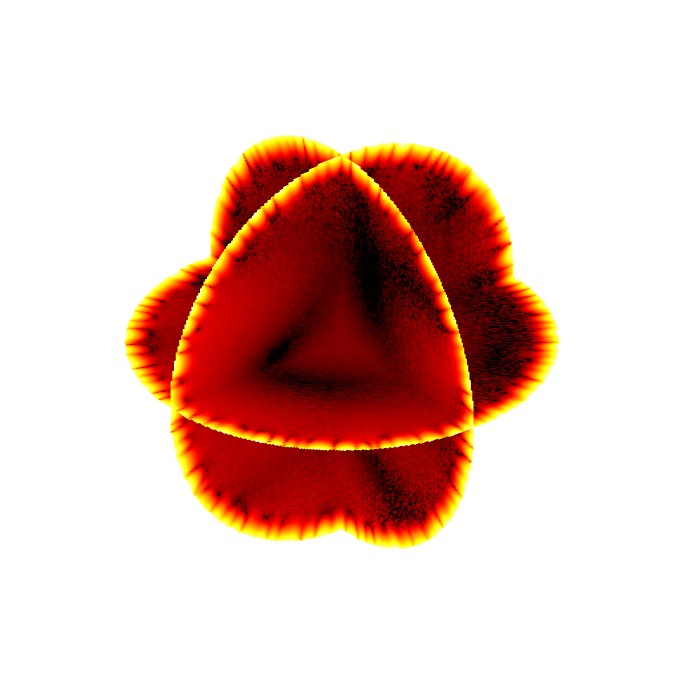}\\
\includegraphics[width=.4\textwidth]{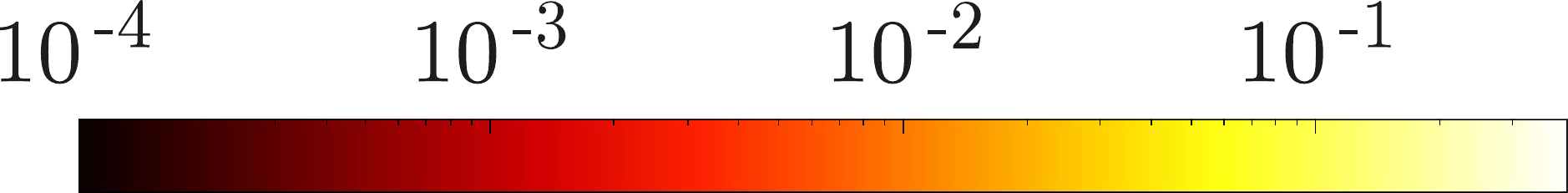} &
\includegraphics[width=.4\textwidth]{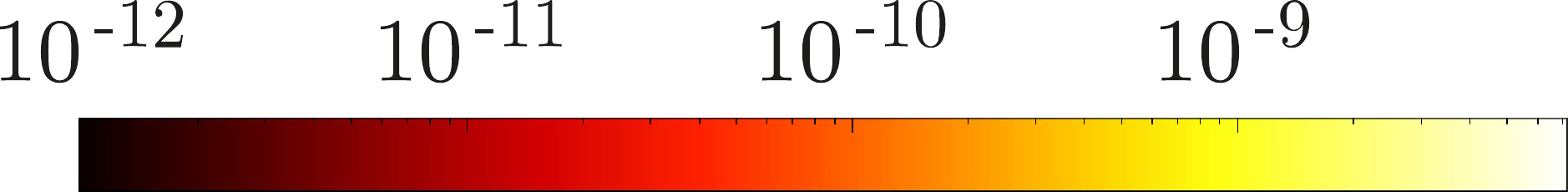} \\
Absolute error using PPWs
& Absolute error using EPWs\\
\rule{0pt}{3.3ex}
\includegraphics[trim=50 50 50 50,clip,width=.45\textwidth,valign=m]{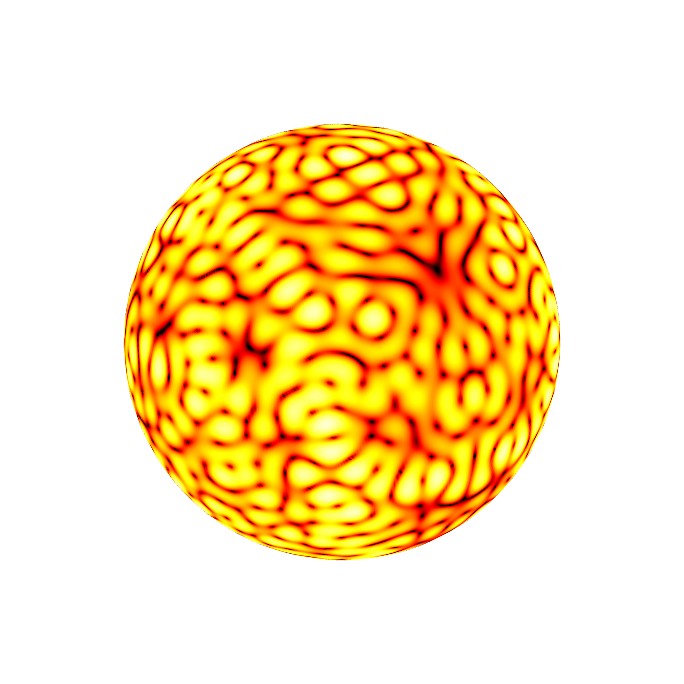} & \includegraphics[trim=50 50 50 50,clip,width=.45\textwidth,valign=m]{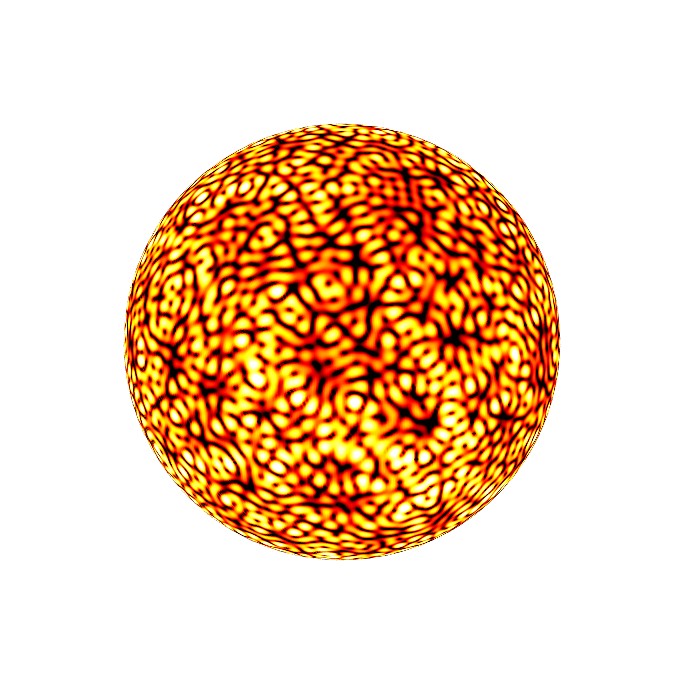}\\
\includegraphics[width=.4\textwidth]{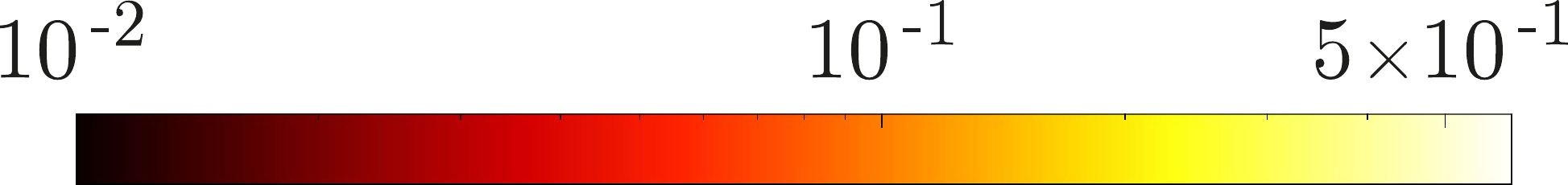} &
\includegraphics[width=.4\textwidth]{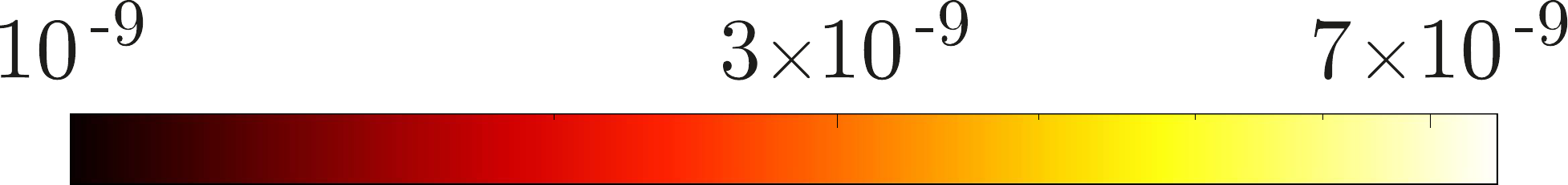}\\
Absolute error using PPWs
& Absolute error using EPWs\\
\end{tabular}
\caption{Absolute errors of the approximation of the solution surrogate $u$, defined in (\ref{surrogate solution}) with $L=5\kappa=25$. The error is provided using $P=4(L+1)^2=2704$ plane waves, either propagative ones $\boldsymbol{\Phi}_P$ from (\ref{plane waves approximation set}) (left) or evanescent ones $\boldsymbol{\Phi}_{L,P}$ from (\ref{evanescence sets}), whose parameters are constructed using the Extremal--Sobol strategy (e) presented in Definition \protect\hyperlink{Definition 6.4}{6.4} (right). The absolute errors are plotted both on $B_1 \cap \{\mathbf{x}=(x,y,z) : xyz=0\}$ (top) and on the unit sphere $\partial B_1$ (bottom). Wavenumber $\kappa=5$ and regularization parameter $\epsilon=10^{-14}$.}
\label{figure 7.9}
\end{figure}

\begin{figure}
\centering
\begin{tabular}{cc}
\includegraphics[trim=50 50 50 50,clip,width=.45\textwidth,valign=m]{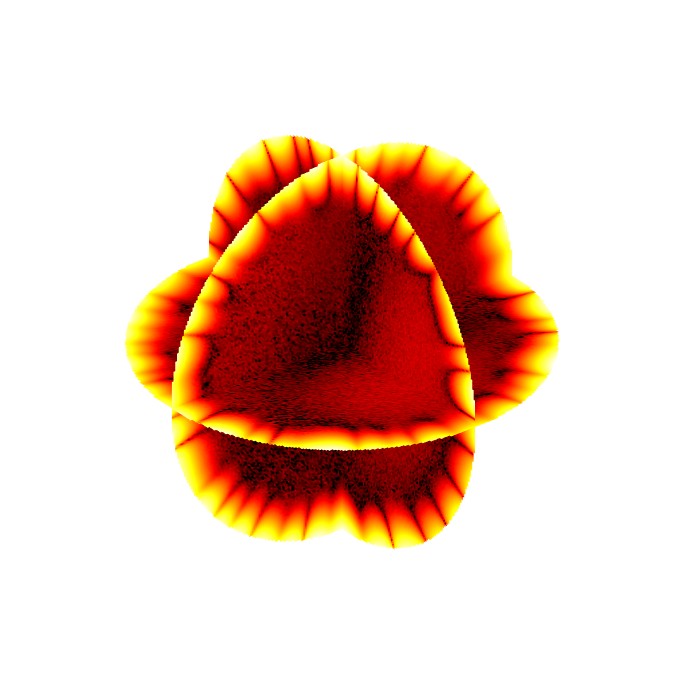} &  \includegraphics[trim=50 50 50 50,clip,width=.45\textwidth,valign=m]{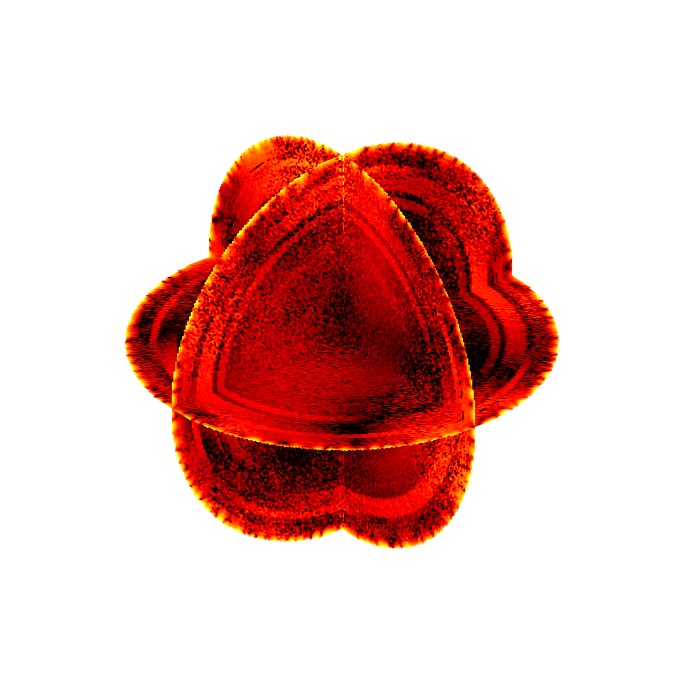}\\
\includegraphics[width=.4\textwidth]{imagespdf/bar_propagative.pdf} &
\includegraphics[width=.4\textwidth]{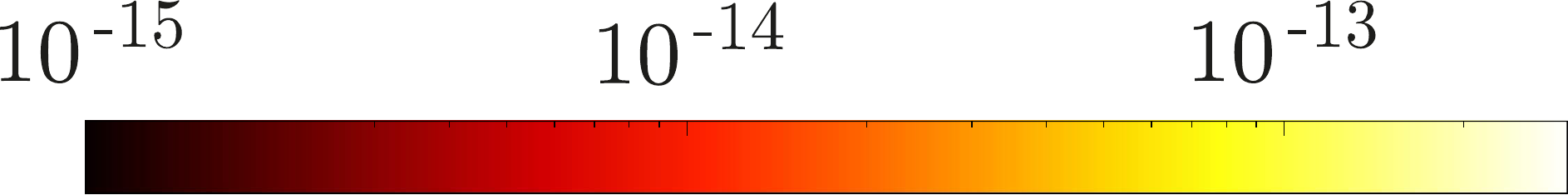}\\
Absolute error using PPWs
& Absolute error using EPWs\\
\rule{0pt}{3.3ex}
\includegraphics[trim=50 50 50 50,clip,width=.45\textwidth,valign=m]{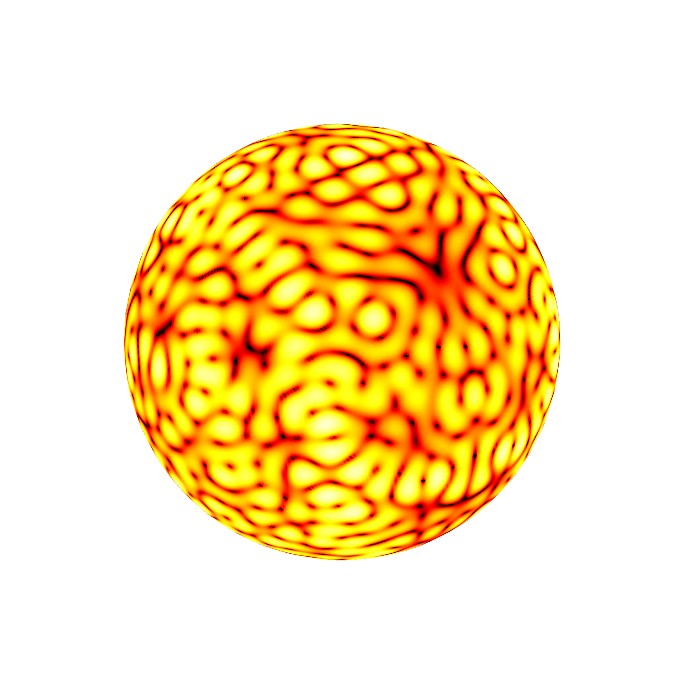} & \includegraphics[trim=50 50 50 50,clip,width=.45\textwidth,valign=m]{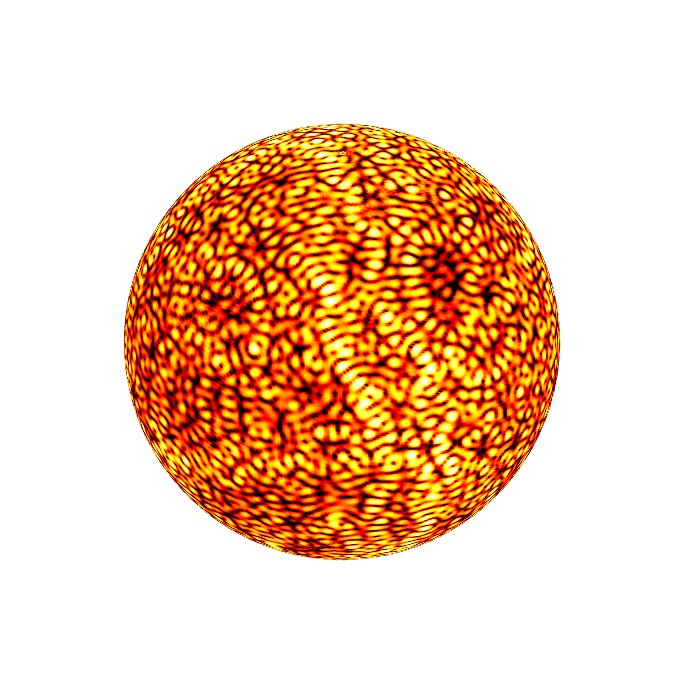}\\
\includegraphics[width=.4\textwidth]{imagespdf/bar_propagative_sphere.pdf} &
\includegraphics[width=.4\textwidth]{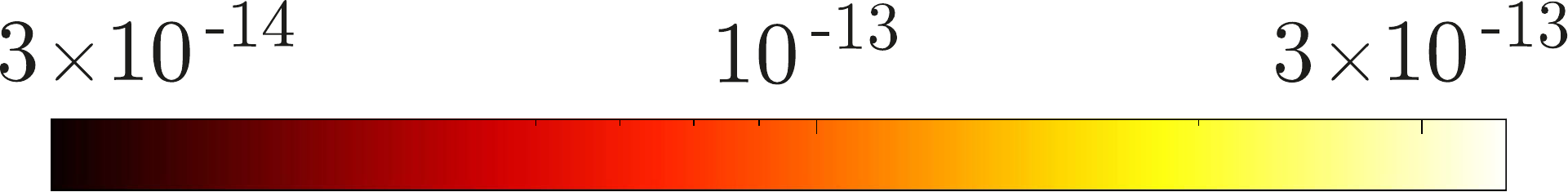}\\
Absolute error using PPWs
& Absolute error using EPWs\\
\end{tabular}
\caption{The caption of Figure \ref{figure 7.9} applies here as well, with the only difference that now the absolute error is provided using $P=9(L+1)^2=6084$ plane waves.}
\label{figure 7.10}
\end{figure}

\section{Enhanced accuracy near singularities}

\begin{figure}
\centering
\includegraphics[width=\linewidth]{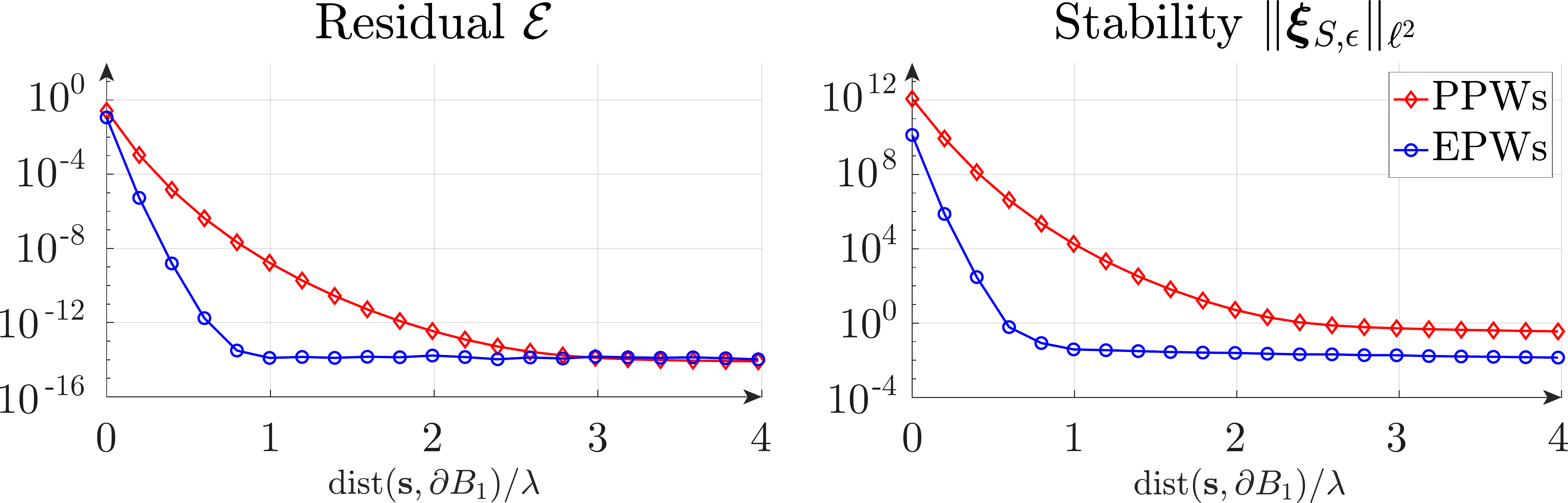}
\caption{Residual $\mathcal{E}$ as defined in (\ref{relative residual}) (left) and stability $\|\boldsymbol{\xi}_{S,\epsilon}\|_{\ell^2}$ (right) of the approximation of the fundamental solution $\Phi_{\mathbf{s}}$ presented in (\ref{fundamental solution}) by $P=2704$ plane waves, either propagative or evanescent (where the nodes in $Y$ are selected using the Extremal–Sobol strategy (e) presented in Definition \protect\hyperlink{Definition 6.4}{6.4}). The values are plotted with respect to the distance of the singularities $\mathbf{s}$ from the surface $\partial B_1$, which has been scaled by the wavelength $\lambda=2\pi/\kappa$. Wavenumber $\kappa=5$ and regularization parameter $\epsilon=10^{-14}$.}
\label{figure 7.11}
\end{figure}

\hypertarget{Section 7.3}{In} the next sections we will consider some numerical experiments involving the fundamental solution of the Helmholtz equation, namely:
\begin{equation}
\Phi_{\mathbf{s}}(\mathbf{x}):=\frac{1}{4\pi}\frac{e^{i\kappa|\mathbf{x}-\mathbf{s}|}}{|\mathbf{x}-\mathbf{s}|},\,\,\,\,\,\,\,\,\,\,\,\,\,\,\forall \mathbf{x} \in \Omega,
\label{fundamental solution}
\end{equation}
where $\Omega \subset \R^3$ is a convex domain and $\mathbf{s} \in \R^3 \setminus \overline{\Omega}$. For now, let us assume $\Omega=B_1$; in the next section we will consider different geometries in order to show that the approximation sets that we built based on the previous analysis for the unit ball $B_1$ also possess excellent approximation properties on other shapes.

In Figure \ref{figure 7.11}, we study the accuracy $\mathcal{E}$, as defined in (\ref{relative residual}), and the stability $\|\boldsymbol{\xi}_{S,\epsilon}\|_{\ell^2}$ of the approximation of the fundamental solution $\Phi_{\mathbf{s}}$ defined in (\ref{fundamental solution}) by plane waves, either propagative or evanescent.
The abscissa shows the distance of the singularity $\mathbf{s}$ from the surface $\partial B_1$, which has been scaled by the wavelength $\lambda=2\pi/\kappa$.
Due to the quasi-optimality of the approximation spaces $\boldsymbol{\Phi}_{L,P}$, which was hinted by the numerical results of Section \hyperlink{Section 7.2}{7.2}, the truncation parameter $L$ is computed from $P$ as $L:=\max\{\lceil \kappa\rceil,\lfloor\sqrt{P/10}\rfloor\}$. Better proportionality constants in the relation $P=\nu L^2$ can be investigated by analogy with \cite[Sec.\ 8.4]{parolin-huybrechs-moiola}.

These results imply that evanescent plane waves are effective in capturing the higher Fourier modes of the fundamental solution $\Phi_{\mathbf{s}}$ when the singularity is close to the boundary $\partial B_1$. Specifically, if $\mathbf{s} \in \partial B_1$, then both the plane wave approximation sets are inadequate for approximating the fundamental solution $\Phi_{\mathbf{s}}$, due to the large coefficients: in fact, if $\mathbf{s} \in \partial B_1$, then $\Phi_{\mathbf{s}} \not \in \mathcal{B}$.
As we move away from the singularity, the evanescent waves become more suitable for approximating the higher Fourier modes that arise due to the presence of the nearby singularity.
An example is given in Figure \ref{figure 7.11}, where it can be observed that the accuracy provided by the evanescent wave sets already reaches $10^{-14}$ when $\textup{dist}(\mathbf{s},\partial B_1) = \lambda$.
If the singularity $\mathbf{s}$ is located sufficiently far away (approximately $3$ wavelengths, i.e.\ $\textup{dist}(\mathbf{s},\partial B_1)=3\lambda$), both types of plane wave sets provide good approximations of the fundamental solution $\Phi_{\mathbf{s}}$, since they only need to account for its propagative modes.

In Figure \ref{figure 7.12} we report the convergence of the approximation by plane waves for increasing size of the approximation set $P$.
We consider the fundamental solution $\Phi_{\mathbf{s}}$ defined in (\ref{fundamental solution}) with wavenumber $\kappa=5$ and singularity $\mathbf{s}=(1+2\lambda/3,0,0)$.
It is worth noting that when propagative plane waves are employed, the residual of the linear system initially reduces swiftly with increasing $P$. However, it eventually plateaus before reaching machine precision due to the rapid growth of the coefficients. Conversely, when using evanescent plane wave approximation sets, the residual converges to machine precision and the coefficients magnitude remains reasonable upon achieving the final accuracy.
In fact, by using evanescent plane waves, the truncation parameter $L$, and consequently the number of approximated modes, grows concurrently with the size of the approximation set $P$, providing an increasingly accurate approximation. In contrast, when using propagative waves, increasing the discrete space only enhances the approximation of propagative modes, while ignoring the higher Fourier modal contents.

From Figure \ref{figure 7.13} to Figure \ref{figure 7.14 bis} we showcase two instances of the fundamental solution $\Phi_{\mathbf{s}}$ with wavenumber $\kappa=5$ and different choices of the singularity $\mathbf{s} \in \R^3\setminus\overline{B_1}$, along with the errors in approximation using plane waves, whether they are propagative or evanescent. The results are consistent with Figure \ref{figure 7.11}.

\begin{figure}
\centering
\includegraphics[width=\linewidth]{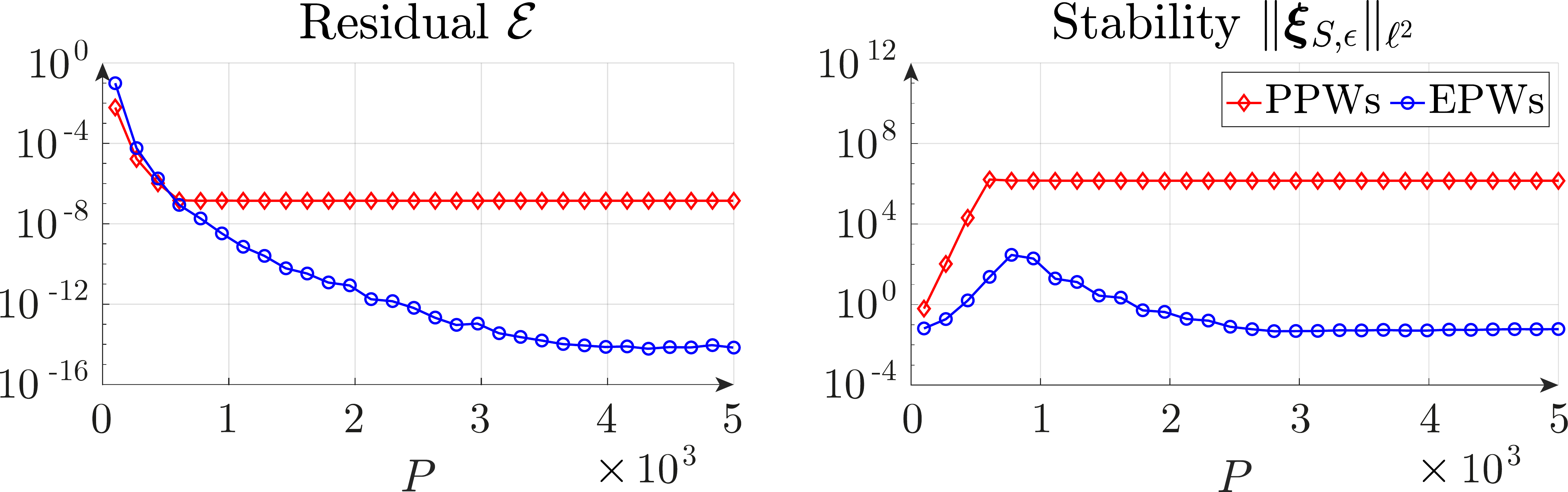}
\caption{Residual $\mathcal{E}$ as defined in (\ref{relative residual}) (left) and stability $\|\boldsymbol{\xi}_{S,\epsilon}\|_{\ell^2}$ (right) of the approximation of the fundamental solution $\Phi_{\mathbf{s}}$ presented in (\ref{fundamental solution}) with $\mathbf{s}=(2\lambda/3,0,0)\in \R^3\setminus\overline{B_1}$ by plane waves, either propagative or evanescent (where the nodes in $Y$ are selected using the Extremal–Sobol strategy (e) presented in Definition \protect\hyperlink{Definition 6.4}{6.4}). We report the convergence of the approximation for increasing size of the approximation set $P$. Wavenumber $\kappa=5$ and regularization parameter $\epsilon=10^{-14}$.}
\label{figure 7.12}
\end{figure}

\begin{figure}
\centering
\begin{tabular}{cc}
\includegraphics[trim=50 50 50 50,clip,width=.45\textwidth,valign=m]{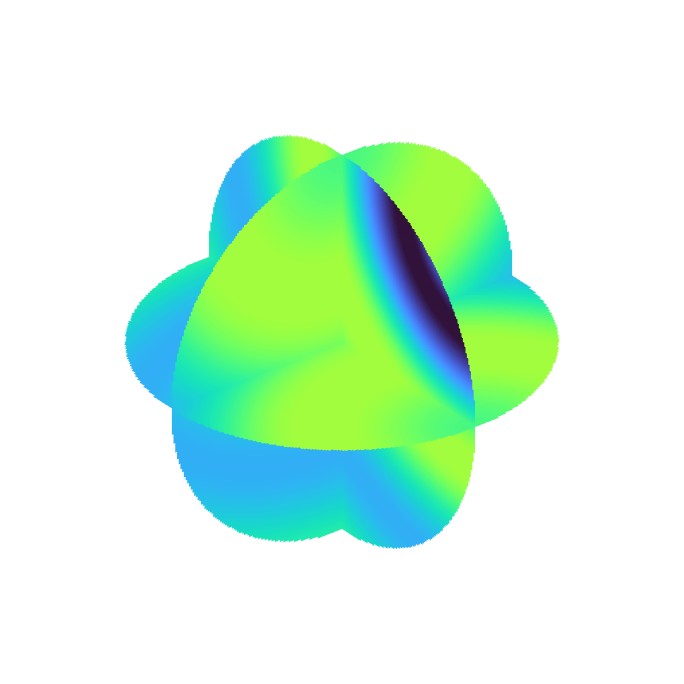} &  \includegraphics[trim=50 50 50 50,clip,width=.45\textwidth,valign=m]{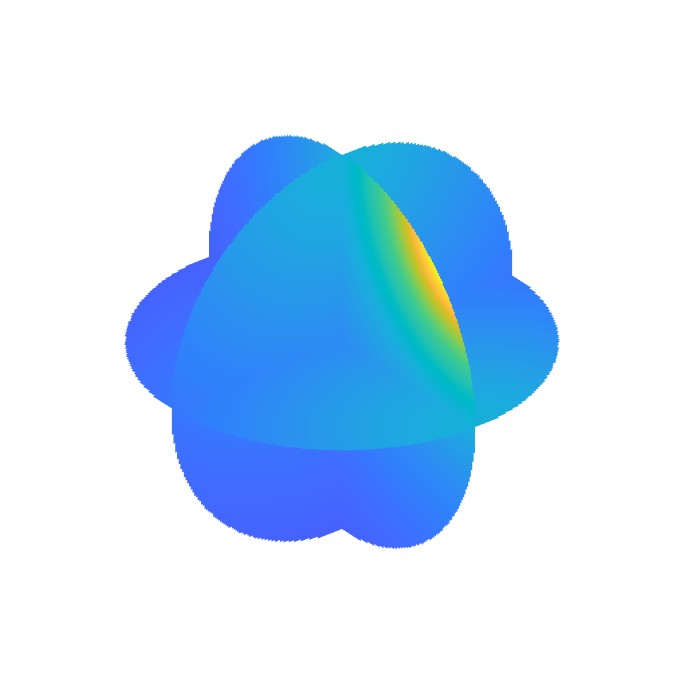}\\
\includegraphics[width=.4\textwidth]{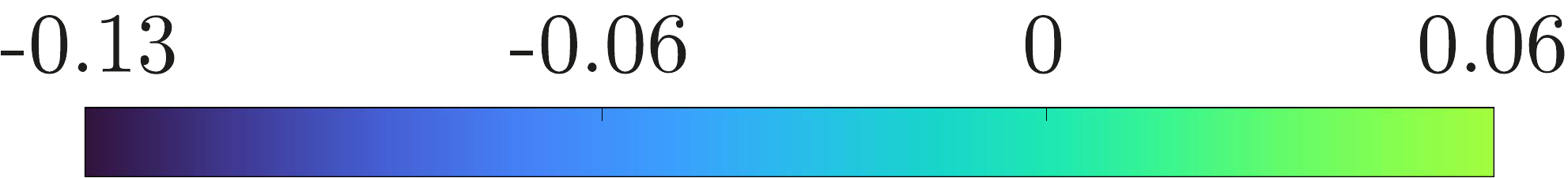} &
\includegraphics[width=.36\textwidth]{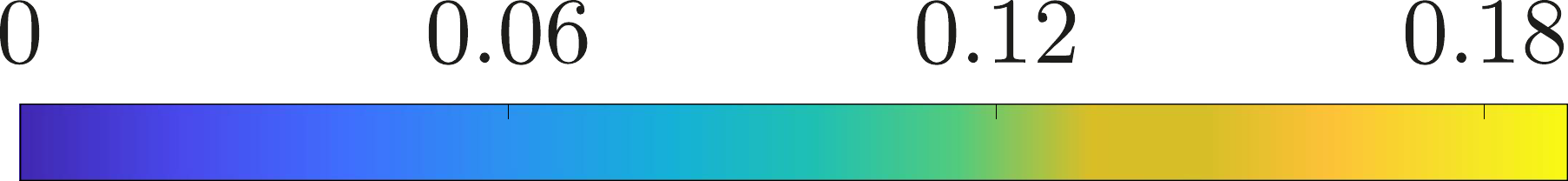}\\
Real part $\Re\{\Phi_{\mathbf{s}}\}$
& Modulus $|\Phi_{\mathbf{s}}|$\\
\rule{0pt}{3.3ex}
\includegraphics[trim=50 50 50 50,clip,width=.45\textwidth,valign=m]{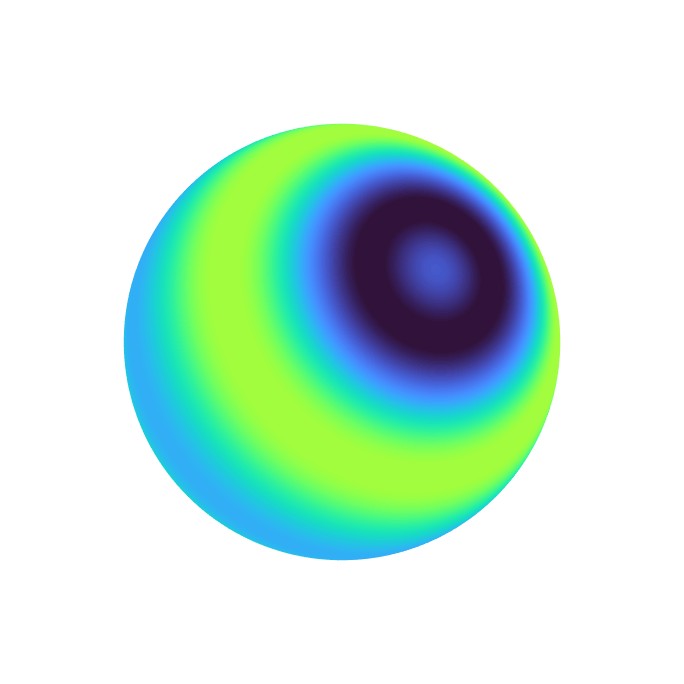} & \includegraphics[trim=50 50 50 50,clip,width=.45\textwidth,valign=m]{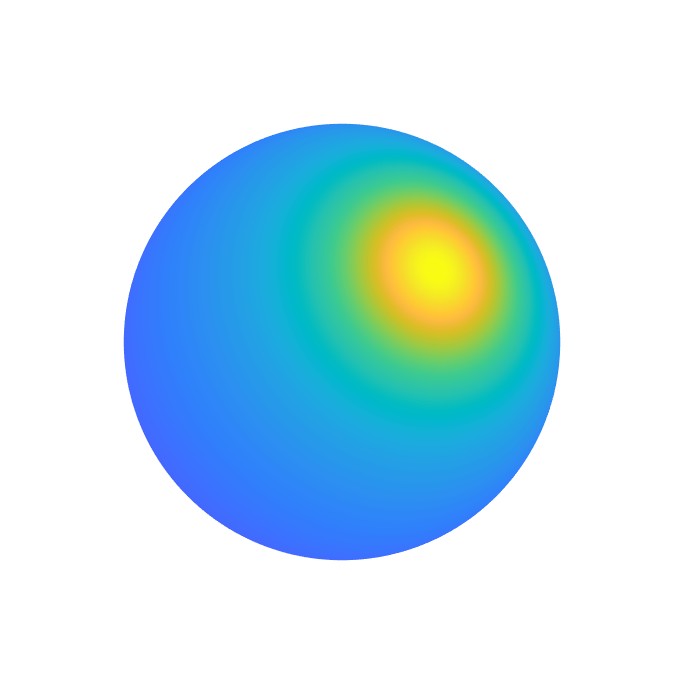}\\
\includegraphics[width=.4\textwidth]{imagespdf/bar_fundamental_real.pdf} &
\includegraphics[width=.36\textwidth]{imagespdf/bar_fundamental_abs.pdf}\\
Real part $\Re\{\Phi_{\mathbf{s}}\}$
& Modulus $|\Phi_{\mathbf{s}}|$\\
\end{tabular}
\caption{Fundamental solution $\Phi_{\mathbf{s}}$ defined in (\ref{fundamental solution}) with wavenumber $\kappa=5$ and $\mathbf{s} \in \R^3\setminus\overline{B_1}$ so that $\textup{dist}(\mathbf{s},\partial B_1)=\lambda/3$. Both the real part $\Re\{\Phi_{\mathbf{s}}\}$ and the modulus $|\Phi_{\mathbf{s}}|$ of the fundamental solution are plotted on $B_1 \cap \{\mathbf{x}=(x,y,z) : xyz=0\}$ (top) and on the unit sphere $\partial B_1$ (bottom).}
\label{figure 7.13}
\end{figure}
\begin{figure}
\centering
\begin{tabular}{cc}
\includegraphics[trim=50 50 50 50,clip,width=.45\textwidth,valign=m]{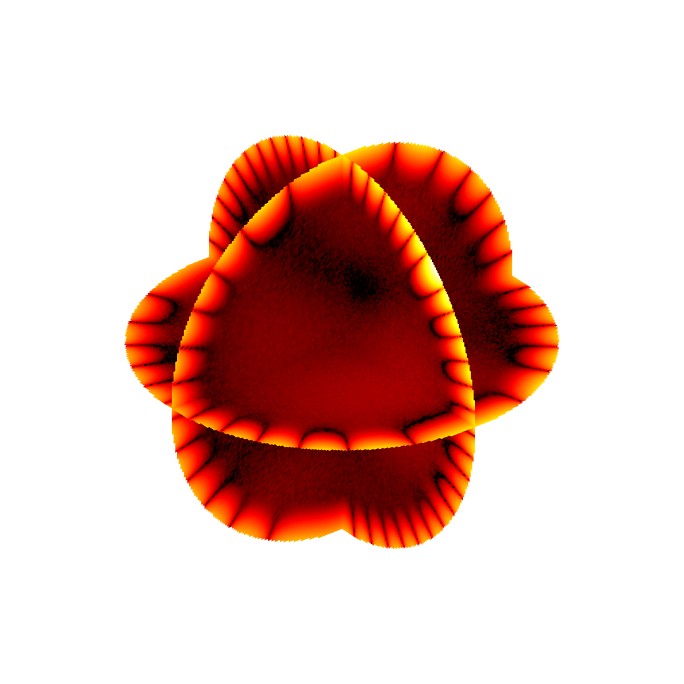} &  \includegraphics[trim=50 50 50 50,clip,width=.45\textwidth,valign=m]{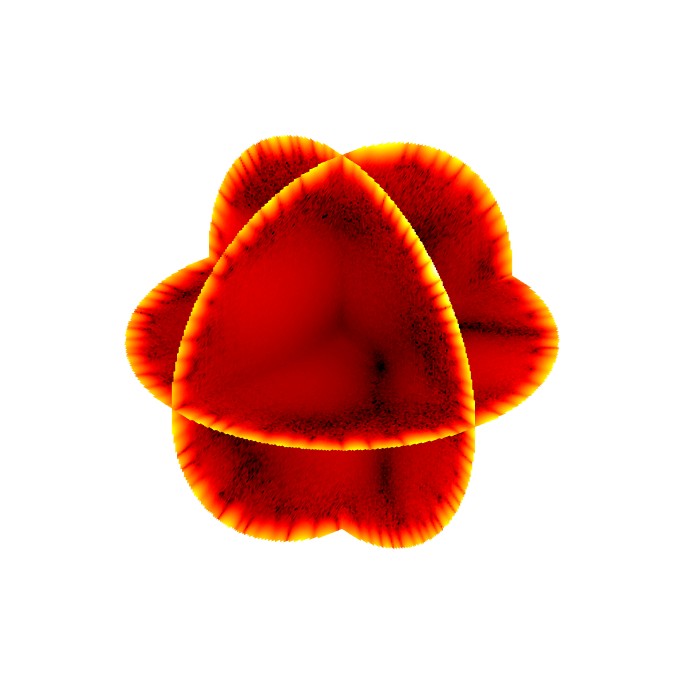}\\
\includegraphics[width=.4\textwidth]{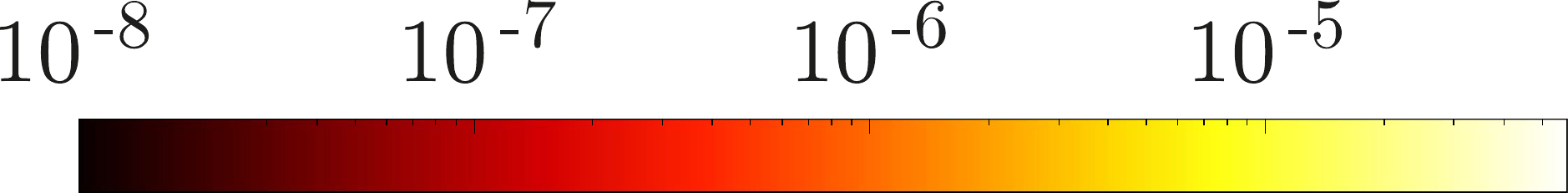} &
\includegraphics[width=.4\textwidth]{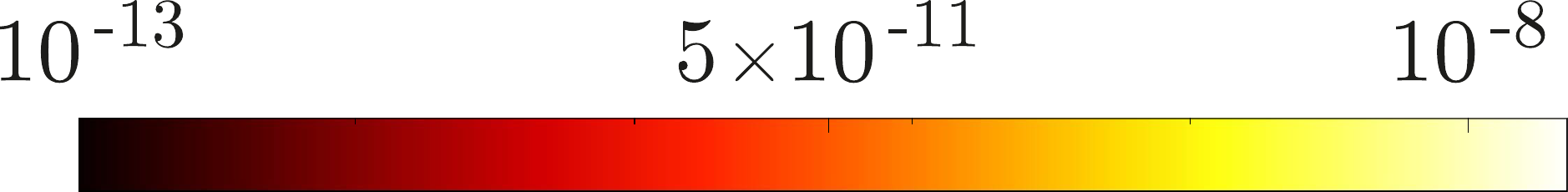}\\
Absolute error using PPWs
& Absolute error using EPWs\\
\rule{0pt}{3.3ex}
\includegraphics[trim=50 50 50 50,clip,width=.45\textwidth,valign=m]{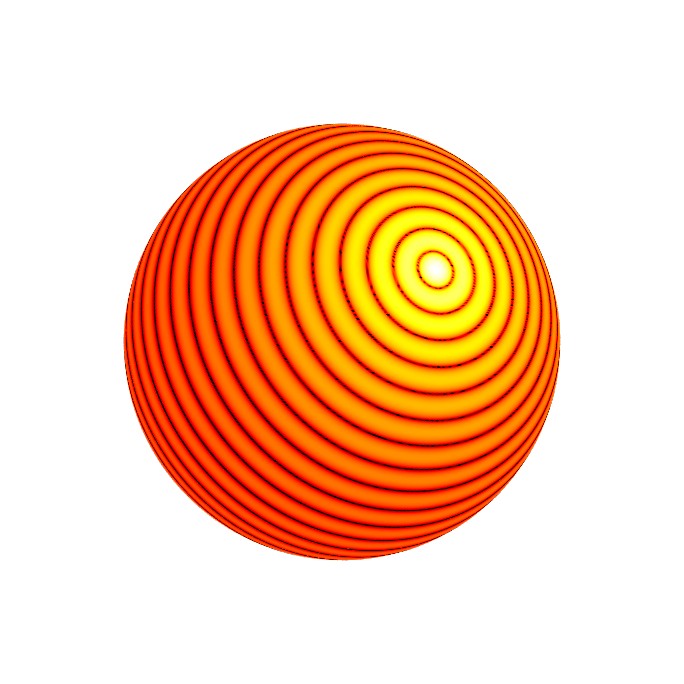} & \includegraphics[trim=50 50 50 50,clip,width=.45\textwidth,valign=m]{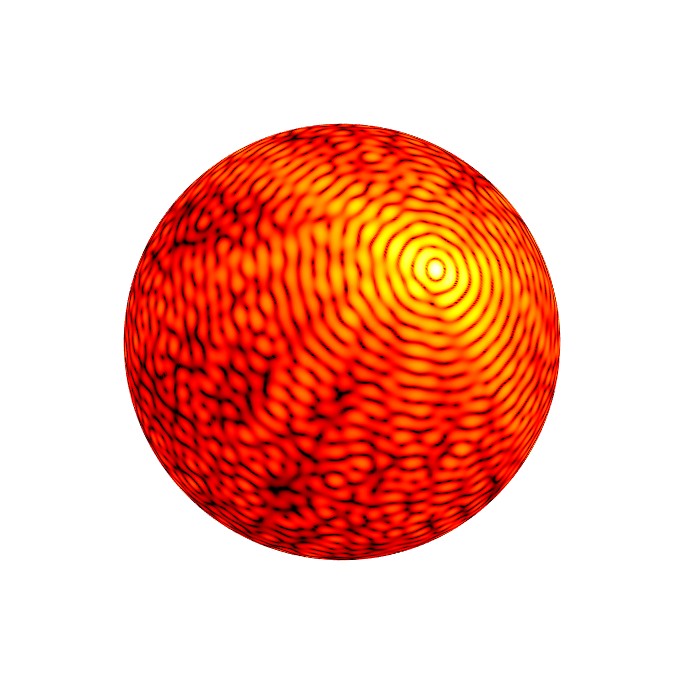}\\
\includegraphics[width=.4\textwidth]{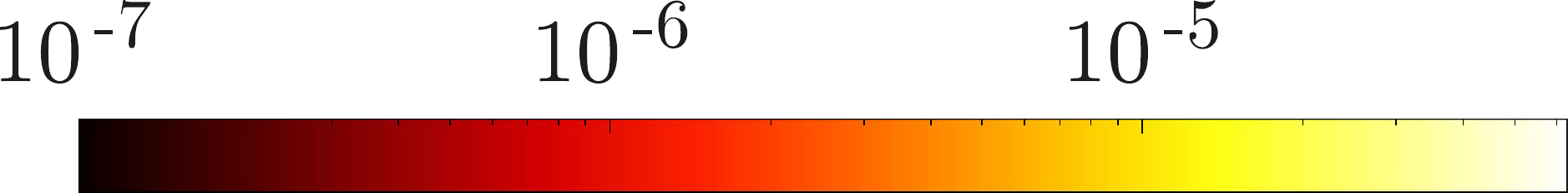} &
\includegraphics[width=.4\textwidth]{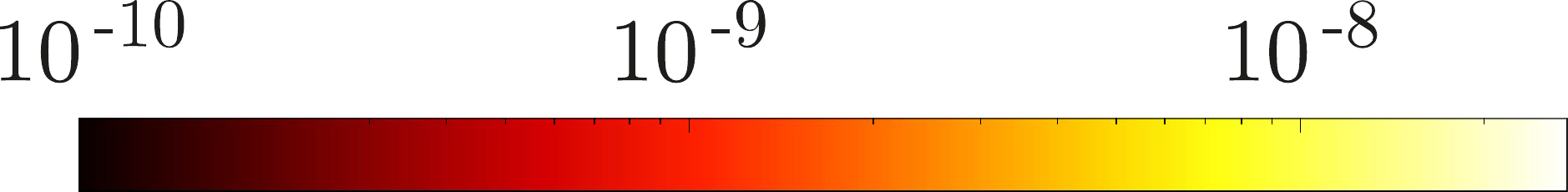}\\
Absolute error using PPWs
& Absolute error using EPWs\\
\end{tabular}
\caption{Absolute errors of the approximation of the fundamental solution $\Phi_{\mathbf{s}}$ defined in (\ref{fundamental solution}) with $\mathbf{s} \in \R^3\setminus\overline{B_1}$ so that $\textup{dist}(\mathbf{s},\partial B_1)=\lambda/3$. The error is provided using $P=2704$ plane waves, either propagative ones $\boldsymbol{\Phi}_P$ from (\ref{plane waves approximation set}) (left) or evanescent ones $\boldsymbol{\Phi}_{L,P}$ from (\ref{evanescence sets}), whose parameters are constructed using the Extremal--Sobol strategy (e) presented in Definition \protect\hyperlink{Definition 6.4}{6.4} (right). The absolute errors are plotted both on $B_1 \cap \{\mathbf{x}=(x,y,z) : xyz=0\}$ (top) and on the unit sphere $\partial B_1$ (bottom). Wavenumber $\kappa=5$ and regularization parameter $\epsilon=10^{-14}$. The results agree with the ones reported in \ref{figure 7.11}.}
\label{figure 7.14}
\end{figure}

\begin{figure}
\centering
\begin{tabular}{cc}
\includegraphics[trim=50 50 50 50,clip,width=.45\textwidth,valign=m]{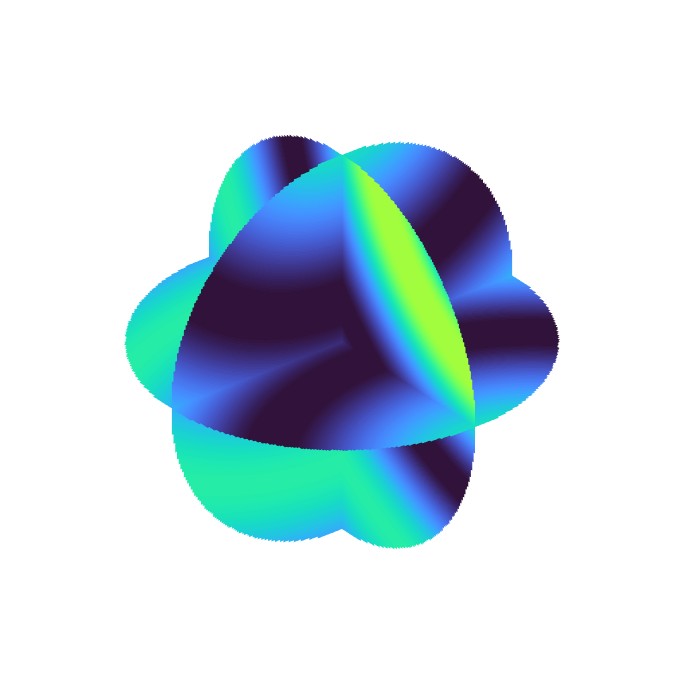} &  \includegraphics[trim=50 50 50 50,clip,width=.45\textwidth,valign=m]{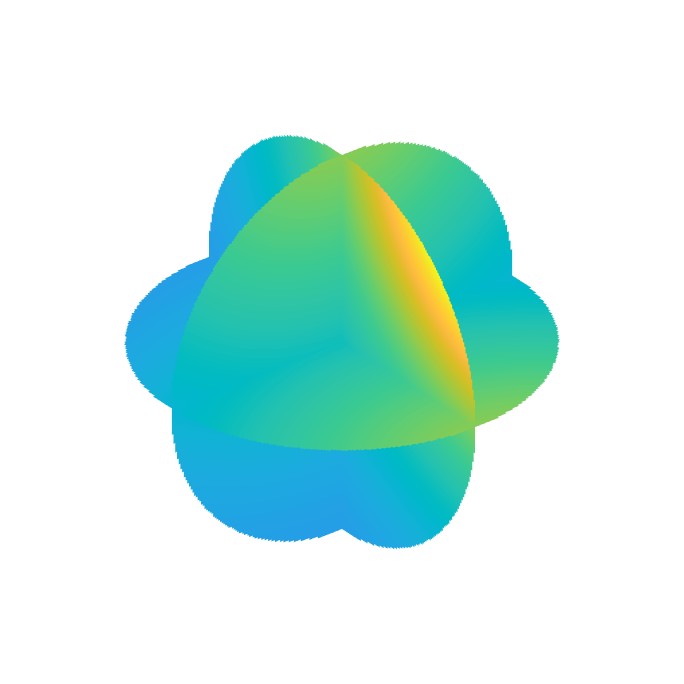}\\
\includegraphics[width=.4\textwidth]{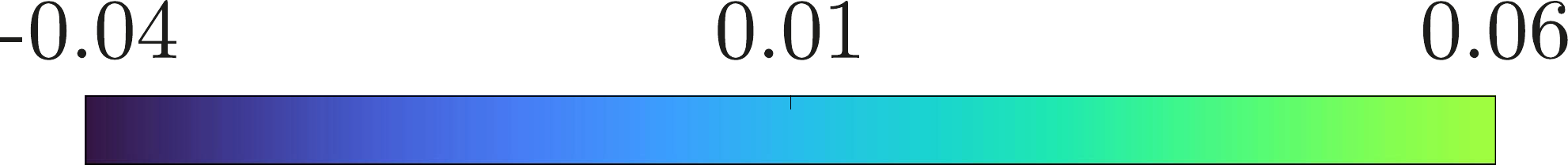} &
\includegraphics[width=.385\textwidth]{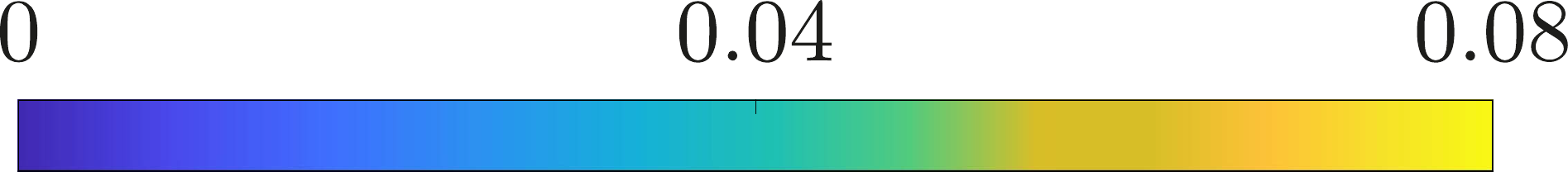}\\
Real part $\Re\{\Phi_{\mathbf{s}}\}$
& Modulus $|\Phi_{\mathbf{s}}|$\\
\rule{0pt}{3.3ex}
\includegraphics[trim=50 50 50 50,clip,width=.45\textwidth,valign=m]{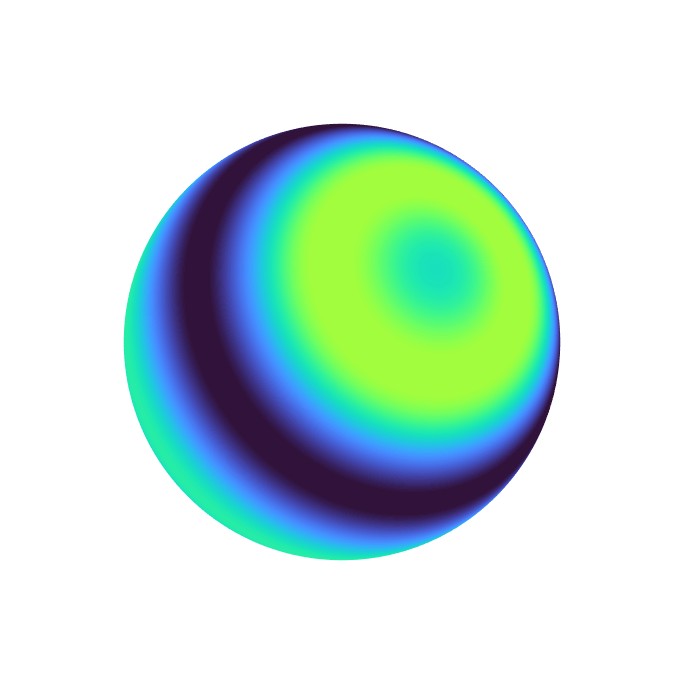} & \includegraphics[trim=50 50 50 50,clip,width=.45\textwidth,valign=m]{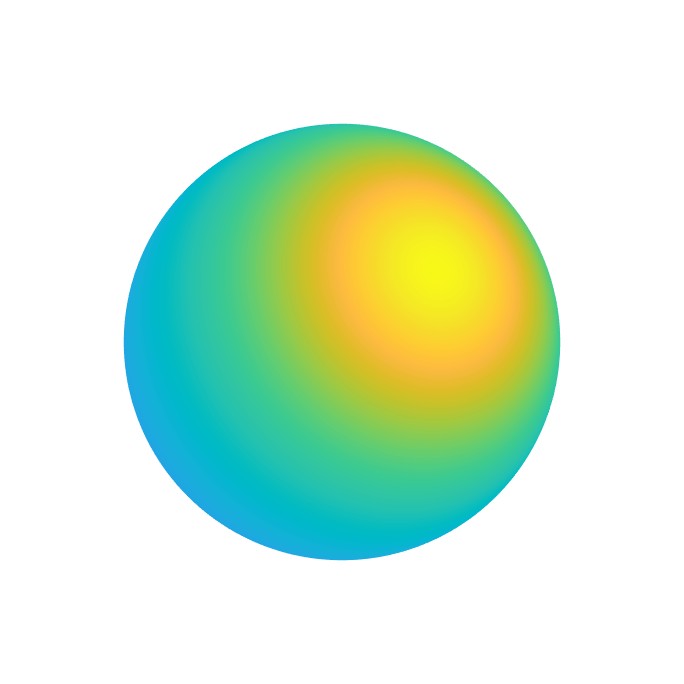}\\
\includegraphics[width=.4\textwidth]{imagespdf/bar_re.pdf} &
\includegraphics[width=.385\textwidth]{imagespdf/bar_ab.pdf}\\
Real part $\Re\{\Phi_{\mathbf{s}}\}$
& Modulus $|\Phi_{\mathbf{s}}|$\\
\end{tabular}
\caption{Fundamental solution $\Phi_{\mathbf{s}}$ defined in (\ref{fundamental solution}) with wavenumber $\kappa=5$ and $\mathbf{s} \in \R^3\setminus\overline{B_1}$ so that $\textup{dist}(\mathbf{s},\partial B_1)=4\lambda/5$. Both the real part $\Re\{\Phi_{\mathbf{s}}\}$ and the modulus $|\Phi_{\mathbf{s}}|$ of the fundamental solution are plotted on $B_1 \cap \{\mathbf{x}=(x,y,z) : xyz=0\}$ (top) and on the unit sphere $\partial B_1$ (bottom).}
\label{figure 7.13 bis}
\end{figure}
\begin{figure}
\centering
\begin{tabular}{cc}
\includegraphics[trim=50 50 50 50,clip,width=.45\textwidth,valign=m]{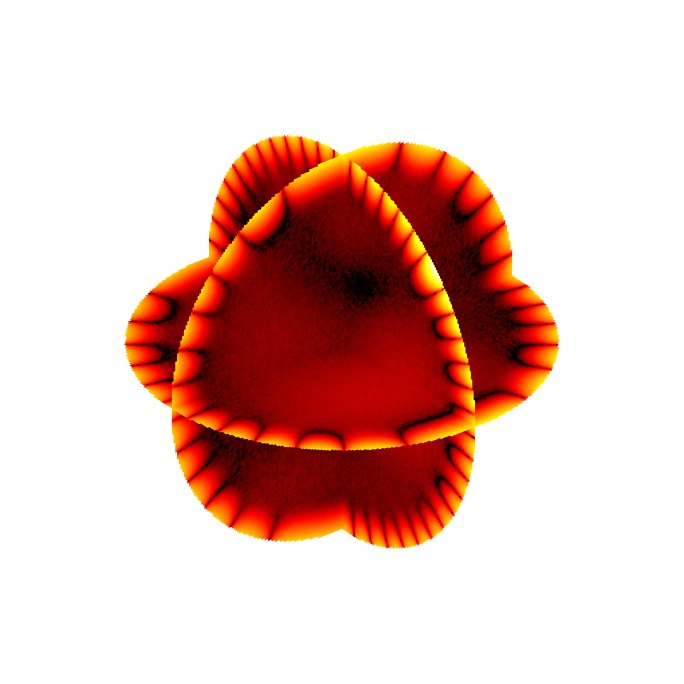} &  \includegraphics[trim=50 50 50 50,clip,width=.45\textwidth,valign=m]{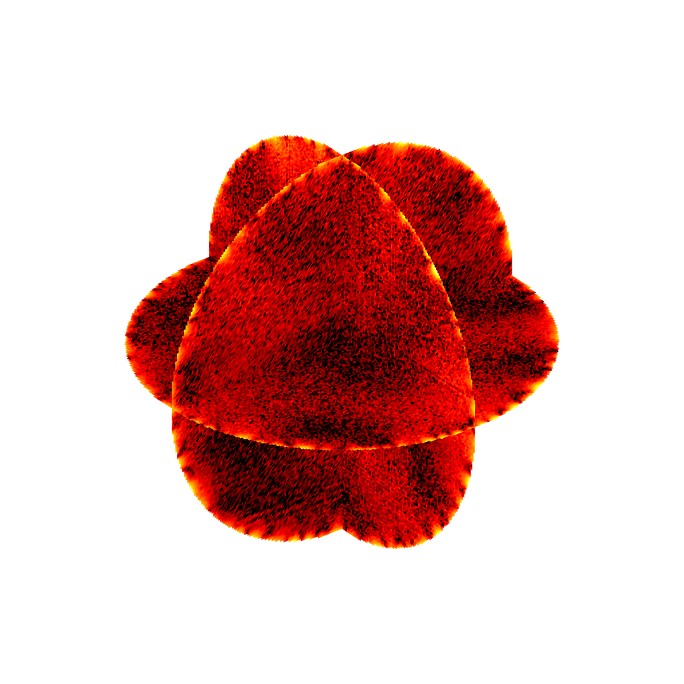}\\
\includegraphics[width=.4\textwidth]{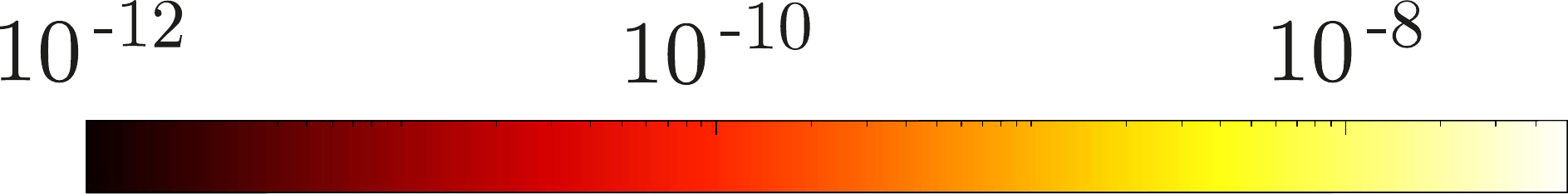} &
\includegraphics[width=.4\textwidth]{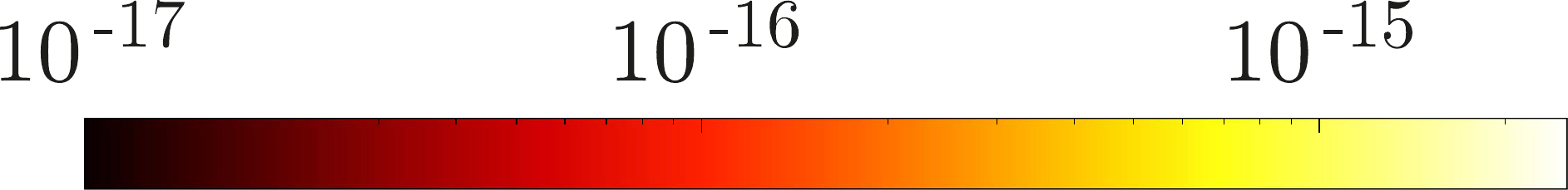}\\
Absolute error using PPWs
& Absolute error using EPWs\\
\rule{0pt}{3.3ex}
\includegraphics[trim=50 50 50 50,clip,width=.45\textwidth,valign=m]{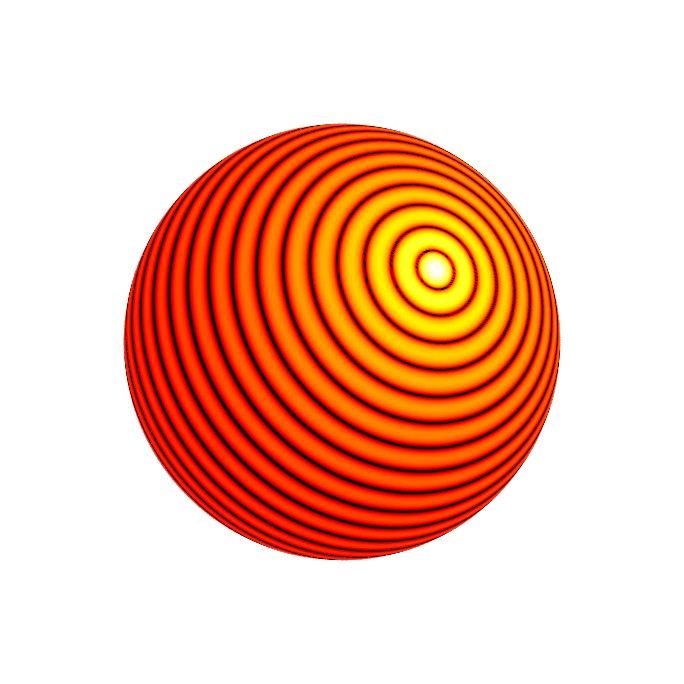} & \includegraphics[trim=50 50 50 50,clip,width=.45\textwidth,valign=m]{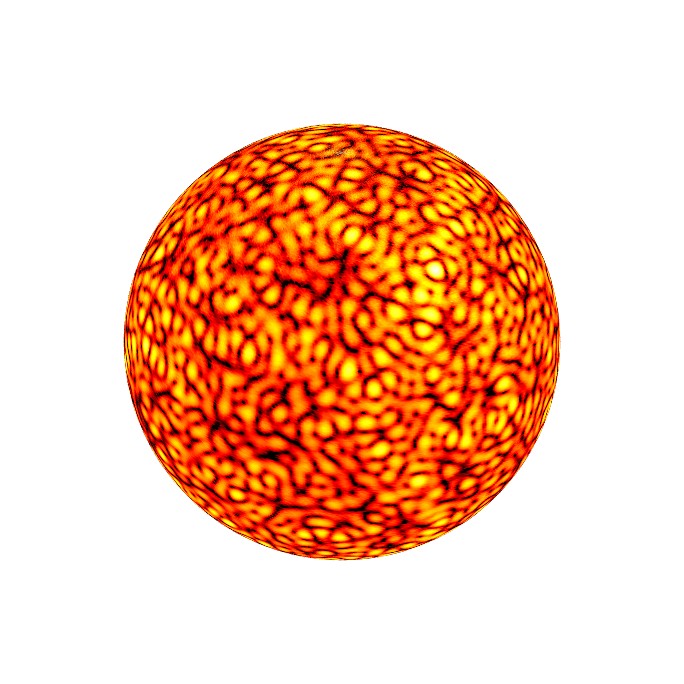}\\
\includegraphics[width=.4\textwidth]{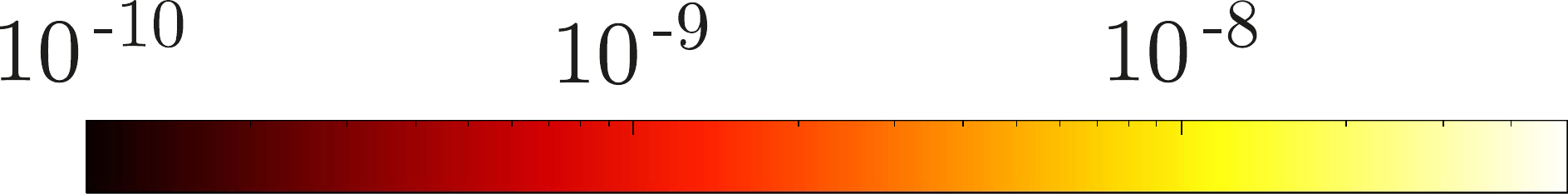} &
\includegraphics[width=.4\textwidth]{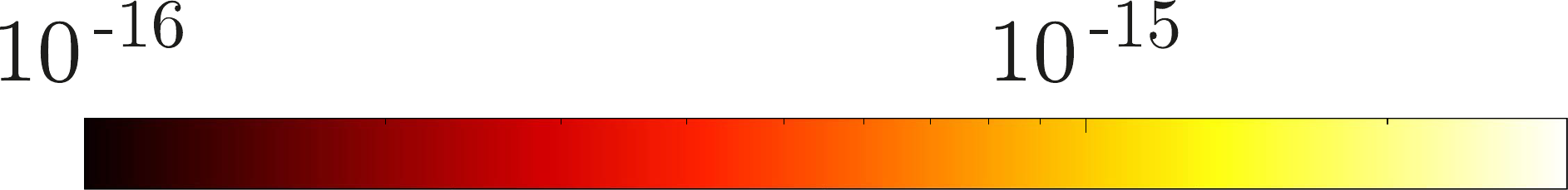}\\
Absolute error using PPWs
& Absolute error using EPWs\\
\end{tabular}
\caption{The caption of Figure \ref{figure 7.14} applies here as well, with the only difference that now the singularity $\mathbf{s} \in \R^3\setminus\overline{B_1}$ of the fundamental solution $\Phi_{\mathbf{s}}$ is chosen so that $\textup{dist}(\mathbf{s},\partial B_1)=4\lambda/5$. The results agree with the ones reported in \ref{figure 7.11}.}
\label{figure 7.14 bis}
\end{figure}

\section{Different geometries}

\begin{figure}
\centering
\begin{tabular}{cc}
\includegraphics[trim=20 20 20 20,clip,width=.4\textwidth,valign=m]{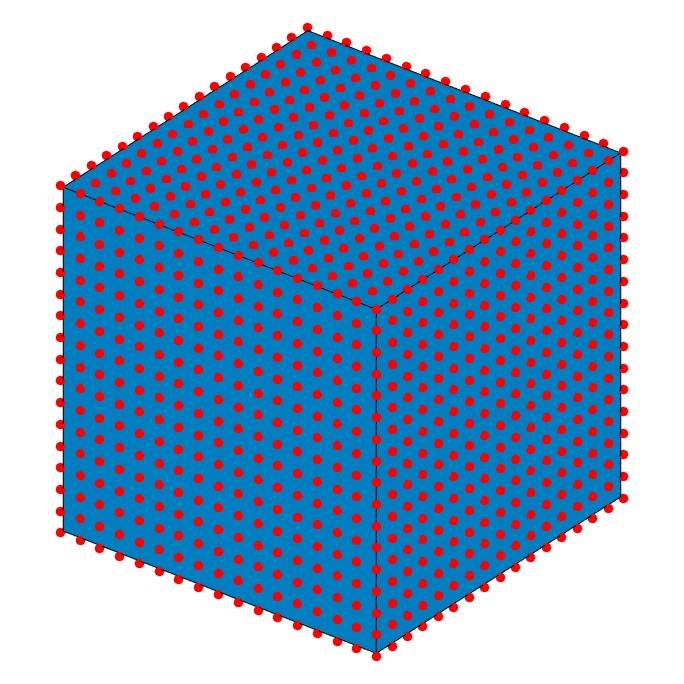} &  \includegraphics[trim=60 60 60 60,clip,width=.45\textwidth,valign=m]{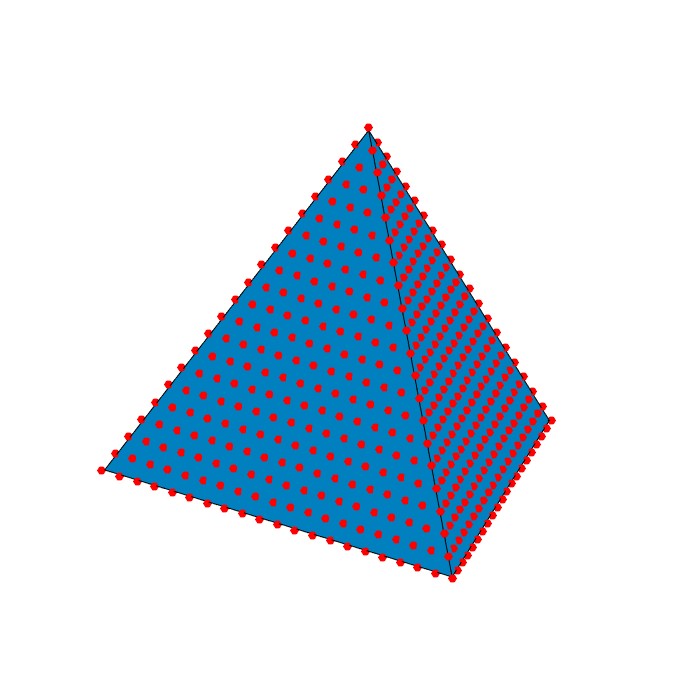}\\
\end{tabular}
\caption{Equispaced sampling points $\{\mathbf{x}_s\}_{s=1}^S \subset \partial \Omega$. Equispaced nodes are initially placed along the edges. Following this, for every face, a sequence of parallel equispaced node sets is generated starting from an edge. In case $\Omega=Q_1$ (left), the number of nodes in each sequence is the same. If $\Omega=T_1$ (right), the number of nodes decreases gradually by one until the opposite vertex is reached.}
\label{figure 7.15}
\end{figure}

\hypertarget{Section 7.4}{In} conclusion of this chapter, we present some numerical results on various shapes to show that the approximation sets we developed, based on the analysis of the unit ball $B_1$, perform well on other geometries too.
Once again, the objective of the approximation problem is to determine the fundamental solution of the Helmholtz equation $\Phi_{\mathbf{s}}$, as defined in (\ref{fundamental solution}), but this time for a convex polyhedron $\Omega \subset \R^3$ inscribed in the unit sphere and with the singularity $\mathbf{s}$ belonging to $\R^3\setminus\overline{\Omega}$.
We will consider both $\Omega=Q_1$, where $Q_1$ is the cube with the edges parallel to the Cartesian axes, and $\Omega=T_1$, where $T_1$ is the regular tetrahedron of vertices

\footnotesize 
\begin{align*}
\mathbf{v}_1&=\left(-\sqrt{\frac{2}{9}},\sqrt{\frac{2}{3}},-\frac{1}{3}\right), & \mathbf{v}_2&=\left(-\sqrt{\frac{2}{9}},-\sqrt{\frac{2}{3}},-\frac{1}{3}\right), &
\mathbf{v}_3&=\left(\sqrt{\frac{8}{9}},0,-\frac{1}{3}\right), & \mathbf{v}_4&=\left(0,0,1\right).
\end{align*}
\medskip

\normalsize
We re-examine the convergence of the plane wave approximation as the size of the approximation set $P$ increases.
The approximation recipe is borrowed from Section \hyperlink{Section 2.2}{2.2}: it involves using equispaced Dirichlet data points $\{\mathbf{x}_s\}_{s=1}^S$ on the boundary of $\Omega$ (see Figure \ref{figure 7.15}) and solving over-sampled linear systems using a regularized SVD. Note that, since evenly spaced sampling points are employed, we can choose $w_s=|\partial \Omega|/S$ for every $s=1,...,S$ in (\ref{A matrix definition}). The approximation sets consist of two types of plane waves: propagative, with directions determined by the extremal point systems (\ref{G matrix}), and evanescent, as described in (\ref{evanescence sets}). The construction of the evanescent plane wave set follows the Extremal--Sobol strategy (e) which is outlined in Definition \protect\hyperlink{Definition 6.4}{6.4}. The truncation parameter $L$ is calculated based on the dimension $P$ of the approximation set, as $L:=\max\{\lceil \kappa\rceil,\lfloor\sqrt{P/10}\rfloor\}$. Finally, the evanescent plane waves are normalized to have a unit $L^{\infty}$ norm on the boundary $\partial \Omega$, which is the only variation from the sets used for the spherical geometry.

Figure \ref{figure 7.16} illustrates the convergence of the plane wave approximation as the size of the approximation set $P$ increases. We consider the fundamental solution $\Phi_{\mathbf{s}}$ defined in (\ref{fundamental solution}) with wavenumber $\kappa=5$ and singularity $\mathbf{s}=(1/\sqrt{3}+2\lambda/3,0,0)$, if $\Omega=Q_1$, and $\mathbf{s}=(0,0,-1/3-2\lambda/3)$, if $\Omega=T_1$, so that in both cases $\text{dist}(\mathbf{s},\partial \Omega)=2\lambda/3$ as in Figure \ref{figure 7.12}.
The results are consistent with those presented in Section \hyperlink{Section 7.3}{7.3}: evanescent plane waves are able to approximate more modes providing better accuracy, on the contrary of propagative plane waves which instead stall before reaching machine precision due to the rapidly growing coefficients.
However, it should be noted that the use of evanescent plane waves results in a slower convergence rate compared to the scenario depicted in Figure \ref{figure 7.12}, where a spherical geometry is considered.

\begin{figure}
\centering
\includegraphics[width=\linewidth]{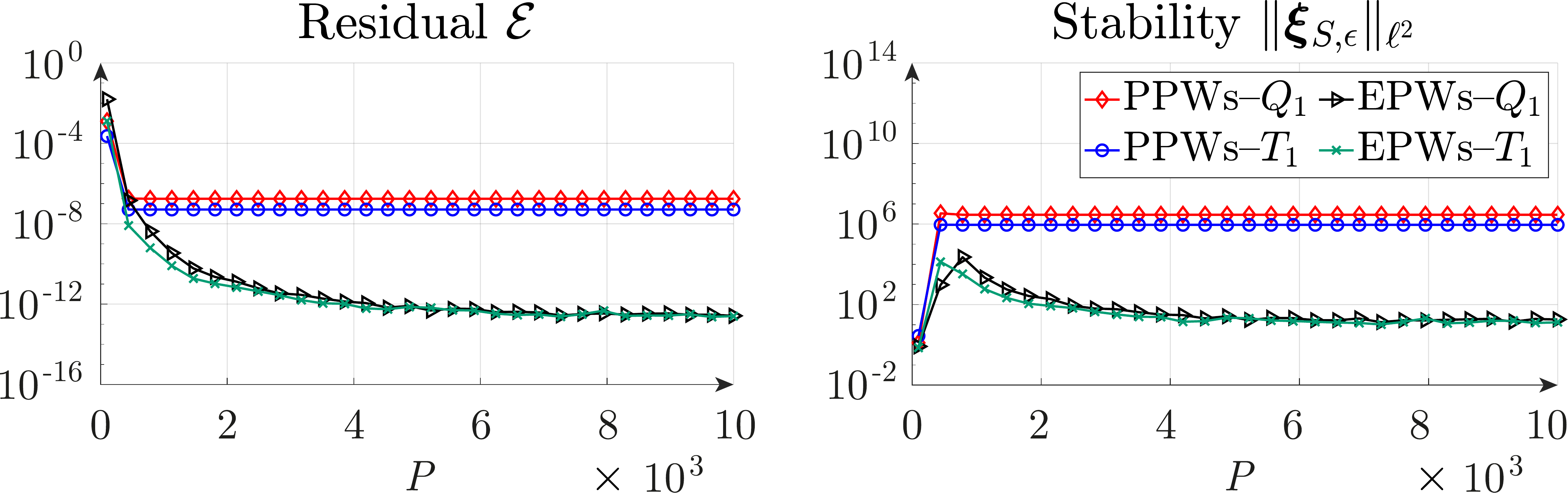}
\caption{Residual $\mathcal{E}$ as defined in (\ref{relative residual}) (left) and stability $\|\boldsymbol{\xi}_{S,\epsilon}\|_{\ell^2}$ (right) of the approximation of the fundamental solution $\Phi_{\mathbf{s}}$ presented in (\ref{fundamental solution}), both in $Q_1$, with $\mathbf{s}=(1/\sqrt{3}+2\lambda/3,0,0)$, and in $T_1$, with $\mathbf{s}=(0,0,-1/3-2\lambda/3)$. Both plane wave approximation sets are employed, either propagative or evanescent (where the nodes in $Y$ are selected using the Extremal–Sobol strategy (e) presented in Definition \protect\hyperlink{Definition 6.4}{6.4}). We report the convergence of the approximation for increasing size of the approximation set $P$. Wavenumber $\kappa=5$ and regularization parameter $\epsilon=10^{-14}$.}
\label{figure 7.16}
\end{figure}

Lastly, we show some examples of the fundamental solution $\Phi_{\mathbf{s}}$ with wavenumber $\kappa=5$ and several choices of the singularity $\mathbf{s}$, both in $Q_1$ and in $T_1$, along with the errors in approximation using plane waves, whether they are propagative or evanescent (see from Figure \ref{figure 7.18} to Figure \ref{figure 7.23}).

It is worth noting that the previously outlined numerical recipe, in which equispaced Dirichlet data points $\{\mathbf{x}_s\}_{s=1}^S \subset \partial \Omega$ are considered, seems to yield some inaccuracies near the corners. For this reason, in Figure \ref{figure 7.20} and in Figure \ref{figure 7.23} we report some numerical experiments where we choose locally-refined sampling points. In this case, we set the weights $\mathbf{w}_S \in \R^S$ in the linear system (\ref{linear system}) to constant.
To construct the points grid, Chebyshev nodes are first placed along the edges. Then, for each face, a sequence of parallel
Chebyshev node set is generated in parallel starting from an edge. If $\Omega=Q_1$, the number of nodes in each sequence remains the same, whereas, if $\Omega=T_1$, the number of nodes is gradually reduced by one until the opposite vertex is reached.
Observe that one possible approach to obtain a set of equispaced sampling points on $\partial \Omega$ (such as those depicted in Figure \ref{figure 7.15}) is to use equispaced nodes, rather than Chebyshev nodes, at each step. Some examples of locally-refined grids are depicted in Figure \ref{figure 7.17}.

The accuracy and stability analysis of this method (not plotted) is totally analogous to the one reported in Figure \ref{figure 7.16}, but both Figure \ref{figure 7.20} and Figure \ref{figure 7.23} show greater accuracy near the corners if compared to Figure \ref{figure 7.19} and Figure \ref{figure 7.22}, where instead equispaced nodes are considered.

These results show the promising prospects of the suggested numerical approach for plane wave approximations and Trefftz methods. Notably, the results are quite impressive, considering that the numerical method used to develop the approximations is not fine-tuned for these particular geometries, apart from the $L^{\infty}$ re-normalization on the boundary and, possibly, the choice of Dirichlet sampling points $\{\mathbf{x}_s\}_{s=1}^S \subset \partial \Omega$. We are confident that the outlined recipe could be improved defining better rules that are specifically tailored to the underlying geometries and hence lead to even more effective approximation schemes.

\begin{figure}
\centering
\begin{tabular}{cc}
\includegraphics[trim=20 20 20 20,clip,width=.4\textwidth,valign=m]{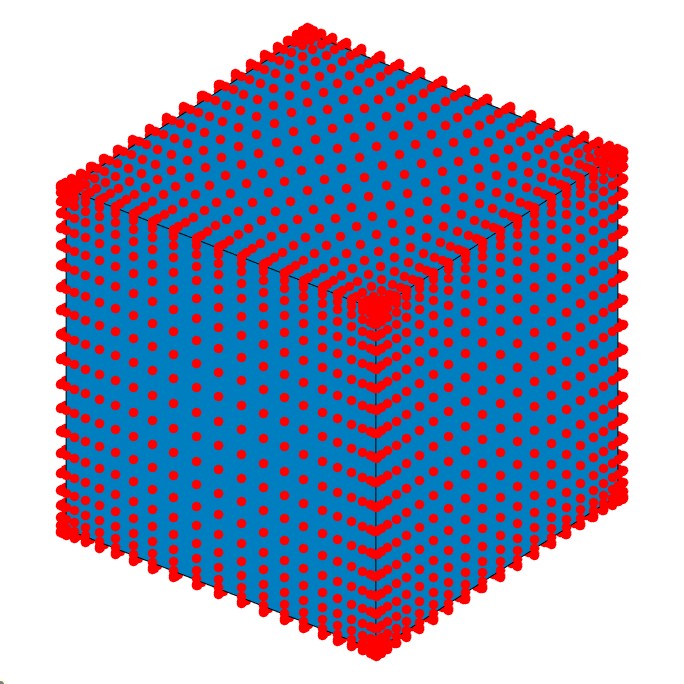} &  \includegraphics[trim=60 60 60 60,clip,width=.45\textwidth,valign=m]{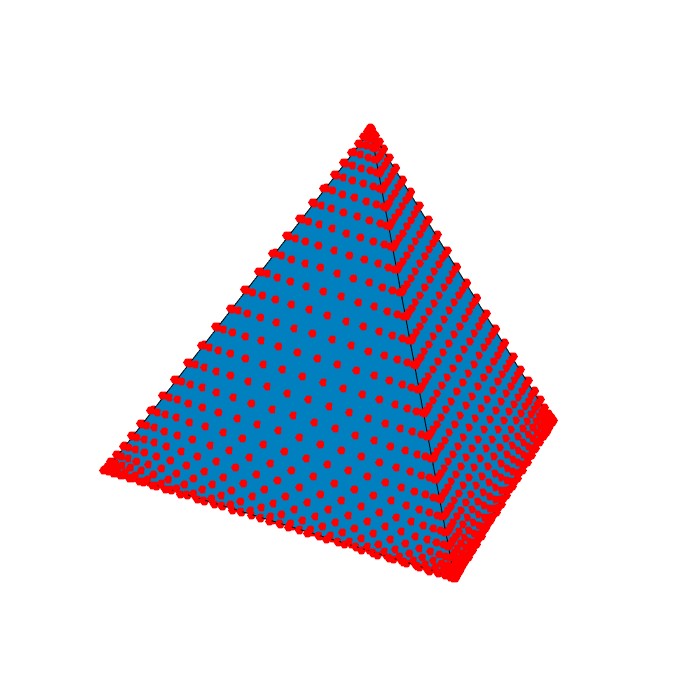}\\
\end{tabular}
\caption{Locally-refined sampling points $\{\mathbf{x}_s\}_{s=1}^S \subset \partial \Omega$. Chebyshev nodes are initially placed along the edges. Following this, for every face, a sequence of parallel Chebyshev node sets is generated starting from an edge. In case $\Omega=Q_1$ (left), the number of nodes in each sequence is the same. If $\Omega=T_1$ (right), the number of nodes decreases gradually by one until the opposite vertex is reached.}
\label{figure 7.17}
\end{figure}

\begin{figure}
\centering
\begin{tabular}{cc}
\includegraphics[trim=20 20 20 20,clip,width=.43\textwidth,valign=m]{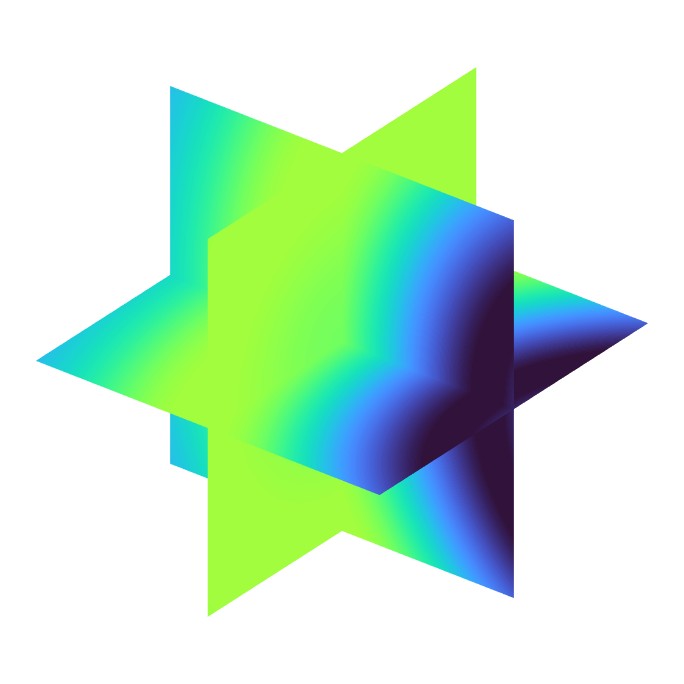} &  \includegraphics[trim=20 20 20 20,clip,width=.43\textwidth,valign=m]{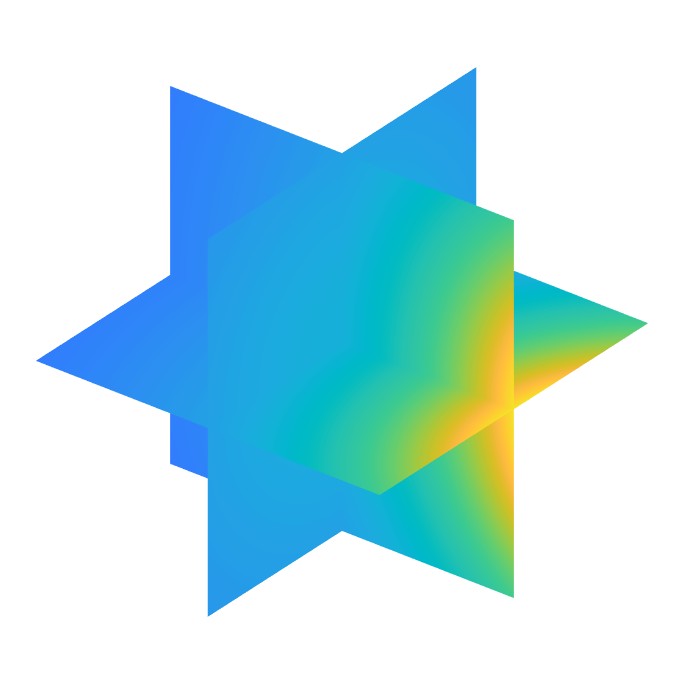}\\
\includegraphics[width=.4\textwidth]{imagespdf/bar_fundamental_real.pdf} &
\includegraphics[width=.36\textwidth]{imagespdf/bar_fundamental_abs.pdf}\\
\vspace{1cm}
Real part $\Re\{\Phi_{\mathbf{s}}\}$
& Modulus $|\Phi_{\mathbf{s}}|$\\
\includegraphics[trim=0 0 0 0,clip,width=.38\textwidth,valign=m]{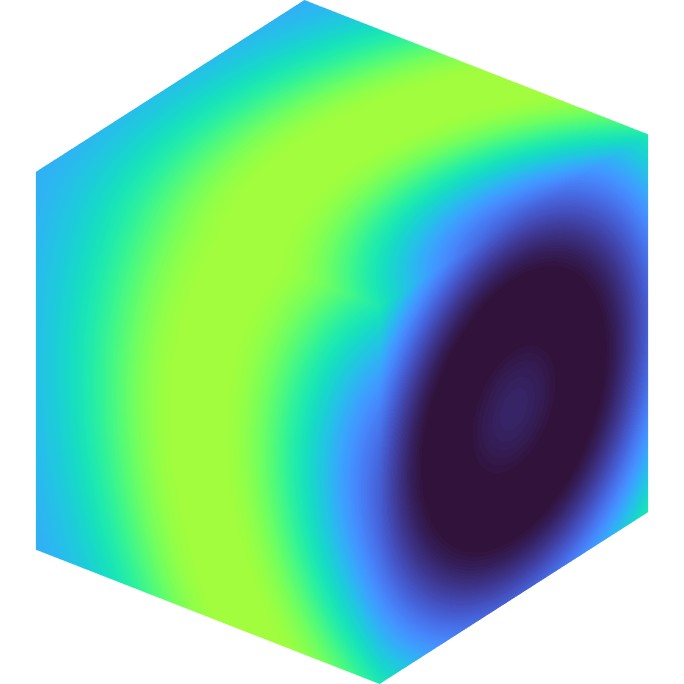} & \includegraphics[trim=0 0 0 0,clip,width=.38\textwidth,valign=m]{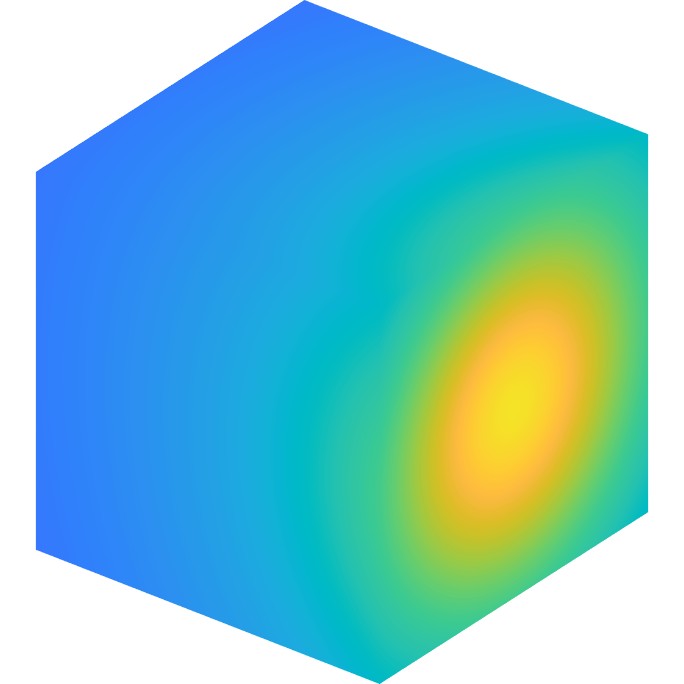}\\
\vspace{2mm}\\
\includegraphics[width=.4\textwidth]{imagespdf/bar_fundamental_real.pdf} &
\includegraphics[width=.36\textwidth]{imagespdf/bar_fundamental_abs.pdf}\\
Real part $\Re\{\Phi_{\mathbf{s}}\}$
& Modulus $|\Phi_{\mathbf{s}}|$\\
\end{tabular}
\caption{Fundamental solution $\Phi_{\mathbf{s}}$ defined in (\ref{fundamental solution}) with wavenumber $\kappa=5$ and $\mathbf{s} \in \R^3\setminus\overline{Q_1}$ so that $\textup{dist}(\mathbf{s},\partial Q_1)=\lambda/3$. Both the real part $\Re\{\Phi_{\mathbf{s}}\}$ and the modulus $|\Phi_{\mathbf{s}}|$ of the fundamental solution are plotted on $Q_1 \cap \{\mathbf{x}=(x,y,z) : xyz=0\}$ (top) and on the boundary $\partial Q_1$ (bottom).}
\label{figure 7.18}
\end{figure}
\begin{figure}
\centering
\begin{tabular}{cc}
\includegraphics[trim=20 20 20 20,clip,width=.43\textwidth,valign=m]{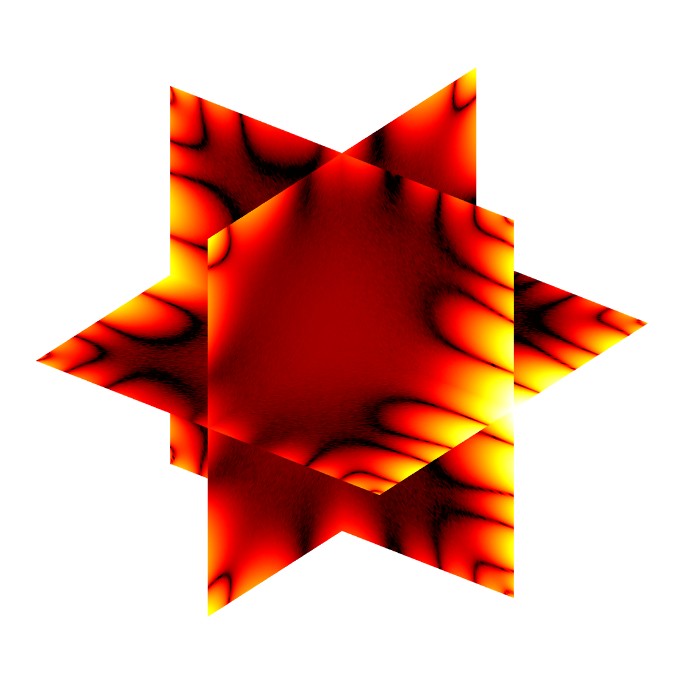} &  \includegraphics[trim=20 20 20 20,clip,width=.43\textwidth,valign=m]{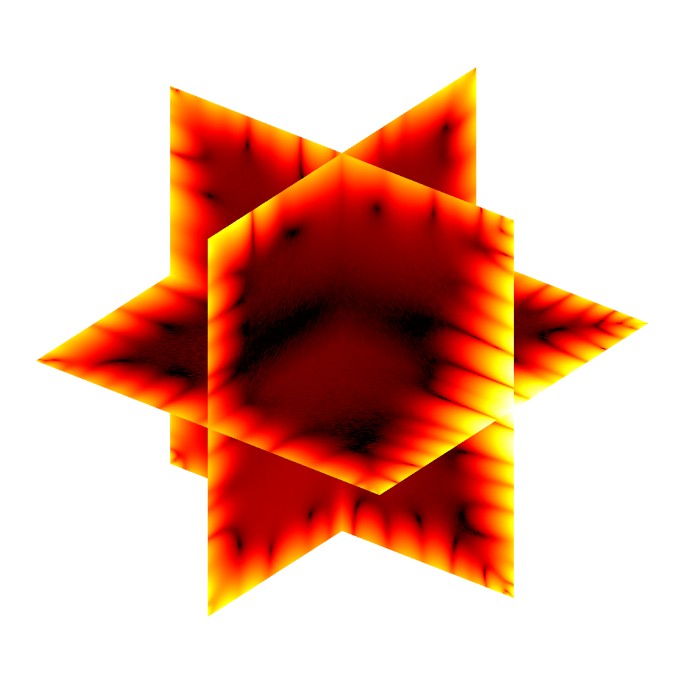}\\
\includegraphics[width=.4\textwidth]{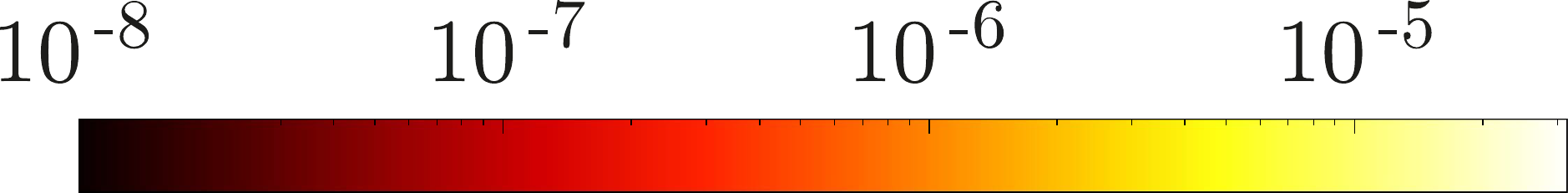} &
\includegraphics[width=.4\textwidth]{imagespdf//bar_evanescent_cube.pdf}\\
\vspace{1cm}
Absolute error using PPWs
& Absolute error using EPWs\\
\includegraphics[trim=0 0 0 0,clip,width=.38\textwidth,valign=m]{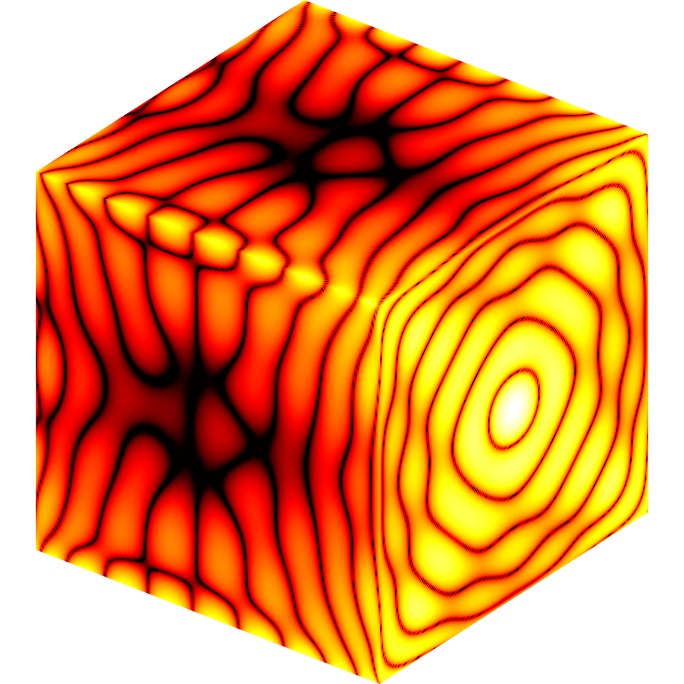} & \includegraphics[trim=0 0 0 0,clip,width=.38\textwidth,valign=m]{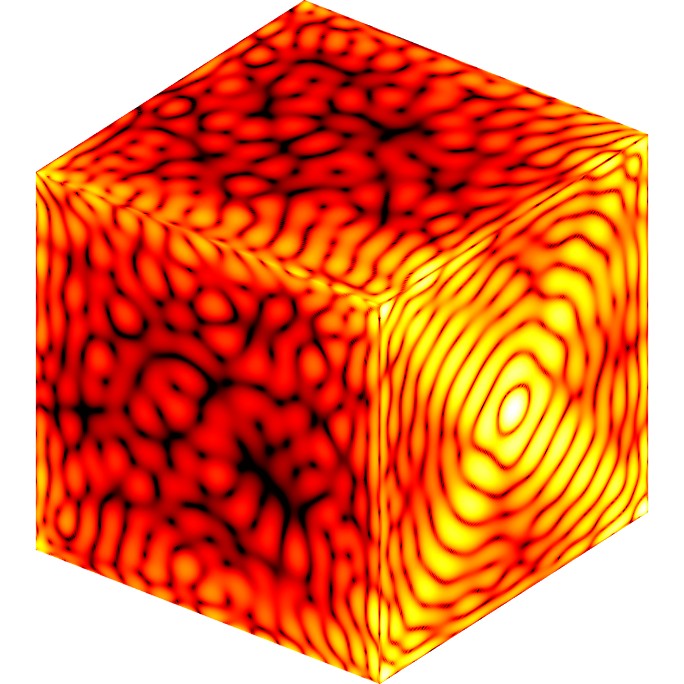}\\
\vspace{2mm}\\
\includegraphics[width=.4\textwidth]{imagespdf//bar_propagative_cube.pdf} &
\includegraphics[width=.4\textwidth]{imagespdf//bar_evanescent_cube2.pdf}\\
Absolute error using PPWs
& Absolute error using EPWs\\
\end{tabular}
\caption{Absolute errors of the approximation of the fundamental solution $\Phi_{\mathbf{s}}$ defined in (\ref{fundamental solution}) with $\mathbf{s} \in \R^3\setminus\overline{Q_1}$ so that $\textup{dist}(\mathbf{s},\partial Q_1)=\lambda/3$. The error is provided using $P=2704$ plane waves, either propagative ones $\boldsymbol{\Phi}_P$ from (\ref{plane waves approximation set}) (left) or evanescent ones $\boldsymbol{\Phi}_{L,P}$ from (\ref{evanescence sets}), whose parameters are constructed using the Extremal--Sobol strategy (e) presented in Definition \protect\hyperlink{Definition 6.4}{6.4} (right).
In both cases, the Dirichlet data sampling nodes on the boundary $\partial Q_1$ are equispaced.
The absolute errors are plotted both on $Q_1 \cap \{\mathbf{x}=(x,y,z) : xyz=0\}$ (top) and on the boundary $\partial Q_1$ (bottom). Wavenumber $\kappa=5$ and regularization parameter $\epsilon=10^{-14}$.}
\label{figure 7.19}
\end{figure}
\begin{figure}
\centering
\begin{tabular}{cc}
\includegraphics[trim=20 20 20 20,clip,width=.43\textwidth,valign=m]{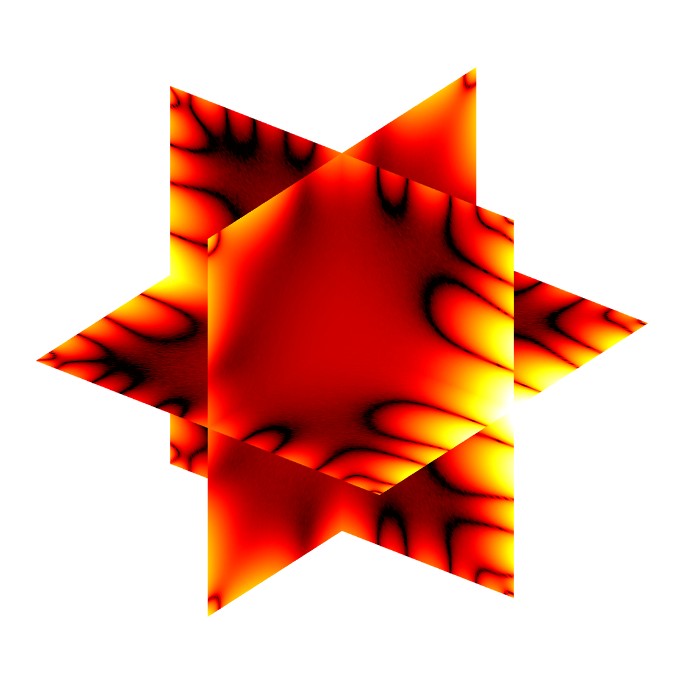} &  \includegraphics[trim=20 20 20 20,clip,width=.43\textwidth,valign=m]{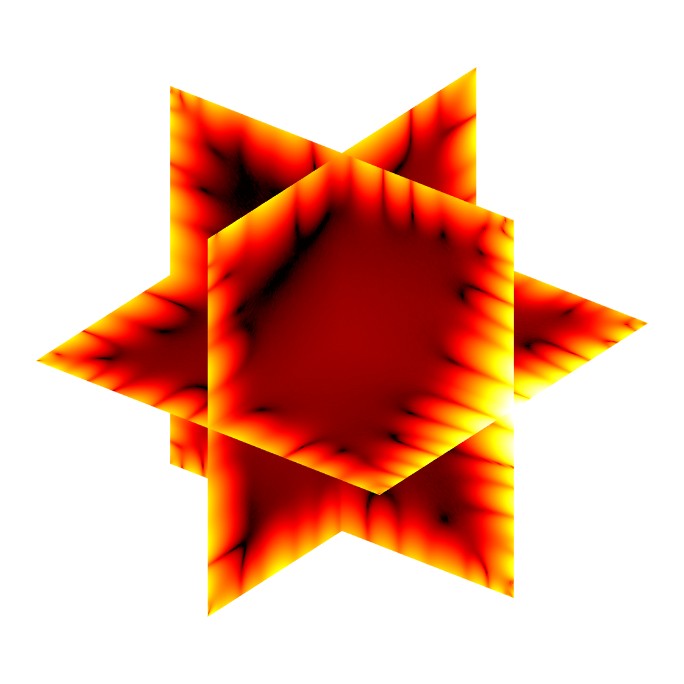}\\
\includegraphics[width=.4\textwidth]{imagespdf//bar_propagative_cube.pdf} &
\includegraphics[width=.4\textwidth]{imagespdf//bar_evanescent_cube.pdf}\\
\vspace{1cm}
Absolute error using PPWs
& Absolute error using EPWs\\
\includegraphics[trim=0 0 0 0,clip,width=.38\textwidth,valign=m]{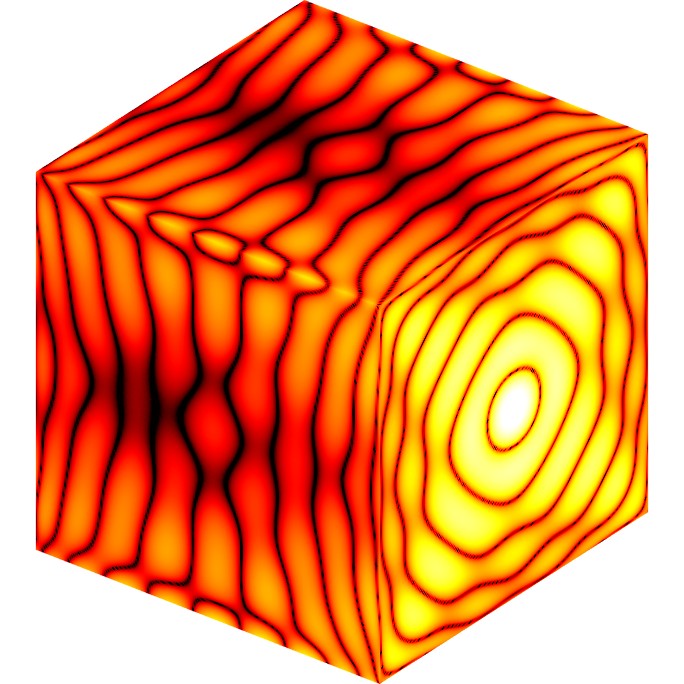} & \includegraphics[trim=0 0 0 0,clip,width=.38\textwidth,valign=m]{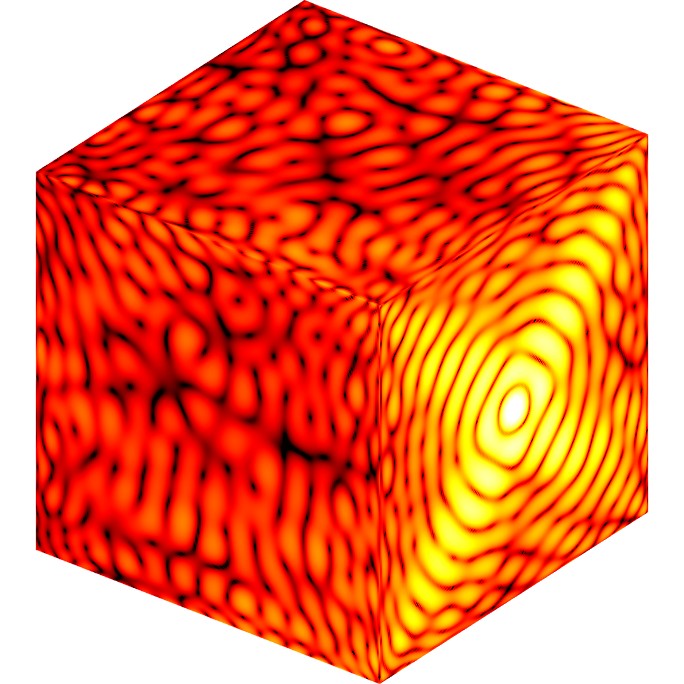}\\
\vspace{2mm}\\
\includegraphics[width=.4\textwidth]{imagespdf//bar_propagative_cube.pdf} &
\includegraphics[width=.4\textwidth]{imagespdf//bar_evanescent_cube2.pdf}\\
Absolute error using PPWs
& Absolute error using EPWs\\
\end{tabular}
\caption{The caption of Figure \ref{figure 7.19} applies here as well, with the only difference that now the Dirichlet data sampling nodes on the boundary $\partial Q_1$ are locally-refined near the corners.}
\label{figure 7.20}
\end{figure}

\begin{figure}
\centering
\begin{tabular}{cc}
\includegraphics[trim=75 75 75 85,clip,width=.45\textwidth,valign=m]{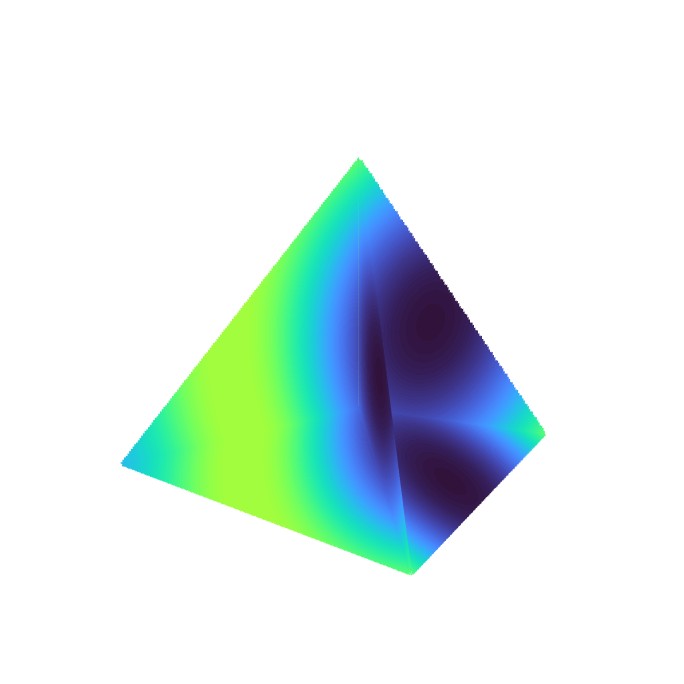} &  \includegraphics[trim=75 75 75 85,clip,width=.45\textwidth,valign=m]{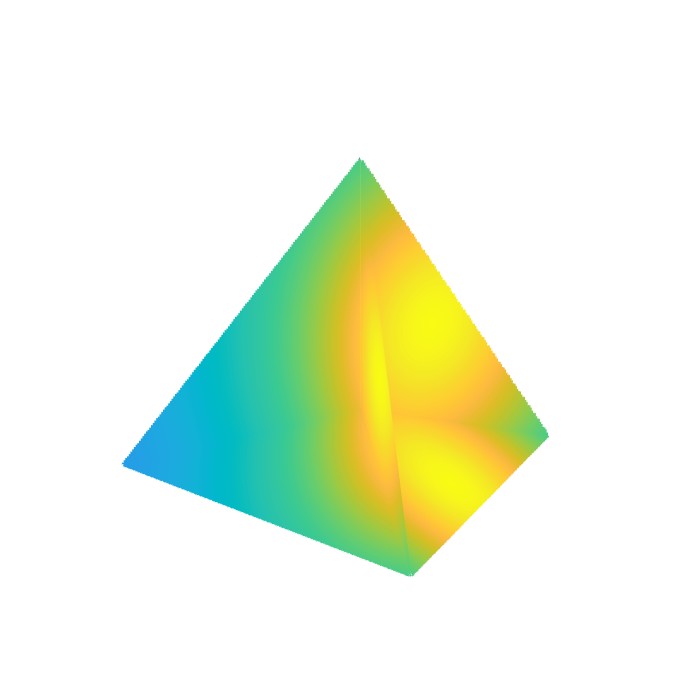}\\
\vspace{-3mm}\\
\includegraphics[width=.4\textwidth]{imagespdf//bar_fundamental_real.pdf} &
\includegraphics[width=.36\textwidth]{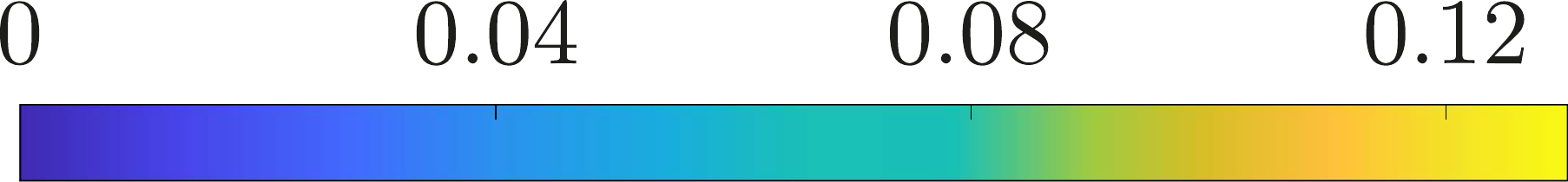}\\
Real part $\Re\{\Phi_{\mathbf{s}}\}$
& Modulus $|\Phi_{\mathbf{s}}|$\\
\includegraphics[trim=70 70 70 70,clip,width=.45\textwidth,valign=m]{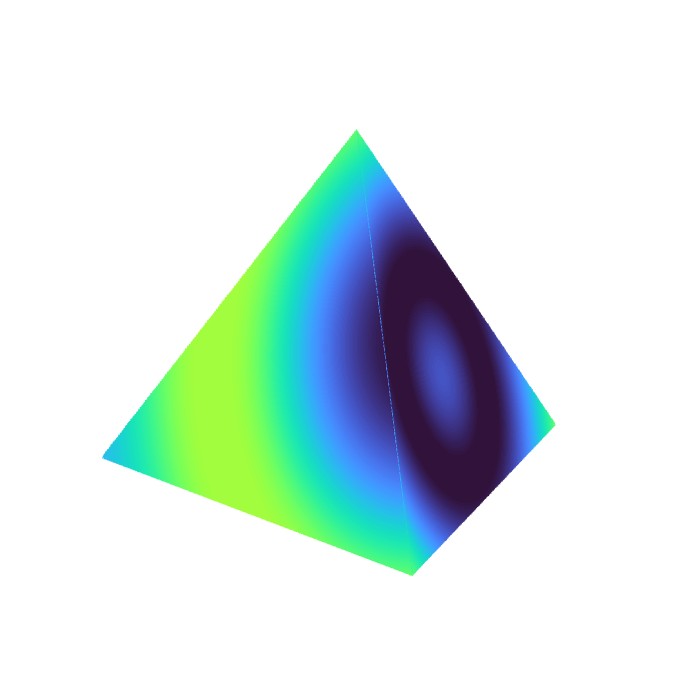} & \includegraphics[trim=70 70 70 70,clip,width=.45\textwidth,valign=m]{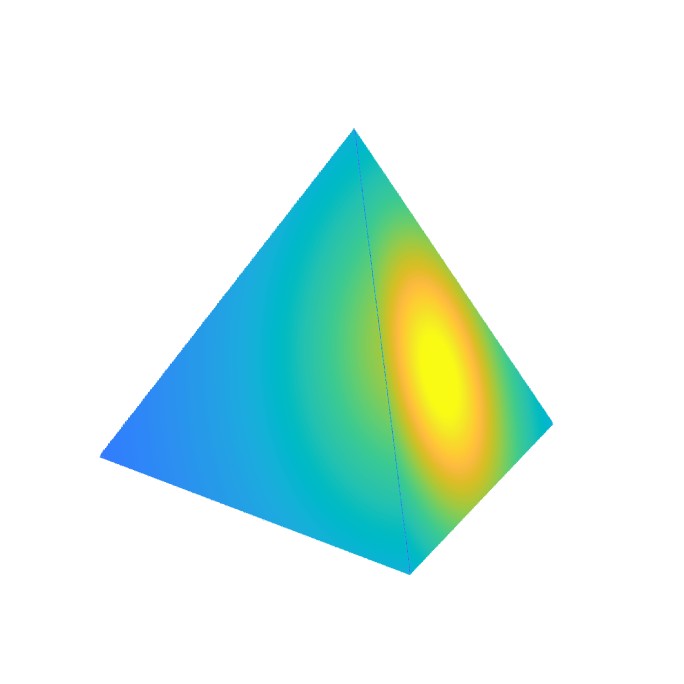}\\
\vspace{-3mm}\\
\includegraphics[width=.4\textwidth]{imagespdf//bar_fundamental_real.pdf} &
\includegraphics[width=.36\textwidth]{imagespdf//bar_fundamental_abs.pdf}\\
Real part $\Re\{\Phi_{\mathbf{s}}\}$
& Modulus $|\Phi_{\mathbf{s}}|$\\
\end{tabular}
\caption{Fundamental solution $\Phi_{\mathbf{s}}$ defined in (\ref{fundamental solution}) with wavenumber $\kappa=5$ and $\mathbf{s}\in \R^3\setminus\overline{T_1}$ so that $\textup{dist}(\mathbf{s},\partial T_1)=\lambda/3$. Both the real part $\Re\{\Phi_{\mathbf{s}}\}$ and the modulus $|\Phi_{\mathbf{s}}|$ of the fundamental solution are plotted on the six triangles with one vertex at the barycenter of $T_1$ and the other two at two vertices of $T_1$ (top) and on the boundary $\partial T_1$ (bottom).}
\label{figure 7.21}
\end{figure}
\begin{figure}
\centering
\begin{tabular}{cc}
\includegraphics[trim=75 75 75 85,clip,width=.45\textwidth,valign=m]{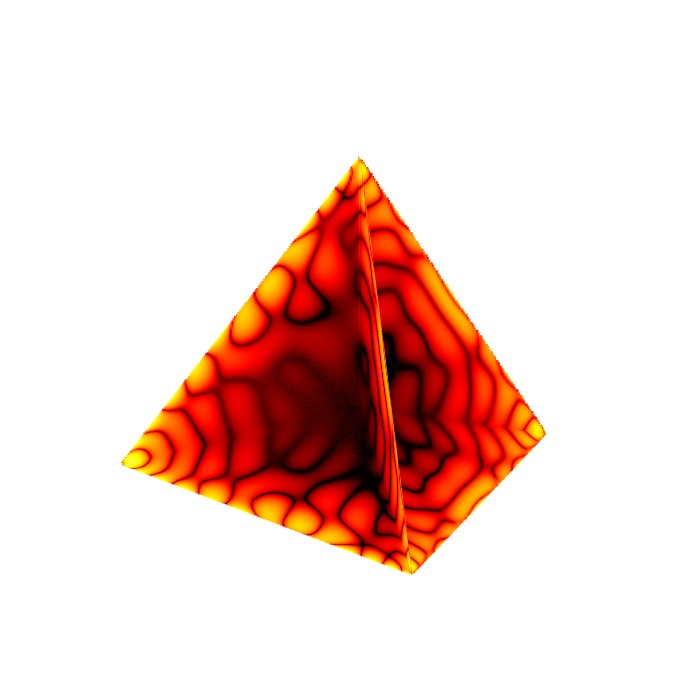} &  \includegraphics[trim=75 75 75 85,clip,width=.45\textwidth,valign=m]{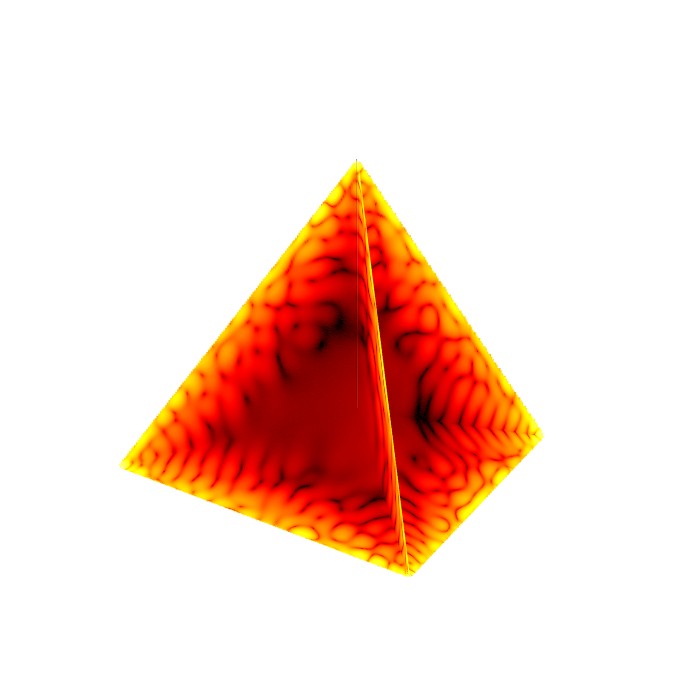}\\
\vspace{-3mm}\\
\includegraphics[width=.4\textwidth]{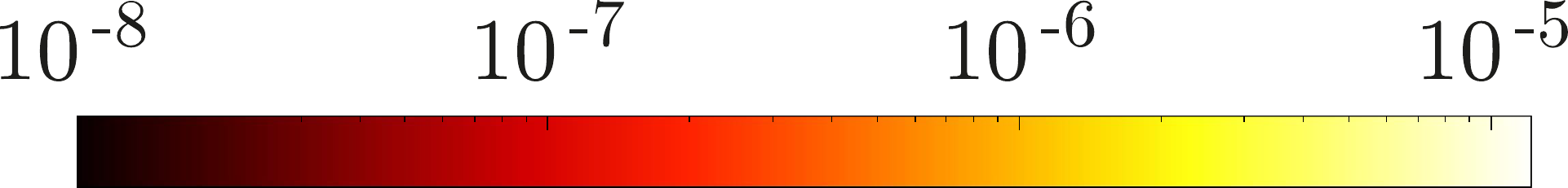} &
\includegraphics[width=.4\textwidth]{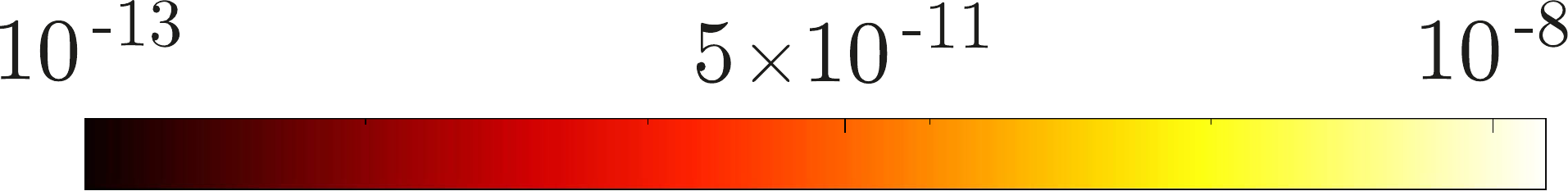}\\
Absolute error using PPWs
& Absolute error using EPWs\\
\includegraphics[trim=70 70 70 70,clip,width=.45\textwidth,valign=m]{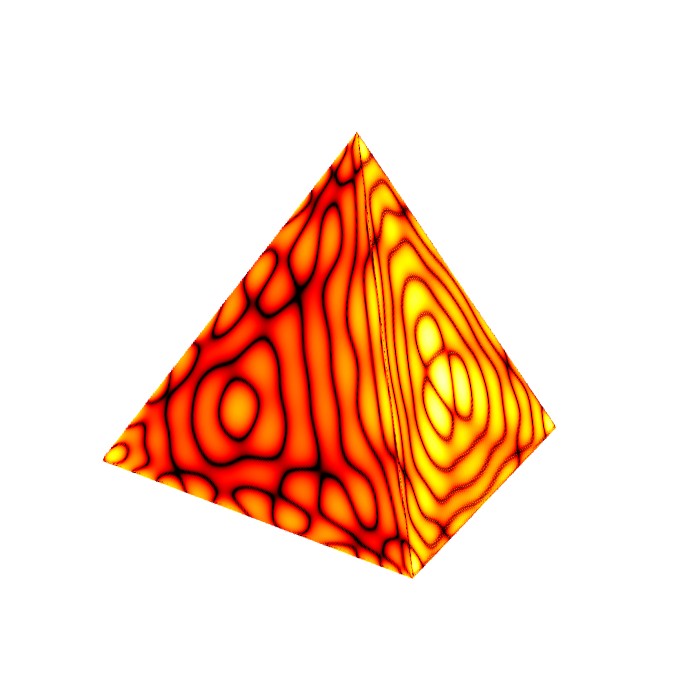} & \includegraphics[trim=70 70 70 70,clip,width=.45\textwidth,valign=m]{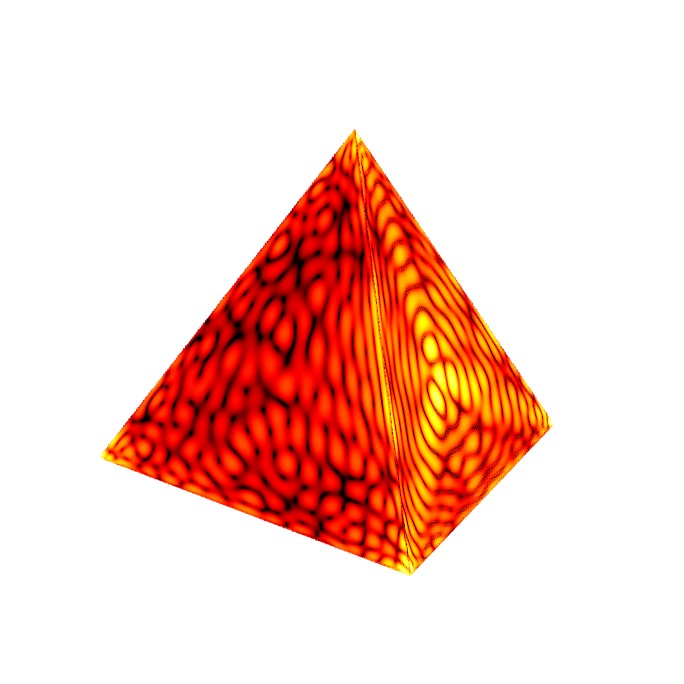}\\
\vspace{-3mm}\\
\includegraphics[width=.4\textwidth]{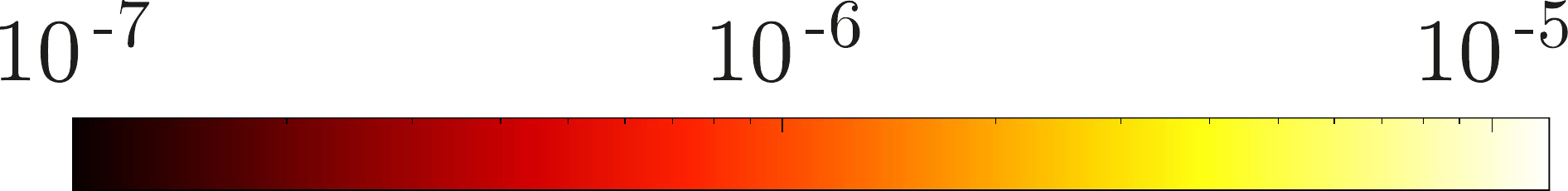} &
\includegraphics[width=.4\textwidth]{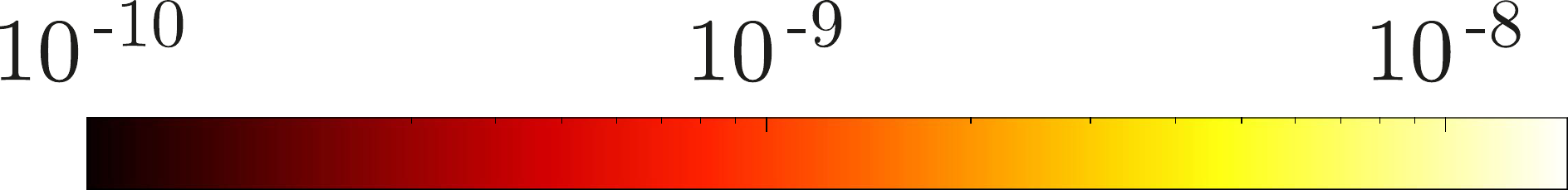}\\
Absolute error using PPWs
& Absolute error using EPWs\\
\end{tabular}
\caption{Absolute errors of the approximation of the fundamental solution $\Phi_{\mathbf{s}}$ defined in (\ref{fundamental solution}) with $\mathbf{s}\in \R^3\setminus\overline{T_1}$ so that $\textup{dist}(\mathbf{s},\partial T_1)=\lambda/3$. The error is provided using $P=2704$ plane waves, either propagative ones $\boldsymbol{\Phi}_P$ from (\ref{plane waves approximation set}) (left) or evanescent ones $\boldsymbol{\Phi}_{L,P}$ from (\ref{evanescence sets}), whose parameters are constructed using the Extremal--Sobol strategy (e) presented in Definition \protect\hyperlink{Definition 6.4}{6.4} (right).
In both cases, the Dirichlet data sampling nodes on the boundary $\partial T_1$ are equispaced.
The absolute errors are plotted both on the six triangles with one vertex at the barycenter of $T_1$ and the other two at two vertices of $T_1$ (top) and on the boundary $\partial T_1$ (bottom). Wavenumber $\kappa=5$ and regularization parameter $\epsilon=10^{-14}$.}
\label{figure 7.22}
\end{figure}
\begin{figure}
\centering
\begin{tabular}{cc}
\includegraphics[trim=75 75 75 85,clip,width=.45\textwidth,valign=m]{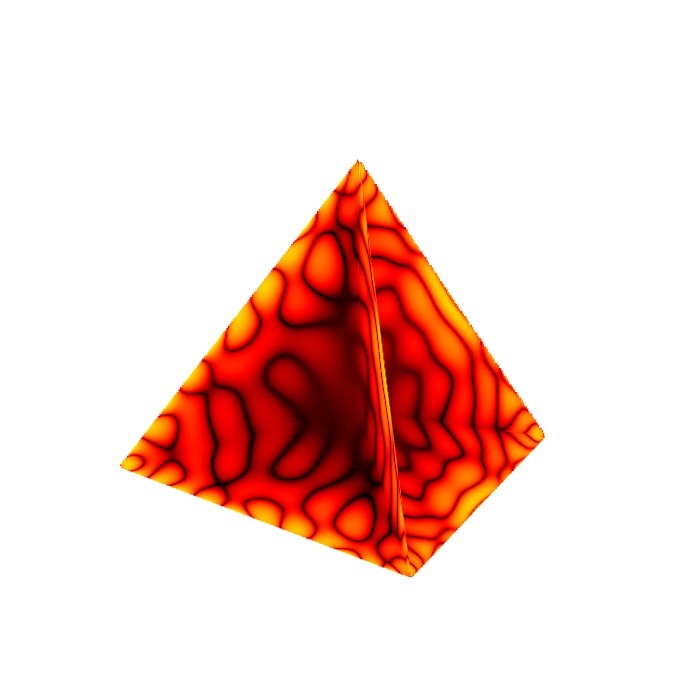} &  \includegraphics[trim=75 75 75 85,clip,width=.45\textwidth,valign=m]{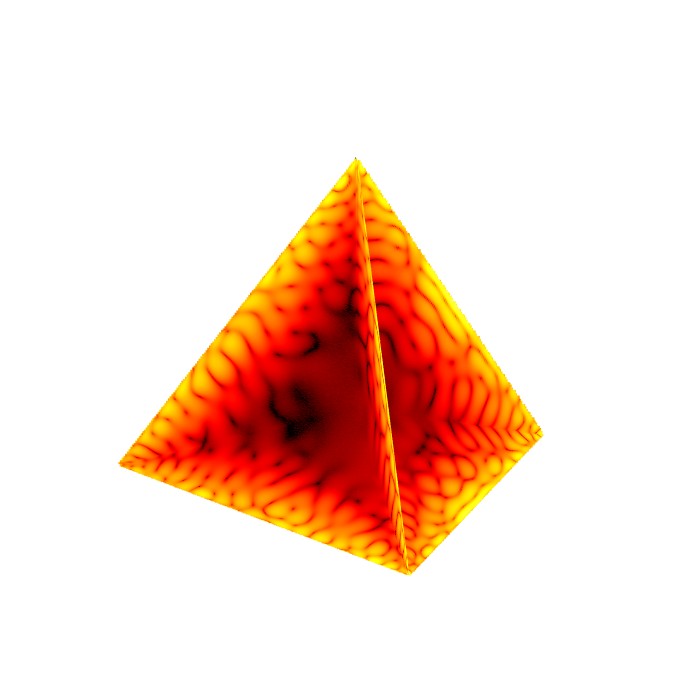}\\
\vspace{-3mm}\\
\includegraphics[width=.4\textwidth]{imagespdf//bar_propagative_tetra.pdf} &
\includegraphics[width=.4\textwidth]{imagespdf//bar_evanescent_tetra.pdf}\\
Absolute error using PPWs
& Absolute error using EPWs\\
\includegraphics[trim=70 70 70 70,clip,width=.45\textwidth,valign=m]{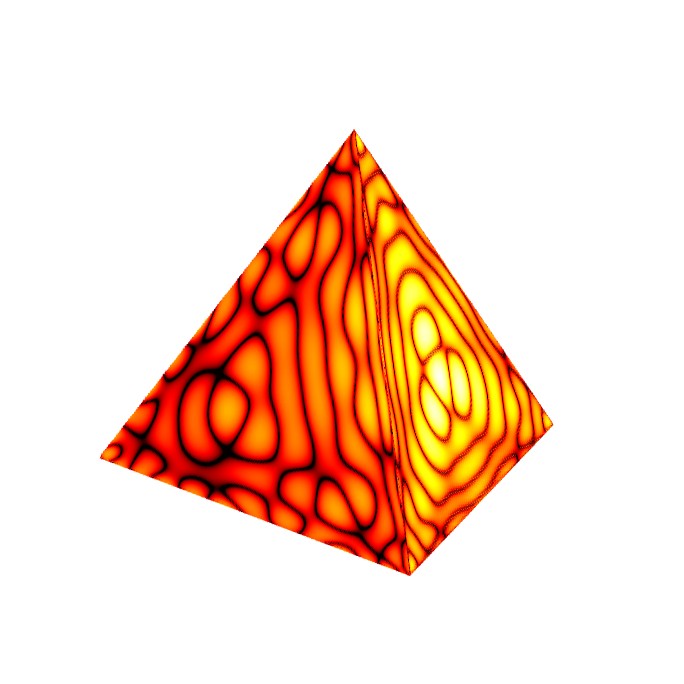} & \includegraphics[trim=70 70 70 70,clip,width=.45\textwidth,valign=m]{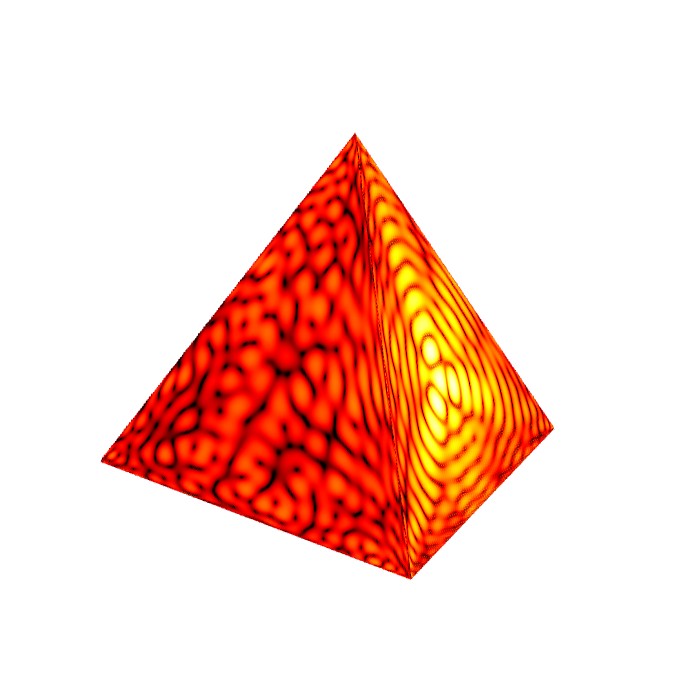}\\
\vspace{-3mm}\\
\includegraphics[width=.4\textwidth]{imagespdf//bar_propagative_tetra2.pdf} &
\includegraphics[width=.4\textwidth]{imagespdf//bar_evanescent_tetra2.pdf}\\
Absolute error using PPWs
& Absolute error using EPWs\\
\end{tabular}
\caption{The caption of Figure \ref{figure 7.22} applies here as well, with the only difference that now the Dirichlet data sampling nodes on the boundary $\partial T_1$ are locally-refined near the corners.}
\label{figure 7.23}
\end{figure}

\chapter*{Conclusions}
Propagative plane wave-based Trefftz methods are known to suffer from numerical instability, which can significantly reduce the accuracy and efficiency of approximation schemes.
This issue is due to the ill-conditioning of the linear system that necessarily arises when the size of the approximation space increases, as propagative plane waves with similar directions of propagation are almost-linear dependent.
However, the error analysis discussed in \cite[Sec.\ 3.4]{parolin-huybrechs-moiola}, here extended to the three-dimensional case (see Section \hyperlink{Section 2.4}{2.4}), shows that evanescent plane waves, providing accurate approximations in the form of expansions with bounded coefficients, are able to overcome this problem.

The presented analysis broadens the findings of \cite[Th.\ 4.3]{parolin-huybrechs-moiola} by demonstrating that propagative plane waves lack high Fourier modal content also in 3D. As a result, their expansion coefficients blow up as soon as less regular Helmholtz solutions are considered. Theorem \hyperlink{Theorem 3.1}{3.4} states this result, which is verified numerically.
On the contrary, the modal analysis made possible by generalizing the Jacobi--Anger identity to complex-valued directions (see Theorem \hyperlink{Theorem 4.6}{4.7}), shows that evanescent plane waves are able to cover the entire modal spectrum.
Moreover, Theorem \hyperlink{Theorem 5.1}{5.9} extends the key results presented in \cite[Th.\ 6.7]{parolin-huybrechs-moiola} to three dimensions by stating that any Helmholtz solution within the unit ball can be uniquely expressed as a continuous superposition of evanescent plane waves.
The derived numerical recipes show the accurate and stable behavior of evanescent plane wave approximation sets, suggesting their potential usefulness in Trefftz methods.

In conclusion, this paper contributes to the ongoing research on stable and accurate approximation schemes based on evanescent plane waves, building upon the previous work presented in \cite{parolin-huybrechs-moiola}. While Conjecture \hyperlink{Conjecture 6.1}{6.2} remains unsolved, further investigation is necessary to provide a rigorous justification of the approximation properties of the evanescent plane wave sets.

Future studies will focus on broadening the scope in order to encompass more general geometries, allowing for the application of Trefftz schemes, and exploring more complex boundary value problems, such as those involving time-harmonic Maxwell and elastic wave equations. We are confident that this will provide valuable insights into the potential applications of evanescent plane wave approximation schemes in a broader range of contexts.
\phantomsection
\addcontentsline{toc}{chapter}{Conclusions}

\clearpage
\phantomsection
\addcontentsline{toc}{chapter}{References}

\printbibliography[title={References}]

@misc{nist,
       title = "{\it NIST Digital Library of Mathematical Functions}",
howpublished = "Release 1.1.9 of 2023-03-15",
         url = "http://dlmf.nist.gov/",
        note = "F.~W.~J. Olver, A.~B. {Olde Daalhuis}, D.~W. Lozier, B.~I. Schneider,
                R.~F. Boisvert, C.~W. Clark, B.~R. Miller, B.~V. Saunders,
                H.~S. Cohl, and M.~A. McClain, eds.",
addendum = "Last visited: 27-04-2023",
}

@phdthesis{melenk,
    AUTHOR = {Melenk, Jens Markus},
     TITLE = {On generalized finite-element methods},
      NOTE = {University of Maryland, College Park},
 PUBLISHER = {ProQuest LLC, Ann Arbor, MI},
      YEAR = {1995},
     PAGES = {227},
   MRCLASS = {Thesis},
  MRNUMBER = {2692949},
}

@article{huybrechs1,
    AUTHOR = {Adcock, Ben and Huybrechs, Daan},
     TITLE = {Frames and numerical approximation},
   JOURNAL = {SIAM Rev.},
  FJOURNAL = {SIAM Review},
    VOLUME = {61},
      YEAR = {2019},
    NUMBER = {3},
     PAGES = {443--473},
      ISSN = {0036-1445},
   MRCLASS = {42C15 (42C30 65T40)},
  MRNUMBER = {3989238},
MRREVIEWER = {Fritz Keinert},
       DOI = {10.1137/17M1114697},
}

@article{huybrechs2,
    AUTHOR = {Adcock, Ben and Huybrechs, Daan},
     TITLE = {Frames and numerical approximation {II}: {G}eneralized
              sampling},
   JOURNAL = {J. Fourier Anal. Appl.},
  FJOURNAL = {The Journal of Fourier Analysis and Applications},
    VOLUME = {26},
      YEAR = {2020},
    NUMBER = {6},
     PAGES = {Paper No. 87, 34},
      ISSN = {1069-5869},
   MRCLASS = {42C15 (42C30 47A52 65T60)},
  MRNUMBER = {4179873},
       DOI = {10.1007/s00041-020-09796-w},
}

@article{huybrechs3,
    AUTHOR = {Huybrechs, Daan and Olteanu, Anda-Elena},
     TITLE = {An oversampled collocation approach of the wave based method
              for {H}elmholtz problems},
   JOURNAL = {Wave Motion},
  FJOURNAL = {Wave Motion. An International Journal Reporting Research on
              Wave Phenomena},
    VOLUME = {87},
      YEAR = {2019},
     PAGES = {92--105},
      ISSN = {0165-2125},
   MRCLASS = {65N35 (65N12)},
  MRNUMBER = {3944573},
       DOI = {10.1016/j.wavemoti.2018.06.001},
}

@incollection{hiptmair-moiola-perugia1,
    AUTHOR = {Hiptmair, Ralf and Moiola, Andrea and Perugia, Ilaria},
     TITLE = {A survey of {T}refftz methods for the {H}elmholtz equation},
 BOOKTITLE = {Building bridges: connections and challenges in modern
              approaches to numerical partial differential equations},
    SERIES = {Lect. Notes Comput. Sci. Eng.},
    VOLUME = {114},
     PAGES = {237--278},
 PUBLISHER = {Springer, [Cham]},
      YEAR = {2016},
   MRCLASS = {65N30 (35J05 35J25)},
  MRNUMBER = {3585792},
}

@article{hiptmair-moiola-perugia4,
    AUTHOR = {Moiola, A. and Hiptmair, R. and Perugia, I.},
     TITLE = {Plane wave approximation of homogeneous {H}elmholtz solutions},
   JOURNAL = {Z. Angew. Math. Phys.},
  FJOURNAL = {Zeitschrift f\"{u}r Angewandte Mathematik und Physik. ZAMP.
              Journal of Applied Mathematics and Physics. Journal de
              Math\'{e}matiques et de Physique Appliqu\'{e}es},
    VOLUME = {62},
      YEAR = {2011},
    NUMBER = {5},
     PAGES = {809--837},
      ISSN = {0044-2275},
   MRCLASS = {35J05 (31B35 35A35 41A30 65N99)},
  MRNUMBER = {2843918},
MRREVIEWER = {Michele Campiti},
       DOI = {10.1007/s00033-011-0147-y},
}

@book{christensen,
    AUTHOR = {Christensen, Ole},
     TITLE = {An introduction to frames and {R}iesz bases},
    SERIES = {Applied and Numerical Harmonic Analysis},
   EDITION = {Second},
 PUBLISHER = {Birkh\"{a}user/Springer, [Cham]},
      YEAR = {2016},
     PAGES = {xxv +704},
      ISBN = {978-3-319-25611-5},
   MRCLASS = {42-02 (42C15 42C40 46B15 46C05)},
  MRNUMBER = {3495345},
MRREVIEWER = {Marcin M. Bownik},
       DOI = {10.1007/978-3-319-25613-9},
}

@book{colton-kress,
 Author = {D.{Colton} and R.{Kress}},
 Title = {{Inverse acoustic and electromagnetic scattering theory. 3rd ed.}},
 FJournal = {{Applied Mathematical Sciences}},
 Journal = {{Appl. Math. Sci.}},
 ISSN = {0066-5452},
 Volume = {93},
 ISBN = {978-1-4614-4941-6},
 Pages = {xiv + 405},
 Year = {2013},
 Publisher = {New York, Springer},
 MSC2010 = {35P25 35J05 35R30 45A05 65M30 78A45},
 Zbl = {1266.35121}
}

@book{reproducing_kernels,
    AUTHOR = {Paulsen, Vern I. and Raghupathi, Mrinal},
     TITLE = {An introduction to the theory of reproducing kernel {H}ilbert
              spaces},
    SERIES = {Cambridge Studies in Advanced Mathematics},
    VOLUME = {152},
 PUBLISHER = {Cambridge University Press, Cambridge},
      YEAR = {2016},
     PAGES = {x+182},
      ISBN = {978-1-107-10409-9},
   MRCLASS = {46-02 (46C05)},
  MRNUMBER = {3526117},
       DOI = {10.1017/CBO9781316219232},
}

@book {nedelec,
    AUTHOR = {N\'{e}d\'{e}lec, Jean-Claude},
     TITLE = {Acoustic and electromagnetic equations},
    SERIES = {Applied Mathematical Sciences},
    VOLUME = {144},
      NOTE = {Integral representations for harmonic problems},
 PUBLISHER = {Springer-Verlag, New York},
      YEAR = {2001},
     PAGES = {x+316},
      ISBN = {0-387-95155-5},
   MRCLASS = {35-02 (35C15 35J05 35Q60 45H05 78A25)},
  MRNUMBER = {1822275},
MRREVIEWER = {Rainer Picard},
       DOI = {10.1007/978-1-4757-4393-7},
}

@article {sloan-womersley1,
    AUTHOR = {Sloan, Ian H. and Womersley, Robert S.},
     TITLE = {Extremal systems of points and numerical integration on the
              sphere},
   JOURNAL = {Adv. Comput. Math.},
  FJOURNAL = {Advances in Computational Mathematics},
    VOLUME = {21},
      YEAR = {2004},
    NUMBER = {1-2},
     PAGES = {107--125},
      ISSN = {1019-7168},
   MRCLASS = {65D30},
  MRNUMBER = {2065291},
       DOI = {10.1023/B:ACOM.0000016428.25905.da},
}

@article {sloan-womersley2,
    AUTHOR = {Womersley, Robert S. and Sloan, Ian H.},
     TITLE = {How good can polynomial interpolation on the sphere be?},
   JOURNAL = {Adv. Comput. Math.},
  FJOURNAL = {Advances in Computational Mathematics},
    VOLUME = {14},
      YEAR = {2001},
    NUMBER = {3},
     PAGES = {195--226},
      ISSN = {1019-7168},
   MRCLASS = {41A63 (65D05 90C90)},
  MRNUMBER = {1845243},
MRREVIEWER = {Michael M. Dediu},
       DOI = {10.1023/A:1016630227163},
}

@article {marzo-cerda,
    AUTHOR = {Marzo, Jordi and Ortega-Cerd\`a, Joaquim},
     TITLE = {Equidistribution of {F}ekete points on the sphere},
   JOURNAL = {Constr. Approx.},
  FJOURNAL = {Constructive Approximation. An International Journal for
              Approximations and Expansions},
    VOLUME = {32},
      YEAR = {2010},
    NUMBER = {3},
     PAGES = {513--521},
      ISSN = {0176-4276},
   MRCLASS = {65D32 (11K36 30E05 33C55 65T40)},
  MRNUMBER = {2726443},
MRREVIEWER = {J. Kofro\v{n}},
       DOI = {10.1007/s00365-009-9051-5},
}

@book {reimer,
    AUTHOR = {Reimer, Manfred},
     TITLE = {Constructive theory of multivariate functions},
 PUBLISHER = {Bibliographisches Institut, Mannheim},
      YEAR = {1990},
     PAGES = {280},
      ISBN = {3-411-14601-X},
   MRCLASS = {41-02 (41A63 41A65 42C15 92C55)},
  MRNUMBER = {1115901},
MRREVIEWER = {Walter Schempp},
}

@book {quantumtheory,
    AUTHOR = {Varshalovich, D. A. and Moskalev, A. N. and Khersonski\i, V.
              K.},
     TITLE = {Quantum theory of angular momentum},
      NOTE = {Irreducible tensors, spherical harmonics, vector coupling
              coefficients, $3nj$ symbols,
              Translated from the Russian},
 PUBLISHER = {World Scientific Publishing Co., Inc., Teaneck, NJ},
      YEAR = {1988},
     PAGES = {xii+514},
      ISBN = {9971-50-107-4},
   MRCLASS = {81C40 (81-01)},
  MRNUMBER = {1022665},
MRREVIEWER = {M. A. Rashid},
       DOI = {10.1142/0270},
}

@book {devanathan,
    AUTHOR = {Devanathan, V.},
     TITLE = {Angular momentum techniques in quantum mechanics},
    SERIES = {Fundamental Theories of Physics},
    VOLUME = {\\108},
 PUBLISHER = {Kluwer Academic Publishers Group, Dordrecht},
      YEAR = {1999},
     PAGES = {xii+242},
      ISBN = {0-7923-5866-X},
   MRCLASS = {81R05 (22E70 81-01)},
  MRNUMBER = {1730694},
MRREVIEWER = {J.-P. Antoine},
}

@article{Hampton,
    AUTHOR = {Hampton, Jerrad and Doostan, Alireza},
     TITLE = {Coherence motivated sampling and convergence analysis of least squares polynomial {C}haos regression},
   JOURNAL = {Comput. Methods Appl. Mech. Engrg.},
  FJOURNAL = {Computer Methods in Applied Mechanics and Engineering},
    VOLUME = {290},
      YEAR = {2015},
     PAGES = {73--97},
      ISSN = {0045-7825},
   MRCLASS = {62H20},
  MRNUMBER = {3340149},
       DOI = {10.1016/j.cma.2015.02.006},
}

@article{Cohen_Migliorati,
    AUTHOR = {Cohen, Albert and Migliorati, Giovanni},
     TITLE = {Optimal weighted least-squares methods},
   JOURNAL = {SMAI J. Comput. Math.},
  FJOURNAL = {SMAI Journal of Computational Mathematics},
    VOLUME = {3},
      YEAR = {2017},
     PAGES = {181--203},
   MRCLASS = {62G05 (41A10 62E17 65D15 93E24)},
  MRNUMBER = {3716755},
MRREVIEWER = {Pranesh Kumar},
       DOI = {10.5802/smai-jcm.24},
}

@article{Migliorati_Nobile,
    AUTHOR = {Migliorati, Giovanni and Nobile, Fabio},
     TITLE = {Stable high-order randomized cubature formulae in arbitrary
              dimension},
   JOURNAL = {J. Approx. Theory},
  FJOURNAL = {Journal of Approximation Theory},
    VOLUME = {275},
      YEAR = {2022},
     PAGES = {Paper No. 105706, 30},
      ISSN = {0021-9045},
   MRCLASS = {65D32 (41A65 65C20)},
  MRNUMBER = {4371125},
       DOI = {10.1016/j.jat.2022.105706},
}

@online{womersley,
    author          ="Womersley, Robert S",
    title           ="Interpolation and Cubature on the Sphere",
    url            ="https://web.maths.unsw.edu.au/~rsw/Sphere/",
    addendum = "Last visited: 27-04-2023"
}

@book {erdelyi,
    AUTHOR = {Erd\'{e}lyi, Arthur and Magnus, Wilhelm and Oberhettinger, Fritz
              and Tricomi, Francesco G.},
    TITLE = {Higher transcendental functions. {V}ols. {I}, {II}},
    PUBLISHER = {McGraw-Hill Book Co., Inc., New York-Toronto-London},
    YEAR = {1953},
    PAGES = {x xvi+302, xvii+396},
    MRCLASS = {33.0X},
    MRNUMBER = {0058756},
    MRREVIEWER = {E. T. Copson},
    NOTE = {Based, in part, on notes left by Harry Bateman},
}

@book {ablowitz,
    AUTHOR = {Ablowitz, Mark J. and Fokas, Athanassios S.},
     TITLE = {Complex variables: introduction and applications. 2nd ed.},
    SERIES = {Cambridge Texts in Applied Mathematics},
 PUBLISHER = {Cambridge University Press, Cambridge},
      YEAR = {2003},
     PAGES = {xii+647},
      ISBN = {0-521-53429-1},
   MRCLASS = {30-01},
  MRNUMBER = {1989049},
       DOI = {10.1017/CBO9780511791246},
}

@misc{parolin-huybrechs-moiola,
    doi = {10.48550/arXiv.2202.05658},
    year = {2022},
    publisher = {ArXiv},
    author = {Parolin, Emile and Huybrechs, Daan and Moiola, Andrea},
    title = {Stable approximation of Helmholtz solutions by evanescent plane waves},
    version = {2nd},
    archivePrefix={arXiv},
    primaryClass={math.NA}
}

@article{Sokal,
	doi = {10.1080/00029890.2020.1668707},
	year = 2019,
	publisher = {Informa {UK} Limited},
	volume = {127},
	number = {1},
	pages = {54--62},
	author = {Alan D. Sokal},
	title = {How To Generalize (and Not To Generalize) the Chu{\textendash}Vandermo-nde Identity},
	journal = {The American Mathematical Monthly}
}

@article{pendleton,
author = {Pendleton, J. David },
title = {Euler angle geometry, helicity basis vectors, and the Wigner D-function addition theorem},
journal = {American Journal of Physics},
publisher={American Association of Physics Teachers},
volume = {71},
number = {12},
pages = {1280-1291},
year = {2003},
doi = {10.1119/1.1615525},
}

@article{feng,
  title = {High-precision evaluation of Wigner's $d$ matrix by exact diagonalization},
  author = {Feng, X. M. and Wang, P. and Yang, W. and Jin, G. R.},
  journal = {Phys. Rev. E},
  volume = {92},
  issue = {4},
  pages = {043307},
  numpages = {5},
  year = {2015},
  month = {10},
  publisher = {American Physical Society},
  doi = {10.1103/PhysRevE.92.043307},
}

\end{document}